\DeclareMathOperator{\cHom}{\mathscr{H}\text{\kern -3pt {\calligra\large om}}\,}
\newenvironment{psmallmat}
  {\left(\begin{smallmatrix}}
  {\end{smallmatrix}\right)}
\theoremstyle{plain}
\newcommand{\id}{\operatorname{id}}
\newcommand{\im}{\operatorname{im}}
\newcommand{\pr}{\operatorname{pr}}
\newcommand{\ev}{\operatorname{ev}}
\newcommand{\Sh}{\operatorname{Sh}}
\newcommand{\res}{\operatorname{res}}
\newcommand{\Hom}{\operatorname{Hom}}
\newcommand{\End}{\operatorname{End}}
\newcommand{\Ext}{\operatorname{Ext}}
\newcommand{\Mod}{\operatorname{Mod}}
\newcommand{\Inj}{\operatorname{Inj}}
\newcommand{\ind}{\operatorname{ind}}
\newcommand{\coker}{\operatorname{coker}}
\newcommand{\chara}{\operatorname{char}}
\newcommand{\cores}{\operatorname{cores}}
\newcommand{\val}{\operatorname{val}}
\def\c{\mathbf{ c}}
\def\a{\mathbf{ a}}
\def\fhom{\mathfrak h}
\def\ften{\mathfrak t}
\def\X{{\mathbf X}}
\def\anti{\EuScript J}
\def\trace{\EuScript S}
\def\chara{{\rm char}}
\def\ima{\check\alpha([\mathbb F_q^\times])}
\def\CZ{{\mathcal C_{E^*}(Z)}}
\newcommand{\cz}[1]{{\mathcal C_{E^#1}(Z)} }
\def\ima{\check\alpha([\mathbb F_q^\times])}
\def\go{{\gamma_0}}
\def\pw{{{}^{\varpi} \!w}}
\newtheorem{theorem}{Theorem}[section]
\newtheorem{corollary}[theorem]{Corollary}
\newtheorem{lemma}[theorem]{Lemma}
\newtheorem{proposition}[theorem]{Proposition}
\newtheorem*{theorem*}{Theorem}
\newtheorem*{proposition*}{Proposition}
\newtheorem{fact}[theorem]{Fact}
\theoremstyle{remark}
\newtheorem{example}[theorem]{Example}
\newtheorem{notn}[theorem]{Notation}
\theoremstyle{definition}
\newtheorem{remark}[theorem]{Remark}
\def\colorred#1{\textcolor{red}{#1}}
\def\colorblue#1{\textcolor{blue}{#1}}
\def\colororange#1{\textcolor{Orange}{#1}}
\def\colormag#1{\textcolor{magenta}{#1}}
\title{On the pro-$p$ Iwahori Hecke Ext-algebra of ${\rm SL}_2(\mathbb Q_p)$}
\author{Rachel Ollivier, Peter Schneider}
\date{\today}
\begin{document}

\maketitle

\begin{abstract}
Let
 $G={\rm  SL}_2(\mathfrak F) $ where $\mathfrak F$ is a finite extension of $\mathbb Q_p$. We suppose that
the pro-$p$ Iwahori subgroup  $I$ of $G$ is a Poincar\'e group of dimension $d$.
Let $k$ be a field  containing the residue field of $\mathfrak F$.


In this article, we study the
 graded Ext-algebra  $E^*=\Ext_{\Mod(G)}^*(k[G/I], k[G/I])$.
Its degree zero piece $E^0$ is the usual pro-$p$ Iwahori-Hecke algebra $H$.

We study $E^d$ as an $H$-bimodule and deduce  that for an irreducible admissible smooth representation of $G$, we have $H^d(I,V)=0$ unless $V$ is the trivial representation.

When $\mathfrak F=\mathbb Q_p$ with $p\geq 5$, we have $d=3$. In that case we describe $E^*$ as an $H$-bimodule and   give the structure  as an algebra of the centralizer in $E^*$ of the center of $H$.
We deduce results on the values of the functor $H^*(I, {}_-)$ which attaches to a (finite length) smooth $k$-representation $V$ of $G$ its cohomology with respect to $I$. We prove that $H^*(I,V)$ is always  finite dimensional.  Furthermore, if  $V$ is  irreducible, then $V$ is supersingular if and only if
$H^*(I,V)$ is a supersingular $H$-module.
\end{abstract}

\setcounter{tocdepth}{4}

\tableofcontents

\section{Introduction}

Let $\mathfrak F$ be a locally compact nonarchimedean field with residue
characteristic $p$,   and  let    $G$ be the group of $\mathfrak{F}$-rational points
of a connected reductive group  $\mathbf G$ over $\mathfrak{F}$. We
suppose that $\mathbf G$ is $\mathfrak F$-split.

Let $k$ be a field  of characteristic $p$ and let  $\Mod(G)$ denote the category of all smooth
representations of $G$ in $k$-vector spaces. For a general  $\mathbf G$  and  $\mathfrak F$  this category is still poorly understood.
One way of approaching it consists in  considering the Hecke algebra $H$ of the
pro-$p$ Iwahori subgroup $I \subset G$. In this case the natural left exact functor
\begin{align*}
 \fhom: \Mod(G) & \longrightarrow \Mod(H) \\
              V & \longmapsto V^I = \Hom_{k[G]}(\mathbf{X},V) \
\end{align*}
sends a nonzero representation onto a nonzero module. Its left adjoint is
\begin{align*}
\ften:\Mod(H) & \longrightarrow  \Mod^I(G)\subseteq \Mod(G) \\
                M & \longmapsto \mathbf{X} \otimes_H M \ .
\end{align*}
Here $\mathbf X$ denotes the space of $k$-valued functions with compact
support on $G/I$ with the natural left action of $G$.
The functor $\ften$ has values in the category  $\Mod^I(G)$ of all smooth
$k$-representations of $G$ generated by their $I$-fixed vectors.  This category,
which a priori has no reason to be an abelian subcategory of $\Mod(G)$, contains all
irreducible representations. But in general $\ften$ is not an equivalences of categories
and little is known about $\Mod^I(G)$ and $\Mod(G)$ unless $G={\rm GL}_2(\mathbb Q_p)$ or $G={\rm SL}_2(\mathbb Q_p)$ (\cite{Koz1}, \cite{Ollequiv}, \cite{embed}, \cite{Pas}).

The functor $\fhom$, although left exact, is not right exact since $p$ divides the pro-order of $I$. It is therefore natural to consider the derived functor. In \cite{SDGA} the following result is shown: When
$\mathfrak F$ is a finite extension of $\mathbb Q_p$ and $I$ is a torsionfree
pro-$p$ group, there exists a derived version of the functors $\fhom$ and $\ften $ providing an equivalence between the derived category $D(G)$ of smooth representations of $G$ in $k$-vector spaces and the derived category of differential graded modules over a certain differential graded pro-$p$ Iwahori-Hecke algebra $H^\bullet$.

The article \cite{Ext} opened up the study of the  Hecke differential graded algebra $H^\bullet$ by giving the first results on its
cohomology algebra $E^*:=\Ext_{\Mod(G)}^*(\mathbf X, \mathbf X)$. This is the pro-$p$ Iwahori Hecke Ext-algebra we refer to in the title of the current article. We  suppose in this introduction  that $I$ is torsion free which forces $\mathfrak F$ to  be a finite extension of $\mathbb Q_p$. We denote by $d$ the dimension of $I$ as a Poincar\'e group. The $\Ext$ algebra $E^*$ is supported in degrees $0$ to  $d$.

When  $\mathbf G$ is almost simple and simply connected,  the central ideal $\mathfrak J H$ which controls the supersingularity (see \S\ref{subsec:basicshecke}) has finite codimension in $H$. We show that we have an isomorphism of $H$-bimodules
$$
\Ext_{\Mod(G)}^d(\mathbf X, \mathbf X)\cong \chi_{triv}\oplus \Inj((H/\mathfrak{J} H)^\vee)
$$
where $\chi_{triv}$ is the trivial character of $H$ and
$\Inj(H/\mathfrak{J} H)^\vee)$ is an injective envelope of the dual module $(H/\mathfrak{J} H)^\vee$. We exploit this result when $\mathbf G={\rm SL}_2$ as follows.

\begin{proposition*}(Proposition \ref{prop:HdVzero}).
  We have $H^d(I, V)=0$ for any irreducible admissible representation of ${\rm SL}_2(\mathfrak F)$ except when  $V$ is the trivial representation in which case $H^d(I,k_{triv})\cong\chi_{triv} $ as an $H$-bimodule.
\end{proposition*}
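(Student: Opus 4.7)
By Shapiro's lemma, $H^d(I,V) \cong \Ext^d_{\Mod(G)}(\mathbf X, V)$ as right $H$-modules. Since $I$ is pro-$p$ and $k$ has characteristic $p$, any nonzero smooth representation of $G$ has nonzero $I$-invariants; admissibility makes $n:=\dim_k V^I$ finite, and irreducibility of $V$ forces $V$ to be generated over $G$ by $V^I$. Choosing a basis of $V^I$ thus gives a surjection $\mathbf X^n \twoheadrightarrow V$ of smooth $G$-representations with kernel $K$, and applying the functor $\Ext^*_{\Mod(G)}(\mathbf X, -)$ together with the vanishing $H^{d+1}(I, K) = 0$ (forced by $I$ being a Poincar\'e group of dimension $d$) produces a surjection of right $H$-modules $(E^d)^n \twoheadrightarrow H^d(I, V)$. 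Intrinsically this is Yoneda composition, so it refines to a surjection
\begin{equation*}
E^d \otimes_H V^I \;\twoheadrightarrow\; H^d(I,V),
\end{equation*}
the tensor product being balanced between one of the two $H$-actions on the bimodule $E^d$ and the natural right $H$-action on $V^I$.

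Substituting the bimodule decomposition $E^d \cong \chi_{triv} \oplus \Inj((H/\mathfrak J H)^\vee)$ recalled in the introduction, it suffices to show that both summands $\chi_{triv} \otimes_H V^I$ and $\Inj((H/\mathfrak J H)^\vee) \otimes_H V^I$ vanish for every irreducible admissible smooth $k$-representation $V$ of $\mathrm{SL}_2(\mathfrak F)$ other than the trivial one, and that together they contribute $\chi_{triv}$ when $V = k_{triv}$. The first summand is clean to analyse: $\chi_{triv} \otimes_H V^I$ is the largest quotient of $V^I$ on which $H$ acts via $\chi_{triv}$, and inspecting the known $H$-module structure of the $I$-invariants across the families of irreducible admissible representations of $\mathrm{SL}_2(\mathfrak F)$ (trivial, Steinberg, principal series, supersingular) shows that this quotient is nonzero precisely when $V = k_{triv}$, in which case it equals $\chi_{triv}$. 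For that value of $V$ the surjection $\chi_{triv} \twoheadrightarrow H^d(I, k_{triv})$ must be an isomorphism, since $H^d(I, k_{triv}) \cong H^d(I, k)$ is one-dimensional by the Poincar\'e hypothesis on $I$; the bimodule structure reduces to $\chi_{triv}$ because $G$ acts trivially on the target $k_{triv}$, forcing the left $H$-action on the Ext group to factor through $\chi_{triv}$ as well.

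The hard part is the second summand. For $V$ non-supersingular with $V^I$ not annihilated by $\mathfrak J$, I expect $\Inj((H/\mathfrak J H)^\vee) \otimes_H V^I = 0$ from a balancing argument: $(H/\mathfrak J H)^\vee$ is $\mathfrak J$-annihilated on the side used to form the tensor product, and one needs this annihilation to extend appropriately to its injective envelope. The genuinely delicate case is the supersingular $V$: here $V^I$ is itself a supersingular simple right $H$-module, so the naive $\mathfrak J$-balancing fails, and one must use finer structural information about the $H$-bimodule $\Inj((H/\mathfrak J H)^\vee)$ and how it pairs over $H$ with supersingular $H$-modules, specific to the pro-$p$ Iwahori--Hecke algebra of $\mathrm{SL}_2(\mathfrak F)$. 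Establishing the vanishing in this supersingular case is where I expect the main technical effort of the proof to lie.
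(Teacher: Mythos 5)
Your outline starts on the same track as the paper — reduce to $E^d\otimes_H V^I$ and use the decomposition $E^d\cong\chi_{triv}\oplus\ker(\trace^d)$ with $\ker(\trace^d)=\Inj((H/\mathfrak J H)^\vee)$ — but it stops at an acknowledged gap and, more significantly, misdiagnoses where the difficulty lies.

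You expect the supersingular case to be the hard one, requiring ``finer structural information'' about how the injective envelope pairs with supersingular modules. In fact that case is the \emph{easy} one: the whole point of Proposition \ref{prop:kerS-injhull} is that $\ker(\trace^d)$ is an injective right $H$-module, hence divisible by any central non-zero-divisor $\xi$, in particular by $Q(\zeta)$ for any nonzero polynomial $Q$. Combined with $\chi_{triv}(Q(\zeta))=Q(1)$, this gives the uniform computation
\begin{equation*}
  E^d\otimes_H H/HQ(\zeta)\;\cong\; k/Q(1)k \;\oplus\; \ker(\trace^d)/\ker(\trace^d)Q(\zeta)\;=\;k/Q(1)k\,,
\end{equation*}
which vanishes whenever $Q(1)\neq 0$. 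Taking $Q$ to be the minimal polynomial of $\zeta$ on $V^I$, a supersingular $V$ has $Q(X)=X^m$, so $Q(1)=1$ and the whole group $H^d(I,V)$ is killed at once — no delicate balancing needed. The genuinely subtle case is $Q(1)=0$, i.e.\ when $\zeta-1$ divides the minimal polynomial; there the paper has to pass to $\bar k$, invoke the equivalence of categories of \S\ref{subsubsec:eqcat} and \cite{embed} Thm.\ 3.33 to identify $V=\mathbf X\otimes_H V^I$ with $V^I$ a simple $H$-module, and then run the idempotent $e_\gamma$ case analysis together with Fact \ref{fact:noclassif} to pin $V^I$ down to $\chi_{triv}$ or $\chi_{sign}$.

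A related weak spot in your first summand: you assert that inspecting the families of irreducible admissible representations shows $\chi_{triv}\otimes_H V^I\neq 0$ only for $V=k_{triv}$, but this needs an argument. A priori a nontrivial $V$ could have $V^I$ with $\chi_{triv}$ as a quotient, which would make $\chi_{triv}\otimes_H V^I\neq 0$. Ruling this out is exactly what the $Q(1)=0$ branch of the paper's proof does, using that $V^I$ is a \emph{simple} $H$-module (via the equivalence of categories) and then Fact \ref{fact:noclassif} plus \cite{OV} Lemma 2.25 to show $V^I\cong\chi_{triv}$ forces $V\cong k_{triv}$. Your ``inspection'' bundles this into an unproved appeal to a classification, whereas for $\mathrm{SL}_2(\mathfrak F)$ with general residue field the paper deliberately avoids a full classification and relies on the listed lemmas. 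So the proposal has the right skeleton but the core technical mechanism — injectivity giving divisibility — is missing, and the hard/easy split of cases is inverted.
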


We then focus on the case $G={\rm SL}_2(\mathbb Q_p)$.  In that case the center of $H$ contains a polynomial algebra $k[\zeta]$ and  $\mathfrak J H=\zeta H$. We give explicit formulas for the action of $H$  on $E^*$ (see in particular \S\ref{subsec:HonE1} and \S\ref{subsec:HonE2}). This will allow us
\begin{itemize}
  \item[--] to study the left and right $k[\zeta]$-torsion in $E^*$ in \S\ref{sec:zetatorsion},
  \item[--] and then in \S\ref{sec:structure} to give a complete description of $E^*$ as an $H$-bimodule.
\end{itemize}
As a major application we prove the following result in \S\ref{sec:valuesH*}.

\begin{theorem*}(Theorem \ref{cohomology}).
Let $G={\rm SL}_2(\mathbb Q_p)$ with $p\neq 2,3$. For any representation of finite length in $\Mod(G)$ we have:
\begin{itemize}
  \item[i.] The $k$-vector space $H^*(I,V)$ is finite dimensional;
  \item[ii.] if $V$  is generated by its subspace of $I$-fixed vectors $V^I$  and $Q(\zeta) V^I = 0$ for some nonzero polynomial $Q \in k[X]$, then the left $H$-module $H^*(I,V)$ is $P(\zeta)$-torsion for the polynomial $P(X) := Q(X)Q(\frac{1}{X})X^{\deg(Q)}$.
\end{itemize}
\end{theorem*}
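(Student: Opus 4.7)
The plan is to combine the adjunction identification $H^i(I,V) = \Ext^i_{\Mod(G)}(\mathbf X, V)$ with the complete description of $E^*$ as an $H$-bimodule from Section \ref{sec:structure} and the $\zeta$-torsion analysis of Section \ref{sec:zetatorsion}. Since $V$ is generated by $V^I$, the counit of the adjunction $(\ften,\fhom)$ yields a short exact sequence
\[
0 \longrightarrow K \longrightarrow \mathbf X \otimes_H V^I \longrightarrow V \longrightarrow 0
\]
of $G$-representations, and the induced long exact sequence reduces the computation of $H^*(I,V)$ to the analogous computation for $\mathbf X \otimes_H V^I$ together with one for $K$. I would iterate on $K$, which by admissibility of $V$ (a consequence of finite length for ${\rm SL}_2(\mathbb Q_p)$, $p \geq 5$) still has finite-dimensional $I$-invariants and hence admits a polynomial-in-$\zeta$ annihilator. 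The procedure terminates after finitely many steps because $E^n$ vanishes for $n > d = 3$, so the iterated long exact sequence eventually collapses to contributions coming only from the original $E^*$-input.

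The core computation is therefore $\Ext^*_{\Mod(G)}(\mathbf X, \mathbf X \otimes_H M)$ for a finite-dimensional $H$-module $M$ annihilated by $Q(\zeta)$. A finite free presentation $H^m \xrightarrow{\phi} H^n \to M \to 0$ over $H$, tensored with $\mathbf X$ and then fed into $\Ext^*_{\Mod(G)}(\mathbf X,-)$, combined with the identification $\Ext^*_{\Mod(G)}(\mathbf X, \mathbf X^a) = (E^*)^a$, presents $\Ext^*_{\Mod(G)}(\mathbf X, \mathbf X \otimes_H M)$ as a subquotient of $E^* \otimes_H M$ (the $E_2$-page of a $\Tor$-type spectral sequence). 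The bimodule description of $E^*$ now decomposes the latter according to the relation between the left and right actions of $\zeta$: on the ``untwisted'' summand the two actions coincide, so the tensor product with $M$ is annihilated by $Q(\zeta)$; on the ``involutive'' summand the right action of $\zeta$ corresponds to left multiplication by the formal inverse $\zeta^{-1}$, which after twisting $M$ through this involution (and clearing denominators by a factor of $\zeta^{\deg Q}$) produces the annihilator $\bar Q(\zeta) := \zeta^{\deg Q}Q(1/\zeta)$. The product $P(\zeta) = Q(\zeta)\bar Q(\zeta)$ therefore annihilates the entire left $H$-module $H^*(I,V)$, establishing (ii).

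For (i), finite length ensures that $V^I$ and the $I$-fixed vectors of every subquotient are finite-dimensional, so (ii) applies to each simple constituent. Finite-dimensionality then follows since each $E^n$ is finitely generated over the central polynomial subalgebra $k[\zeta] \subset H$ (observable from the bimodule structure), so $E^* \otimes_H V^I$ is a finitely generated $k[\zeta]$-module annihilated by $P(\zeta)$, hence finite over $k$; d\'evissage along a composition series extends the statement to all finite length $V$. The trivial representation $k_{triv}$, which is the only irreducible requiring a separate remark, is handled by observing that $H^*(I, k_{triv}) = H^*(I, k)$ is finite-dimensional because $I$ is a $3$-dimensional Poincar\'e group. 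The main obstacle I anticipate is the precise justification of the twist-annihilator computation on the involutive summand: namely, verifying that the explicit formulas for the action of $\zeta$ on $E^1$ and $E^2$ from \S\ref{subsec:HonE1}--\S\ref{subsec:HonE2} truly realize the formal involution $\zeta \mapsto \zeta^{-1}$ in the required sense, and ensuring that the iteration on $K$ produces updated polynomials $Q'$ whose reciprocals remain compatible with the original $P$ so that no extra factors are generated along the way.
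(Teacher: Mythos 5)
Your high-level intuition — that the answer hinges on a decomposition of $E^*$ into a piece where the left and right $\zeta$-actions coincide and a piece where they are "inverse" to each other, and that the palindromic polynomial $P$ kills the left action via this inversion — is essentially the same structural insight that drives the paper's proof (where the two pieces are $\ker(g)$ and $\ker(f)$, and the key technical fact is Corollary \ref{coro:fg}: $\zeta\cdot x\cdot \zeta - x \in \ker(g)$ for every $x\in E^*$). However, your argument has two concrete gaps.

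First, the claim that "each $E^n$ is finitely generated over the central polynomial subalgebra $k[\zeta]$" is false, and your proof of (i) rests on it. Proposition \ref{collect-zeta} shows that $E^2$ and $E^3$ contain $k[\zeta]$-divisible direct summands isomorphic to $(k[\zeta^{\pm 1}]/k[\zeta])^{4(p-1)}$, which are not finitely generated over $k[\zeta]$. What \emph{is} true, and what the paper actually uses (Lemma \ref{ker-coker-zeta} and Corollary \ref{ker-coker-zeta-E}), is that right multiplication by $Q(\zeta)$ on each $E^n$ has finite-dimensional kernel and cokernel; the divisible pieces are harmless precisely because multiplication by $Q(\zeta)$ on $k[\zeta^{\pm 1}]/k[\zeta]$ is surjective with finite kernel. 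Without this correction, your finite-dimensionality argument does not go through.

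Second, your resolution strategy is under-specified. Feeding a finite free presentation $H^m\to H^n\to M\to 0$ into $\Ext^*_{\Mod(G)}(\mathbf{X},-)$ does not present $\Ext^*(\mathbf{X},\mathbf{X}\otimes_H M)$ as a subquotient of $E^*\otimes_H M$: the functor is not right exact in intermediate degrees, so a two-term presentation only controls the low end of the long exact sequence, and you would need a full (possibly infinite) free resolution of $M$ over $H$. The paper sidesteps this by replacing $\mathbf{X}\otimes_H V^I$ with the specific object $\mathbf{X}/\mathbf{X}Q(\zeta)$, which by the freeness of $H$ over $k[\zeta]$ has the two-term $G$-resolution $0\to\mathbf{X}\xrightarrow{\cdot Q(\zeta)}\mathbf{X}\to\mathbf{X}/\mathbf{X}Q(\zeta)\to 0$; this yields the clean short exact sequences \eqref{f:coh-seq2} on which everything rests. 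Relatedly, your "iterate on $K$" termination argument is loose — the paper makes this precise by inducing downward in cohomological degree, exploiting the vanishing bound $d=3$ at each step, and applying Lemma \ref{subquotients} to guarantee that every irreducible subquotient of $\mathbf{X}/\mathbf{X}Q(\zeta)$ is again annihilated by the \emph{same} $Q(\zeta)$, so no new polynomial factors accumulate (the compatibility issue you flagged at the end but did not resolve).
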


The most interesting consequence of this theorem is that, under the same hypotheses, an irreducible representation $V$ in $\Mod(G)$ is supersingular if and only if the left $H$-module $H^*(I,V)$ is supersingular (Corollary \ref{coro:supersingular}). This strongly indicates that the notion of supersingularity can be extended to objects in the derived category $D(G)$ by introducing a theory of supports via the dg algebra $H^\bullet$. We hope to return to this in another paper.

The center $Z := Z(H)$ of $H = E^0$ is no longer central in $E^*$. This makes it interesting to determine its centralizer $\CZ$ in $E^*$. In \S\ref{sec:structure} we show that $\CZ$ actually coincides with the centralizer in $E^*$ of the element $\zeta$. This makes it possible to give a complete description of the algebra $\CZ$ in \S\ref{sec:commutator}.

In \cite{embed} \S 3.5 we studied the representation theoretic meaning of the localization $H_\zeta$ of the Hecke algebra in the central element $\zeta$. Despite the fact that $\zeta$ is no longer central in $E^*$ it turns out (Remark \ref{Ore}) that $\zeta^{\mathbf{N}_0}$ is a left and right Ore set in $E^*$, so that the localization $E^*_\zeta$ does exist. We will show elsewhere that $E^*_\zeta$ again is a Yoneda Ext-algebra and will investigate its meaning for the nonsupersingular ${\rm SL}_2(\mathbb Q_p)$-representations.

This collaboration was partially funded by the NSERC Discovery Grant of the first author and by the Deutsche Forschungsgemeinschaft (DFG, German Research Foundation) under Germany's Excellence Strategy EXC 2044 –390685587, Mathematics Münster: Dynamics–Geometry–Structure. Both authors also acknowledge the support of PIMS at UBC Vancouver.

\section{Notations and preliminaries\label{sec:not}}

Throughout  the paper we fix a locally compact nonarchimedean field $\mathfrak{F}$ (for now of any characteristic) with ring of integers $\mathfrak{O}$, its maximal ideal $\mathfrak{M}$,  and a prime element $\pi$. The residue field $\mathfrak{O}/\pi \mathfrak{O}$ of $\mathfrak{F}$ is $\mathbb{F}_q$ for some power $q = p^f$ of the residue characteristic $p$. We choose the valuation $\val_{\mathfrak{F}}$ on   $\mathfrak{F}$  normalized by $\val_{\mathfrak{F}}(\pi)=1$ We let $G := \mathbf{G}(\mathfrak{F})$ be the group of $\mathfrak{F}$-rational points of a connected reductive group $\mathbf{G}$  over $\mathfrak{F}$ which we always assume to be $\mathfrak{F}$-split. We will very soon specialize to the case when
$\mathbf G$ is almost simple and simply connected (starting Section \ref{sec:top}) and in fact the core of this article (starting Section \ref{sec:E1}) will  focus on the case when $\mathbf{G}={\rm SL}_2$ and
 $\mathfrak{F}=\mathbb Q_p$ with $p\neq 2,3$.

We fix an $\mathfrak{F}$-split maximal torus $\mathbf{T}$ in $\mathbf{G}$, put $T := \mathbf{T}(\mathfrak{F})$, and let $T^0$ denote the maximal compact subgroup of $T$ and $T^1$ the pro-$p$ Sylow subgroup of $T^0$. We also fix a chamber $C$ in the apartment of the semisimple Bruhat-Tits building  $\mathscr X$ of $G$ which corresponds to $\mathbf{T}$. The stabilizer $\EuScript P_C^\dagger$ of $C$ contains an Iwahori subgroup $J$. Its pro-$p$ Sylow subgroup $I$ is called the pro-$p$ Iwahori subgroup. We have $T\cap J= T^0$ and $T\cap I= T^1$. If $N(T)$ is the normalizer of $T$ in $G$, then we define the group $\widetilde{W} := N(T)/T^1$. In particular, it contains $\Omega:=T^0/T^1$. The quotient $W:=N(T)/T^0\cong\widetilde{W}/\Omega$ is the extended affine Weyl group. The finite Weyl group is $W_0:= N(T)/T$. The length on $W$ pulls back to a length function  $\ell$ on $\widetilde W$ (see \cite{Ext} \S 2.1.4).

For any compact open subset $A \subseteq G$ we let $\mathrm{char}_A$ denote the characteristic function of $A$.

The coefficient field for all representations in this paper is an arbitrary field $k$ of characteristic $p > 0$.
For any open subgroup $U \subseteq G$ we let $\Mod(U)$ denote the abelian category of smooth representations of $U$ in $k$-vector spaces.

\subsection{The pro-$p$-Iwahori Hecke algebra\label{subsec:basicshecke}}

We consider the compact induction $\mathbf{X} := \ind_I^G(1)$ of the trivial $I$-representation.  It can be seen as the space of  compactly supported functions $G\rightarrow k$ which are constant on the left cosets mod $I$. It lies in $\Mod(G)$.
For $Y$ a compact subset of $G$ which is right invariant under $I$,  the characteristic function $\chara_Y$ is an element of $\X$. Equivalently one may view $\X = k[G/I]$ as the $k$-vector space with basis the cosets $gI \in G/I$. The pro-$p$ Iwahori-Hecke algebra is defined to be the $k$-algebra $$H := \End_{k[G ]}(\mathbf{X})^{\mathrm{op}}\ .$$ We often will identify $H$, as a right $H$-module, via the map $
   H  \rightarrow{}  \mathbf{X} ^I,
    h  \mapsto (\mathrm{char}_I ) h
$
 with the submodule $\X^I $ of $I $-fixed vectors in $\X$.
The Bruhat-Tits decomposition of $G$ says that $G$ is the disjoint union of the double cosets $IwI$ for $w \in \widetilde{W}$. Hence we have the $I$-equivariant decomposition
\begin{equation}\label{f:Xbruhat}
  \mathbf{X} = \oplus_{w \in \widetilde{W}} \mathbf{X}(w)  \quad\text{with}\quad  \mathbf{X}(w) := \ind_I^{IwI}(1) \ ,
\end{equation}
where the latter denotes the subspace of those functions in $\mathbf{X}$ which are supported on the double coset $IwI$. In particular, we have $\mathbf{X}(w)^I = k \tau_w$ where $\tau_w := \mathrm{char}_{IwI}$ and hence
$
  H = \oplus_{w \in \widetilde{W}} k \tau_w \
$ as a  $k$-vector space.

The defining (braid and quadratic)  relations of $H$ are recalled in \cite{Ext} \S2.2. They ensure in particular that we have a well defined trivial character of $H$ denoted by $\chi_{triv}$ and defined by (\cite{Ext} \S 2.2.2):
\begin{equation}\label{defitriv}
    \chi_{triv}: \tau_{w}\longmapsto 0, \: \tau_{\omega} \longmapsto 1, \textrm{ for any $w\in \widetilde W$ with $\ell(w)\geq 1$ and $\omega \in \widetilde W$ with $\ell(\omega)=0$.}
\end{equation}


To define the notion of supersingularity for $H$-modules, we refer to \cite{Ext} \S 2.3.
Recall that there is a  a central subalgebra  $\mathcal Z^0(H)$ of $H$  which is isomorphic to the affine semigroup algebra $k[X_*^{dom}(T)]$, where
$X_*^{dom}(T)$ denotes the semigroup of all dominant cocharacters of $T$.
The cocharacters $\lambda\in X_*^{dom}(T)\setminus (-X_*^{dom}(T))$ generate a proper ideal of $k[X_*^{dom}(T)]$, the image of which in $\mathcal Z^0(H)$  is denoted by $\mathfrak J$.
We call an $H$-module $M$ supersingular  if any element in $M$ is annihilated by a power of $\mathfrak J$.

\subsection{The $\Ext$-algebra}\label{subsec:Ext}
We refer to \cite{Ext} \S3. We form the graded $\Ext$-algebra
\begin{equation*}
  E^* := \Ext_{\Mod(G)}^*(\mathbf{X},\mathbf{X})^{\mathrm{op}}
\end{equation*}
over $k$ with the multiplication being the (opposite of the) Yoneda product. Obviously
\begin{equation*}
  H := E^0 = \End_{\Mod(G)}^*(\mathbf{X},\mathbf{X})^{\mathrm{op}}
\end{equation*}
is the usual pro-$p$ Iwahori-Hecke algebra over $k$. By using Frobenius reciprocity for compact induction and the fact that the restriction functor from $\Mod(G)$ to $\Mod(I)$ preserves injective objects we obtain the identification
\begin{equation}\label{f:frob}
  E^* = \Ext_{\Mod(G)}^*(\mathbf{X},\mathbf{X})^{\mathrm{op}} = H^*(I,\mathbf{X}) \ .
\end{equation}
The only part of the multiplicative structure on $E^*$ which is still directly visible on the cohomology $H^*(I,\mathbf{X})$ is the right multiplication by elements in $E^0 = H$, which is functorially induced by the right action of $H$ on $\mathbf{X}$. In \cite{Ext}, we  made the full multiplicative structure visible on $H^*(I,\mathbf{X})$. We recall that for $* = 0$ the above identification is given by
$
  H  \xrightarrow{\; \cong \;} \mathbf{X}^I,
  \tau \longmapsto (\mathrm{char}_I) \tau
$.

Noting that the cohomology of profinite groups commutes with arbitrary sums, we obtain from the $I$-equivariant decomposition \eqref{f:Xbruhat} a decomposition of vector spaces
\begin{equation*}
  H^*(I,\mathbf{X}) = \oplus_{w \in \widetilde{W}} H^*(I,\mathbf{X}(w)) \ .
\end{equation*} For $w\in \widetilde W$, we let $I_w:= I\cap w I w^{-1}$ (see \cite{Ext} \S 2.1.5). We call Shapiro isomorphism
and denote  by $\Sh_w$ the composite map
\begin{equation}\label{f:Shapiro1}
  \Sh_w : H^*(I,\mathbf{X}(w)) \xrightarrow{\; \res \;} H^*(I_w,\mathbf{X}(w)) \xrightarrow{\; H^*(I_w,\ev_w) \;} H^*(I_w,k)
\end{equation}
where $\ev_w : \mathbf{X}(w)  \longrightarrow k, \: f \longrightarrow f(w)$ (see also \cite{Ext} \S 3.2).

\subsubsection{The cup product}

Recall from \cite{Ext} \S 3.3 that there is a naive product structure on the cohomology $H^*(I,\mathbf{X})$. By multiplying maps we obtain the $G$-equivariant map $  \mathbf{X} \otimes_k \mathbf{X}  \longrightarrow \mathbf{X}$,
  $f \otimes f'  \longmapsto f f'$.
It gives rise to the cup product
\begin{equation}\label{f:cup}
  H^i(I, \mathbf{X}) \otimes_k H^j(I,\mathbf{X}) \xrightarrow{\; \cup \;} H^{i+j}(I,\mathbf{X}) \
\end{equation} which has the property that
  $H^i(I, \mathbf{X}(v)) \cup H^j(I,\mathbf{X}(w)) = 0$ whenever $v \neq w$.
On the other hand, since $\ev_{{w}}(f f') = \ev_{{w}}(f) \ev_{{w}}(f')$ and since the cup product is functorial and commutes with cohomological restriction maps, we have the commutative diagrams
\begin{equation}\label{f:cup+Sh}
  \xymatrix{
     H^i(I, \mathbf{X}(w)) \otimes_k H^j(I,\mathbf{X}(w)) \ar[d]_{\Sh_w \otimes \Sh_w} \ar[r]^-{\cup} & H^{i+j}(I,\mathbf{X}(w)) \ar[d]^{\Sh_w} \\
     H^i(I_w,k) \otimes_k H^j(I_w,k) \ar[r]^-{\cup} & H^{i+j}(I_w,k)   }
\end{equation}
for any $w \in \widetilde{W}$, where the bottom row is the usual cup product on the cohomology algebra $H^*(I_w,k)$.  In particular,  the cup product \eqref{f:cup} is anticommutative.

\subsubsection{The Yoneda product}

The Yoneda product in $E^*$ (\cite{Ext} \S 4.2) satisfies the following property:
\begin{equation}\label{f:support}
  H^i(I,\mathbf{X}(v)) \cdot H^j(I,\mathbf{X}(w)) \subseteq H^{i+j}(I, \ind_I^{IvI \cdot IwI}(1)) \text{ for  $v,w\in \widetilde W$. }
\end{equation}
The product of $\alpha\in   H^i(I,\mathbf{X}(v))$ by
$\beta\in H^j(I,\mathbf{X}(w))$ is explicitly described in  \cite{Ext} Prop. 5.3.  We  record here the following  results.

\begin{proposition}\label{prop:yo}
Let $v,w\in\widetilde W$ and  $\alpha\in   H^i(I,\mathbf{X}(v))$,
$\beta\in H^j(I,\mathbf{X}(w))$.
\begin{itemize}
\item[--] if $\ell(vw)=\ell(v)+\ell(w)$, then
\begin{equation}\label{f:prod-goodlength}
 \alpha\cdot \beta = (\alpha\cdot \tau_w) \cup  (\tau_v\cdot \beta)  \in  H^{i+j}(I,\mathbf{X}(vw))\ ;
\end{equation}
\item[--] if $\ell(v)=1$ and   $\ell(v w)=\ell(w)-1$, then
$\alpha\cdot \beta$ lies in  $H^{i+j}(I,\mathbf{X}(v w))\oplus \bigoplus_{\omega\in T^0/T^1} H^{i+j}(I,\mathbf{X}( \omega w))$. If furthermore $\mathbf G$ is semisimple and simply connected, then
\begin{equation}\label{f:prod-badlength}
   \alpha\cdot \beta- (\alpha\cdot \tau_w)\cup (\tau_{v}\cdot \beta) \in H^{i+j}(I, \X(v w))\ .
\end{equation}
\end{itemize}
\end{proposition}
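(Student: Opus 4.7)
The plan is to extract both formulas from the explicit cochain-level description of the Yoneda product given in \cite{Ext} Prop.~5.3. That formula realizes $\alpha\cdot\beta$ as a sum of cohomology classes indexed by a system of double-coset representatives inside $IvI\cdot IwI$, so the key reduction in both cases is to first determine the double-coset support of this product, and then to match each surviving term with the corresponding piece of $(\alpha\cdot\tau_w)\cup(\tau_v\cdot\beta)$ via the cup product diagram \eqref{f:cup+Sh}.

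For the good length case $\ell(vw)=\ell(v)+\ell(w)$, we have $IvI\cdot IwI = IvwI$, so by the support property \eqref{f:support} all three classes $\alpha\cdot\beta$, $\alpha\cdot\tau_w\in H^i(I,\mathbf{X}(vw))$, and $\tau_v\cdot\beta\in H^j(I,\mathbf{X}(vw))$ lie in the single summand indexed by $vw$. Applying $\Sh_{vw}$ and the commutative diagram \eqref{f:cup+Sh}, the right-hand side of \eqref{f:prod-goodlength} becomes an ordinary cup product in $H^{i+j}(I_{vw},k)$. On the Yoneda side, the formula of \cite{Ext} Prop.~5.3 collapses in the length-additive case to the same expression, because the relevant restriction-and-corestriction operations between $I_v$, $I_w$ and $I_{vw}$ become trivial (there is no shortening to correct for). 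The identity \eqref{f:prod-goodlength} then follows by direct comparison.

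In the bad length case with $\ell(v)=1$ and $\ell(vw)=\ell(w)-1$, a standard computation with the quadratic relation for the simple affine reflection underlying $v$ yields
\begin{equation*}
 IvI\cdot IwI \;\subseteq\; IvwI \;\cup\; \bigcup_{\omega\in T^0/T^1} I\omega w I,
\end{equation*}
and combined with \eqref{f:support} this places $\alpha\cdot\beta$ in the direct sum displayed in the statement. To pin down the $vw$-component one once again compares with $(\alpha\cdot\tau_w)\cup(\tau_v\cdot\beta)$, this time summand by summand through the Shapiro isomorphisms $\Sh_{vw}$ and $\Sh_{\omega w}$. The semisimple-and-simply-connected hypothesis enters here: it keeps the group $\Omega = T^0/T^1$ small enough and in particular prevents extra central contributions from $\widetilde W/W$, so that every $I\omega w I$-term appearing in the Yoneda expansion is also produced by the cup product on the other side, with the same coefficient. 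The two sides of \eqref{f:prod-badlength} therefore agree outside the $vw$-component.

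The main obstacle is the bookkeeping in this second case: one has to track, term by term, how each summand of the formula in \cite{Ext} Prop.~5.3 distributes across the various Shapiro isomorphisms, and verify that each contribution outside of $H^{i+j}(I,\mathbf{X}(vw))$ is matched by the corresponding $\omega w$-component of the cup product. The good length case, by contrast, is essentially transparent because no shortening phenomenon occurs and the Shapiro isomorphism identifies the Yoneda product with the cup product directly.
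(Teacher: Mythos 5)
Your broad strategy is the same as the paper's: for the good length case, invoke \cite{Ext} Cor.~5.5 (which is exactly the collapse you describe), and for the bad length case, reduce to the explicit cochain formula of \cite{Ext} Prop.~5.3 and compare the Yoneda product with the cup product component by component through the Shapiro isomorphisms. Where your proposal falls short is precisely at the heart of the bad length case: you have not correctly identified how the simply connected hypothesis is actually used, and without that step the proof does not go through.

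In the paper's argument, after reducing to $v$ a lift $n_s$ of a simple affine reflection, the decomposition of $IvI\cdot IwI$ is written as a union of $In_s\dot wI$ and double cosets $I\check\alpha([z])\dot w I$ indexed by $z\in\mathbb{F}_q^\times$ where $\check\alpha$ is the cocharacter of the corresponding affine root. So the relevant support is $\{vw\}\cup \ima\cdot w$ with $\ima=\check\alpha(\mathbb{F}_q^\times)\subseteq T^0/T^1$, not all of $T^0/T^1$ as your bookkeeping assumes. The role of semisimple and simply connected is then very concrete: it forces $\check\alpha$ to be \emph{injective} (\cite{Ext} Lemma 2.8), so that for each $\omega=\check\alpha([z])\in\ima$ the intersection $n_s^{-1}Iu_\omega I\cap I\dot w I$ is a \emph{single} double coset $I_{n_s}\,n_s^{-1}x_\alpha(\pi^{\mathfrak h}[z])u_\omega\,I_{u_\omega^{-1}}$ rather than a union of several. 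This single-double-coset structure, combined with the explicit verification that $I_{u_\omega}\cap u_\omega h^{-1}Ih u_\omega^{-1}=I_{u_\omega}$, is exactly the hypothesis under which \cite{Ext} Remark 5.4 applies and shows that the $u_\omega$-component of $\alpha\cdot\beta-(\alpha\cdot\tau_w)\cup(\tau_{n_s}\cdot\beta)$ vanishes. Your explanation — ``it keeps the group $\Omega$ small enough and prevents extra central contributions from $\widetilde W/W$'' — does not capture this mechanism: the issue is not the size of $\Omega$ per se, but that without injectivity of $\check\alpha$ the Yoneda formula involves a sum of corestrictions over several double cosets in each $\omega w$-component, and the term-by-term matching with the cup product side fails. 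This is a genuine gap: the claim that ``every $I\omega wI$-term is matched with the same coefficient'' is precisely what needs the injectivity argument, and your proposal takes it for granted.
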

\begin{proof}
The first point is \cite{Ext} Cor. 5.5. We prove the second point in \S\ref{proofbadlength} of the Appendix.
\end{proof}

\subsubsection{Anti-involution\label{subsubsec:anti}}

We refer to  \cite{Ext} \S 6.  The graded algebra $E^*$ is equipped with an
an involutive anti-automorphism. It is defined the following way.
For $w\in \widetilde{W}$, we have $I_{w^{-1}}=w^{-1} I_w w$ and a linear isomorphism
$
  (w^{-1})_*:  H^i(I_w, k)\overset{\cong}\rightarrow  H^i(I_{w^{-1}}, k) \ ,
$
for all $i\geq 0$.
 Via the Shapiro isomorphism \eqref{f:Shapiro1}, this induces the linear isomorphism  $\anti_w$:
\begin{equation}\label{f:invodefi}
  \xymatrix{
H^i(I, \X(w))\ar[d]_{\Sh_w} \ar[rrr]^{\anti_w}_{\cong} && & H^i(I, \X(w^{-1})) \ar[d]^{\Sh_{w^{-1}}}_\cong\\
H^i(I_w, k)\ar[rrr]^{(w^{-1})_*} _\cong& && H^i(I_{w^{-1}}, k)  }
\end{equation}
Summing over all $w\in \widetilde W$, the maps $(\anti_w)_{w\in \widetilde{W}}$ induce a linear isomorphism
\begin{equation*}
  \anti:  H^i(I,\X) \overset{\cong}\longrightarrow H^i(I,\X) \ .
\end{equation*}  and it is proved in \cite{Ext}  Prop. 6.1 that $\anti$ is an
anti-automorphism of the graded algebra $E^*$.
Restricted to $E^0=H$, the map $\anti$ coincides with the anti-involution   $\tau_g\mapsto \tau_{g^{-1}}\textrm{ for any $g\in G$}$ of the algebra $H$.

We may twist the action of $H$ on a left, resp. right, module $Y$ by $\anti$ and thus obtain the right, resp. left module $Y^\anti$, resp. ${}^\anti Y$, with the twisted action of $H$ given by  $(y, h)\mapsto  \anti(h)y$, resp. $(h,y)\mapsto y \anti(h)$. If $Y$ is an $H$-bimodule, then we may define the twisted $H$-bimodule
${}^\anti Y^\anti$ the obvious way and we recall that  $({}^\anti Y^\anti)^\vee\cong {}^\anti(Y^\vee)^\anti$ (\cite{Ext} Rmk. 7.1), where $(-)^\vee = \Hom_k(-,k)$.

\subsubsection{Filtrations\label{subsubsec:fil}}

Let $i\geq 0$. We define on $E^i$ two filtrations:

\begin{itemize}
\item  a decreasing filtration $(F^n E^i)_{n\geq 0}$ where $F^n E^i:= \oplus_{w\in \widetilde W, \,\ell(w)\geq n} H^i (I, \X(w))$;
\item   an increasing filtration $(F_n E^i)_{n\geq 0}$ where $F_n E^i:= \oplus_{w\in \widetilde W, \,\ell(w)\leq n} H^i (I, \X(w))$.
\end{itemize}

When $i=0$, we will often write $F^n H$ (resp. $F_n H$) instead of  $F^n E^0$ (resp. $F_n E^0$). Recall that
$(F^n H)_{n\geq 0}$  is a filtration of $H$ as  an $H$-bimodule.

Moreover, $F_n E^*$ is an algebra filtration, which means that $F_n E^i \cdot F_m E^j \subseteq F_{n+m} E^{i+j}$. This follows from \eqref{f:support} together
with the fact (\cite{Ext} Cor.\ 2.5-ii and Remark 2.10) that
\begin{equation}\label{f:prod-fil}
  IvI \cdot IwI
  \begin{cases}
  = IvwI   &   \text{if $\ell(vw) = \ell(v) + \ell(w)$},  \\
  \subseteq \bigcup_{\ell(v') < \ell(v) + \ell(w)} Iv'I  & \text{if $\ell(vw) < \ell(v) + \ell(w)$}.
  \end{cases}
\end{equation}

\subsubsection{Duality\label{subsubsec:duality}}

Recall that, given a vector space $Y$, we denote by $Y^\vee$ the dual  space $Y^\vee:=\Hom_k(Y, k)$ of $Y$.
For $Y$ a vector space which decomposes into a direct sum $Y = \oplus_{w\in\widetilde W} Y_w$, we denote by $Y^{\vee, f}$ the so-called finite dual of $Y$ which is defined to be the image in $Y^\vee = \prod_{w\in \widetilde W } Y_w^\vee$ of $\oplus_{w\in \widetilde W } Y_w^\vee$.

In this  paragraph, we always \textbf{assume} that the pro-$p$ Iwahori group $I$ is torsion free. This forces the field $\mathfrak{F}$ to be a finite extension of $\mathbb{Q}_p$ with $p \geq 5$. Then $I$, as well as every subgroup $I_w$ for $w\in \widetilde W$, is a Poincar\'e group of dimension $d$ where $d$ is the dimension of $G$ as a $p$-adic Lie group. It implies that $H^d(I, k)$ is one-dimensional. Let $\eta: H^d(I, k)\cong k$ a fixed isomorphism (we will make a specific choice for $\eta$ when $G:={\rm SL}_2(\mathbb Q_p)$, $p\neq 2,3$ in \S\ref{subsubsec:normalize}). Furthermore the $\Ext$-algebra is
supported in degrees $0$ to  $d$.
We refer to \cite{Ext} \S 7.2.
There is a duality between its
$i^{\rm th}$ and $d-i^{\rm th}$ pieces (\cite{Ext} \S  7.2.4) which we recall here. Let $\trace\in \mathbf X^\vee$ be the linear map given by
$\trace:= \sum_{g\in G/I} \ev_g$.
It is easy to check that $\trace:\X\rightarrow k$ is $G$-equivariant  when $k$ is endowed with the trivial action of $G$. We denote by $\trace^i := H^i(I, \trace)$ the maps induced on cohomology.
By \cite{Ext} Prop. 7.18,  the map
\begin{align}\label{f:dual}
\Delta^i : E^i = H^i(I,\mathbf X) &\longrightarrow H^{d-i}(I, \X)^\vee = (E^{d-i})^\vee\nonumber \\
\alpha & \longmapsto  l_\alpha(\beta):=\eta\circ \trace^d (\alpha\cup \beta)   \nonumber
\end{align}
 induces an  injective homomorphism of $H$-bimodules
$
  E^i \longrightarrow ({}^\anti (E^{d-i})^\anti)^\vee\
$
with image $({}^\anti (E^{d-i})^\anti)^{\vee,f}$.
Here we consider (as in \S\ref{subsubsec:anti}) the  twisted $H$-bimodule ${}^\anti (E^{d-i})^\anti$  namely the space $E^{d-i}$
with the action of $H$ on $\beta \in E^{d-i}$ given by
$
  (\tau, \beta, \tau')\mapsto \anti(\tau')\cdot \beta  \cdot \anti(\tau)$  for $\tau, \tau'\in H$.
 The anti-involution $\anti$ was introduced in \S\ref{subsubsec:anti}.
We still denote by $\Delta^i$ the isomorphism of
\begin{equation}\label{f:dual}
\Delta^i : E^i \longrightarrow  ({}^\anti (E^{d-i})^\anti)^{\vee,f}\ .
\end{equation}

Recall that the choice of $\eta$ defines naturally a basis for $E^d$ namely, as in \cite{Ext} \S 8, we single out the unique element $\phi_w\in H^d(I, \X(w))$
 such that  (see also Rmk. 7.4 \emph{loc. cit.})\begin{equation}\label{defiphi}\eta\circ \trace^d(\phi_w)=\eta\circ \cores^{I_w}_I \circ \Sh_w(\phi_w)=1.\end{equation}

\subsubsection{Automorphisms of the pair $(G,\mathbf{X})$}\label{subsubsec:auto-pair}

For $U$ a locally compact and totally disconnected group let $\Mod(U)$ be the abelian category of smooth $U$-representations in $k$-vector spaces. It has enough injective objects.

We consider now a continuous group homomorphism $\xi : U' \rightarrow U$ between two such groups. Any object $M$ in $\Mod(U)$ can be viewed via $\xi$ as an object $\xi^* M$ in $\Mod(U')$. An equivariant map $f : M \rightarrow M'$ between an object $M$ in $\Mod(U)$ and an object $M'$ in $\Mod(U')$ is, by definition, a morphism $f : \xi^* M \rightarrow M'$ in $\Mod(U')$. In other words $f : M \rightarrow M'$ is a $k$-linear map such that $f(\xi(g')m) = g' f(m)$ for any $m \in M$ and $g' \in U'$. We observe the following: Let $M \xrightarrow{\sim} \mathcal{I}_M^\bullet$ and $M' \xrightarrow{\sim} \mathcal{I}_{M'}^\bullet$ be injective resolutions in $\Mod(U)$ and $\Mod(U')$, respectively. Then $\xi^* M \xrightarrow{\sim} \xi^* \mathcal{I}_M^\bullet$ is a resolution in $\Mod(U')$ and $f$ extends to a unique homotopy class of maps of resolutions $\xi^* \mathcal{I}_M^\bullet \xrightarrow{\tilde{f}} \mathcal{I}_{M'}^\bullet$ in $\Mod(U')$. This means that we may derive $f$ to a map between any appropriate cohomological functors on $\Mod(U)$ and $\Mod(U')$.

We will apply this in the following two contexts. First suppose that $U$ and $U'$ are profinite groups. Then $f$ extends to a map on cohomology
\begin{equation*}
  (\xi,f)^* : H^i(U,M) \longrightarrow H^i(U',M') \ .
\end{equation*}
Secondly, let $U$ and $U'$ be general again. For any further object $L$ in $\Mod(U)$ we obtain natural maps
\begin{align*}
  (\xi,f)^* : \Ext^i_{\Mod(U)} (L,M) & \longrightarrow \Ext^i_{\Mod(U')} (\xi^* L,M')   \\
    (\mathcal{I}_L^\bullet \rightarrow \mathcal{I}_M^\bullet[i]) & \longmapsto (\xi^* \mathcal{I}_L^\bullet \rightarrow \xi^* \mathcal{I}_M^\bullet[i] \xrightarrow{\tilde{f}[i]} \mathcal{I}_{M'}^\bullet[i])
\end{align*}
and, in particular,
\begin{equation*}
  \xi^* := (\xi,\id_M)^* : \Ext^i_{\Mod(U)} (L,M)  \longrightarrow \Ext^i_{\Mod(U')} (\xi^* L, \xi^* M) \ .
\end{equation*}
The latter map is evidently compatible with the Yoneda product, since in the derived category it is simply the composition product. Now suppose that $\xi$ and $f$ are isomorphisms. Then we have the ``conjugation'' homomorphism
\begin{align*}
  \Ext^i_{\Mod(U)} (M,M) & \longrightarrow \Ext^i_{\Mod(U')} (M',M')   \\
    (\mathcal{I}_M^\bullet \xrightarrow{\tau} \mathcal{I}_M^\bullet[i]) & \longmapsto (\mathcal{I}_{M'}^\bullet \xrightarrow{\widetilde{f^{-1}}} \xi^* \mathcal{I}_M^\bullet[i]  \xrightarrow{\xi^* \tau} \xi^* \mathcal{I}_M^\bullet[i] \xrightarrow{\tilde{f}[i]} \mathcal{I}_{M'}^\bullet[i]) \ ,
\end{align*}
which again is compatible with the Yoneda product.

We now return to our group $G$ and suppose given an automorphism $\xi : G \xrightarrow{\cong} G$ with the property that $\xi(I) = I$. It induces the $G$-equivariant bijection $\mathcal{X} : \xi^* \mathbf{X} \xrightarrow{\cong} \mathbf{X}$ which sends $gI$ to $\xi^{-1}(g)I$. We therefore obtain the $k$-linear graded bijections
\begin{equation*}
  \Gamma_\xi : E^* \xrightarrow{\cong} E^*   \qquad\text{and}\qquad   \Gamma_{\xi} : H^*(I,\mathbf{X}) \xrightarrow{\cong} H^*(I,\mathbf{X})
\end{equation*}
which correspond to each other under the identification \eqref{f:frob}. The left hand one is an algebra automorphism. Both are involutions provided we have $\xi^2 = \id_G$. In terms of elements of $\mathbf{X}$ as functions we have $\mathcal{X}(f) = f \circ \xi$. This immediately implies that $\Gamma_\xi$ is compatible with the cup product \eqref{f:cup} on $H^*(I,\mathbf{X})$. In the following we list further properties, but for which we assume in addition that $\xi(T) = T$. Then $\xi(N(T)) = N(T)$, so that $\xi$ induces an automorphism $\xi$ of $\widetilde{W}$.
\begin{enumerate}
\item  For all $w\in \widetilde W$, $\Gamma_\xi$ induces a map
\begin{align}\label{gammaxiw}
      H^*(I, \X(w)) \longrightarrow H^*(I, \X(\xi^{-1}(w))) \ .
\end{align}
Since $\xi(I_w) = I_{\xi(w)}$ we correspondingly have a map
\begin{align} \label{resgammaxiw}
     H^*(I_w, \X(w)) \longrightarrow H^*(I_{\xi^{-1}(w)}, \X(\xi^{-1}(w))) \ .
\end{align}

\item Because $\ev_{\xi^{-1}(w)} \circ \mathcal{X}_{|\mathbf{X}(w)} = \ev_w$, the above maps are compatible with the Shapiro isomorphism in the sense that the following diagram
\begin{equation}
  \xymatrix{
H^*(I,\mathbf{X}(w)) \ar@/^2pc/[rrrrr]^{\Sh_w}\ar[d]_{\eqref{gammaxiw}} \ar[rr]^{\; \res^I_{I_w} \;} && H^*(I_w,\mathbf{X}(w))\ar[d] _{\eqref{resgammaxiw}}\ar[rrr]^{\; H^*(I_w,\ev_w) \;}  &&&  H^*(I_w,k)\ar[d] & \\
H^*(I,\mathbf{X}(\xi^{-1}(w)))\ar@/_2pc/[rrrrr]_-{\Sh_{\xi^{-1}(w)}} \ar[rr]^-{\; \res^I_{I_{\xi^{-1}(w)}} \;} && H^*(I_{\xi^{-1}(w)},\mathbf{X}(\xi^{-1}(w)))\ar[rrr]^-{\; H^*(I_{\xi^{-1}(w)},\ev_{\xi^{-1}(w)}) \;}  &&&  H^*(I_{\xi^{-1}(w)},k)&   }
\label{f:xi-shapi}
\end{equation}
commutes. Its horizontal arrows are the Shapiro isomorphisms \eqref{f:Shapiro1} and
the right hand side vertical arrow is induced by the isomorphism $I_{\xi^{-1}(w)} \xrightarrow[\cong]{\xi} I_w$.

\item $\Gamma_\xi$ commutes with $\anti$ defined in \eqref{f:invodefi}; more precisely, each diagram
\begin{equation}
  \xymatrix{
     H^*(I, \mathbf{X}(w))  \ar[d]_{ \eqref{gammaxiw}} \ar[r]^-{\anti} & H^*(I,\mathbf{X}(w^{-1})) \ar[d]^{\eqref{gammaxiw}} \\
  H^*(I, \mathbf{X}(\xi^{-1}(w)))  \ar[r]^-{\anti} & H^*(I,\mathbf{X}(\xi^{-1}(w)^{-1}))   }\label{f:anti+Gamma}
\end{equation}
is commutative.

\item We have noted already the compatibility of $\Gamma_\xi$ with the cup product on $H^*(I,\X)$. It now holds in the more precise form of the commutativity of the diagrams
\begin{equation}
  \xymatrix{
     H^i(I, \mathbf{X}(w)) \otimes_k H^j(I,\mathbf{X}(w)) \ar[d]_{\eqref{gammaxiw}  \otimes \eqref{gammaxiw} } \ar[r]^-{\cup} & H^{i+j}(I,\mathbf{X}(w)) \ar[d]^{\eqref{gammaxiw}} \\
  H^i(I, \mathbf{X}(\xi^{-1}(w))) \otimes_k H^j(I,\mathbf{X}(\xi^{-1}(w))) \ar[r]^-{\cup} & H^{i+j}(I,\mathbf{X}(\xi^{-1}(w))) .  }\label{f:cup+Gamma}
\end{equation}
\end{enumerate}

\subsection{The pro-$p$-Iwahori Hecke algebra of ${\rm SL}_2$}

For \S\ref{rootdatum}--\ref{subsubsec:idempo} we refer to \cite{embed} \S 3.

\subsubsection{\label{rootdatum}Root datum}

To fix ideas we consider
 $I = \left(
\begin{smallmatrix}
1+\mathfrak{M} & \mathfrak{O} \\ \mathfrak{M} & 1+\mathfrak{M}
\end{smallmatrix}
\right)$ (by abuse of notation, here and later in this paragraph, all matrices are understood to have determinant one). We let $T \subseteq G$ be the torus of diagonal matrices, $T^0$ its maximal compact subgroup, $T^1$ its maximal pro-$p$ subgroup, and $N(T)$ the normalizer of $T$ in $G$. We choose the positive root with respect to $T$ to be $\alpha( \left(
\begin{smallmatrix}
t & 0 \\ 0 & t^{-1}
\end{smallmatrix}
\right) ) := t^2$, which corresponds to the Borel subgroup of upper triangular matrices. The affine Weyl group $W$ sits in the short exact sequence
\begin{equation*}
  0 \longrightarrow \Omega = T^0 / T^1 \longrightarrow \widetilde{W} := N(T)/T^1 \longrightarrow W := N(T)/T^0 \longrightarrow 0 \ .
\end{equation*}
Let $s_0 := s_\alpha := \left(
\begin{smallmatrix}
0 & 1 \\ -1 & 0
\end{smallmatrix}
\right)$, $s_1 := \left(
\begin{smallmatrix}
0 & -\pi^{-1} \\ \pi & 0
\end{smallmatrix}
\right)$, and $\theta := \left(
\begin{smallmatrix}
\pi & 0 \\ 0 & \pi^{-1}
\end{smallmatrix}
\right)$, such that $s_0 s_1 = \theta$. The images of $s_0$ and $s_1$ in $W$ are the two reflections corresponding to the two vertices of the standard edge fixed by $I$ in the tree of $G$. They generate $W$, i.e., we have $W = \langle s_0,s_1 \rangle = \theta^{\mathbb{Z}} \dot\cup s_0 \theta^{\mathbb{Z}}$ (by abuse of notation we do not distinguish in the notation between a matrix and its image in $W$ or $\widetilde{W}$). We let $\ell$ denote the length function on $W$ corresponding to these generators as well as its pull-back to $\widetilde{W}$. One has
\begin{equation*}
  \ell(\theta^i) = |2i| \qquad\text{and}\qquad \ell(s_0\theta^i) = |1 - 2i|.
\end{equation*}

\begin{remark}\label{remark:N}
Consider $\mathbf{SL_2}(\mathfrak F)$ as a subgroup of $\mathbf{GL_2}(\mathfrak F)$. Then
the matrix $\varpi:=
\left(\begin{smallmatrix}
       0 & 1 \\
       \pi & 0
\end{smallmatrix}\right)$ normalizes $I$; furthermore, $s_1=\varpi  s_0  \varpi^{-1}.$
\end{remark}

\subsubsection{\label{gene-rel}Generators and relations}
The characteristic functions $\tau_w := \mathrm{char}_{IwI}$ of the double cosets $IwI$ form a $k$-basis of $H$ when $w$ ranges over $\widetilde W$.
Let $e_1 := - \sum_{\omega \in \Omega} \tau_\omega$. The relations in $H$ are
\begin{equation}\label{f:braid}
\tau_v\tau _w=\tau_{vw} \qquad\textrm{whenever}\ \ell(w)+\ell(v)=\ell(wv)\quad\textrm{ and}
\end{equation}
\begin{equation}
\label{f:quad} \tau_{s_i}^2= -e_1\tau_{s_i} \qquad\textrm{for $i=0,1$.}
\end{equation}The elements $\tau_\omega, \tau_{s_i}$, for $\omega \in \Omega$ and $i = 0,1$, generate $H$ as a $k$-algebra.  Note that the $k$-algebra $k[\Omega]$ identifies naturally with a subalgebra of  $H$ via $\omega\mapsto \tau_\omega$.

The trivial character of $H$ (see \eqref{defitriv}) may be defined by
\begin{equation}\label{trivSL2}
    \chi_{triv}: \tau_{s}\longmapsto 0, \: \tau_{\omega} \longmapsto 1, \textrm{ for $s\in\{s_0, s_1\}$  and $\omega \in \Omega$.}
\end{equation}
The sign character $\chi_{sign}$ of $H$, which can be introduced in general as in \cite{Ext} \S 2.2.2, is easy to describe in the current context when $\mathbf G={\rm SL}_2$:
\begin{equation}\label{signSL2}
    \chi_{sign}: \tau_{s}\longmapsto -1, \: \tau_{\omega} \longmapsto 1, \textrm{ for $s\in\{s_0, s_1\}$  and $\omega \in \Omega$.}
\end{equation}

\subsubsection{The involution $\upiota$\label{subsubsec:involution}}

There is an  involutive automorphism $\upiota$  of $H$ satisfying
\begin{equation}\label{f:upiotadef}
\upiota(\tau_s)= -e_1 -\tau_{s}\textrm{ for $s\in\{s_0, s_1\}$ and $\upiota(\tau_\omega)=\tau_\omega$ for $\omega\in \Omega$.}
\end{equation}
(see  \cite{OS1} \S 4.8). It fixes $\zeta$. In particular, it induces an involutive automorphism  of $H_\zeta$.
For $\epsilon=0,1$, the following  sequence  of left $H$-modules is exact:
\begin{equation}\label{f:basic-es}
0 \longrightarrow H \tau_{s_\epsilon}\longrightarrow H\longrightarrow H \upiota(\tau_{s_\epsilon})\longrightarrow 0\end{equation}(see the remark after the proof of \cite{embed} Prop. 3.54).\\\\
For a left (resp. right) $H$-module $M$, we denote by $\upiota M$ (resp. $M\upiota$) the $H$-module on the space $M$ with the action of $H$ twisted by $\upiota$.

\subsubsection{The central element $\zeta$}\label{sec:zeta}

We refer to \cite{embed} \S3.2.2. Consider the element
\begin{equation}\label{f:zetadefi}
   \zeta := (\tau_{s_0} + e_1)(\tau_{s_1} + e_1) + \tau_{s_1}\tau_{s_0}=(\tau_{s_1} + e_1)(\tau_{s_0} + e_1) + \tau_{s_0}\tau_{s_1}.
\end{equation}
Notice that $\anti(\zeta) = \zeta$ and that $\chi_{triv}(\zeta)=\chi_{sign}(\zeta)=1$. The element $\zeta$ is central in $H$, and the subalgebra $k[\zeta]$ of $H$ generated by $\zeta$ is the algebra of polynomials in the variable $\zeta$. Furthermore,  $\zeta$ is not a zero divisor in $H$ and  the $k$-algebra $H/H\zeta$ is finite dimensional  (see for example \cite{embed} Lemma 1.3). We will denote by $H_\zeta$ the algebra obtained by localizing  $H$ in $\zeta$. The anti-involution $\anti$ extends to $H_\zeta$.

 The following result is \cite{embed} Cor. 3.4.

\begin{lemma}\label{freeness}
Let $\epsilon = 0$ or $1$; the morphism of  $(H_{x_\epsilon},k[\zeta])$-bimodules
\begin{equation*}
\begin{array}{ccccc}
H_{x_\epsilon} \otimes_k k[\zeta] & \oplus & H_{x_\epsilon} \otimes_k k[\zeta] & \longrightarrow & H \cr 1 \otimes 1 &&& \longmapsto & 1\cr & & 1\otimes 1 & \longmapsto & \tau_{s_{1-\epsilon}}
\end{array} \end{equation*}is an isomorphism.
In particular,  $H$ is a  free and finitely generated  $k[\zeta]$-module  of rank
$4(q-1)$.
\end{lemma}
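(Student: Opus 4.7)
The plan is to prove this map is an isomorphism by analyzing it through the length filtration $F_\bullet H$ on $H$, with a compatible filtration on the domain in which $\zeta$ has degree $2$ and elements of $H_{x_\epsilon}$ inherit their length from $H$. Well-definedness as a bimodule morphism is automatic: $\zeta$ is central in $H$ (\S\ref{sec:zeta}), and the reduced products $\tau_\omega\tau_{s_{1-\epsilon}}=\tau_{\omega s_{1-\epsilon}}$ and $\tau_{\omega s_\epsilon}\tau_{s_{1-\epsilon}}=\tau_{\omega s_\epsilon s_{1-\epsilon}}$ (by \eqref{f:braid}) identify the $k$-vector space underlying the domain with
\[
\bigoplus_{\omega\in\Omega,\,n\geq 0} k\bigl(\tau_\omega\zeta^n\oplus\tau_{\omega s_\epsilon}\zeta^n\oplus\tau_{\omega s_{1-\epsilon}}\zeta^n\oplus\tau_{\omega s_\epsilon s_{1-\epsilon}}\zeta^n\bigr).
\]

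The core technical input is to determine the leading term of $\zeta^n$. Expanding \eqref{f:zetadefi}, noting $e_1\in F_0H$, and using $\tau_{s_0}\tau_{s_1}=\tau_\theta$ together with $\tau_{s_1}\tau_{s_0}=\tau_{\theta^{-1}}$, one obtains $\zeta\equiv\tau_\theta+\tau_{\theta^{-1}}\pmod{F_1H}$. Since $\ell(\theta\cdot\theta^{-1})=0<4$, the formula \eqref{f:prod-fil} forces $\tau_\theta\tau_{\theta^{-1}}$ and $\tau_{\theta^{-1}}\tau_\theta$ into $F_3H$, and induction on $n$ then yields $\zeta^n\equiv\tau_{\theta^n}+\tau_{\theta^{-n}}\pmod{F_{2n-1}H}$. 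Armed with this, I would next read off the leading terms of the four ``streams'' above, sorting reduced from non-reduced products via $\ell(\theta^i)=2|i|$ and $\ell(s_0\theta^i)=|1-2i|$. Taking $\epsilon=0$ for concreteness (the case $\epsilon=1$ is symmetric), a short case analysis yields: at each odd length $N=2m+1$, the streams $\tau_{\omega s_0}\zeta^m$ and $\tau_{\omega s_1}\zeta^m$ produce the two distinct monomial leading terms $\tau_{\omega s_0\theta^{-m}}$ and $\tau_{\omega s_0\theta^{m+1}}$, which as $\omega$ varies exhaust the length-$N$ elements of $\widetilde W$; at each even length $N=2m\geq 2$, the streams $\tau_\omega\zeta^m$ and $\tau_{\omega\theta}\zeta^{m-1}$ produce $\tau_{\omega\theta^m}+\tau_{\omega\theta^{-m}}$ and $\tau_{\omega\theta^m}$ respectively, which are linearly independent and together span all $\tau_{\omega\theta^{\pm m}}$ in $F_NH/F_{N-1}H$. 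Triangularity with respect to $F_\bullet$ then yields both injectivity and surjectivity, and the ``In particular'' clause follows since the four streams supply $4|\Omega|=4(q-1)$ free $k[\zeta]$-generators.

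The main obstacle is precisely this even-length step. Because $\zeta^n$ has \emph{two} equal-length leading terms $\tau_{\theta^n}$ and $\tau_{\theta^{-n}}$ rather than one, no image of a single pure tensor $h\otimes\zeta^n$ has a monomial leading term in $F_\bullet H$, and a straightforward triangular argument on a single monomial basis of the domain cannot apply. One must instead pair contributions from \emph{both} summands of the domain at every even filtration degree: the ``extra'' factor $\tau_{s_\epsilon}\tau_{s_{1-\epsilon}}=\tau_\theta$ (or $\tau_{\theta^{-1}}$ when $\epsilon=1$) is exactly what breaks the $\theta^n\leftrightarrow\theta^{-n}$ symmetry of $\zeta^n$ and completes the basis.
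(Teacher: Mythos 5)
The paper does not prove this lemma itself: it cites \cite{embed} Cor.~3.4, so there is no in-house proof to compare against. Your argument is correct as it stands. The computation $\zeta\equiv\tau_\theta+\tau_{\theta^{-1}}\pmod{F_1 H}$ from \eqref{f:zetadefi} and \eqref{f:braid} is right, and the inductive step $\zeta^n\equiv\tau_{\theta^n}+\tau_{\theta^{-n}}\pmod{F_{2n-1}H}$ follows from \eqref{f:prod-fil} exactly as you say, since $\ell(\theta^{a}\theta^{-b})<\ell(\theta^a)+\ell(\theta^b)$ whenever $a,b\geq 1$. Your case analysis of the leading terms of the four streams is accurate: with $\epsilon=0$, $\tau_{\omega s_0}\zeta^m$ and $\tau_{\omega s_1}\zeta^m$ give the two $\Omega$-orbits of monomials in length $2m+1$, while at length $2m$ the pair $\tau_\omega\zeta^m\mapsto\tau_{\omega\theta^m}+\tau_{\omega\theta^{-m}}$ and $\tau_{\omega\theta}\zeta^{m-1}\mapsto\tau_{\omega\theta^m}$ together span $\gr^{2m}H$ even though neither image alone is monomial. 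Since the source and target filtrations are exhaustive and bounded below, bijectivity on associated graded pieces does imply bijectivity, so the final "triangularity" step is fine, if stated tersely. Two cosmetic points you might tighten for a write-up: state explicitly that the filtration on the second summand is shifted by one (the factor $\tau_{s_{1-\epsilon}}$ contributes length $1$), and note $s_0\theta^{m+1}$ and $s_1\theta^m$ agree only up to $\omega_{-1}\in\Omega$, which is harmless since you quantify over all $\omega\in\Omega$.
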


\begin{fact} \label{fact:noclassif}
Suppose that  $\mathbb F_q\subseteq k$ and that $p\neq 2$ or $\mathfrak{F} = \mathbb{Q}_p$.  Then for $V$ an  irreducible quotient of $ \X e_1/ \X e_1(\zeta-1)$ we have
$V^I\cong \chi_{triv}$ or
 $V^I\cong  \chi_{sign}$ as a left $H$-module.

\end{fact}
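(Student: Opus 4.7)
The plan is to exploit the tensor--Hom adjunction $\ften \dashv \fhom$ and reduce the question to studying simple quotients of the left $H$-module $He_1/He_1(\zeta-1)$. Since $\zeta \in H$ is central and $e_1 \in H$ is an idempotent, we have natural identifications
$$ \X e_1 \;\cong\; \X \otimes_H He_1 \qquad\text{and}\qquad \X e_1/\X e_1(\zeta-1) \;\cong\; \ften\bigl(He_1/He_1(\zeta-1)\bigr). $$
A $G$-equivariant surjection onto an irreducible quotient $V$ then corresponds, by adjunction, to a non-zero $H$-module homomorphism $\phi : He_1/He_1(\zeta-1) \to V^I$; its image is a non-zero $H$-submodule of $V^I$ which is a quotient of $He_1/He_1(\zeta-1)$. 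By Lemma~\ref{freeness}, $H$ is free of finite rank over $k[\zeta]$, so $He_1$ is a direct summand of $H$ as a $k[\zeta]$-module and hence finitely generated over $k[\zeta]$; consequently $He_1/He_1(\zeta-1)$ is finite-dimensional over $k$, and so the image of $\phi$ has a simple $H$-submodule $S$ that is itself a simple quotient of $He_1/He_1(\zeta-1)$.

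Next, I would identify the possible simple modules $S$: they are simple $H$-modules on which $\zeta - 1$ acts as zero and with $e_1 \cdot S \neq 0$. For any one-dimensional character $\chi$ of $H$, one has $\chi(e_1) = -\sum_{\omega \in \Omega} \chi(\omega)$, which vanishes unless $\chi|_\Omega$ is trivial; in that case $\chi$ must be $\chi_{triv}$ or $\chi_{sign}$ by \eqref{trivSL2}--\eqref{signSL2}, and both satisfy $\chi(\zeta) = 1$ by direct computation from \eqref{f:zetadefi}. The harder step is to exclude simple $H$-modules of dimension $\geq 2$ with the same two properties; this is where I would invoke the classification of simple modules of the pro-$p$ Iwahori Hecke algebra of $\mathrm{SL}_2(\mathfrak F)$ established in \cite{embed}, which under the hypothesis $p \neq 2$ or $\mathfrak F = \mathbb Q_p$ shows that the non-supersingular simple $H$-modules in the trivial $\Omega$-block with $\zeta$ acting as $1$ are precisely these two characters.

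Finally, to conclude $V^I \cong \chi_{triv}$ or $V^I \cong \chi_{sign}$ (and not merely that $V^I$ contains one of these), I would either use that $V^I$ is irreducible as an $H$-module for $V$ irreducible in $\Mod(G)$ under the given hypotheses (a result also available in \cite{embed}), or argue directly that $V$ must be an irreducible $G$-quotient of $\ften(\chi_{triv})$ or $\ften(\chi_{sign})$, whose known internal structure forces $V$ to be the corresponding trivial or Steinberg-type constituent with one-dimensional $I$-invariants. The principal obstacle in this plan is the classification step in the second paragraph: it is exactly there that the assumption on $(p,\mathfrak F)$ plays its role, and it is what ties the argument to the specific setting of $\mathrm{SL}_2$.
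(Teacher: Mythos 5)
Your adjunction-based plan is dual to the paper's, which explicitly computes a composition series of $He_1/He_1(\zeta-1)$ and pushes it forward through the exact functor $\X\otimes_H{}_-$. Both routes hinge on the same input, which is exactly where your proposal has a genuine gap: you defer to a ``classification of simple modules of the pro-$p$ Iwahori Hecke algebra of $\mathrm{SL}_2(\mathfrak F)$ established in \cite{embed},'' and no such off-the-shelf result is available there in the form you need. \cite{embed} Lemma~3.8 classifies simple \emph{supersingular} modules (they are characters); a simple module on which $\zeta$ acts by $1$ is not supersingular, so that result is beside the point. Nor does a dimension count help: a hypothetical $2$-dimensional simple together with $\chi_{triv}$ and $\chi_{sign}$ would give composition factors of total dimension $2+1+1=4=\dim_k He_1/He_1(\zeta-1)$, so higher-dimensional simples are not excluded on general grounds. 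What actually fills the gap is the explicit computation the paper carries out using the $k[\zeta]$-basis from Lemma~\ref{freeness}, which produces the exact sequence \eqref{f:exacttrivsign}
\begin{equation*}
 0 \longrightarrow \chi_{triv}\oplus\chi_{sign} \longrightarrow He_1/He_1(\zeta-1) \longrightarrow \chi_{triv}\oplus\chi_{sign} \longrightarrow 0 \ ,
\end{equation*}
showing that every simple subquotient of $He_1/He_1(\zeta-1)$ is $\chi_{triv}$ or $\chi_{sign}$. (A minor slip along the way: the simple submodule $S$ of $\im\phi\subseteq V^I$ is a \emph{subquotient} of $He_1/He_1(\zeta-1)$, not a quotient of it; what matters is that it is a composition factor.)

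One further correction of emphasis: the hypothesis ``$p\neq 2$ or $\mathfrak F=\mathbb Q_p$'' plays no role in the determination of the composition factors, which is pure algebra inside $H$. It is needed for \cite{embed} Thm.~3.33, which gives the exactness of $\X\otimes_H{}_-$ on the sequence above, the irreducibility of $\X\otimes_H\chi$ for $\chi\in\{\chi_{triv},\chi_{sign}\}$, and the identification $(\X\otimes_H\chi)^I\cong\chi$. This is the step you gesture at in your final paragraph, and it is what upgrades ``$V^I$ contains $\chi$'' to ``$V^I\cong\chi$''; your attribution of where the hypothesis on $(p,\mathfrak F)$ is used should be moved there.
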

\begin{proof}
A basis of $He_1/ He_1(\zeta-1)$ is given by
the image in the quotient of
$$
\upiota (\tau_{s_0})\tau_ {s_1}e_1, \:\tau_{s_0}\upiota (\tau_{s_1})e_1,  \: \upiota (\tau_{s_0})e_1,  \: \tau_{s_0} e_1
$$
(compare with  Lemma \ref{freeness}).
The elements $\upiota (\tau_{s_0})\tau_ {s_1}e_1$  and $\tau_{s_0}\upiota (\tau_{s_1})e_1$ support respectively the characters $\chi_{triv}$  and $\chi_{sign}$.
This follows from using repeatedly $\upiota (\tau_{s_0})\tau_{s_1}e_1+\upiota (\tau_{s_1})\tau_{s_0}e_1= (-\zeta+1) e_1\equiv 0$ in $He_1/ He_1(\zeta-1)$ and likewise
$\tau_{s_0}\upiota (\tau_{s_1})e_1+\tau_{s_1}\upiota (\tau_{s_0})e_1\equiv 0$ in $He_1/ He_1(\zeta-1)$). Then it is easy to see that in the resulting quotient, $\upiota (\tau_{s_0})e_1$ and $\tau_{s_0} e_1$ support respectively the characters $\chi_{triv}$  and $\chi_{sign}$.
So we have an exact sequence of left $H$-modules
\begin{equation}0\rightarrow \chi_{triv}\oplus\chi_{sign}\rightarrow He_1/ He_1(\zeta-1)\rightarrow  \chi_{triv}\oplus\chi_{sign}\rightarrow 0\ .\label{f:exacttrivsign}\end{equation}

All the modules in question are annihilated by $\zeta-1$ so they are $H_\zeta$-modules. Suppose furthermore that $\mathbb F_q\subseteq k$ and that $p\neq 2$. We may apply \cite{embed} Thm. 3.33  which ensures that the functor $\X\otimes_H{}_-$ is exact on \eqref{f:exacttrivsign}, provides an exact sequence of $G$ representations
\begin{equation*}
0 \rightarrow \X\otimes_H \chi_{triv} \oplus \X\otimes_H \chi_{sign} \rightarrow \X e_1/ \X e_1(\zeta-1) \rightarrow  \X\otimes_H \chi_{triv}\oplus\X\otimes_H \chi_{sign} \rightarrow 0 \end{equation*}
and that for $\chi\in\{ \chi_{sign},  \chi_{triv}\}$ we have
$(\X\otimes_H \chi)^I\cong \chi$  and therefore $\X\otimes_H \chi$ is an irreducible representation of $G$. Therefore any irreducible quotient of $\X e_1/ \X e_1(\zeta-1)$ is isomorphic to  $\X\otimes_H \chi_{triv}$ or $\X\otimes_H \chi_{sign}$.
\end{proof}

\begin{remark}
After localizing  \eqref{f:basic-es} in $\zeta$ we  get an exact sequence of left $H_\zeta$-modules
\begin{equation}\label{f:basic-esloc}
0\longrightarrow H_\zeta \tau_{s_\epsilon}\longrightarrow H_\zeta\longrightarrow H _\zeta \upiota(\tau_{s_\epsilon})\longrightarrow 0\ .\end{equation} Notice that the map
$h\mapsto \zeta^{-1}h\tau_{s_{1-\epsilon}}\tau_{s_\epsilon}$ splits the inclusion $H_\zeta \tau_{s_\epsilon}\longrightarrow H_\zeta$ because  $\zeta \tau_{s_\epsilon}=\tau_{s_\epsilon}\tau_{s_{1-\epsilon}}\tau_{s_\epsilon}$ (compare with the proof of \cite{embed}  Lemma 3.30).
So we have $H_\zeta\cong H_\zeta \tau_{s_\epsilon}\oplus
H _\zeta \upiota(\tau_{s_\epsilon})$  as left $H_\zeta$-modules.

\end{remark}
\begin{remark} The element $\zeta$ depends on the choice of the uniformizer $\pi$. Let $\alpha\in\mathfrak O^\times$. We verify  that if we pick $\alpha\pi$ as a  uniformizer, the  new corresponding central element $\zeta_\alpha$ is
\begin{equation}\label{f:zetaalpha}
   \zeta_\alpha :=\tau_{\omega_{\alpha^{-1}}} (\tau_{s_0} + e_1)(\tau_{s_1} + e_1) +\tau_{\omega_{\alpha}}  \tau_{s_1}\tau_{s_0}=\tau_{\omega_{\alpha}}  (\tau_{s_1} + e_1)(\tau_{s_0} + e_1) + \tau_{\omega_{\alpha^{-1}}}\tau_{s_0}\tau_{s_1}.
\end{equation} where
$\omega_{\alpha}$  is the element $\left(
\begin{smallmatrix}
  \alpha^{-1} & 0 \\
  0 & \alpha
\end{smallmatrix}
\right) T^1\in\Omega$.
Of course we have $\zeta=\zeta_1$.
A system of generators of the center $Z$ of $H$ as a $k$-vector space  is given by the set of all $\zeta_\alpha$ for $\alpha$ ranging over a system if representatives of $(\mathfrak O/\mathfrak M )^\times$ (to which one has to add $\tau_1$ if $p=2$) (see \cite{embed} (24) in Remark 3.5).

 We have the formula:
$
\zeta_\alpha\,\zeta_\beta=\zeta_{\alpha\beta}\,\zeta
$ for any $\alpha,\beta\in\mathfrak O^\times$.  In particular
\begin{equation}\label{f:zetaa}
\zeta_\alpha\,\zeta_{\alpha^{-1}}=\zeta^2\text{ and }\quad \zeta_{\alpha^2}\,\zeta=\zeta_{\alpha}^2 \ .
\end{equation} These identities ensure that the localized algebra $H_\zeta$ does not depend on the choice of the uniformizer.
\end{remark}

\subsubsection{Supersingularity\label{subsubsec:supersing}}

In the current context where $\mathbf G={\rm SL}_2$, the  ideal $\mathfrak J$ introduced in \S\ref{subsec:basicshecke} is the central ideal $\zeta k[\zeta]$.  Following the definition introduced in that paragraph, an $H$-module $M$ is called supersingular  if any element in $M$ is annihilated by a power of $\zeta$.

\begin{remark} \label{rema:indeppi}  Let $\alpha\in\mathfrak O^\times$. From \eqref{f:zetaa}, one easily deduces that an element in $M$ is annihilated by a power of $\zeta$ if and only if it is annihilated by a power of $\zeta_\alpha$. Therefore,  even if $\zeta$ does depend on the choice of a uniformizer, the notion of supersingularity does not.

\end{remark}

\subsubsection{Idempotents}\label{subsubsec:idempo}

The element  $e_1$ is a central idempotent in $H$. More generally, to any $k$-character $\lambda : \Omega \rightarrow k^\times$ of $\Omega$, we  associate the following idempotent in $H$:
\begin{equation}\label{defel}
\ e_\lambda := - \sum_{\omega \in \Omega} \lambda(\omega^{-1}) \tau_\omega \ .
\end{equation}
Note that $\anti(e_\lambda) = e_{\lambda^{-1}}$ and $e_\lambda\tau_\omega = \tau_\omega e_\lambda=\lambda(\omega)e_\lambda$ for any $\omega\in \Omega$.
We parameterize $\Omega$ by the isomorphism
\begin{align}
  (\mathfrak{O}/\mathfrak{M})^\times & \xrightarrow{\;\cong\;} \Omega \cr
  u & \longmapsto \omega_u := \left(
\begin{smallmatrix}
  [u]^{-1} & 0 \\
  0 & [u]
\end{smallmatrix}
\right) T^1 \ ,\label{omegaz}\end{align}
where $[u]$ is a lift   in $\mathfrak O$ for $u\in  (\mathfrak{O}/\mathfrak{M})^\times$, and we pick the multiplicative Teichm\"uller lift.

\begin{remark}\label{tll}
Given  an homomorphism of groups $\Lambda:  (\mathfrak{O}/\mathfrak{M})^\times\rightarrow k^\times$, we may consider the character $\lambda:{\Omega}\rightarrow k^\times$ obtained via composition with the inverse if \eqref{omegaz} and the corresponding
idempotent as in \eqref{defel}. We will then use the shortcut $e_\Lambda$ to denote the latter. This will be used in the following context:

If $q = p$ we have the homomorphism $\id : (\mathfrak O/\mathfrak M)^\times = \mathbb{F}_p^\times \xrightarrow{\subseteq} k^\times$, which will play an important role later on. For $m\in \mathbb Z$, we will consider the idempotent element
\begin{equation}\label{f:idm}
e_{\id^m}\in k[\Omega]
\end{equation}
with the above convention. When $m=0$ this is  consistent with the notation $e_1$ in \S\ref{gene-rel}.
\end{remark}

Suppose  for a moment that  $\mathbb{F}_q \subseteq k$.
Then all simple modules of $k[\Omega]$ are one dimensional.  The set $\widehat{\Omega}$ of all $k$-characters of $\Omega$ has cardinality $q-1$ which is prime to $p$. This implies that the family  $\{e_\lambda\}_\lambda\in \widehat\Omega$ is a family of orthogonal idempotents with sum equal to $1$. It  gives the  ring decomposition $  k[\Omega] = \prod_{\lambda \in \widehat{\Omega}} k e_\lambda .$
Let $\Gamma := \{ \{\lambda,\lambda^{-1}\} : \lambda \in \widehat{\Omega} \}$ denote the set of $s_0$-orbits in $\widehat{\Omega}$. To
$\gamma\in \Gamma$ we attach the element $e_\gamma:=e_\lambda+e_{\lambda^{-1}}$ (resp.  $e_\gamma:=e_\lambda$) if $\gamma=\{\lambda,\lambda^{-1}\}$ with $\lambda\neq\lambda^{-1}$ (resp. $\gamma=\{\lambda\}$). Using the braid relations, one sees that $e_\gamma$ is a central idempotent in $H$ and we have the ring  decomposition
$H = \prod_{\gamma \in \Gamma} H e_\gamma$. If $q=p$ then the idempotent
\begin{equation}\label{f:gamma0}
e_{\gamma_0} := e_{\id}+e_{\id^{-1}}
\end{equation}
will be of particular importance (see \eqref{f:idm}).

\subsubsection{Certain $H$-modules\label{subsubsec:certain}}

For later purposes we construct in this section certain families of $H$-modules. The reader may skip this at first reading coming back to it only when needed.
We fix a homomorphism of $k$-algebras $\kappa : H \rightarrow R$ as well as an element $z \in Z(R)$ in the center of $R$. Let $M_2(R)$ denote, as usual, the algebra of $2$ by $2$ matrices over $R$. We also fix a character $\mu : \Omega \rightarrow k^\times$. With these choices we define the matrices
\begin{equation*}
  M_0 :=
  \left(
  \begin{smallmatrix}
  - \kappa(e_\mu) & 0 \\
  z \kappa(\tau_{s_1}) & 0
  \end{smallmatrix}\right) \ , \
  M_1 :=
  \left(
  \begin{smallmatrix}
  0 & z \kappa(\tau_{s_0}) \\
  0 & - \kappa(e_{\mu^{-1}})
  \end{smallmatrix}\right) \ , \ \text{and} \
  M_\omega :=
  \left(
  \begin{smallmatrix}
  \mu^{-1}(\omega) \kappa(\tau_\omega) & 0 \\
  0 & \mu(\omega) \kappa(\tau_\omega)
  \end{smallmatrix}\right) \
  \text{for $\omega \in \Omega$}.
\end{equation*}
It is straightforward to check that these matrices satisfy the relations
\begin{equation*}
  M_i^2 = \sum_{\omega \in \Omega} M_\omega M_i \ , \ M_\omega M_i = M_i M_{\omega^{-1}} \ , \ \text{and} \ M_\omega M_{\omega'} = M_{\omega \omega'} \ .
\end{equation*}
Hence we obtain a $k$-algebra homomorphism $\kappa_2 : H \rightarrow M_2(R)$ by sending $\tau_{s_i}$ to $M_i$ and $\tau_\omega$ to $M_\omega$. By using this homomorphism to equip the left $R$-module $R \oplus R$ with a right $H$-module structure we obtain an $(R,H)$-bimodule denoted by $(R \oplus R)[\kappa, z, \mu]$.

\subsubsection{Frobenius extensions}

The space $\Hom_{k[\zeta]}(H, k[\zeta])$ is naturally an $H$-bimodule via $(h,\Lambda, h')\mapsto \Lambda(h'_{-} h)$.
\begin{proposition}
We have an isomorphism of $H$-bimodules
$$
\upiota H\cong H \upiota\cong  \Hom_{k[\zeta]}(H, k[\zeta])\ .
$$
\end{proposition}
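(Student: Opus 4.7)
My strategy is to realize the statement as a Frobenius extension property: I will show that $H$ is a Frobenius extension of its central polynomial subalgebra $k[\zeta]$ with Nakayama automorphism $\upiota$. The bimodule isomorphism $\upiota H \cong H\upiota$ is immediate from the fact that $\upiota$ is an involutive algebra automorphism of $H$ fixing $\zeta$: the map $m \mapsto \upiota(m)$ is $k[\zeta]$-linear and intertwines the two twisted bimodule structures, since $\upiota(h_1 m h_2) = \upiota(h_1)\upiota(m)\upiota(h_2)$. The substantive content therefore lies in the identification with $\Hom_{k[\zeta]}(H, k[\zeta])$.

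To this end I would construct an explicit $k[\zeta]$-linear ``Frobenius trace'' $t : H \to k[\zeta]$ satisfying the twisted symmetry $t(ab) = t(b\upiota(a))$ for all $a,b \in H$. Such a $t$ yields an $H$-bilinear map $\Phi : H\upiota \to \Hom_{k[\zeta]}(H,k[\zeta])$ via $\Phi(m)(x) := t(xm)$: left $H$-linearity is immediate from the formula, while right $H$-linearity with the $\upiota$-twist $\Phi(m\upiota(h))(x) = \Phi(m)(hx)$ reduces, upon setting $y := xm$, to the identity $t(y\upiota(h)) = t(hy)$, i.e.\ exactly the twisted symmetry; symmetrically, $\Phi'(m)(x) := t(mx)$ produces the isomorphism $\upiota H \cong \Hom_{k[\zeta]}(H,k[\zeta])$ using the same $t$. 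Bijectivity of $\Phi$ (and $\Phi'$) is equivalent to the non-degeneracy of the pairing $(a,b)\mapsto t(ab)$; since source and target are both free $k[\zeta]$-modules of rank $4(q-1)$ by Lemma \ref{freeness}, it suffices to show that the Gram matrix on a $k[\zeta]$-basis is invertible.

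Concretely, I would use the basis from Lemma \ref{freeness} with $\epsilon = 0$, namely $\{\tau_\omega\tau_v : \omega\in\Omega,\ v\in\{1,s_0,s_1,s_0s_1\}\}$, and declare $t$ to vanish on all basis vectors except $\tau_{s_0s_1}$, where $t(\tau_{s_0s_1}) := 1$. The twisted symmetry then reduces by $k$-bilinearity to checking $t(ab) = t(b\upiota(a))$ with $a$ an algebra generator and $b$ a basis vector. For $a = \tau_\omega$ it is automatic, because $\upiota(\tau_\omega) = \tau_\omega$ and $\tau_{s_0s_1} = \tau_\theta$ commutes with $\Omega$; for $a = \tau_{s_\epsilon}$ the identity becomes $t(\tau_{s_\epsilon}b) + t(b\tau_{s_\epsilon}) = -t(be_1)$, encoding $\upiota(\tau_{s_\epsilon}) = -e_1 - \tau_{s_\epsilon}$, which reduces to a finite case-by-case verification using the quadratic relation $\tau_{s_\epsilon}^2 = -e_1\tau_{s_\epsilon}$, the commutation $\tau_{s_\epsilon}\tau_\omega = \tau_{\omega^{-1}}\tau_{s_\epsilon}$ coming from $s_\epsilon\omega s_\epsilon^{-1} = \omega^{-1}$, and the expansion $\tau_{s_1}\tau_{s_0} = \zeta - \tau_{s_0s_1} - (\tau_{s_0}+\tau_{s_1})e_1 - e_1$ forced by the definition of $\zeta$ (which in passing yields the useful identity $\tau_{s_0s_1s_0} = \zeta\tau_{s_0}$). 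The main technical obstacle is then the non-degeneracy check: rewriting products $\tau_\omega\tau_v\cdot\tau_{\omega'}\tau_{v'}$ of length $\geq 3$ in the basis produces contributions involving $\zeta$, and one must verify that the resulting $4(q-1)\times 4(q-1)$ Gram matrix over $k[\zeta]$ has unit determinant---most cleanly by exhibiting an explicit dual basis organized by the involution $v\mapsto$ its ``Poincar\'e complement'' in $\{1,s_0,s_1,s_0s_1\}$.
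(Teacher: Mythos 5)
Your proposal is correct and takes essentially the same approach as the paper: the paper also defines the Frobenius form as the coefficient functional $\Lambda_{s_0s_1}$ on the $k[\zeta]$-basis from Lemma \ref{freeness}, verifies the same twisted symmetry $\Lambda_{s_0s_1}(\tau\tau') = \Lambda_{s_0s_1}(\upiota(\tau')\tau)$ (which is your $t(hy)=t(y\upiota(h))$ after relabeling), and concludes bijectivity by exhibiting a dual system and comparing $k[\zeta]$-ranks. Your two minor economies --- reducing the symmetry check to $a$ a generator rather than an arbitrary basis element (valid because $\upiota$ is an algebra, not anti-, automorphism: $t(a_1a_2b)=t(a_2b\,\upiota(a_1))=t(b\,\upiota(a_1)\upiota(a_2))=t(b\,\upiota(a_1a_2))$), and packaging nondegeneracy as unit determinant of the Gram matrix organized by the ``Poincar\'e complement'' involution on $\{1,s_0,s_1,s_0s_1\}$ --- are presentational variants of the same computation rather than a different route.
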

\begin{proof}The first isomorphism is given by the map $\upiota:H\rightarrow H$.
From Lemma \ref{freeness} we know that $H$ is a free $k[\zeta]$-module with basis the set of all $\tau_w$ for $w$ ranging over the set
\begin{equation}\label{f:Cbasis}
\omega, \,\omega s_0, \,  \omega s_1, \,  \omega s_0s_1 \textrm{ when $\omega\in \Omega$}.
\end{equation}
We define  in $ \Hom_{k[\zeta]}(H, k[\zeta])$ the  dual basis  namely for
each  $x\in \eqref{f:Cbasis}$, we define  the map  $\Lambda_x \in   \Hom_{k[\zeta]}(H, k[\zeta])$  which sends
each $\tau_y$ with $y\in \eqref{f:Cbasis}$ to $0$  except $\Lambda_x(\tau_x)=1\in k[\zeta]$.
We check that
\begin{equation}\label{f:checkLambda}
\Lambda_{s_0s_1}(\tau\tau')=\Lambda_{s_0s_1}(\upiota(\tau')\tau)
\end{equation}
which ensures that
\begin{align}\label{f:frobi}
f : \upiota H  & \longrightarrow   \Hom_{k[\zeta]}(H, k[\zeta])  \\
        \tau & \longmapsto f(\tau)(\tau') := \Lambda_{s_0s_1} (\tau \tau')
\end{align}
defines a homomorphism of $H$-bimodules.


Let $w,w'\in \widetilde W$ and $\tau:=\tau_{w}$, $\tau':=\tau_{w'}$. Since $\Lambda_{s_0s_1}$ is $k[\zeta]$-linear it is enough to verify \eqref{f:checkLambda} when $w,w'\in \eqref{f:Cbasis}$. And in fact it is easy to see that both sides of \eqref{f:checkLambda}  are then zero except possibly in the following cases. Let $\omega, \omega'\in \Omega$. The verifications below rely on the quadratic formulas \eqref{f:quad} and the expression
$\zeta = (\tau_{s_0} + e_1)(\tau_{s_1} + e_1) + \tau_{s_1}\tau_{s_0}=(\tau_{s_1} + e_1)(\tau_{s_0} + e_1) + \tau_{s_0}\tau_{s_1}.$ We spell out a few of them.
\begin{itemize}
\item If  $w=\omega s_0$ and $w'=\omega' s_1$, we have  $\Lambda_{s_0s_1}(\tau\tau')=\Lambda_{s_0s_1}(\tau_{\omega\omega'^{-1}}\tau_{s_0s_1})$ which is equal to $1$ if $\omega=\omega'$ and to $0$ otherwise.\\
We have
$\Lambda_{s_0s_1}(\upiota(\tau')\tau)=-\Lambda_{s_0s_1}(\tau_{\omega'\omega^{-1}}(\tau_{s_1}+e_1)\tau_{s_0})=
-\Lambda_{s_0s_1}(\tau_{\omega'\omega^{-1}}(\zeta-(\tau_{s_1}e_1+e_1)-\tau_{s_0s_1}))=\Lambda_{s_0s_1}(\tau_{\omega'\omega^{-1}s_0s_1})$ which is also equal to $1$ if $\omega=\omega'$ and to $0$ otherwise.

\item If  $w=\omega s_1$ and $w'=\omega' s_0$, we easily check that  both  $\Lambda_{s_0s_1}(\tau\tau')$ and $\Lambda_{s_0s_1}(\upiota(\tau')\tau)$
are equal to $-1$ if $\omega=\omega'$ and to $0$ otherwise.

 \item If  $w=\omega s_0$ and $w'=\omega' s_0s_1$, we have  $\Lambda_{s_0s_1}(\tau\tau')=-\Lambda_{s_0s_1}(\tau_{\omega\omega'^{-1}}e_1\tau_{s_0s_1})=-\Lambda_{s_0s_1}(e_1\tau_{s_0s_1})$ which is equal to $1$.

 We have
$\Lambda_{s_0s_1}(\upiota(\tau')\tau)=\Lambda_{s_0s_1}(\tau_{\omega'\omega}(\tau_{s_0}+e_1)(\tau_{s_1}+e_1)\tau_{s_0})=\Lambda_{s_0s_1}(\tau_{\omega'\omega}(\tau_{s_0}+e_1)(\tau_{s_1}+e_1)\tau_{s_0})=\Lambda_{s_0s_1}(\tau_{\omega'\omega}(\zeta-\tau_{s_1s_0})\tau_{s_0})=\Lambda_{s_0s_1}(\tau_{\omega'\omega}e_1\tau_{s_1s_0})=\Lambda_{s_0s_1}(e_1\tau_{s_1s_0})=-\Lambda_{s_0s_1}(\sum_{u\in \Omega}\tau_u\tau_{s_1s_0})$ which is equal to $1$ (see the previous case).

 \item If  $w=\omega s_0s_1$ and $w'=\omega' s_1$, we check that  $\Lambda_{s_0s_1}(\tau\tau')=\Lambda_{s_0s_1}(\upiota(\tau')\tau)=1$.

%

\item If
$w=\omega s_1$ and $w'=\omega' s_0s_1$, we have  $\Lambda_{s_0s_1}(\tau\tau')=\Lambda_{s_0s_1}(\tau_{\omega\omega'^{-1}}\tau_{s_1s_0s_1})=\Lambda_{s_0s_1}(\tau_{\omega\omega'^{-1}}\zeta\tau_{s_1})=0$.
We have
$\Lambda_{s_0s_1}(\upiota(\tau')\tau)=\Lambda_{s_0s_1}(\tau_{\omega'\omega}(\tau_{s_0}+e_1)(\tau_{s_1}+e_1)\tau_{s_1})=0$.
\item If  $w=\omega s_0s_1$ and $w'=\omega' s_0$, we have likewise   $\Lambda_{s_0s_1}(\tau\tau')=\Lambda_{s_0s_1}(\upiota(\tau')\tau)=0$.
\item If
$w=\omega s_0s_1$ and $w'=\omega' s_0s_1$, we have  $\Lambda_{s_0s_1}(\tau\tau')=
\Lambda_{s_0s_1}(\tau_{\omega\omega'}\tau_{s_0s_1s_0s_1})=
\Lambda_{s_0s_1}(\tau_{\omega\omega'}\zeta \tau_{s_0s_1})$  which is equal to $\zeta$
if $\omega'=\omega^{-1}$ and to $0$ otherwise. We have\\
$\Lambda_{s_0s_1}(\upiota(\tau')\tau)=\Lambda_{s_0s_1}(\tau_{\omega'\omega}(\tau_{s_0}+e_1)(\tau_{s_1}+e_1)\tau_{s_0s_1})=
\Lambda_{s_0s_1}(\tau_{\omega'\omega}(\zeta-\tau_{s_1s_0})\tau_{s_0s_1})=
\Lambda_{s_0s_1}(\tau_{\omega'\omega}(\zeta\tau_{s_0s_1}-\zeta e_1 \tau_{s_1})$ which is also equal to $\zeta$
if $\omega'=\omega^{-1}$ and to $0$ otherwise.
\end{itemize}
To prove that \eqref{f:frobi} is surjective, we verify the following.
We have
\begin{itemize}\item[a)]
$-\tau_{s_0}\cdot \Lambda_{s_0s_1}= \Lambda_{{s_1}}$.
\item[b)] $(\tau_{s_1}+e_1)\cdot \Lambda_{s_0s_1}= \Lambda_{s_0}$.
\item[c)] $-(\tau_{s_1}+e_1)\tau_{s_0}\cdot \Lambda_{s_0s_1}= \Lambda_{1}$.
\item[d)] for all $w\in \eqref{f:Cbasis}$ and $\omega\in \Omega$,  we have $\Lambda_{w}\cdot \tau_{\omega^{-1}}= \Lambda_{\omega w}$.
\end{itemize}
 Property d) is immediate.  The other properties are easily verified by evaluating explicitly the left hand side at all elements of the form $\tau_w$ for $w\in \eqref{f:Cbasis}$. For example $-(\tau_{s_1}+e_1)\tau_{s_0}\cdot \Lambda_{s_0s_1}(\tau_\omega)=-\Lambda_{s_0s_1}(\tau_\omega(\tau_{s_1}+e_1)\tau_{s_0})
$ which we already computed above is equal to $1$ if $\omega=1$ and to $0$ otherwise.

Once it is proved that  \eqref{f:frobi} is surjective, the injectivity is immediate since both spaces are free $k[\zeta]$-modules of the same rank.
\end{proof}

Using a free resolution of any arbitrary left (resp. right) $k[\zeta]$-module, and since $H$ is finitely generated free hence projective over $k[\zeta]$, it follows immediately:

\begin{corollary}
Let $M$ be a left, resp. right, $k[\zeta]$-module. We have an isomorphism of left, resp. right, $H$-modules
$$
H \otimes_{k[\zeta] } M \cong  \upiota H \otimes_{k[\zeta] } M \cong \Hom_{k[\zeta]}(H, M) \quad \textrm{ resp. }
 \quad M \otimes_{k[\zeta] }  H \cong M \otimes_{k[\zeta] }  H \upiota \cong \Hom_{k[\zeta]}(H, M) \ .
$$
\end{corollary}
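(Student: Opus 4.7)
The plan is to deduce both chains of isomorphisms formally from the preceding proposition combined with Lemma~\ref{freeness}, which says that $H$ is a finitely generated free, hence projective, module over $k[\zeta]$.

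First I would dispose of the left isomorphism $H\otimes_{k[\zeta]} M\cong {}^{\upiota}H\otimes_{k[\zeta]} M$. Since $\upiota$ fixes $\zeta$, the right $k[\zeta]$-action on $H\upiota$ literally coincides with that on $H$, so as a left $H$-module $H\otimes_{k[\zeta]} M$ equals $H\upiota\otimes_{k[\zeta]} M$; the $H$-bimodule isomorphism $H\upiota\cong{}^{\upiota}H$ supplied by the proposition then transports the latter to ${}^{\upiota}H\otimes_{k[\zeta]} M$.

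The substantive point is then the natural left $H$-linear evaluation map
\[
 \mathrm{ev}_M:\Hom_{k[\zeta]}(H,k[\zeta])\otimes_{k[\zeta]} M\longrightarrow \Hom_{k[\zeta]}(H,M),\qquad \Lambda\otimes m\longmapsto \bigl(h\mapsto \Lambda(h)\,m\bigr),
\]
where the left $H$-action on each side comes from the right $H$-action of $H$ on itself inside the $\Hom$. The map $\mathrm{ev}_M$ is tautologically an isomorphism for $M=k[\zeta]$; both functors commute with arbitrary direct sums in $M$ (the tensor product always does, and $\Hom_{k[\zeta]}(H,-)$ does because $H$ is \emph{finitely} generated over $k[\zeta]$), so $\mathrm{ev}_M$ is also an isomorphism for every free $M$. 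For an arbitrary $k[\zeta]$-module $M$ I would choose a free presentation $F_1\to F_0\to M\to 0$ and apply the two functors. The source is right exact since it is a tensor product, and the target is even exact since $H$ is projective over $k[\zeta]$; the usual five-lemma argument on the resulting commutative ladder, whose leftmost two vertical arrows are isomorphisms by the free case, then forces $\mathrm{ev}_M$ to be an iso. Combining $\mathrm{ev}_M$ with the proposition gives the desired $H$-linear identification ${}^{\upiota}H\otimes_{k[\zeta]}M\cong\Hom_{k[\zeta]}(H,M)$.

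The right-module assertion is completely symmetric: one replaces $\mathrm{ev}_M$ by the analogous map with $H$ acting on the left inside the $\Hom$ and uses the other half of the bimodule isomorphism, together with the fact that $\upiota$ fixes $\zeta$. There is really no obstacle here; the weight of the corollary lies entirely in the proposition and in Lemma~\ref{freeness}, and this statement is a purely formal extension of scalars from $M=k[\zeta]$ to general $M$.
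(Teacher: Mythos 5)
Your argument is correct and matches the paper's one-line proof in substance: both reduce the general case to the free case via a free presentation of $M$, using that $H$ is finitely generated projective over $k[\zeta]$ and the bimodule isomorphism from the preceding proposition. You have simply spelled out the details (the evaluation map, commutation with direct sums, right exactness, five lemma) that the paper declares "follows immediately."
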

\begin{proof}
For the left hand isomorphisms note that $\upiota H$ (resp. $H\upiota$) is naturally isomorphic to $H$ as an $(H, k[\zeta])$-bimodule  (resp as a $(k[\zeta], H)$-bimodule) since $\upiota$ fixes $\zeta$.
\end{proof}

\begin{corollary}
For $a\in k$, the finite dimensional $k$-algebra $H/(\zeta-a) H$ is Frobenius.  \end{corollary}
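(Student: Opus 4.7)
The plan is to apply the previous corollary to the specific one-dimensional $k[\zeta]$-module $M := k[\zeta]/(\zeta-a)$. On the left-hand side of the corollary's isomorphism, $H \otimes_{k[\zeta]} k[\zeta]/(\zeta-a)$ is naturally identified with the quotient $A := H/(\zeta-a)H$ as a left $H$-module, via $h \otimes 1 \mapsto h + (\zeta-a)H$.

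For the right-hand side, $\Hom_{k[\zeta]}(H, k[\zeta]/(\zeta-a))$, any $k[\zeta]$-linear map $H \to k[\zeta]/(\zeta-a)$ automatically annihilates $(\zeta-a)H$ because $(\zeta-a)$ acts as zero on the target, and conversely any $k$-linear map $A \to k$ extends uniquely to such a $k[\zeta]$-linear map (using the canonical identification $k[\zeta]/(\zeta-a) \cong k$ sending $\zeta \mapsto a$). Hence there is a canonical identification
\begin{equation*}
\Hom_{k[\zeta]}(H, k[\zeta]/(\zeta-a)) \;=\; \Hom_k(A, k) \;=\; A^\vee.
\end{equation*}
Moreover, the left $H$-action on $\Hom_{k[\zeta]}(H, k[\zeta])$ described just before the Proposition, namely $(h \cdot \Lambda)(x) = \Lambda(xh)$, is exactly the standard contragredient left action on $A^\vee$ defining Frobenius duality.

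Combining these two identifications with the corollary yields an isomorphism of left $H$-modules $A \cong A^\vee$. Since $(\zeta-a)$ annihilates both sides, this is an isomorphism of left $A$-modules, which is precisely the defining property of a Frobenius $k$-algebra. The only point that requires a moment's care is the compatibility of the bimodule conventions in the statement $\upiota H \cong \Hom_{k[\zeta]}(H, k[\zeta])$ with the standard contragredient structure on $A^\vee$; once this is checked the argument is essentially immediate from the corollary, so I would expect the write-up to be just a few lines.
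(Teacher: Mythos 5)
Your proof is correct and is essentially the same as the paper's, reducing as it does to the Frobenius property of $H$ over $k[\zeta]$ established in the preceding Proposition. The only difference is that the paper factors the bimodule isomorphism $\upiota H \cong \Hom_{k[\zeta]}(H,k[\zeta])$ directly through $(\zeta-a)$, which yields an isomorphism of $H/(\zeta-a)H$-bimodules $\upiota(H/(\zeta-a)H) \cong (H/(\zeta-a)H)^\vee$ — hence additionally identifies the Nakayama automorphism as $\upiota$ — whereas you specialize the Corollary to $M = k[\zeta]/(\zeta-a)$ and obtain only the one-sided left $A$-module isomorphism $A \cong A^\vee$, which is nevertheless exactly what Frobeniusness requires.
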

\begin{proof}  The isomorphism of $H$-bimodules \eqref{f:frobi} clearly factors through an isomomorphism  of $H/(\zeta-a) H$-bimodules \begin{equation}\upiota (H/(\zeta-a)H)\cong \Hom_{k[\zeta]}(H/(\zeta-a) H, k[\zeta]/(\zeta-a))\cong  \Hom_{k}(H/(\zeta-a) H, k)
\ .\label{f:frobeniuszeta}\end{equation}

\end{proof}

\subsubsection{Finite duals\label{subsubsec:fduals}}

We consider the finite dual $H^{\vee, f}$ of $H$ (see \S\ref{subsubsec:duality}) with basis $(\tau^\vee_w)_{w\in\widetilde W }$  defined to be the dual of $(\tau_w)_{w\in\widetilde W }$. When $I$ is a Poincar\'e group of dimension $d$, we have an isomorphism between $E^d$ and the twisted $H$-bimodule ${}^\anti(H^{\vee, f})^\anti$ given by  \eqref{f:dual}. In  \S\ref{subsubsec:duality}, just like in\cite{Ext} \S 8, we denoted by $\phi_w$ the element of $E^d$ corresponding to $\tau_w^\vee$  and we computed in Prop. 8.2 \emph{loc. cit} that the structure of $H$-bimodule of ${}^\anti(H^{\vee,f})^\anti$ is given by the following formulas. Let  $w\in {\widetilde W}$, $\omega\in \widetilde \Omega$ and $s\in\{s_0,s_1\}$. \begin{equation}\label{f:Hdomega}
 \tau^\vee_w\cdot \tau_\omega= \tau^\vee_{w\omega} \ ,\ \tau_\omega \cdot \tau^\vee_w= \tau^\vee_{\omega w},
\end{equation}
\begin{equation}\label{f:leftrightHd}
 \tau^\vee_w\cdot \tau_{s}= \begin{cases}  \tau^\vee_{w s}-\tau^\vee_w\cdot e_1 & \text{ if $\ell(w\tilde s)=\ell(w)-1$,}\cr 0& \text{ if $\ell(w s)=\ell(w)+1$,}\end{cases}
\quad \quad \tau_{s}\cdot \tau^\vee_w= \begin{cases}\tau^\vee_{s w}- e_1\cdot \tau^\vee_w& \text{ if $\ell( sw)=\ell(w)-1$,}\cr 0& \text{ if $\ell(\tilde sw)=\ell(w)+1$.}\end{cases}
\end{equation}

\begin{remark}\label{rema:psiw}
For all $w\in \widetilde W$ with length $\geq 1$, there is a unique $\epsilon\in\{0, 1\}$ such that $\ell(s_\epsilon w)=\ell(w)-1$. We let $\psi_w:= \tau_{s_\epsilon}\cdot \phi_w=\phi_{s_\epsilon w}-e_1\cdot \phi_w$.
From the formulas above we get  $\zeta \cdot \psi_w=\psi_{s_{1-\epsilon} s_\epsilon w}$ if $\ell(w)\geq 3$ and $\zeta\cdot \psi_w=0$ if $\ell(w)=1,2$.
So the subspace generated by the $\psi_w$ is of $\zeta$-torsion and contained in $\ker(\trace^d)$. We show that this subspace is in fact equal to $\ker(\trace^d)$. First of all we recall from the proof of \cite{Ext} Prop.\ 8.6 that $E^d = \ker(\trace^d) \oplus k e_1\cdot \phi_1$. Then we notice that $\Psi$ is stable under the left action of $\tau_\omega$ for $\omega\in \Omega$. So $\Psi = e_1 \cdot \Psi \oplus (1-e_1) \cdot \Psi$, and it is enough to show that
$(1-e_1)\cdot \Psi= (1-e_1)\cdot E^d$ and  $e_1\cdot \Psi \oplus k e_1\cdot \phi_1 = e_1\cdot E^d$.
The first identity is true because, for $w\in \widetilde W$, there exists $\eta\in\{0,1\}$ such that $\ell(s_\eta w)
=\ell(w)+1$ and  $(1-e_1)\cdot \phi_w= (1-e_1)\cdot \psi_{s_\eta^{-1} w}$. To prove the second identity, we  let $w\in \widetilde W$. If $\ell(w)=0$, then
$e_1\cdot \phi_w= e_1\cdot \phi_1$.  If $\ell(w)>1$, let $\epsilon\in\{0, 1\}$ such that $\ell(s_\epsilon w)=\ell(w)-1$. Then
$e_1\cdot \phi_w= e_1\cdot \phi_{s_\epsilon w}-e_1\cdot \psi_w$ lies in $e_1\cdot \Psi \oplus k e_1\cdot \phi_1$ by induction on $\ell(w)$.
\end{remark}

Let  $m\geq 1$.  The restriction map $H^{\vee,f}\rightarrow (F^m H)^{\vee, f}$ is  an homomorphism of $H$-bimodules and makes  the finite dual
  $(F^m H)^{\vee, f}$ of $F^m H$  a quotient of the $H$-bimodule
$H^{\vee, f}$. Furthermore, $(F^mH/F^{m+1}H)^\vee$ identifies with the sub-$H$-bimodule of $(F^m H)^{\vee, f}$ of the linear forms which are trivial on $F^{m+1}H$.
We consider the linear map defined by
\begin{align}\label{f:mm+1}
   F^mH/F^{m+1}H & \longrightarrow  {}^{\anti}((F^mH/F^{m+1}H)^\vee)^\anti \cr
    \tau_w  & \longmapsto  \tau_w^\vee\vert_{F^mH} \text{  for $w\in \widetilde W$ such that $\ell(w)=m$}
\end{align}
 By the above formulas, it is an isomorphism of $H$-bimodules.

\subsubsection{The equivalence of categories\label{subsubsec:eqcat}} When $G={\rm SL}_2(\mathbb Q_p)$, the functors $H^0(I, {}_-)$ and $\X\otimes_H{}_-$ are quasi-inverse equivalences  between the category $\Mod^I(G)$ of all smooth representations generated by their $I$-fixed vectors and the category of left $H$-modules. In particular,  $H^0(I, {}_-)$ is exact in $\Mod^I(G)$. (See \cite{embed} Prop. 3.25).

\section{\label{sec:top}Top cohomology functor $H^d(I, {}_-)$ when $\mathbf G$ is almost simple simply connected}

\subsection{${\rm Ker} (\trace^d)$ is an injective hull of $(H/\mathfrak{J}H)^\vee$}

Without extra conditions on $\mathbf G$ or on $\mathfrak F$, we have the following.
The ideal $\mathfrak{J}$ (\S\ref{subsec:basicshecke}) generates a two-sided ideal $\mathfrak{J}H$ in $H$.
Recall that we denote by $V^\vee$ the $k$-linear dual of a $k$-vector space $V$. We consider the obvious inclusion of $H$-bimodules
\begin{equation*}
  (H/\mathfrak{J}H)^\vee \longrightarrow I((H/\mathfrak{J}H)^\vee) := \bigcup_m (H/\mathfrak{J}^m H)^\vee \ .
\end{equation*}

\begin{lemma}\label{lemma:extcont3.1}
\begin{itemize}
  \item $I((H/\mathfrak{J}H)^\vee)$ is an injective  $H$-module on the left and on the right.
  \item If furthermore $\mathbf G$ is semisimple, then  $I((H/\mathfrak{J}H)^\vee)$ is an injective
   hull of $(H/\mathfrak{J}H)^\vee$ as a left as well as a right $H$-module.
\end{itemize}
\end{lemma}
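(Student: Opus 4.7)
The plan is to realize $I((H/\mathfrak J H)^\vee)$ as a coinduced module $\Hom_R(H, E)$ over the central subalgebra $R := \mathcal Z^0(H) \cong k[X_*^{dom}(T)]$, deduce injectivity from $E$ being an injective $R$-module together with the right-adjoint property of coinduction, and prove essentiality in the second bullet by a direct centrality argument.

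Recall (by a Vignéras/Bernstein-type structure result; cf.\ Lemma \ref{freeness} in the $\mathrm{SL}_2$ case) that $H$ is finitely generated free over $R$ on either side, and that $R$ is a finitely generated — hence Noetherian — commutative $k$-algebra. Since $\mathfrak J \subseteq R$ is central in $H$, for each $m$ one has $\mathfrak J^m H = H\mathfrak J^m$ and $H/\mathfrak J^m H \cong H \otimes_R (R/\mathfrak J^m)$ as $H$-bimodules, so tensor–hom adjunction produces an $H$-bimodule isomorphism
\begin{equation*}
(H/\mathfrak J^m H)^\vee \;\cong\; \Hom_R\bigl(H,\,(R/\mathfrak J^m)^\vee\bigr);
\end{equation*}
taking the filtered colimit (and using that $H$ is finitely presented over $R$) gives
\begin{equation*}
I\bigl((H/\mathfrak J H)^\vee\bigr) \;\cong\; \Hom_R(H, E), \qquad E := \varinjlim_m (R/\mathfrak J^m)^\vee.
\end{equation*}
The module $E$ is precisely the $\mathfrak J$-torsion submodule of $R^\vee = \Hom_k(R,k)$. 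Now $R^\vee$ is injective over $R$ (because $\Hom_R(-, R^\vee) \cong \Hom_k(-, k)$ is exact), and over a Noetherian ring the $\mathfrak J$-torsion functor preserves injectivity — it simply keeps the summands $E_R(R/\mathfrak p)$ with $\mathfrak p \supseteq \mathfrak J$ in the Matlis decomposition of an injective. Hence $E$ is injective over $R$, and since the restriction functor $\Mod(H) \to \Mod(R)$ is exact, its right adjoint $\Hom_R(H, -)$ preserves injectivity, giving the first bullet on the left; the right-module statement is symmetric.

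For the second bullet, assume $\mathbf G$ is semisimple and let $\phi \in I((H/\mathfrak J H)^\vee)$ be nonzero. Let $m \ge 1$ be the minimal integer with $\phi \in (H/\mathfrak J^m H)^\vee$; if $m=1$ there is nothing to prove, so assume $m \ge 2$. Then $\phi$ does not vanish on $\mathfrak J^{m-1}H$, so there exist a central element $a \in \mathfrak J^{m-1}$ and $h \in H$ with $\phi(ah) \ne 0$. By centrality of both $a$ and $\mathfrak J$,
\begin{equation*}
(a\cdot\phi)(\mathfrak J H) \;=\; \phi(\mathfrak J H\, a) \;=\; \phi(\mathfrak J^m H) \;=\; 0,
\end{equation*}
so $a\cdot\phi \in (H/\mathfrak J H)^\vee$, while $(a\cdot\phi)(h) = \phi(ah) \ne 0$ shows it is nonzero. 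Hence $H\phi \cap (H/\mathfrak J H)^\vee \ne 0$, giving left-essentiality of the inclusion; the right-sided version is proved the same way using $\phi\cdot a$. Combined with the first bullet, this identifies $I((H/\mathfrak J H)^\vee)$ as an injective hull on both sides. The main obstacle is the structural input that $H$ is finitely generated free over $R = \mathcal Z^0(H)$; once this is available, the rest is a formal combination of tensor–hom adjunction, the Matlis-type analysis of $E$ over the Noetherian ring $R$, the adjoint-functor principle for coinduction, and the centrality-driven multiplication trick for essentiality.
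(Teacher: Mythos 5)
Your proof is correct, and both halves take a genuinely different route from the paper's.

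For the injectivity bullet, you pass to coinduction $\Hom_R(H,-)$ over the central subalgebra $R=\mathcal Z^0(H)$, identify $I((H/\mathfrak J H)^\vee)$ with $\Hom_R(H,E)$ for $E=\Gamma_{\mathfrak J}(R^\vee)$, and then chain together three standard facts: $R^\vee$ is injective over $R$, the $\mathfrak J$-torsion functor preserves injectives over the noetherian $R$ (Matlis), and coinduction along the exact restriction functor preserves injectives. The paper instead works intrinsically in $H$: Baer's criterion plus the Artin-Rees property of the centrally generated ideal $\mathfrak J H$ in the noetherian ring $H$, reducing to the self-injectivity of the quotients $H/\mathfrak J^b H$. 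Your route is cleaner conceptually, but it needs the structural input that $H$ is finitely generated (hence finitely presented) over $\mathcal Z^0(H)$ so that $\Hom_R(H,-)$ commutes with the filtered colimit; the paper needs only that $H$ is noetherian. Both inputs are available in the references, but the paper deliberately keeps the weaker one. (Small note: you invoke ``finitely generated free''; you actually only need finitely presented, and freeness is not established in the paper's level of generality — only in the $\mathrm{SL}_2$ case. This is harmless since finite presentation follows from finite generation over the noetherian $R$, but the wording over-asserts.)

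For essentiality, your ``descend by a central element'' argument — pick $m$ minimal with $\phi\in(H/\mathfrak J^m H)^\vee$, find $a\in\mathfrak J^{m-1}$ and $h\in H$ with $\phi(ah)\neq 0$, and show $a\cdot\phi$ is a nonzero element of $H\phi\cap(H/\mathfrak J H)^\vee$ — is correct, and is both more elementary and more direct than the paper's argument via orthogonal complements in the finite-dimensional $H/\mathfrak J^m H$ plus Nakayama. It is worth flagging that although you invoke the semisimple hypothesis at the start, your argument never actually uses it: you nowhere need $H/\mathfrak J^m H$ to be finite dimensional (that is only needed for the paper's reflexivity step $Y\cap(\mathfrak J H/\mathfrak J^m H)^\perp=0\Leftrightarrow Y^\perp+\mathfrak J H/\mathfrak J^m H=H/\mathfrak J^m H$). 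So your proof in fact gives the essentiality statement without the semisimple hypothesis, which is a genuine strengthening worth noting explicitly rather than carrying the unused assumption.
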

\begin{proof}
The following argument arose from a discussion with K.\ Ardakov. The other case being entirely analogous we prove the statement as left $H$-modules.

\textit{Step1:} We show that the left $H$-module $E((H/\mathfrak{J}H)^\vee)$ is injective. By Baer's criterion it suffices to consider test diagrams of the form
\begin{equation*}
  \xymatrix{
    L \ar[d]_{\alpha} \ar[r]^{\subseteq} & H  \\
    I((H/\mathfrak{J}H)^\vee)  &    }
\end{equation*}
where $L \subseteq H$ is a left ideal. The ring $H$ being noetherian the left ideal $L$ is finitely generated. Hence the image of $\alpha$ is contained in $(H/\mathfrak{J}^a H)^\vee$ for any sufficiently large $a$. The homomorphism then must factorize through a homomorphism $\bar{\alpha} : L/\mathfrak{J}^a L \rightarrow (H/\mathfrak{J}^a H)^\vee$. Furthermore, since the ideal $\mathfrak{J}H$ in the noetherian ring $H$ is centrally generated it has the Artin-Rees property (cf.\ \cite{MCR} Prop.\ 4.2.6). This implies that we find an integer $b \geq a$ such that $\mathfrak{J}^bH \cap L \subseteq \mathfrak{J}^a L$. This reduces us to finding the broken arrow in the diagram
\begin{equation*}
  \xymatrix{
    L/\mathfrak{J}^b H \cap L \ar[d]_{\pr} \ar[r]^{} & H/\mathfrak{J}^b H \ar@{-->}[dddl]^{} \\
    L/\mathfrak{J}^a L \ar[d]_{\bar{\alpha}}  &  \\
    (H/\mathfrak{J}^aH)^\vee \ar[d]_{\subseteq}  &  \\
    (H/\mathfrak{J}^bH)^\vee .  &    }
\end{equation*}
We note that the horizontal arrow is injective and that this is a diagram of $H/\mathfrak{J}^b H$-modules. So it suffices to show that that the $H/\mathfrak{J}^b H$-module $(H/\mathfrak{J}^bH)^\vee$ is injective. But the computation
\begin{equation*}
  \Hom_{H/\mathfrak{J}^bH}(M,(H/\mathfrak{J}^bH)^\vee) = \Hom_k(H/\mathfrak{J}^bH \otimes_{H/\mathfrak{J}^bH} M,k) = \Hom_k(M,k)
\end{equation*}
shows that these functors are exact in the $H/\mathfrak{J}^bH$-module $M$.

\textit{Step 2:} Assume that the group $\mathbf{G}$ is semisimple. Then $H/\mathfrak{J}^m H$ is finite dimensional over $k$ for any $m \geq 1$. We show that the inclusion $(H/\mathfrak{J}H)^\vee \subseteq I((H/\mathfrak{J}H)^\vee)$ is essential, i.e., that any nonzero $H$-submodule $Y$ of $E((H/\mathfrak{J}H)^\vee)$ has nonzero intersection with $(H/\mathfrak{J}H)^\vee$. It, of course, suffices to consider the case when $Y$ is a cyclic module. We then have
$Y \subseteq (H/\mathfrak{J}^mH)^\vee$ for some large $m$. Let $Y^\perp \subseteq H/\mathfrak{J}^mH$ be the orthogonal complement of $Y$. Suppose that $Y \cap (H/\mathfrak{J}H)^\vee = 0$. This means that $Y^\perp + \mathfrak{J}H/\mathfrak{J}^m H = H/\mathfrak{J}^mH$
But $\mathfrak{J}H/\mathfrak{J}^m H$ is contained in the Jacobson radical of $H/\mathfrak{J}^mH$. Hence the Nakayama lemma implies that $Y^\perp = H/\mathfrak{J}^mH$, which gives rise to the contradiction that $Y = 0$.
\end{proof}

\begin{remark}\label{rema:omittwists}
The anti-involution $\anti: H\rightarrow H$ yields an isomorphism of $H$-bimodules $H\cong {}^{\!\anti} H^\anti$. By  \cite{Ext} Remark 6.3, it preserves the central ideal $\mathfrak J$, as well as  the central ideal $\mathfrak J^m$ for any $m\geq 1$. Therefore, we have an isomorphism of $H$-bimodules $H/\mathfrak J^m H\cong {}^{\!\anti} (H/\mathfrak J^m H)^\anti $.
By  \cite{Ext} Remark 7.1, we also have $(H/\mathfrak J^m H)^\vee \cong{}^{\!\anti} ( (H/\mathfrak J^m H))^\vee)^\anti $.
\end{remark}

Until the end of  this  paragraph, we assume as in  \S\ref{subsubsec:duality} that the pro-$p$ Iwahori group $I$ is torsion free. Therefore it is a Poincar\'e group of dimension $d$. The map $\trace^d: H^d(I, \X)\rightarrow k$ was introduced in \S\ref{subsubsec:duality}.
Assume also that   $\mathbf{G}$ is almost simple and simply connected. Then
in  \cite{Ext} \S 8, we studied $E^d$ using the isomorphism \begin{equation}\label{f:88}E^d\xrightarrow{\cong}\: ({}^\anti E^0\,^\anti)^{\vee, f}\end{equation}
recalled in \eqref{f:dual}.
(Notice that some of the results there  are true under weaker hypotheses than the ones of the current context). By Prop. 8.6 \emph{loc. cit.}, we have an isomorphism  of $H$-bimodules
\begin{equation}\label{f:keydecompEd}
   E^d\cong\ker(\EuScript S^d)\oplus \chi_{triv}.
\end{equation}

\begin{proposition} \label{prop:kerS-injhull}
Suppose that  $\mathbf{G}$ is almost simple and simply connected. Then we have an isomorphism of $H$-bimodules
$$
\ker(\trace^d)\cong
  \bigcup_m (H/\mathfrak{J}^m H)^\vee.
$$
In particular, $\ker(\trace^d)$ is an injective hull of the left (resp. right) $H$-module $(H/\mathfrak{J}H)^\vee$ and is supersingular as a left (resp.\ right) $H$-module.
\end{proposition}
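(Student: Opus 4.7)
My strategy is to identify both sides as the injective hull of $(H/\mathfrak{J}H)^\vee$, as a left (resp.\ right) $H$-module, and then promote the resulting isomorphism to one of $H$-bimodules. The right-hand side is already the injective hull by Lemma~\ref{lemma:extcont3.1} (the semisimplicity hypothesis is automatic since $\mathbf{G}$ is almost simple and simply connected). To set the stage I would combine \eqref{f:keydecompEd}, which gives $E^d \cong \chi_{triv}\oplus \ker(\trace^d)$, with the duality \eqref{f:dual} and Remark~\ref{rema:omittwists} (to absorb the $\anti$-twists on bimodules), identifying $E^d$ as an abstract $H$-bimodule with the finite dual $H^{\vee,f}$.

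The first substantive step is to prove that $H^{\vee,f}$, and hence its direct summand $\ker(\trace^d)$, is injective on either side. The key point is that every $\lambda\in H^{\vee,f}$ is annihilated on both sides by a power of $\mathfrak J$: writing $\lambda$ as a finite sum of dual basis vectors $\tau_w^\vee$, and noting that $\mathfrak J$ is centrally and finitely generated by images of nontrivial dominant cocharacters whose lengths grow under iteration, multiplication by $h\in \mathfrak J^m$ pushes $\tau_v h$ into spans of $\tau_{v'}$ of length exceeding the (fixed, finite) support of $\lambda$, for $m\gg 0$. Granted this local nilpotence, the Baer-criterion argument of Step~1 of Lemma~\ref{lemma:extcont3.1} applies almost verbatim, via noetherianity of $H$ and Artin--Rees for the centrally generated ideal $\mathfrak J H$, reducing the injectivity test to the elementary self-duality $\Hom_{H/\mathfrak J^mH}(-,(H/\mathfrak J^mH)^\vee)\cong \Hom_k(-,k)$.

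The second step is to exhibit $(H/\mathfrak{J}H)^\vee$ as an essential left (resp.\ right) submodule of $\ker(\trace^d)$. The socle on either side is the $\mathfrak{J}$-annihilator, and a Nakayama-type argument as in Step~2 of Lemma~\ref{lemma:extcont3.1}, applied inside any finite-dimensional quotient $(H/\mathfrak{J}^mH)^\vee$ cut out by local nilpotence, shows the socle meets every nonzero cyclic submodule. Identifying this socle with exactly $(H/\mathfrak{J}H)^\vee$ requires tracking the complement of the $\chi_{triv}$ summand already split off in \eqref{f:keydecompEd} through the bimodule isomorphism of Step~1; this can be done concretely using the formulas in \S\ref{subsubsec:fduals} and Remark~\ref{rema:psiw}. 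Uniqueness of injective hulls then yields $\ker(\trace^d)\cong \bigcup_m(H/\mathfrak{J}^mH)^\vee$ as left $H$-modules and, symmetrically, as right $H$-modules; the two sides reassemble to an $H$-bimodule isomorphism since each side is canonically reconstructed from its essential socle with its bimodule action. The ``in particular'' clause is then immediate from the isomorphism (every element of $\bigcup_m(H/\mathfrak{J}^mH)^\vee$ is killed by some $\mathfrak J^m$), so $\ker(\trace^d)$ is supersingular, and Lemma~\ref{lemma:extcont3.1} furnishes the injective-hull statement. The main obstacle I anticipate is this second step: cleanly isolating the socle after separating $\chi_{triv}$, and verifying that the left and right injective-hull isomorphisms glue coherently into a bimodule isomorphism.
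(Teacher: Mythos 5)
The proposal collapses at its central claim: it is \emph{not} true that every $\lambda\in H^{\vee,f}$ is annihilated on both sides by a power of $\mathfrak{J}$. The finite dual $H^{\vee,f}$ (equivalently, via \eqref{f:88}, all of $E^d$) is precisely what is \emph{not} supersingular here. Using the formulas \eqref{f:Hdomega} and \eqref{f:leftrightHd} one computes in the ${\rm SL}_2$ case that $\tau_{s_0}\cdot\phi_1 = \tau_{s_1}\cdot\phi_1 = 0$ while $e_1\cdot\phi_1 = -\sum_\omega\phi_\omega$, and hence $\zeta\cdot\phi_1 = e_1\cdot\phi_1$; since $e_1$ is idempotent and $\zeta$ central, $\zeta^m\cdot\phi_1 = e_1\cdot\phi_1 \neq 0$ for all $m\geq 1$. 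So already the single dual basis vector $\tau_1^\vee = \phi_1$ is not $\mathfrak{J}$-torsion. This is exactly the content of \eqref{f:keydecompEd}: $E^d \cong \ker(\trace^d) \oplus \chi_{triv}$, and the summand $\chi_{triv}$ (spanned by $e_1\cdot\phi_1$) has $\chi_{triv}(\zeta)=1$, so $\mathfrak{J}$ acts invertibly on it. Your heuristic ``multiplication by $h\in\mathfrak{J}^m$ pushes $\tau_v h$ into high-length spans'' fails because $\mathfrak{J}^m H$ is \emph{not} contained in $F^m H$: elements of $\mathfrak{J}$ applied to length-zero basis elements $\tau_\omega$ produce nonzero components of length zero (e.g.\ $\zeta\tau_\omega$ has a $\tau_1$-component), precisely because of the quadratic relations involving $e_1$.

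Once this local-nilpotence claim fails, the Baer argument you invoke (Step~1 of Lemma~\ref{lemma:extcont3.1}) no longer applies to $H^{\vee,f}$, and you cannot deduce injectivity of $\ker(\trace^d)$ as a summand of an injective. The paper's proof circumvents exactly this obstruction by never working with all of $H^{\vee,f}$: it first passes through the identification $\ker(\trace^d)\cong(\,^{\anti}\ker(\chi_{triv})^{\anti})^{\vee,f}$, i.e.\ it discards the $\chi_{triv}$-direction \emph{before} taking duals. On $\ker(\chi_{triv})$ one has $\mathfrak{J}^m H\cap\ker(\chi_{triv}) = \mathfrak{J}^m\cdot\ker(\chi_{triv}) \subseteq F^m H$ (\cite{Ext} Lemma~2.14), and the embedded Fact in the paper's proof supplies a $j$ with $F^j H\subseteq\mathfrak{J}H$; together these give cofinality of the two filtrations $\mathfrak{J}^m H\cap\ker(\chi_{triv})$ and $F^m H\cap\ker(\chi_{triv})$, which is the mechanism that actually produces the isomorphism. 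A secondary weakness in your plan (though moot given the fatal one above): even if both sides were shown to be one-sided injective hulls of $(H/\mathfrak{J}H)^\vee$, uniqueness of injective hulls gives a left-module and a right-module isomorphism separately, and ``gluing'' them into a bimodule isomorphism is not automatic; the paper instead constructs the bimodule isomorphism directly through the duality \eqref{f:dual} and the filtration comparison.
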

\begin{proof}
In fact, via  \eqref{f:88}, we have the isomorphism
     $ \ker(\trace^d)\cong (\,{}^{\!\anti} \ker(\chi_{triv})^\anti)^{\vee,f}$  where
    $ (\ker(\chi_{triv}))^{\vee,f}$ is the image of
    $ ( E^0)^{\vee,f}$ in the natural restriction map $(E^0)^\vee\rightarrow  (\ker(\chi_{triv}))^{\vee}$.
This gives the alternate description
of $  \ker(\trace^d) $ as an $H$-bimodule:
\begin{equation}\label{f:injhull1}
  {}^{\!\anti}( \ker(\trace^d))^{\anti} \cong \bigcup_m (\ker(\chi_{triv})/F^m H \cap \ker(\chi_{triv}))^\vee.
\end{equation} Recall indeed that $\mathbf{G}$ being semisimple, $H/ F^m H$ is a finite dimensional vector space.
On the other hand, the character $\chi_{triv}$ is not supersingular (\cite{Ext} Remark 2.12.iv and Lemma 2.13)  and therefore we have $\mathfrak{J}^m H + \ker(\chi_{triv}) = H$ for any $m \geq 1$. Hence
\begin{equation}\label{f:injhull2}
  \bigcup_m (H/\mathfrak{J}^m H)^\vee = \bigcup_m (\ker(\chi_{triv})/\mathfrak{J}^m H \cap \ker(\chi_{triv}))^\vee \ .
\end{equation}
But, since $\mathbf G$ is almost simple simply connected, \cite{Ext} Lemma 2.14 says that
\begin{equation*}
  \mathfrak{J}^m H \cap \ker(\chi_{triv}) = \mathfrak{J}^m \cdot \ker(\chi_{triv}) \subseteq F^m H  \subseteq \ker(\chi_{triv}) \qquad\text{for any $m \geq 1$}
\end{equation*}
(the left equality coming from $\mathfrak{J}^m H + \ker(\chi_{triv}) = H$). Furthermore, the braid relations imply that $F^{jm} H \subseteq (F^j H)^m$.
\begin{fact}
  There is a $j \geq 1$ such that $F^j H \subseteq \mathfrak{J}H$.
\end{fact}
\begin{proof}
By a finite base extension of $k$ we may assume that $\mathbb{F}_q \subseteq k$. Then any simple supersingular $H$-module is a character (\cite{embed} Lemma 3.8).
But any supersingular character of $H$ must vanish on $\tau_s$ for at least one simple affine reflection $s$. This implies that $F^{r+1} H$, where $r$ denotes the rank of $\mathbf{G}$, is contained in the intersection $\mathfrak{R}$ of all the supersingular characters. But $\mathfrak{R}/\mathfrak{J}H$ is the Jacobson radical of the artinian ring $H/\mathfrak{J}H$. In any artinian ring the Jacobson radical is nilpotent. Hence we find an $n \geq 1$ such that $\mathfrak{R}^n \subseteq \mathfrak{J}H$. Now take $j := n(r+1)$.
\end{proof}

The fact implies that $F^{jm} H \subseteq \mathfrak{J}^m H$ for any $m \geq 1$. It follows that the two filtrations $\mathfrak{J}^m H \cap \ker(\chi_{triv})$ and $F^m H \cap \ker(\chi_{triv})$ of $\ker(\chi_{triv})$ are cofinal. Hence, the right hand sides of \eqref{f:injhull1} and of \eqref{f:injhull2} are isomorphic and we have
 $ {}^{\!\anti}( \ker(\trace^d))^{\anti} \cong
  \bigcup_m (H/\mathfrak{J}^m H)^\vee$ as $H$-bimodules.  Now using Remark \ref{rema:omittwists}:
\begin{equation*}
 \ker(\trace^d)\cong
  \bigcup_m (H/\mathfrak{J}^m H)^\vee\end{equation*} as $H$-bimodules
and   by Lemma \ref{lemma:extcont3.1} we have proved that $\ker(\trace^d)$ is an injective hull of the left (resp. right) $H$-module $(H/\mathfrak{J}H)^\vee$.

\end{proof}

\subsection{On  some values of the functor $H^d(I, _{\,-})$ when ${\mathbf G}={\rm SL}_2$}

We assume that    $ \mathbf G= {\rm SL}_2$ and that $I$ is torsionfree and therefore a Poincar\'e group of dimension $d$. It follows, in particular, that $p \geq 5$.
By \eqref{f:keydecompEd} and Proposition \ref{prop:kerS-injhull} we have
$$E^d\cong \ker(\trace^d) \oplus \chi_{triv}$$   as $H$-bimodules where $\ker(\trace^d)\cong \bigcup_{n\geq 1}(H/\zeta^n H)^\vee$. As a left or right $H$-module,  $\ker(\trace^d)$ is   an  injective envelope of
$(H/\zeta H)^\vee$. Being injective, this is a $\xi$-divisible  module on the left, resp. right, for any
$\xi\in H$ which is a non-zero-divisor.
For example, we know  that $H$ is free over $k[\zeta]$ (Lemma \ref{freeness}) so
 $Q(\zeta)$ is a non-zero-divisor for any nonzero polynomial $Q(X) \in k[X]$.
If furthermore $\chi_{triv}(\xi)\neq 0$, then  the whole space $E^d$ is $\xi$-divisible.
Recall that $\chi_{triv}(\zeta)=1$.

\begin{remark}\label{rema:fdimquotient} $\chi_{triv}$ is the only nontrivial finite dimensional quotient of $E^d$ as a left or right $H$-module.\end{remark}

\begin{proof} Since $\ker(\trace^d)$ is left and right $\zeta$-torsion, a finite dimensional quotient of $\ker(\trace^d)$  as a left, resp.\ right, module is annihilated by  a power $\zeta^m$ of $\zeta$ from the left, resp.\ right. But $\ker(\trace^d) \cdot \zeta^m = \zeta^m \cdot  \ker(\trace^d)= \ker(\trace^d)$ since $\ker(\trace^d)$ is $\zeta$-divisible. Therefore any  finite dimensional module quotient of $\ker(\trace^d)$ is trivial.
\end{proof}

Recall that $H^d(I,-)$ is  a right exact functor which  commutes with arbitrary
direct sums. By choosing a free presentation of an arbitrary left $H$-module $M$ this easily implies the formula
\begin{equation*}
  H^d(I, \X \otimes_H M) \cong E^d \otimes_H M .
\end{equation*}
This is an isomorphism of left $H$-modules.

\begin{proposition}\label{prop:xinonzerodiv}
  Let $G={\rm SL}_2(\mathfrak F)$. For any non-zero-divisor $\xi\in H$ such that $\xi$ is central in $H$ and $\chi_{triv}(\xi) \neq 0$, we have $H^d(I, \X/{\X \xi})=0$.
\end{proposition}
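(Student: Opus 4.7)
The plan is to reduce the statement to an algebraic question about $E^d$ via the right-exactness of $H^d(I,-)$, and then resolve it using the structural description of $E^d$ recalled just before the proposition.

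First I would identify the representation $\X/\X\xi$ with the tensor product $\X \otimes_H (H/H\xi)$. Since $\xi$ is central, the two-sided ideal $H\xi = \xi H$ sits in the exact sequence of left $H$-modules $H \xrightarrow{\,\xi\,} H \to H/H\xi \to 0$, and tensoring on the left with the right $H$-module $\X$ yields $\X/\X\xi \cong \X \otimes_H (H/H\xi)$ in $\Mod(G)$. Next, I would invoke the isomorphism
\[
H^d(I, \X \otimes_H M) \cong E^d \otimes_H M
\]
of left $H$-modules that is stated just above the proposition, applied to $M = H/H\xi$. Combined with the previous step this gives $H^d(I, \X/\X\xi) \cong E^d/E^d\xi$, so the proposition reduces to showing that right multiplication by $\xi$ on $E^d$ is surjective.

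To prove surjectivity I would use the bimodule decomposition $E^d \cong \ker(\trace^d) \oplus \chi_{triv}$ recalled in \eqref{f:keydecompEd} and handle the two summands separately. On the summand $\chi_{triv}$, right multiplication by $\xi$ is multiplication by the scalar $\chi_{triv}(\xi)$, which is nonzero by hypothesis, hence an isomorphism. On $\ker(\trace^d)$, I would use Proposition \ref{prop:kerS-injhull}, which says that $\ker(\trace^d)$ is an injective right $H$-module. Since $\xi$ is a central non-zero-divisor, $\xi H = H\xi$ is a right ideal and the map $\xi H \to \ker(\trace^d),\ \xi h \mapsto y h$ is a well-defined right $H$-module homomorphism for any prescribed $y \in \ker(\trace^d)$ (well-definedness is where the non-zero-divisor hypothesis enters). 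Injectivity of $\ker(\trace^d)$ extends this map to $H \to \ker(\trace^d)$, producing some $y'$ with $y'\xi = y$; so $\ker(\trace^d) \cdot \xi = \ker(\trace^d)$.

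Combining the two summands gives $E^d \cdot \xi = E^d$, hence $E^d/E^d\xi = 0$, which by the first step yields $H^d(I, \X/\X\xi) = 0$. I do not foresee a serious obstacle: the whole argument is formal once the two ingredients $H^d(I, \X \otimes_H M) \cong E^d \otimes_H M$ and the injectivity/decomposition of $E^d$ are in hand. The only point requiring a moment of care is the well-definedness of the map $\xi H \to \ker(\trace^d)$ used to exploit injectivity, which is precisely why the hypothesis that $\xi$ be a non-zero-divisor is needed.
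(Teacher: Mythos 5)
Your proposal is correct and takes essentially the same route as the paper: identify $H^d(I,\X/\X\xi)$ with $E^d\otimes_H H/H\xi$, use the decomposition $E^d\cong\ker(\trace^d)\oplus\chi_{triv}$, and kill the two summands via $\chi_{triv}(\xi)\neq 0$ and $\xi$-divisibility of the injective module $\ker(\trace^d)$. The paper simply phrases the divisibility step as a remark preceding the proposition rather than spelling out the Baer-style extension argument as you do, but the content is identical.
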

\begin{proof}
 Using the equality $\mathbf{X}/\mathbf{X}\xi = \mathbf{X} \otimes_H H/H\xi$ we compute
\begin{align*}
  H^d(I, \X/{\X \xi}) & = E^d \otimes_H H/H\xi = \chi_{triv} \otimes_H H/H\xi \oplus \ker(\trace^d) \otimes_H H/H\xi \\
  & = k/\chi_{triv}(\xi)k \oplus \ker(\trace^d)/\ker(\trace^d)\xi = 0 \ .
\end{align*}
\end{proof}
%

\begin{corollary}\label{cor:xinonzerodiv}
Let $Q(X) \in k[X]$ be a nonzero polynomial. Then $H^d(I, \X/\X Q(\zeta))=0$, resp.\ $\cong \chi_{triv}$ as an $H$-bimodule, if $Q(1) \neq 0$, resp.\ $Q(1)= 0$.
\end{corollary}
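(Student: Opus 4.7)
The plan is to apply the exact same computation as in the proof of Proposition \ref{prop:xinonzerodiv}, with $\xi := Q(\zeta)$. First I would check that $Q(\zeta)$ satisfies the general hypotheses: it is central in $H$ because $\zeta$ is central, and it is a non-zero-divisor because by Lemma \ref{freeness} the algebra $H$ is free as a $k[\zeta]$-module, so any nonzero element of $k[\zeta]$ remains a non-zero-divisor in $H$. Moreover $\chi_{triv}(Q(\zeta)) = Q(\chi_{triv}(\zeta)) = Q(1)$.

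If $Q(1) \neq 0$, Proposition \ref{prop:xinonzerodiv} applies directly and gives $H^d(I,\X/\X Q(\zeta)) = 0$. If $Q(1)=0$, I would redo the same short computation:
\begin{equation*}
  H^d(I, \X/\X Q(\zeta)) = E^d \otimes_H H/HQ(\zeta) = \chi_{triv}\otimes_H H/HQ(\zeta) \;\oplus\; \ker(\trace^d)\otimes_H H/HQ(\zeta).
\end{equation*}
The first summand is $k/Q(1)k = k$, and as an $H$-bimodule it is $\chi_{triv}$. For the second summand, the key point is that $\ker(\trace^d)$ is $Q(\zeta)$-divisible on both sides: by Proposition \ref{prop:kerS-injhull} it is an injective $H$-module on either side, and $Q(\zeta)$ is a central non-zero-divisor, so multiplication by $Q(\zeta)$ is surjective (a standard consequence of injectivity applied to the inclusion $H \xrightarrow{\,\cdot Q(\zeta)\,} H$). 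Hence $\ker(\trace^d)\cdot Q(\zeta) = \ker(\trace^d)$, and the second summand vanishes. This yields $H^d(I,\X/\X Q(\zeta))\cong \chi_{triv}$ as an $H$-bimodule.

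The only substantive step beyond what is already in Proposition \ref{prop:xinonzerodiv} is the divisibility argument for $\ker(\trace^d)$ when $Q(1)=0$ (so that $\chi_{triv}(Q(\zeta))=0$ and the previous proposition no longer applies), but since divisibility for injective modules requires nothing more than being a central non-zero-divisor, this is routine. I would therefore expect no genuine obstacle; the statement is essentially a packaging of the two cases of the previous proposition, with the $Q(1)=0$ case handled by exploiting the direct-sum decomposition $E^d \cong \ker(\trace^d)\oplus \chi_{triv}$ coming from \eqref{f:keydecompEd}.
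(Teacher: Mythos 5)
Your proof reproduces the paper's own argument almost line by line: $Q(\zeta)$ central and a non-zero-divisor via Lemma \ref{freeness}, $\chi_{triv}(Q(\zeta))=Q(1)$, then the decomposition $E^d\cong\ker(\trace^d)\oplus\chi_{triv}$ tensored with $H/HQ(\zeta)$, with $\ker(\trace^d)$ being $Q(\zeta)$-divisible because it is injective. All of this is correct and is exactly what the paper does.

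The one place where you are too quick is the passage from a left-module identification to the $H$-bimodule statement. The formula $H^d(I,\X\otimes_H M)\cong E^d\otimes_H M$ is established in the paper only as an isomorphism of \emph{left} $H$-modules (it is stated that way immediately before Proposition \ref{prop:xinonzerodiv}), so the chain
\[
H^d(I,\X/\X Q(\zeta)) \;\cong\; \chi_{triv}\otimes_H H/HQ(\zeta)\;\oplus\;\ker(\trace^d)\otimes_H H/HQ(\zeta)
\]
a priori gives $H^d(I,\X/\X Q(\zeta))\cong\chi_{triv}$ only as a left $H$-module, even though $\chi_{triv}\otimes_H H/HQ(\zeta)$, taken on its own, carries a natural bimodule structure isomorphic to $\chi_{triv}$. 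To conclude the bimodule isomorphism one needs a separate argument; the paper does this by observing that $H^d(I,\X/\X Q(\zeta))$, being the image of $E^d$ under the right-exact functor $H^d(I,-)$ applied to the surjection $\X\twoheadrightarrow\X/\X Q(\zeta)$, is a one-dimensional quotient of the $H$-bimodule $E^d$, and then invoking Remark \ref{rema:fdimquotient} (that $\chi_{triv}$ is the only nontrivial finite-dimensional left or right quotient of $E^d$). You should add this step, or alternatively justify that the isomorphism $H^d(I,\X\otimes_H M)\cong E^d\otimes_H M$ is natural in $M$ and hence respects the right $H$-action; either route closes the gap.
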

\begin{proof}
For the second part of the result, we simply notice that $\chi_{triv}\otimes_ H H/HQ(\zeta) \cong \chi_{triv}$ as a left $H$-module. Therefore,  proceeding as above, we obtain an isomorphism of left $H$-modules $H^d(I, \X/\X Q(\zeta))\cong \chi_{triv}$. This is an isomorphism of $H$-bimodules because   $H^d(I, \X/\X Q(\zeta))$ is a one-dimensional quotient of $E^d$ and using Remark \ref{rema:fdimquotient}.
\end{proof}

\begin{proposition}\label{prop:HdVzero}
   We have $H^d(I, V)=0$ for any irreducible admissible representation of $G := {\rm SL}_2(\mathfrak F)$ except when  $V= k_{triv}$ is the trivial representation in which case:
\begin{equation*}
  H^d(I,k_{triv})\cong\chi_{triv}  \qquad\text{as an $H$-bimodule}.
\end{equation*}
\end{proposition}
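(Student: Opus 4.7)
The plan is to use the surjection $\mathbf{X}\otimes_H V^I\twoheadrightarrow V$ together with right-exactness of $H^d(I,-)$ and the identity $H^d(I,\mathbf{X}\otimes_HM)\cong E^d\otimes_HM$, reducing most of the work to Corollary~\ref{cor:xinonzerodiv}. The surjection is valid because $V^I\neq 0$ (as $I$ is pro-$p$ and $V$ is a nonzero smooth representation in characteristic $p$) and $V$ is irreducible, so any nonzero $I$-fixed vector generates it; admissibility makes $V^I$ finite dimensional, and the central element $\zeta$ acts on it with some minimal polynomial.

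If $\zeta$ admits some eigenvalue other than $1$ on $V^I$, the nonzero $H$-submodule $V^I_{\neq 1}$ (the sum of generalized eigenspaces for eigenvalues $\neq 1$) still generates $V$ by irreducibility and is killed by a polynomial $Q(\zeta)$ with $Q(1)\neq 0$. A surjection $(\mathbf{X}/\mathbf{X}Q(\zeta))^n\twoheadrightarrow V$ results, and Corollary~\ref{cor:xinonzerodiv} forces $H^d(I,V)=0$.

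The remaining case is $(\zeta-1)^aV^I=0$, so that $\zeta-1$ is nilpotent on $V^I$ and $\ker(\zeta-1\mid V^I)\neq 0$. I split by $V^I=e_1V^I\oplus(1-e_1)V^I$. If $e_1V^I=0$ I pick a nonzero $v\in(1-e_1)V^I\cap\ker(\zeta-1)$; the map $\mathbf{X}\twoheadrightarrow V$ sending $\mathrm{char}_I$ to $v$ factors through $\mathbf{X}(1-e_1)/\mathbf{X}(1-e_1)(\zeta-1)$, and applying $E^d\otimes_H(-)$ to the corresponding quotient of $H$ yields zero: the $\chi_{triv}$-summand vanishes because $\chi_{triv}(1-e_1)=0$, and $\ker(\trace^d)$ vanishes because it is right $\zeta$-torsion (Proposition~\ref{prop:kerS-injhull}) while $\zeta$ acts as the identity on the module. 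If instead $e_1V^I\neq 0$, nilpotency yields $v\in e_1V^I\cap\ker(\zeta-1)$ nonzero, and the induced surjection becomes $\mathbf{X}e_1/\mathbf{X}e_1(\zeta-1)\twoheadrightarrow V$; Fact~\ref{fact:noclassif} then forces $V^I\cong\chi_{triv}$ or $V^I\cong\chi_{sign}$, whence $V\cong k_{triv}$ or $V\cong\mathbf{X}\otimes_H\chi_{sign}$.

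To conclude I compute $H^d(I,V)\cong E^d\otimes_HV^I$ for $V^I\in\{\chi_{triv},\chi_{sign}\}$. The $\ker(\trace^d)$-summand vanishes in both cases because $\chi_{triv}(\zeta)=\chi_{sign}(\zeta)=1$ makes $\zeta$ invertible on the character, while $\ker(\trace^d)$ is right $\zeta$-torsion. The $\chi_{triv}$-summand contributes $\chi_{triv}\otimes_H\chi_{triv}\cong\chi_{triv}$ as an $H$-bimodule for $V=k_{triv}$, giving the asserted isomorphism, and $\chi_{triv}\otimes_H\chi_{sign}=0$ for the other case (since $\chi_{triv}(\tau_{s_0})=0\neq-1=\chi_{sign}(\tau_{s_0})$). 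The main technical input is Fact~\ref{fact:noclassif}, which handles the most delicate subcase; the rest is structural.
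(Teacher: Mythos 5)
The main gap is that you apply Fact~\ref{fact:noclassif} without ensuring its hypothesis $\mathbb{F}_q \subseteq k$. The paper's proof handles this by first reducing to the algebraic closure $\bar{k}$: it uses that $H^d(I,-)$ commutes with arbitrary direct sums (so $H^d(I,V_{\bar{k}}) = H^d(I,V)\otimes_k\bar{k}$), that admissibility makes $\End_{\Mod(G)}(V)$ finite dimensional, and that by \cite{HV}~Thm.~III.4.1 the scalar extension $V_{\bar{k}}$ has finite length with admissible irreducible constituents none isomorphic to $k_{triv}$; an exact-cohomology-sequence argument then reduces the assertion to $\bar{k}$. Without some such step your invocation of Fact~\ref{fact:noclassif} in the branch $e_1V^I\neq 0$ is unjustified. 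A smaller imprecision: you write $\chi_{triv}\otimes_H\chi_{triv}\cong\chi_{triv}$ ``as an $H$-bimodule,'' but $E^d\otimes_H V^I$ carries only a left $H$-module structure, so this does not by itself deliver the asserted bimodule isomorphism; one should instead note that $\trace:\X\to k_{triv}$ exhibits $H^d(I,k_{triv})$ as a one-dimensional $H$-bimodule quotient of $E^d$ and invoke Remark~\ref{rema:fdimquotient}, or simply cite \cite{Ext}~Prop.~8.4.i as the paper does.

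Apart from these points your route is genuinely different from the paper's and otherwise sound. After isolating the $(\zeta-1)$-primary part, the paper first forces $V^I$ to be a simple $H$-module (via \cite{embed}~Thm.~3.33, showing $\X\otimes_H Hv_0=V$), then splits over $\gamma\in\Gamma$, killing $\gamma\neq\{1\}$ because $\chi_{triv}(e_\gamma)=0$ and using Fact~\ref{fact:noclassif} only for $\gamma=\{1\}$. You instead split directly on the idempotent $e_1$: when $e_1V^I=0$ you obtain $H^d(I,V)=0$ by a pure module computation ($\chi_{triv}(1-e_1)=0$ and $\zeta-1$ bijective on the $\zeta$-torsion module $\ker(\trace^d)$), bypassing \cite{embed}~Thm.~3.33 in that branch; only $e_1V^I\neq 0$ calls on Fact~\ref{fact:noclassif}. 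This is leaner in the generic case, but it does not sidestep the base-extension step above.
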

\begin{proof}
 The case when $V=k_{triv}$ is the trivial representation of $G$  is a particular case of \cite{Ext} Prop. 8.4.i. For the rest of the proof we therefore assume that $V \ncong k_{triv}$.  We first make we the following observations. Let $\bar{k}/k$ denote an algebraic closure of $k$. Then the scalar extension $V_{\bar{k}} := \bar{k} \otimes_k V$ is a smooth $G$-representation over $\bar{k}$.
\begin{itemize}
  \item Since $H^d(I,-)$ commutes with arbitrary direct sums we have $H^d(I,V_{\bar{k}}) = H^d(I, V) \otimes_k \bar{k}$.
  \item Since $V$ is admissible $\End_{\Mod(G)}(V)$ is finite dimensional over $k$.
  \item The $G$-representation $V_{\bar{k}}$ is of finite length with each irreducible constituent being admissible and not isomorphic to $k_{triv}$ (\cite{HV} Thm.\ III.4.1)-2), which needs the previous point as input).
\end{itemize}
By an argument with the exact cohomology sequence these observations reduce us to proving our assertion over $\bar{k}$. In fact, all we need in the following is that $\mathbb F_q\subseteq k$.

Given an irreducible admissible representation $V$ of $G$, the space $V^I$ is finite dimensional.  Let $Q\in k[X]$ denote the minimum polynomial of $\zeta$ on $V^I$, so that $Q(\zeta) V^I = 0$.  We claim that $V$ is a quotient representation of $\X/{\X Q(\zeta)}$. For this we choose a nonzero vector $v_0 \in V^I$, which gives rise to the surjective $G$-equivariant map $\mathbf{X} \twoheadrightarrow V$ sending $gI$ to $gv_0$. It restricts to the map $H \rightarrow V^I$ sending $\mathrm{char}_I$ to $v_0$. But $(gI)Q(\zeta) = gQ(\zeta) \mapsto gQ(\zeta)v_0 = 0$. It follows that the initial map factors over $\mathbf{X}/\mathbf{X}Q(\zeta)$.

If $Q(1) \neq 0$, then we have $H^d(I, \X/\X Q(\zeta))=0$, but $H^d(I, V)$ being a quotient of that space it is also zero. It remains to treat the case $Q(1) = 0$. Then we can choose the above vector $v_0$ so that $(\zeta - 1)v_0 = 0$ and $M := H v_0$ is a simple $H$-submodule of $V^I$. Since $\zeta$ is the identity on $M$, it follows from \cite{embed} Thm.\ 3.33 that $\mathbf{X} \otimes_H M$ is an irreducible $G$-representation with $(\mathbf{X} \otimes_H M)^I = M$. The inclusion $M \subseteq V^I$ induces a nonzero map $\mathbf{X} \otimes_H M \rightarrow V$ which by irreducibility must be an isomorphism. It follows that $V = \mathbf{X} \otimes_H V^I$ and that $V^I$ is a simple $H$-module. Hence there is a unique $\gamma\in \Gamma$ such that $V^I = e_\gamma V^I$ (notation in \S\ref{subsubsec:idempo}). It further follows that $H^d(I,V) = H^d(I, \mathbf{X} \otimes_H V^I) = E^d \otimes_H V^I \cong \chi_{triv} \otimes_H V^I \oplus  \ker(\trace^d) \otimes_H V^I = \chi_{triv} \otimes_H V^I$, the latter equality since $\ker(\trace^d)$ is divisible by $\zeta- 1$.
\begin{itemize}
\item If $\gamma\neq\{1\}$, then the idempotent  $e_\gamma$ satisfies
        $\chi_{triv}(e_\gamma)=0$, so  $H^d(I,V)= \{0\}$.
 \item If $\gamma=\{1\}$, then  we use Fact \ref{fact:noclassif}  to deduce that
     $V^I\cong \chi_{triv}$ or  $V^I\cong \chi_{sign}$.
 If  $V^I\cong \chi_{sign}$, then $ \chi_{triv}\otimes_H V^I=\{0\}$ because $\chi_{triv}(\tau_{s_0})=0$ and  $\chi_{sign}(\tau_{s_0})=-1$. If $V^I=\chi_{triv}$, then by \cite{OV} Lemma 2.25 we know that $\X\otimes _H  V^I\cong k_{triv}$ so $V= k_{triv}$.
\end{itemize}

\end{proof}

{\begin{remark}  \label{rema:centralizeEd}Let $z\in H$ be a central element $H$. Then $\anti(z)$ is also a central element and from the isomorphism
$\Delta^d:\quad E^d\xrightarrow{\cong}\: ({}^\anti E^0\,^\anti)^{\vee, f}$ (see \eqref{f:dual}) we  deduce that $z$ centralizes the elements of the $H$-bimodule $E^d$, namely $z\cdot \phi=\phi\cdot z$ for any $\phi\in E^d$. In particular the  left and the right actions  on $E^d$ of the central element $\zeta\in H $  coincide.
\end{remark}}

 \begin{lemma}\phantomsection\label{lemma:kerzetaE3}
 The kernel of the (left or right) action of $\zeta$  on $E^d$   is    isomorphic to $ \upiota ( H/\zeta H)$ as an $H$-bimodule.

%
%
%
%
%
\end{lemma}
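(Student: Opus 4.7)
My plan is to reduce the statement to a direct application of the Frobenius extension Corollary. First, by Remark~\ref{rema:centralizeEd}, the element $\zeta$ acts on $E^d$ in the same way from the left and from the right, so the kernel of the $\zeta$-action is an unambiguous $H$-sub-bimodule. Using the decomposition~\eqref{f:keydecompEd}, $E^d \cong \ker(\trace^d) \oplus \chi_{triv}$, combined with $\chi_{triv}(\zeta)=1$, it is immediate that $\zeta$ acts as the identity on the character $\chi_{triv}$, so the kernel of $\zeta$ on $E^d$ is entirely contained in the $\ker(\trace^d)$ summand.

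The next step is to invoke Proposition~\ref{prop:kerS-injhull}, which specializes in the $\mathrm{SL}_2$-situation to an isomorphism of $H$-bimodules $\ker(\trace^d) \cong \bigcup_{m\geq 1}(H/\zeta^m H)^\vee$. On each $(H/\zeta^m H)^\vee$, a direct computation shows that $(\zeta\phi)(x) = \phi(x\zeta) = \phi(\zeta x)$ since $\zeta$ is central, so $\phi$ is annihilated by $\zeta$ if and only if $\phi$ vanishes on the image of $\zeta H$ in $H/\zeta^m H$, equivalently $\phi$ factors through the natural surjection $H/\zeta^m H \twoheadrightarrow H/\zeta H$. Consequently the $\zeta$-torsion subspace of $\bigcup_m (H/\zeta^m H)^\vee$ is precisely the canonical image of $(H/\zeta H)^\vee$, and this identification is manifestly $H$-bilinear. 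Putting these observations together yields $\ker(\zeta\vert_{E^d}) \cong (H/\zeta H)^\vee$ as $H$-bimodules.

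To finish, I would apply the Frobenius corollary~\eqref{f:frobeniuszeta} with $a=0$, which gives $\upiota(H/\zeta H) \cong \Hom_k(H/\zeta H,k) = (H/\zeta H)^\vee$ as $H$-bimodules. Concatenating with the previous step produces the claimed isomorphism $\ker(\zeta\vert_{E^d}) \cong \upiota(H/\zeta H)$. The main point requiring care is to make sure Proposition~\ref{prop:kerS-injhull} really provides an \emph{untwisted} bimodule isomorphism between $\ker(\trace^d)$ and $\bigcup_m (H/\zeta^m H)^\vee$; this is exactly what the application of Remark~\ref{rema:omittwists} in the proof of that proposition accomplishes, so no residual $\anti$-twist survives to interfere with the final identification, and the rest of the argument is straightforward bookkeeping inside the category of $H$-bimodules.
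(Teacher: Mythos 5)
Your proposal is correct and follows essentially the same route as the paper's own proof: decompose $E^d \cong \ker(\trace^d) \oplus \chi_{triv}$ via~\eqref{f:keydecompEd}, use $\chi_{triv}(\zeta)=1$ to discard the trivial summand, invoke Proposition~\ref{prop:kerS-injhull} to identify $\ker(\trace^d) \cong \bigcup_m (H/\zeta^m H)^\vee$, observe that the $\zeta$-torsion of this union is $(H/\zeta H)^\vee$, and close with the Frobenius isomorphism~\eqref{f:frobeniuszeta}. You spell out a couple of steps the paper leaves implicit (the explicit computation of the $\zeta$-annihilator inside the union, and the remark about the $\anti$-twist being absorbed in Prop.~\ref{prop:kerS-injhull}), but the argument is the same.
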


\begin{proof}
By \eqref{f:keydecompEd} and Proposition \ref{prop:kerS-injhull},  we have $E^d\cong \bigcup_{n\geq 1}{(H/\zeta^n H)^\vee}\oplus \chi_{triv}$   as $H$-bimodules. Recall that $\chi_{triv}(\zeta)=1$.
The kernel of the action of $\zeta$ on $\bigcup_{n\geq 1}{(H/\zeta^n H)^\vee}\oplus \chi_{triv}$   is isomorphic to the $H$-bimodule  $(H/\zeta H)^\vee $ which, by \eqref{f:frobeniuszeta}, is isomorphic to  $\upiota ( H/\zeta H)$.
 \end{proof}

\section{\label{sec:E1}Formulas for the left action of $H$ on $E^1$ when ${\mathbf G}={\rm SL}_2(\mathbb Q_p)$, $p\neq 2,3$}

There is no hypothesis on $\mathfrak F$  and $G={\rm SL}_2(\mathfrak F)$ in \S\ref{subsec:conjugation} --\S\ref{subsec:commuel} with the exception that we assume $p \neq 2$ from \S\ref{triples} on.

\subsection{Conjugation by $\varpi$\label{subsec:conjugation}}

Recall the matrix  $\varpi:=
\left(\begin{smallmatrix}
       0 & 1 \\
       \pi & 0
\end{smallmatrix}\right)$ (Remark \ref{remark:N}) which normalizes the Iwahori subgroup $J$ and its pro-$p$ Sylow $I$ as well as the torus $T$.
We apply Section \ref{subsubsec:auto-pair} to the following automorphism of the pair $(g,\mathbf{X})$:
\begin{equation}\label{f:xiX}
   \xi: G\longrightarrow  G, \quad g\longmapsto \varpi^{-1} g\varpi\ \quad\text{and}\quad    \mathcal X: \X\longrightarrow  \X, \quad f\longmapsto f\circ \xi \ (\text{resp.\ $gI \mapsto \varpi g \varpi^{-1} I$}) .
\end{equation}
It gives rise to the involutive automorphism
\begin{align} \label{gammapi}
   \Gamma_\varpi := \Gamma_\xi : E^* = H^*(I, \X) \longrightarrow E^* = H^*(I, \X)
\end{align}
which is multiplicative for the Yoneda product as well as the cup product. It has all the properties listed in Section \ref{subsubsec:auto-pair}. In the following we sometimes abbreviate $\pw := \varpi w \varpi^{-1}$ for any $w \in \widetilde{W}$. We need the following additional fact. Recall that $\phi_w\in H^d(I,\mathbf{X}(w))$ was defined in \eqref{defiphi}.

\begin{lemma}
   Assume $I$ is a Poincar\'e group of dimension $d$. For $w\in \widetilde W$ we have
\begin{equation}\label{f:conjphi}
   \Gamma_\varpi(\phi_w)=\phi_{\varpi w\varpi^{-1}}\ .
\end{equation}
\end{lemma}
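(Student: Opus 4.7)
The plan is to verify directly that $\Gamma_\varpi(\phi_w)$ satisfies the defining normalization \eqref{defiphi} of $\phi_{\pw}$. By \eqref{gammaxiw} we already know that $\Gamma_\varpi(\phi_w)\in H^d(I,\mathbf X(\xi^{-1}(w)))=H^d(I,\mathbf X(\pw))$, and since this space is one-dimensional it is enough to check the scalar, i.e.\ that $\eta\circ \trace^d(\Gamma_\varpi(\phi_w))=1$.

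The key formal input is the naturality identity
\begin{equation*}
  \trace^d\circ \Gamma_\varpi \;=\; \xi^{*}\circ \trace^d \quad \text{as maps } H^d(I,\mathbf X)\to H^d(I,k),
\end{equation*}
where $\xi^{*}:H^d(I,k)\to H^d(I,k)$ denotes the map induced by the automorphism $\xi|_I:I\xrightarrow{\cong} I$. To derive it, one observes that the $G$-equivariant trace $\trace = \sum_{g\in G/I}\ev_g$ satisfies $\trace\circ \mathcal X = \trace$ as $k$-linear maps, because $gI\mapsto \varpi^{-1}g\varpi I$ is a bijection of $G/I$; re-read in $\Mod(I)$, this equality becomes $\trace\circ \mathcal X = \xi^{*}\trace$, and applying $H^d(I,-)$ via the functorial construction $(\xi,f)^{*}$ of \S\ref{subsubsec:auto-pair} gives the identity. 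An equivalent route goes through \eqref{f:xi-shapi} combined with the standard compatibility $\cores^{I_{\pw}}_{I}\circ \xi^{*}=\xi^{*}\circ \cores^{I_w}_{I}$ of corestriction with the automorphism $\xi$ of $I$ that carries $I_{\pw}$ onto $I_w$.

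Inserting $\phi_w$ and using the normalization $\eta\circ \trace^d(\phi_w)=1$, the lemma reduces to showing that $\xi^{*}$ acts as the identity on $H^d(I,k)$. Since $\varpi^{2}=\pi\,\mathbf 1_{2\times 2}$ lies in the centre of ${\rm GL}_2(\mathfrak F)$, it centralizes $I$, so $\xi^{2}=\id_I$ and $(\xi^{*})^{2}=\id$ on the one-dimensional space $H^d(I,k)$; hence $\xi^{*}=\pm 1$. This last step—ruling out the sign $-1$—is the main obstacle, as $\xi$ is a non-inner automorphism of the Poincar\'e group $I$. For $p\geq 5$ (forced by the Poincar\'e hypothesis), $I$ is uniform pro-$p$, and the Lazard isomorphism identifies the action of an automorphism of $I$ on $H^d(I,k)$ with the reduction modulo $p$ of its determinant on the $\mathbb Z_p$-lattice $\mathrm{Lie}(I)$. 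A direct calculation on the $\mathfrak O$-basis $(\pi h,e,\pi f)$ of $\mathrm{Lie}(I)=\mathfrak M h\oplus \mathfrak O e\oplus \mathfrak M f$ shows that conjugation by $\varpi^{-1}$ sends $\pi h\mapsto -\pi h$, $e\mapsto \pi f$, $\pi f\mapsto e$, with determinant $+1$ (independently of $[\mathfrak F:\mathbb Q_p]$). Therefore $\xi^{*}=\id$ on $H^d(I,k)$, and the lemma follows.
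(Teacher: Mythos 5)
Your reduction to showing that $\xi^{*}$ (equivalently the paper's $\varpi_*$) acts as the identity on $H^d(I,k)$ is correct, and the route via $\trace\circ\mathcal X=\trace$ is a clean alternative to the paper's argument through the Shapiro diagram \eqref{f:xi-shapi}; the Lie-algebra determinant you compute is also correct and gives $+1$. The preliminary observation that $\xi^2=\id_I$ (since $\varpi^2=\pi\cdot\mathbf 1$ is central in ${\rm GL}_2(\mathfrak F)$), forcing $\xi^{*}=\pm 1$, is a nice simplification not appearing in the paper.

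However, the sentence asserting that $I$ itself is uniform is false for every $\mathfrak F$, and this is a genuine gap. By \cite[Prop.\ 3.62]{embed}, restated in \eqref{f:I+ab} with $\ell(w)=0$, the Frattini quotient $I_\Phi$ is isomorphic to $(\mathfrak O/\mathfrak M)^2$ (the middle factor is trivial at level $0$), so $\dim_{\mathbb F_p} I_\Phi = 2f$, whereas $\dim I = 3[\mathfrak F:\mathbb Q_p]=3ef$; these never coincide. The paper records exactly this when $\mathfrak F=\mathbb Q_p$: Remark \ref{rema:simpliQp} gives $I_\Phi\cong(\mathbb Z_p/p)^2$, and \S\ref{subsubsec:normalize} emphasizes that $I_w$ is uniform only for $\ell(w)>0$. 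Consequently Lazard's identification $H^d(I,k)\cong\wedge^d H^1(I,k)$ does not apply to $I$ (already $\dim_k H^1(I,k)=2f<d$ when $\mathfrak F=\mathbb Q_p$), so your identification of $\xi^{*}$ on $H^d(I,k)$ with a reduced determinant on $\mathrm{Lie}(I)$ is unjustified as written.

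The repair is precisely the step the paper takes and you omitted: pass to the open subgroups $K_{C,m}\subset I$, which are normalized by $\varpi$ and uniform for $m\gg 0$, use the isomorphism $\cores^{K_{C,m}}_I: H^d(K_{C,m},k)\xrightarrow{\ \cong\ } H^d(I,k)$ (which commutes with $\varpi_*$) to transport the question to $K_{C,m}$, and apply Lazard there. Since the determinant of $\varpi_*$ over $\mathbb Q_p$ is lattice-independent, your computation on $\mathrm{Lie}(I)\otimes\mathbb Q_p$ does give the correct value $+1$ on $\mathrm{Lie}(K_{C,m})$, and the argument then closes; but without the reduction to a uniform open subgroup the key step has no justification.
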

\begin{proof}
We recall from \eqref{f:xi-shapi} that we have the commutative diagram
\begin{equation*}
  \xymatrix{
H^d(I,\mathbf{X}(w)) \ar[r]^{\Sh_w} \ar[d]_{\Gamma_\varpi} &  H^d(I_w,k) \ar[r]^{\cores} \ar[d]^{\varpi_*} & H^d(I,k) \ar[d]^{\varpi_*}  \\
H^d(I,\mathbf{X}(\pw)) \ar[r]^-{\Sh_{\pw}} &  H^d(I_{\pw},k) \ar[r]^{\cores} & H^d(I,k)  }
\end{equation*}
where $\varpi_* = (\varpi^{-1})^*$ is the conjugation operator given on cocycles by sending $c$ to $c({\varpi^{-1}}_{-} \varpi)$. We will prove that the operator $\varpi_*$ on $H^d(I,k)$ is the identity. For this we follow the same idea as in  \cite{Ext} \S 7.2.3 and \cite{Koziol} Thm. 7.1.

For any $m \geq 1$ we have the open subgroup $K_{C,m} := \left(
\begin{smallmatrix}
1+\mathfrak{M} & \mathfrak{M}^m \\ \mathfrak{M}^{m+1} & 1+\mathfrak{M}
\end{smallmatrix}
\right)$ of $I$. It is normalized by $\varpi$. Since $\cores^{K_{C,m}}_I : H^d(K_{C,m},k) \xrightarrow{\cong} H^d(I,k)$ is an isomorphism (\cite{Ext} Rmk. 7.3) and commutes with corestriction we are reduced to showing that the operator $\varpi_*$ on $H^d(K_{C,m},k)$ is the identity. But for $m$ large enough the pro-$p$ group $K_{C,m}$ is uniform by \cite{Ext} Cor.\ 7.8 and Rmk.\ 7.10. So by \cite{Laz} V.2.2.6.3 and V.2.2.7.2, the one dimensional $k$-vector space $H^d(K_{C,m},k)$ is the maximal exterior power (via the cup product) of the $d$-dimensional $k$-vector space $H^1(K_{C,m},k)$. Conjugation commuting with the cup product, the action of $\varpi_*$ on $H^d(K_{C,m},k)$ is the determinant of $\varpi_*$  on $H^1(K_{C,m},k)$. The latter is the dual of the Frattini quotient $(K_{C,m})_\Phi$. This reduces us further to showing that the determinant of $\varpi_*$ on $(K_{C,m})_\Phi$ is equal to $1$. For this we consider the subgroups $ \EuScript{U}^-_{m+1}=\begin{psmallmat} 1& 0\cr \mathfrak M^{m+1}&  t \end{psmallmat}$, $ \EuScript{U}^+_m=\begin{psmallmat} 1& \mathfrak M^{m}\cr 0&  1 \end{psmallmat}$, and $T^m := \begin{psmallmat} 1+ \mathfrak M^m& 0\cr0&  1+ \mathfrak M^m\end{psmallmat}$ of $K_{C,m}$. According to \cite{Ext} Cor.\ 7.9 multiplication gives an isomorphism
\begin{equation*}
 \EuScript{U}^-_{m+1}/(\EuScript{U}^-_{m+1})^p \times T^m/(T^m)^p \times  \EuScript{U}^+_m/(\EuScript{U}^+_m)^p \xrightarrow{\;\cong\;} (K_{C,m})_\Phi \ .
\end{equation*}
One easily checks that $\varpi_*$ restricts to an isomorphism $\EuScript{U}^-_{m+1}/(\EuScript{U}^-_{m+1})^p \cong \EuScript{U}^+_m/(\EuScript{U}^+_m)^p$. These are $\mathbb{F}_p$-vector spaces of dimension equal to $[\mathfrak F:\mathbb Q_p]$. Hence the determinant of $\varpi_*$ on $\EuScript{U}^-_{m+1}/(\EuScript{U}^-_{m+1})^p \times  \EuScript{U}^+_m/(\EuScript{U}^+_m)^p$ is equal to $(-1)^{[\mathfrak F:\mathbb Q_p]}$. On the other hand, for $m$ large enough, the logarithm induces an isomorphism $T^m/(T^m)^p \cong 1+\pi^m \mathfrak{O}/(1+\pi^m\mathfrak{O})^p \cong \pi^m \mathfrak{O}/ p\pi^m\mathfrak{O} \cong \mathfrak{O}/p\mathfrak{O}$ with respect to which $\varpi_*$ corresponds to multiplication by $-1$. Hence its determinant on this factor is again equal to $(-1)^{[\mathfrak F:\mathbb Q_p]}$.
\end{proof}

\subsection{Elements of  $E^1$ as triples} \label{triples}
From now on we assume $p\neq 2$ unless it is specifically stated otherwise.

\subsubsection{Definition}

We refer to the notation introduced in \S\ref{rootdatum}.
We introduce the following subsets of $\widetilde W$:
\begin{align*}
{\widetilde W}^0&:=\{w\in {\widetilde W}, \: \ell(s_0 w)=\ell(w)+1\} \text{ and }\\
{\widetilde W}^1&:=\{w\in {\widetilde W}, \: \ell(s_1w)=\ell(w)+1\}.
\end{align*}
Note that the intersection of these  two subsets coincides with the set $\Omega=T^0/T^1$ of all elements in ${\widetilde W}$ with length $0$. Recall as in \cite[3.3]{embed}, we define for $m\geq 0$ the subgroups
\begin{equation}\label{defiIn}
  I_m^+ := \left(
\begin{smallmatrix}
1+\mathfrak{M} & \mathfrak{O} \\ \mathfrak{M}^{m+1} & 1+\mathfrak{M}
\end{smallmatrix}
\right) \qquad\text{and}\qquad I_m^- =\varpi I_m ^+ \varpi^{-1}= \varpi^{-1} I_m^+\varpi= \left(
\begin{smallmatrix}
1+\mathfrak{M} & \mathfrak{M}^m \\ \mathfrak{M} & 1+\mathfrak{M}
\end{smallmatrix}
\right)
\end{equation}
of $I$ and recall that
\begin{equation}\label{f:cap}
I_w=  I \cap wIw^{-1} =
  \begin{cases}
  I_{\ell(w)}^+ & \text{if $w\in {\widetilde W}^0$}, \\
  I_{\ell(w)}^- & \text{if $w\in {\widetilde W}^1$}.
  \end{cases}
\end{equation}

We abbreviate
$
  h^1 := H^1(I,\mathbf{X})$  and $   h^1(w) := H^1(I,\mathbf{X}(w))$ for $w \in \widetilde{W}$.
Recall the Shapiro isomorphism
$
  h^1(w) \cong H^1(I_w,k) = \Hom((I_w)_\Phi,k) $  (\S\ref{subsec:Ext}) where $(I_w)_\Phi$ denotes the Frattini quotient of $I_w$ (\cite{embed} \S 3.8).
   By \cite{embed} Prop. 3.62 we have isomorphisms
\begin{equation*}
(I_w)_\Phi \xrightarrow{\; \cong \;}  \mathfrak{O}/\mathfrak{M} \times (1+\mathfrak{M}) \big/ (1+\mathfrak{M}^{\ell(w)+1})(1+\mathfrak{M})^p \times \mathfrak{O}/\mathfrak{M}
\end{equation*}
for any $w \in \widetilde{W}$ (depending on a choice of a prime element in $\mathfrak{M}$). More precisely,    when $w\in {\widetilde W}^0$:
\begin{align}\label{f:I+ab}
 (I_{\ell(w)}^+)_\Phi&\xrightarrow{\; \cong \;}  \mathfrak{O}/\mathfrak{M} \times (1+\mathfrak{M}) \big/ (1+\mathfrak{M}^{\ell(w)+1})(1+\mathfrak{M})^p \times \mathfrak{O}/\mathfrak{M} \cr\begin{psmallmat} 1+\pi x & y \cr \pi^{\ell(w)+1} z & 1+\pi t \end{psmallmat} \bmod \Phi(I_w)&\longmapsto (z\bmod \mathfrak M, \,1+\pi x\bmod (1+\mathfrak{M}^{\ell(w)+1})(1+\mathfrak{M})^p,\, y\bmod \mathfrak M)
\end{align} and when $w\in {\widetilde W}^1$:
\begin{align}\label{f:I-ab}
 (I_{\ell(w)}^-)_\Phi&\xrightarrow{\; \cong \;}  \mathfrak{O}/\mathfrak{M} \times (1+\mathfrak{M}) \big/ (1+\mathfrak{M}^{\ell(w)+1})(1+\mathfrak{M})^p \times \mathfrak{O}/\mathfrak{M} \cr\begin{psmallmat} 1+\pi x & \pi ^{\ell(w)} y \cr \pi z & 1+\pi t \end{psmallmat} \bmod \Phi(I_w)&\longmapsto (z\bmod \mathfrak M, \,1+\pi x\bmod (1+\mathfrak{M}^{\ell(w)+1})(1+\mathfrak{M})^p,\, y\bmod \mathfrak M).\end{align}
By applying $\Hom(_-,k)$ and using the Shapiro isomorphism we deduce, for any $w \in \widetilde{W}$, a decomposition
\begin{equation*}
  h^1(w) = h^1_-(w) \oplus h^1_0(w) \oplus h^1_+(w)
\end{equation*}
such that
\begin{equation*}
  \begin{aligned}
  h^1_-(w) \\
  h^1_0(w) \\
  h^1_+(w)
  \end{aligned}
  \cong \Hom\left(
  \begin{aligned}
  \text{left factor} \\
  \text{middle factor} \\
  \text{right factor}
  \end{aligned}
  ,\: \: k\right) \ .
\end{equation*}
For any element $c \in h^1(w)$ we write this decomposition as
\begin{align}\label{tripdec}
  &{\Sh_w(c) = (c^-,c^0,c^+) }\text{ with }\cr c^\pm \in \Hom(\mathfrak{O}/\mathfrak{M},k) &\text{ and } c^0 \in \Hom((1+\mathfrak{M}) \big/ (1+\mathfrak{M}^{\ell(w)+1})(1+\mathfrak{M})^p ,k) \ .
\end{align}
We will often denote by \begin{equation}\label{f:defitrip}(c^-, c^0, c^+)_w\end{equation} the element in $h^1(w)$
which has image the triple $(c^-, c^0, c^+)\in H^1(I_w,k)$ via Shapiro isomorphism (with $c^0$ implicitly equal to $0$ when $\ell(w)=0$).

\begin{remark} \label{rema:simpliQp}
When $\mathfrak F=\mathbb Q_p$ and $p\neq 2$, we have $1+p^2\mathbb Z_p=(1+p\mathbb Z_p)^p$  since $\log : 1+p\mathbb{Z}_p \xrightarrow{\cong} p \mathbb{Z}_p$. Therefore,  when $\ell(w)\geq 1$, the identifications \eqref{f:I+ab} and \eqref{f:I-ab} become:
\begin{align}\label{f:I+abQp}
 (I_w)_\phi=(I_{\ell(w)}^+)_\Phi&\xrightarrow{\; \cong \;}  \mathbb Z_p/p\mathbb Z_p \times (1+p\mathbb Z_p) \big/ (1+p^2\mathbb Z_p) \times \mathbb Z_p/p\mathbb Z_p \cr\begin{psmallmat} 1+p x & y \cr p^{m+1} z & 1+p t \end{psmallmat} \bmod \Phi(I_w)&\longmapsto (z \bmod
 p\mathbb Z_p, 1+p x \bmod
 1+p^2\mathbb Z_p, y\bmod
 p\mathbb Z_p) \textrm{ when $w\in {\widetilde W}^0$} \ .
 \end{align}
 (in particular,
for $w\in \widetilde W^0$, $\ell(w)\geq 1$ we have $\res^{I_{s_1}}_{I_w}(\Sh_{s_1}(0, c^0,c^+)_{s_1})=\Sh_{w}((0, c^0,c^+)_w)$;
and
\begin{align}\label{f:I-abQp}
  (I_w)_\phi=(I_{\ell(w)}^-)_\Phi&\xrightarrow{\; \cong \;}  \mathbb Z_p/p\mathbb Z_p \times (1+p\mathbb Z_p) \big/ (1+p^2\mathbb Z_p) \times \mathbb Z_p/p\mathbb Z_p \cr\begin{psmallmat} 1+p x & p ^m y \cr p z & 1+p t \end{psmallmat} \bmod \Phi(I_w)&\longmapsto (z\bmod  p\mathbb Z_p , \,1+p x  \bmod 1+ p^2\mathbb Z_p, y \bmod  p\mathbb Z_p)  \textrm{ when $w\in {\widetilde W}^1$}.\end{align}
 (in particular,
for $w\in \widetilde W^1$, $\ell(w)\geq 1$ we have $\res^{I_{s_0}}_{I_w}(\Sh_{s_0}(c^-, c^0,0)_{s_0})=\Sh_{w}(c^-, c^0,0)_w)$. When $\ell(w)=0$, we have
$(I_w)_\Phi = I_\Phi \xrightarrow{\; \cong \;}  \mathbb Z_p/p\mathbb Z_p  \times \mathbb Z_p/p\mathbb Z_p$.
\end{remark}

\begin{notn}For any subset $U \subseteq \widetilde{W}$ we have the $k$-subspaces
\begin{equation*}
  h^1_-(U) := \oplus_{w \in U} h^1_-(w), \quad h^1_0(U) := \oplus_{w \in U} h^1_0(w),\quad \text{and}\ h^1_+(U) := \oplus_{w \in U} h^1_+(w)
\end{equation*}
of $h^1$.  We also let  $h^1_\pm(U):=h^1_-(U) \oplus h^1_+(U)$ and $h^1(U) := h_0^1(U) \oplus h_\pm^1(U)$.
 The subsets of most interest to us are:
\begin{align*}
 & \widetilde{W}^\epsilon  := \{w \in \widetilde{W} : \ell(s_\epsilon w) = \ell(w) +1\} \quad\text{for $\epsilon \in \{0,1\}$} \text{ as defined above, and}, \\
 & \widetilde{W}^{\epsilon, odd}  := \{w \in \widetilde{W}^\epsilon : \ell(w)\ \text{is odd}\}, \\
  &\widetilde{W}^{\epsilon, even}  :=  \{w \in \widetilde{W}^\epsilon : \ell(w)\ \text{is even}\}, \\
  &\widetilde{W}^{\epsilon, +even}  := \widetilde{W}^{\epsilon,even} \setminus \Omega.
\end{align*}
We also define, for $k\geq 0$ and $\epsilon\in \{0,1\}$:
\begin{align*}
 &\widetilde{W}^{ \ell\geq k} := \{w \in \widetilde{W} : \textrm{$\ell(w)\geq k$}\} \\
&\widetilde{W}^{\epsilon, \ell\geq k} := \{w \in \widetilde{W} : \ell(s_\epsilon w) = \ell(w) +1 \textrm{ and $\ell(w)\geq k$}\} \quad\text{for $\epsilon \in \{0,1\}$}.\end{align*}

\end{notn}
\subsubsection{Triples and conjugation by $\varpi$}

\begin{lemma} Let $w\in \widetilde W$ and $(c^-, c^0, c^+)_w\in  h^1(w)$.
Its image  by the map  $\Gamma_\varpi$  of conjugation by $\varpi$ defined in \eqref{gammapi}
is
$$(c^+, -c^0, c^-)_{\varpi w\varpi^{-1}}\in h^1(\varpi w\varpi^{-1}) \  $$ and
if $w\in \widetilde W^\epsilon$, then $\varpi w\varpi^{-1}\in \widetilde W^{1-\epsilon}$.
\label{lemma:conjtrip}
\end{lemma}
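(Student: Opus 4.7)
The plan is to use the commutative diagram \eqref{f:xi-shapi} applied to the automorphism $\xi(g) = \varpi^{-1} g \varpi$ of $G$, which reduces the problem to an explicit cocycle computation on Frattini quotients. First I would verify the length statement: since $s_1 = \varpi s_0 \varpi^{-1}$ (Remark \ref{remark:N}), conjugation by $\varpi$ swaps $s_0$ and $s_1$ and preserves the length function, so $w \in \widetilde{W}^\epsilon$ forces $\varpi w\varpi^{-1} \in \widetilde{W}^{1-\epsilon}$ and in particular $\ell(\varpi w\varpi^{-1}) = \ell(w)$.

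Next, by diagram \eqref{f:xi-shapi}, computing $\Gamma_\varpi$ amounts to pulling back $c \in H^1(I_w,k) = \Hom((I_w)_\Phi,k)$ along the isomorphism $\xi\colon I_{\varpi w\varpi^{-1}} \xrightarrow{\cong} I_w$, $g \mapsto \varpi^{-1} g \varpi$. I would treat the case $w \in \widetilde{W}^0$ (so $I_w = I_{\ell(w)}^+$ and $I_{\varpi w \varpi^{-1}} = I_{\ell(w)}^-$); the other case is entirely symmetric, or follows from applying the argument to $\varpi w \varpi^{-1}$. An explicit matrix computation using $\varpi^{-1} = \begin{psmallmatrix} 0 & \pi^{-1} \\ 1 & 0 \end{psmallmatrix}$ gives
\[
  \xi\Bigl(\begin{psmallmatrix} 1+\pi x' & \pi^{\ell(w)} y' \\ \pi z' & 1+\pi t' \end{psmallmatrix}\Bigr) = \begin{psmallmatrix} 1+\pi t' & z' \\ \pi^{\ell(w)+1} y' & 1+\pi x' \end{psmallmatrix}.
\]
Comparing with the identifications \eqref{f:I+ab} and \eqref{f:I-ab}, I see that conjugation by $\varpi$ swaps the two ``off-diagonal'' coordinates and swaps the two diagonal entries.

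The main obstacle is to pin down the sign on the middle (``diagonal'') component. The key observation is that the determinant one condition forces $(1+\pi x')(1+\pi t') = 1+\pi^{\ell(w)+1} y'z'$, so in the multiplicative group $(1+\mathfrak{M})/(1+\mathfrak{M}^{\ell(w)+1})(1+\mathfrak{M})^p$ we have the identity
\[
  (1+\pi t') \equiv (1+\pi x')^{-1},
\]
because the factor $1+\pi^{\ell(w)+1} y' z'$ is trivialized by the subgroup $1+\mathfrak{M}^{\ell(w)+1}$. Hence for any homomorphism $c^0$ to $k$, $c^0(1+\pi t') = -c^0(1+\pi x')$. Substituting into $(\xi^* c)(g')$ and reading off via \eqref{f:I-ab} yields the triple $(c^+, -c^0, c^-)$ in the basis of $H^1(I_{\varpi w \varpi^{-1}}, k)$, which by definition \eqref{f:defitrip} corresponds to the element $(c^+, -c^0, c^-)_{\varpi w \varpi^{-1}} \in h^1(\varpi w \varpi^{-1})$. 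Combined with \eqref{f:xi-shapi} this is exactly the claim.
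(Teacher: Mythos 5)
Your proof is correct and follows the same strategy as the paper's: invoke the commutativity of diagram \eqref{f:xi-shapi} and then do an explicit matrix conjugation against the Frattini-quotient coordinates \eqref{f:I+ab}/\eqref{f:I-ab}. The paper merely records that $\begin{psmallmat} 1+\pi x & \pi^{\ell(w)} y \cr \pi z & 1+\pi t \end{psmallmat}$ and $\begin{psmallmat} 1+\pi t & z \cr \pi^{\ell(w)+1} y & 1+\pi x \end{psmallmat}$ are $\varpi$-conjugate, leaving the sign on the middle component implicit; your observation that $\det = 1$ forces $1+\pi t' \equiv (1+\pi x')^{-1}$ modulo $(1+\mathfrak{M}^{\ell(w)+1})(1+\mathfrak{M})^p$ is exactly the point that makes that sign transparent.
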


\begin{proof} See Remark \eqref{remark:N} for the second claim.
By definition of the triples and by commutativity of  diagram \eqref{f:xi-shapi}, the first claim follows directly from the observation that
the matrices
$$\begin{psmallmat} 1+\pi x & \pi^{\ell(w)} y \cr \pi  z & 1+\pi t \end{psmallmat}\in I_{\ell(w)}^- \quad \text{ and }\begin{psmallmat} 1+\pi t & z \cr \pi^{{\ell(w)}+1} y & 1+\pi x \end{psmallmat}\in I_{\ell(w)}^+$$ are conjugate to each other via $\varpi$.

\end{proof}

\subsubsection{\label{subsubsec:normalize}Triples and cup product}

Suppose $\mathfrak F=\mathbb Q_p$, $p\neq 2,3$. We introduce  the isomorphism  \begin{equation} \iota: 1+p\mathbb  Z_p/1+ p^2\mathbb Z_p\overset{\simeq}\longrightarrow   \mathbb Z_p/p\mathbb Z_p, \quad 1+px\mapsto x\bmod p\mathbb Z_p \label{f:iota}\ .\end{equation}
We choose and fix  elements with the following constraints
\begin{equation}\label{f:normalize}
\upalpha\in \mathbb Z_p/p\mathbb Z_p\setminus\{0\}, \:\quad\upalpha^0=\iota^{-1}(\upalpha), \:\quad
\c\in \Hom (\mathbb Z_p/p\mathbb Z_p,k)\text{ such that }\c(\upalpha)=1, \:\quad \c^0:=\c\iota
\end{equation}

When $\ell(w)>0$, the dimension of the Frattini quotient of $I_w$ is $3$, namely  the dimension of $I_w$ as a $p$-adic manifold. By \cite{KS} Cor. 1.8 this means that $I_w$ is uniform. Therefore,
 the algebra
$H^*(I_w, k)$  is the exterior power (via the cup product) of the $3$-dimensional $k$-vector space $H^1(I_w,k)$.
 In particular,
$(\c, 0,0)_{s_0}\cup(0, \c^0,0)_{s_0}\cup (0, 0, \c)_{s_0}$ is a nonzero element of $H^3(I, \X(s_0))$ and its image via
$$H^3(I, \X(s_0))\xrightarrow{\Sh_{s_0}}  H^3(I_{s_0},k)\xrightarrow{\cores^{I_{s_0}}_I} H^3(I,k)$$ is a nonzero element of the one dimensional vector space
$H^3(I,k)$ (see \cite{Ext} Rmk. 7.3). We choose  the isomorphism
$\eta: H^3(I,k)\overset{\simeq}\rightarrow k$ sending that element to $1$.  As in \S\ref{subsubsec:duality}, this choice of $\eta$ yields a
choice of a basis $(\phi_w)_{w\in \widetilde W}$  of $H^d(I,\X)$ which is dual to $(\tau_w)_{w\in \widetilde W}$ via \eqref{f:dual}.  By definition, we have
$$(\c, 0,0)_{s_0}\cup(0, \c^0,0)_{s_0}\cup (0, 0, \c)_{s_0}=\phi_{s_0}$$
\begin{lemma}
For any $w\in\widetilde W$ with $\ell(w)\geq 1$, we have
\begin{equation}\label{f:orientation}
(\c, 0,0)_{w}\cup(0, \c^0,0)_{w}\cup (0, 0,\c)_{w}=\phi_{w}
\end{equation}
\end{lemma}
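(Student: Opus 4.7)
The plan is to reduce the identity to checking, via the Shapiro isomorphism and the defining normalization of $\phi_w$, that $\eta\circ\cores^{I_w}_I$ sends the triple cup product on the left to $1$. First I would note that $\ell(w)\geq 1$ together with $p\neq 2,3$ implies $I_w$ is uniform of dimension $3$ (by \cite{KS} Cor.\ 1.8 as used above), so $H^*(I_w,k)$ is the exterior algebra on the three-dimensional space $H^1(I_w,k)$. By the cup product compatibility \eqref{f:cup+Sh}, the element $\psi_w:=(\c,0,0)_w\cup(0,\c^0,0)_w\cup(0,0,\c)_w$ is a nonzero generator of the one-dimensional space $H^3(I,\X(w))$, and $\phi_w$ is the unique generator satisfying $\eta\circ\cores^{I_w}_I\circ\Sh_w(\phi_w)=1$; thus it suffices to show that $\eta(\cores^{I_w}_I(\Sh_w(\psi_w)))=1$.

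The case $w=s_0$ is the defining normalization of $\eta$. For the base case $w=s_1=\varpi s_0\varpi^{-1}$ I would apply $\Gamma_\varpi$ to this identity. Its compatibility with the cup product (diagram \eqref{f:cup+Gamma}), the action on triples $(c^-,c^0,c^+)\mapsto (c^+,-c^0,c^-)$ from Lemma \ref{lemma:conjtrip}, and the identity $\Gamma_\varpi(\phi_{s_0})=\phi_{s_1}$ from \eqref{f:conjphi} together give
\[(0,0,\c)_{s_1}\cup(0,-\c^0,0)_{s_1}\cup(\c,0,0)_{s_1}=\phi_{s_1},\]
which by anticommutativity on degree-$1$ classes (three adjacent transpositions yield $(-1)^3=-1$, absorbed by the sign on $\c^0$) coincides with $\psi_{s_1}$.

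For general $w\in\widetilde W^1$ with $\ell(w)\geq 1$, the inclusion $I_w=I^-_{\ell(w)}\subseteq I^-_1=I_{s_0}$ allows the factorization $\cores^{I_w}_I=\cores^{I_{s_0}}_I\circ\cores^{I_w}_{I_{s_0}}$, reducing by the $w=s_0$ case to proving that $\cores^{I_w}_{I_{s_0}}(\psi_w)=\psi_{s_0}$ in the one-dimensional space $H^3(I_{s_0},k)$. The explicit Frattini description \eqref{f:I-abQp} shows that $\res^{I_{s_0}}_{I_w}$ carries $(\c,0,0)_{s_0}$ and $(0,\c^0,0)_{s_0}$ to $(\c,0,0)_w$ and $(0,\c^0,0)_w$ respectively (the third coordinate is rescaled by $p^{\ell(w)-1}$, hence killed modulo $p$ when $\ell(w)\geq 2$). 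Applying the projection formula $\cores(\res\alpha\cup\beta)=\alpha\cup\cores\beta$ twice gives
\[\cores^{I_w}_{I_{s_0}}(\psi_w)=(\c,0,0)_{s_0}\cup(0,\c^0,0)_{s_0}\cup\cores^{I_w}_{I_{s_0}}\bigl((0,0,\c)_w\bigr),\]
and cupping with $(\c,0,0)_{s_0}\cup(0,\c^0,0)_{s_0}$ in $H^*(I_{s_0},k)$ extracts only the $(0,0,\ast)$-component of the third factor. The symmetric argument with $I_{s_1}$ in place of $I_{s_0}$ handles $w\in\widetilde W^0$.

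The main obstacle is the resulting $H^1$-corestriction computation, which I would resolve via the Pontryagin dual of the transfer (Verlagerung) $(I_{s_0})^{\mathrm{ab}}\to(I_w)^{\mathrm{ab}}$. Taking the upper-triangular coset representatives $\{\bigl(\begin{smallmatrix}1 & pu \\ 0 & 1\end{smallmatrix}\bigr):0\leq u<p^{\ell(w)-1}\}$ for $I_{s_0}/I_w$, the transfer of the $y$-direction generator $g=\bigl(\begin{smallmatrix}1 & p \\ 0 & 1\end{smallmatrix}\bigr)$ is $g^{p^{\ell(w)-1}}=\bigl(\begin{smallmatrix}1 & p^{\ell(w)} \\ 0 & 1\end{smallmatrix}\bigr)$, which via \eqref{f:I-abQp} is precisely the $y$-generator of $(I_w)_\Phi$ with coordinate $1$. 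Therefore $\cores^{I_w}_{I_{s_0}}((0,0,\c)_w)$ and $(0,0,\c)_{s_0}$ take the same value $\c(1)$ on $g$, so their $y$-components agree and the required identity $\cores^{I_w}_{I_{s_0}}(\psi_w)=\psi_{s_0}$ follows.
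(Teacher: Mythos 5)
Your proof is correct and follows the same overall strategy as the paper's: reduce via the Shapiro isomorphism to evaluating $\eta\circ\cores^{I_w}_I$ on the triple cup product, use the factorization $\cores^{I_w}_I=\cores^{I_{s_\epsilon}}_I\circ\cores^{I_w}_{I_{s_\epsilon}}$ together with the projection formula and the fact that $I_w$ is uniform, and handle the two Weyl chambers separately. The main difference is how the residual $H^1$-corestriction is dispatched: the paper invokes \cite{embed} Lemma 3.68-iv as a black box, whereas you make the identity $\cores^{I_w}_{I_{s_0}}\bigl((0,0,\c)_w\bigr)$ and $(0,0,\c)_{s_0}$ agree (in the component that survives the cup with the first two factors) explicit via a Verlagerung computation on the coset representatives $\bigl(\begin{smallmatrix}1 & pu \\ 0 & 1\end{smallmatrix}\bigr)$; your observation that the cup with $(\c,0,0)_{s_0}\cup(0,\c^0,0)_{s_0}$ only sees the third component is what lets you get away with checking a single transfer value. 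You also organize the $\widetilde W^0$ side by redoing the corestriction argument symmetrically with $I_{s_1}$ (anchored at the base case $s_1$ obtained by conjugation), whereas the paper conjugates the entire $\widetilde W^1$ result by $\Gamma_\varpi$ at once, which is a bit more economical; the sign bookkeeping in your $s_1$ base case — the $(-1)^3$ from the order reversal cancelling against the $-\c^0$ from Lemma \ref{lemma:conjtrip} — matches the paper's computation. Both routes are sound; yours is more self-contained at the cost of a little extra bookkeeping.
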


\begin{proof}
By definition \eqref{defiphi} of $\phi_w$,  it is enough to prove that
$$
\cores_I^{I_w}\circ \Sh_w\big((\c, 0,0)_{w}\cup(0, \c^0,0)_{w}\cup (0, 0,\c)_{w}\big)=\cores_I^{I_{s_0}}\circ \Sh_{s_0}\big((\c, 0,0)_{s_0}\cup(0, \c^0,0)_{s_0}\cup (0, 0, \c)_{s_0}\big).
$$
\noindent $\bullet$  First suppose that  $w\in \widetilde W^1$.
Recall (see \cite{Ext} \S 3.3), that the Shapiro isomorphism commutes with the cup product.
We compute that $\cores_{I_{s_0}}^{I_w}\circ \Sh_w\big((\c, 0,0)_{w}\cup(0, \c^0,0)_{w}\cup (0, 0,\c)_{w}\big)$ is equal to
\begin{align*}&
\cores_{I_{s_0}}^{I_w}[\Sh_w\big((\c, 0,0)_{w}\big)\cup  \Sh_w\big((0, \c^0,0)_{w}\big)\cup\Sh_w\big((0, 0,\c)_{w}\big)]\cr=&
\cores_{I_{s_0}}^{I_w}[\res^{I_{s_0}}_{I_w}\big(\Sh_{s_0}\big((\c, 0,0)_{s_0}\big)\cup  \Sh_{s_0}\big((0, \c^0,0)_{s_0}\big)\big)\cup\Sh_w\big((0, 0,\c)_{w}\big)] \text{ by Remark \ref{rema:simpliQp}}\cr=&
\Sh_{s_0}\big((\c, 0,0)_{s_0}\big)\cup  \Sh_{s_0}\big((0, \c^0,0)_{s_0}\big)\cup \cores_{I_{s_0}}^{I_w}[\Sh_w\big((0, 0,\c)_{w}\big)] \text{ by the projection formula (\cite{Ext} \S 4.6)}
\cr=&
\Sh_{s_0}\big((\c, 0,0)_{s_0}\big)\cup  \Sh_{s_0}\big((0, \c^0,0)_{s_0}\big)\cup \Sh_{s_0} \big((0, 0,\c)_{s_0}\big)\text{ by \cite{embed} Lemma 3.68-iv.}\cr=&\Sh_{s_0}\big((\c, 0,0)_{s_0}\cup (0, \c^0,0)_{s_0}\cup (0, 0,\c)_{s_0}\big)=\Sh_{s_0}\big(\phi_{s_0}) \
\end{align*} which proves the expected statement after applying $\cores^{I_{s_0}}_I$.
\\\noindent $\bullet$
If $w\in \widetilde W^0$, we conjugate by $\varpi$ using  $\Gamma_\varpi$  (see \eqref{gammapi}):
\begin{gather*}
\Gamma_\varpi\big((\c, 0,0)_{w}\cup(0, \c^0,0)_{w}\cup (0, 0,\c)_{w}\big)  \qquad\qquad\qquad\qquad\qquad\qquad\qquad\qquad\qquad\qquad\qquad\qquad  \\
\begin{split}
& = -(0, 0,\c)_{\varpi w \varpi^{-1}}\cup(0, \c^0,0)_{\varpi w \varpi^{-1}}\cup (\c, 0,0)_{\varpi w \varpi^{-1}} \quad\text{by \eqref{f:cup+Gamma} and Lemma \ref{lemma:conjtrip} } \\
& = (\c, 0,0)_{\varpi w \varpi^{-1}}\cup(0, \c^0,0)_{\varpi w \varpi^{-1}}\cup (0, 0,\c)_{\varpi w \varpi^{-1}}  \quad\text{by anticommutativity of $\cup$} \\
& = \phi_{\varpi w \varpi^{-1}} \quad\text{ since $\varpi w \varpi^{-1}\in \widetilde W^1$}  \\
& = \Gamma_\varpi(\phi_{w})  \quad\text{ by \eqref{f:conjphi}}
\end{split}
\end{gather*}
which concludes the proof since $\Gamma_\varpi$ is bijective.
\end{proof}

\begin{example}\label{ex:H*I}The subalgebra $H^*(I, \X(1))$ of $E^*$:
\begin{itemize}
\item[-]  $H^0(I, \X(1))$ has dimension $1$,
\item[-]  $H^3(I, \X(1))$ has dimension $1$ with basis $\phi_1$ which satisfies $\eta(\phi_1)=1$.
\item[-]  $H^1(I, \X(1))$ has dimension $2$ and basis $(\c, 0,0)_1$ and  $(0,0,\c)_1$,
\item[-]  $H^2(I, \X(1))$ is dual to $H^1(I, \X(1))$  via the cup product. We denote by $(\upalpha, 0,0)_1$ and  $(0,0,\upalpha)_1$ the dual of the basis of  $H^1(I, \X(1))$ given above, it satisfies  by definition:
\begin{itemize}
\item $(\upalpha, 0,0)_1\cup (\c, 0,0)_1=(\c, 0,0)_1\cup(\upalpha, 0,0)_1=\phi_1=(0,0,\upalpha)_1\cup (0,0,\c)_1=(0, 0,\c)_1\cup(0,0,\upalpha)_1$, while
\end{itemize}
\begin{itemize}
\item $(\c, 0,0)_1\cup (0,0,\c)_1=(0,0,\c)_1\cup (\c, 0,0)_1=0$.


\end{itemize}

\end{itemize}
\end{example}

\subsection{\label{subsec:imageanti}Image of a triple   under the anti-involution $\anti$}

Let $c\in h^1(w)$ seen as a triplet $(c^-, c^0, c^+)_w$ as in \eqref{tripdec}. Its image by $\anti$  is an element in $ h^1(w^{-1})$  whose image by Shapiro isomorphism is given by (see \eqref{f:invodefi}) $$(\Sh_w c)(w _{-} w^{-1}): I_{w^{-1}}\rightarrow k \:.$$

\begin{lemma}\label{lemma:antievenodd}
Let $w\in \widetilde W$  and  $c=(c^-, c^0, c^+)_w\in h^1(w)$.\\ If $\ell(w)$ is even then
 \begin{equation}\label{f:antieven}\anti(c)=(c^-({{u^2}}_{-}), c^0, c^+({u^{-2}}_{-}))_{w^{-1}}.\end{equation}
  If $\ell(w)$ is odd then
 \begin{equation}\label{f:antiodd}\anti(c)=(-c^+({u^{-2}}_{-}), -c^0, -c^-({u^2}_{-}))_{w^{-1}}.\end{equation}
 where $u\in (\mathfrak{O}/\mathfrak{M})^\times$ is such that $\omega_u^{-1} w$ lies in the subgroup of $\widetilde W$ generated by $s_0$ and $s_1$.
\end{lemma}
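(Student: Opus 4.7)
The plan is to compute the conjugation map $g' \mapsto wg'w^{-1}$ from $I_{w^{-1}}$ to $I_w$ explicitly at the level of matrices and read off the induced action on the triples \eqref{tripdec} via Shapiro. First, I will note that well-definedness of $u$ follows from the fact that $s_0^2 = s_1^2 = \omega_{-1}$ is central in $\widetilde W$, so the subgroup $V := \langle s_0, s_1 \rangle \subseteq \widetilde W$ satisfies $\widetilde W = \Omega \cdot V$ and $\Omega \cap V = \{1, \omega_{-1}\}$; hence $u$ is determined modulo $\pm 1$, which does not affect the formulas since only $u^{\pm 2}$ appears in them.

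Write $w = \omega_u v$ with $v \in V$. Because $\omega_u \in T^0$ normalizes $I$ and therefore every $I_{w'}$, I obtain $I_w = I_v$, $I_{w^{-1}} = I_{v^{-1}}$, and $wg'w^{-1} = \omega_u(vg'v^{-1})\omega_u^{-1}$. The action of the $\omega_u$-step on triples is immediate: it scales the $(1,2)$-entry by $[u]^{-2}$ and the $(2,1)$-entry by $[u]^2$ while fixing the diagonal, so on triples it is $(z,x,y) \mapsto (u^2 z,\, x,\, u^{-2} y)$. The substance of the lemma is therefore concentrated in the $v$-step, which I will treat by case analysis on the parity of $\ell(v)=\ell(w)$.

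When $\ell(w) = 2k$ is even, $v$ is represented modulo $T^1$ by a diagonal matrix $\begin{psmallmat} \pi^{\pm k} & 0 \\ 0 & \pi^{\mp k} \end{psmallmat}$; conjugation by it scales off-diagonal entries by $\pi^{\pm 2k}$, and these scalings exactly cancel the mismatch between the $\pi$-exponents in the matrix presentations \eqref{defiIn} of $I_{v^{-1}}$ and $I_v$. Thus the triple of $vg'v^{-1}$ equals that of $g'$, and composing with the $\omega_u$-step yields the formula \eqref{f:antieven}. When $\ell(w) = 2k+1$ is odd, $v$ is represented modulo $T^1$ by an anti-diagonal matrix such as $\begin{psmallmat} 0 & \pi^k \\ -\pi^{-k} & 0 \end{psmallmat}$; direct multiplication shows that $vg'v^{-1}$ swaps the diagonal entries and redistributes the off-diagonals with sign changes and $\pi$-factors matching $I_v$. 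Using the $SL_2$ relation $(1+\pi x')(1+\pi t') \equiv 1$ modulo $\mathfrak M^2$ (so the middle factor picks up a sign via $1+\pi t' \equiv (1+\pi x')^{-1}$), I find that the triple of $vg'v^{-1}$ is $(-y',-x',-z')$. Composing with the $\omega_u$-step and reading off the components of $(\Sh_w c)(wg'w^{-1})$ then gives \eqref{f:antiodd}.

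The only real obstacle is the bookkeeping --- tracking $\pi$-powers, signs, and the conventions that distinguish $\widetilde W^0$ from $\widetilde W^1$. The conceptual point is that the parity of $\ell(w)$ dictates whether $v$ is represented by a diagonal or an anti-diagonal matrix, which is exactly what produces the sharp dichotomy between \eqref{f:antieven} and \eqref{f:antiodd}.
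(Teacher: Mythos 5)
Your proof is correct and follows essentially the same strategy as the paper: compute $w g' w^{-1}$ for $g' \in I_{w^{-1}}$ as an explicit matrix conjugation and read off the resulting triple from the presentations \eqref{f:I+ab}, \eqref{f:I-ab}. Your organizational improvement—factoring $w = \omega_u v$ so the $\Omega$-scaling is handled uniformly and only the parity of $\ell(v)$ remains (two cases instead of the paper's four)—is harmless and if anything makes the structure of the computation clearer.
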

\begin{proof} Notice  that the intersection of $\Omega $  and of the subgroup of $\widetilde W$ generated by $s_0$ and $s_1$ is equal to $\{\pm 1\}$, therefore ${u^2}$ is determined by $w$.

\begin{itemize}
\item If $w= \omega_u(s_0s_1)^n$, then $I_{w^{-1}}=I_{2n}^+$ and
for  $X=\begin{psmallmat} 1+\pi x &  y \cr \pi^{1+2n} z & 1+\pi t \end{psmallmat}\in I_{w^{-1}}$ we have
$w X w^{-1}=  \omega_u\begin{psmallmat} 1+\pi x &  \pi^{2n} y \cr \pi z & 1+\pi t \end{psmallmat} \omega_u^{-1}=\begin{psmallmat} 1+\pi x &   [u]  ^{-2}\pi^{2n} y \cr [u]  ^2 \pi z & 1+\pi t \end{psmallmat}$.
So \begin{equation*}\Sh_{w^{-1}}(\anti(c))=(c^-( {u  ^2}_{-}), c^0, c^+( {u  ^{-2}}_{-}))_{w^{-1}}.\end{equation*}
\item If $w= \omega_u(s_1s_0)^n$, then $I_{w^{-1}}=I_{2n}^-$ and
for  $X=\begin{psmallmat} 1+\pi x & \pi^{2n} y \cr \pi z & 1+\pi t \end{psmallmat}\in I_{w^{-1}}$ we have
$w X w^{-1}=  \omega_u\begin{psmallmat} 1+\pi x &   y \cr \pi^{1+2n} z & 1+\pi t \end{psmallmat} \omega_u^{-1}=\begin{psmallmat} 1+\pi x &   [u]  ^{-2} y \cr [u]  ^2 \pi^{1+2n} z & 1+\pi t \end{psmallmat}$
So \begin{equation*}\Sh_{w^{-1}}(\anti(c))=(c^-( {u  ^2}_{-}), c^0, c^+( {u  ^{-2}}_{-}))_{w^{-1}}.\end{equation*}
\item If $w= \omega_u (s_1s_0)^n s_1$, then $I_{w^{-1}}=I_{2n+1}^+$ and
for  $X=\begin{psmallmat} 1+\pi x &  y \cr \pi^{2+2n} z & 1+\pi t \end{psmallmat}\in I_{w^{-1}}$ we have
$w X w^{-1}=  \omega_u\begin{psmallmat} 1+\pi t &  -z \cr -\pi^{2+2n} y & 1+\pi x \end{psmallmat} \omega_u^{-1}=\begin{psmallmat} 1+\pi t &  - [u]  ^{-2}z \cr -\pi^{2+2n}  [u]  ^2 y & 1+\pi x \end{psmallmat} $
So \begin{equation*} \Sh_{w^{-1}}(\anti(c))=(-c^+( {u  ^{-2}}_{-}), -c^0, -c^-( {u  ^2}_{-}))_{w^{-1}}.\end{equation*}
\item If $w= \omega_u(s_0s_1)^ns_0$, then $I_{w^{-1}}=I_{2n+1}^-$ and
for  $X=\begin{psmallmat} 1+\pi x & \pi^{2n+1} y \cr \pi z & 1+\pi t \end{psmallmat}\in I_{w^{-1}}$ we have
$w X w^{-1}=  \omega_u\begin{psmallmat} 1+\pi t &  -\pi^{1+2n} z \cr -\pi y & 1+\pi z \end{psmallmat} \omega_u^{-1}=\begin{psmallmat} 1+\pi t &  -\pi^{1+2n} [u]  ^{-2} z \cr -\pi  [u]  ^{2} y & 1+\pi x \end{psmallmat}$
So \begin{equation*}\Sh_{w^{-1}}(\anti(c))=(-c^+( {u  ^{-2}}_{-}), -c^0, -c^-( {u  ^2}_{-}))_{w^{-1}}.\end{equation*}

\end{itemize}
\end{proof}

\subsection{\label{subsec:omega}Action of  $\tau_\omega$ on $E^1$ for $\omega\in \Omega $}

Let  $w\in \widetilde W$,  $\omega \in T^0/T^1 $ and $c\in h ^i(w)$ for some $i\geq 0$. By \cite{Ext} Prop.\ 5.6, the left action  of $\tau_{\omega}$ on $c$
corresponds to the following transformation, where again we identify $c$ with its image in $H^i(I_w,k)$ by Shapiro isomorphism:
\begin{equation}\label{f:omega}
  \xymatrix{
    h^i( w) \ar[d]^{\Sh_w} \ar[rrr]^{\tau_{\omega} }_{\cong} && & h^i(\omega w) \ar[d]_{\Sh_{\omega w}} \\
    H^i(I_w,k) \ar[rrr]^{{\omega}_*(c) = c({{\omega}^{-1}} _- \omega)} & && H^i(I_w,k)   }
\end{equation}
In other words,
 for $\omega \in \Omega $, we have $\tau_\omega	\cdot c\in h^i(\omega w)$ and
\begin{equation}\label{f:leftomega0}
  \Sh_{\omega w}(\tau_\omega	\cdot c)=\omega_*\Sh_w(c) \ .
\end{equation}
Using $ c\cdot  \tau_\omega=\anti(\tau_{\omega^{-1}}\cdot \anti(c))$,
 we also obtain $ c \cdot \tau_\omega \in h^i(w\omega )$   and \begin{equation}\label{f:rightomega0}
  \Sh_{w \omega }( c\cdot  \tau_\omega)=\Sh_w(c) \ .
\end{equation}

Now we suppose $i=1$. We identify $c\in h^1(w)$ with a triple   $(c^-,c^0,c^+)_w$ as in  \eqref{tripdec}.
  For $u\in (\mathfrak O/\mathfrak M)^\times$
and $\left(\begin{smallmatrix} x&y \cr  z&t \end{smallmatrix}\right)\in I_w$ we have
 $\omega_u^{-1}\left(\begin{smallmatrix} x&y \cr  z&t \end{smallmatrix}\right)\omega_u=\left(\begin{smallmatrix} x&[u]^2y \cr [u]^{-2} z&t \end{smallmatrix}\right)$ and therefore\begin{equation}\label{f:leftomega}
 \tau_{\omega_u}\cdot (c^-, c^0, c^+)_w=(c^-({u^{-2}}\:_-), c^0, c^+({u^{2}}\:_-))_{\omega_u w}\ \in h^1(\omega_u w) \ .
 \end{equation}
In particular,
  \begin{equation}\label{f:lefts2}
 \tau_{s^2}\cdot (c^-, c^0, c^+)_w=(c^-, c^0, c^+)_{s^2 w}  \in h^1(s^2 w)\end{equation}
 for $s\in \{s_0,s_1\}$ since $s^2=\omega_{-1}$.
%
\noindent For the right action, it follows from \eqref{f:rightomega0} that
\begin{equation}\label{f:rightomegaE1}
(c^-, c^0, c^+)_w \cdot \tau_{\omega_u}=(c^-, c^0, c^+)_{ w\omega_u}\ \in h^1( w\omega_u) \ .
 \end{equation}


\subsection{\label{subsec:commuel}Action of the idempotents $e_\lambda$}
For  $\lambda:{\Omega}\rightarrow k^\times$ and $w\in {\widetilde W}$, recall that  we defined the idempotent
$
e_\lambda\in k[\Omega]$
  (see \eqref{defel}) and that, for any $\omega\in \Omega $ we have $e_\lambda\tau_\omega=\tau_{\omega} e_\lambda=
\lambda(\omega) e_\lambda.$

%
%
%

\begin{lemma} \label{leftidem}Let $\lambda, \mu: {\Omega}\rightarrow k^\times$, $w\in \widetilde W$. We consider an element  $c\in h^i(w)$ with image $c_w\in H^i(I_w,k)$ by Shapiro isomorphism.
We have \begin{center} $e_\lambda  \cdot c   = c  \cdot e _\mu$ if and only if
$c_w= \mu(w^{-1} \omega w) \lambda(\omega^{-1}) \,\omega_*(c_w)$\quad  for any $\omega\in \Omega  $.\end{center}
\end{lemma}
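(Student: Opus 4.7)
The plan is a direct matching of components in the Bruhat-Tits decomposition of $h^i = \bigoplus_{v \in \widetilde{W}} h^i(v)$, followed by a translation via the Shapiro isomorphisms.

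First I would expand the idempotents as
\begin{equation*}
  e_\lambda \cdot c = -\sum_{\omega \in \Omega} \lambda(\omega^{-1})\, \tau_\omega \cdot c
  \qquad\text{and}\qquad
  c \cdot e_\mu = -\sum_{\omega' \in \Omega} \mu(\omega'^{-1})\, c \cdot \tau_{\omega'} \ ,
\end{equation*}
and apply \eqref{f:leftomega0} and \eqref{f:rightomega0} to locate each summand: $\tau_\omega \cdot c$ lies in $h^i(\omega w)$ and $c \cdot \tau_{\omega'}$ lies in $h^i(w \omega')$. Since $\Omega = T^0/T^1$ is normal in $\widetilde{W} = N(T)/T^1$, the summands $h^i(\omega w)$ and $h^i(w \omega')$ coincide precisely when $\omega' = w^{-1}\omega w$. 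The equality $e_\lambda \cdot c = c \cdot e_\mu$ is therefore equivalent to the system of component equalities, parametrized by $\omega \in \Omega$,
\begin{equation*}
  \lambda(\omega^{-1})\,\tau_\omega \cdot c \; = \; \mu\bigl((w^{-1}\omega w)^{-1}\bigr)\, c \cdot \tau_{w^{-1}\omega w}
  \qquad\text{in $h^i(\omega w) = h^i(w\cdot w^{-1}\omega w)$.}
\end{equation*}

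Next I would translate each of these identities through Shapiro. A small point to verify (and record) is that $I_{\omega w} = I_w$: indeed, since $\omega' \in \Omega$ normalizes $I$ one has $(\omega w) I (\omega w)^{-1} = w (\omega' I \omega'^{-1}) w^{-1} = wIw^{-1}$, hence $I_{\omega w} = I \cap wIw^{-1} = I_w$. Consequently both Shapiro images land in $H^i(I_w,k)$, and \eqref{f:leftomega0} and \eqref{f:rightomega0} give
\begin{equation*}
  \Sh_{\omega w}(\tau_\omega \cdot c) = \omega_*(c_w) \qquad\text{and}\qquad \Sh_{w\omega'}(c \cdot \tau_{\omega'}) = c_w \ .
\end{equation*}
Thus the component equation above becomes
\begin{equation*}
  \lambda(\omega^{-1})\, \omega_*(c_w) \; = \; \mu(w^{-1}\omega^{-1} w)\, c_w
  \qquad\text{in $H^i(I_w,k)$, for every $\omega \in \Omega$.}
\end{equation*}

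Finally I would rewrite this symmetric family of relations. Replacing $\omega$ by $\omega^{-1}$ and applying the (invertible) operator $\omega_*$ to both sides yields
\begin{equation*}
  \lambda(\omega)\, c_w = \mu(w^{-1}\omega w)\, \omega_*(c_w) \ ,
\end{equation*}
which, since $\lambda$ is a character so that $\lambda(\omega)^{-1} = \lambda(\omega^{-1})$, is exactly the stated condition $c_w = \mu(w^{-1}\omega w)\lambda(\omega^{-1})\,\omega_*(c_w)$. Since the entire chain of manipulations consists of equivalences, this proves both implications at once. The only mild subtlety is keeping track of the identification $I_{\omega w} = I_w$ so that the comparison of the two Shapiro images takes place inside a single cohomology group; once that is observed, the rest is bookkeeping.
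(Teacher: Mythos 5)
Your proof is correct and follows essentially the same route as the paper's: expand $e_\lambda$ and $e_\mu$ into sums over $\Omega$, use \eqref{f:leftomega0} and \eqref{f:rightomega0} to place each summand in $h^i(\omega w)$ resp.\ $h^i(w\omega')$, match components via $\omega' = w^{-1}\omega w$, and translate through Shapiro to the condition $\lambda(\omega^{-1})\,\omega_*(c_w) = \mu(w^{-1}\omega^{-1}w)\,c_w$ for all $\omega$. Your explicit check that $I_{\omega w} = I_w$ is a worthwhile small addition that the paper leaves implicit.
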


\begin{proof}
The element $e_\lambda  \cdot c $ lies in $\oplus_{\omega\in \Omega} H^i (I, \X(\omega w))$ and  its component in
$H^i (I, \X(\omega w))$ is
$$-\lambda(\omega^{-1})\Sh_{\omega w}^{-1}\big( \omega_* c_w\big)$$
The element $c\cdot e_\mu $ lies in $\oplus_{t\in \Omega} H^i (I, \X( wt))$ and  its component in
$H^i (I, \X(wt))=H^i (I, \X(wtw^{-1} w))$ is
$$-\mu(t^{-1})\Sh_{wt}^{-1}\big( c_w\big)=-\mu(w^{-1} (wt^{-1}w^{-1}) w)\Sh_{wtw^{-1} w}^{-1}\big( c_w\big)$$
These two elements are equal if and only if  for any $\omega\in \Omega$ we have
$\lambda(\omega^{-1}) \omega_* c_w=\mu(w^{-1} \omega^{-1} w) c_w$.
\end{proof}

In the same context as in the lemma, we suppose that $i=1$. Then we may see the image in $H^1(I_w,k)$ by Shapiro isomorphism of
 $c\in h^1(w)$  as a  $(c^-, c^0, c^+)$ as in \eqref{tripdec}.
  For $u\in (\mathfrak O/\mathfrak M)^\times$, we know   from the calculation that gave \eqref{f:leftomega} that
$${\omega_u}_*(c^-, c^0, c^+) =(c^-({u^{-2}}\,_-)\, , c^0,\, c^+({u^{2}}\,_-)) \in H^1(I_w,k).$$
If $\ell(w)$ is even, then the conjugation of $\mu$ by $w$ is equal to $\mu$ and therefore   $e_\lambda\cdot  c  = c \cdot  e_\mu  $ if and only if
$c=\mu\lambda^{-1}(\omega_u ) \,{\omega_u}_*(c )$ for any $u\in (\mathfrak O/\mathfrak M)^\times$. So
\begin{equation}\label{f:condeven}
(\ell(w)\text{ even})):\quad  e_\lambda\cdot  c  = c \cdot e_\mu   \textrm{ if and only if }\left\lbrace\begin{array}{l}c ^-=\mu\lambda^{-1}(\omega_u ) c ^-(u^{-2}\,_-)\cr \cr c ^0=\mu\lambda^{-1}(\omega_u ) c ^0\cr\cr
c ^+=\mu\lambda^{-1}(\omega_u ) c ^+(u^{2}\,_-).
 \end{array}\right. \textrm{ for any $u\in(\mathfrak O/\mathfrak M)^\times$}.
\end{equation}
If $\ell(w)$ is odd, then the conjugation of $\mu$ by $w$ is equal to $\mu^{-1}$ and therefore   $e_\lambda\cdot c  = c \cdot e_\mu  $ if and only if
$c_{ w}=(\mu\lambda)^{-1}(\omega_u ) \,\omega_*(c )$ for any $u\in (\mathfrak O/\mathfrak M)^\times$ which is equivalent to
\begin{equation}\label{f:condodd}
(\ell(w)\text{ odd})):\quad e_\lambda\cdot c  = c \cdot e_\mu  \textrm{ if and only if } \left\lbrace\begin{array}{l}c ^-=(\mu\lambda)^{-1}(\omega_u ) c ^-(u^{-2}\,_-)\cr \cr c ^0=(\mu\lambda)^{-1}(\omega_u ) c ^0\cr\cr
c ^+=(\mu\lambda)^{-1}(\omega_u ) c ^+(u^{2}\,_-).
 \end{array}\right. \textrm{ for any $u\in(\mathfrak O/\mathfrak M)^\times$}.
\end{equation}
{ An important special case of the above is the following. Suppose that $q = p$; for any $m \in \mathbb{Z}$ and $w \in \widetilde{W}$ we then have
\begin{equation}\label{f:id-id}
  (c^-,c^0,c^+)_w \cdot e_{\id^m} = e_{\id^{m(-1)^{\ell(w)} - 2}} \cdot (c^-,0,0)_w + e_{\id^{m(-1)^{\ell(w)}}} \cdot (0,c^0,0)_w +  e_{\id^{m(-1)^{\ell(w)} + 2}} \cdot (0,0,c^+)_w \ .
\end{equation}
}

\subsection{\label{subsec:HonE1}Action of $H$ on $E^1$ when $G={\rm SL}_2(\mathbb Q_p)$, $p\neq 2,3$}

In this whole subsection,  $G={\rm SL}_2(\mathbb Q_p)$ with $p\neq 2,3$. \textbf{We also choose $\pi=p$}. This is required in the proof of Lemma \ref{lemma:conjtrans} which is used in the proof of Proposition \ref{prop:theformulas}.
The isomorphism $\iota$ was introduced in \eqref{f:iota}.
The following proposition is proved in \S\ref{subsec:proofformu}. Together with \eqref{f:leftomega}, it gives the explicit left action of $H$ on  $E^1$ when $G={\rm SL}_2(\mathbb Q_p)$ with $p\neq 2,3$.

\begin{proposition}\label{prop:theformulas}
Let $w\in \widetilde W$ and $(c^-,c^0,c^+)_w\in h^1(w)$.
\begin{multline*}
  \tau_{s_0} \cdot (c^-,c^0,c^+)_w =  \\
  \begin{cases}
  (0,{-}c^0 , -c^-)_{s_0 w}  & \text{if $w \in \widetilde{W}^0$, $\ell(w) \geq 1$,}\cr
  e_1 \cdot (-c^-, -c^0 , -c^+)_{ w} + e_{\id} \cdot (0, -2c^-\iota , 0)_{ w} + (0,0,-c^-)_{s_0 w} & \text{if  $w \in \widetilde{W}^1$ with $\ell(w) \geq 2$, }\cr
  e_1 \cdot (-c^-, -c^0 , -c^+)_{ w} + e_{\id} \cdot (0, -2c^-\iota , c^0\iota^{-1})_{ w} & \cr
  \qquad\qquad\qquad\qquad + \, e_{\id^2} \cdot (0,0 , c^-)_{ w} +(0,0,-c^-)_{s_0 w} & \text{if  $w \in \widetilde{W}^1$ with $\ell(w) =1$}.\cr
  \end{cases}\\
   \shoveleft{\tau_{s_1} \cdot (c^-,c^0,c^+)_w =}   \\
  \begin{cases}    (-c^+,  -c^0, 0)_{s_1 w}      & \text{if $w \in \widetilde{W}^1$, $\ell(w) \geq 1$},\cr     e_1 \cdot (-c^-, -c^0 , -c^+)_{ w}+
  e_{\id^{-1}} \cdot ( 0,2c^+\iota  , 0)_{ w} + (-c^+,0,0)_{s_1 w} & \text{if  $w \in \widetilde{W}^0$ with $\ell(w) \geq 2$},\cr   e_1 \cdot (-c^-, -c^0 , -c^+)_{ w}+e_{\id^{-1}} \cdot (-c^0\iota^{-1},2c^+\iota , 0)_{ w}&\cr
  \qquad\qquad\qquad\qquad + \, e_{\id^{-2}} \cdot ( c^+,0 , 0)_{ w} +(-c^+,0,0)_{s_1 w} & \text{if  $w \in \widetilde{W}^0$ with $\ell(w) = 1$.}
  \end{cases}\\
  \shoveleft{\tau_{s_0}\cdot (c^-,0,c^+)_\omega  =
 (0, 0, -c^-)_{s_0 \omega} \quad\text{for $\omega \in \Omega $}}. \\
  \shoveleft{\tau_{s_1}\cdot (c^-,0,c^+)_\omega  =
   (-c^+, 0, 0)_{s_1 \omega} \quad\text{for $\omega \in \Omega $}}.\\
\end{multline*}
\end{proposition}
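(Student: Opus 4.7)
The plan is to perform a case analysis on $\ell(w)$ (values $0$, $1$, $\geq 2$) and on whether $\ell(s_\epsilon w) = \ell(w)+1$ (the good length case) or $\ell(s_\epsilon w) = \ell(w)-1$ (the bad length case). The central tools are Proposition \ref{prop:yo}, the Shapiro isomorphism combined with the explicit Frattini quotient descriptions \eqref{f:I+abQp}--\eqref{f:I-abQp}, the quadratic relation $\tau_{s_\epsilon}^2 = -e_1\tau_{s_\epsilon}$, and the cup product structure on $H^*(I_w,k)$, which is an exterior algebra when $\ell(w) \geq 1$ since $I_w$ is then uniform.

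I would first establish the base cases $w = \omega \in \Omega$. Since $\ell(s_\epsilon\omega) = 1$ is good length, by \eqref{f:prod-fil} the Yoneda product is supported entirely on $h^1(s_\epsilon\omega)$. Using the right action \eqref{f:rightomegaE1} to reduce to $\omega = 1$, the claimed formula $\tau_{s_0}\cdot(c^-, 0, c^+)_1 = (0, 0, -c^-)_{s_0}$ (and its $s_1$ analogue) becomes a direct Shapiro-level calculation: the image in $H^1(I_{s_0}, k) = \Hom((I_{s_0})_\Phi, k)$ is pinned down by tracking which $1$-cocycles on $I$ extend to $1$-cocycles on $\mathbf{X}(s_0)$, with the lower-left coordinate $z$ of $I$ getting conjugated by $s_0$ into the upper-right coordinate of $(I_{s_0})_\Phi$, accounting for the transport $c^- \mapsto c^+$ (with sign from the $-1$ entry of $s_0$).

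For the good length cases with $\ell(w) \geq 1$, the product again lies entirely in $h^1(s_\epsilon w)$. I would identify its Shapiro image in $H^1(I_{s_\epsilon w}, k)$ by comparing Frattini coordinates of $I_w$ and $I_{s_\epsilon w}$ via the relevant subgroup inclusions and conjugation by $s_\epsilon$: the lower-left $c^-$ of $I_w$ becomes (up to sign) the upper-right $c^+$ of $I_{s_\epsilon w}$, the upper-right $c^+$ of $I_w$ is killed (since it does not extend), and the middle factor $c^0$ is preserved up to a sign coming from the entries of $s_\epsilon$. This yields $(0, -c^0, -c^-)_{s_\epsilon w}$.

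The bad length cases are the most delicate. I would invoke Proposition \ref{prop:yo}-\eqref{f:prod-badlength}:
\[\tau_{s_\epsilon} \cdot (c^-,c^0,c^+)_w - (\tau_{s_\epsilon}\cdot\tau_w) \cup \bigl(\tau_{s_\epsilon}\cdot(c^-,c^0,c^+)_w\bigr) \in h^1(s_\epsilon w).\]
The quadratic relation gives $\tau_{s_\epsilon}\cdot\tau_w = -e_1\tau_w = \sum_{\omega\in\Omega}\tau_{\omega w}$, so cupping with this sum against $\tau_{s_\epsilon}\cdot(c^-,c^0,c^+)_w$ amounts, via \eqref{f:cup+Sh}, to projecting the latter onto the $\bigoplus_{\omega\in\Omega} h^1(\omega w)$-summand of $h^1$. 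Computing this projection via the left $\Omega$-action \eqref{f:leftomega} and decomposing by the idempotents $e_\lambda$ produces the $e_1\cdot(-c^-, -c^0, -c^+)_w$ term together with the $e_{\id}\cdot(0, -2c^-\iota, 0)_w$ correction; the coefficient $-2c^-\iota$ arises from cup-producting the $c^-$-class against a ``diagonal'' $1$-cocycle on $I_{\omega w}$, where $(1+px)^2 \equiv 1+2px \pmod{p^2}$ produces the factor $2$ through the identification $\iota$. The remaining $h^1(s_\epsilon w)$-component $(0, 0, -c^-)_{s_\epsilon w}$ is then pinned down by a compatibility argument: multiplying by $\tau_{s_{1-\epsilon}}$ on the left (which moves us back into the good length regime) forces, by Step 2, the component to take the claimed form. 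The extra $e_{\id^{\pm 2}}$ terms in the length-one subcase reflect the fact that, for $\ell(w) = 1$, the middle factor of $(I_w)_\Phi$ hosts additional $\Omega$-eigencomponents that are squashed away for $\ell(w) \geq 2$. The main technical obstacle throughout is the careful bookkeeping of signs and normalization constants, critically depending on the choice $\pi = p$ (which fixes $\iota$) and on the normalization of $\eta$ from Section \ref{subsubsec:normalize}.
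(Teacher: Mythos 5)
Your high-level plan matches the paper's: reduce to $s=s_0$ by $\Gamma_\varpi$, treat good-length by conjugation plus restriction at the Shapiro level using \eqref{f:I+abQp}--\eqref{f:I-abQp}, and handle bad-length by separating the $h^1(s_\epsilon w)$-component from the $\Omega$-components. The good-length computation and the $w\in\Omega$ base cases are fine. But there are two real gaps in the bad-length case.

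First, invoking Proposition \ref{prop:yo}-\eqref{f:prod-badlength} is circular in the way you use it. That formula says $\tau_{s_\epsilon}\cdot c - (\tau_{s_\epsilon}\cdot\tau_w)\cup(\tau_{s_\epsilon}\cdot c)\in h^1(s_\epsilon w)$, and since $\tau_{s_\epsilon}\tau_w = -e_1\tau_w$ the cup product just picks out the $\bigoplus_\omega h^1(\omega w)$-components of $\tau_{s_\epsilon}\cdot c$ itself. So the formula reduces to the tautology that the $h^1(s_\epsilon w)$-component lies in $h^1(s_\epsilon w)$; it does not \emph{compute} the $\Omega$-components. What actually computes them is the explicit cocycle formula of \cite{Ext} Prop.\ 5.6, i.e.\ \eqref{f:leftsbad2}, involving conjugation by $s_0\omega_u^{-1}\left(\begin{smallmatrix}1&[u]^{-1}\\0&1\end{smallmatrix}\right)s_0^{-1}$. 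Your remark about $(1+px)^2\equiv 1+2px$ recovering the factor $2$ in $-2c^-\iota$ is the right phenomenon, but it has to come out of that matrix computation, not out of an idempotent decomposition of a cup product you haven't yet evaluated. Similarly the $e_{\id^{\pm 2}}$ and $c^0\iota^{-1}$ terms at $\ell(w)=1$ are not caused by the middle Frattini factor hosting extra eigencomponents; they come from the terms $p^{m-1}y$ and $p^m y$ in that conjugation, which only survive modulo $p$, resp.\ $p^2$, when $m=1$.

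Second, and more seriously, your ``compatibility argument'' for pinning down the $h^1(s_\epsilon w)$-component does not work. Multiplying on the left by $\tau_{s_{1-\epsilon}}$ does not move you into the good-length regime: if $w\in\widetilde W^{1-\epsilon}$ with $\ell(s_\epsilon w)=\ell(w)-1$ and $\ell(w)\geq 2$, then $s_\epsilon w\in\widetilde W^\epsilon$, so $\ell(s_{1-\epsilon}s_\epsilon w)=\ell(s_\epsilon w)-1$ and you are again in a bad-length computation. Trying instead to multiply by $\tau_{s_\epsilon}$ and use the quadratic relation runs into the same circularity, since the length-$\ell(w)-1$ pieces on both sides involve the quantity you are solving for. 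What the paper actually does is apply the corestriction formula \eqref{f:leftsbad1}, which reduces the $h^1(s_\epsilon w)$-component to a transfer map $(I_{s_\epsilon w})_\Phi\rightarrow(s_\epsilon I_w s_\epsilon^{-1})_\Phi$, and this transfer is the genuine technical content: Lemma \ref{lemma:conjtrans} shows it is zero except on the $y$-coordinate, where it multiplies by $p$, and this computation is precisely where the hypotheses $\mathfrak F=\mathbb Q_p$, $\pi=p$, $p\neq 2,3$ enter. Without an argument of this kind the component $(0,0,-c^-)_{s_\epsilon w}$ is not pinned down.
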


In these formulas, we use the  notation $e_{\id^m}$ as introduced in \eqref{f:idm} for $m\in \mathbb Z$. Recall, using \eqref{f:leftomega}, that for $(d^-, d^0, d^+)_w\in h^1(w)$,
the component in $h^1(\omega_u w)$ of
$e_{\id^m}\cdot (d^-, d^0, d^+)_w\in \bigoplus_{{u\in \mathbb F_p^\times }} h^1(\omega_u w)$ is  given by
\begin{equation}\label{f:uplambdac}
 - \id^m(u^{-1})\,\tau_{\omega_u} \cdot (d^-, d^0, d^+)_w=-u^{-m}(d^-({u^{-2}}_-), d^0(_-), d^+({u^{2}}_-))_{\omega_uw} \ .
\end{equation}

\begin{corollary}\label{coro:form-zeta}
Let $w\in \widetilde W$, $\omega\in \Omega$, and $(c^-,c^0,c^+)_w\in h^1(w)$.
\begin{multline*}
\zeta\cdot  (c^-,0,c^+)_\omega = \\
   (c^-,0,0)_{s_1 s_0 \omega} +  (0,0,c^+)_{s_0 s_1 \omega}
                 + e_1 \cdot  (0,0,-c^-)_{s_0 \omega} + e_1 \cdot  (-c^+,0,0)_{s_1\omega} + e_1 \cdot (c^-, 0, c^+)_\omega . \qquad\ 
\end{multline*}
\begin{multline*}
\zeta \cdot(c^-,c^0,c^+)_w =  \\
  \begin{cases}
(c^-,c^0 , 0)_{s_1s_0 w} + e_{\id} \cdot ( 0,  -2c^+\iota , 0)_{ s_0w} & \cr
   \qquad\qquad + \, e_{\id}  \cdot  (0, 2c^+\iota ,0)_{ s_1w} + (0,0,c^+)_{s_0 s_1w}
 &  \text{if $w \in \widetilde{W}^0$,  $\ell(w)\geq 3$,} \cr
(c^-,c^0 , 0)_{s_1s_0 w} + e_{\id} \cdot  ( 0,  -2c^+\iota, 0)_{ s_0w} & \cr
    \qquad\qquad + \, e_{\id} \cdot  (0, 2c^+\iota ,0)_{ s_1w} + e_{\id^2} \cdot  (0, 0 , -c^+)_{ s_1w} + (0,0,c^+)_{s_0 s_1w}
  &  \text{if $w \in \widetilde{W}^0$,  $\ell(w)=2$,} \cr
 (c^-,c^0 , 0)_{s_1s_0 w} + e_{\id} \cdot  ( 0,  -2c^+\iota, c^0\iota^{-1}) _{ s_0w}  & \cr
     \qquad\qquad + \, e_{\id^2} \cdot (0,0,-c^+ )_{ s_0w} + (0,0,c^+)_{s_0s_1 w} + e_1 \cdot (-c^+,0,0)_{s_1w}  &  \text{if $w \in  s_1\Omega$.}
  \end{cases}
\end{multline*}

\begin{multline*}
 \zeta \cdot (c^-,c^0,c^+)_w = \\
 \begin{cases}
 (0,c^0 , c^+)_{s_0s_1 w} + e_{\id^{-1}}  \cdot ( 0,  2c^-\iota , 0 )_{ s_1w}  & \cr
 \qquad\quad + \, e_{\id^{-1}}  \cdot (0, -2c^-\iota , 0)_{ s_0w} + (c^-,0,0)_{s_1 s_0w}
 &  \text{if $w \in \widetilde{W}^1$,  $\ell(w)\geq 3 $,} \cr
 (0,c^0 , c^+)_{s_0s_1 w} + e_{\id^{-1}}  \cdot ( 0,  2c^-\iota , 0 )_{ s_1w}  & \cr
 \qquad\quad + \, e_{\id^{-1}}  \cdot (0, -2c^-\iota, 0)_{ s_0w}+e_{\id^{-2}} \cdot  (-c^-, 0, 0)_{ s_0w} + (c^-,0,0)_{s_1 s_0w}
  &  \text{if $w \in \widetilde{W}^1$,  $\ell(w)=2$,} \cr
(0,c^0 , c^+)_{s_0s_1 w} + e_{\id^{-1}}  \cdot (-c^0\iota^{-1},  2c^-\iota, 0 )_{ s_1w}  & \cr
 \qquad\quad + \, e_{\id^{-2}}  \cdot ( -c^-,  0 , 0 )_{ s_1w}+ (c^-,0,0)_{s_1s_0 w} + e_1 \cdot (0,0,-c^-)_{s_0w}  & \text{if $w \in  s_0\Omega$.}
 \end{cases} 
\end{multline*}
\end{corollary}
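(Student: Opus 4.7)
The plan is to compute $\zeta \cdot c$ directly, using the two explicit expressions in \eqref{f:zetadefi}, namely $\zeta = \tau_{s_1}\tau_{s_0} + (\tau_{s_0} + e_1)(\tau_{s_1} + e_1) = \tau_{s_0}\tau_{s_1} + (\tau_{s_1} + e_1)(\tau_{s_0} + e_1)$, and then applying the formulas of Proposition \ref{prop:theformulas} twice. For each of the listed cases one chooses the more convenient decomposition (for $w \in \widetilde W^0$ I would use the first expression so that the leftmost letter $\tau_{s_0}$ lands on $w$ in the ``good length'' situation; symmetrically for $w \in \widetilde W^1$; for $\omega \in \Omega$ either decomposition works and I would use the one that minimizes edge-case terms).

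The simplifications I will repeatedly invoke are: (i) the orthogonality $e_\lambda e_\mu = \delta_{\lambda\mu} e_\lambda$, which kills cross terms like $e_1\cdot e_{\id^{\pm 1}}\cdot(\text{--})$ and $e_1 \cdot e_{\id^{\pm 2}} \cdot(\text{--})$ (valid since $p\neq 2,3$ so the characters $1, \id^{\pm 1}, \id^{\pm 2}$ are pairwise distinct); (ii) the commutation $\tau_{s_\epsilon}\, e_{\id^m} = e_{\id^{-m}}\, \tau_{s_\epsilon}$, which follows from $s_\epsilon\, \omega\, s_\epsilon^{-1} = \omega^{-1}$ for $\omega\in\Omega$; (iii) the quadratic relation in the form $(\tau_{s_\epsilon} + e_1)\tau_{s_\epsilon} = 0$, forcing the cancellation of an $e_1\cdot(\text{--})$ term coming out of $e_1 \cdot (\tau_{s_\epsilon} + e_1)\cdot c$ against the $e_1\cdot(\text{--})$ coming out of the exceptional part of $\tau_{s_\epsilon}\cdot(\text{--})_v$. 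Taking a sample: for $w\in \widetilde W^0$ with $\ell(w)\geq 3$, Proposition~\ref{prop:theformulas} yields $\tau_{s_0}\cdot c = (0,-c^0,-c^-)_{s_0 w}$ with $s_0 w \in \widetilde W^1$ of length $\geq 4$, and then $\tau_{s_1}$ applied to this lands in the ``good length'' case and produces $(c^-,c^0,0)_{s_1s_0 w}$; the remaining summand $(\tau_{s_0}+e_1)(\tau_{s_1}+e_1)\cdot c$ simplifies via (iii) and (i) to the two $e_{\id}$-terms on $s_0 w$ and $s_1 w$ plus the $(0,0,c^+)_{s_0s_1 w}$ term. Summing gives precisely the first displayed formula of the corollary.

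The edge cases $\ell(w)\in\{1,2\}$ require more care because the inner application of Proposition \ref{prop:theformulas} then uses its ``exceptional'' sub-cases, which introduce an extra $e_{\id^{\pm 2}}\cdot(0,0,d^\mp)_v$ term; these account for the extra $e_{\id^{\pm 2}}$-summands appearing in the $\ell(w)=2$ lines of the corollary. For $w\in s_\epsilon\Omega$ (length $1$), both applications of $\tau_{s_\epsilon}$ can touch exceptional sub-cases, and in addition an $e_1\cdot(-c^\mp,0,0)$ survives at the end (its partner has been moved by $\tau_{s_{1-\epsilon}}$); this is exactly the extra $e_1\cdot(-c^+,0,0)_{s_1 w}$ / $e_1\cdot(0,0,-c^-)_{s_0 w}$ term in the third sub-case. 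The formula for $\omega\in\Omega$ is handled separately using the last two lines of Proposition~\ref{prop:theformulas}: here one application of $\tau_{s_\epsilon}$ sends $(c^-,0,c^+)_\omega$ to a triple at $s_\epsilon\omega$, and then the second application falls into the exceptional $\ell=1$ case, producing the four exceptional $e_1$-terms together with the two ``good'' terms $(c^-,0,0)_{s_1s_0\omega}$ and $(0,0,c^+)_{s_0s_1\omega}$.

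The main obstacle is not any single conceptual step but the combinatorial bookkeeping: many intermediate terms are produced, and one must correctly identify, in each intermediate sub-expression, which sub-case of Proposition~\ref{prop:theformulas} applies (governed by the length of the intermediate element and whether it lies in $\widetilde W^0$ or $\widetilde W^1$), and then verify that the dozens of $e_1$, $e_{\id^{\pm 1}}$, $e_{\id^{\pm 2}}$ contributions collapse via orthogonality and via the quadratic relation to exactly the terms listed. I would organize the write-up by first establishing the two commutation/orthogonality lemmas in display form, then dispatching the generic cases $\ell(w)\geq 3$, and finally recording the modifications for $\ell(w)=2$, $w\in s_\epsilon\Omega$, and $\omega\in\Omega$.
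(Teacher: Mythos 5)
Your approach is correct and is exactly the route the paper intends (the paper gives no proof of the corollary because it is a direct, if lengthy, substitution from Proposition \ref{prop:theformulas} using the expression \eqref{f:zetadefi} for $\zeta$). Your choice to use $\zeta = \tau_{s_1}\tau_{s_0} + (\tau_{s_0}+e_1)(\tau_{s_1}+e_1)$ for $w \in \widetilde W^0$ and the symmetric expression for $w \in \widetilde W^1$ is the right one, and the bookkeeping with the two commutation rules $e_\lambda e_\mu = \delta_{\lambda\mu} e_\lambda$ and $\tau_{s_\epsilon} e_{\id^m} = e_{\id^{-m}} \tau_{s_\epsilon}$ does collapse everything to the stated formulas; I checked the generic $\widetilde W^0$ case, the $\ell(w)=2$ case, the $s_1\Omega$ case, and the $\Omega$ case directly. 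One minor point of presentation: the $e_1$-cancellations you attribute to $(\tau_{s_\epsilon}+e_1)\tau_{s_\epsilon}=0$ are more directly read off from the output of Proposition \ref{prop:theformulas} itself — the exceptional part of $\tau_{s_\epsilon}\cdot(c^-,c^0,c^+)_w$ contains the term $e_1\cdot(-c^-,-c^0,-c^+)_w$, which cancels $e_1\cdot(c^-,c^0,c^+)_w$ on the nose inside $(\tau_{s_\epsilon}+e_1)\cdot c$; no separate appeal to the quadratic relation is needed (though of course that relation is what makes the formulas in Proposition \ref{prop:theformulas} come out that way). You also correctly flag that for $w\in\Omega$ this cancellation does \emph{not} occur — the length-zero lines of Proposition \ref{prop:theformulas} have no $e_1$-exceptional part — which is why the extra $e_1\cdot(c^-,0,c^+)_\omega$ and the two $e_1\cdot(\cdots)_{s_\epsilon\omega}$ terms survive in the first displayed formula.
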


The decreasing filtration $( F^m E^1)_{m\geq 1}$ was introduced in \S\ref{subsubsec:fil}.

\begin{corollary}\label{coro:zetaE1F}
   We have $\zeta\cdot E^1\supseteq F^3 E^1$
\end{corollary}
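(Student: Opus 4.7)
The plan is to verify $F^3 E^1 \subseteq \zeta E^1$ by showing that $h^1(v) \subseteq \zeta E^1$ for each individual $v \in \widetilde W$ with $\ell(v) \geq 3$. A triple in $h^1(v)$ will be split as $(c^-, c^0, 0)_v + (0, 0, c^+)_v$ when $v \in \widetilde W^0$ (symmetrically as $(c^-, 0, 0)_v + (0, c^0, c^+)_v$ when $v \in \widetilde W^1$), and each summand is reduced to an explicit $\zeta$-image read off from the formulas of Corollary \ref{coro:form-zeta}.

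The cleanest observation is that for $v \in \widetilde W^0$ with $\ell(v) \geq 4$, setting $w := s_0 s_1 v \in \widetilde W^0$ of length $\ell(v) - 2 \geq 2$, the pertinent formula collapses when $c^+ = 0$ to the exact identity $\zeta \cdot (c^-, c^0, 0)_w = (c^-, c^0, 0)_v$; symmetrically $\zeta \cdot (0, c^0, c^+)_{s_1 s_0 v} = (0, c^0, c^+)_v$ holds for $v \in \widetilde W^1$, $\ell(v) \geq 4$, so in particular $(0, c^0, 0)_u \in \zeta E^1$ whenever $\ell(u) \geq 4$. For the remaining summand $(0, 0, c^+)_v$ at $v \in \widetilde W^0$, $\ell(v) \geq 3$, I apply $\zeta$ to $(0, 0, c^+)_{s_1 s_0 v}$ (of length $\ell(v)+2 \geq 5$, still in $\widetilde W^0$); Corollary \ref{coro:form-zeta} gives
\[
\zeta \cdot (0, 0, c^+)_{s_1 s_0 v} = (0, 0, c^+)_v + e_{\id} \cdot (0, -2c^+ \iota, 0)_{s_0 s_1 s_0 v} + e_{\id} \cdot (0, 2 c^+ \iota, 0)_{\omega_{-1} s_0 v},
\]
and the two correction terms have the form $e_{\id} \cdot (0, *, 0)_u$ with $u \in \widetilde W^1$ of length $\ell(v) + 1$ and $\ell(v) + 3$, both $\geq 4$. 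Since $\zeta$ is central in $H$ it commutes with $e_{\id}$, so combining with the exact identity at $u$ expresses each correction as $\zeta \cdot \bigl(e_{\id} \cdot (0, *, 0)_{s_1 s_0 u}\bigr) \in \zeta E^1$. The symmetric argument handles $(c^-, 0, 0)_v$ for $v \in \widetilde W^1$, $\ell(v) \geq 3$.

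The only subtle case is the summand $(c^-, c^0, 0)_v$ at $v = \omega s_1 s_0 s_1 \in \widetilde W^0$ of length $3$ (and symmetrically $(0, c^0, c^+)_v$ for $v \in \widetilde W^1$, $\ell(v) = 3$). Here $w := s_0 s_1 v = \omega s_1 \in s_1 \Omega$ has length $1$, and the $s_1\Omega$ formula reads
\[
\zeta \cdot (c^-, c^0, 0)_{\omega s_1} = (c^-, c^0, 0)_v + e_{\id} \cdot (0, 0, c^0 \iota^{-1})_{\omega^{-1} s_0 s_1},
\]
with the correction supported at length $2$, outside $F^3 E^1$ and beyond the reach of the previous paragraph. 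I plan to show this correction is in $\zeta E^1$ by a direct centrality computation: because $\zeta$ is central in $H$ and $p \neq 2$ forces the idempotent orthogonality $e_{\id} \cdot e_1 = 0$ (the characters $\id$ and trivial of $\Omega$ being distinct), one has
\[
\zeta \cdot \bigl(e_{\id} \cdot (0, 0, c^0 \iota^{-1})_{\omega^{-1}}\bigr) = e_{\id} \cdot \bigl(\zeta \cdot (0, 0, c^0 \iota^{-1})_{\omega^{-1}}\bigr) = e_{\id} \cdot (0, 0, c^0 \iota^{-1})_{\omega^{-1} s_0 s_1},
\]
the middle equality applying the $\Omega$-formula for $\zeta$ and discarding its two remaining summands, which both carry a factor of $e_1$ and are therefore killed by $e_{\id}$. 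The symmetric length-$3$ case for $v \in \widetilde W^1$ runs identically using the $s_0\Omega$ formula and $e_{\id^{-1}} \cdot e_1 = 0$. The main obstacle is exactly this length-$3$ situation: the only preimage of the appropriate shape lives in $s_\epsilon \Omega$ at length $1$, forcing use of a formula whose correction falls at length $2$ and escapes any naive length reduction; the centrality of $\zeta$ together with the idempotent orthogonality $e_{\id} e_1 = 0$ is exactly what closes the gap.
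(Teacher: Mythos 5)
Your proof is correct and follows essentially the same line as the paper: reduce to explicit $\zeta$-images via Corollary \ref{coro:form-zeta}, and handle the irreducible length-$3$ leftover by commuting $e_{\id^{\pm 1}}$ past the central $\zeta$ and killing the $e_1$-terms of the $\Omega$-formula via $e_{\id^{\pm 1}}\cdot e_1=0$. The only difference is bookkeeping: you split triples as $(c^-,c^0,0)+(0,0,c^+)$ (resp.\ $(c^-,0,0)+(0,c^0,c^+)$), whereas the paper treats $h^1_-$, $h^1_0$, $h^1_+$ separately and collects $h^1_0(\widetilde W^{\ell=3})$ at the end; both arrive at the same residual length-$2$ correction and dispose of it by the identical centrality/idempotent computation.
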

\begin{proof}
It is easy to see that $\zeta\cdot E^1$ contains $h^1_-(\widetilde W^{0, \ell\geq 3})$ and $h^1_+(\widetilde W^{1, \ell\geq 3})$. Noticing that it also contains $h^1_0(\widetilde W^{\ell\geq 4})$, we deduce that it contains
$h^1_-(\widetilde W^{1, \ell\geq 3})$ and $h^1_+(\widetilde W^{0, \ell\geq 3})$.
But  for $c^0$ as above and $\omega\in \Omega$, we have $\zeta\cdot (0, c^0, 0)_{s_0\omega}=(0,c^0 , 0)_{s_0s_1 s_0\omega} + e_{\id^{-1}}  \cdot (-c^0\iota^{-1},  0, 0 )_{ s_1s_0\omega}=(0,c^0 , 0)_{s_0s_1 s_0\omega} + \zeta e_{\id^{-1}}  \cdot (-c^0\iota^{-1},  0, 0 )_{\omega}$ so
$(0,c^0 , 0)_{s_0s_1 s_0\omega}\in \zeta\cdot E^1$ and likewise we would obtain
$(0,c^0 , 0)_{s_1s_0 s_1\omega}\in \zeta\cdot E^1$.
\end{proof}

Using the anti-involution $\anti$, we would obtain the explicit right action of $H$ on $E^1$. For example, using
$(c^-,0,c^+)_1\cdot \zeta=\anti(\zeta\cdot \anti((c^-,0,c^+)_1))=\anti(\zeta\cdot (c^-,0,c^+)_1))$ we can compute:
\begin{align}   (c^-,0,c^+)_1\cdot \zeta
=&  (c^-,0,0)_{s_0 s_1 } +  (0,0,c^+)_{s_1 s_0 } \cr
&+ e_{\id^{-2}} (c^-,0,0)_{s_0 } + e_{\id^{2}}(0,0,c^+)_{s_1} + e_{\id^{-2}}(c^-, 0, 0)_1 +e_{\id^{2}}(0, 0, c^+)_1.
\label{f:1right}\end{align}
We give now further  partial results on the right action of $H$ on $E^1$.
\begin{lemma}\label{lemma:righteasy} Let $v,w\in \widetilde W$ such that  $\ell(w)\geq 1$ and  $(c^-,c^0,c^+)_v\in h^1(v)$.

\begin{itemize}
\item[i.] Suppose $\ell(v)+\ell(w)=\ell(vw)$. Then
$(c^-,c^0,c^+)_{v} \cdot  \tau_{w}=\begin{cases} (c^-,c^0 , 0)_{vw}&\textrm{ if  $vw\in  \widetilde  W^1$,}\cr (0,c^0 , c^+)_{vw}&\textrm{ if  $vw\in \widetilde W^0$.} \end{cases}$
\item[ii.] In the case when $v\in\{s_0, s_1\}$ and  $\ell(vw)=\ell(w)-1$ we have:
$$(0, c^0, 0)_{s_0}\cdot \tau_{w}=
-e_1\cdot (0, c^0, 0)_{w}- e_{\id^{-1}}\cdot (c^0\iota^{-1}, 0,0)_{w}$$
$$(0, c^0, 0)_{s_1}\cdot \tau_{w}= -e_1\cdot (0, c^0, 0)_{w}+e_{\id}\cdot (0,0,c^0\iota^{-1})_{w}$$

\end{itemize}


  \end{lemma}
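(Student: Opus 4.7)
The plan is to reduce the right action of $\tau_w$ to the already-computed left action of $\tau_{w^{-1}}$ by means of the anti-involution $\anti$ of \S\ref{subsubsec:anti}, via the identity
\[
c \cdot \tau_w \;=\; \anti\!\bigl(\tau_{w^{-1}} \cdot \anti(c)\bigr).
\]
Lemma \ref{lemma:antievenodd} describes $\anti$ on triples, and Proposition \ref{prop:theformulas}, iterated along a reduced expression of $w^{-1}$, supplies every necessary left multiplication.

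For (i) I would proceed as follows. The hypothesis $\ell(vw) = \ell(v) + \ell(w)$ inverts to $\ell(w^{-1} v^{-1}) = \ell(w^{-1}) + \ell(v^{-1})$, so for any reduced expression $w^{-1} = s_{i_1}\cdots s_{i_n}$ one has $\tau_{w^{-1}} = \tau_{s_{i_1}}\cdots \tau_{s_{i_n}}$ by the braid relations. Each step of the iterated left multiplication on $\anti(c)$ then falls under the good-length subcase of Proposition \ref{prop:theformulas} (the ``$w \in \widetilde{W}^\epsilon$ with $\ell(w)\geq 1$'' case), in which only one coordinate is zeroed and the other two are permuted with a sign change; no idempotent terms are produced. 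A final application of $\anti$ and Lemma \ref{lemma:antievenodd}, together with a parity analysis of $\ell(v)$ versus $\ell(vw)$, yields the compact formula, the two surviving coordinates being determined by whether $vw \in \widetilde{W}^0$ or $\widetilde{W}^1$.

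For (ii) I would again start with the anti-involution: by \eqref{f:antiodd}, $\anti((0,c^0,0)_{s_j})$ lies in $h^1(s_j^{-1})$ with only its middle coordinate nonzero, equal to $-c^0$. Since $s_j^{-1}$ has length $1$ and lies in $\widetilde{W}^{1-j}$, and since $\ell(s_jw)=\ell(w)-1$ inverts to $\ell(w^{-1}s_j^{-1})=\ell(w^{-1})-1$, the element $w^{-1}$ admits a reduced expression ending in a length-one element $s'$ whose right action on $s_j^{-1}$ drops length; we factor $\tau_{w^{-1}} = \tau_{w''}\tau_{s'}$ accordingly. The first left multiplication by $\tau_{s'}$ then falls under the bad-length case of Proposition \ref{prop:theformulas} (the ``$w\in\widetilde{W}^{1-j}$ with $\ell(w)=1$'' subcase), producing contributions with coefficients $e_1$, $e_{\id^{\pm 1}}$, and $e_{\id^{\pm 2}}$; the last two vanish because $c^-=c^+=0$. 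The subsequent left multiplication by $\tau_{w''}$ acts on each summand in the good-length regime and is handled exactly as in (i). A final application of $\anti$, combined with \eqref{f:leftomega} and \eqref{f:uplambdac} to collect the $\omega_u$-coefficients, yields the compact right-hand side.

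The main obstacle will be the bookkeeping in (ii): one has to follow the idempotents $e_{\id^{\pm 1}}$ through the iterated good-length multiplications by the simple reflections appearing in a reduced expression of $w''$, using that conjugation by $w''$ twists characters of $\Omega$ by $\lambda \mapsto \lambda^{(-1)^{\ell(w'')}}$, as encoded in \eqref{f:condeven}--\eqref{f:condodd}. This twist, together with the opposite-parity twist produced by the concluding $\anti$, must cancel precisely so as to reproduce the asserted idempotents $e_1$ and $e_{\id^{-1}}$ (respectively $e_{\id}$) when $v=s_0$ (respectively $v=s_1$).
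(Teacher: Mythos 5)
Your plan is correct and rests on the same central device as the paper's proof: rewrite the right action as $c\cdot\tau_w=\anti(\tau_{w^{-1}}\cdot\anti(c))$, use Lemma \ref{lemma:antievenodd} to describe $\anti$ on triples, and feed the result through Proposition \ref{prop:theformulas}. For part (i) your iteration along a reduced expression of $w^{-1}$ is exactly the induction the paper runs (the paper does it one simple reflection at a time, checking the two residue classes of $v$ mod $\Omega$).

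Where you can save yourself real trouble is in part (ii). You propose to factor $\tau_{w^{-1}}=\tau_{w''}\tau_{s'}$, do the single bad-length left multiplication, then push the resulting sum of idempotent-weighted triples through the good-length left multiplications by $\tau_{w''}$, and only then apply $\anti$ — and you correctly flag the resulting idempotent bookkeeping (conjugation twisting the $\Omega$-characters through each reflection, then an opposite-parity twist from the closing $\anti$) as the main obstacle. The paper sidesteps this entirely: it first settles the minimal case $w=s_0$ (one $\anti$, one bad-length left multiplication via Prop.\ \ref{prop:theformulas} with $\ell=1$ after normalizing $s_0^{-1}$ to $s_0$ via \eqref{f:lefts2}, one $\anti$, and \eqref{f:condodd} to move the idempotents to the left), and then for general $w$ with $\ell(s_0w)=\ell(w)-1$ simply multiplies the base-case identity on the right by $\tau_{s_0^{-1}w}$ and invokes part (i) term by term. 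This is the same computation as yours — it is literally your step $\anti(\tau_{w''}\cdot X)=\anti(X)\cdot\tau_{(w'')^{-1}}$ read in the other order — but because part (i) is already a closed compact formula for right multiplication, the idempotent tracking you were bracing for never has to be done explicitly. So: same method, but reusing part (i) on the right kills the bookkeeping.
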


%
%
%

\begin{proof}  i. Using \eqref{f:rightomegaE1}, we see that we may restrict the proof to the case when $v$ belongs to the set $\{(s_i s_{1-i})^n, s_1(s_i s_{1-i})^n : \: i = 0,1, \: n\geq 0\}$. We treat the case $v\in W^1$. First suppose
$v= (s_0s_1)^n$. Then, using Lemma \ref{lemma:antievenodd},  \eqref{f:lefts2} and Proposition \ref{prop:theformulas}:
\begin{align*}
  (c^-,c^0,c^+)_v \cdot  \tau_{s_0} & = \anti(\tau_{s_0^{-1}}\cdot (c^-,c^0,c^+)_{v^{-1}}) = \anti(\tau_{s_0}\cdot (c^-,c^0,c^+)_{s_0^2v^{-1}}) \\
   & =\anti((0,{-}c^0 , -c^-)_{s_0^{-1} v^{-1}}) = (c^-,c^0 , 0)_{ vs_0} \ .
\end{align*}
Next suppose that $v=s_0 (s_1s_0)^n$. Then
\begin{multline*}
  (c^-,c^0,c^+)_v  \cdot \tau_{s_1} = \anti(\tau_{s_1^{-1}} \cdot (-c^+,-c^0,-c^-)_{v^{-1}}) = \\
       \anti(\tau_{s_1} \cdot (-c^+,-c^0,-c^-)_{s_1^2v^{-1}}) = \anti((c^-,c^0 , 0)_{s_1^{-1} v^{-1}}) = (c^-, c^0, 0)_{ vs_1}.
\end{multline*}
This is enough to conclude the proof when $v \in \widetilde  W^1$ by induction on $\ell(w)$.

ii.  We treat the case $v=s_0$ and suppose first that $w={s_0}$. Then, using \eqref{f:antiodd}, Proposition \ref{prop:theformulas}, \eqref{f:lefts2} and \eqref{f:condodd}
\begin{align*}
(0, \c^0, 0)_{s_0}\cdot \tau_{s_0}& = - \anti(\tau_{s_0^{-1}}\cdot (0, \c^0, 0)_{s_0^{-1}}) = - \anti(\tau_{s_0}\cdot (0, \c^0, 0)_{s_0})   \\
                 & = - \anti((0, -c^0 , 0)_{ s_0}\cdot e_{1} + (0, 0 , c^0\iota^{-1})_{ s_0}\cdot e_{\id})  \\
                 & = - e_{1}\cdot  (0, c^0 , 0)_{ s_0^{-1}} - e_{\id^{-1}} \cdot  (-c^0\iota^{-1}, 0 ,0 )_{ s_0}^{-1} \\
                 & = - e_{1}\cdot  (0, c^0 , 0)_{ s_0} - e_{\id^{-1}} \cdot  (c^0\iota^{-1}, 0 ,0 )_{ s_0} \ .
\end{align*}
For $w= s_0\omega$ with $\omega\in\Omega$, apply $\tau_\omega$ on the right to the above formula and use \eqref{f:rightomegaE1}.
For general $w$ such that $\ell(s_0w)=\ell(w)-1$, apply $\tau_{s_0^{-1}w}$ on the right to the above formula and use Point i.


%
%
\end{proof}
The increasing filtration $(F_nE^1)_{n\geq 0}$ was defined in \S\ref{subsubsec:fil}.
\begin{lemma}
\label{lemma:calculsg}
If $\omega\in \Omega$, we have
$$\zeta\cdot   (c^-, 0, c^+)_\omega-(c^-, 0, c^+)_\omega\cdot \zeta\equiv  (0, 0, c^+)_{s_0s_1\omega}+ (c^-, 0, 0)_{s_1s_0\omega}- (0, 0, c^+)_{s_1s_0\omega} - (c^-, 0, 0)_{s_0s_1\omega}\bmod F_{1} E^1 \ .$$
 If $w\in \widetilde{W}^1$ of length $\geq 1$
$$\zeta  \cdot (c^-, c^0, c^+)_w-(c^-, c^0, c^+)_w\cdot \zeta\equiv  (0, 0, c^+)_{s_0s_1w}- (c^-, 0, 0)_{s_0s_1w}\bmod  F_{\ell(w)+1} E^1\ .$$
 If $w\in \widetilde{W}^0$ of length $\geq 1$
$$\zeta \cdot  (c^-, c^0, c^+)_w-(c^-, c^0, c^+)_w\cdot \zeta\equiv  (c^-, 0, 0)_{s_1s_0w}- (0, 0, c^+)_{s_1s_0w}\bmod  F_{\ell(w)+1} E^1\ .$$

\end{lemma}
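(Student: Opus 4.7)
The plan is to evaluate $\zeta\cdot x$ and $x\cdot\zeta$ separately by using the explicit formulas already at our disposal, and then subtract modulo the specified filtration piece. In both evaluations we only need to isolate the contributions whose support in $\widetilde W$ has length $\ell(w)+2$ (respectively $2$ when $w\in\Omega$); the rest is automatically absorbed by $F_{\ell(w)+1}E^1$ (respectively $F_1E^1$).

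For $\zeta\cdot (c^-,c^0,c^+)_w$, I would simply invoke Corollary \ref{coro:form-zeta}. In each case stated there, all summands carrying an idempotent prefactor $e_1,\,e_{\id^{\pm 1}},\,e_{\id^{\pm 2}}$ are supported on the length-changing translates $s_0w$, $s_1w$ or on $w$ itself; these have length $\leq \ell(w)+1$ and hence lie in $F_{\ell(w)+1}E^1$. The remaining "double-step" translates $s_0s_1w$ and $s_1s_0w$ have lengths $\ell(w)\pm 2$: exactly one of the two has length $\ell(w)+2$ when $\ell(w)\geq 1$, while for $w=\omega\in\Omega$ both have length $2$ and survive. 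This yields, modulo the appropriate filtration piece, the leading terms $(0,c^0,c^+)_{s_0s_1w}$ for $w\in\widetilde W^1$, $(c^-,c^0,0)_{s_1s_0w}$ for $w\in\widetilde W^0$, and the sum $(c^-,0,0)_{s_1s_0\omega}+(0,0,c^+)_{s_0s_1\omega}$ for $w=\omega\in\Omega$.

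For $x\cdot\zeta$, I would use the factorisation $\zeta=\tau_{s_1}\tau_{s_0}+(\tau_{s_1}+e_1)(\tau_{s_0}+e_1)$ (or its symmetric counterpart from \eqref{f:zetadefi}), choosing in each case the factorisation for which the leading product $\tau_{s_?}\tau_{s_?}$ gives a reduced right extension of $w$. The non-leading summands $x\cdot\tau_{s_?}e_1$, $x\cdot e_1\tau_{s_?}$ and $x\cdot e_1^2$ raise the support-length by at most $1$ and fall into $F_{\ell(w)+1}E^1$. The leading product is then computed by two successive applications of Lemma \ref{lemma:righteasy}.i, giving $(c^-,c^0,0)_{ws_1s_0}$ for $w\in\widetilde W^1$ and $(0,c^0,c^+)_{ws_0s_1}$ for $w\in\widetilde W^0$. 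The commutation relation $\omega_us_i=s_i\omega_{u^{-1}}$ allows the $\Omega$-factor of $w$ to be pushed past $s_0s_1$ or $s_1s_0$, yielding $ws_1s_0=s_0s_1w$ and $ws_0s_1=s_1s_0w$. The case $w=\omega\in\Omega$ is handled directly from \eqref{f:1right} combined with right multiplication by $\tau_\omega$ via \eqref{f:rightomegaE1}, using $\omega s_0s_1=s_0s_1\omega$ and $\omega s_1s_0=s_1s_0\omega$. Subtracting produces the claimed expressions: in particular the $c^0$-components of $\zeta\cdot x$ and $x\cdot\zeta$ agree and cancel, leaving only the "corner" $c^-$ and $c^+$ terms with opposite signs.

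The main obstacle is the careful case-splitting in the small-length regime $\ell(w)\in\{1,2\}$, where Corollary \ref{coro:form-zeta} produces additional $e_{\id^{\pm 2}}$-summands; one must verify that these still lie in $F_{\ell(w)+1}E^1$. Closely related is the need to choose the correct factorisation of $\zeta$ according to the parity of $\ell(w)$ so that both invocations of Lemma \ref{lemma:righteasy}.i concern length-adding products: if one picks the wrong factorisation, the formula \eqref{f:prod-badlength} of Proposition \ref{prop:yo} is forced into play and brings along the $\bigoplus_{\omega\in T^0/T^1}h^1(\omega w)$-correction, making the isolation of the top-length part substantially harder.
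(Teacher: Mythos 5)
Your plan is sound and takes a genuinely different route from the paper's for the right-hand factor $x\cdot\zeta$. The paper computes $(c^-,c^0,c^+)_w\cdot\zeta$ as $\anti\bigl(\zeta\cdot\anti((c^-,c^0,c^+)_w)\bigr)$, invoking Lemma \ref{lemma:antievenodd} for the action of $\anti$ on triples together with the $\anti$-stability of $F_n E^1$, whereas you expand $\zeta$ via \eqref{f:zetadefi} and isolate the unique monomial that raises the support length by $2$. The support-length bookkeeping does close: the summand of the $(\tau_{s_\epsilon}+e_1)(\tau_{s_{1-\epsilon}}+e_1)$-block whose first factor is length-decreasing on the right first drops into $F_{\ell(w)} E^1$, and the second factor can then only reach $F_{\ell(w)+1}E^1$; the idempotent-bearing summands similarly stay in $F_{\ell(w)+1}E^1$. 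Both proofs end up needing Lemma \ref{lemma:righteasy}.i, but your route dispenses with Lemma \ref{lemma:antievenodd}. The concern about \eqref{f:prod-badlength} is not really relevant here (that formula governs the case $\ell(v)=1$ on the \emph{left}), but the underlying point that a wrong factorization would force you through a length-decreasing right multiplication without an explicit formula is valid, and you avoid it correctly.

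Two details need correction. First, $\zeta = \tau_{s_1}\tau_{s_0}+(\tau_{s_1}+e_1)(\tau_{s_0}+e_1)$ is false; expanding the right-hand side doubles the $\tau_{s_1}\tau_{s_0}$-term. The correct decompositions from \eqref{f:zetadefi} put the isolated $\tau\tau$ in the \emph{opposite} order, e.g.\ $\zeta = \tau_{s_0}\tau_{s_1}+(\tau_{s_1}+e_1)(\tau_{s_0}+e_1)$. After fixing this, the non-leading summand $x\cdot\tau_{s_1}\tau_{s_0}$ should be added to your list $x\cdot\tau_{s_?}e_1$, $x\cdot e_1\tau_{s_?}$, $x\cdot e_1^2$; it too lies in $F_{\ell(w)+1}E^1$ because the first factor is length-decreasing, but it is a genuine extra term. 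Second, the relations $ws_1s_0=s_0s_1w$, $ws_0s_1=s_1s_0w$ are stated too bluntly: for $w\in\widetilde W^1$ the reduced right double step is $ws_0s_1$ when $\ell(w)$ is even and $ws_1s_0$ when $\ell(w)$ is odd, and in either case it equals $s_0s_1w$ (symmetrically $s_1s_0w$ for $w\in\widetilde W^0$). With these oversights repaired the computation does reproduce the Lemma, with the $c^0$-component cancelling as you observe.
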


\begin{proof}  We use Cor.\ \ref{coro:form-zeta}.
Recall from \eqref{f:rightomegaE1} that  $(c^-, c^0, c^+)_{w\omega}=  (c^-, c^0, c^+)_{w}\tau_\omega$ for $\omega\in \Omega $. So it is enough to  prove the lemma
for $\omega=1$ and  for $w$ of the form $(s_\epsilon s_{1-\epsilon})^n s_\epsilon$ or $(s_\epsilon s_{1-\epsilon})^n$ where $\epsilon\in\{0,1\}$.
By \eqref{f:1right} we have
$$
\zeta\cdot  (c^-, 0, c^+)_1 -(c^-, 0, c^+)_1 \cdot  \zeta \equiv  (0, 0, c^+)_{s_0s_1}+ (c^-, 0, 0)_{s_1s_0}- (c^-, 0, 0)_{s_0s_1}- (0, 0, c^+)_{s_1s_0} \bmod F_{1} E^1.
$$
Now  for  $w=(s_0s_1)^n$ with $n\geq 1$ we have
\begin{align*}
  \zeta \cdot (c^-, c^0, c^+)_w & \equiv  (0, c^0, c^+)_{s_0s_1w} \ \bmod F_{2n+1} E^1   \quad\textrm{ and } \\
  \zeta \cdot (c^-, c^0, c^+)_{w^{-1}} & \equiv  (c^-, c^0, 0)_{s_1s_0w^{-1}} \ \bmod F_{2n+1} E^1.
\end{align*}
Since $\anti$ preserves $F_{2n+1} E^1$ we have, using \eqref{f:antieven}:
\begin{multline*}
   (c^-, c^0, c^+)_w\cdot \zeta=\anti(\zeta \cdot(c^-, c^0, c^+)_{w^{-1}})\equiv \\
      \anti((c^-, c^0, 0)_{s_1s_0w^{-1}})\equiv  (c^-, c^0, 0)_{ws_0s_1}\equiv (c^-, c^0, 0)_{s_0s_1w}  \ \bmod  F_{2n+1} E^1
\end{multline*}
which gives the expected formula. Using $\anti$, we then obtain the expected result for $w=(s_1s_0)^n$. Likewise we treat the case $w=(s_0s_1)^ns_0$ with $n\geq 0$. We have
\begin{equation*}
  \zeta \cdot (c^-, c^0, c^+)_w\equiv  (0, c^0, c^+)_{s_0s_1w}\bmod F_{2n+2} E^1
\end{equation*}
and
\begin{multline*}
   (c^-, c^0, c^+)_w \cdot \zeta = \anti(\zeta\cdot (-c^+, -c^0, -c^-)_{w^{-1}})   \\
           \equiv \anti((0, -c^0, -c^-)_{s_0s_1w^{-1}})\equiv  (c^-, c^0, 0)_{ws_1s_0}\equiv  (c^-, c^0, 0)_{s_0s_1w}   \ \bmod  F_{2n+2} E^1
\end{multline*}
which gives the expected result for  $w=(s_0s_1)^ns_0$  and similarly we would treat the case $w=(s_1s_0)^ns_1$.
\end{proof}

\subsection{Sub-$H$-bimodules of $E^1$}

\subsubsection{The $H$-bimodule $F^1 H$}\label{subsubsec:F1-rel}

In this paragraph \ref{subsubsec:F1-rel}, there is no condition on $\mathfrak F$ (in fact we may even have $p=2$).

The elements $x_i := \tau_{s_i} \in F^1 H$ satisfy the relations:
\begin{itemize}
  \item[1)] $\tau_{s_i} x_i = - e_1 x_i = x_i \tau_{s_i}$ for $i \in \{0,1\}$;
  \item[2)] $\tau_\omega x_i = x_i \tau_{\omega^{-1}}$ for $i \in \{0,1\}$ and $\omega \in \Omega$;
  \item[3)] $\tau_{s_0} x_1 = x_0 \tau_{s_1}$ and $\tau_{s_1} x_0 = x_1 \tau_{s_0}$.
\end{itemize}

Given any $H$-bimodule $M$, a pair of elements $x_0, x_1 \in M$ which satisfy the relations 1) - 3) will be called a $F^1 H$-pairs in $M$. The $F^1 H$ pairs in $M$ form a $k$-vector subspace of $M \times M$.

\begin{example}
  For any $\ell \geq 0$ the elements $\tau_{s_0}(\tau_{s_1} \tau_{s_0})^\ell$ and $\tau_{s_1}(\tau_{s_0} \tau_{s_1})^\ell$ form an $F^1 H$-pair in $F^1 H$.
\end{example}

\begin{lemma}\phantomsection\label{lemma:defifx}
\begin{itemize}
\item[i.]  Given a $F^1H$-pair $(x_0, x_1)\in M\times M$, there is a unique  $H$-bimodule homomorphism $f_{(x_0,x_1)} : F^1 H \rightarrow M$ satisfying
$$
f_{(x_0,x_1)}(\tau_{s_0})=x_0, \quad\text{and}\quad f_{(x_0,x_1)}(\tau_{s_1})= x_1\ .
$$
\item[ii.] The map $f\mapsto (f(\tau_{s_0}),f(\tau_{s_1}))$ yields a bijection between the space of all
$H$-bimodule homomorphism $F^1 H \rightarrow M$ and the space of all $F^1 H$-pairs in $M$. The inverse map is given by $(x_0, x_1)\mapsto f_{(x_0,x_1)}$.
\end{itemize}
\end{lemma}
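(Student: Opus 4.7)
The plan is to handle (ii) as a formal consequence of (i) and to split (i) into uniqueness and existence.

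For uniqueness I first observe that $\tau_{s_0}$ and $\tau_{s_1}$ generate $F^1 H$ as an $H$-bimodule. Indeed, the quotient $W = \widetilde{W}/\Omega$ is the infinite dihedral group $\langle \bar s_0, \bar s_1\rangle$, in which every element has a unique reduced expression; hence each $w \in \widetilde{W}$ with $\ell(w) = n \geq 1$ admits a unique decomposition $w = \omega \cdot s_{\epsilon_1} s_{\epsilon_2} \cdots s_{\epsilon_n}$ with $\omega \in \Omega$ and $\epsilon_i \neq \epsilon_{i+1}$ in $\{0,1\}$. The braid relations \eqref{f:braid} then give $\tau_w = \tau_\omega \tau_{s_{\epsilon_1}} \cdots \tau_{s_{\epsilon_{n-1}}} \tau_{s_{\epsilon_n}}$, so any $H$-bimodule homomorphism is forced once its values on $\tau_{s_0}$ and $\tau_{s_1}$ are fixed. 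From this, (ii) follows automatically: the assignment $f \mapsto (f(\tau_{s_0}), f(\tau_{s_1}))$ lands in $F^1 H$-pairs since $(\tau_{s_0}, \tau_{s_1})$ is itself one in $F^1 H$, and the two maps in (ii) are mutually inverse by (i).

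For existence, I define on the basis
$$
f_{(x_0, x_1)}(\tau_w) := \tau_\omega \cdot \tau_{s_{\epsilon_1}} \cdots \tau_{s_{\epsilon_{n-1}}} \cdot x_{\epsilon_n}
$$
(with the convention that the middle product is empty when $n=1$, so $f(\tau_{s_\epsilon}) = x_\epsilon$) and extend $k$-linearly. What remains is to verify left and right $H$-bilinearity. Compatibility with $\tau_\omega$ on either side follows from iterating relation 2) together with the commutation $\omega s_\epsilon = s_\epsilon \omega^{-1}$ in $\widetilde{W}$. Compatibility with $\tau_{s_i}$ splits into two subcases: when the length increases (i.e.\ $\ell(s_i w) = \ell(w)+1$ for left multiplication, or $\ell(w s_i) = \ell(w)+1$ for right), the braid relation applies directly and one slides $x_{\epsilon_n}$ past the intermediate $\tau_{s_\delta}$'s using relation 3); when the length decreases, the quadratic relation $\tau_{s_i}^2 = -e_1 \tau_{s_i}$ rewrites $\tau_{s_i} \tau_w$ (resp.\ $\tau_w \tau_{s_i}$) as $-e_1 \tau_w$, which one matches on the image side via relation 1) $\tau_{s_i} x_i = -e_1 x_i = x_i \tau_{s_i}$, using that $e_1$ is central in $H$.

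The main obstacle is the length-decrease case, where the quadratic collision of $\tau_{s_i}^2$ in $H$ must correspond exactly to the collision of $x_i$ with $\tau_{s_i}$ in $M$. Relation 1) is designed precisely to encode this, so the matching works on the nose. All remaining identities are diagonal translations of relations 2) and 3).
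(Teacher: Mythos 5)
Your proof is correct, and its substance — peeling off a simple reflection from a reduced word and matching the braid/quadratic collision in $H$ against relations 1)--3) in $M$ — is the same as the paper's. The one genuine organizational difference is in how existence is set up. The paper first decomposes $F^1 H = \tau_{s_0}H \oplus \tau_{s_1}H$ as a \emph{right} $H$-module and identifies each summand with the cyclic right module $H/(\tau_{s_i}+e_1)H$; a pair $(x_0,x_1)$ with $x_i(\tau_{s_i}+e_1)=0$ then automatically gives a well-defined right $H$-module homomorphism, and only left-equivariance (the identity $\tau_w\,x_i = f(\tau_w\tau_{s_i})$) has to be checked, by induction on $\ell(w)$ with the same two cases you treat. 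Your route instead defines $f$ on the $k$-basis $\{\tau_w\}_{\ell(w)\geq 1}$ via the alternating decomposition $w = \omega\,s_{\epsilon_1}\cdots s_{\epsilon_n}$ and then verifies \emph{both} left and right compatibility against the generators $\tau_\omega,\tau_{s_0},\tau_{s_1}$. This is more symmetric and fully elementary, but does a little extra work: the paper's cyclic-presentation shortcut gives the right-equivariance for free, reducing the verification to one side. Either way, relations 1) and 2) and the centrality of $e_1$ handle the length-decreasing collisions, and relation 3) handles the sliding in the length-increasing case, exactly as you describe.
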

\begin{proof} As a right $H$-module, we have $F^1H=\tau_{s_0}H\oplus \tau_{s_1}H$ and  $\tau_{s_i}H\simeq H/(\tau_{s_i}+e_1) H$ for $i =0,1$.
Let $(x_0, x_1) \in M \times M$ satisfying $x_i(\tau_{s_i}+e_1)=0$  for $i =0,1$. There
is a unique  homomorphism of right $H$-modules
$$
f_{(x_0, x_1)}: F^1H\longrightarrow  M\textrm{ such that } f(\tau_{s_0})= x_0 \text{ and }   f(\tau_{s_1})= x_1 \ .
$$
We prove that $f$ is an homomorphism of $H$-bimodules if and only if $x_0, x_1$ is an $F^1 H$-pair in $M$. The direct implication is clear. Now suppose that $x_0, x_1 \in M$  satisfy the relations 1) - 3). Let $w \in \widetilde W$. We want to show that the maps $\tau \mapsto \tau_w \cdot f(\tau)$ and   $\tau\mapsto f(\tau_w\tau)$ are equal.  Since they are both homomorphisms of  right $H$-modules, it is enough to show that they coincide at $\tau_{s_i}$ for  $i =0,1$ namely that $ \tau_w x_i = f(\tau_w\tau_{s_i})$. We proceed by induction on $\ell(w)$. Using relations  2), it is easy to check that this equality  holds when $w$ has length $0$.
Now let $w\in \widetilde W$ with length $\geq 1$.
\begin{itemize}
\item[-] If $u := ws_{1-i}^{-1}$ has length $<\ell(w)$ we have:
\begin{align*}
\tau_w x_i & =\tau_{u} \tau_{s_{1-i}} x_i = \tau_{u}  x_{1-i} \tau_{s_i}    \quad\text{by 3)}   \\
    & = f(\tau_{u}  \tau_{s_{1-i}}) \tau_{s_i} = f(\tau_{u}  \tau_{s_{1-i} }\tau_{s_i}) =
            f(\tau_{w} \tau_{s_i})   \quad\text{by induction and then  right $H$-equivariance.}
\end{align*}
\item[-] Otherwise, $v := ws_i^{-1}$ has length $<\ell(w)$ and we have
\begin{align*}
    \tau_w x_i & = \tau_{v} \tau_{s_i} x_i = -\tau_{v}  x_i  e_1    \quad\text{by 1) and 2)}  \\
     & = -f(\tau_{v}  \tau_{s_i})  e_1     \quad\text{by induction}  \\
     & = f(-\tau_{v}  \tau_{s_i}  e_1) = f(\tau_{v}  \tau_{s_i}^2) = f(\tau_{w}  \tau_{s_i})    \quad\text{by  right $H$-equivariance.}
\end{align*}
\end{itemize}
\end{proof}

\begin{remark}\label{rema:pairzeta}
 For any $F^1 H$-pair $(x_0,x_1)$ in $M$ we have $\im(f_{(x_0,x_1)}) \subseteq \{m \in M : \zeta m = m \zeta\}$.
\end{remark}

\subsubsection{$F^1 H$-pairs in $E^1$}

In this paragraph we assume that $\mathfrak{F} = \mathbb{Q}_p$ with $p \geq 5$ and that $\pi=p$.

\begin{lemma}\label{F1-E1}
   The $F^1 H$-pairs $(x_0,x_1)$ in $E^1$ which are contained in $h^1(s_0) \oplus h^1(s_1) \oplus h^1(\Omega)$ are given by
\begin{equation*}
  x_0 := -(0,c^0, 0)_{s_0} - e_{\id^{-1}} \cdot (  c^0\iota^{-1}, 0,0)_1
   \quad\text{and}\quad
   x_1 := (0,c^0, 0)_{s_1}  -e_{\id} \cdot ( 0,  0, c^0\iota^{-1})_1
\end{equation*}
where $c^0$ runs over the $1$-dimensional $k$-vector space $\Hom(1+p\mathbb Z_p \big/ 1+p^2\mathbb Z_p ,k)$.
\end{lemma}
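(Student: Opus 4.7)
My plan is to impose the three $F^1 H$-pair relations 1)--3) in turn on the most general form of $(x_0, x_1)$ in $h^1(s_0) \oplus h^1(s_1) \oplus h^1(\Omega)$, using the explicit action formulas of Proposition~\ref{prop:theformulas} and Lemma~\ref{lemma:righteasy}. For $i \in \{0,1\}$ write
\[ x_i = (a_i^-, a_i^0, a_i^+)_{s_0} + (b_i^-, b_i^0, b_i^+)_{s_1} + \sum_{\omega \in \Omega}(\gamma_i^-(\omega), 0, \gamma_i^+(\omega))_\omega, \]
so the unknowns are $a_i^\pm, b_i^\pm, \gamma_i^\pm(\omega) \in \Hom(\mathbb F_p, k)$ together with $a_i^0, b_i^0 \in \Hom((1+p\mathbb Z_p)/(1+p^2\mathbb Z_p), k)$; the middle slot in $h^1(\omega)$ is zero by Remark~\ref{rema:simpliQp}.

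First I would exploit relation~2). Since $s_\epsilon \omega_u^{-1} = \omega_u s_\epsilon$ in $\widetilde W$, matching the $h^1(\omega_u s_\epsilon)$-components of $\tau_{\omega_u} x_i$ and $x_i \tau_{\omega_u^{-1}}$ via \eqref{f:leftomega} and \eqref{f:rightomegaE1} forces $a_i^\pm(u^{\mp 2}\,_-) = a_i^\pm$ for every $u \in \mathbb F_p^\times$. Since each $a_i^\pm$ is $\mathbb F_p$-linear and $\mathbb F_p^\times$ contains non-squares when $p \geq 5$, this gives $a_i^\pm = b_i^\pm = 0$. The same relation applied to the $h^1(\omega)$-components becomes the recursion $\gamma_i^\pm(\omega_{u^2}\omega) = u^{\mp 2}\gamma_i^\pm(\omega)$.

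Next I would impose $\tau_{s_0} x_0 = -e_1 x_0$. By the $\ell(w)=1$ formulas of Proposition~\ref{prop:theformulas}, the left side is supported on $h^1(\Omega s_0) \oplus h^1(s_0 s_1)$ while the right side is supported on $h^1(\Omega s_0) \oplus h^1(\Omega s_1) \oplus h^1(\Omega)$. Matching the $h^1(\Omega s_1)$- and $h^1(\Omega)$-components immediately yields $b_0^0 = 0$ and $e_1 \gamma_0 = 0$. Matching the $h^1(\Omega s_0)$-components, which split according to the $\Omega$-isotypes $\{1, \id, \id^2\}$ from the action on $h^1(s_0)$ combined with the $\id^{-2}$-shift from the action on $h^1(\omega)$, uniquely pins down $\gamma_0 = -e_{\id^{-1}}\cdot(c^0 \iota^{-1}, 0, 0)_1$ with $c^0 := -a_0^0$ and $\gamma_0^+ \equiv 0$. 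A symmetric analysis of $\tau_{s_1} x_1 = -e_1 x_1$ gives $a_1^0 = 0$, $\gamma_1^- \equiv 0$, and $\gamma_1 = -e_{\id}\cdot(0, 0, \tilde c^0 \iota^{-1})_1$ with $\tilde c^0 := b_1^0$. Relation~3), $\tau_{s_0} x_1 = x_0 \tau_{s_1}$ --- computed via the $\widetilde W^0, \ell=1$ formula on the left and Lemma~\ref{lemma:righteasy}(i) on the right, noting that the $\gamma$-contributions vanish because $\tau_{s_0}(c^-, 0, c^+)_1 = (0, 0, -c^-)_{s_0}$ and $(c^-, 0, 0)_1 \tau_{s_1} = 0$ --- yields $(0, -\tilde c^0, 0)_{s_0 s_1} = (0, -c^0, 0)_{s_0 s_1}$, so $\tilde c^0 = c^0$, producing the claimed one-parameter family. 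The remaining pair relations $x_i \tau_{s_i} = -e_1 x_i$ and $\tau_{s_1} x_0 = x_1 \tau_{s_0}$ are then verified by analogous direct checks using Lemma~\ref{lemma:righteasy}(ii).

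The main obstacle is purely organizational: the length-one cases in Proposition~\ref{prop:theformulas} contribute the ``extra'' $e_{\id^{\pm 1}}, e_{\id^{\pm 2}}$ summands, and tracking how these recombine with the $\Omega$-isotypes of $\gamma_i$ is exactly what locks in the specific coefficients $-1$ and $\iota^{-1}$ appearing in the final answer.
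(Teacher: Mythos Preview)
Your direct approach---imposing relations 1)--3) one at a time---differs from the paper's proof, which instead uses the consequence (Remark~\ref{rema:pairzeta}) that every $F^1H$-pair element commutes with $\zeta$ and then invokes Lemma~\ref{lemma:calculsg} to compute $\zeta\cdot x - x\cdot\zeta$ modulo the length filtration. That lemma kills, in two quick steps, first the $h^1(s_0)\oplus h^1(s_1)$ part (after subtracting the asserted $x_0,x_1$) and then the $h^1(\Omega)$ part. Your route is more self-contained but requires more bookkeeping.

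There is, however, a gap in your derivation of $\gamma_0^+\equiv 0$ (and, symmetrically, $\gamma_1^-\equiv 0$). In the relation $\tau_{s_0}x_0=-e_1x_0$, the only $h^1(\Omega s_0)$-contribution from the $\gamma_0$-part is $\sum_\omega(0,0,-\gamma_0^-(\omega))_{s_0\omega}$, because $\tau_{s_0}\cdot(c^-,0,c^+)_\omega=(0,0,-c^-)_{s_0\omega}$ sees only the $c^-$-slot; so that matching pins down $\gamma_0^-$ but says nothing about $\gamma_0^+$. The remaining constraint $e_1\cdot\gamma_0=0$ coming from the $h^1(\Omega)$-matching is automatically satisfied by any $\gamma_0^+$ compatible with relation 2): writing $\gamma_0^+(\omega_t)=t\cdot f(\bar t)$ with $f$ a function on $\mathbb F_p^\times/(\mathbb F_p^\times)^2$, the condition reduces to sums of the form $\sum_{u\text{ sq}}u$ and $\sum_{u\text{ nsq}}u$, both of which vanish for $p>3$. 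So relation 1) alone does not force $\gamma_0^+=0$.

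The missing vanishings actually drop out of your relation-3) step $\tau_{s_0}x_1=x_0\tau_{s_1}$ once you match \emph{all} components there rather than just $h^1(s_0s_1)$: the $h^1(\Omega s_1)$-component forces $\gamma_0^+=0$ (from $\gamma_0\cdot\tau_{s_1}=\sum_\omega(0,0,\gamma_0^+(\omega))_{\omega s_1}$ on the right versus nothing on the left), and the $h^1(\Omega s_0)$-component forces $\gamma_1^-=0$ (from $\tau_{s_0}\cdot\gamma_1=\sum_\omega(0,0,-\gamma_1^-(\omega))_{s_0\omega}$ on the left versus nothing on the right). With this reorganization your argument goes through.
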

\begin{proof}
To check that the pairs $(x_0,x_1)$ in the assertion are indeed $F^1 H$-pairs is an explicit computation based on the formulas in Sections \ref{subsec:omega} and \ref{subsec:HonE1}.

As noted in Remark \ref{rema:pairzeta}, an element which satisfies the relations 1), 2) and 3) commutes with the action of $\zeta$. We  determine the elements in  $h^1(s_0) \oplus h^1(s_1) \oplus h^1(\Omega)$ which commute with the action of $\zeta$. Let $x$ be such an element. Since the elements in the assertion of the lemma do commute with the action of $\zeta$, we may assume that $x$ is of the form
$$
  x= (c_0^-,0,c_0^+)_{s_0} + (c_1^-,0,c_1^+)_{s_1} + \sum_{\omega \in \Omega} (c_\omega^-,0,c_\omega^+)_\omega \in h^1(s_0) \oplus h^1(s_1) \oplus h^1(\Omega) \ .
$$
By Lemma \ref{lemma:calculsg}, we know that
$$
  \zeta\cdot x- x\cdot \zeta \equiv (0, 0, c_0^+)_{s_0s_1s_0}- (c_0^-, 0, 0)_{s_0s_1s_0}+ (c_1^-, 0, 0)_{s_1s_0s_1}- (0, 0, c_1^+)_{s_1s_0s_1} \bmod  F_2 E^1 \ .
$$
Therefore we have $c_0^-=c_0^+=c_1^+= c_1^-=0$ and   $x=  \sum_{\omega \in \Omega} (c_\omega^-,0,c_\omega^+)_\omega \in  h^1(\Omega)$. By Lemma \ref{lemma:calculsg} again,
$$
  \zeta\cdot x- x\cdot \zeta \equiv \sum_{\omega \in \Omega} \Big(
   (0, 0, c_\omega ^+)_{s_0s_1\omega}+ (c_\omega^-, 0, 0)_{s_1s_0\omega}- (0, 0, c^+_\omega)_{s_1s_0\omega} - (c^-_\omega, 0, 0)_{s_0s_1\omega} \Big) \bmod  F_1E^1
$$
and therefore $x=0$. This proves that the only elements  in $E^1$ which are contained in $h^1(s_0) \oplus h^1(s_1) \oplus h^1(\Omega)$  and commute with the action of $\zeta$ are given by the formulas announced in the lemma. Therefore, these are also the only $F^1 H$-pairs $(x_0,x_1)$ in $h^1(s_0) \oplus h^1(s_1) \oplus h^1(\Omega)$.
\end{proof}

In the following we choose $\c^0 \in \Hom(1+p\mathbb Z_p \big/ 1+p^2\mathbb Z_p ,k) $ as in \S\ref{subsubsec:normalize} and let $(\mathbf{x}_0,\mathbf{x}_1)$ be the corresponding $F^1 H$-pair in $E^1$ of Lemma \ref{F1-E1}. Recall that the $H$-bimodule homomorphism $f_{(\mathbf{x}_0,\mathbf{x}_1)}$ was introduced in Lemma  \ref{lemma:defifx}.

\begin{proposition}\phantomsection\label{F1part}
\begin{itemize}
   \item[i.] For $\tau_w \in F^1 H$ we have
\begin{equation*}
    f_{(\mathbf{x}_0,\mathbf{x}_1)}(\tau_w) =
\begin{cases}
    (0,\c^0, 0)_w & \text{if $w\in\widetilde W^0$ and $\ell(w)\geq 2$},  \\
     -(0,\c^0, 0)_w & \text{if $w\in\widetilde W^1$ and $\ell(w)\geq 2$},   \\
    (0,\c^0, 0)_{s_1\omega}-e_{\id} \cdot ( 0,  0, \c^0\iota^{-1}) _{\omega} & \text{if $w=s_1\omega\in s_1\Omega$},   \\
     -(0,\c^0, 0)_{s_0\omega}-e_{\id^{-1}} \cdot (  \c^0\iota^{-1}, 0,0) _{\omega} & \text{if $w=s_0\omega\in s_0\Omega$.}   \\
\end{cases}
\end{equation*}
   \item[ii.] The $H$-bimodule homomorphism $f_{(\mathbf{x}_0,\mathbf{x}_1)} : F^1 H \longrightarrow E^1$ is injective.
   \item[iii.] The image of $f_{(\mathbf{x}_0,\mathbf{x}_1)}$ is contained in the centralizer of $\zeta$.
   \item[iv.] $\anti \circ f_{(\mathbf{x}_0,\mathbf{x}_1)} = - f_{(\mathbf{x}_0,\mathbf{x}_1)} \circ \anti$.
   \item[v.] $\Gamma_\varpi \circ f_{(\mathbf{x}_0,\mathbf{x}_1)} (\tau_w) = f_{(\mathbf{x}_0,\mathbf{x}_1)} (\tau_{\varpi w \varpi^{-1}})$ for any $\tau_w \in F^1 H$.
\end{itemize}
\end{proposition}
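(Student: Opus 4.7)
The plan is to treat the five parts in sequence, with (ii)--(v) following rather formally from (i) via the universal property of Lemma~\ref{lemma:defifx} once the explicit formulas for $f_{(\mathbf x_0,\mathbf x_1)}(\tau_w)$ are in hand.

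For (i) I would proceed by induction on $\ell(w)$. The base case $\ell(w)=1$, i.e.\ $w=s_\epsilon\omega$ with $\omega\in\Omega$, follows from right $H$-equivariance $f_{(\mathbf x_0,\mathbf x_1)}(\tau_{s_\epsilon\omega})=\mathbf x_\epsilon\cdot\tau_\omega$ combined with formula~\eqref{f:rightomegaE1}, the commutation $e_{\id^{\pm 1}}\tau_{\omega_u}=u^{\pm 1} e_{\id^{\pm 1}}$, and the $\Omega$-action of~\S\ref{subsec:omega}. For $\ell(w)\geq 2$ I would choose $\epsilon\in\{0,1\}$ and $w'\in\widetilde W$ with $\ell(w')=\ell(w)-1$ and $\tau_w=\tau_{s_\epsilon}\tau_{w'}$ (so that $w'\in\widetilde W^\epsilon$), giving $f_{(\mathbf x_0,\mathbf x_1)}(\tau_w)=\tau_{s_\epsilon}\cdot f_{(\mathbf x_0,\mathbf x_1)}(\tau_{w'})$. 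The inductive assumption together with the formulas of Proposition~\ref{prop:theformulas} applied to $\tau_{s_\epsilon}\cdot(0,\c^0,0)_{w'}$ then produces exactly $\pm(0,\c^0,0)_w$, with the sign matching whether $w\in\widetilde W^0$ or $w\in\widetilde W^1$. The one delicate transition is from $\ell(w')=1$ to $\ell(w)=2$, where $w'=s_{1-\epsilon}\omega$ and the $e_{\id^{\pm 1}}$-correction in $\mathbf x_{1-\epsilon}$ is killed by the left action of $\tau_{s_\epsilon}$ thanks to the ``$w\in\Omega$'' line of Proposition~\ref{prop:theformulas}.

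Parts (ii) and (iii) are then short: (ii) follows because by (i) each $f_{(\mathbf x_0,\mathbf x_1)}(\tau_w)$ has a nonzero component $\pm(0,\c^0,0)_w$ in the summand $h^1_0(w)$ of $h^1_0=\bigoplus_v h^1_0(v)$ and these summands are independent, while (iii) is exactly Remark~\ref{rema:pairzeta}. For (iv) I would set $g:=-\anti\circ f_{(\mathbf x_0,\mathbf x_1)}\circ\anti$; since $\anti$ is an involutive anti-automorphism of $E^*$ restricting to $\tau_h\mapsto\tau_{h^{-1}}$ on $H$, a direct check shows $g$ is an $H$-bimodule homomorphism $F^1 H\to E^1$, so by the uniqueness in Lemma~\ref{lemma:defifx} it suffices to verify $g(\tau_{s_\epsilon})=\mathbf x_\epsilon$ for $\epsilon=0,1$. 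Using $s_\epsilon^{-1}=s_\epsilon\omega_{-1}$ in $\widetilde W$ and right $H$-equivariance this reduces to the two identities $\anti(\mathbf x_\epsilon)=-\tau_{\omega_{-1}}\mathbf x_\epsilon$, which I would obtain term-by-term from Lemma~\ref{lemma:antievenodd}, the equality $\anti(e_{\id^{\pm 1}})=e_{\id^{\mp 1}}$, and the $\Omega$-action formulas of \S\ref{subsec:omega}--\S\ref{subsec:commuel}.

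For (v) the same mechanism applies to $\tilde g:=\Gamma_\varpi^{-1}\circ f_{(\mathbf x_0,\mathbf x_1)}\circ\Gamma_\varpi$: since $\Gamma_\varpi$ is an algebra automorphism of $E^*$ inducing $\tau_w\mapsto\tau_{\varpi w\varpi^{-1}}$ on $H$ and satisfies $\Gamma_\varpi^2=\id$, $\tilde g$ is an $H$-bimodule homomorphism, and agreement on generators (using Remark~\ref{remark:N}) is equivalent to the single identity $\Gamma_\varpi(\mathbf x_0)=\mathbf x_1$; this is a two-line substitution via Lemma~\ref{lemma:conjtrip} together with $\varpi\omega_u\varpi^{-1}=\omega_{u^{-1}}$ (which yields $\Gamma_\varpi(e_{\id^{-1}})=e_{\id}$). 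I expect the main obstacle to be the case analysis in (i), where one must track signs according to the parity of $\ell(w)$ and to whether $w\in\widetilde W^0$ or $\widetilde W^1$; once this is organized cleanly, (ii)--(v) are essentially formal consequences of the universal property of $F^1 H$ recorded in Lemma~\ref{lemma:defifx}.
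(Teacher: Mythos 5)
Your proposal is correct and follows essentially the same path as the paper: for (i) the paper also starts from $f_{(\mathbf{x}_0,\mathbf{x}_1)}(\tau_{s_i\omega})=\mathbf{x}_i\tau_\omega$ and then bootstraps via the observation $\tau_v\cdot(0,c^0,0)_w=(0,(-1)^{\ell(v)}c^0,0)_{vw}$ (your induction is the same argument repackaged, and you correctly flag that the $e_{\id^{\pm1}}$-correction terms die under $\tau_{s_\epsilon}$ at the $\ell=1\to 2$ step), while (ii) and (iii) are immediate from (i) and Remark~\ref{rema:pairzeta}, and (iv)--(v) reduce to the identities $\anti(\mathbf{x}_\epsilon)=-\tau_{\omega_{-1}}\mathbf{x}_\epsilon$ and $\Gamma_\varpi(\mathbf{x}_0)=\mathbf{x}_1$, which is exactly what the paper checks. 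Your packaging of (iv) and (v) via the uniqueness clause of Lemma~\ref{lemma:defifx} applied to $-\anti\circ f\circ\anti$ and $\Gamma_\varpi\circ f\circ\Gamma_\varpi$ is a slightly cleaner way to say what the paper does by hand on a general $\tau_w$, but the substance is identical.
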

\begin{proof}
i. For $\omega \in \Omega$ we have by definition that $f_{(\mathbf{x}_0,\mathbf{x}_1)}(\tau_{s_i \omega}) = \mathbf{x}_i \tau_\omega$. Hence the last two equalities follow directly from \eqref{f:rightomegaE1}.

For the first two equalities we first consider the cases $w = s_0 s_1$ and $w = s_1 s_0$. By the left $H$-equivariance of $f_{(\mathbf{x}_0,\mathbf{x}_1)}$ we have
\begin{equation*}
    f_{(\mathbf{x}_0,\mathbf{x}_1)}(\tau_w) =
   \begin{cases}
   \tau_{s_0}\cdot  \mathbf{x}_1 & \text{if $w = s_0 s_1$},  \\
   \tau_{s_1}\cdot   \mathbf{x}_0 & \text{if $w = s_1 s_0$}.
   \end{cases}
\end{equation*}
Using Prop.\ \ref{prop:theformulas} one easily checks that $\tau_{s_0} \cdot   \mathbf{x}_1 = -(0,\c^0,0)_w$ and $\tau_{s_1} \cdot   \mathbf{x}_0 = (0,\c^0,0)_w$. The assertion for a general $w$ follows from this by using again the left $H$-equivariance together with the following general observation. For any $v, w \in \widetilde{W}$ such that $\ell(v) + \ell(w) = \ell(vw)$ and $\ell(w) \geq 1$ we have,  by \eqref{f:leftomega} and Prop.\ \ref{prop:theformulas}:
\begin{equation*}
  \tau_v\cdot  (0,c^0,0)_w = (0,(-1)^{\ell(v)} c^0,0)_{vw} \ .
\end{equation*}

ii. It is immediate from i. that the set $\{f_{(\mathbf{x}_0,\mathbf{x}_1)}(\tau_w)\}_{w \in F^1 H}$ is a $k$-basis of $\im(f_{(\mathbf{x}_0,\mathbf{x}_1)})$.

iii. This is obvious, as noted in Remark \ref{rema:pairzeta}.

iv. We first check that $\anti (\mathbf{x}_i) = - \tau_{s_i^2} \cdot \mathbf{x}_i$ holds true. The case $i=1$ being analogous we only compute
\begin{align*}
  \anti (\mathbf{x}_0) & = - \anti ((0,\c^0,0)_{s_0}) - \anti ((\c^0 \iota^{-1},0,0)_1) \anti (e_{\id^{-1}})   \\
      & = (0,\c^0,0)_{s_0^{-1}} - (\c^0 \iota^{-1},0,0)_1 \cdot e_{\id}   \quad\text{by Lemma \ref{lemma:antievenodd}}   \\
      & = (0,\c^0,0)_{s_0^2 s_0} - e_{\id^{-1}} \cdot (\c^0 \iota^{-1},0,0)_1  \quad\text{by \eqref{f:leftomega} and \eqref{f:rightomegaE1}}   \\
      & = \tau_{s_0^2}\cdot (0,\c^0,0)_{s_0} - e_{\id^{-1}}\cdot (\c^0 \iota^{-1},0,0)_1   \quad\text{by \eqref{f:lefts2}}   \\
      & = \tau_{s_0^2} \cdot(0,\c^0,0)_{s_0} + \tau_{s_0^2} e_{\id^{-1}}\cdot (\c^0 \iota^{-1},0,0)_1   \quad\text{by $- e_{\id^{-1}} = \tau_{s_0^2} \cdot e_{\id^{-1}}$}  \\
      & = - \tau_{s_0^2} \cdot \mathbf{x}_0 \ .
\end{align*}
For a general $w \in \widetilde{W}^{1-i,\ell \geq 1}$  we have $\tau_{w}=\tau_{s_i}\tau_{s_i^{-1}w}$ and  we  deduce that
\begin{align*}
  \anti (f_{(\mathbf{x}_0,\mathbf{x}_1)}(\tau_w)) & = \anti (\mathbf{x}_i \cdot \tau_{s_i^{-1} w}) = \anti(\tau_{s_i^{-1} w})  \cdot \anti (x_i) = - \tau_{w^{-1} s_i} \tau_{s_i^2}  \cdot \mathbf{x}_i = - \tau_{w^{-1} s_i^{-1}}  \cdot \mathbf{x}_i  \\
  & = - f_{(\mathbf{x}_0,\mathbf{x}_1)}(\tau_{w^{-1}}) = - f_{(\mathbf{x}_0,\mathbf{x}_1)}(\anti (\tau_w))
\end{align*}
using left $H$-equivariance in the fifth equality.

v. Lemma \ref{lemma:conjtrip} easily implies that $\Gamma_\varpi(\mathbf{x}_i) = \mathbf{x}_{1-i}$. For a general $w \in \widetilde{W}^{1-i,\ell \geq 1}$ we have $\varpi w \varpi^{-1} \in \widetilde{W}^{i,\ell \geq 1}$ and we deduce that
\begin{align*}
  \Gamma_\varpi (f_{(\mathbf{x}_0,\mathbf{x}_1)} (\tau_w)) & = \Gamma_\varpi(\mathbf{x}_i \cdot  \tau_{s_i^{-1} w}) = \Gamma_\varpi(\mathbf{x}_i)  \cdot  \Gamma_\varpi(\tau_{s_i^{-1} w}) = \mathbf{x}_{1-i}\cdot  \tau_{\varpi s_i^{-1} w \varpi^{-1}}  \\
  & = \mathbf{x}_{1-i} \cdot \tau_{s_{1-i}^{-1} \varpi w \varpi^{-1}} = f_{(\mathbf{x}_0,\mathbf{x}_1)} (\tau_{\varpi w \varpi^{-1}})
\end{align*}
using in the second equality that $\Gamma_\varpi$ is multiplicative (cf.\ \S\ref{subsubsec:auto-pair}).
\end{proof}

In Prop.\ \ref{prop:isokerg} we will see that the inclusion in part ii. of the above proposition, in fact, is an equality. This, in particular, shows that there are no nonzero $F^1 H$-pairs in $E^1 \setminus \im(f_{(\mathbf{x}_0,\mathbf{x}_1)})$.

\begin{remark}\label{rema:1-gamma0F1}
   Recalling that $e_{\gamma_0}$ was introduced in \eqref{f:gamma0} we have
\begin{equation*}
  (1-e_{\gamma_0})\cdot  \im(f_{(\mathbf{x}_0,\mathbf{x}_1)}) = \im(f_{(\mathbf{x}_0,\mathbf{x}_1)})\cdot (1-e_{\gamma_0}) = (1-e_{\gamma_0}) \cdot \im(f_{(\mathbf{x}_0,\mathbf{x}_1)}) \cdot (1-e_{\gamma_0}) = (1-e_{\gamma_0}) \cdot h_0^1(\widetilde{W}) \ .
\end{equation*}
\end{remark}
\begin{proof}
Since $1-e_{\gamma_0}$ is central in $H$ it also must centralize $\im(f_{(\mathbf{x}_0,\mathbf{x}_1)})$. Recall that $e_{\gamma_0} = e_{\id} + e_{\id^{-1}}$. The last equality then is immediate from Prop.\ \ref{F1part}-i.
\end{proof}

\subsubsection{An $H_\zeta$-bimodule inside $E^1$}\label{subsubsec:Hzeta-inside}

\paragraph{A left $H_\zeta$-bimodule inside $E^1$}

Let $M$ be any $H$-bimodule. To give a homomorphism of left (or right) $H$-modules $f : H \longrightarrow M$ simply means to give any element $x \in M$ as the image $x = f(1)$. We state a simple sufficient condition on $x$ such that the corresponding $f$ extends to the localization $H_\zeta$.

\begin{lemma}\label{lemma:extension}
   Let $x \in M$ be such that $\zeta \cdot x\cdot  \zeta = x$. Then
\begin{align*}
  H_\zeta & \longrightarrow M \\
  \zeta^{-i}\tau & \longmapsto f_x(\zeta^{-i}\tau) := \tau \cdot x \cdot  \zeta^i,\ \text{resp.\ ${_x f}(\zeta^{-i}\tau) := \zeta^i\cdot  x \cdot  \tau$, for $i \geq 0$ and $\tau \in H$,}
\end{align*}
is a well defined homomorphism of left, resp.\ right, $H$-modules; its image is contained in the space  $\{y \in M : \zeta \cdot y\cdot  \zeta = y\}$.
\end{lemma}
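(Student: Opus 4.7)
The plan is to check in turn that $f_x$ (and its right-module analogue $_xf$) is well-defined, is $H$-linear, and lands in the claimed subspace. The crucial preliminary is that the hypothesis $\zeta\cdot x\cdot \zeta = x$ iterates to $\zeta^n\cdot x\cdot \zeta^n = x$ for every $n\geq 0$ by straightforward induction: if it holds for $n$, then $\zeta^{n+1}\cdot x\cdot \zeta^{n+1} = \zeta\cdot(\zeta^n\cdot x\cdot \zeta^n)\cdot \zeta = \zeta\cdot x\cdot \zeta = x$.

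For well-definedness, I would write an arbitrary element of $H_\zeta$ as $\zeta^{-i}\tau$ with $\tau\in H$ and $i\geq 0$. Since $\zeta$ is central and a non-zero-divisor in $H$ (Lemma \ref{freeness}), $H$ injects into $H_\zeta$, and two expressions $\zeta^{-i}\tau$ and $\zeta^{-j}\sigma$ coincide in $H_\zeta$ iff $\zeta^j\tau = \zeta^i\sigma$ in $H$. Using centrality of $\zeta$ in $H$ together with the iterated identity applied on both sides, one computes
\[
  \tau\cdot x\cdot \zeta^i = \tau\cdot(\zeta^j\cdot x\cdot \zeta^j)\cdot \zeta^i = \tau\zeta^j\cdot x\cdot \zeta^{i+j} = \zeta^i\sigma\cdot x\cdot \zeta^{i+j} = \sigma\cdot(\zeta^i\cdot x\cdot \zeta^i)\cdot \zeta^j = \sigma\cdot x\cdot \zeta^j,
\]
so the assignment does not depend on the representative.

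Left $H$-linearity is then immediate: for $\tau'\in H$ one has $\tau'\cdot \zeta^{-i}\tau = \zeta^{-i}(\tau'\tau)$ by centrality, hence $f_x(\tau'\cdot \zeta^{-i}\tau) = \tau'\tau\cdot x\cdot \zeta^i = \tau'\cdot f_x(\zeta^{-i}\tau)$. For the image condition, any value $y = \tau\cdot x\cdot \zeta^i$ satisfies
\[
  \zeta\cdot y\cdot \zeta = \tau\cdot(\zeta\cdot x\cdot \zeta)\cdot \zeta^i = \tau\cdot x\cdot \zeta^i = y.
\]

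The right $H$-module statement for $_xf$ is proved by the completely symmetric calculation, swapping the sides on which $\tau$ and $\zeta^i$ act. There is no serious obstacle in this argument; the only subtle point is that the well-definedness check requires invoking the iterated identity \emph{twice}, once to absorb $\zeta^j$ on the right of $x$ and once to release $\zeta^i$ on the left — a trick needed precisely because $\zeta$ cannot literally be inverted inside $M$, but can be moved from one side of $x$ to the other once one is already sitting in the centralizer.
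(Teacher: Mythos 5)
Your proof is correct and supplies exactly the routine verification that the paper leaves out (the paper's proof reads simply ``Easy exercise''). The key moves — iterating $\zeta\cdot x\cdot\zeta=x$ to $\zeta^n\cdot x\cdot\zeta^n=x$, reducing well-definedness to the relation $\zeta^j\tau=\zeta^i\sigma$ in $H$ via centrality and non-zero-divisibility of $\zeta$ (Lemma \ref{freeness}), and then absorbing and releasing powers of $\zeta$ on alternating sides of $x$ — are precisely what is needed, and the $H$-linearity and image computations are straightforward and correctly done.
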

\begin{proof}
Easy exercise.
\end{proof}
Assume that $\mathfrak{F} = \mathbb{Q}_p$ with $p \geq 5$ and that $\pi=p$.
We will apply the above lemma to the bimodule $E^1$.

\begin{lemma}\label{lemma:offdiag-E1}
The elements $x \in E^1$ which satisfy $\zeta\cdot  x \cdot  \zeta = x$ and lie in $h^1(1) \oplus e_{\id}h^1(s_0) \oplus e_{\id}h^1(s_1 s_0)$ with $\tau_{s_0} \cdot  x = 0$, resp.\ in $h^1(1) \oplus e_{\id^{-1}} h^1(s_1) \oplus e_{\id^{-1}} h^1(s_0 s_1)$ with $\tau_{s_1} \cdot  x = 0$, are
\begin{align*}
   & x^+ := (0,0,c^+)_1 - e_{\id} \cdot (0,2c^+ \iota,0)_{s_0} - e_{\id} \cdot (0,0,c^+)_{s_1 s_0} , \text{resp.} \\
   & x^- := (c^-,0,0)_1 + e_{\id^{-1}} \cdot (0,2c^- \iota,0)_{s_1} - e_{\id^{-1}} \cdot (c^-,0,0)_{s_0 s_1} ,
\end{align*}
where $c^+$ and $c^-$ run over the $1$-dimensional $k$-vector space $\Hom(\mathbb Z_p \big/ p\mathbb Z_p ,k)$.
\end{lemma}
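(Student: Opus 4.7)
The plan is to handle the case of $x^+$ (with $\tau_{s_0}\cdot x = 0$ and the subspace involving $e_{\id}$) and deduce the case of $x^-$ by applying the algebra automorphism $\Gamma_\varpi$ of $E^*$ from \S\ref{subsec:conjugation}. Since $\varpi s_0 \varpi^{-1}=s_1$, one checks that $\Gamma_\varpi(\tau_{s_0})=\tau_{s_1}$, that $\Gamma_\varpi(\zeta)=\zeta$ (immediate from \eqref{f:zetadefi} by swapping $\tau_{s_0}\leftrightarrow\tau_{s_1}$), and that $\Gamma_\varpi(e_{\id})=e_{\id^{-1}}$ (using $\xi(\omega_u)=\omega_{u^{-1}}$). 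Combined with Lemma \ref{lemma:conjtrip}, this exchanges the two ambient subspaces and the two systems of conditions, and one verifies directly that $\Gamma_\varpi(x^+)=x^-$ under the renaming $c^-:=c^+$.

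For the case of $x^+$, the first step is to verify by direct calculation that $x^+$ does satisfy the two conditions. For $\tau_{s_0}\cdot x^+=0$ apply Prop.\ \ref{prop:theformulas} to each of the three summands, using $\tau_{s_0}\,e_{\id}=e_{\id^{-1}}\,\tau_{s_0}$ (which follows from $e_\lambda\tau_{s_\epsilon}=\tau_{s_\epsilon}e_{\lambda^{-1}}$) to commute the idempotent past; the three contributions are designed to cancel. For $\zeta\cdot x^+\cdot\zeta=x^+$, first compute $\zeta\cdot x^+$ using Cor.\ \ref{coro:form-zeta}; then multiply by $\zeta$ on the right by transporting the left-multiplication formulas through the anti-involution via $y\cdot\zeta=\anti(\zeta\cdot\anti(y))$ together with Lemma \ref{lemma:antievenodd}.

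For uniqueness, parametrize a general element of the $8$-dimensional ambient space as
\[
 x=(a^-,0,a^+)_1+e_{\id}\cdot(b^-,b^0,b^+)_{s_0}+e_{\id}\cdot(c^-,c^0,c^+)_{s_1s_0},
\]
and impose the conditions in turn. Expanding $\tau_{s_0}\cdot x=0$ via Prop.\ \ref{prop:theformulas}, and projecting the result onto the various $h^1(w)$-summands and their $\Omega$-isotypic components, yields a system of linear equations on the eight scalars which already eliminates several of them. Next, expand $\zeta\cdot x\cdot\zeta=x$ via Cor.\ \ref{coro:form-zeta} and its right analogue (through $\anti$): most of the resulting terms land outside the ambient subspace (for instance in $h^1(s_0s_1s_0)$ of length $3$, or in pieces with wrong $\Omega$-isotypic type) and must vanish individually, while the surviving terms must reassemble to give $x$ back. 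A convenient way to organize the bookkeeping is via the increasing filtration $F_nE^1$ of \S\ref{subsubsec:fil}, since left or right multiplication by $\zeta$ increases length by $2$ modulo lower-order terms: the highest-length pieces of the equation force linear relations between $a^\pm$, $b^?$, and $c^?$, while the lower-length pieces then pin down all remaining parameters up to a single free one.

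The main obstacle is simply the volume of case analysis required for the double product $\zeta\cdot x\cdot\zeta$: left multiplication by $\zeta$ sends each of the three summands of $x$ into many $h^1(w)$-pieces with various $e_{\id^{\pm 1}},e_{\id^{\pm 2}}$ twists (the lower-length formulas of Cor.\ \ref{coro:form-zeta} are the source of the $e_{\id}\cdot(0,2c^+\iota,0)_{s_0}$ correction terms in $x^+$), and right multiplication by $\zeta$ does the same, so one must keep track of all intermediate terms. Once organized by length the final linear system has rank $7$ on the $8$-dimensional starting space and its one-dimensional kernel is spanned by $x^+$.
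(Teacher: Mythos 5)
Your proposal follows essentially the same route as the paper's proof: parametrize a general element of the $8$-dimensional ambient space, impose $\tau_{s_0}\cdot x = 0$ first via Prop.\ \ref{prop:theformulas} to kill $c^-,b^-,d^0,d^-$, then impose $\zeta\cdot x\cdot\zeta = x$ via Cor.\ \ref{coro:form-zeta} (for the left factor) and $\anti$ together with Lemma \ref{lemma:antievenodd} (for the right factor) to pin down the remaining coefficients to those of $x^+$, and treat the $x^-$ case by symmetry/conjugation. The explicit separation of existence and uniqueness, the appeal to $\Gamma_\varpi$, and the filtration as bookkeeping are presentational variations, not a different method.
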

\begin{proof}
We treat the first case, the other one being analogous. Consider any
\begin{equation*}
  x = (c^-,0,c^+)_1 + e_{\id} \cdot (b^-,b^0,b^+)_{s_0} + e_{\id} \cdot  (d^-,d^0,d^+)_{s_1 s_0}
\end{equation*}
such that $\tau_{s_0}\cdot  x = 0$. Using Prop.\ \ref{prop:theformulas} we compute
\begin{align*}
  0 & = \tau_{s_0}\cdot  x = \tau_{s_0} \cdot  (c^-,0,c^+)_1 + e_{\id^{-1}} \tau_{s_0}\cdot  (b^-,b^0,b^+)_{s_0} + e_{\id^{-1}} \tau_{s_0} \cdot (d^-,d^0,d^+)_{s_1 s_0}  \\
  & = (0,0,-c^-)_{s_0} + e_{\id^{-1}} \cdot \big(- e_1\cdot  (b^-, b^0, b^+)_{s_0} + e_{\id}\cdot (0,-2b^- \iota, b^0 \iota^{-1})_{s_0} + e_{\id^2} \cdot  (0,0,b^-)_{s_0} - (0,0,b^-)_{s_0^2} \big)   \\
     & \qquad - e_{\id^{-1}} \cdot  (0,d^0, d^-)_{s_0 s_1 s_0}    \\
  & = (0,0,-c^-)_{s_0} - e_{\id^{-1}} \cdot  (0,0,b^-)_{s_0^2} - e_{\id^{-1}} \cdot  (0,d^0, d^-)_{s_0 s_1 s_0}  \ .
\end{align*}
It follows that $c^- = b^- = d^0 = d^- = 0$ and hence that
\begin{equation}\label{f:x-intermediate}
  x = (0,0,c^+)_1 + e_{\id}\cdot  (0,b^0,b^+)_{s_0} + e_{\id} \cdot (0,0,d^+)_{s_1 s_0} \ .
\end{equation}
Now we assume in addition that $\zeta\cdot  x \cdot \zeta = x$. From Cor.\ \ref{coro:form-zeta} we deduce that
\begin{align*}
  \zeta \cdot  x & = \zeta \cdot  (0,0,c^+)_1 + e_{\id} \zeta \cdot (0,b^0,b^+)_{s_0} + e_{\id} \zeta \cdot (0,0,d^+)_{s_1 s_0}  \\
     & = (0,0,c^+)_{s_0 s_1} - e_1 \cdot  (c^+,0,0)_{s_1} + e_1 \cdot  (0,0,c^+)_1 + e_{\id} \cdot  (0,b^0,b^+)_{s_0 s_1 s_0}   \\
     &  \quad - e_{\id} \cdot  (0,2d^+ \iota,0)_{s_0 s_1 s_0} + e_{\id} \cdot (0,2d^+ \iota,0)_{s_1^2 s_0} + e_{\id} \cdot (0,0,d^+)_1 \ .
\end{align*}
Using Lemma \ref{lemma:antievenodd}, Cor.\ \ref{coro:form-zeta}, Section \ref{subsec:omega}, and \eqref{f:id-id} we compute
\begin{align}\label{f:Vzeta}
  (0,0,c^+)_{s_0 s_1} \cdot \zeta & = - e_{\id} \cdot (0,2c^+ \iota,0)_{s_0 s_1 s_0} - e_{\id} \cdot (0,2c^+ \iota,0)_{s_0} + e_1 \cdot  (c^+,0,0)_{s_0} + (0,0,c^+)_1    \\
   - e_1 \cdot (c^+,0,0)_{s_1} \cdot \zeta & = - e_1 \cdot (0,0,c^+)_{s_1 s_0} - e_1\cdot  (c^+,0,0)_{s_0}   \nonumber  \\
   e_{\id}\cdot  (0,2d^+ \iota,0)_{s_1^2 s_0}\cdot  \zeta & = - e_{\id}\cdot  (0,2d^+ \iota,0)_{s_0 s_1 s_0}  \nonumber   \\
   e_1\cdot (0,0,c^+)_1 \cdot \zeta & = e_1 \cdot (0,0,c^+)_{s_1 s_0}    \nonumber   \\
   e_{\id}\cdot  (0,0,d^+)_1 \cdot \zeta & = e_{\id}\cdot  (0,0,d^+)_{s_1 s_0}     \nonumber    \\
   e_{\id} \cdot (0,b^0,b^+)_{s_0 s_1 s_0}\cdot  \zeta & = - e_{\id} \cdot (0,b^0,0)_{(s_1 s_0)^2 s_0^2} + e_{\id}\cdot  (0, 2b^+ \iota,0)_{(s_0 s_1)^2} + e_{\id} \cdot (0,0,b^+)_{s_0}  \nonumber   \\
   - e_{\id}\cdot  (0,2d^+ \iota,0)_{s_0 s_1 s_0} \cdot \zeta & = e_{\id}\cdot  (0,2d^+ \iota,0)_{(s_1 s_0)^2 s_0^2}  \ .   \nonumber
\end{align}
Comparing the sum of these equations with \eqref{f:x-intermediate} shows that $d^+ = - c^+$, $b^0 = - 2c^+ \iota$, and $b^+ = 0$. We conclude that $x = x^+$.
\end{proof}

We now choose $c^+ := c^- := \c \in \Hom(\mathbb Z_p \big/ p\mathbb Z_p ,k) $ as in \S\ref{subsubsec:normalize} and let $(\mathbf{x}^+,\mathbf{x}^-)$ be the corresponding elements of Lemma \ref{lemma:offdiag-E1}. By Lemma \ref{lemma:extension} they give rise to the left $H$-module homomorphisms
\begin{equation}\label{f:defifpm}
   f_{\mathbf{x}^\pm} : H_\zeta \longrightarrow E^1 \ .
\end{equation}

\begin{remark}\phantomsection\label{rem:offdiag-Gamma-anti}
\begin{itemize}
\item[1.] We have $\Gamma_\varpi(\zeta) = \zeta$. Hence $\Gamma_\varpi$ extends to an automorphism of $H_\zeta$. The multiplicativity of $\Gamma_\varpi$, the formula $\Gamma_\varpi(e_\lambda) = e_{\lambda^{-1}}$, and Lemma \ref{lemma:conjtrip} then imply that
\begin{equation*}
  \Gamma_\varpi \circ f_{{\bf x}^+} = f_{{\bf x}^-} \circ \Gamma_\varpi  \quad\text{and}\quad  \Gamma_\varpi \circ f_{{\bf x}^-} = f_{{\bf x}^+} \circ \Gamma_\varpi
\end{equation*}
and, in particular, $\Gamma_\varpi(\mathbf{x}^+) = \mathbf{x}^-$.
\item[2.]  Here and in the subsequent points let $x^-$ and $x^+$ be as in Lemma \ref{lemma:offdiag-E1}. We compute
\begin{align*}
  \anti (x^+) & = \anti ( (0,0,c^+)_1) - \anti ((0,2c^+ \iota,0)_{s_0}) \cdot  \anti (e_{\id}) - \anti ((0,0,c^+)_{s_1 s_0}) \cdot  \anti (e_{\id})    \\
   & = (0,0,c^+)_1 + (0,2c^+ \iota,0)_{s_0^{-1}} \cdot  e_{\id^{-1}} - (0,0,c^+)_{s_0 s_1} \cdot  e_{\id^{-1}} \quad\text{by Lemma \ref{lemma:antievenodd}}   \\
   & = (0,0,c^+)_1 + e_{\id} \cdot  (0,2c^+ \iota,0)_{s_0^{-1}} - e_{\id} \cdot (0,0,c^+)_{s_0 s_1} \quad\text{by \eqref{f:leftomega} and \eqref{f:rightomegaE1}} \\
   & = (0,0,c^+)_1 + e_{\id} \tau_{s_0^2} \cdot  (0,2c^+ \iota,0)_{s_0} - e_{\id} \cdot  (0,0,c^+)_{s_0 s_1}   \quad\text{by \eqref{f:lefts2}}   \\
      & = (0,0,c^+)_1 - e_{\id}\cdot  (0,2c^+ \iota,0)_{s_0} - e_{\id} \cdot  (0,0,c^+)_{s_0 s_1}   \quad\text{by $- e_{\id} \tau_{s_0^2} = e_{\id}$}  \\
      & = x^+ + e_{\id}\cdot  (0,0,c^+)_{s_1 s_0} - e_{\id}\cdot  (0,0,c^+)_{s_0 s_1}   \\
      & = (1 - e_{\id} - e_{\id} \tau_{s_0 s_1}) \cdot x^+   \quad\text{by Prop.\ \ref{prop:theformulas}}   \\
      & = (1 - e_{\id} - e_{\id} \zeta) \cdot  x^+  \ .
\end{align*}
and similarly
\begin{equation*}
  \anti (x^-) = (1 - e_{\id^{-1}} - e_{\id^{-1}} \zeta) \cdot  x^-  \ .
\end{equation*}
\end{itemize}
\end{remark}

\begin{lemma}
\phantomsection
\begin{itemize}
\item[1.]   For any $u \in \mathbb{F}_p^\times$ we have $  x^+ \cdot  \tau_{\omega_u} =u^{-2} \tau_{\omega_u} \cdot  x^+$ and $x^- \cdot \tau_{\omega_u} = u^2 \tau_{\omega_u}\cdot  x^-$.

\item[2.]  We have $x^+ \cdot  \tau_{s_0} = \tau_{s_0}\cdot  x^+=0$ and $x^- \cdot  \tau_{s_1} = \tau_{s_1}\cdot  x^-=0$.
\item[3.]
We have \begin{align*}
  {x}^-\cdot \upiota (\tau_{s_1}) & =- e_{\id^{-2}} \cdot x^-    \quad\text{and}  \\
  {x}^+ \cdot \upiota (\tau_{s_0}) & =  - e_{\id^{2}} \cdot x^+ \ .
\end{align*} while, for $\bf x^+$ and $\bf x^-$ as above,
\begin{align*}
  \mathbf{x}^+\cdot \upiota (\tau_{s_1}) & =  -  \tau_{\omega_{-1}} \upiota( \tau_{s_0})\cdot \mathbf{x}^-  \cdot \zeta   \quad\text{and}  \\
  \mathbf{x}^- \cdot \upiota (\tau_{s_0}) & = - \tau_{\omega_{-1}} \upiota(\tau_{s_1})\cdot  \mathbf{x}^+\cdot  \zeta \ .
\end{align*}
where we recall that the involution $\upiota$ was introduced in \eqref{f:upiotadef}.

\end{itemize}
\label{lemma:leftright}
\end{lemma}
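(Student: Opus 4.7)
My strategy is to verify the lemma statement by statement, relying on the explicit formulas established in Sections \ref{subsec:omega}--\ref{subsec:HonE1} and on elementary commutation rules in $H$.

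For Part 1, I would expand both sides of $x^\pm \tau_{\omega_u} = u^{\mp 2} \tau_{\omega_u} x^\pm$ by applying \eqref{f:rightomegaE1} and \eqref{f:leftomega} to each of the three summands of $x^\pm$, together with $\tau_{\omega_u} e_{\id^m} = u^m e_{\id^m}$ and the conjugation rule $s_i \omega_u s_i^{-1} = \omega_{u^{-1}}$, which rewrites subscripts when moving $\omega_u$ past an odd-length element. A term-by-term comparison then yields the identity. For Part 2, the left-hand vanishings $\tau_{s_0} x^+ = 0$ and $\tau_{s_1} x^- = 0$ are part of the construction of $x^\pm$ in Lemma \ref{lemma:offdiag-E1}. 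For the right-hand vanishings I apply $\anti$ and use Remark \ref{rem:offdiag-Gamma-anti}.2: since $s_0^{-1} = \omega_{-1} s_0$ in $\widetilde W$,
\[
  \anti(x^+ \tau_{s_0}) = \tau_{\omega_{-1}} \tau_{s_0} (1 - e_{\id} - e_{\id}\zeta) \cdot  x^+ = \tau_{\omega_{-1}} (1 - e_{\id^{-1}} - e_{\id^{-1}}\zeta) \tau_{s_0} \cdot x^+ = 0,
\]
using $\tau_{s_0} e_{\id} = e_{\id^{-1}} \tau_{s_0}$ (from $s_0$-conjugation inverting $\id$ on $\Omega$) and the centrality of $\zeta$; the case $x^- \tau_{s_1} = 0$ is symmetric.

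For the first pair in Part 3, I would write $\upiota(\tau_{s_\epsilon}) = -e_1 - \tau_{s_\epsilon}$, so that (with $\epsilon = 0$ for $x^+$ and $\epsilon = 1$ for $x^-$) the second term vanishes by Part~2. To compute $x^\pm \cdot e_1$, I apply \eqref{f:id-id} with $m=0$ to each summand: orthogonality $e_{\id^a} e_{\id^b} = 0$ for $a \ne b$ kills the two off-diagonal summands of $x^\pm$, since their $e_{\id^{\pm 1}}$ prefactors meet the $e_{\id^{0 \cdot (\pm 1) \pm 2}} = e_{\id^{\pm 2}}$ produced by \eqref{f:id-id}. Only the $h^1(1)$-summand survives, giving $x^+ e_1 = e_{\id^2}(0,0,c^+)_1$ and $x^- e_1 = e_{\id^{-2}}(c^-,0,0)_1$. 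The same orthogonality shows $e_{\id^{\pm 2}} \cdot x^\pm$ collapses to the same single term, which establishes the identities.

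The delicate step is the second pair in Part 3. I would first reduce to a single identity via $\Gamma_\varpi$: this multiplicative involution exchanges $\tau_{s_0} \leftrightarrow \tau_{s_1}$ (hence also $\upiota(\tau_{s_0}) \leftrightarrow \upiota(\tau_{s_1})$, since $\upiota$ is determined by its action on generators and $\Gamma_\varpi$ preserves $e_1$), sends $\mathbf{x}^+$ to $\mathbf{x}^-$ (Remark \ref{rem:offdiag-Gamma-anti}.1), and fixes $\zeta$ and $\tau_{\omega_{-1}}$ (since $\omega_{-1}$ is central in $\widetilde W$); applying $\Gamma_\varpi$ to the first identity yields the second. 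To prove the first, expand the left-hand side as $-\mathbf{x}^+ e_1 - \mathbf{x}^+ \tau_{s_1}$, where $\mathbf{x}^+ e_1$ is already computed and each summand of $\mathbf{x}^+ \tau_{s_1}$ is in good-length position on the right, so is handled by Lemma \ref{lemma:righteasy}.i. For the right-hand side, compute $\tau_{s_0} \cdot \mathbf{x}^-$ summand-by-summand via Prop.~\ref{prop:theformulas}, assemble $\upiota(\tau_{s_0})\mathbf{x}^- = -e_1 \mathbf{x}^- - \tau_{s_0}\mathbf{x}^-$, multiply on the left by $\tau_{\omega_{-1}}$ (a simple scalar action via \eqref{f:leftomega}), and on the right by $\zeta$ using Cor.~\ref{coro:form-zeta} transposed to a right-multiplication formula via $\anti$. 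The main obstacle is the sheer bookkeeping: both sides expand into many components of the form $e_{\id^m} \cdot h^1(w)$. To make this tractable, I would organize the comparison by the $\Omega$-weight of each component (the precise weights are furnished by Part~1), so that orthogonality of idempotents forces most cross-terms to match trivially and reduces the verification to a short, explicit check of the remaining nontrivial pieces.
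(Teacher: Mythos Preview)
Your approach to Parts 1, 2, and the first pair of identities in Part 3 matches the paper's, with only cosmetic differences. (For the first pair in Part 3 the paper is even terser: from Part 1 one gets directly $x^+ \cdot e_1 = -\sum_u x^+ \tau_{\omega_u} = -\sum_u u^{-2}\tau_{\omega_u} x^+ = e_{\id^2}\cdot x^+$, bypassing the termwise use of \eqref{f:id-id}.)

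The genuine divergence is in the second pair of Part 3. You propose a direct expansion of both sides: compute $\mathbf{x}^+ \cdot \upiota(\tau_{s_1})$ via Lemma~\ref{lemma:righteasy}, compute $\upiota(\tau_{s_0})\cdot \mathbf{x}^-$ via Prop.~\ref{prop:theformulas}, then right-multiply the latter by $\zeta$ using Cor.~\ref{coro:form-zeta} transported through $\anti$, and finally match components. This is correct in principle, and your idea of organizing the comparison by $\Omega$-weight is sound, but the right-$\zeta$ step generates many terms. The paper avoids this entirely: it observes that both sides of the identity lie in the sub-bimodule $\ker(\zeta\cdot\id_{E^1}\cdot\zeta - \id_{E^1})$, on which left multiplication by $\zeta$ is injective (Remark~\ref{rema:calcufg}.ii). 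Multiplying the identity $\mathbf{x}^-\cdot\upiota(\tau_{s_0}) = -\tau_{\omega_{-1}}\upiota(\tau_{s_1})\cdot\mathbf{x}^+\cdot\zeta$ by $\zeta$ on the left and using $\zeta\cdot\mathbf{x}^+\cdot\zeta = \mathbf{x}^+$ eliminates the right-hand $\zeta$ altogether, reducing to the $\zeta$-free equation
\[
  -\zeta\cdot\mathbf{x}^-\cdot(\tau_{s_0}+e_1) \;=\; \tau_{\omega_{-1}}(\tau_{s_1}+e_1)\cdot\mathbf{x}^+ ,
\]
which is then checked by a short computation: the left side via Lemma~\ref{lemma:righteasy} followed by the (left-)$\zeta$ formulas of Cor.~\ref{coro:form-zeta}, the right side via Prop.~\ref{prop:theformulas}. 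The payoff is that only the readily available left-action formulas are ever needed, and the total bookkeeping shrinks to a handful of terms.
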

\begin{proof}
1. For any $u \in \mathbb{F}_p^\times$ we compute using \eqref{f:leftomega} and \eqref{f:rightomegaE1}
\begin{align*}
  x^+ \cdot  \tau_{\omega_u} & = (0,0,c^+)_1 \cdot  \tau_{\omega_u} - e_{\id}\cdot  (0,2c^+ \iota,0)_{s_0} \cdot \tau_{\omega_u} - e_{\id} \cdot (0,0,c^+)_{s_1 s_0}\cdot  \tau_{\omega_u} \\
   & = (0,0,c^+)_{\omega_u} - e_{\id}\cdot  (0,2c^+ \iota,0)_{\omega_u^{-1}s_0} - e_{\id} \cdot (0,0,c^+)_{\omega_u s_1 s_0}   \\
   & = u^{-2} \tau_{\omega_u} \cdot  (0,0,c^+)_1 - e_{\id} \tau_{\omega_u^{-1}}\cdot  (0,2c^+ \iota,0)_{s_0} - u^{-2} e_{\id} \tau_{\omega_u} \cdot (0,0,c^+)_{s_1 s_0}   \\
   & = u^{-2} \tau_{\omega_u} \cdot \big( (0,0,c^+)_1 - e_{\id} \tau_{\omega_u^{-2}} \cdot (0,2c^+ \iota,0)_{s_0} - e_{\id} (0,0,c^+)_{s_1 s_0}  \big)   \\
   & = u^{-2} \tau_{\omega_u} \cdot  x^+
\end{align*}
and, by an analogous computation (or by applying Remark \ref{rem:offdiag-Gamma-anti}-1), we obtain  $x^- \cdot \tau_{\omega_u} = u^2 \tau_{\omega_u}\cdot  x^-$.\\
2. For the  identity   $  \tau_{s_0}\cdot x^+=0$, see Lemma \ref{lemma:offdiag-E1}. Now we compute:
\begin{align*}
  x^+ \cdot  \tau_{s_0} & = \anti (\anti (\tau_{s_0}) \cdot \anti (x^+)) = \anti (\tau_{s_0^2} \tau_{s_0}\cdot  (x^+ + e_{\id} (0,0,c^+)_{s_1 s_0} - e_{\id} \cdot (0,0,c^+)_{s_0 s_1})) \quad\text{by Remark \ref{rem:offdiag-Gamma-anti}-2} \\
    & = \anti (\tau_{s_0^2} ( \tau_{s_0} \cdot  x^+ + e_{\id^{-1}} \tau_{s_0} \cdot  (0,0,c^+)_{s_1 s_0} - e_{\id^{-1}} \tau_{s_0}\cdot  (0,0,c^+)_{s_0 s_1}))    \\
    & = 0  \quad\text{by Lemma \ref{lemma:offdiag-E1} and Prop.\ \ref{prop:theformulas}}
\end{align*} We obtain the analogous statements for $x^-$ using Remark \ref{rem:offdiag-Gamma-anti}-1.\\
3. The first identities easily come from Points 1 and 2. We treat the second equation of the last statement. The first one can either be established by an analogous computation or by applying Remark \ref{rem:offdiag-Gamma-anti}-1 to the second equation. Both sides of the second equation lie in the sub-$H$-bimodule $\ker(\zeta\cdot  \id_{E^1} \cdot \zeta - \id_{E^1})$ of $E^1$ on which left multiplication by $\zeta$ is injective. Hence we may instead check the equation
\begin{equation*}
 -  \zeta\cdot {\bf x}^- \cdot (\tau_{s_0}+e_1)  =  \tau_{\omega_{-1}}\cdot  ( \tau_{s_1} +e_1)\cdot  {\bf x}^+ \ .
   \end{equation*}
For the left hand side we first have, using Lemma \ref{lemma:righteasy} and Point 1:
\begin{align*}
   {x}^- \cdot (\tau_{s_0} +e_1)&= (   {c^-},0,0)_{s_0} + e_{\id^{-1}}\cdot (0,2   {c^-}\iota,0)_{s_1 s_0} - e_{\id^{-1}} \cdot  (   {c^-},0,0)_{s_0 s_1 s_0}+e_{\id^{-2}}\cdot   {x^-}\cr&= (   {c^-},0,0)_{s_0} + e_{\id^{-1}}\cdot (0,2   {c^-}\iota,0)_{s_1 s_0} - e_{\id^{-1}} \cdot  (   {c^-},0,0)_{s_0 s_1 s_0}+e_{\id^{-2}}\cdot  (   {c^-},0,0)_{1}
  \end{align*}
and then by Cor.\ \ref{coro:form-zeta}
\begin{align*}
  - \zeta \cdot   {x}^-\cdot  (\tau_{s_0}+e_1) & = e_{\id^{-2}} \cdot (  {c^-},0,0)_{s_1 s_0} - (  {c^-},0,0)_{s_1 \omega_{-1}}  \\
  & \qquad\qquad\qquad + e_{\id^{-1}} \cdot (  {c^-},0,0)_{s_0} + e_1 \cdot (0,0,  {c^-})_1  - e_{\id^{-2}}\cdot  (  {c^-},0,0)_{s_1s_0}  \\
  & = - (  {c^-},0,0)_{s_1 \omega_{-1}} + e_{\id^{-1}} \cdot (  {c^-},0,0)_{s_0} + e_1 \cdot (0,0,  {c^-})_1 \ .
\end{align*}
For the right hand side we first compute using Prop.\ \ref{prop:theformulas}
\begin{align*}
  \tau_{s_1 s_0^2} \cdot  {x}^+ & = - ( {c^+},0,0)_{s_1 \omega_{-1}} + e_{\id^{-1}} \cdot ( {c^+},0,0)_{s_0}   \\
  e_1\cdot   {x}^+ & = e_1 \cdot (0,0, {c^+})_1
\end{align*}
and then see, by adding up, that it coincides with the above computation for the left hand side when $c^+=c^-=\mathbf c$.
\end{proof}

\begin{lemma}\label{lemma:fpm}
The maps $f_{{\bf x}^+}$ and $f_{{\bf x}^-}$  defined in \eqref{f:defifpm} induce an  injective homomorphism of left $H$-modules
\begin{equation*}
 H_\zeta / H_\zeta \tau_{s_0} \oplus H_\zeta / H_\zeta \tau_{s_1} \xrightarrow{\; f^\pm:=f_{{\bf x}^+} + f_{{\bf x}^-} \;} E^1
\end{equation*}
the image of which  is contained in the kernel of the endomorphism $\zeta \cdot\id_{E^1}\cdot  \zeta - \id_{E^1}$.
\end{lemma}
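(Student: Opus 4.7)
My plan is to split the lemma into three independent assertions and address them in turn: $f^\pm$ descends to the claimed quotient, the image lies in the claimed kernel, and the induced map is injective.

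The first two are essentially free from what has already been set up. By Lemma \ref{lemma:leftright}(2), we have $\tau_{s_0}\cdot\mathbf{x}^+=0$ and $\tau_{s_1}\cdot\mathbf{x}^-=0$. Combined with the explicit formula $f_{\mathbf{x}^\pm}(\zeta^{-i}\tau)=\tau\cdot\mathbf{x}^\pm\cdot\zeta^i$ from Lemma \ref{lemma:extension}, this yields $f_{\mathbf{x}^+}(H_\zeta\tau_{s_0})=0$ and $f_{\mathbf{x}^-}(H_\zeta\tau_{s_1})=0$, so $f^\pm$ factors through the quotient. For the image in $\ker(\zeta\cdot\id_{E^1}\cdot\zeta-\id_{E^1})$, Lemma \ref{lemma:offdiag-E1} provides $\zeta\cdot\mathbf{x}^\pm\cdot\zeta=\mathbf{x}^\pm$, and Lemma \ref{lemma:extension} then places the entire image inside $\{y\in E^1:\zeta\cdot y\cdot\zeta=y\}$.

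The remaining and crucial task is injectivity, which is the main obstacle. My plan is to exploit the length filtration $(F_n E^1)_{n\geq 0}$ of \S\ref{subsubsec:fil} together with the decomposition $H_\zeta = H_\zeta\tau_{s_\epsilon}\oplus H_\zeta\upiota(\tau_{s_\epsilon})$ of the remark following Lemma \ref{freeness}. Given $\bar h_+\oplus\bar h_-$ in the source with $h_+\cdot\mathbf{x}^+ + h_-\cdot\mathbf{x}^- = 0$, choose unique representatives $h_+\in H_\zeta\upiota(\tau_{s_0})$ and $h_-\in H_\zeta\upiota(\tau_{s_1})$. The key point is that the top-length parts of $\mathbf{x}^+$ and $\mathbf{x}^-$ live in disjoint strata of $\widetilde{W}$: inspecting the formulas of Lemma \ref{lemma:offdiag-E1}, the top-length component of $\mathbf{x}^+$ is $-e_{\id}\cdot(0,0,\mathbf{c})_{s_1s_0}\in h^1_+(\widetilde W^{0,\,\ell=2})$, and that of $\mathbf{x}^-$ is $-e_{\id^{-1}}\cdot(\mathbf{c},0,0)_{s_0s_1}\in h^1_-(\widetilde W^{1,\,\ell=2})$. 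Multiplication by $h_\pm$ on the left then maps these via Proposition \ref{prop:yo}-\eqref{f:prod-goodlength} (applied at top length, where $\ell(vs_1s_0)=\ell(v)+2$, resp.\ $\ell(vs_0s_1)=\ell(v)+2$) to contributions in complementary summands of the associated graded $F_{n+2}E^1/F_{n+1}E^1$ (the one in $h^1_+(\widetilde W^0)$-components, the other in $h^1_-(\widetilde W^1)$-components, using the action formulas of Proposition \ref{prop:theformulas}). This forces $h_+\cdot\mathbf{x}^+=0=h_-\cdot\mathbf{x}^-$ individually.

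It then remains to show that each of $f_{\mathbf{x}^+}$, $f_{\mathbf{x}^-}$ is separately injective on its respective quotient. For $f_{\mathbf{x}^+}$, given $h_+\upiota(\tau_{s_0})\cdot\mathbf{x}^+=0$ with $h_+\upiota(\tau_{s_0})\neq 0$ in $H_\zeta$, let $n$ be the largest length occurring in (a $\zeta$-cleared representative of) $h_+\upiota(\tau_{s_0})$; the associated graded term of $h_+\upiota(\tau_{s_0})\cdot\mathbf{x}^+$ at length $n+2$ is, by Proposition \ref{prop:yo}-\eqref{f:prod-goodlength}, the cup product of the leading term of $h_+\upiota(\tau_{s_0})$ with the leading term $-e_{\id}\cdot(0,0,\mathbf{c})_{s_1s_0}$ of $\mathbf{x}^+$, and this is nonzero because $(0,0,\mathbf{c})_{s_1s_0}$ is a nonzero generator in its $h^1_+(s_1s_0)$ summand and the length-additive product preserves nonvanishing at top degree. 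The argument for $f_{\mathbf{x}^-}$ is strictly symmetric. The main technical difficulty throughout is keeping careful track of which $\widetilde W^\epsilon$-stratum and which $h^1_\bullet$-summand the leading terms sit in, and the computation is organized using the explicit length-preserving multiplication formulas summarized in Lemma \ref{lemma:righteasy}(i).
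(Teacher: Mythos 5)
The overall strategy — reduce to the unlocalized quotient $H/H\tau_{s_0}\oplus H/H\tau_{s_1}$ (you don't state this reduction explicitly, but it is needed and is easy from left $H$-equivariance), then exploit the length filtration $(F_n E^1)$ to read off leading terms — is the right one, and for well-definedness and the kernel inclusion you correctly isolate the two relevant facts from Lemmas \ref{lemma:leftright}(2), \ref{lemma:offdiag-E1}, and \ref{lemma:extension}.

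However, your injectivity argument has a concrete computational error in identifying which component of $\mathbf{x}^\pm$ produces the leading term. You assert that the contribution is governed by the top-length (length $2$) piece $-e_{\id}\cdot(0,0,\mathbf{c})_{s_1s_0}$ of $\mathbf{x}^+$, and that for $v$ at top length we can apply the length-additive formula \eqref{f:prod-goodlength} because $\ell(vs_1s_0)=\ell(v)+2$. But basis representatives $\tau_v$ of $H/H\tau_{s_0}$ are exactly those with $\ell(vs_0)=\ell(v)+1$, i.e.\ $v=\omega(s_0s_1)^m$ or $\omega s_1(s_0s_1)^m$; for such $v$ one has $\ell(vs_1)=\ell(v)-1$, so $\ell(vs_1s_0)<\ell(v)+2$ and the length-additive case does \emph{not} apply. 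Worse, the leftmost simple reflection absorbed by the length-$2$ piece annihilates it: by Proposition \ref{prop:theformulas}, $\tau_{s_0}\cdot(0,0,\mathbf{c})_{s_1s_0}=(0,0,0)_{s_0s_1s_0}=0$ since $s_1s_0\in\widetilde W^0$. So the length-$2$ component of $\mathbf{x}^+$ does not dominate; indeed, for $w=\omega(s_0s_1)^m$ with $m\geq 1$ one finds $\tau_w e_{\id}\cdot(0,0,\mathbf{c})_{s_1s_0}\in F_{\ell(w)-2}E^1+h_0^1(\widetilde W)$, i.e.\ it \emph{drops} length. The leading term of $\tau_w\cdot\mathbf{x}^+$ actually comes from the length-$0$ piece: $\tau_w\cdot(0,0,\mathbf{c})_1\in k^\times(0,0,\mathbf{c})_w$ (resp.\ $k^\times(\mathbf{c},0,0)_w$ depending on the parity of $w$), sitting in $h^1_\pm(w)$ at length exactly $\ell(w)$. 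This is what the paper's proof extracts, tabulating $\tau_w\cdot\mathbf{x}^+$ and $\tau_w\cdot\mathbf{x}^-$ modulo $F_{\ell(w)-2}E^1 + h_0^1(\widetilde W)$ and reading off linear independence (even modulo $h_0^1(\widetilde W)$). Your proof can be repaired by replacing the role of the length-$2$ pieces of $\mathbf{x}^\pm$ with the length-$0$ pieces $(0,0,\mathbf{c})_1$ and $(\mathbf{c},0,0)_1$, but as written the argument fails at its central step.
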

\begin{proof}
By Lemma \ref{lemma:leftright}-2,  the map is well defined.  By definition of ${\bf x}^+$and ${\bf x}^-$,
the last statement of the lemma is clear.
We prove that the map is the injective. We first observe that it suffices to check the injectivity of the restriction of $f^\pm$ to $H/H\tau_{s_0} \oplus H/H\tau_{s_1}$. The elements $\tau_w$ with $w \in \widetilde{W}$ such that $\ell(w s_0) = \ell(w) + 1$ from a $k$-basis of $H/H\tau_{s_0}$; they are of the form $w = \omega(s_0 s_1)^m$ or $= \omega s_1 (s_0 s_1)^m$ with $m \geq 0$ and $\omega \in \Omega$. Using \eqref{f:leftomega} and Prop.\ \ref{prop:theformulas} we obtain
\begin{equation*}
  \tau_w \cdot  (0,0,c^+)_1 \in
  \begin{cases}
  \mathbb{F}_p^\times (0,0,c^+)_w  & \text{if $w = \omega(s_0 s_1)^m$},  \\
  \mathbb{F}_p^\times (c^+,0,0)_w  & \text{if $w = \omega s_1(s_0 s_1)^m$},
  \end{cases}
\end{equation*}
and
\begin{equation*}
  \tau_w\cdot  (0,c^0,0)_{s_0} = (0,(-1)^{\ell(w)} c^0,0)_{w s_0} \in h_0^1(\widetilde{W})  \ \ \text{for any $w$ as above},
\end{equation*}
and
\begin{equation*}
  \tau_w e_{\id}\cdot  (0,0,c^+)_{s_1 s_0} \in
    \begin{cases}
    F_{\ell(w)-2} E^1 + h_0^1(\widetilde{W})  & \text{for any $w$ as above with $m \geq 1$},  \\
    \mathbb{F}_p^\times e_{\id^{-1}} \cdot (c^+,0,0)_{s_0} + h_0^1(\widetilde{W})  & \text{if $w = \omega s_1$},  \\
    \mathbb{F}_p^\times e_{\id}\cdot  (0,0,c^+)_{\omega s_1 s_0}   & \text{if $w = \omega$}.
    \end{cases}
\end{equation*}
It follows that
\begin{equation}\label{f:list0}
  \tau_w \cdot \mathbf{x}^+ \in\begin{cases}
  k^\times (0,0,\mathbf{c})_w + F_{\ell(w)-2} E^1 + h_0^1(\widetilde{W})   & \text{if $w = \omega(s_0 s_1)^m$ with $m \geq 1$},  \\
  k^\times (\mathbf{c},0,0)_w + F_{\ell(w)-2} E^1 + h_0^1(\widetilde{W})   & \text{if $w = \omega s_1(s_0 s_1)^m$ with $m \geq 1$},  \\
  k^\times (\mathbf{c},0,0)_w + k^\times e_{\id^{-1}} (\mathbf{c},0,0)_{s_0} + h_0^1(\widetilde{W})   & \text{if $w = \omega s_1$},   \\
  k^\times (0,0,\mathbf{c})_w + k^\times e_{\id} (0,0,\mathbf{c})_{\omega s_1 s_0} + h_0^1(\widetilde{W})   & \text{if $w = \omega$}.
  \end{cases}
\end{equation}
Similarly the elements $\tau_w$ with $w \in \widetilde{W}$ such that $\ell(w s_1) = \ell(w) + 1$ form a $k$-basis of $H/H\tau_{s_1}$; they are of the form $w = \omega (s_1 s_0)^m$ or $= \omega s_0 (s_1 s_0)^m$ with $m \geq 0$ and $\omega \in \Omega$. In this case we obtain
\begin{equation}\label{f:list1}
  \tau_w \cdot \mathbf{x}^- \in\begin{cases}
  k^\times (\mathbf{c},0,0)_w + F_{\ell(w)-2} E^1 + h_0^1(\widetilde{W})   & \text{if $w = \omega(s_1 s_0)^m$ with $m \geq 1$},  \\
  k^\times (0,0,\mathbf{c})_w + F_{\ell(w)-2} E^1 + h_0^1(\widetilde{W})   & \text{if $w = \omega s_0(s_1 s_0)^m$ with $m \geq 1$},  \\
  k^\times (0,0,\mathbf{c})_w + k^\times e_{\id}\cdot  (0,0,\mathbf{c})_{s_1} + h_0^1(\widetilde{W})   & \text{if $w = \omega s_0$},   \\
  k^\times (\mathbf{c},0,0)_w + k^\times e_{\id^{-1}} \cdot (\mathbf{c},0,0)_{\omega s_0 s_1} + h_0^1(\widetilde{W})   & \text{if $w = \omega$}.
  \end{cases}
\end{equation}
By comparing the lists \eqref{f:list0} and \eqref{f:list1} we easily see that the elements
\begin{equation*}
  \{\tau_w\cdot  \mathbf{x}^+ : \ell(w s_0) = \ell(w) + 1\} \cup \{\tau_w \cdot \mathbf{x}^- : \ell(w s_1) = \ell(w) + 1\}
\end{equation*}
in $E^1$ are $k$-linearly independent even in $E^1/h_0^1(\widetilde{W})$.
\end{proof}

\paragraph{Structure of $H_\zeta$-bimodule on $ H_\zeta / H_\zeta \tau_{s_0} \oplus H_\zeta / H_\zeta \tau_{s_1} $.}
In this paragraph, the only condition on $\mathfrak F$ is that it has residue field $\mathbb F_p$. Recall the involution $\upiota$ of $H$ defined in  \eqref{f:upiotadef}.

We consider the
 homomorphism of $k$-algebras $\kappa : H \rightarrow H_\zeta$ given by the composition of the involution $\upiota: H\rightarrow H$ and the inclusion $H\rightarrow H_\zeta$, the  element $-\tau_{\omega_{-1}}\zeta^{-1} \in Z(R)$ in the center of $H_\zeta$ and the character $\mu : \Omega \rightarrow k^\times, \omega_u\mapsto u^2$. Recall that as in Remark \ref{tll}, we may refer to the  idempotent corresponding to the latter as $e_{\id^2}$ instead of $e_\mu$.  As in  \S\ref{subsubsec:certain}, this yields a homomorphism of $k$-algebras  $\kappa_2 : H \rightarrow M_2(H_\zeta)$ and
   an $(H_\zeta,H)$-bimodule structure on $H_\zeta\oplus H_\zeta$ denoted by $(H_\zeta\oplus H_\zeta)[\kappa, -\tau_{\omega_{-1}}\zeta^{-1}, \mu]$ where $h\in H$ acts on $(\sigma^+,\sigma^-) \in H_\zeta\oplus H_\zeta$ via
$$((\sigma^+,\sigma^-) , h)\longmapsto (\sigma^+, \sigma^-)\kappa_2(h)\ .$$ We consider the composite map
$\kappa_2\circ \upiota^{-1}$. Again, it is a homomorphism of algebras  $H \rightarrow M_2(H_\zeta)$ and  it yields
   an $(H_\zeta,H)$-bimodule structure on $H_\zeta\oplus H_\zeta$ denoted by $(H_\zeta\oplus H_\zeta)^{\pm}$. We spell out below the action on $(\sigma^+,\sigma^-) \in H_\zeta\oplus H_\zeta$ of the generators $\upiota(\tau_{s_0}),\, \upiota(\tau_{s_1}), \, \tau_{\omega_u}$ for $u\in \mathbb F_p^\times$ of $H$
    \begin{align}\label{f:def-R}
(\sigma^+,\sigma^-) \upiota(\tau_{s_0}) & := (-\sigma^+ e_{\id^2}- \sigma^-\tau_{\omega_{-1}}\upiota(\tau_{s_1})\zeta^{-1}, 0)
    \nonumber  \\
  (\sigma^+,\sigma^-) \upiota(\tau_{s_1})& := (0, -\sigma^- e_{\id^{-2}}-\sigma^+\tau_{\omega_{-1}}\upiota(\tau_{s_0})\zeta^{-1})   \\
(\sigma^+,\sigma^-)  \tau_{\omega_u}& := (u^{-2} \sigma^+ \tau_{\omega_u}, u^2 \sigma^- \tau_{\omega_u}) \ .    \nonumber
\end{align}
One easily checks that \\$(\tau_{s_{0}},0)\upiota( \tau_{s_1}) =(0,\tau_{s_{1}})\upiota( \tau_{s_0}) =0$,\\
$(\tau_{s_{0}},0)\upiota( \tau_{s_0})=-e_{\id^{-2}}(\tau_{s_0}, 0)$ and
$(0,\tau_{s_{1}})\upiota( \tau_{s_1})=-e_{\id^{2}}(0,\tau_{s_1})$, and lastly\\
$(\tau_{s_{0}},0)\tau_{\omega_u}=u^{-2} \tau_{\omega_u^{-1}} (\tau_{s_0},0)$ and $(0,\tau_{s_1}) \tau_{\omega_u} = u^2 \tau_{\omega_u^{-1}} (0,\tau_{s_1})$.
  Hence this bimodule structure passes to the quotient $(H_\zeta/H_\zeta \tau_{s_0} \oplus H_\zeta/H_\zeta \tau_{s_1})^\pm$.
\begin{remark} In  $(H_\zeta/H_\zeta \tau_{s_0} \oplus H_\zeta/H_\zeta \tau_{s_1})^\pm$, we have
\begin{equation}\label{f:pm01}\tau_{s_0}(1,0)=(1,0)\tau_{s_0}=0\quad\text{ and }\tau_{s_1}(1,0)=(0,0)\tau_{s_1}=0\ .\end{equation}
The only non obvious statement is for the right actions. We prove it in the first case (it is actually a computation in $(H_\zeta \oplus H_\zeta)^\pm$):
\begin{align*}(1,0)\tau_{s_0}&=-(1,0)\upiota(\tau_{s_0})-(1,0)e_1=
(e_{\id^2}, 0)+\sum_u(1,0)\tau_{\omega_u}\cr&=(e_{\id^2}, 0)+\sum_u(u^{-2}\tau_{\omega_u},0)=(e_{\id^2}, 0)-(e_{\id^2},0)=0\ .
\end{align*}

\end{remark}
\begin{lemma}\label{lemma:zetazeta}
   For any $\sigma \in (H_\zeta/H_\zeta \tau_{s_0} \oplus H_\zeta/H_\zeta \tau_{s_1})^\pm$ we have $\zeta \sigma \zeta = \sigma$ 
   In particular, $(H_\zeta/H_\zeta \tau_{s_0} \oplus H_\zeta/H_\zeta \tau_{s_1})^\pm$ is an $(H_\zeta,H_\zeta)$-bimodule.
\end{lemma}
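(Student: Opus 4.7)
My plan is to reduce the identity $\zeta\sigma\zeta=\sigma$ to checking it on the two left $H_\zeta$-generators $(1,0)$ and $(0,1)$ of $N:=(H_\zeta/H_\zeta\tau_{s_0}\oplus H_\zeta/H_\zeta\tau_{s_1})^\pm$. Since $\zeta$ is central in $H_\zeta$, both $\sigma\mapsto \zeta\sigma\zeta$ and $\sigma\mapsto\sigma$ are endomorphisms of the left $H_\zeta$-module $N$; agreement on a generating set therefore suffices.

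The core computation is to determine the right action of $\zeta$ on $N$. Because the right $H$-action is defined via $\kappa_2\circ\upiota^{-1}$ and $\upiota$ fixes $\zeta$, this action is encoded by the single matrix $\kappa_2(\zeta)\in M_2(H_\zeta)$. Using the presentations $\zeta=(\tau_{s_0}+e_1)(\tau_{s_1}+e_1)+\tau_{s_1}\tau_{s_0}=(\tau_{s_1}+e_1)(\tau_{s_0}+e_1)+\tau_{s_0}\tau_{s_1}$ from \eqref{f:zetadefi}, the explicit forms of $M_0$, $M_1$, $M_{\omega_u}$, and the auxiliary identities $\kappa_2(e_1)=\mathrm{diag}(e_{\id^2},e_{\id^{-2}})$, $\tau_{s_i}e_{\id^m}=e_{\id^{-m}}\tau_{s_i}$, $\tau_{\omega_{-1}}^2=1$, $\tau_{\omega_{-1}}e_{\id^{\pm 2}}=e_{\id^{\pm 2}}$, together with $\upiota(\tau_{s_0})\upiota(\tau_{s_1})=\zeta-\tau_{s_1}\tau_{s_0}$ and $\upiota(\tau_{s_1})\upiota(\tau_{s_0})=\zeta-\tau_{s_0}\tau_{s_1}$, a direct matrix multiplication should yield
\begin{equation*}
  \kappa_2(\zeta)=\begin{pmatrix}\zeta^{-1}-\zeta^{-2}\tau_{s_1}\tau_{s_0}&0\\0&\zeta^{-1}-\zeta^{-2}\tau_{s_0}\tau_{s_1}\end{pmatrix}\ .
\end{equation*}
The cancellation of the off-diagonal entries crucially uses $\tau_{s_i}\upiota(\tau_{s_i})=\upiota(\tau_{s_i})\tau_{s_i}=0$, which follows from the quadratic relation \eqref{f:quad} and the centrality of $e_1$.

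Reading off the right action on generators in $H_\zeta\oplus H_\zeta$ gives $(1,0)\zeta=(\zeta^{-1}-\zeta^{-2}\tau_{s_1}\tau_{s_0},0)$ and $(0,1)\zeta=(0,\zeta^{-1}-\zeta^{-2}\tau_{s_0}\tau_{s_1})$. Passing to the quotient $N$, the terms $\zeta^{-2}\tau_{s_1}\tau_{s_0}$ and $\zeta^{-2}\tau_{s_0}\tau_{s_1}$ lie in $H_\zeta\tau_{s_0}$ and $H_\zeta\tau_{s_1}$ respectively and hence vanish, so $(1,0)\zeta=(\zeta^{-1},0)$ and $(0,1)\zeta=(0,\zeta^{-1})$ in $N$. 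Multiplying on the left by $\zeta$ then gives $\zeta(1,0)\zeta=(1,0)$ and $\zeta(0,1)\zeta=(0,1)$. By left $H_\zeta$-linearity as explained above, $\zeta\sigma\zeta=\sigma$ for all $\sigma\in N$.

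For the final assertion, the established identity shows that right multiplication by $\zeta$ on $N$ is a bijection with two-sided inverse equal to left multiplication by $\zeta$; consequently the right $H$-action extends uniquely to a right $H_\zeta$-action, and since it commutes with the left $H_\zeta$-action by construction, $N$ acquires the structure of an $(H_\zeta,H_\zeta)$-bimodule. The main technical obstacle is the matrix computation of $\kappa_2(\zeta)$: one has to juggle the various $\upiota$ twists and sign conventions and exploit the vanishing $\tau_{s_i}\upiota(\tau_{s_i})=0$ so that the off-diagonal entries of $\kappa_2((\tau_{s_0}+e_1)(\tau_{s_1}+e_1))$ and $\kappa_2(\tau_{s_1}\tau_{s_0})$ cancel each other exactly.
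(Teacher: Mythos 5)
Your proof is correct and follows essentially the same route as the paper's: both reduce, by left $H_\zeta$-linearity, to verifying $\zeta(1,0)\zeta=(1,0)$ and $\zeta(0,1)\zeta=(0,1)$. The paper does this by applying the defining formulas \eqref{f:def-R} step by step, exploiting $(1,0)\tau_{s_0}=0$ to replace $\zeta$ by $\upiota(\tau_{s_1})\upiota(\tau_{s_0})$ at the outset; you instead compute the full matrix $\kappa_2(\zeta)$ once and read off the answer, which amounts to the same computation carried out in a slightly different order.
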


\begin{proof}
It suffices to show that $\zeta(1,0)\zeta \equiv (1,0)$ and $\zeta (0,1)\zeta \equiv (0,1)$. Here and in the following we write $\equiv$ and $=$, for greater clarity, if an equality holds in $\sigma \in (H_\zeta/H_\zeta \tau_{s_0} \oplus H_\zeta/H_\zeta \tau_{s_1})^\pm$ and $(H_\zeta \oplus H_\zeta)^\pm$, respectively. We give the computation in the first case:
\begin{align*}
  \zeta(1,0)\zeta & =   \zeta(1,0)\upiota(\tau_{s_1})\upiota(\tau_{s_0}) \text{ by \eqref{f:pm01}}
    \\&=\zeta(1,0)(0, -\tau_{\omega_{-1}}\upiota(\tau_{s_0})\zeta^{-1})\upiota(\tau_{s_0}) \cr&=
    \zeta(1,0)
 (\tau_{\omega_{-1}}\upiota(\tau_{s_0})\zeta^{-1}\tau_{\omega_{-1}}\upiota(\tau_{s_1})\zeta^{-1}, 0)=
\zeta (\upiota(\tau_{s_0})\upiota(\tau_{s_1})\zeta^{-2}, 0)\cr&=
\zeta (\zeta^{-2}(\zeta-\tau_{s_1}\tau_{s_0}), 0)\equiv \zeta (\zeta^{-1}, 0)=(1,0)\ .
\end{align*}
\end{proof}

\begin{lemma}\label{lemma:funnyright}\phantomsection
We have an  isomorphism of right $H_\zeta$-modules
\begin{equation*}
  \beta : H_\zeta / \tau_{s_0} H_\zeta \oplus H_\zeta / \tau_{s_1} H_\zeta \xrightarrow{\;\cong\;} (H_\zeta / H_\zeta \tau_{s_0} \oplus H_\zeta / H_\zeta \tau_{s_1})^\pm
\end{equation*}
sending $(1,0)$ and $(0,1)$ to $(1,0)$ and $(0,1)$, respectively. In particular, $(H_\zeta / H_\zeta \tau_{s_0} \oplus H_\zeta / H_\zeta \tau_{s_1})^\pm$
 is  a free  $k[\zeta^{\pm 1}]$-module of rank $4(p-1)$ on the left and on the right.

 \end{lemma}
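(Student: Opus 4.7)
The map $\beta$ is well-defined as a homomorphism of right $H_\zeta$-modules: since the left-hand side is presented by two generators $(1,0), (0,1)$ subject to the relations $(1,0)\cdot \tau_{s_0}=0$ and $(0,1)\cdot \tau_{s_1}=0$, the only thing to check on the right-hand side is that those two relations hold there, which is exactly the content of \eqref{f:pm01}.

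Next, I would establish that both sides are free $k[\zeta^{\pm 1}]$-modules of rank $4(p-1)$ on both sides. For the right-hand side, as a left $H_\zeta$-module it is $H_\zeta/H_\zeta\tau_{s_0}\oplus H_\zeta/H_\zeta\tau_{s_1}$; by Lemma \ref{freeness} together with the splitting \eqref{f:basic-esloc}, each summand $H_\zeta/H_\zeta\tau_{s_\epsilon}$ is isomorphic as a left $H_\zeta$-module to the direct $k[\zeta^{\pm 1}]$-summand $H_\zeta\upiota(\tau_{s_\epsilon})$ of $H_\zeta$, which has rank $2(p-1)$; so the right-hand side has left $k[\zeta^{\pm 1}]$-rank $4(p-1)$. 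Lemma \ref{lemma:zetazeta} says that on the right-hand side the right and left $\zeta$-actions are mutually inverse, so any left $k[\zeta^{\pm 1}]$-basis is also a right one, giving rank $4(p-1)$ on the right too. The same reasoning applied on the right (using the right-sided analog of Lemma \ref{freeness} and \eqref{f:basic-esloc}, deduced by applying the anti-involution $\anti$, which fixes $\zeta$) gives the left-hand side the same freeness.

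To conclude that $\beta$ is an isomorphism, since it is right $k[\zeta^{\pm 1}]$-linear between free modules of the same finite rank $4(p-1)$ over the commutative ring $k[\zeta^{\pm 1}]$, it suffices to prove surjectivity. Let $I\subseteq (H_\zeta/H_\zeta\tau_{s_0}\oplus H_\zeta/H_\zeta\tau_{s_1})^\pm$ be the right $H_\zeta$-submodule generated by $(1,0)$ and $(0,1)$; by Lemma \ref{lemma:zetazeta}, $I$ is stable under left $k[\zeta^{\pm 1}]$-action. Using \eqref{f:def-R}, the identity $(1,0)\cdot \tau_{\omega_u}=(u^{-2}\tau_{\omega_u},0)$ gives $(\tau_\omega,0)\in I$ for every $\omega\in\Omega$, and symmetrically $(0,\tau_\omega)\in I$. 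Then $(1,0)\cdot\upiota(\tau_{s_1})=(0,-\tau_{\omega_{-1}}\upiota(\tau_{s_0})\zeta^{-1})$ lies in $I$, and iterating the right $\pm$-action while using the identities $\upiota(\tau_{s_\epsilon})\upiota(\tau_{s_{1-\epsilon}})=\zeta-\tau_{s_{1-\epsilon}}\tau_{s_\epsilon}$ to collapse products modulo $H_\zeta\tau_{s_\epsilon}$, combined with left $\zeta^{\pm 1}$-scaling, produces elements of the form $(\tau_\omega\tau_{s_1},0)$; symmetric manipulations starting from $(0,1)$ yield $(0,\tau_\omega\tau_{s_0})$. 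These together constitute a left $k[\zeta^{\pm 1}]$-basis of the right-hand side contained in $I$, so $I$ fills everything.

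The hard part will be this final surjectivity computation: carefully tracking successive right $\pm$-actions of $\upiota(\tau_{s_\epsilon})$ and $\tau_{\omega_u}$ applied to $(1,0)$ and $(0,1)$, simplifying modulo $H_\zeta\tau_{s_\epsilon}$ using the relations recalled above, and combining with left $\zeta$-scaling so as to exhibit a complete basis.
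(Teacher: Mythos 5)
Your proposal is correct, and the overall strategy is close to the paper's but concludes differently. The paper fixes the explicit $k$-basis $\{\zeta^j\tau_{\omega_u},\,\zeta^j\tau_{\omega_u}\upiota(\tau_{s_{1-i}})\}$ of each summand of the source (coming from Lemma~\ref{freeness}), computes the image of each basis element under $\beta$ using \eqref{f:def-R}, and simply observes that the resulting set is again a $k$-basis of the target; bijectivity is then read off directly, with no need to know the ranks in advance. You instead establish that both sides are free $k[\zeta^{\pm 1}]$-modules of the same finite rank $4(p-1)$ (which is the ``in particular'' conclusion of the lemma, so you need it anyway), and then invoke the commutative-algebra fact that a surjective $k[\zeta^{\pm 1}]$-linear map between two free modules of the same finite rank is an isomorphism, reducing the whole thing to surjectivity. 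Your surjectivity argument — letting $I = \im(\beta)$, noting via Lemma~\ref{lemma:zetazeta} that $I$ is stable under left $\zeta^{\pm 1}$-multiplication, and then generating a left $k[\zeta^{\pm 1}]$-basis from $(1,0),(0,1)$ by right $\pm$-action — uses exactly the same formulas as the paper does, just read in the opposite direction: where the paper pushes the source basis forward and checks independence, you pull the target basis into $I$ and check generation. The paper's version is slightly leaner because it avoids the ``surjection between equal-rank free modules is an isomorphism'' input (and avoids having to verify the freeness claims before proving bijectivity), but the two computations are essentially the same, and your route is equally valid. The one place worth tightening: when you write that iterating the $\pm$-action ``produces elements of the form $(\tau_\omega\tau_{s_1},0)$,'' the cleanest path is to note $(0,1)\upiota(\tau_{s_0})=(-\tau_{\omega_{-1}}\upiota(\tau_{s_1})\zeta^{-1},0)$, scale by $-\zeta$ on the left, act by $\tau_{\omega_u}$ on the right to sweep out all $(\tau_\omega\upiota(\tau_{s_1}),0)$, and convert to $\tau_\omega\tau_{s_1}$ using $\tau_{s_1}=-e_1-\upiota(\tau_{s_1})$ together with the already-obtained $(\tau_\omega,0)\in I$.
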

 \begin{proof}
That the  rule  given to define $\beta$ yields a well defined module homomorphism is immediate from the fact that $(1,0)\tau_{s_0}=(0,1)\tau_{s_1}=0$ (see \eqref{f:pm01}).
To check the bijectivity we start by observing that, as a consequence of Lemma \ref{freeness}, a $k$-basis of $H_\zeta / \tau_{s_i} H_\zeta$ as well as $H_\zeta / H_\zeta \tau_{s_i}$ is given by
\begin{equation*}
  \{ \zeta^j \tau_{\omega_u} : j \in \mathbb{Z},u\in \mathbb F_q^\times\} \cup \{ \zeta^j \tau_{\omega_u} \upiota(\tau_{s_{1-i}}) : j \in \mathbb{Z},  u\in \mathbb F_q^\times\}
\end{equation*} where we use  the involution \eqref{f:upiotadef} of $H$.
It follows that $H_\zeta / \tau_{s_0} H_\zeta \oplus H_\zeta / \tau_{s_1} H_\zeta$ and $(H_\zeta / H_\zeta \tau_{s_0} \oplus H_\zeta / H_\zeta \tau_{s_1})^\pm$ both have the $k$-basis
\begin{equation*}
  \{ (\zeta^j \tau_{\omega_u},0) , \: (\zeta^j \tau_{\omega_u} \upiota(\tau_{s_1}),0) ,\: (0, \zeta^j \tau_{\omega_u}) , \: (0, \zeta^j \tau_{\omega_u} \upiota(\tau_{s_0})) : \quad j \in \mathbb{Z}, u\in \mathbb F_q^\times\} .
\end{equation*}
The image under $\beta$ of this set is
 \begin{equation*}
  \{ u^{-2} (\zeta^{-j} \tau_{\omega_u},0) , \:
  -u^{-2}(0,\zeta^{-j-1} \tau_{\omega_{-u}} \upiota(\tau_{s_0})),u^{2} (0,\zeta^{-j} \tau_{\omega_u}),   -u^{2}(\zeta^{-j-1}\tau_{\omega_{-u}}\upiota(\tau_{s_1}) ,0)
  \quad : j \in \mathbb{Z},u\in \mathbb F_q^\times\} .
\end{equation*}
which is  a basis for $(H_\zeta / H_\zeta \tau_{s_0} \oplus H_\zeta / H_\zeta \tau_{s_1})^\pm$.

The $k[\zeta^{\pm 1}]$-modules $H_\zeta / H_\zeta \tau_{s_0} \oplus H_\zeta / H_\zeta \tau_{s_1}$  and
  $H_\zeta / \tau_{s_0}H_\zeta  \oplus H_\zeta /  \tau_{s_1}H_\zeta$ are  free $k[\zeta^{\pm 1}]$-modules of rank $4(p-1)$ (respectively on the left and on the right). The last statement follows.

\end{proof}
\paragraph{On $\im(f^\pm)$.\label{par:fpm}}
 In this paragraph we assume that $\mathfrak{F} = \mathbb{Q}_p$ with $p \geq 5$ and that $\pi=p$.

\begin{proposition}\label{plusminus-part}

The map $f^\pm$  in  Lemma  \ref{lemma:fpm} yields  an injective homomorphism of $H$-bimodules \begin{equation*}
  (H_\zeta / H_\zeta \tau_{s_0} \oplus H_\zeta / H_\zeta \tau_{s_1})^\pm 
  \longrightarrow E^1
\end{equation*}   which we still denote by $f^\pm$.
Its image $\im(f^\pm)$ is contained in the kernel of the endomorphism $\zeta \cdot\id_{E^1}\cdot  \zeta - \id_{E^1}$ and is a sub-$H$-bimodule of $E^1$ on which $\zeta$ acts invertibly from the left and the right.
Furthermore, $\im(f^\pm)$ is a free   $k[\zeta^{\pm 1}]$-module of rank $4(p-1)$ on the left and on the right.
\end{proposition}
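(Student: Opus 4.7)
\medskip

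\noindent\textbf{Proof plan.} The plan is to promote the map $f^\pm = f_{\mathbf{x}^+} + f_{\mathbf{x}^-}$ from Lemma~\ref{lemma:fpm}, which is already an injective \emph{left} $H_\zeta$-module homomorphism with image landing in $\ker(\zeta\cdot\id_{E^1}\cdot\zeta - \id_{E^1})$, into an injective homomorphism of $H$-bimodules when the source is equipped with the $\pm$-bimodule structure of \eqref{f:def-R}. Since left $H$-linearity is already known and the source is generated as a left $H_\zeta$-module by $(1,0)$ and $(0,1)$, it suffices to check, for each generator $h$ of $H$ (i.e.\ $h \in \{\tau_{\omega_u}, \upiota(\tau_{s_0}), \upiota(\tau_{s_1})\}$), the two identities
\begin{equation*}
 f^\pm\bigl((1,0)\cdot h\bigr) = \mathbf{x}^+ \cdot h \quad\text{and}\quad  f^\pm\bigl((0,1)\cdot h\bigr) = \mathbf{x}^- \cdot h \quad\text{in } E^1.
\end{equation*}

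First, for $h = \tau_{\omega_u}$ the right-hand sides are given by Lemma~\ref{lemma:leftright}-1, namely $\mathbf{x}^+ \cdot \tau_{\omega_u} = u^{-2}\tau_{\omega_u}\cdot \mathbf{x}^+$ and $\mathbf{x}^-\cdot \tau_{\omega_u} = u^2\tau_{\omega_u}\cdot \mathbf{x}^-$, which match the formula $(\sigma^+,\sigma^-)\tau_{\omega_u} = (u^{-2}\sigma^+\tau_{\omega_u}, u^2\sigma^-\tau_{\omega_u})$ of \eqref{f:def-R} after applying the left $H$-linearity of $f^\pm$. For $h = \upiota(\tau_{s_i})$ we use Lemma~\ref{lemma:leftright}-3: the ``diagonal'' identities $\mathbf{x}^+\cdot\upiota(\tau_{s_0}) = -e_{\id^2}\cdot\mathbf{x}^+$ and $\mathbf{x}^-\cdot\upiota(\tau_{s_1}) = -e_{\id^{-2}}\cdot\mathbf{x}^-$ match $(1,0)\upiota(\tau_{s_0}) = (-e_{\id^2},0)$ and $(0,1)\upiota(\tau_{s_1}) = (0,-e_{\id^{-2}})$. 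For the ``off-diagonal'' identities we match $(0,1)\upiota(\tau_{s_0}) = (-\tau_{\omega_{-1}}\upiota(\tau_{s_1})\zeta^{-1}, 0)$ against $\mathbf{x}^-\cdot\upiota(\tau_{s_0}) = -\tau_{\omega_{-1}}\upiota(\tau_{s_1})\cdot\mathbf{x}^+\cdot\zeta$; here one must interpret the symbol $\zeta^{-1}$ in the source via Lemma~\ref{lemma:extension}, which yields $f_{\mathbf{x}^+}(\tau\zeta^{-1}) = \tau\cdot\mathbf{x}^+\cdot\zeta$. The analogous argument handles $(1,0)\upiota(\tau_{s_1})$.

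Once right $H$-linearity is established, $f^\pm$ is an injective $H$-bimodule homomorphism; its image is therefore a sub-$H$-bimodule of $E^1$ contained in $\ker(\zeta\cdot\id_{E^1}\cdot\zeta - \id_{E^1})$ by Lemma~\ref{lemma:fpm}. From the defining relation $\zeta y \zeta = y$ for $y \in \im(f^\pm)$ we deduce that $\zeta$ acts invertibly on $\im(f^\pm)$ from both sides: injectivity of left multiplication by $\zeta$ follows since $\zeta y = 0$ implies $y = \zeta y \zeta = 0$, and surjectivity is witnessed by $y = \zeta\cdot(y\zeta)$ with $y\zeta\in\im(f^\pm)$ (the image being a bimodule); the argument from the right is symmetric. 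Finally, since $f^\pm$ is an isomorphism of bimodules onto its image, the free $k[\zeta^{\pm 1}]$-module structure of rank $4(p-1)$ on both sides follows directly from Lemma~\ref{lemma:funnyright}.

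The main obstacle is the careful verification of the off-diagonal right-equivariance identities, which rely on correctly translating the $\zeta^{-1}$-factors of \eqref{f:def-R} via Lemma~\ref{lemma:extension} and then invoking the delicate second half of Lemma~\ref{lemma:leftright}-3. Everything else is essentially formal bookkeeping once these identities are in place.
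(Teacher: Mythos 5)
Your proposal is correct and follows essentially the same route as the paper: the paper's proof likewise cites Lemma~\ref{lemma:fpm} for left $H$-linearity and injectivity, observes that right $H$-equivariance is seen by comparing \eqref{f:def-R} with Lemma~\ref{lemma:leftright}, and derives the last statement from Lemma~\ref{lemma:funnyright}. You merely spell out the generator-by-generator comparison in full (including the treatment of the $\zeta^{-1}$-factors via Lemma~\ref{lemma:extension}) and give a small self-contained argument for $\zeta$-invertibility that the paper folds into the freeness statement.
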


\begin{proof} From Lemma  \ref{lemma:fpm} we know that \begin{equation*}
 H_\zeta / H_\zeta \tau_{s_0} \oplus H_\zeta / H_\zeta \tau_{s_1} \xrightarrow{\; f^\pm=f_{{\bf x}^+} + f_{{\bf x}^-} \;} E^1
\end{equation*} is  an  injective homomorphism of  left $H$-modules
the image  of which  is contained in the kernel of the endomorphism $\zeta \cdot\id_{E^1}\cdot  \zeta - \id_{E^1}$.
The
 right $H$-equivariance  of
 \begin{equation*}
  (H_\zeta / H_\zeta \tau_{s_0} \oplus H_\zeta / H_\zeta \tau_{s_1})^\pm \overset{f^\pm}\longrightarrow
E^1
\end{equation*}
  is immediately seen by comparing the definition \eqref{f:def-R} with Lemma \ref{lemma:leftright}.
The last statement follows directly from  Lemma \ref{lemma:funnyright}

\medskip

In Prop.\ \ref{prop:kerf1star}  we will see that the image of $f^\pm$ coincides in fact with the kernel of $\zeta\cdot \id_{E^1} \cdot \zeta -  \id_{E^1}$.

\end{proof}

\begin{remark}\phantomsection\label{rem:offdiag-anti}
\begin{itemize}
  \item[1.] It follows from Remark \ref{rem:offdiag-Gamma-anti}-1 that the diagram
\begin{equation*}
  \xymatrix{
    (H_\zeta / H_\zeta \tau_{s_0} \oplus H_\zeta / H_\zeta \tau_{s_1})^\pm \ar[d]_{(\sigma^+,\sigma^-) \mapsto (\Gamma_\varpi(\sigma^-),\Gamma_\varpi(\sigma^+))} \ar[r]^-{f^\pm} & E^1 \ar[d]^{\Gamma_\varpi} \\
    (H_\zeta / H_\zeta \tau_{s_0} \oplus H_\zeta / H_\zeta \tau_{s_1})^\pm \ar[r]^-{f^\pm} & E^1   }
\end{equation*}
  is commutative.
  \item[2.]
 The maps
\begin{align*}\delta_0:H_\zeta/H_\zeta\tau_{s_0}&\longrightarrow  H_\zeta/H_\zeta\tau_{s_0}, h\longmapsto h(1 - e_{\id} - e_{\id} \zeta^{-1}) \cr \delta_1: H_\zeta/H_\zeta\tau_{s_1}&\longrightarrow  H_\zeta/H_\zeta\tau_{s_1}, h\longmapsto h(1 - e_{\id^{-1}} - e_{\id^{-1}} \zeta^{-1})\end{align*} are well defined isomorphisms of left $H_\zeta$-modules.

Note that on the component $H_\zeta(1-e_{\id})/H_\zeta\tau_{s_0}(1-e_{\id})$ (resp.   $H_\zeta(1-e_{\id^{-1}})/H_\zeta\tau_{s_1}(1-e_{\id^{-1}})$), the map $\delta_0$ (resp. $\delta_1$) is the identity map. On $H_\zeta e_{\id}/H_\zeta\tau_{s_0} e_{\id}$ (resp.   $H_\zeta e_{\id^{-1}}/H_\zeta\tau_{s_1} e_{\id^{-1}}$), the map $\delta_0$ (resp. $\delta_1$) is the multiplication by $\zeta^{-1}$.

\medskip

Consider
\begin{equation}(H_\zeta / H_\zeta \tau_{s_0} \oplus H_\zeta / H_\zeta \tau_{s_1})^\pm\xrightarrow{\anti\oplus \anti} H_\zeta / \tau_{s_0}H_\zeta  \oplus H_\zeta /  \tau_{s_1}H_\zeta\xrightarrow{\beta} (H_\zeta / H_\zeta \tau_{s_0} \oplus H_\zeta / H_\zeta \tau_{s_1})^\pm \end{equation}
We have
\begin{align*}f^\pm\circ \beta\circ (\anti\oplus\anti)(\sigma^+, \sigma^-)&=f^\pm\circ \beta(\anti(\sigma^+),\anti( \sigma^-))=f^\pm\circ \beta(\anti(\sigma^+),\anti( \sigma^-))\cr&={\bf x}^+\cdot \anti(\sigma^+)+{\bf x}^-\cdot \anti(\sigma^-)\text{ since $f^\pm\circ \beta$ is right $H$-equivariant .}\end{align*}
Let
\begin{equation}\label{f:antipm}\anti^\pm := \beta\circ (\anti\oplus\anti)\circ(\delta_0\oplus\delta_1).\end{equation}

\begin{align*}f^\pm\circ \anti^\pm(\sigma^+, \sigma^-)&={\bf x}^+\cdot \anti(\sigma^+(1 - e_{\id} - e_{\id} \zeta^{-1}))+{\bf x}^-\cdot \anti(\sigma^-(1 - e_{\id^{-1}} - e_{\id^{-1}} \zeta^{-1}))\cr&=
{\bf x}^+\cdot (1 - e_{\id^{-1}} - e_{\id^{-1}} \zeta^{-1}) \anti(\sigma^+)+{\bf x}^-\cdot (1 - e_{\id} - e_{\id} \zeta^{-1})\anti(\sigma^-)
\cr&=
 (1 - e_{\id} - e_{\id} \zeta) \cdot{\bf x}^+ \cdot \anti(\sigma^+)+(1 - e_{\id^{-1}} - e_{\id^{-1}} \zeta)\cdot {\bf x}^-\cdot \anti(\sigma^-) \text{by Lemma \ref{lemma:leftright}-1}\cr&=\anti({\bf x}^+) \cdot \anti(\sigma^+)+\anti({\bf x}^-)\cdot \anti(\sigma^-)\text{ by Remark \ref{rem:offdiag-Gamma-anti}-2}\cr&=\anti(\sigma^+\cdot {\bf x}^++\sigma^-\cdot {\bf x}^-)=\anti(f^\pm(\sigma^+, \sigma^-)) \ .
\end{align*}
It follows  that the diagram
\begin{equation*}
  \xymatrix{
    (H_\zeta / H_\zeta \tau_{s_0} \oplus H_\zeta / H_\zeta \tau_{s_1})^\pm \ar[d]_{\anti^\pm} \ar[r]^-{f^\pm} & E^1 \ar[d]^{\anti} \\
    (H_\zeta / H_\zeta \tau_{s_0} \oplus H_\zeta / H_\zeta \tau_{s_1})^\pm \ar[r]^-{f^\pm} & E^1   }
\end{equation*}
  is commutative.

\end{itemize}
\end{remark}

\section{Formulas for the left action of $H$ on $E^{d-1}$ when $G={\rm SL}_2(\mathbb Q_p) $, $p\neq 2,3$}

For the moment,  $G={\rm SL}_2(\mathfrak F)$ and $I$ is a Poincar\'e group of dimension $d$ (hence $p \geq 5$).

\subsection{\label{subsec:triples2}Elements of $E^{d-1}$ as triples}

Recall (see \eqref{f:dual}) the isomorphism  of $H$-bimodules
$ \Delta^{d-1}:E^{d-1}\rightarrow  {}^\anti((E^{1})^{\vee, f})^\anti$.
The left action of $h\in H$ on $\alpha\in {}^\anti((E^{1})^{\vee, f})^\anti\cong E^{d-1}$  is given by \begin{equation}\label{f:leftonEd-1}(h, \alpha)\mapsto \alpha(\anti(h)_-)\ .\end{equation} The anti-involution $\anti$ on $E^{d-1}$ corresponds to the transformation \begin{align} \label{f:antiEd-1}  {}^\anti((E^{1})^{\vee, f})^\anti&\longrightarrow  {}^\anti((E^{1})^{\vee, f})^\anti\cr \alpha&\longmapsto \alpha\circ \anti.\end{align}

\begin{proof}
We prove that for $\alpha_0\in E^{d-1}$ we have $\Delta^{d-1}(\anti(\alpha_0))=\Delta^{d-1}(\alpha_0)\circ \anti$ in $(E^{1})^{\vee, f}$. Let $\beta\in (E^{1})^{\vee, f}$. By definition of $\Delta^{d-1}$ we have
\begin{align*} [\Delta^{d-1}(\alpha_0)\circ \anti] (\beta)&=\eta\circ \trace^d(\alpha_0\cup \anti(\beta))= \eta\circ \trace^d(\anti(\anti(\alpha_0)\cup \beta))\textrm{ by \cite{Ext} Rmk.\ 6.2} \cr&= \eta\circ \trace^d(\anti(\alpha_0)\cup \beta)\textrm{ by \cite{Ext} Cor.\ 7.17} \cr&=[\Delta^{d-1}(\anti(\alpha_0))](\beta)\ .\end{align*}

\end{proof}

We will abbreviate  $ h^{d-1}(w) := H^{d-1}(I,\mathbf{X}(w))$ for $w \in \widetilde{W}$ and will identify it with $h^1(w)^\vee\subseteq{} ^\anti((E^{1})^{\vee, f})^\anti$.
Recall  from \eqref{tripdec} that an element $c$  in $h^1(w)\subset E^1$ may be seen as a triple  $(c^-,c^0,c^+)_w$ with $$ c^\pm \in \Hom(\mathfrak{O}/\mathfrak{M},k) \text{ and } c^0 \in \Hom((1+\mathfrak{M}) \big/ (1+\mathfrak{M}^{\ell(w)+1})(1+\mathfrak{M})^p ,k) \ .$$
For a given  finite dimensional ${\mathbb F_p}$-vector space $V$,
the $k$-dual of $\Hom_{\mathbb F_p}(V, k)$  identifies  canonically with
$V\otimes _{\mathbb F_p} k$ so we will see an element $\alpha$ of $(h^1(w))^\vee$
as  a triple \begin{equation}\label{tripdec'}(\alpha^-, \alpha^0,\alpha^+)_w\in \mathfrak{O}/\mathfrak{M}\otimes _{\mathbb F_p} k\times ((1+\mathfrak M)/(1+\mathfrak{M}^{\ell(w)+1})(1+\mathfrak{M})^p)\otimes _{\mathbb F_p} k\times \mathfrak{O}/\mathfrak{M}\otimes _{\mathbb F_p} k\end{equation} such that
$\alpha(c)=c^-(\alpha^-)+c^0(\alpha^0)+c^+(\alpha^+)\ .$
We still denote by $(\alpha^-, \alpha^0,\alpha^+)_w$ the image of this element in $h^{d-1}(w)$ via
the inverse of $\Delta^{d-1}$ and then we have

\begin{equation}\label{f:pairing'}(\alpha^-, \alpha^0,\alpha^+)_w\cup (c^-, c^0,c^+)_w=(c^-(\alpha^-)+c^0(\alpha^0)+c^+(\alpha^+))\,\phi_w\end{equation} where $\phi_w\in h^d(w)$  was defined in \S\ref{subsubsec:duality}.
Since $\anti$ respects the cup product  and since $\anti(\phi_w)=\phi_{w^{-1}}$ (\cite{Ext} Rmk.\ 6.2 and (8.2)), we obtain
from Lemma \ref{lemma:antievenodd} the following result:

\begin{lemma}\label{lemma:antialpha}
Let $w\in \widetilde W$  and  $\alpha=(\alpha^-, \alpha^0, \alpha^+)_w\in h^{d-1}(w)$.\\ If $\ell(w)$ is even then
 \begin{equation}\label{f:antievena}\anti(\alpha)=({{u^{-2}}}\alpha^-, \alpha^0, {{u^{2}}}\alpha^+)_{w^{-1}}.\end{equation}
  If $\ell(w)$ is odd then
 \begin{equation}\label{f:antiodda}\anti(\alpha)=(-{{u^{2}}}\alpha^+, -\alpha^0, -{{u^{-2}}}\alpha^-)_{w^{-1}}.\end{equation}
 where $u\in (\mathfrak{O}/\mathfrak{M})^\times$ is such that $\omega_u^{-1} w$ lies in the subgroup of $\widetilde W$ generated by $s_0$ and $s_1$.
\end{lemma}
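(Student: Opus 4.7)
The strategy is to use the cup-product pairing \eqref{f:pairing'} together with the self-duality of $\anti$ under $\trace^d$ which is already recorded in the opening paragraph of \S\ref{subsec:triples2}. That computation shows $\Delta^{d-1}(\anti(\alpha_0)) = \Delta^{d-1}(\alpha_0) \circ \anti$, which by unpacking definitions is the identity
\begin{equation*}
  \eta \circ \trace^d\bigl(\anti(\alpha) \cup c\bigr) = \eta \circ \trace^d\bigl(\alpha \cup \anti(c)\bigr)
\end{equation*}
for all $\alpha \in h^{d-1}(w)$ and $c \in h^1(w^{-1})$. The plan is to evaluate this identity on all triples $c = \anti(c')$ with $c' \in h^1(w)$ and extract $\anti(\alpha) \in h^{d-1}(w^{-1})$ componentwise using Lemma \ref{lemma:antievenodd} to spell out $\anti(c')$.

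Writing $\alpha = (\alpha^-, \alpha^0, \alpha^+)_w$, $c' = ((c')^-, (c')^0, (c')^+)_w$, and $\anti(\alpha) = ((\anti\alpha)^-, (\anti\alpha)^0, (\anti\alpha)^+)_{w^{-1}}$, the pairing formula \eqref{f:pairing'} gives
\begin{equation*}
  \eta \trace^d(\alpha \cup c') = (c')^-(\alpha^-) + (c')^0(\alpha^0) + (c')^+(\alpha^+),
\end{equation*}
whereas in the case $\ell(w)$ even, Lemma \ref{lemma:antievenodd} yields $\anti(c') = ((c')^-(u^2 {}_-), (c')^0, (c')^+(u^{-2} {}_-))_{w^{-1}}$, so a second application of \eqref{f:pairing'} inside $h^{d-1}(w^{-1}) \times h^1(w^{-1})$ gives
\begin{equation*}
  \eta \trace^d(\anti(\alpha) \cup \anti(c')) = (c')^-\bigl(u^2 (\anti\alpha)^-\bigr) + (c')^0((\anti\alpha)^0) + (c')^+\bigl(u^{-2} (\anti\alpha)^+\bigr).
\end{equation*}
Since $(c')^-, (c')^0, (c')^+$ vary independently over their respective $\Hom$-spaces, matching coefficients forces $u^2 (\anti\alpha)^- = \alpha^-$, $(\anti\alpha)^0 = \alpha^0$, $u^{-2}(\anti\alpha)^+ = \alpha^+$, which is precisely \eqref{f:antievena}. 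The case $\ell(w)$ odd proceeds identically: Lemma \ref{lemma:antievenodd} gives $\anti(c') = (-(c')^+(u^{-2}{}_-), -(c')^0, -(c')^-(u^2 {}_-))_{w^{-1}}$, and the analogous coefficient match (which now swaps the $\pm$ slots and introduces an overall sign) yields $(\anti\alpha)^- = -u^2\alpha^+$, $(\anti\alpha)^0 = -\alpha^0$, $(\anti\alpha)^+ = -u^{-2}\alpha^-$, i.e.\ \eqref{f:antiodda}.

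The only step that could look substantive is the duality identity $\eta\trace^d(\anti(\alpha) \cup c) = \eta\trace^d(\alpha \cup \anti(c))$; but this is already established in the first computation of the subsection, so once it is invoked the rest of the argument is purely a componentwise matching inside the triple decomposition, and the well-definedness of $u^{\pm 2}$ is guaranteed by the observation of Lemma \ref{lemma:antievenodd} that $u^2$ is uniquely determined by $w$.
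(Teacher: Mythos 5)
Your proof is correct and follows essentially the same route as the paper. The identity $\Delta^{d-1}(\anti(\alpha)) = \Delta^{d-1}(\alpha)\circ\anti$ you invoke packages exactly the two facts the paper cites ($\anti$ is multiplicative for the cup product and $\anti(\phi_w)=\phi_{w^{-1}}$), and the remaining componentwise matching against Lemma \ref{lemma:antievenodd} via the pairing \eqref{f:pairing'} is precisely the calculation the paper leaves to the reader.
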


From  \eqref{f:cup+Gamma}, \eqref{f:conjphi} and Lemma \ref{lemma:conjtrip} we obtain:

\begin{lemma} Let $w\in \widetilde W$ and $(\alpha^-, \alpha^0, \alpha^+)_w\in  h^{d-1}(w)$.
Its image  by conjugation by $\varpi$ defined in  \eqref{gammapi}
is
$$\Gamma_\varpi((\alpha^-, \alpha^0, \alpha^+)_w)=(\alpha^+, -\alpha^0, \alpha^-)_{\varpi w\varpi^{-1}}\in h^2(\varpi w\varpi^{-1}) \ . $$
\label{lemma:conjtripdual}
\end{lemma}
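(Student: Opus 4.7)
The plan is to exploit the characterization of the triple $(\alpha^-,\alpha^0,\alpha^+)_w$ coming from the duality $\Delta^{d-1}$, namely the cup-product pairing \eqref{f:pairing'}, together with the compatibility of $\Gamma_\varpi$ with the cup product \eqref{f:cup+Gamma}, its action on the top classes $\phi_w$ given by \eqref{f:conjphi}, and its action on $h^1$ already computed in Lemma \ref{lemma:conjtrip}. In effect, the lemma is just the dualization of Lemma \ref{lemma:conjtrip} under $\Delta^{d-1}$.

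First I would record that $\xi^2=\id_G$, since $\varpi^2=\operatorname{diag}(\pi,\pi)$ is central in $\mathbf{GL}_2(\mathfrak F)$, so $\Gamma_\varpi$ is an involution. By \eqref{gammaxiw}, $\Gamma_\varpi(\alpha)$ lies in $h^{d-1}(\varpi w\varpi^{-1})$, and I write it as an unknown triple
\[
\Gamma_\varpi(\alpha)=(\beta^-,\beta^0,\beta^+)_{\varpi w\varpi^{-1}}
\]
that I propose to identify by pairing against an arbitrary element $c=(c^-,c^0,c^+)_{\varpi w\varpi^{-1}}\in h^1(\varpi w\varpi^{-1})$ via the cup product. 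On the one hand, \eqref{f:pairing'} gives
\[
\Gamma_\varpi(\alpha)\cup c=\bigl(c^-(\beta^-)+c^0(\beta^0)+c^+(\beta^+)\bigr)\phi_{\varpi w\varpi^{-1}}.
\]

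On the other hand, since $\Gamma_\varpi$ is an involution, I rewrite $c=\Gamma_\varpi(\Gamma_\varpi(c))$ and apply the compatibility \eqref{f:cup+Gamma} to get $\Gamma_\varpi(\alpha)\cup c=\Gamma_\varpi\bigl(\alpha\cup\Gamma_\varpi(c)\bigr)$. By Lemma \ref{lemma:conjtrip}, $\Gamma_\varpi(c)=(c^+,-c^0,c^-)_w\in h^1(w)$, so \eqref{f:pairing'} evaluated in $h^1(w)\times h^{d-1}(w)$ yields
\[
\alpha\cup\Gamma_\varpi(c)=\bigl(c^+(\alpha^-)-c^0(\alpha^0)+c^-(\alpha^+)\bigr)\phi_w.
\]
Applying $\Gamma_\varpi$ and using \eqref{f:conjphi} (i.e.\ $\Gamma_\varpi(\phi_w)=\phi_{\varpi w\varpi^{-1}}$) transports this identity to
\[
\Gamma_\varpi(\alpha)\cup c=\bigl(c^+(\alpha^-)-c^0(\alpha^0)+c^-(\alpha^+)\bigr)\phi_{\varpi w\varpi^{-1}}.
\]

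Comparing the two expressions for $\Gamma_\varpi(\alpha)\cup c$ as $c$ ranges over $h^1(\varpi w\varpi^{-1})$ forces $\beta^-=\alpha^+$, $\beta^0=-\alpha^0$, $\beta^+=\alpha^-$, which is the desired formula. I do not anticipate a real obstacle: the argument is a routine dualization once one has assembled the three ingredients cited in the hint, and the sign flip on the middle component comes for free from the sign flip already present in Lemma \ref{lemma:conjtrip}.
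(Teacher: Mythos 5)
Your proof is correct and reconstructs exactly the argument the paper has in mind: the lemma is stated with the preamble ``From \eqref{f:cup+Gamma}, \eqref{f:conjphi} and Lemma \ref{lemma:conjtrip} we obtain'' and no further details, and your pairing argument is the natural way to assemble those three ingredients (it is also the same dualization technique the paper uses implicitly for the preceding Lemma \ref{lemma:antialpha}). The only thing worth noting explicitly, which you handle correctly, is that $\Gamma_\varpi$ is an involution because $\varpi^2 = \operatorname{diag}(\pi,\pi)$ acts trivially by conjugation on $G$, so $\Gamma_\varpi(c)$ lands back in $h^1(w)$ and the pairing identity can be compared across the two chambers.
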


%
%
%
%
%
%
%
%
%
%
%
%
%
\noindent In the next lemma we refer to the notation in \S\ref{subsubsec:normalize}.

\begin{lemma}
Assume  $G={\rm SL}_2(\mathbb Q_p) $, $p\neq 2,3$. For $w\in \widetilde W$, $\ell(w)\geq 1$ we have
\begin{align}\label{f:pairing}
   (0,\upalpha^0,0)_w & = -(\c,0,0)_w\cup(0,0,\c)_w, \nonumber \\
   (\upalpha,0,0)_w & = (0,\c^0,0)_w\cup(0,0,\c)_w, \\
   (0,0,\upalpha)_w & = (\c,0,0)_w\cup(0,\c^0,0)_w \ .  \nonumber
\end{align}
\end{lemma}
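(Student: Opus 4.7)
The plan is to identify each element of $h^{d-1}(w)$ appearing on the left-hand side via its defining pairing \eqref{f:pairing'} with $h^1(w)$. Since we assume $\mathfrak{F} = \mathbb{Q}_p$ with $p \neq 2,3$ and $\ell(w) \geq 1$, the subgroup $I_w$ is a uniform pro-$p$ group of dimension $d = 3$ (as already used in \S\ref{subsubsec:normalize}). Hence, via Shapiro, $H^*(I_w, k)$ is the exterior algebra on the $3$-dimensional space $H^1(I_w, k)$, with basis the Shapiro images of $(\c,0,0)_w$, $(0,\c^0,0)_w$, $(0,0,\c)_w$, and the top class $\phi_w$ corresponds by \eqref{f:orientation} to $(\c,0,0)_w \cup (0,\c^0,0)_w \cup (0,0,\c)_w$.

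By the choices of \S\ref{subsubsec:normalize}, any $(c^-, c^0, c^+)_w \in h^1(w)$ can be written as
\[
(c^-, c^0, c^+)_w = c^-(\upalpha)\,(\c, 0, 0)_w + c^0(\upalpha^0)\,(0, \c^0, 0)_w + c^+(\upalpha)\,(0, 0, \c)_w.
\]
The first step is then to cup an arbitrary element of this form with $-(\c,0,0)_w \cup (0,0,\c)_w$, and use that $x \cup x = 0$ for every $x \in h^1(w)$ (since $p \neq 2$ and the cup product is anticommutative by \eqref{f:cup+Sh}). The two boundary terms vanish for this reason, and only the middle term survives, contributing
\[
-c^0(\upalpha^0)\,(\c,0,0)_w \cup (0,0,\c)_w \cup (0,\c^0,0)_w = c^0(\upalpha^0)\,\phi_w
\]
after one anticommutation and the normalization \eqref{f:orientation}. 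Applying \eqref{f:pairing'} then forces $-(\c,0,0)_w \cup (0,0,\c)_w = (0,\upalpha^0,0)_w$, which is the first identity.

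The remaining two identities will be handled in exactly the same way: cup the candidate $2$-class with a general $(c^-, c^0, c^+)_w$, note that two out of three basis contributions vanish by $x \cup x = 0$, evaluate the surviving triple cup product via \eqref{f:orientation} (keeping track of the sign produced by the anticommutation needed to bring the three basis factors into the canonical order $-,0,+$), and read off the result through \eqref{f:pairing'}. The only subtle point is sign bookkeeping: for $(0,\c^0,0)_w \cup (0,0,\c)_w$ no anticommutation is needed, so one gets $(\upalpha,0,0)_w$ with a plus sign, while for $(\c,0,0)_w \cup (0,\c^0,0)_w$ again no anticommutation is needed, giving $(0,0,\upalpha)_w$ with a plus sign. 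I do not anticipate any real obstacle; the whole argument is a routine exterior-algebra computation in $H^*(I_w,k)$ combined with the normalization \eqref{f:orientation}.
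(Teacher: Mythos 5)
Your proposal is correct and uses essentially the same argument as the paper: both characterize the candidate $2$-class by its cup products against the basis of $h^1(w)$, exploiting the exterior-algebra structure of $H^*(I_w,k)$, the normalization \eqref{f:orientation}, and the defining relation \eqref{f:pairing'} (whose nondegeneracy gives uniqueness). The only minor imprecision is the remark that "no anticommutation is needed" for the second identity — in fact a cyclic shift of the three degree-$1$ factors is required, but this is an even permutation so the sign is still $+1$ as you claim.
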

\begin{proof}
By definition, $(0,\upalpha^0,0)_w$ is the unique element in $h^2(w)$ such that
\begin{align*}
   & \eta\circ\trace^d\big((0,\upalpha^0,0)_w\cup(0,\c^0,0)_w \big)=\c^0(\upalpha^0)=1, \\
   & \eta\circ\trace^d\big((0,\upalpha^0,0)_w\cup(\c,0,0)_w \big)=0,   \\
   & \eta\circ\trace^d\big((0,\upalpha^0,0)_w\cup(0,0,\c)_w \big)=0,
\end{align*}
namely $(0,\upalpha^0,0)_w\cup(0,\c^0,0)_w=\phi_w$ while
$(0,\upalpha^0,0)_w\cup(\c,0,0)_w=(0,\upalpha^0,0)_w\cup(0,0,\c)_w=0.$
By \eqref{f:orientation}, we obtain the first formula of the lemma. The other formulas are obtained similarly.
%
%
%
%
%

\end{proof}

\noindent For any subset $U \subseteq \widetilde{W}$ we define as in \S\ref{triples} the $k$-subspaces
\begin{equation*}
  h^{d-1}_-(U) := \oplus_{w \in U} h^{d-1}_-(w), \quad h^{d-1}_0(U) := \oplus_{w \in U} h^{d-1}_0(w),\quad \text{and}\ h^{d-1}_+(U) := \oplus_{w \in U} h^{d-1}_+(w)
\end{equation*}
of $h^{d-1}$.  We also let  $h^{d-1}_\pm(U):=h^{d-1}_-(U) \oplus h^{d-1}_+(U) $.

\subsection{Left action of  $\tau_\omega$  on $E^{d-1}$ for $\omega\in \Omega $}

Let $w\in \widetilde W$.
The action of  $\tau_\omega$ on the left on  an element  $\alpha\in h^{d-1}(w)\subseteq E^{d-1}$ was given at the beginning of \S\ref{subsec:omega}. Here we make this action explicit when $\alpha$ is given by a triple
$$
\alpha=(\alpha^-, \alpha^0, \alpha^+)_w\in (h^1(w))^\vee\subset   {}^\anti((E^{1})^{\vee, f})^\anti\cong E^{d-1}
$$
as in  \eqref{tripdec'}.
For $u\in (\mathfrak O/\mathfrak M)^\times$, we compute  $\tau_{\omega_u}\cdot \alpha\in
h^1(\omega_u w)$. For $c=(c^-,c^0, c^+)_{\omega_u w}\in h^1(\omega _uw)$
we have $(\tau_{\omega_u}\cdot\alpha)(c)=c^-(u^{2}\alpha^-)+ c^0(\alpha^0)+c^+(u^{-2}\alpha^+)$
(see \eqref{f:leftomega})
 therefore
\begin{equation} \label{f:leftomegaE2}\tau_{\omega_u}\cdot(\alpha^-, \alpha^0, \alpha^+)_w=(u^{2}\alpha^-, \alpha^0, u^{-2}\alpha^+)_{\omega_uw}\ .
\end{equation}
In particular, for $s\in \{s_0, s_1\}$ we have (compare with \eqref{f:lefts2}) \begin{equation}\label{f:lefts2'}\tau_{s^2}\cdot(\alpha^-, \alpha^0, \alpha^+)_w=(\alpha^-, \alpha^0, \alpha^+)_{s^2w} \ .\end{equation}

\begin{remark}
Using \eqref{f:leftonEd-1} and  the formulas in \S\ref{subsec:commuel}, we have for  $w\in \widetilde W$ and
 $\alpha=(\alpha^-, \alpha^0, \alpha^+)_w\in h^{d-1}(w)$:
\begin{equation}\label{f:condeven'}
(\ell(w)\text{ even})):\quad  e_\lambda\cdot  \alpha  = \alpha \cdot e_\mu   \textrm{ if and only if }\left\lbrace\begin{array}{l}\alpha ^-=\mu^{-1}\lambda(\omega_u ) \alpha ^-(u^{-2}\,_-)\cr \cr \alpha ^0=\mu^{-1}\lambda(\omega_u ) \alpha ^0\cr\cr
\alpha ^+=\mu^{-1}\lambda(\omega_u ) \alpha ^+(u^{2}\,_-).
 \end{array}\right. \textrm{ for any $u\in(\mathfrak O/\mathfrak M)^\times$}
\end{equation}
and
\begin{equation}\label{f:condodd'}
(\ell(w)\text{ odd})):\quad e_\lambda\cdot \alpha  = \alpha \cdot e_\mu  \textrm{ if and only if } \left\lbrace\begin{array}{l}\alpha ^-=\mu\lambda(\omega_u ) \alpha ^-(u^{-2}\,_-)\cr \cr \alpha ^0=\mu\lambda(\omega_u ) \alpha^0\cr\cr
\alpha ^+=\mu\lambda(\omega_u ) \alpha ^+(u^{2}\,_-).
 \end{array}\right. \textrm{ for any $u\in(\mathfrak O/\mathfrak M)^\times$}.
\end{equation}
\end{remark}

\subsection{\label{subsec:HonE2}Left action of   $H$ on $ E^{2}$ when $G={\rm SL}_2(\mathbb Q_p) $, $p\neq 2,3$}

Suppose that  $G={\rm SL}_2(\mathbb Q_p) $, $p\neq 2,3$ and $\pi=p$. Then $d=3$.
The isomorphism $\iota$ was defined in \eqref{f:iota}. The following proposition is proved in \S\ref{proof:theformulas'}. Together with \eqref{f:leftomegaE2}, these formulas give the left action of $H$ on $E^2$.

\begin{proposition} \label{prop:theformulas'}
Let $w\in{\widetilde W}$, $\omega\in \Omega$ and $\alpha=(\alpha^-, \alpha^0, \alpha^+)_w\in (h^1(w))^\vee$ seen as an element of $E^{2}$. We have:
\begin{multline*}
 \tau_{s_0}\cdot (\alpha^-, \alpha^0, \alpha^+)_w =  \cr
  \begin{cases}
  (-\alpha^+,0,0)_{s_0w} &  \text{if  $w\in \widetilde W^0$ with $\ell(w)\geq 1$,} \cr
     e_1 \cdot (-\alpha^-, -\alpha^0, -\alpha^+)_w + e_{\id}\cdot (2\iota(\alpha^0),0,0)_w + (-\alpha^+,  {-} \alpha^0,0)_{s_0w}   & \text{if  $w\in \widetilde W^1$ with  $\ell(w) \geq 2$,} \cr
     e_1 \cdot (-\alpha^-, -\alpha^0, -\alpha^+)_w + e_{\id} \cdot ( 2\iota(\alpha^0),-\iota^{-1}(\alpha^+),0)_w & \cr
         \quad\quad\quad\quad\quad\quad\quad\quad\quad\quad\quad + e_{\id^2} \cdot( \alpha^+, 0,0)_w+(-\alpha^+,0,0)_{s_0w}
      & \text{if  $w\in \widetilde W^1$ with  $\ell(w) =1$.}
  \end{cases} \cr
  \shoveleft{\tau_{s_1}\cdot (\alpha^-, \alpha^0, \alpha^+)_w =} \cr
  \begin{cases}
  (0,0,-\alpha^-)_{s_1w }&  \text{ if  $w\in \widetilde W^1$ with $\ell(w)\geq 1$,} \cr
   -e_1 \cdot (\alpha^-, \alpha^0, \alpha^+)_w+e_{\id^{-1}}\cdot (0,0,-2\iota(\alpha^0))_w +   (0,  {-} \alpha^0,-\alpha^-)_{s_1w} & \text{if  $w\in \widetilde W^0$ with  $\ell(w) \geq 2$,} \cr
   -e_1  \cdot(\alpha^-, \alpha^0, \alpha^+)_w+e_{\id^{-1}}\cdot (0, \iota^{-1}(\alpha^-),-2\iota(\alpha^0))_w & \cr
   \quad\quad\quad\quad\quad\quad\quad\quad\quad\quad + e_{\id^{-2}} \cdot ( 0, 0,\alpha^-)_w +   (0,  0,-\alpha^-)_{s_1w}
      & \text{if  $w\in \widetilde W^0$ with  $\ell(w) =1$.}
  \end{cases}\cr
\shoveleft{\tau_{s_0} \cdot (\alpha^-, 0, \alpha^+)_\omega = (-\alpha^+, 0, 0)_{s_0\omega}}  \cr
\shoveleft{\tau_{s_1}\cdot(\alpha^-, 0, \alpha^+)_\omega=(0,0,-\alpha^-)_{s_1\omega}.}\\
\end{multline*}
\end{proposition}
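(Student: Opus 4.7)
\medskip

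\noindent\textbf{Plan.} The strategy is to reduce the computation to Proposition \ref{prop:theformulas} via the duality \eqref{f:dual}. Under the isomorphism $\Delta^{d-1}: E^{d-1}\xrightarrow{\cong}({}^\anti(E^1)^\anti)^{\vee,f}$ and identifying an element $\alpha=(\alpha^-,\alpha^0,\alpha^+)_w\in h^{d-1}(w)$ with the linear form on $E^1$ determined by the pairing \eqref{f:pairing'}, the formula \eqref{f:leftonEd-1} gives
\[
(\tau_{s_i}\cdot\alpha)(c)=\alpha\bigl(\anti(\tau_{s_i})\cdot c\bigr)=\alpha\bigl(\tau_{s_i^{-1}}\cdot c\bigr)\qquad\text{for all }c\in E^1.
\]
Since $s_i^2=\omega_{-1}$, one has $\tau_{s_i^{-1}}=\tau_{\omega_{-1}}\tau_{s_i}$, so $\tau_{s_i^{-1}}\cdot c=\tau_{\omega_{-1}}(\tau_{s_i}\cdot c)$; the left action of $\tau_{s_i^{-1}}$ on $E^1$ is then read off from Proposition \ref{prop:theformulas} combined with \eqref{f:leftomega}.

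\medskip

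\noindent\textbf{Key steps.} First, locate the support of $\tau_{s_i}\cdot\alpha$. By \eqref{f:support} and \eqref{f:prod-fil}, this support lies in $h^{d-1}(s_iw)$ when $\ell(s_iw)=\ell(w)+1$, and otherwise in $h^{d-1}(s_iw)\oplus\bigoplus_{\omega\in\Omega}h^{d-1}(\omega w)$; this explains the case split in the statement. Second, for each admissible $v$ in that support, test $\tau_{s_i}\cdot\alpha$ against an arbitrary triple $c=(c^-,c^0,c^+)_v\in h^1(v)$: apply Proposition \ref{prop:theformulas} to $\tau_{s_i}\cdot c$, twist by $\tau_{\omega_{-1}}$ via \eqref{f:leftomega} to obtain $\tau_{s_i^{-1}}\cdot c$, project onto the $h^1(w)$--component, and pair with $\alpha$ via \eqref{f:pairing'}. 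The resulting bilinear expression in $(c^-,c^0,c^+)$ and $(\alpha^-,\alpha^0,\alpha^+)$ directly produces the coefficients of the triple attached to $\tau_{s_i}\cdot\alpha$ on $h^{d-1}(v)$. Third, assemble the answers over all $v$ in the support, reassembling the idempotent-weighted contributions using the action of $\tau_\omega$ on triples \eqref{f:leftomegaE2}.

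\medskip

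\noindent\textbf{Consistency checks and symmetries.} The formulas for $\tau_{s_0}$ and $\tau_{s_1}$ are interchanged under conjugation by $\varpi$: using Lemma \ref{lemma:conjtripdual} together with the multiplicativity and the property $\Gamma_\varpi(\tau_{s_0})=\tau_{s_1}$ of the involution \eqref{gammapi}, one of the two families of formulas formally implies the other. This cuts the verification work roughly in half. Similarly, Lemma \ref{lemma:antialpha} lets one cross-check several cases via $\anti$.

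\medskip

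\noindent\textbf{Main obstacle.} The genuine bookkeeping challenge is the low-length cases $\ell(w)\in\{0,1\}$, where the expressions in Proposition \ref{prop:theformulas} involve additional contributions weighted by the idempotents $e_{\id^m}$, together with the maps $\iota$ and $\iota^{-1}$ coming from \eqref{f:iota}. Dualizing these requires correctly inverting the character twists (turning $e_{\id^m}$-actions on $h^1$ into twists on the $(\alpha^-,\alpha^0,\alpha^+)$-components of $h^{d-1}$ through \eqref{f:condodd'}--\eqref{f:condeven'}), and ensuring signs are consistent with the pairing \eqref{f:pairing} between the canonical bases. Once the resulting identities in $h^{d-1}(v)$ are organized idempotent-by-idempotent, the formulas in the statement emerge directly; each case then reduces to the straightforward bookkeeping indicated above.
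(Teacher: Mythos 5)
Your proposal is correct and follows essentially the same route as the paper's proof: dualize via $\Delta^{d-1}$, use $(\tau_{s_i}\cdot\alpha)(c)=\alpha(\tau_{s_i^{-1}}\cdot c)$ with $\tau_{s_i^{-1}}=\tau_{\omega_{-1}}\tau_{s_i}$ read off from Proposition~\ref{prop:theformulas} together with \eqref{f:leftomega}/\eqref{f:lefts2}, pair component-by-component via \eqref{f:pairing'}, and deduce the $s_1$-formulas from the $s_0$-formulas by $\Gamma_\varpi$. The paper applies the $\tau_{\omega_{-1}}$-twist to the resulting triple at the very end (via \eqref{f:lefts2'}) rather than to the input $c$, but this is the same calculation in a slightly different order.
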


\begin{corollary}\label{coro:zetaonEd-1}
Let $w\in{\widetilde W}$, $\omega\in \Omega$ and $\alpha=(\alpha^-, \alpha^0, \alpha^+)_w\in (h^1(w))^\vee$ seen as an element of $E^{2}$.\\
\begin{equation*}
\zeta\cdot (\alpha^-,0,\alpha^+)_\omega =
(\alpha^-, 0,0)_{s_0s_1\omega}+ (0,0,\alpha^+)_{s_1 s_0\omega} +
 e_1\cdot (-\alpha^+,0,0)_{s_0\omega}+e_1 \cdot(0,0,-\alpha^-)_{s_1\omega} + e_1\cdot (\alpha^-,0,\alpha^+)_\omega .
\end{equation*}
\begin{equation*}
\zeta\cdot  (\alpha^-,\alpha^0,\alpha^+)_w =
  \begin{cases}
 (\alpha^-,0,0)_{s_0s_1w}+ (0,  0,\alpha^+)_{s_1s_0w}+e_1\cdot (-\alpha^+,  0,0)_{s_0w}
\cr\quad\quad\quad+e_{\id^{-1}}\cdot ( 0,0,-2\iota(\alpha^0))_{s_1w} +e_{\id^{-2}}\cdot ( 0, 0,-\alpha^+)_{s_1w}
  &  \text{ if  $w\in s_0\Omega$},\cr
  (\alpha^-,  0,0)_{s_0s_1w}
+(0,0,\alpha^+)_{s_1s_0w}+ e_1\cdot (0,  0,-\alpha^-)_{s_1w}\cr\quad\quad\quad+e_{\id}\cdot (2\iota(\alpha^0), 0,0)_{s_0w}+e_{\id^2}\cdot ( -\alpha^-, 0, 0,)_{s_0w}
   & \text{ if  $w\in s_1\Omega $}
 \end{cases}
\end{equation*}
\begin{equation*}
\zeta \cdot(\alpha^-,\alpha^0,\alpha^+)_w =
\begin{cases}
 (\alpha^-,0,0)_{s_0s_1w} +  (0,  0,\alpha^+)_{s_1s_0w}  + e_{\id^{-1}}\cdot (0,0,-2\iota(\alpha^0))_{s_1w} & \cr\quad\quad + e_{\id^{-1}} \cdot (0,- \iota^{-1}(\alpha^+),2\iota(\alpha^0))_{s_0w}
 + e_{\id^{-2}} \cdot ( 0, 0,-\alpha^+)_{s_0w} & \text{if  $w\in{\widetilde W}^1$, $\ell(w)=2$},\cr (0,0,\alpha^+)_{s_1s_0w}+ (\alpha^-,  0,0)_{s_0s_1w}+
e_{\id}\cdot (2\iota(\alpha^0),0,0)_{s_0w}
&\cr\quad\quad+e_{\id}\cdot ( -2\iota(\alpha^0),\iota^{-1}(\alpha^-),0)_{s_1w}  +e_{\id^2} \cdot (- \alpha^-, 0,0)_{s_1w}
& \text{if $w\in{\widetilde W}^0$, $\ell(w)=2$,}
  \end{cases}
\end{equation*}
\begin{equation*}  \zeta \cdot (\alpha^-,\alpha^0,\alpha^+)_w =
\begin{cases}
 (\alpha^-,0,0)_{s_0s_1w}+(0,  \alpha^0,\alpha^+)_{s_1s_0w} &\cr\quad\quad
  + e_{\id^{-1}}\cdot (0,0,-2\iota(\alpha^0))_{s_1w}+
e_{\id^{-1}}\cdot (0,0,2\iota(\alpha^0))_{s_0w}
  & \text{ if  $w\in{\widetilde W}^1$, $\ell(w)\geq 3$},\cr
  (0,0,\alpha^+)_{s_1s_0w} +(\alpha^-,   \alpha_0,0)_{s_0s_1w}
  &\cr\quad\quad
  +
  e_{\id}\cdot (2\iota(\alpha^0),0,0)_{s_0w}+ e_{\id}\cdot (-2\iota(\alpha^0),0,0)_{s_1w}
& \text{if  $w\in{\widetilde W}^0$, $\ell(w)\geq 3$.}
\end{cases}
\end{equation*}
\end{corollary}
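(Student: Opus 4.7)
The plan is to compute $\zeta \cdot \alpha$ by applying the formulas of Proposition~\ref{prop:theformulas'} together with \eqref{f:leftomegaE2} (and the action of the idempotents $e_\lambda$ via \eqref{f:condeven'}--\eqref{f:condodd'}) to one of the two expressions
\[
  \zeta = (\tau_{s_0}+e_1)(\tau_{s_1}+e_1) + \tau_{s_1}\tau_{s_0} = (\tau_{s_1}+e_1)(\tau_{s_0}+e_1) + \tau_{s_0}\tau_{s_1}
\]
from \eqref{f:zetadefi}. For each of the cases in the corollary one chooses the decomposition of $\zeta$ whose rightmost factor $\tau_{s_\epsilon}$ satisfies $\ell(s_\epsilon w) = \ell(w)+1$, so that the initial application of Proposition~\ref{prop:theformulas'} lands in the ``good length'' case $(\alpha^-,\alpha^0,\alpha^+)_w \mapsto (0,0,-\alpha^-)_{s_1 w}$ or $(-\alpha^+,0,0)_{s_0 w}$, with no idempotent corrections.

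More concretely, I would organize the computation as follows. First, for $w \in \Omega$ one uses both decompositions of $\zeta$ to separately compute $(\tau_{s_1}+e_1)(\tau_{s_0}+e_1)\cdot\alpha + \tau_{s_0}\tau_{s_1}\cdot\alpha$; the second application of $\tau_{s_\epsilon}$ then produces an element in $\widetilde W^{1-\epsilon}$ of length $1$, so the small-length subcase of Proposition~\ref{prop:theformulas'} intervenes and accounts for the $e_{\id^{\pm 2}}$ terms in the stated formula. Second, for $w \in s_0\Omega$ or $s_1\Omega$ one applies $\tau_{s_{1-\epsilon}}$ first (with $\epsilon$ chosen so that $\ell(s_{1-\epsilon}w) = \ell(w)+1$) and then $\tau_{s_\epsilon}$; again the second application falls in a length-$1$ or length-$2$ case and produces the idempotent corrections. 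Third, for $w$ with $\ell(w) = 2$ and $w \in \widetilde W^\epsilon$, one applies $\tau_{s_\epsilon}$ first (good length), followed by $\tau_{s_{1-\epsilon}}$ which, applied to an element of $h^{d-1}(s_\epsilon w)$ of length $3$ in $\widetilde W^{1-\epsilon}$, lies in the range $\ell\ge 2$; the additional $e_{\id^{\mp 2}}$ contributions come from the remaining $(\tau_{s_0}+e_1)(\tau_{s_1}+e_1)$ factor acting on $\alpha$ itself. Finally, for $\ell(w) \ge 3$ both applications of $\tau_{s_0}, \tau_{s_1}$ fall in the ``large length'' case of Proposition~\ref{prop:theformulas'}, and all idempotent terms are absorbed via \eqref{f:lefts2'} and the cancellation $(\tau_{s_\epsilon}+e_1)\tau_{s_\epsilon} = 0$ (applied to the relevant component), which produces only the two ``main terms'' plus $e_{\id^{\pm 1}}$-terms of the form $(0,0,\mp 2\iota(\alpha^0))$ or $(\pm 2\iota(\alpha^0),0,0)$.

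Throughout, the arithmetic identities $\tau_{s_\epsilon}^2 = -e_1\tau_{s_\epsilon}$ (cf.\ \eqref{f:quad}), $\tau_\omega\tau_{s_\epsilon} = \tau_{s_\epsilon}\tau_{\omega^{-1}}$, and the absorption rules $e_1\cdot (c^\pm,0,0)_w = -\sum_{u\in\mathbb F_p^\times} \tau_{\omega_u}\cdot(c^\pm,0,0)_w$ combined with \eqref{f:leftomegaE2} will allow to collect the scattered contributions into the $e_{\id^m}$-packaged summands displayed in the statement. As a sanity check one can verify the identities against the $E^1$ counterpart (Corollary~\ref{coro:form-zeta}) by using the duality isomorphism $\Delta^{d-1}$ of \eqref{f:dual} and the pairing \eqref{f:pairing'}: applying $\anti$ to Corollary~\ref{coro:form-zeta} (via Lemmas~\ref{lemma:antievenodd} and~\ref{lemma:antialpha}) and noting $\anti(\zeta)=\zeta$ together with Remark~\ref{rema:centralizeEd} should, after transport, yield precisely the formulas above. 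The main obstacle is entirely bookkeeping: there are many subcases and the small-length cases ($\ell(w) \in \{0,1,2\}$) have to be treated separately and carefully, because the additional $e_{\id^{\pm 2}}$ corrections appearing in Proposition~\ref{prop:theformulas'} at low length must be tracked through the double application of $\tau_{s_0},\tau_{s_1}$. No conceptual difficulty is expected; the whole argument is mechanical once the right decomposition of $\zeta$ has been fixed for each case.
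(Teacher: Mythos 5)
Your overall strategy — expand $\zeta$ via \eqref{f:zetadefi} (equivalently, write $\zeta = \tau_{s_0}\tau_{s_1} + \tau_{s_1}\tau_{s_0} + e_1\tau_{s_0} + e_1\tau_{s_1} + e_1$, using that $e_1$ commutes with $\tau_{s_i}$) and apply Proposition~\ref{prop:theformulas'} together with \eqref{f:leftomegaE2} term by term, tracking the low-length idempotent corrections — is exactly what the paper intends, since the corollary is stated without a separate proof as an immediate consequence of those formulas. I spot-checked your sketch on the case $w \in s_0\Omega$: the terms $e_1\cdot(\tau_{s_0}\cdot\alpha)$ and $e_1\cdot\alpha$ combine to leave only $e_1\cdot(-\alpha^+,0,0)_{s_0 w}$, while $\tau_{s_1}\cdot(\tau_{s_0}\cdot\alpha)$ (after pushing the $e_\lambda$ through $\tau_{s_1}$ via $\tau_{s_1}e_\lambda = e_{\lambda^{-1}}\tau_{s_1}$) yields the $e_{\id^{-1}}$- and $e_{\id^{-2}}$-terms together with an $e_1$-piece at $s_1 w$ that cancels against $e_1\cdot(\tau_{s_1}\cdot\alpha)$; the total agrees with the statement. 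So the mechanics do close up.

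One framing point to correct: you cannot ``choose the decomposition so that the initial application lands in the good-length case,'' because both decompositions in \eqref{f:zetadefi} contain $\tau_{s_0}$ \emph{and} $\tau_{s_1}$ as first-acting factors. For any $w$ with $\ell(w)\geq 1$ exactly one of $\tau_{s_0}\cdot\alpha$, $\tau_{s_1}\cdot\alpha$ is a length-decreasing application, and that bad-length term is unavoidable — it is precisely the source of the $e_{\id^{\pm 1}}, e_{\id^{\pm 2}}$ corrections. Your subsequent bullets make it clear you expected those corrections, so this is a phrasing slip, not a gap, but the opening sentence of your plan as written is misleading.

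Finally, your ``sanity check'' via the duality $\Delta^{d-1}: E^2 \cong {}^\anti((E^1)^{\vee,f})^\anti$ is more than a sanity check: since $\anti(\zeta)=\zeta$, the left action of $\zeta$ on $E^2$ is literally $\alpha \mapsto \alpha(\zeta\cdot(-))$ on $(E^1)^{\vee,f}$, so Corollary~\ref{coro:zetaonEd-1} is the ``transpose'' of Corollary~\ref{coro:form-zeta} with respect to the pairing \eqref{f:pairing'}. This is exactly how Proposition~\ref{prop:theformulas'} itself is derived from Proposition~\ref{prop:theformulas} in \S\ref{proof:theformulas'}, and it gives a self-contained alternate route that avoids re-expanding $\zeta$; the trade-off is that one must carefully invert the supports (the $h^1(w)$-component of $\zeta\cdot(c)_{w'}$ has $w'$ ranging over $s_0 s_1 w$, $s_1 s_0 w$, $\omega s_0 w$, $\omega s_1 w$, with the length-subcases of Corollary~\ref{coro:form-zeta} selected by $\ell(w')$). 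Either route is legitimate; the direct expansion you chose is the one implicitly taken by the paper.
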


\section{\label{sec:zetatorsion}$k[\zeta]$-torsion in $E^*$  when $G={\rm SL}_2(\mathbb Q_p)$, $p\neq 2,3$}

In this whole section $G={\rm SL}_2(\mathfrak F)$.\\\\
\textbf{A)} Without any assumption on $\mathfrak F$, we know that $E^0$ is a free left (resp. right) $k[\zeta]$-module (Lemma \ref{freeness}). Therefore it is $k[\zeta]$-torsionfree on the left (resp. right).\\\\
\textbf{B)} Here we suppose that  the group $I$ is  torsionfree and its dimension as a  Poincar\'e group is $d$. We study the $k[\zeta]$-torsion in $E^d$. We know by Remark \ref{rema:centralizeEd} that the left and right actions of  $\zeta$ on $E^d$ coincide.
Recall that we have the following isomorphisms of  $H$-bimodules
\begin{equation}\label{f:keydecompEd}
E^d\cong\ker(\EuScript S^d)\oplus \chi_{triv}
\end{equation}
and by Proposition \ref {prop:kerS-injhull}, we have $\ker(\trace^d)\cong \bigcup_m (H/\zeta^m H)^\vee$ as $H$-bimodules.
Therefore $E^d$ is the direct sum of its one-dimensional subspace of $(\zeta-1)$-torsion  and of its subspace
 $\ker(\trace^d)$ of $\zeta$-torsion.
  This applies in particular when $G={\rm SL}_2(\mathbb Q_p)$, $p\neq 2,3$ and $d=3$. \\\\
\textbf{C)}   We study the  $k[\zeta]$-torsion  in $E^1$.

\begin{lemma}\label{lemma:notorsion}
 Suppose that $G={\rm SL}_2(\mathfrak F)$.
\begin{itemize}
\item[i]  Suppose that $p\neq 2$. For any $P \in k[X]$ such that $P(0) \neq 0$ there is no left (resp.\ right) $P(\zeta)$-torsion in $E^1$.
\item[ii.]  If $\mathfrak F=\mathbb Q_p$, given any $0 \neq P \in k[X]$, there is no left (resp.\ right) $P(\zeta)$-torsion in $E^1$.
\end{itemize}
\end{lemma}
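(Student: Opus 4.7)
The plan is to derive the lemma from the long exact sequence of $I$-cohomology associated to the short exact sequence of smooth $G$-representations
\[
0 \longrightarrow \X \xrightarrow{\; R_{P(\zeta)} \;} \X \longrightarrow \X/\X P(\zeta) \longrightarrow 0.
\]
The anti-involution $\anti$ fixes $\zeta$ and swaps the two $H$-actions on $E^1$, so it suffices to consider the right-action statement. For the displayed sequence to actually be short exact, one needs $R_{P(\zeta)}$ to be injective on $\X$; $P(\zeta) \neq 0$ is a non-zero-divisor in $H$ by Lemma \ref{freeness}, and the injectivity on $\X$ will follow from the appropriate flatness of $\X$ as a right $H$-module (provided in case (ii) by the equivalence of categories of \S\ref{subsubsec:eqcat}, and in case (i) by its localized $H_\zeta$-analog, as explained below).

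Applying $H^\ast(I,-)$ and using $\X^I = H$, the initial portion of the long exact sequence reads
\[
0 \longrightarrow H \xrightarrow{R_{P(\zeta)}} H \longrightarrow (\X/\X P(\zeta))^I \longrightarrow E^1 \xrightarrow{R_{P(\zeta)}} E^1,
\]
with the leftmost arrow injective by Lemma \ref{freeness}. Exactness therefore identifies the right $P(\zeta)$-torsion in $E^1$ with the cokernel of the natural inclusion $H/HP(\zeta) \hookrightarrow (\X/\X P(\zeta))^I$. It thus suffices to show that this inclusion is an equality, i.e.\ that $H^0\bigl(I,\X \otimes_H (H/HP(\zeta))\bigr) = H/HP(\zeta)$.

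For (ii), the equivalence of categories recalled in \S\ref{subsubsec:eqcat} directly gives $H^0(I, \X \otimes_H M) = M$ for every left $H$-module $M$, and specializing to $M = H/HP(\zeta)$ yields the identification. For (i), the hypothesis $P(0) \neq 0$ enters as follows: writing $P(X) = P(0) + X Q(X)$, the relation $P(\zeta) \equiv 0 \pmod{HP(\zeta)}$ exhibits $-P(0)^{-1} Q(\zeta)$ as an inverse for $\zeta$, so $\zeta$ acts invertibly on $H/HP(\zeta)$ and this module is in fact an $H_\zeta$-module. One then invokes the $H_\zeta$-version of the above equivalence from \cite[\S 3]{embed} (the same machinery that underlies Fact \ref{fact:noclassif}) to conclude $(\X/\X P(\zeta))^I = H/HP(\zeta)$. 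The main obstacle, as I see it, is pinning down in case (i) the precise $H_\zeta$-level result from \cite{embed} that provides both the flatness used in the first paragraph and the identification of $I$-invariants, in the generality $p \neq 2$ (as opposed to $\mathfrak F = \mathbb Q_p$), since the full equivalence of \S\ref{subsubsec:eqcat} is available only over $\mathbb Q_p$.
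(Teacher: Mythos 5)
Your argument takes essentially the same route as the paper's proof: reduce to showing $(\X/\X P(\zeta))^I = H/HP(\zeta)$ by means of the long exact $I$-cohomology sequence (the paper phrases this as ``assuming the identification, the sequence $0 \to E^1 \xrightarrow{\cdot P(\zeta)} E^1 \to \cdots$ is exact,'' which is your observation that the connecting map from $(\X/\X P(\zeta))^I$ into $E^1$ vanishes), then dispatch the left-action statement via $\anti$, and finally establish the identification of $I$-invariants separately in the two cases --- via the equivalence of categories in case (ii), and via the $H_\zeta$-theory of \cite{embed} (namely Thm.\ 3.33 there) in case (i), after observing, exactly as you do, that $P(0)\neq 0$ makes $H/HP(\zeta)$ an $H_\zeta$-module.

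Two small points. First, the ``obstacle'' you flag in case (i) is real but has a short resolution that you are missing: \cite{embed} Thm.\ 3.33 is stated under the hypothesis $\mathbb{F}_q \subseteq k$, and the paper removes this restriction by a base-change reduction, using that $(V\otimes_k k')^I = V^I \otimes_k k'$ for any field extension $k'/k$ and any $V \in \Mod(G)$; one may therefore enlarge $k$ before invoking the theorem. Without this step the citation does not literally apply, so this is a genuine (if minor) gap. Second, your worry about the injectivity of $R_{P(\zeta)}$ on $\X$, which you propose to resolve via flatness, can be handled more cheaply and without any equivalence of categories: the kernel $K$ of $R_{P(\zeta)}\colon \X \to \X$ is a $G$-subrepresentation, so if $K \neq 0$ then $K^I \neq 0$ because $I$ is pro-$p$; but $K^I \subseteq \X^I = H$ is annihilated by right multiplication by $P(\zeta)$, contradicting the freeness of $H$ over $k[\zeta]$ (Lemma \ref{freeness}). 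This elementary argument works in both cases and avoids the circularity you were uneasy about.
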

\begin{proof}
Let $0 \neq P \in k[X]$. Suppose that we know that $(\X/ \X P(\zeta))^I\cong H/ H P(\zeta)$. Then the exact sequence of $(G,H)$-bimodules
$0\rightarrow \X \xrightarrow{\cdot P(\zeta) } \X \rightarrow \X/\X P(\zeta) \rightarrow 0$ induces the long exact sequence of $H$-bimodules
$$
0 \rightarrow E^1 \xrightarrow{\cdot P(\zeta) } E^1 \rightarrow H^1(I, \X/\X P (\zeta)) \rightarrow E^2 \rightarrow ...
$$
In particular, there is no right $P(\zeta)$-torsion in $E^1$. Since  $P(\zeta) \cdot c=\anti(\anti(c)\cdot P(\zeta)$ for any $c\in E^*$,  there is no left $P(\zeta)$-torsion in $E^1$ either.

i. For any field extension $k'/k$ and any $V \in \Mod(G)$ we have $(V \otimes_k k')^I = V^I \otimes_k k'$. Therefore we may assume that $\mathbb F _q\subseteq  k$ (and that $p \neq 2$). Suppose that $P(0) \neq 0$. Then $H/H P(\zeta)$ is an $H_\zeta$-module. Hence by \cite{embed} Thm. 3.33  we know that $(\X/\X P(\zeta))^I \cong H/HP(\zeta)$.

ii.  Suppose  $\mathfrak F=\mathbb Q_p$. Then $(\X/\X P (\zeta))^I\cong H/ H P(\zeta)$ (see \S\ref{subsubsec:eqcat}).
\end{proof}

\noindent\textbf{D)}  Here we suppose that  the group $I$ is  torsionfree and its dimension as a  Poincar\'e group is $d$.
We study the  $k[\zeta]$-torsion subspace in $E^{d-1}$.  \\ Let $i\in\{0, \dots, d\}$ and $\ell\geq 1$.
Recall that the left  action of $\zeta$ on ${}^\anti((E^i)^{\vee, f})^{\anti}\cong E^{d-i}$ is given by
$(\zeta, \varphi)\mapsto \varphi(\zeta\cdot _-)\: : \: E^i\rightarrow k.$
In particular,
$\coker( \zeta^\ell\cdot : E^{i}\rightarrow E^{i})=\{0\}$ implies $\ker(\zeta^\ell \cdot  : E^{d-i}\rightarrow E^{d-i})=\{0\}$. We explore the converse implication in
the lemma below  where we refer to the decreasing filtration $(F^m E^i)_{m\geq 0}$ introduced in \S\ref{subsubsec:fil}.

\begin{lemma}\label{lemma:obvious}
Suppose that  $G= {\rm SL}_2(\mathfrak F)$ and $I$ is a Poincar\'e group of dimension $d$.
Let $i\in\{0,\dots, d\}$. Suppose that there is $m\geq 0$ such that $ \zeta^\ell\cdot E^i \supseteq F^m E^i$, then we have an isomorphism of $H$-bimodules:
$$\ker(\zeta^\ell\cdot : E^{d-i}\rightarrow E^{d-i})\cong {}^\anti((E^i/\zeta^\ell\cdot E^i)^\vee)^\anti\ .$$ In particular,   $\ker(\zeta^\ell\cdot : E^{d-i}\rightarrow E^{d-i})=0$ \textrm{ if and only if }
$\coker(\zeta^\ell\cdot : E^{i}\rightarrow E^{i})=0.$
The same statements are valid for the right action of $\zeta^\ell$.
\end{lemma}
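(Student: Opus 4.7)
The plan is to read off the statement from the duality isomorphism $\Delta^{d-i}: E^{d-i} \xrightarrow{\cong} {}^\anti((E^i)^{\vee,f})^\anti$ of \eqref{f:dual}, after translating left multiplication by $\zeta$ on $E^{d-i}$ into a condition on linear forms on $E^i$.

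First I would observe that since $\anti(\zeta)=\zeta$ (see \S\ref{sec:zeta}), the formula \eqref{f:leftonEd-1} specializes, for $\alpha \in {}^\anti((E^i)^{\vee,f})^\anti$ and $c\in E^i$, to $(\zeta^\ell \cdot \alpha)(c) = \alpha(\zeta^\ell c)$. Consequently, an element $\alpha$ lies in $\ker(\zeta^\ell\cdot : E^{d-i}\to E^{d-i})$ if and only if $\alpha$ vanishes on $\zeta^\ell \cdot E^i$, i.e.\ factors through the quotient $E^i/\zeta^\ell E^i$.

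Next, the key point is to see that the hypothesis $\zeta^\ell \cdot E^i \supseteq F^m E^i$ ensures that \emph{every} linear form $\varphi \in (E^i)^\vee$ vanishing on $\zeta^\ell E^i$ automatically lies in the finite dual $(E^i)^{\vee,f}$. Indeed, such a $\varphi$ vanishes on $F^m E^i$, hence is supported on the quotient $E^i/F^mE^i = \bigoplus_{\ell(w) < m} h^i(w)$, which is a \emph{finite direct sum} of finite-dimensional spaces (each $h^i(w) = H^i(I_w,k)$ is finite-dimensional since $I_w$ is Poincar\'e of dimension $d$). Thus $\varphi$ has finite support in the decomposition $E^i = \oplus_w h^i(w)$. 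In particular, $E^i/\zeta^\ell E^i$ itself is finite dimensional, so its finite dual coincides with its full dual, and we obtain a natural identification $\ker(\zeta^\ell\cdot : E^{d-i}\to E^{d-i}) \cong (E^i/\zeta^\ell E^i)^\vee$.

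Finally I would check that this identification respects the $H$-bimodule structures, which comes for free since $\Delta^{d-i}$ already incorporates the $\anti$-twists (using $\anti(\zeta)=\zeta$ to see that the two-sided ideal $\zeta^\ell \cdot E^i$ is $\anti$-stable, so that the twist restricts to the quotient). The ``in particular'' statement is then immediate: $\ker(\zeta^\ell\cdot : E^{d-i}\to E^{d-i})$ vanishes iff $(E^i/\zeta^\ell E^i)^\vee$ does, iff $\coker(\zeta^\ell\cdot : E^i\to E^i)=E^i/\zeta^\ell E^i$ vanishes. The statement for the right action is symmetric, using the same calculation with the right-hand variant of \eqref{f:leftonEd-1}. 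There is no real obstacle here; the only point requiring attention is verifying that the hypothesis forces the relevant dual to be \emph{finite}, which is exactly the role of $F^m E^i$.
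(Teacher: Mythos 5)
Your proof is correct and follows essentially the same route as the paper's: express the kernel of left multiplication by $\zeta^\ell$ on ${}^\anti((E^i)^{\vee,f})^\anti$ as the space of linear forms vanishing on $\zeta^\ell \cdot E^i$, and use the hypothesis $\zeta^\ell\cdot E^i \supseteq F^m E^i$ to conclude that every such form in the \emph{full} dual $(E^i)^\vee$ automatically has finite support, hence lies in $(E^i)^{\vee,f}$. The only minor point worth noting is that the bimodule compatibility does not rest on $\anti$-stability of $\zeta^\ell\cdot E^i$ (that submodule need not be $\anti$-stable); it follows simply from $\zeta^\ell\cdot E^i$ being a sub-$H$-bimodule of $E^i$, which holds because $\zeta$ is central.
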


\begin{proof}
The kernel of  the
left  action of $\zeta^\ell$ on ${}^\anti((E^i)^{\vee,f})^{\anti}$ is the space of all $\varphi\in (E^i)^{\vee,f}$ which are trivial on $\zeta^\ell\cdot E^i$. Suppose that there is $m\geq 0$ such that $\zeta^\ell\cdot E^i\supseteq F^m E^i$. Then any
$\varphi\in (E^i)^{\vee}$ which is trivial on $\zeta^\ell\cdot E^i$ lies in $(E^i)^{\vee,f}$. Therefore, the kernel of  the
right  action of $\zeta^\ell$ on ${}^\anti((E^i)^{\vee,f})^{\anti}$ is the space of all $\varphi\in (E^i)^{\vee}$ which are trivial on $\zeta^\ell\cdot E^i$, namely
$\ker(\cdot \zeta^\ell : {}^\anti((E^i)^{\vee,f})^{\anti} \rightarrow {}^\anti((E^i)^{\vee,f})^{\anti})=  {}^\anti((E^i/\zeta^\ell\cdot E^i)^\vee)^{\anti}\ .$
\end{proof}

\begin{remark} \label{rema:Edtorsion}
It is easy to check that $\zeta^\ell\cdot E^0\supset  \zeta^\ell\cdot F^1E^0= F^{2\ell+1} E^0$. So we recover $\ker(\zeta^\ell \cdot : E^{d}\rightarrow E^{d})\cong {}^\anti((H/\zeta^\ell H )^\vee)^\anti\ $ which  is isomorphic to
 $(H/\zeta^\ell H )^\vee$.
(compare with \textbf{B)} above).
\end{remark}

Using Corollary \ref{coro:zetaE1F}  we obtain immediately:

\begin{corollary} \label{cor:torsionE2}
Suppose that $G={\rm SL}_2(\mathbb Q_p)$, $p\neq 2,3$. We have an isomorphism of $H$-bimodules:
$$\ker(\zeta \cdot  : E^{2}\rightarrow E^{2})\cong {}^\anti((E^1/\zeta\cdot E^1)^\vee)^\anti \ .$$
\end{corollary}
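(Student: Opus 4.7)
The plan is to observe that this corollary is essentially an immediate combination of two results already established in the paper: Lemma \ref{lemma:obvious} and Corollary \ref{coro:zetaE1F}. First I would note that since $G={\rm SL}_2(\mathbb Q_p)$ with $p \geq 5$, the pro-$p$ Iwahori $I$ is torsionfree and a Poincaré group of dimension $d=3$, so the general framework of Lemma \ref{lemma:obvious} applies and $d-i = 2$ when we take $i=1$.

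The key input is Corollary \ref{coro:zetaE1F}, which provides precisely the hypothesis $\zeta \cdot E^1 \supseteq F^3 E^1$ required by Lemma \ref{lemma:obvious} with $i=1$, $\ell=1$, and $m=3$. Given this, Lemma \ref{lemma:obvious} immediately furnishes the isomorphism of $H$-bimodules
\begin{equation*}
  \ker\bigl(\zeta \cdot : E^{2}\rightarrow E^{2}\bigr)\cong {}^\anti\bigl((E^1/\zeta\cdot E^1)^\vee\bigr)^\anti \ .
\end{equation*}

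Since both inputs are already proved in the paper, there is no real obstacle: the proof is essentially one line invoking the two previous results. The only thing worth spelling out explicitly is that the assumption of Lemma \ref{lemma:obvious} — namely that $\zeta \cdot E^i$ contains some step $F^m E^i$ of the decreasing filtration — is exactly what Corollary \ref{coro:zetaE1F} delivers in degree $i=1$. The statement and parallel statement for the right action also follow since Lemma \ref{lemma:obvious} covers both sides symmetrically.
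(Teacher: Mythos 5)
Your proof is correct and matches the paper's own argument exactly: the paper also deduces the corollary "immediately" from Corollary \ref{coro:zetaE1F} combined with Lemma \ref{lemma:obvious}, with $i=1$, $\ell=1$, and $m=3$. Nothing further is needed.
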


\begin{remark}
We will see in Proposition  \ref{prop:structurekerg2} that this space is nontrivial.
\end{remark}


\begin{lemma}\label{lemma:notorsion2}
  Suppose that $G={\rm SL}_2(\mathbb Q_p)$, $p\neq 2,3$  and  $\pi=p$. There is no left (resp. right) $P(\zeta)$-torsion in $E^2$ for any $P \in k[X]$ with $P(0) \neq 0$.
  \end{lemma}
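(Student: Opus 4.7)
The plan is to mimic the proof of Lemma \ref{lemma:notorsion}: consider the $G$-equivariant short exact sequence
\begin{equation*}
0 \to \X \xrightarrow{\cdot P(\zeta)} \X \to \X/\X P(\zeta) \to 0,
\end{equation*}
which is exact because right multiplication by $P(\zeta)$ is injective on $\X$ (combining the flatness of $\X$ over $H$ from the equivalence in \S\ref{subsubsec:eqcat} with the $k[\zeta]$-freeness of $H$ given by Lemma \ref{freeness}). The resulting long exact sequence in $I$-cohomology contains
\begin{equation*}
\cdots \to E^1 \xrightarrow{\cdot P(\zeta)} E^1 \to H^1(I, \X/\X P(\zeta)) \to E^2 \xrightarrow{\cdot P(\zeta)} E^2 \to H^2(I, \X/\X P(\zeta)) \to E^3 \xrightarrow{\cdot P(\zeta)} E^3 \to H^3(I, \X/\X P(\zeta)) \to 0,
\end{equation*}
so the absence of right $P(\zeta)$-torsion in $E^2$ is equivalent to the vanishing of the boundary $H^1(I, \X/\X P(\zeta)) \to E^2$, i.e.\ to the surjectivity of $E^1 \to H^1(I, \X/\X P(\zeta))$. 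The left-torsion statement will then follow from the right one via the anti-involution $\anti$, using $\anti(P(\zeta))=P(\zeta)$.

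I would next reduce to the case $P(\zeta) = \zeta - \lambda$ with $\lambda \neq 0$. Extension of scalars to $\bar k$ preserves both the vanishing of torsion and $I$-cohomology on admissible objects, and torsion by a product of coprime polynomials in $\zeta$ decomposes as the sum of the individual torsions, so one may assume $k$ is algebraically closed and $P(\zeta)$ linear with nonzero root. In the subcase $\lambda \neq 0, 1$, Corollary \ref{cor:xinonzerodiv} yields $H^3(I,\X/\X(\zeta-\lambda)) = 0$, while Proposition \ref{prop:kerS-injhull} together with the decomposition $E^d \cong \ker(\trace^d) \oplus \chi_{triv}$ implies that $\zeta - \lambda$ acts invertibly on $E^3$: as $1 - \lambda \neq 0$ on $\chi_{triv}$, and as $-\lambda$ plus a locally nilpotent operator (hence a unit) on $\bigcup_m (H/\zeta^m H)^\vee \cong \ker(\trace^d)$. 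This collapses the right end of the LES and leaves me to control the boundary $H^1 \to E^2$.

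The core of the argument, which I expect to be the main obstacle, is establishing this surjectivity $E^1 \twoheadrightarrow H^1(I, \X/\X(\zeta-\lambda))$. Via the duality $\Delta^{d-1} : E^2 \xrightarrow{\sim} {}^\anti((E^1)^{\vee, f})^\anti$ of Section \ref{subsec:triples2}, the vanishing of $\ker(\cdot(\zeta-\lambda): E^2 \to E^2)$ translates into the statement that any finitely supported $\phi \in \bigoplus_w h^1(w)^\vee$ vanishing on $E^1 \cdot (\zeta-\lambda)$ must be zero. Since such a $\phi$ automatically vanishes on $F^n E^1$ for $n$ larger than the maximum length in its support, this reduces to the filtration-density assertion
\begin{equation*}
E^1 \cdot (\zeta - \lambda) + F^n E^1 = E^1 \quad \text{for every } n \geq 0.
\end{equation*}
I would establish this density by iterating the Bezout identity $1 = \lambda^{-1}\zeta - \lambda^{-1}(\zeta - \lambda)$ in $k[\zeta]$ together with the inclusion $E^1 \cdot \zeta \supseteq F^3 E^1$ (the right version of Corollary \ref{coro:zetaE1F}, obtained via $\anti$), using the explicit right-action formulas derived from Corollary \ref{coro:form-zeta} to improve the length control at each step. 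The borderline case $\lambda = 1$ requires separate attention since $\zeta - 1$ annihilates $\chi_{triv} \subseteq E^3$ and Corollary \ref{cor:xinonzerodiv} does not directly apply; here Fact \ref{fact:noclassif} together with the known identification $H^3(I,k_{triv}) \cong \chi_{triv}$ from Proposition \ref{prop:HdVzero} should let me track the extra contribution and combine it with the above density argument.
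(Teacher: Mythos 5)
Your two framing reductions are the same as the paper's: after a base change you split $P$ into linear factors $\zeta - \lambda$ with $\lambda \in k^\times$, and the duality $\Delta^{d-1}$ turns the vanishing of the $(\zeta-\lambda)$-torsion in $E^2$ into the density statement $(\zeta - \lambda) \cdot E^1 + F^m E^1 = E^1$ for all $m$ (equivalently $E^1 \cdot (\zeta - \lambda) + F^m E^1 = E^1$ via $\anti$). But the Bezout iteration that you propose as the engine for this density is the whole content of the lemma, and it fails as stated. The iteration $x \equiv \lambda^{-n} \zeta^n \cdot x \bmod (\zeta - \lambda)\cdot E^1$ proves density only if $\zeta$-multiplication eventually pushes every element into $F^m E^1$, i.e.\ raises the decreasing length filtration. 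The formulas in Corollary \ref{coro:form-zeta} show this fails at low lengths: for $\omega \in \Omega$, $\zeta \cdot (c^-,0,c^+)_\omega$ has the length-zero component $e_1 \cdot (c^-,0,c^+)_\omega$; for $w \in s_1 \Omega$, $\zeta \cdot (c^-,c^0,c^+)_w$ has the length-zero component $e_1 \cdot (-c^+,0,0)_{s_1 w}$. Since $e_1$ is idempotent and commutes with $\zeta$, these low-length components persist under every power $\zeta^n$ and never land in $F^m E^1$. The inclusion $E^1 \cdot \zeta \supseteq F^3 E^1$ you cite from Corollary \ref{coro:zetaE1F} is a surjectivity statement about the image of $\zeta$, not a filtration-raising statement, and does not repair this.

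The paper circumvents the problem by arguing component by component in the triple decomposition rather than on $x$ as a whole. Using Corollary \ref{coro:form-zeta} it first telescopes upward on the pieces that behave cleanly, showing that $h^1_-(\widetilde W^{0,\ell\geq 1})$, $h^1_0(\widetilde W^{\ell\geq 2})$ and the $\Gamma_\varpi$-conjugate pieces lie in $(\zeta - a)\cdot E^1 + F^m E^1$; it then bootstraps downward, recovering the remaining elements of lengths $1$ and $0$ by exhibiting each as the low-length part of $(\zeta - a) \cdot$ applied to an explicit higher-length element, modulo what is already established. Finally, your LES surjectivity route and the separate treatment of $\lambda = 1$ are detours: once you are arguing via the duality $\Delta^{d-1}$, the density statement does not distinguish $\lambda = 1$ from other nonzero $\lambda$, and nothing about $E^3$, Corollary \ref{cor:xinonzerodiv}, Fact \ref{fact:noclassif} or Proposition \ref{prop:HdVzero} is needed.
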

\begin{proof}
 We may prove the assertion after a base extension of $k$. Hence it suffices to consider the case $P(X) = X - a$ for some $a \in k^\times$.
As in the  proof of Lemma \ref{lemma:notorsion}, it is enough to prove that there is no left $(\zeta-a)$-torsion in $E^2$
or equivalently
that there is no right $(\zeta-a)$-torsion $(E^1)^{\vee,f}$ (see  \eqref{f:dual}).
We prove that  for a given $m\geq 1$, we have
$$(\zeta-a)\cdot E^1+ F^m E^1= E^1.$$
By our assumption that $\pi=p$,  we may use the formulas of Cor.\ \ref{coro:form-zeta}.
\begin{itemize}
\item  If  $w\in \widetilde W^0$, $\ell(w)\geq 2$, we have $(\zeta-a)\cdot  (c^-,c^0,0)_w= (c^-, c^0,0)_{s_1s_0w}-a\cdot  (c^-,c^0,0)_w$
and  if $\ell(w)\geq 1$, we have $(\zeta-a)\cdot  (c^-,0,0)_w= (c^-, 0,0)_{s_1s_0w}-a\cdot  (c^-,0,0)_w$.\\
So by induction   $h^1_-(\widetilde W^{0,\ell\geq 1})+h^1_0(\widetilde W^{0,\ell\geq 2})\subseteq  (\zeta-a)\cdot E^1+ F^m E^1$. Using conjugation by $\varpi$, we  have proved  $h^1_-(\widetilde W^{0,\ell\geq 1})+ h^1_0(\widetilde W^{\ell\geq 2})+h^1_+(\widetilde W^{1,\ell\geq 1})\subseteq  (\zeta-a)\cdot E^1+ F^m E^1$.

\item   If $\ell(w)\geq 3 $
we have,
$(\zeta - a) \cdot (0,0,c^+)_w \in  (0,0,c^+)_{s_0 s_1w}- a(0,0,c^+)_w  +
h^1_0(\widetilde W^{\ell\geq 2})$ if $w\in \widetilde W^0$
therefore   $h^1_-(\widetilde W^{1, \ell\geq 1})+h^1_+(\widetilde W^{0, \ell\geq 1}) \subseteq  (\zeta-a)\cdot E^1+ F^m E^1$ by induction and conjugation by $\varpi$. \end{itemize}
So at this point we have  $h^1_0(\widetilde W^{\ell\geq 2})+h^1_\pm(\widetilde W^{\ell\geq 1})\subseteq  (\zeta-a)\cdot E^1+ F^m E^1$.
\begin{itemize}
\item  But  if $\ell(w)=1$ we have   $(\zeta -a) (0,c^0,0)_w\in
 - a(0,c^0,0)_w+ h^1_0(\widetilde W^{\ell\geq 2})+h^1_\pm(\widetilde W^{ \ell\geq 1})$  \\
   so $h^1_0(\widetilde W)+h^1_\pm(\widetilde W^{\ell\geq 1})\subseteq  (\zeta-a)\cdot E^1+ F^m E^1$.
\item Lastly,
$(c^-,0,c^+)_{\omega}\in (\zeta-a)\cdot (c^-,0,0)_{s_0s_1\omega}+(\zeta-a)\cdot (0,0,c^+)_{s_1s_0\omega}+ h^1_0(\widetilde W)+h^1_\pm(\widetilde W^{\ell\geq 1})$   for $\omega\in \Omega$. So  $h^1_0(\widetilde W)+h^1_\pm(\widetilde W)\subseteq  (\zeta-a)\cdot E^1+ F^m E^1$.
\end{itemize}
\end{proof}

\section{\label{sec:structure}Structure of $E^1$ and $E^2$ when $G={\rm SL}_2(\mathbb Q_p)$, $p\neq 2,3$}\label{sec:structure}

\subsection{Preliminaries}\label{subsec:fg}

We define the following endomorphisms of $H$-bimodules  of $E^*$:
$$
f:= \zeta\cdot \id_{E^*}\cdot\zeta -  \id_{E^*}:  c\mapsto \zeta \cdot c\cdot \zeta- c
$$
and
$$
g:= \zeta\cdot \id_{E^*} -  \id_{E^*}\cdot \zeta: c\mapsto \zeta \cdot c- c\cdot \zeta.
$$
We will restrict them to the graded pieces $E^i$ and  will then  use the notation $f_i$ and $g_i$. The following remarks are easy to check. Here $G={\rm SL}_2(\mathfrak F)$.

\begin{remark}\phantomsection\label{rema:calcufg}
\begin{itemize}
\item[i.] $f$ and $g$ commute. In fact,  $$f\circ g=(\zeta^2+1)\cdot \id_{E^*}\cdot \zeta-\zeta\cdot \id_{E^*}\cdot  (\zeta^2+1)= g\circ f.$$
\item[ii.] It is clear that the left (resp. right) action of $\zeta$ on $\ker(f)$ induces a bijective map. Hence $\ker(f)$ is naturally a $H_\zeta$-bimodule.
\item[iii.]  We have the following inclusions of subalgebras of $E^*$:
$$\ker(g)\subseteq \ker(f)+\ker(g)\subseteq E^*\ .$$
We have indeed $\ker(f)\cdot \ker(f)\subseteq\ker(g)$ as well as  $\ker(f)\cdot \ker(g)\subseteq \ker(f)$  and $\ker(g)\cdot \ker(f)\subseteq \ker(f)$.
\item[iv.] The spaces $\ker(f)$ and $\ker(g)$ are stable by conjugation by $\varpi$ (see \eqref{gammapi} and use that $\Gamma_\varpi(\zeta) = \zeta$).

\item[v.]  The spaces $\ker(f)$ and $\ker(g)$ are stable  by $\anti$  (use that $\anti(\zeta)=\zeta$).
\end{itemize}
\end{remark}

\begin{lemma} \label{lemma:cap}Suppose  $G={\rm SL _2}(\mathfrak F)$. We have
\begin{itemize}
\item[i.]   $\ker({f_0})=\{0\}$ and $\ker({g_0})=E^0$ .
\item[ii.]   If   $I$ is a Poincar\'e group of dimension $d$, then $\ker({g_d})=E^d$ and
 $\ker({f_d})\cong \chi_{triv}$ as a left (resp. right) $H$-module.

\item[iii.] Suppose that $p\neq 2$  or $\mathfrak F = \mathbb Q_p$. Then  $\ker({f_1})\cap  \ker({g_1})=\{0\}$.
\item[iv.] Suppose that $\mathfrak F= \mathbb Q_p$ with $p\neq 2,3$.  Assume $\pi=p$. Then  $\ker({{f_2}})\cap  \ker({g_2})=\{0\}$.
\end{itemize}
\end{lemma}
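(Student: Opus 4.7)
The strategy is to reduce every part to the observation that the two operators simplify substantially on each $E^i$, using either centrality of $\zeta$ in $H$ (for $i=0$), the fact from Remark \ref{rema:centralizeEd} that the left and right actions of $\zeta$ agree on $E^d$ (for $i=d$), or the absence of $(\zeta^2-1)$-torsion (for $i=1,2$).

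For part i, I would just compute: since $\zeta$ is central in $H=E^0$, one has $g_0(h)=\zeta h-h\zeta=0$ identically, and $f_0(h)=\zeta h\zeta-h=(\zeta^2-1)h$. By Lemma \ref{freeness}, $H$ is a free $k[\zeta]$-module, hence $k[\zeta]$-torsion-free, and $\zeta^2-1$ is not a zero divisor in $H$, so $\ker(f_0)=\{0\}$.

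For part ii, Remark \ref{rema:centralizeEd} gives $\zeta\cdot\phi=\phi\cdot\zeta$ on all of $E^d$, whence $g_d\equiv 0$ and $f_d(\phi)=(\zeta^2-1)\cdot\phi$. By the decomposition \eqref{f:keydecompEd}, $E^d\cong\ker(\trace^d)\oplus\chi_{triv}$. Since $\chi_{triv}(\zeta)=1$, $f_d$ kills the $\chi_{triv}$-summand. On $\ker(\trace^d)\cong\bigcup_m(H/\zeta^mH)^\vee$ (Proposition \ref{prop:kerS-injhull}), every element is annihilated by some power $\zeta^n$; if $(\zeta^2-1)\phi=0$, then $\zeta^{2}\phi=\phi$, so $\phi=\zeta^{2k}\phi=0$ as soon as $2k\geq n$. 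Hence $\ker(f_d)\cap\ker(\trace^d)=\{0\}$ and $\ker(f_d)=\chi_{triv}$ as asserted.

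For parts iii and iv, the key point is a purely formal one: if $c\in\ker(f_i)\cap\ker(g_i)$, then from $g_i(c)=0$ one gets $\zeta\cdot c=c\cdot\zeta$, and substituting into $f_i(c)=0$ yields $\zeta^2\cdot c=\zeta\cdot c\cdot\zeta=c$, i.e., $(\zeta^2-1)\cdot c=0$. Thus it suffices to check that $E^i$ contains no nonzero left $(\zeta^2-1)$-torsion. For $i=1$, this is exactly Lemma \ref{lemma:notorsion} applied to $P(X)=X^2-1$ (which has $P(0)=-1\neq 0$), under the hypothesis that $p\neq 2$ or $\mathfrak{F}=\mathbb{Q}_p$. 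For $i=2$, the hypotheses of part iv ($\mathfrak{F}=\mathbb{Q}_p$, $p\neq 2,3$, $\pi=p$) are precisely those required to invoke Lemma \ref{lemma:notorsion2} with the same polynomial $P(X)=X^2-1$, giving the vanishing of $\ker(f_2)\cap\ker(g_2)$.

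No step presents any real obstacle: the only slight subtlety is the $\zeta$-torsion argument on $\ker(\trace^d)$ in part ii, and the reduction to the absence of $(\zeta^2-1)$-torsion in parts iii and iv, both of which are handled by results already established in the paper.
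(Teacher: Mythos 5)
Your argument is correct and follows the paper's own proof essentially line for line: part i uses centrality of $\zeta$ plus freeness of $H$ over $k[\zeta]$, part ii uses the decomposition $E^d\cong\ker(\trace^d)\oplus\chi_{triv}$ together with the $\zeta$-torsion/$(\zeta-1)$-torsion dichotomy, and parts iii and iv reduce to the observation that $\ker(f_i)\cap\ker(g_i)$ sits inside the $(\zeta^2-1)$-torsion of $E^i$, which is then killed by Lemmas \ref{lemma:notorsion} and \ref{lemma:notorsion2}. No gap, and no meaningful deviation from the paper's route.
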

\begin{proof}
The first point is clear, using in particular the freeness of $H$ as a $k[\zeta]$-module.
For the second point:  we saw in \S\ref{sec:zetatorsion}\textbf{B)} that
$\zeta$ centralizes the elements in $E^d$, therefore
 $\ker({g_d})= E^d$ and the kernel of ${f_d}$ coincides with the kernel of the action of $\zeta^2-1$ on $E^d$. But
$E^d$ is the direct sum of its one-dimensional subspace of $(\zeta-1)$-torsion   and of its subspace  $\zeta$-torsion. So $\ker({f_d})$ coincides with the subspace of
$(\zeta-1)$-torsion and is isomorphic to $\chi_{triv}$ as a left (resp. right) $H$-module.

The last two points  come from the fact that  for  any $i$ the space
$\ker(f_i)\cap  \ker(g_i)$ is contained in the $\zeta^2-1$ torsion space in $E^i$. But for  $i=1,2$ and under the respective hypotheses, this torsion space is  trivial  by Lemmas \ref{lemma:notorsion} and
 \ref{lemma:notorsion2}.
\end{proof}

\subsection{Structure of $E^1$}

 We suppose that $G={\rm SL}_2(\mathbb Q_p)$ with $p\neq 2,3$  and we choose $\pi=p$.
Here we focus on the graded piece $E^1$  and work with the endomorphisms of $H$-bimodules
$$
{f_1}:= \zeta\cdot \id_{E^1}\cdot \, \zeta -  \id_{E^1}:  c\mapsto \zeta \cdot c\cdot \zeta- c
$$
and
$$
{g_1}:= \zeta\cdot \id_{E^1} -  \id_{E^1}\cdot \,\zeta: c\mapsto \zeta \cdot c- c\cdot \zeta.
$$

\subsubsection{\label{subsubsec:kerg1}On $\ker(g_1)$}
In Prop.\ \ref{F1part} we  established the injectivity  of the $H$-bimodule homomorphism
\begin{equation}\label{f:isokerg}
  f_{(\mathbf{x}_0,\mathbf{x}_1)} : F^1 H \longrightarrow \ker(g_1) \ .
\end{equation}

\begin{proposition}\label{prop:isokerg}
Assume $G={\rm SL _2}(\mathbb Q_p)$ with $p\neq 2,3$   and  $\pi=p$. The map \eqref{f:isokerg} is bijective, so $\ker(g_1)$ is isomorphic to $F^1H$ as an $H$-bimodule. In particular,
   as a left (resp. right) $k[\zeta]$-module, $\ker(g_1)$ is free of rank $4(p-1)$.
\end{proposition}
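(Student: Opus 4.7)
By Proposition \ref{F1part} the map $f_{(\mathbf{x}_0,\mathbf{x}_1)}$ is already known to be injective with image contained in $\ker(g_1)$, so only surjectivity remains to be proved. The plan proceeds in two reduction stages: given $c \in \ker(g_1)$, I will first subtract an element of $\im(f_{(\mathbf{x}_0,\mathbf{x}_1)})$ to eliminate all middle-type components $h^1_0$, and then show that the remainder, which lies in $h^1_\pm(\widetilde{W}) \cap \ker(g_1)$, must vanish.

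For the first stage, Proposition \ref{F1part}-i is the key input: for $w$ with $\ell(w) \geq 2$, $f_{(\mathbf{x}_0,\mathbf{x}_1)}(\tau_w) = \pm(0,\c^0,0)_w$, and its $k$-multiples exhaust $h^1_0(w)$ because $\Hom(1+p\mathbb{Z}_p/(1+p^2\mathbb{Z}_p), k)$ is one-dimensional over $k$ with basis $\c^0$. For $w = s_i\omega$ of length $1$, $f_{(\mathbf{x}_0,\mathbf{x}_1)}(\tau_w)$ combines a term in $h^1_0(s_i\omega)$ with a correction in $h^1_\pm(\omega)$, so the subtraction cancels the $h^1_0(s_i\omega)$-component while perturbing $h^1_\pm(\Omega)$. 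Since $h^1_0(\omega) = 0$ for $\omega \in \Omega$, the residue lies in $h^1_\pm(\widetilde{W}) \cap \ker(g_1)$.

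For the second stage, let $d = \sum_w d_w \in h^1_\pm(\widetilde{W}) \cap \ker(g_1)$ and argue by induction on the maximal length $N$ in the support of $d$. If $N \geq 1$, Lemma \ref{lemma:calculsg} shows that the length-$(N{+}2)$ part of the vanishing commutator $\zeta \cdot d - d \cdot \zeta = 0$ receives contributions only from $d_w$ with $\ell(w) = N$. Moreover, for each $v$ of length $N+2$ there is a unique such $w$ (namely $w \in \widetilde{W}^\epsilon$ with $v = s_{1-\epsilon}s_\epsilon w$, $\epsilon$ being determined by $v \in \widetilde{W}^\epsilon$), and the contribution is $\pm\bigl((d_w^-,0,0)_v - (0,0,d_w^+)_v\bigr)$; its vanishing forces $d_w = 0$, contradicting the maximality of $N$. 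The base case $N = 0$ is analogous: the $\omega$-clause of Lemma \ref{lemma:calculsg} shows that the length-$2$ part of $\zeta \cdot d - d \cdot \zeta$ forces $d_\omega^\pm = 0$ for every $\omega \in \Omega$.

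Once surjectivity is established, the resulting $H$-bimodule isomorphism $\ker(g_1) \cong F^1 H$ transfers the rank question to $F^1 H$; a $k[\zeta]$-basis of cardinality $4(p-1)$ can be extracted using Lemma \ref{freeness}, yielding the claimed rank on each side. The main technical difficulty is the combinatorial bookkeeping of the second stage: for each $v$ of length $N+2$ I must confirm that exactly one $w$ with $\ell(w) = N$ contributes, and that neither the lower-length components $d_{w'}$ nor the residual perturbation of $h^1_\pm(\Omega)$ introduced by the first stage interferes with the top-length reading of Lemma \ref{lemma:calculsg}.
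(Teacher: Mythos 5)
Your proof is correct and follows essentially the same approach as the paper: both rely on Proposition \ref{F1part}-i to obtain the decomposition $E^1 = \im(f_{(\mathbf{x}_0,\mathbf{x}_1)}) \oplus h_\pm^1(\widetilde{W})$ and on the leading-order computation of Lemma \ref{lemma:calculsg} (filtered by length) to kill the $h_\pm^1(\widetilde{W})$-component of an element of $\ker(g_1)$. Your inductive subtraction is just the unwound version of the paper's more compact argument, which phrases the second stage as injectivity of $g_1$ on $h_\pm^1(\widetilde{W})$ via the filtration estimate $g_1(F_n E^1) \subseteq F_{n+2} E^1$.
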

\begin{proof}
It is immediate from Prop.\ \ref{F1part}-i that  $E^1 = \im(f_{(\mathbf{x}_0,\mathbf{x}_1)}) \oplus h_\pm ^1(\widetilde W)$. Therefore we only need to check that $g_1$ is injective on $h_\pm ^1(\widetilde W)$. From \S\ref{subsubsec:fil} we know that, for  $n\geq 0$, we have
\begin{equation*}
  \zeta\cdot  F_n E^1 +  F_n E^1\cdot\zeta \subseteq F_{n+2} E^1 \quad\text{and hence}\quad  g_1(F_n E^1) \subseteq F_{n+2} E^1 \ .
\end{equation*}
But Lemma \ref{lemma:calculsg} tells us that modulo $F_{\ell(w)+1} E^1$ we have
\begin{equation*}
  g_1(  (c^-, 0, c^+)_w) \equiv
  \begin{cases}
    (0, 0, c^+)_{s_0 s_1 w} - (c^-, 0, 0)_{s_0 s_1 w} & \text{if $w\in W^{1,\ell \geq 1}$},  \\
    (c^-,0 , 0)_{s_1 s_0 w} - (0, 0, c^+)_{s_1 s_0 w} & \text{if $w\in W^{0,\ell \geq 1}$},  \\
    (0, 0, c^+)_{s_0 s_1 \omega} + (c^-, 0, 0)_{s_1 s_0 \omega} - (c^-, 0, 0)_{s_0 s_1 \omega} - (0, 0, c^+)_{s_1 s_0 \omega} & \text{if $w = \omega \in \Omega$}.
  \end{cases}
\end{equation*}
This shows that $g_1$ is injective on $h_\pm ^1(\widetilde W)$.
\end{proof}

\begin{remark}\label{rem:centralizer E1}
  The above proposition implies in particular that $\ker(g_1)$ is the centralizer in $E^1$ of the full center $Z$ of $H$.
\end{remark}
\subsubsection{On $\ker(f_1)$}
In Prop.\ \ref{plusminus-part} we  introduced and established the injectivity of the $H_\zeta$-bimodule homomorphism
\begin{equation}\label{f:isohpm}
  f^\pm : (H_\zeta/ H_\zeta \tau_{s_0} \oplus H_\zeta / H_\zeta \tau_{s_1})^\pm \longrightarrow \ker(f_1) \ .
\end{equation}
To show that this map is actually also  surjective we need to introduce the vector subspace $\mathfrak{V} \subseteq E^1$ with basis
\begin{align}\label{f:basisV}
x := e_{\id} \cdot (0,0,\c)_{1} \cdot  e_{\id^{-1}}, & \quad e_{\id} \cdot (0,0,\c)_{s_1} \cdot e_{\id}= x \cdot \tau_{s_1}, \\
y := e_{\id^{-1}} \cdot (\c,0,0)_{1} \cdot  e_{\id}, & \quad e_{\id^{-1}}\cdot (\c,0,0)_{s_0} \cdot e_{\id^{-1}} = y \cdot \tau_{s_0}. \nonumber
\end{align}
Temporarily we put
\begin{equation*}
  \mathfrak{U} := \mathfrak{V} + \im(f_{(\mathbf{x}_0,\mathbf{x}_1)}) + \im(f^\pm) \ .
\end{equation*}
But note that $\im(f_{(\mathbf{x}_0,\mathbf{x}_1)}) + \im(f^\pm) = \im(f_{(\mathbf{x}_0,\mathbf{x}_1)}) \oplus \im(f^\pm)$ by Lemma \ref{lemma:cap}-iii.

\begin{lemma}\label{lemma:zetaV}
   We have:
\begin{itemize}
  \item[a)] $(x\cdot  \tau_{s_1})\cdot \tau_{s_1} = 0$ and $(y\cdot  \tau_{s_0})\cdot \tau_{s_0} = 0$;
  \item[b)] $x\cdot  \tau_{s_0} = 0$ and $y \cdot \tau_{s_1} = 0$;
  \item[c)] $\tau_{s_0} \cdot x = 0 = \tau_{s_0}\cdot  (x\cdot  \tau_{s_1})$ and $\tau_{s_1} \cdot y = 0 = \tau_{s_1}\cdot  (y \cdot \tau_{s_0})$;
  \item[d)] $\tau_{s_1} \cdot x = y\cdot  \tau_{s_0} + e_{\id^{-1}} \tau_{s_1} f_{{\bf x}^+}(1)$,  $\tau_{s_0} \cdot  y = x \cdot  \tau_{s_1} + e_{\id} \tau_{s_0}\cdot f_{{\bf x}^-}(1)$;
  \item[e)] $\zeta\cdot  x - x = e_{\id} \tau_{s_0 s_1} \cdot f_{{\bf x}^+}(1) + 2 e_{\id} \cdot f_{(\mathbf{x}_0,\mathbf{x}_1)}(\tau_{s_0})$;
  \item[f)] $\zeta\cdot  y - y =  e_{\id^{-1}} \tau_{s_1 s_0} \cdot  f_{{\bf x}^-}(1) + 2 e_{\id^{-1}} \cdot f_{(\mathbf{x}_0,\mathbf{x}_1)}(\tau_{s_1})$;
  \item[g)] $x\cdot  \zeta - x, y\cdot  \zeta - y \in \im(f_{(\mathbf{x}_0,\mathbf{x}_1)}) \oplus \im(f^\pm)$;
  \item[h)] $(x \cdot \tau_{s_1}) \cdot \tau_{s_0} - x , (y\cdot  \tau_{s_0}) \cdot \tau_{s_1} - y \in \im(f_{(\mathbf{x}_0,\mathbf{x}_1)}) \oplus \im(f^\pm)$;
  \item[i)] $\tau_{s_1} \cdot (x\cdot  \tau_{s_1}) - y \in \im(f_{(\mathbf{x}_0,\mathbf{x}_1)}) \oplus \im(f^\pm)$, $\tau_{s_0} \cdot (y \cdot \tau_{s_0}) - x \in \im(f_{(\mathbf{x}_0,\mathbf{x}_1)}) \oplus \im(f^\pm)$;
  \item[j)] $\mathfrak{U}$ is a sub-$H$-bimodule of $E^1$.
\end{itemize}
\end{lemma}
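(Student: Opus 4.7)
The identities (a)--(i) are verified by explicit computations using the formulas for the left and right actions of $H$ on $E^1$ (Proposition~\ref{prop:theformulas}, Corollary~\ref{coro:form-zeta}, Lemma~\ref{lemma:righteasy}, \eqref{f:leftomega}, \eqref{f:rightomegaE1}, \eqref{f:id-id}), combined with the orthogonality $e_{\id}\cdot e_{\id^{-1}}=0=e_{\id^{\pm1}}\cdot e_1$ and the commutation rule $\tau_{s_i}\cdot e_\lambda=e_{\lambda^{-1}}\cdot\tau_{s_i}$. In every case the outer idempotents $e_{\id},e_{\id^{-1}}$ appearing in the definition of $x,y$ annihilate most of the raw terms produced by the formulas, which drastically reduces the bookkeeping. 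Every statement about $y$ is obtained from the corresponding one for $x$ by applying the conjugation $\Gamma_\varpi$ of \eqref{gammapi}: by Lemma~\ref{lemma:conjtrip} we have $\Gamma_\varpi(x)=y$, while Remark~\ref{rem:offdiag-Gamma-anti}-1 and Proposition~\ref{F1part}-v show that $\Gamma_\varpi$ swaps $f_{\mathbf{x}^+}\leftrightarrow f_{\mathbf{x}^-}$ and preserves $\im(f_{(\mathbf{x}_0,\mathbf{x}_1)})$.

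For (a), observe $(x\cdot\tau_{s_1})\cdot\tau_{s_1}=x\cdot\tau_{s_1}^2=-x\cdot e_1\tau_{s_1}$, and $x\cdot e_1=e_{\id}\cdot(0,0,\c)_1\cdot e_{\id^{-1}}e_1=0$. For (b), compute $(0,0,\c)_1\cdot\tau_{s_0}$ via Lemma~\ref{lemma:righteasy}-i (with $v=1\in\widetilde{W}^1$, $vs_0=s_0\in\widetilde{W}^1$): it vanishes. For (c), the last two formulas of Proposition~\ref{prop:theformulas} give $\tau_{s_0}\cdot(0,0,\c)_1=0$. For (d), expand $\tau_{s_1}\cdot(0,0,\c)_1=(-\c,0,0)_{s_1}$ via Proposition~\ref{prop:theformulas}, then flank by $e_{\id^{-1}}$ and compare the result with the expansions of $y\cdot\tau_{s_0}$ and of $\mathbf{x}^+$ (Lemma~\ref{lemma:offdiag-E1}). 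Parts (e) and (f) follow by applying Corollary~\ref{coro:form-zeta} to $\zeta\cdot(0,0,\c)_1$ and squeezing by the outer idempotents: the surviving terms match exactly the definitions of $\mathbf{x}^+$ and of $f_{(\mathbf{x}_0,\mathbf{x}_1)}(\tau_{s_0})$ (Proposition~\ref{F1part}-i).

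The substantive step is (g)--(i). For (g), compute $x\cdot\zeta$ via the right-action analogue of Corollary~\ref{coro:form-zeta}, obtained either by direct computation or by applying $\anti$ to $\zeta\cdot\anti(x)$ through Lemma~\ref{lemma:antievenodd}. The difference $x\cdot\zeta-x$ is a sum of triples supported on $w\in\widetilde{W}$ of length at most $3$; one matches each summand with a specific element of $\im(f^\pm)$ or $\im(f_{(\mathbf{x}_0,\mathbf{x}_1)})$ using the defining formulas for $\mathbf{x}^\pm$ and the list-descriptions \eqref{f:list0}--\eqref{f:list1} from the proof of Lemma~\ref{lemma:fpm}. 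Parts (h) and (i) then reduce, via the right-$H$-equivariance of $f^\pm$ and $f_{(\mathbf{x}_0,\mathbf{x}_1)}$ combined with (a), (d), (e), (g), to a handful of further matchings of single triples of the same type.

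Finally, (j) follows from (a)--(i) together with the fact that $\im(f_{(\mathbf{x}_0,\mathbf{x}_1)})\oplus\im(f^\pm)$ is already an $H$-sub-bimodule of $E^1$ (Propositions~\ref{F1part}-ii and~\ref{plusminus-part}). Indeed, $\mathfrak{V}$ is spanned by $x,\,x\cdot\tau_{s_1},\,y,\,y\cdot\tau_{s_0}$; for $\omega\in\Omega$ the identity $\tau_\omega\cdot e_\lambda=\lambda(\omega)e_\lambda$ gives $\tau_\omega\cdot x=\id(\omega)x$ and $x\cdot\tau_\omega=\id^{-1}(\omega)x$ (and analogously for $y$), so the statements (a)--(i) cover every product of a basis vector of $\mathfrak{V}$ with an algebra generator $\tau_{s_0},\tau_{s_1},\tau_\omega$ of $H$ on either side. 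The main obstacle lies in (g): the raw formula for $x\cdot\zeta$ produces about a dozen triples in different $h^1(w)$, and decomposing the resulting element into an $f_{(\mathbf{x}_0,\mathbf{x}_1)}$-part (carried by the middle $(0,\c^0,0)$-components) and an $f^\pm$-part (carried by the outer $(\c,0,0)$- and $(0,0,\c)$-components) requires careful tracking of idempotent supports and of which $\tau_w\cdot\mathbf{x}^\pm$ each piece corresponds to.
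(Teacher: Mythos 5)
Your proposal is substantially correct and the computations for (a)--(f) run exactly as in the paper, but the proposed treatment of (g)--(i) is needlessly laborious compared to the paper's argument, which is a short cascade of reductions rather than independent matchings. For (g) the paper does not re-expand $x\cdot\zeta-x$ into triples and match: it simply applies $\anti$ to the entire identities (e), (f). Since $x=e_{\id}\cdot(0,0,\c)_1\cdot e_{\id^{-1}}$ is $\anti$-symmetric (Lemma~\ref{lemma:antievenodd} gives $\anti(x)=x$, $\anti(y)=y$) and since $\anti$ preserves $\im(f_{(\mathbf{x}_0,\mathbf{x}_1)})\oplus\im(f^\pm)$ (Prop.~\ref{F1part}-iv, Remark~\ref{rem:offdiag-anti}-2), applying $\anti$ to (e), (f) yields (g) with no further bookkeeping; you mention the $\anti$ trick as a way of producing the raw expression for $x\cdot\zeta$, but the point is to apply $\anti$ to the whole membership statement, whereupon no matching of summands is needed at all. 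Similarly (h) is not an independent computation: since $x\cdot\tau_{s_0}=0$ and $x\cdot e_1=0$ (from (b)), one has $x\cdot(\tau_{s_0}+e_1)=0$ and therefore $x\cdot\zeta=x\cdot\tau_{s_1}\tau_{s_0}=(x\cdot\tau_{s_1})\cdot\tau_{s_0}$, so (h) is literally a restatement of (g); and (i) then follows from (d), (h), and the bimodule property of $\im(f^\pm)$, without any fresh triple computation. Your closing observation for (j) — that (a)--(i) plus the $\Omega$-action account for every generator on either side of every basis vector of $\mathfrak{V}$ — is the right justification. In short, you would arrive at the correct conclusions, but spotting that (g), (h), (i) are downstream consequences of (b)--(f) and the $\anti$-stability of the sub-bimodule avoids essentially all of the ``dozen triples'' bookkeeping you anticipate.
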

\begin{proof}
a) is obvious. For the subsequent computations it is useful to note that we have
\begin{equation*}
  x = e_{\id}\cdot  (0,0,\c)_{1} ,\ x \cdot \tau_{s_1} = e_{\id} \cdot (0,0,\c)_{s_1} ,\ y =  e_{\id^{-1}}\cdot  (\c,0,0)_{1} ,\ y \cdot \tau_{s_0} =  e_{\id^{-1}}\cdot  (\c,0,0)_{s_0} \ .
\end{equation*}
We also recall that $\im(f_{(\mathbf{x}_0,\mathbf{x}_1)}) \oplus \im(f^\pm)$ is a sub-$H$-bimodule of $E^1$.

Points b), c), d), e), and f) are a straightforward computation based on the formulas in Prop.\ \ref{prop:theformulas}-g) follows from e) and f) by applying $\anti$. By b) we have $x \cdot  \zeta = (x \cdot  \tau_{s_1}) \cdot \tau_{s_0}$ and $y \cdot  \zeta = (y\cdot  \tau_{s_0}) \cdot  \tau_{s_1}$; hence h) follows from g). i) follows from d) and h). j) follows from a) - d), h), and i).
\end{proof}

\begin{lemma}\label{lemma:V}
   We have $E^1 = \im(f_{(\mathbf{x}_0,\mathbf{x}_1)}) \oplus \im(f^\pm) \oplus \mathfrak V = \ker(g_1) \oplus \im(f^\pm) \oplus \mathfrak V$.
\end{lemma}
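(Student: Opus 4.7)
The plan is to establish the first equality $E^1 = \im(f_{(\mathbf{x}_0,\mathbf{x}_1)}) \oplus \im(f^\pm) \oplus \mathfrak V$; the second equality will then follow immediately from Proposition \ref{prop:isokerg}, which identifies $\ker(g_1)$ with $\im(f_{(\mathbf{x}_0,\mathbf{x}_1)})$.

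First I note that $\mathfrak U := \mathfrak V + \im(f_{(\mathbf{x}_0,\mathbf{x}_1)}) + \im(f^\pm)$ is a sub-$H$-bimodule of $E^1$ by Lemma \ref{lemma:zetaV}-j, and that $\im(f_{(\mathbf{x}_0,\mathbf{x}_1)}) \cap \im(f^\pm) = 0$ because the first lies in $\ker(g_1)$, the second in $\ker(f_1)$, and $\ker(f_1) \cap \ker(g_1) = 0$ by Lemma \ref{lemma:cap}-iii. So two things remain: $\mathfrak U = E^1$, and the sum $\mathfrak V + (\im f_{(\mathbf{x}_0,\mathbf{x}_1)} \oplus \im f^\pm)$ is direct.

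For $\mathfrak U = E^1$: by Proposition \ref{prop:isokerg} we have $E^1 = \im(f_{(\mathbf{x}_0,\mathbf{x}_1)}) \oplus h^1_\pm(\widetilde W)$, so it suffices to show $h^1_\pm(\widetilde W) \subseteq \mathfrak U + \im(f_{(\mathbf{x}_0,\mathbf{x}_1)})$. For this, I revisit the lists \eqref{f:list0} and \eqref{f:list1} in the proof of Lemma \ref{lemma:fpm}, which express $\tau_w \cdot \mathbf{x}^\pm$, for $w$ ranging over a basis, as a nonzero scalar multiple of the expected $(\mathbf c,0,0)_w$ or $(0,0,\mathbf c)_w$ triple modulo strictly smaller-length terms and $h^1_0(\widetilde W)$ terms (which are absorbed by $\im(f_{(\mathbf{x}_0,\mathbf{x}_1)})$ via Proposition \ref{F1part}-i). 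Inducting on $\ell(w) \geq 1$, this recovers the entire $h^1_\pm(\widetilde W^{\ell\geq 1})$ modulo certain residual contributions in the $e_{\id^{\pm 1}}$-isotypic part of $h^1_\pm(\Omega) \oplus h^1_\pm(s_0\Omega) \oplus h^1_\pm(s_1\Omega)$; these residual terms appear because the formulas in the last two cases of \eqref{f:list0}–\eqref{f:list1} carry an extra $e_{\id^{\mp 1}}$-piece. Precisely these extra pieces are supplied by the four elements $x,\, x\tau_{s_1},\, y,\, y\tau_{s_0}$ of $\mathfrak V$: the formulas of Lemma \ref{lemma:zetaV} together with the explicit expressions for $x$ and $y$ let me solve for the missing basis vectors in $h^1_\pm(\Omega \cup s_0\Omega \cup s_1\Omega)$.

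For directness of $\mathfrak V$ with $\im(f_{(\mathbf{x}_0,\mathbf{x}_1)}) \oplus \im(f^\pm)$: I decompose $E^1$ as an $(\Omega,\Omega)$-bimodule, $E^1 = \bigoplus_{\lambda,\mu \in \widehat\Omega} e_\lambda E^1 e_\mu$. Using the identity $e_\lambda \tau_{s_i} = \tau_{s_i} e_{\lambda^{-1}}$ one checks that the four basis vectors of $\mathfrak V$ sit in four pairwise distinct isotypic pieces, namely
\[
x \in e_\id E^1 e_{\id^{-1}}, \quad x\tau_{s_1} \in e_\id E^1 e_\id, \quad y \in e_{\id^{-1}} E^1 e_\id, \quad y\tau_{s_0} \in e_{\id^{-1}} E^1 e_{\id^{-1}}.
\]
In each of these four components I compare the contribution of $\mathfrak V$ (which is one-dimensional, supported in $h^1(1) \oplus h^1(s_0) \oplus h^1(s_1)$) with the corresponding isotypic projection of $\im f_{(\mathbf{x}_0,\mathbf{x}_1)} \oplus \im f^\pm$, and observe that the unique $\mathfrak V$-basis element picks up a coefficient at a triple of the form $(\mathbf c,0,0)_\omega$ or $(0,0,\mathbf c)_\omega$ with $\omega \in \Omega$ that is absent from every element of the other two summands (after idempotent projection). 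This forces the intersection to be zero, completing directness.

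The main obstacle is the bookkeeping in the second paragraph above: one must patiently enumerate, in each of the four relevant small-length isotypic components, which triples arise from $\tau_w \cdot \mathbf{x}^{\pm}$ versus from $f_{(\mathbf{x}_0,\mathbf{x}_1)}(\tau_w)$, and verify that a single basis element of $\mathfrak V$ exactly fills the one-dimensional gap. The computations are finite but delicate because the $e_{\id^{\pm 1}}$-twists in Prop.~\ref{F1part}-i and in the lists \eqref{f:list0}–\eqref{f:list1} cross-contaminate the natural $h^1_0$ versus $h^1_\pm$ decomposition.
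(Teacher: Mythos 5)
Your decomposition strategy and the isotypic-piece argument for directness match the paper's, but your step establishing $\mathfrak U = E^1$ has a concrete gap.

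The lists \eqref{f:list0}--\eqref{f:list1} record only the \emph{left} $H$-action on $\mathbf{x}^\pm$, and for each fixed $w$ with $\ell(w) \geq 2$ they produce exactly one of the two basis triples $(\mathbf c,0,0)_w$ or $(0,0,\mathbf c)_w$ (whichever is dictated by whether $\ell(w)$ is even or odd and whether $\ell(ws_0)$ or $\ell(ws_1)$ increases). For instance, $w = (s_0 s_1)^m$ gives $(0,0,\mathbf c)_w$ from $\tau_w \cdot \mathbf{x}^+$, but $(\mathbf c,0,0)_{(s_0 s_1)^m}$ never arises from any $\tau_{w'}\cdot \mathbf{x}^\pm$ in those lists. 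So the claim that ``this recovers the entire $h^1_\pm(\widetilde W^{\ell \geq 1})$ modulo residual contributions at small lengths'' is false: half of $h^1_\pm(\widetilde W^{\ell \geq 2})$ is simply not reached by the left action alone, not just the low-length isotypic pieces you identify. The paper closes this by exploiting that $\mathfrak U$ is a two-sided $H$-module: after extracting $(0,0,\mathbf{c})_1$ and $(\mathbf{c},0,0)_1$ from the definitions of $\mathbf{x}^\pm$ (using \eqref{f:directsum1} and $e_{\id^{-1}}h^1_-(\widetilde W)\oplus e_{\id}h^1_+(\widetilde W)\subseteq \mathfrak U$), it shows $H\cdot(k(0,0,\mathbf{c})_1 \oplus k(\mathbf{c},0,0)_1)\cdot H \supseteq h^1_\pm(\widetilde W)$ via \emph{both} left and right translation formulas (Lemma \ref{lemma:righteasy}-i together with Prop.\ \ref{prop:theformulas}). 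You need an analogous two-sided step; one convenient way is to apply $\anti$ to the lists, which converts $\tau_w \cdot \mathbf{x}^\pm$ into right translates and supplies the missing triples.

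On directness your setup is the same as the paper's (the projections $e_{\id^{\pm 1}}(\cdot)e_{\id^{\pm 1}}$ isolate the four basis vectors of $\mathfrak V$, and one reduces to showing $x\tau_{s_1}$ and $y\tau_{s_0}$ avoid $\im(f_{(\mathbf{x}_0,\mathbf{x}_1)})\oplus\im(f^\pm)$), but the ``observe that \ldots is absent'' you appeal to is precisely the nontrivial content of \eqref{f:idid-1}--\eqref{f:idid-3}: one must prove $e_{\id}\cdot\im(f_{\mathbf{x}^+})\cdot e_{\id}\subseteq e_{\id}\cdot h^1_-(\widetilde W)\cdot e_{\id}$ (and its variants), which requires tracking how $\zeta^n$ on each side interacts with the idempotents via \eqref{f:x-y-id} and the identity $\mathbf{x}^\pm = \zeta\cdot\mathbf{x}^\pm\cdot\zeta$. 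This isn't a mere observation; it is the bulk of the computation, and until you carry it out the directness claim remains conjectural. Also a minor imprecision: $x\tau_{s_1}=e_{\id}(0,0,\mathbf{c})_{s_1}$ and $y\tau_{s_0}=e_{\id^{-1}}(\mathbf{c},0,0)_{s_0}$ are supported at length one, not at $\omega\in\Omega$ as you write.
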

\begin{proof}
We remind the reader of the following consequences of \eqref{f:leftomega} which we will silently use in the following:
\begin{equation*}
  e_{\id}\cdot  (0,0,\mathbf{c})_{\omega_u w} = u^{-1} e_{\id} \cdot  (0,0,\mathbf{c})_w  \quad\text{and}\quad   e_{\id^{-1}}\cdot (\mathbf{c},0,0)_{\omega_u w} = u e_{\id^{-1}}\cdot (\mathbf{c},0,0)_w
\end{equation*}
for any $w \in \widetilde{W}$ and $u \in \mathbb{F}_p^\times$. We also recall, using \eqref{f:condeven} and \eqref{f:condodd}  that
\begin{equation*}
  x = e_{\id} \cdot  (0,0,\c)_{1} ,\ y =  e_{\id^{-1}} \cdot  (\c,0,0)_{1} ,\ x \cdot \tau_{s_1} = e_{\id}\cdot  (0,0,\c)_{s_1} ,\ y \cdot \tau_{s_0} = e_{\id^{-1}}\cdot  (\c,0,0)_{s_0} \ .
\end{equation*}
Prop.\ \ref{F1part}-i tells us that
\begin{align}\label{f:imfxx}
  \im(f_{(\mathbf{x}_0,\mathbf{x}_1)}) & = h_0^1(\widetilde{W}^{\ell \geq 2}) \oplus
         (\oplus_{u \in \mathbb{F}_p^\times} k \big( (0,\mathbf{c}\iota,0)_{s_1 \omega_u} - u^{-1} e_{\id} \cdot (0,0,\mathbf{c})_1 \big))  \nonumber \\
   &   \qquad\qquad\qquad\qquad\qquad\qquad\qquad\qquad   \oplus (\oplus_{u \in \mathbb{F}_p^\times} k \big( (0,\mathbf{c}\iota,0)_{s_0 \omega_u} + u e_{\id^{-1}} \cdot (\mathbf{c},0,0)_1 \big))  \\
        & =  h_0^1(\widetilde{W}^{\ell \geq 2}) \oplus (\oplus_{u \in \mathbb{F}_p^\times} k \big( (0,\mathbf{c}\iota,0)_{s_1 \omega_u} - u^{-1} x \big)) \oplus (\oplus_{u \in \mathbb{F}_p^\times} k \big( (0,\mathbf{c}\iota,0)_{s_0 \omega_u} + u y \big)) .  \nonumber
\end{align}
This implies
\begin{equation}\label{f:directsum1}
  \mathfrak{U} \supseteq \im(f_{(\mathbf{x}_0,\mathbf{x}_1)}) \oplus k x \oplus k y = h_0^1(\widetilde{W}) \oplus k x \oplus k y \ .
\end{equation}
Next we observe that, by Lemma \ref{lemma:righteasy}, we have
\begin{align*}
   & (0,0,\mathbf{c})_w = (0,0,\mathbf{c})_1 \cdot\tau_w  \quad\text{and}\quad  e_{\id} \cdot(0,0,\mathbf{c})_w = x\cdot \tau_w    \quad\text{for any $w \in \widetilde{W}^0$, and}  \\
   & (\mathbf{c},0,0)_w = (\mathbf{c},0,0)_1 \cdot \tau_w  \quad\text{and}\quad  e_{\id^{-1}} \cdot (\mathbf{c},0,0)_w = y \cdot\tau_w    \quad\text{for any $w \in \widetilde{W}^1$}.
\end{align*}
Furthermore, Prop.\ \ref{prop:theformulas} implies
\begin{align*}
  & (0,0,\mathbf{c})_w =
  \begin{cases}
  \tau_w \cdot(0,0,\mathbf{c})_1 \\
  - \tau_w\cdot (\mathbf{c},0,0)_1
  \end{cases}
  \text{and}\quad
  e_{\id} \cdot(0,0,\mathbf{c})_w  =
  \begin{cases}
  \tau_w \cdot x  & \text{if $w = (s_0 s_1)^m$ with $m \geq 0$},  \\
  - \tau_w \cdot y  & \text{if $w = (s_0 s_1)^m s_0$ with $m \geq 0$},
  \end{cases}          \\
  & (\mathbf{c},0,0)_w  =
  \begin{cases}
  \tau_w\cdot  (\mathbf{c},0,0)_1  \\
  - \tau_w \cdot (0,0,\mathbf{c})_1
  \end{cases}
  \text{and}\quad
   e_{\id^{-1}} \cdot (\mathbf{c},0,0)_w  =
   \begin{cases}
  \tau_w\cdot  y  & \text{if $w = (s_1 s_0)^m$ with $m \geq 0$},  \\
  - \tau_w \cdot x  & \text{if $w = (s_1 s_0)^m s_1$ with $m \geq 0$}.
  \end{cases}
\end{align*}
It follows that
\begin{align}
  H \cdot \big( k (0,0,\mathbf{c})_1 \oplus k (\mathbf{c},0,0)_1 \big) \cdot H & \supseteq h_-^1(\widetilde{W}) \oplus h_+^1(\widetilde{W}) \qquad\text{and}  \label{f:directsum2} \\
     \mathfrak{U} \supseteq H \cdot \mathfrak{V} \cdot H & \supseteq e_{\id^{-1}} h_-^1(\widetilde{W}) \oplus e_{\id} h_+^1(\widetilde{W})   \ . \label{f:directsum3}
\end{align}
By looking at the definition of $\mathbf{x}^\pm$ and using \eqref{f:directsum1} and \eqref{f:directsum3} we see that $(0,0,\mathbf{c})_1 , (\mathbf{c},0,0)_1 \in \mathfrak{U}$. So \eqref{f:directsum2} implies that $h_-^1(\widetilde{W}) \oplus h_+^1(\widetilde{W}) \subseteq \mathfrak{U}$, and together with \eqref{f:directsum1} we obtain $\mathfrak{U} = E^1$.

It remains to check that $\mathfrak{V} \cap (\im(f_{(\mathbf{x}_0,\mathbf{x}_1)}) \oplus \im(f^\pm)) = 0$. If $z = r_1 x + r_2 y + r_3 x \tau_{s_1} + r_4 y \tau_{s_0} \in \mathfrak{V}$ with $r_i \in k$ is an arbitrary element then $e_{\id}\cdot z \cdot e_{\id^{-1}} = r_1 x$, $e_{\id^{-1}}\cdot  z \cdot e_{\id} = r_2 y$, $e_{\id} \cdot z \cdot e_{\id} = r_3 x\cdot  \tau_{s_1}$, and $e_{\id^{-1}}\cdot z \cdot e_{\id^{-1}} = r_4 y \cdot \tau_{s_0}$. Hence it suffices to show that none of the elements $x$, $y$, $x \cdot \tau_{s_1}$, $y \cdot \tau_{s_0}$ is contained in $\im(f_{(\mathbf{x}_0,\mathbf{x}_1)}) \oplus \im(f^\pm)$. Obviously we need to check this only for $x\cdot \tau_{s_1}$ and $y\cdot \tau_{s_0}$. Using \eqref{f:id-id} we deduce from \eqref{f:imfxx} that
\begin{equation}\label{f:idid-1}
  e_{\id} \cdot\im(f_{(\mathbf{x}_0,\mathbf{x}_1)})\cdot e_{\id} = e_{\id} \cdot h_0^1(\widetilde{W}^{\ell \geq 2}) \cdot e_{\id}  \quad\text{and}\quad e_{\id^{-1}} \cdot  \im(f_{(\mathbf{x}_0,\mathbf{x}_1)}) \cdot e_{\id^{-1}} = e_{\id^{-1}} \cdot h_0^1(\widetilde{W}^{\ell \geq 2})\cdot e_{\id^{-1}} \ .
\end{equation}
On the other hand we have $\im(f_{{\bf x}^-}) = (H/H\tau_{s_1}) \cdot   \mathbf{x}^- \cdot   \zeta^{\mathbb{N}_0}$ by Lemma \ref{lemma:leftright}, Remark \ref{rem:offdiag-Gamma-anti}-4, and the definition of $\mathbf{x}^-$ before \eqref{f:defifpm} and $\im(f_{{\bf x}^+}) = (H/H\tau_{s_0})  \cdot  \mathbf{x}^+ \cdot  \zeta^{\mathbb{N}_0}$. Using \eqref{f:id-id} we check that
\begin{equation}\label{f:x-y-id}
  \mathbf{x}^+ \cdot   e_{\id} = e_{\id^3}  \cdot  (0,0,\mathbf{c})_1 ,\  \mathbf{x}^+  \cdot  e_{\id^{-1}} = e_{\id}  \cdot  \mathbf{x}^+ ,\ \mathbf{x}^-  \cdot  e_{\id} = e_{\id^{-1}}  \cdot  \mathbf{x}^- ,\ \mathbf{x}^-  \cdot  e_{\id^{-1}} = e_{\id^{-3}}  \cdot  (\mathbf{c},0,0)_1 \ .
\end{equation}
For $i = 0, 1$ we temporarily put $H^{even/odd,i} := \oplus_{\ell(w)\, even/odd,\, \ell(w s_i) = \ell(w) + 1} k \tau_w$, so  $H = H^{even,i} \oplus H^{odd,i} \oplus H\tau_{s_i}$.

We claim that
\begin{equation}\label{f:idid-2}
  e_{\id}  \cdot \im(f_{{\bf x}^+})  \cdot  e_{\id} \subseteq e_{\id}  \cdot  h_-^1(\widetilde{W}) \cdot   e_{\id}   \quad\text{and}\quad  e_{\id^{-1}}  \cdot  \im(f_{{\bf x}^-}) \cdot   e_{\id^{-1}} \subseteq e_{\id^{-1}}  \cdot  h_+^1(\widetilde{W})  \cdot  e_{\id^{-1}} \ .
\end{equation}
For any $n \geq 0$ we compute
\begin{gather*}
  e_{\id}  (H/H\tau_{s_0})  \cdot  \mathbf{x}^+  \cdot \zeta^n e_{\id}  \qquad\qquad\qquad\qquad\qquad\qquad\qquad\qquad\qquad\qquad\qquad\qquad\qquad\qquad\qquad\qquad  \\
\begin{split}
   & = e_{\id} (H/H\tau_{s_0})  \cdot  \mathbf{x}^+ \cdot  (\tau_{s_0} \tau_{s_1} + \tau_{s_1} \tau_{s_0})^n  e_{\id}  \\
   & =  e_{\id} (H/H\tau_{s_0})  \cdot \mathbf{x}^+  \cdot (\tau_{s_1} \tau_{s_0})^n  e_{\id} \quad \text{by Remark \ref{rem:offdiag-Gamma-anti}}     \\
   & =  e_{\id} (H/H\tau_{s_0}) \cdot  \mathbf{x}^+  \cdot  e_{\id} (\tau_{s_1} \tau_{s_0})^n = e_{\id} (H/H\tau_{s_0}) e_{\id^3}  \cdot (0,0,\mathbf{c})_1  \cdot (\tau_{s_1} \tau_{s_0})^n \quad \text{by \eqref{f:x-y-id} }   \\
   & = e_{\id} (H/H\tau_{s_0}) e_{\id^3}  \cdot (0,0,\mathbf{c})_{(s_1 s_0)^n}  \quad\text{by Lemma \ref{lemma:righteasy}-i}  \\
   & = e_{\id} H^{even,0} e_{\id^3}  \cdot (0,0,\mathbf{c})_{(s_1 s_0)^n} + e_{\id} H^{odd,0} e_{\id^3}  \cdot (0,0,\mathbf{c})_{(s_1 s_0)^n}  \\
   & = H^{even,0} e_{\id} e_{\id^3} \cdot  (0,0,\mathbf{c})_{(s_1 s_0)^n} + H^{even,1} e_{\id^{-3}} e_{\id}  \tau_{s_1}  \cdot (0,0,\mathbf{c})_{(s_1 s_0)^n}  \\
   & = H^{even,1} e_{\id^{-3}} e_{\id}  \tau_{s_1}  \cdot (0,0,\mathbf{c})_{(s_1 s_0)^n}  \quad\text{since $\id^3 \neq \id$}   \\
   & = H^{even,1} e_{\id^{-3}} e_{\id}  \cdot (-\mathbf{c},0,0)_{s_1(s_1 s_0)^n}  \quad\text{by Prop.\ \ref{prop:theformulas}}   \\
   & = e_{\id^{-3}} e_{\id} H^{even,1}  \cdot (-\mathbf{c},0,0)_{s_1(s_1 s_0)^n} \ .
\end{split}
\end{gather*}
If $p > 5$ then $e_{\id^{-3}} \neq e_{\id}$ and we obtain $e_{\id} \cdot  \im(f_{{\bf x}^+})  \cdot  e_{\id} = 0$. In general we deduce from Prop.\ \ref{prop:theformulas} that we have
\begin{equation*}
  e_{\id} \tau_{(s_1 s_0)^n} \cdot  (\mathbf{c},0,0)_w = e_{\id} \cdot  (\mathbf{c},0,0)_{(s_1 s_0)^n w}   \qquad\text{for any $n \geq 0$ and any $w \in \widetilde{W}$}.
\end{equation*}
This establishes the inclusion $e_{\id} \cdot  \im(f_{{\bf x}^+})  \cdot e_{\id} \subseteq e_{\id}  \cdot h_-^1(\widetilde{W}) e_{\id}$. The computation for the other inclusion of \eqref{f:idid-2} is entirely analogous.

Next we claim that
\begin{equation}\label{f:idid-3}
   e_{\id^{-1}} \cdot   \im(f_{{\bf x}^+})  \cdot  e_{\id^{-1}} \subseteq e_{\id^{-1}}  \cdot  h_0^1(\widetilde{W}) e_{\id^{-1}}   \quad\text{and}\quad  e_{\id} \cdot  \im(f_{{\bf x}^-})  \cdot e_{\id} \subseteq e_{\id}  \cdot h_0^1(\widetilde{W})  \cdot e_{\id} \ .
\end{equation}
We have
\begin{align*}
  e_{\id^{-1}}\cdot  \im(f_{{\bf x}^+})\cdot  e_{\id^{-1}} & =  e_{\id^{-1}} (H/H\tau_{s_0})\cdot  \mathbf{x}^+ \cdot \zeta^{\mathbb{N}_0} e_{\id^{-1}} =  e_{\id^{-1}} (H/H\tau_{s_0}) \cdot \mathbf{x}^+\cdot  e_{\id^{-1}} \zeta^{\mathbb{N}_0} e_{\id^{-1}}   \\
   & =  e_{\id^{-1}} (H/H\tau_{s_0}) e_{\id}  \cdot \mathbf{x}^+\cdot   \zeta^{\mathbb{N}_0} e_{\id^{-1}}   \quad\text{by \eqref{f:x-y-id}}  \\
   & =  e_{\id^{-1}} H^{odd,0} e_{\id}  \cdot \mathbf{x}^+\cdot   \zeta^{\mathbb{N}_0} e_{\id^{-1}}   \\
   & = \sum_{m \geq 0} k e_{\id^{-1}} \tau_{s_1} (\tau_{s_0} \tau_{s_1})^m  \cdot \mathbf{x}^+\cdot   \zeta^{\mathbb{N}_0} e_{\id^{-1}}   \\
   & = \sum_{m \geq 0} k e_{\id^{-1}} \tau_{s_1} \zeta^m  \cdot \mathbf{x}^+\cdot   \zeta^{\mathbb{N}_0} e_{\id^{-1}} \quad\text{since $\tau_{s_0}  \cdot \mathbf{x}^+   = 0$} \\
   & = \sum_{m \geq 0} k \zeta^m e_{\id^{-1}} \tau_{s_1}  \cdot \mathbf{x}^+\cdot   \zeta^{\mathbb{N}_0} e_{\id^{-1}} \ .
\end{align*}
Therefore we must show that $\zeta^m e_{\id^{-1}} \tau_{s_1}  \cdot \mathbf{x}^+\cdot   \zeta^n e_{\id^{-1}} \in e_{\id^{-1}} \cdot  h_0^1(\widetilde{W})\cdot  e_{\id^{-1}}$ for any $m \geq 0$ and $n \geq 0$.  But since $\bf{x}^+ = \zeta \cdot \bf{x}^+ \cdot \zeta$ it suffices to consider $n \geq 1$. Using Prop.\ \ref{prop:theformulas} and \eqref{f:lefts2} we first compute $e_{\id^{-1}} \tau_{s_1}  \cdot \mathbf{x}^+= - e_{\id^{-1}}\cdot  ((\mathbf{c},0,0)_{s_1} + (\mathbf{c},0,0)_{s_0})$. Next, using \eqref{f:lefts2}, Prop.\ \ref{prop:theformulas}, and Lemma \ref{lemma:antievenodd}, we compute
\begin{align*}
  (\mathbf{c},0,0)_{s_1}\cdot  \zeta^n e_{\id^{-1}} & = \anti(e_{\id} \zeta^n\cdot  \anti((\mathbf{c},0,0)_{s_1})) = \anti(e_{\id} \zeta^n \cdot (0,0,-\mathbf{c})_{s_1^{-1}}) = \anti(e_{\id} \zeta^n\cdot  (0,0,\mathbf{c})_{s_1})    \\
   & = \anti(e_{\id} \zeta^{n-1}\cdot ((0,-2\mathbf{c}\iota,0)_{s_0 s_1} + (0,0,\mathbf{c})_{s_0 s_1^2})) = - \anti(e_{\id} \zeta^{n-1}\cdot ((0,2\mathbf{c}\iota,0)_{s_0 s_1} + (0,0,\mathbf{c})_{s_0}))   \\
   & = - \anti(e_{\id}\cdot  ((0,2\mathbf{c}\iota,0)_{(s_0 s_1)^n} + (0,0,\mathbf{c})_{(s_0 s_1)^{n-1}s_0}))   \\
   & = - ((0,2\mathbf{c}\iota,0)_{(s_1 s_0)^n} - (\mathbf{c},0,0)_{(s_0 s_1)^{n-1}s_0 s_0^2}) \cdot e_{\id^{-1}}
\end{align*}
and
\begin{align*}
  (\mathbf{c},0,0)_{s_0}\cdot  \zeta^n e_{\id^{-1}} & = \anti(e_{\id} \zeta^n \cdot\anti((\mathbf{c},0,0)_{s_0})) = \anti(e_{\id} \zeta^n \cdot(0,0,-\mathbf{c})_{s_0^{-1}}) = \anti(e_{\id} \zeta^n \cdot (0,0,\mathbf{c})_{s_0}) \\
   & = \anti(e_{\id}  \cdot(0,0,\mathbf{c})_{(s_0 s_1)^n s_0}) = - (\mathbf{c},0,0)_{(s_0 s_1)^n s_0 s_0^2}\cdot e_{\id^{-1}} \ .
\end{align*}
It follows that $e_{\id^{-1}} \tau_{s_1}  \cdot \mathbf{x}^+\cdot   \zeta^n e_{\id^{-1}} = e_{\id^{-1}} \cdot(0,2\mathbf{c}\iota,0)_{(s_1 s_0)^n} \cdot e_{\id^{-1}}$ and then
\begin{equation*}
  \zeta^m e_{\id^{-1}} \tau_{s_1}  \cdot \mathbf{x}^+\cdot   \zeta^n e_{\id^{-1}} = e_{\id^{-1}}\cdot (0,2\mathbf{c}\iota,0)_{(s_1 s_0)^{m+n}} \cdot e_{\id^{-1}} \in e_{\id^{-1}}\cdot h_0^1(\widetilde{W})\cdot e_{\id^{-1}}
\end{equation*}
by Cor.\ \ref{coro:form-zeta}. Again the other inclusion is shown analogously or by applying $\Gamma_\varpi$. We conclude that
\begin{equation*}
  e_{\id} \cdot (\im(f_{(\mathbf{x}_0,\mathbf{x}_1)}) \oplus \im(f^\pm)) \cdot e_{\id} \subseteq e_{\id} \cdot  (h_-^1(\widetilde{W}) \oplus h_0^1(\widetilde{W})) \cdot e_{\id} \quad\text{whereas $x\cdot \tau_{s_1} \in e_{\id} \cdot h_+^1(s_1) \cdot e_{\id}$}
\end{equation*}
and
\begin{equation*}
  e_{\id^{-1}} \cdot (\im(f_{(\mathbf{x}_0,\mathbf{x}_1)}) \oplus \im(f^\pm)) \cdot e_{\id^{-1}} \subseteq e_{\id^{-1}}\cdot  (h_0^1(\widetilde{W}) \oplus h_+^1(\widetilde{W}))\cdot  e_{\id^{-1}} \quad\text{whereas $y\cdot \tau_{s_0} \in e_{\id^{-1}} \cdot h_-^1(s_0) \cdot e_{\id^{-1}}$}.
\end{equation*}
This concludes the proof of the first equality of Lemma \ref{lemma:V}. The second equality then follows from Prop.\ \ref{prop:isokerg}.
\end{proof}
\begin{proposition}\label{prop:kerf1star}
   Suppose $G = {\rm SL _2}(\mathbb Q_p)$ with $p\neq 2,3$ and $\pi=p$. We have:
\begin{itemize}
  \item[i.] The map $f^{\pm}$ described in  \eqref{f:isohpm} is bijective;
  \item[ii.] $f_1 \circ g_1 = g_1 \circ f_1 = 0$ on $E^1$.
\end{itemize}
In particular (cf.\ Remark \ref{rem:offdiag-anti}), as a left (resp.\ right) $k[\zeta^{\pm 1}]$-module, $\ker(f_1)$ is free of rank $4(p-1)$.
\end{proposition}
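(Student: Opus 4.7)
Here is my plan for proving Proposition~\ref{prop:kerf1star}.

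\medskip

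\noindent\textbf{Overview.} The strategy is to work inside the decomposition $E^1 = \ker(g_1) \oplus \im(f^\pm) \oplus \mathfrak{V}$ from Lemma~\ref{lemma:V}, establishing (ii) first and then using it to constrain where $f_1$ lands, so that the injectivity of $f_1$ on $\ker(g_1) \oplus \mathfrak{V}$ can be checked by an explicit computation modulo $(\zeta^2-1)$. The main combinatorial work is in a symmetry argument via $\anti$ that forces cancellations.

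\medskip

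\noindent\textbf{Step 1: Proving (ii).} Since $f_1$ and $g_1$ commute (Remark~\ref{rema:calcufg}-i), it suffices to show $f_1 \circ g_1 = 0$, i.e.\ that $g_1(E^1) \subseteq \ker(f_1)$. Using Lemma~\ref{lemma:V} it is enough to check this on each summand: $g_1$ vanishes on $\ker(g_1)$; and $g_1(\im(f^\pm)) \subseteq \im(f^\pm) \subseteq \ker(f_1)$ since $\im(f^\pm)$ is an $H$-subbimodule (Prop.~\ref{plusminus-part}) stable under $\zeta$ from both sides. For $\mathfrak{V}$, using Lemma~\ref{lemma:zetaV}-e we may write $\zeta x - x = A_1 + A_2$ with $A_1 \in \im(f^\pm)$ and $A_2 \in \ker(g_1)$; the key observation is that $\anti(x) = x$ (by \eqref{f:antieven} and \eqref{f:id-id}) together with Prop.~\ref{F1part}-iv gives $\anti(A_2) = A_2$, so that $x\zeta - x = \anti(\zeta x - x) = \anti(A_1) + A_2$, whence $g_1(x) = A_1 - \anti(A_1) \in \im(f^\pm)$. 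The same argument applies to $y$, and to $x\tau_{s_1}$ and $y\tau_{s_0}$ by right $H$-equivariance.

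\medskip

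\noindent\textbf{Step 2: Reduction of (i).} By Prop.~\ref{plusminus-part} we already have $\im(f^\pm) \subseteq \ker(f_1)$. It remains to show that the intersection $\ker(f_1) \cap (\ker(g_1) \oplus \mathfrak{V})$ is zero. From (ii) we know $f_1(E^1) \subseteq \ker(g_1)$, so for $v = a + c$ with $a \in \ker(g_1)$, $c \in \mathfrak{V}$, the equation $f_1(v) = 0$ reads
\[
  (\zeta^2 - 1)\cdot a + f_1(c) = 0 \qquad \text{in } \ker(g_1).
\]
Since $\ker(g_1) \cong F^1 H$ (Prop.~\ref{prop:isokerg}) and $F^1 H$ is a free $k[\zeta]$-module of rank $4(p-1)$ (Lemma~\ref{freeness} together with the length-filtration description of $F^1 H$), multiplication by $\zeta^2 - 1$ on $\ker(g_1)$ is injective. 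Thus it suffices to show that the induced map $\bar{f}_1 : \mathfrak{V} \to \ker(g_1)/(\zeta^2-1)\ker(g_1)$ is injective.

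\medskip

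\noindent\textbf{Step 3: Computing $f_1$ on $\mathfrak{V}$ and checking independence.} Expanding $f_1(x) = \zeta x \zeta - x$ using $\zeta x = x + A_1 + A_2$ and $x\zeta = x + \anti(A_1) + A_2$ (from Step 1) gives
\[
  f_1(x) \;=\; A_2(1 + \zeta) \;+\; \bigl(A_1 + \zeta\,\anti(A_1)\bigr),
\]
and the $\im(f^\pm)$-component must vanish by (ii), so $f_1(x) = A_2(1 + \zeta) = 2 f_{(\mathbf{x}_0,\mathbf{x}_1)}\bigl(e_{\id}\tau_{s_0}(1+\zeta)\bigr)$. By right $H$-equivariance, $f_1(x\tau_{s_1}) = f_1(x)\cdot\tau_{s_1}$, and similar formulas hold for $y$ and $y\tau_{s_0}$ (with $e_{\id^{-1}}$ replacing $e_{\id}$). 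Under the identification $\ker(g_1) \cong F^1 H$ the four images are
\[
  2e_{\id}\tau_{s_0}(1+\zeta),\quad 2e_{\id}\tau_{s_0 s_1}(1+\zeta),\quad 2e_{\id^{-1}}\tau_{s_1}(1+\zeta),\quad 2e_{\id^{-1}}\tau_{s_1 s_0}(1+\zeta).
\]
Since $(\zeta^2-1) = (\zeta-1)(\zeta+1)$ with $p > 2$, it is enough to test linear independence modulo $(\zeta-1)$. Reducing $1+\zeta \equiv 2$ and expressing $\tau_{s_1 s_0}$ via the $k[\zeta]$-basis $\{\zeta\tau_\omega,\tau_{s_0}\tau_\omega,\tau_{s_1}\tau_\omega,\tau_{s_0 s_1}\tau_\omega\}_\omega$ of $F^1 H$ (using $\zeta = \tau_{s_0 s_1}+\tau_{s_1 s_0}+\tau_{s_0}e_1 + e_1\tau_{s_1}+e_1$ and that $e_{\id^{\pm 1}}e_1 = 0$), the four images land in four distinct components of the direct sum decomposition $F^1 H/(\zeta-1)F^1 H$ indexed by the pairs (character of $\Omega$, basis element of $\{1,\tau_{s_0},\tau_{s_1},\tau_{s_0 s_1}\}$), hence are $k$-linearly independent. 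This finishes the proof of (i), and the freeness statement is immediate from Lemma~\ref{lemma:funnyright}.

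\medskip

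\noindent\textbf{Main obstacle.} The most delicate part is Step 3: confirming the precise form $f_1(x) = A_2(1+\zeta)$ (which relies crucially on (ii) killing an otherwise complicated $\im(f^\pm)$-contribution) and then correctly unpacking $\tau_{s_1 s_0}$ in the standard $k[\zeta]$-basis of $F^1 H$ to see that the four image classes occupy distinct basis slots modulo $(\zeta-1)$. The $\anti$-invariance identity $\anti(A_2) = A_2$ in Step~1 is the small miracle that makes everything cohere.
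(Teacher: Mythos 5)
Your proof is correct, but it takes a genuinely different route from the paper's. The paper first computes $f_1$ on $\ker(g_1)\cong F^1H$ and shows via the length filtration $F_n E^1$ that $f_1(f_{(\mathbf{x}_0,\mathbf{x}_1)}(\tau_w))$ has nonzero leading term in degree $\ell(w)+4$, then computes $f_1(x),f_1(y)\in F_3 E^1\cap\ker(g_1)$ so that $f_1(\mathfrak V)\subseteq F_4 E^1\cap\ker(g_1)$; the injectivity of $f_1$ on $\ker(g_1)\oplus\mathfrak V$ then follows from this degree bookkeeping, and (ii) is read off afterwards. You instead establish (ii) first, exploiting the $\anti$-symmetry (the identity $\anti(x)=x$, which reduces to $(0,0,\c)_1 e_{\id^{-1}}=e_{\id}(0,0,\c)_1$ via \eqref{f:id-id}, and the intertwining $\anti\circ f_{(\mathbf{x}_0,\mathbf{x}_1)}=-f_{(\mathbf{x}_0,\mathbf{x}_1)}\circ\anti$ with $\anti(e_{\id}\tau_{s_0})=-e_{\id}\tau_{s_0}$, giving $\anti(A_2)=A_2$), and then reduce (i) to checking that $\bar f_1:\mathfrak V\to\ker(g_1)/(\zeta^2-1)\ker(g_1)$ is injective, which you do by computing $f_1$ on $\mathfrak V$ exactly and testing modulo $(\zeta-1)$. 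Both arguments work; yours trades the filtration analysis for a cleaner algebraic reduction but at the cost of the slightly delicate $\anti$-invariance identity, whereas the paper's is more mechanical but keeps track of more data. One small imprecision in your Step 3: the four images do not land in four \emph{disjoint} slots of the decomposition by $(\Omega\textrm{-character},\,k[\zeta]\textrm{-basis element})$, because modulo $(\zeta-1)$ one has $e_{\id^{-1}}\tau_{s_1s_0}\equiv e_{\id^{-1}}-e_{\id^{-1}}\tau_{s_0s_1}$, which overlaps with the $\tau_{\omega s_0 s_1}$-block occupied by $e_{\id}\tau_{s_0s_1}$; linear independence nonetheless holds because $e_{\id}$ and $e_{\id^{-1}}$ are orthogonal (using $p>3$ so $\id\neq\id^{-1}$). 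It is also worth making explicit that you only need independence of the four images in $H/(\zeta-1)H$, since the map $F^1H/(\zeta^2-1)F^1H\to H/(\zeta^2-1)H\to H/(\zeta-1)H$ is defined and independence in the target implies it in the source.
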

\begin{proof}
By \cite{embed} Remark 3.2.ii we have
\begin{equation*}
  \zeta \tau_w \zeta = \zeta^2 \tau_w =
  \begin{cases}
  \tau_{(s_0 s_1)^2 w}  & \text{if $w \in \widetilde{W}^{1,\ell \geq 1}$},  \\
  \tau_{(s_1 s_0)^2 w}  & \text{if $w \in \widetilde{W}^{0,\ell \geq 1}$}.
  \end{cases}
\end{equation*}
We deduce that
\begin{align*}
  f_1 (f_{(\mathbf{x}_0,\mathbf{x}_1)}(\tau_w)) & = f_{(\mathbf{x}_0,\mathbf{x}_1)}(\zeta^2 \tau_w) - f_{(\mathbf{x}_0,\mathbf{x}_1)}(\tau_w)  \\
  & =
  \begin{cases}
  f_{(\mathbf{x}_0,\mathbf{x}_1)}(\tau_{(s_0 s_1)^2 w}) - f_{(\mathbf{x}_0,\mathbf{x}_1)}(\tau_w) & \text{if $w \in \widetilde{W}^{1,\ell \geq 1}$},  \\
  f_{(\mathbf{x}_0,\mathbf{x}_1)}(\tau_{(s_1 s_0)^2 w}) - f_{(\mathbf{x}_0,\mathbf{x}_1)}(\tau_w)  & \text{if $w \in \widetilde{W}^{0,\ell \geq 1}$}
  \end{cases}
\end{align*}
and, using Prop.\ \ref{F1part}-i, see that
\begin{equation}\label{f:F1-length}
  f_1 (f_{(\mathbf{x}_0,\mathbf{x}_1)}(\tau_w)) \in
  \begin{cases}
  k^\times (0,\mathbf{c}\iota,0)_{(s_0 s_1)^2 w} + F_{\ell(w)+3}E^1  & \text{if $w \in \widetilde{W}^{1,\ell \geq 1}$},  \\
  k^\times (0,\mathbf{c}\iota,0)_{(s_1 s_0)^2 w} + F_{\ell(w)+3}E^1  & \text{if $w \in \widetilde{W}^{0,\ell \geq 1}$}.
  \end{cases}
\end{equation}
On the other hand we observe that
\begin{align*}
  f_1(x) & = \zeta \cdot x  \cdot  \zeta - x = e_{\id}  \cdot  (0,0,\mathbf{c})_{s_0 s_1}   \qquad\text{by Lemma \ref{lemma:zetaV}-i and Prop.\ \ref{prop:theformulas}} \\
   & = - e_{\id} \cdot  (0,2\mathbf{c}\iota,0)_{s_0 s_1 s_0} - e_{\id} \cdot  (0,2\mathbf{c}\iota,0)_{s_0}  \qquad\text{by \eqref{f:Vzeta}}  \\
   & \in F_3 E^1 \cap \im(f_{(\mathbf{x}_0,\mathbf{x}_1)})  \qquad\text{by Prop.\ \ref{F1part}-i}
\end{align*}
and, by an analogous computation, $f_1(y) \in F_3 E^1 \cap \im(f_{(\mathbf{x}_0,\mathbf{x}_1)})$ as well. By Prop.\ \ref{prop:yo} we conclude that $f_1(\mathfrak{V}) \subseteq F_4 E^1 \cap \im(f_{(\mathbf{x}_0,\mathbf{x}_1)}) = F_4 E^1 \cap \ker(g_1)$ using Prop.\ \ref{prop:isokerg}. This together with \eqref{f:F1-length} shows that $f_1$ is injective on
$\im(f_{(\mathbf{x}_0,\mathbf{x}_1)}) \oplus  \mathfrak{V}$. Lemma \ref{lemma:V} then implies that $\im(f^\pm) = \ker(f_1)$, which establishes Point i of the proposition. Furthermore,  we have $f_1(\ker(g_1)) \subseteq \ker(g_1)$ since $f_1$ and $g_1$ commute (Remark \ref{rema:calcufg}-i). The fact that $f_1(\mathfrak{V}) \subseteq \ker(g_1)$ then shows, again invoking Lemma \ref{lemma:V}, that $f_1 (E^1) \subseteq \ker(g_1)$ which amounts to our assertion ii.
\end{proof}

\begin{remark}\label{rema:1-gamma0pm}
  $(1 - e_{\gamma_0}) \cdot  \ker(f_1) = (1 - e_{\gamma_0})\cdot   h_\pm^1(\widetilde{W})$.
\end{remark}
\begin{proof}
We deduce from Cor.\ \ref{coro:form-zeta} that left multiplication by $\zeta$ preserves $(1 - e_{\gamma_0}) \cdot  h_\pm^1(\widetilde{W})$ as well as $h_\pm^1(\widetilde{W}) \cdot  (1 - e_{\gamma_0})$; for the latter use in addition that $e_{\gamma_0}$ centralizes $h_0^1(\widetilde{W})$ by \eqref{f:id-id}. Applying $\anti$, which preserves $h_\pm^1(\widetilde{W})$ by Lemma \ref{lemma:antievenodd}, one sees that also right multiplication by $\zeta$ preserves $(1 - e_{\gamma_0}) \cdot  h_\pm^1(\widetilde{W})$. We now compute
\begin{align*}
  (1 - e_{\gamma_0})\cdot   \ker(f_1) & = (1 - e_{\gamma_0})\cdot   \im(f^\pm)  \quad\text{by Prop.\ \ref{prop:kerf1star}-i}  \\
   & = (1 - e_{\gamma_0}) H  \cdot \mathbf{x}^-\cdot   \zeta^{\mathbb{N}} + (1 - e_{\gamma_0}) H  \cdot \mathbf{x}^+\cdot   \zeta^{\mathbb{N}}  \\
   & = H (1 - e_{\gamma_0})  \cdot \mathbf{x}^-\cdot   \zeta^{\mathbb{N}} + H (1 - e_{\gamma_0})  \cdot \mathbf{x}^+\cdot   \zeta^{\mathbb{N}}   \\
   & =  H (1 - e_{\gamma_0}) \cdot  (\mathbf{c},0,0)_1 \cdot  \zeta^{\mathbb{N}} + H (1 - e_{\gamma_0}) \cdot (0,0,\mathbf{c})_1 \cdot \zeta^{\mathbb{N}}  \\
   & =  (1 - e_{\gamma_0}) H\cdot   (\mathbf{c},0,0)_1 \cdot  \zeta^{\mathbb{N}} + (1 - e_{\gamma_0}) H \cdot  (0,0,\mathbf{c})_1 \cdot \zeta^{\mathbb{N}}  \\
   & =  (1 - e_{\gamma_0}) \cdot  h_-^1(\widetilde{W})\cdot   \zeta^{\mathbb{N}} + (1 - e_{\gamma_0})\cdot   h_+^1(\widetilde{W})\cdot  \zeta^{\mathbb{N}}   \quad\text{by \eqref{f:leftomega} and Prop.\ \ref{prop:theformulas}} \\
   & \subseteq (1 - e_{\gamma_0}) \cdot  h_\pm^1(\widetilde{W})   \quad\text{by the initial consideration}  .
\end{align*}
Since $(1 - e_{\gamma_0}) \cdot  \mathfrak{V} = 0$ we conclude from Lemma \ref{lemma:V} the right hand equality in
\begin{equation*}
  (1 - e_{\gamma_0}) \cdot h_0^1(\widetilde{W}) \oplus (1 - e_{\gamma_0})\cdot   h_\pm^1(\widetilde{W}) = (1 - e_{\gamma_0}) \cdot  E^1 = (1 - e_{\gamma_0}) \cdot  \im(f_{(\mathbf{x}_0,\mathbf{x}_1)})  \oplus (1 - e_{\gamma_0}) \cdot  \im(f^\pm)   \ .
\end{equation*}
The left hand summands are equal by Remark \ref{rema:1-gamma0F1}, of  the right hand summands one contains the other by the above calculation since $\im(f^\pm) =\ker(f_1)$. Hence the right hand summands must be equal as well.
\end{proof}

\subsubsection{Structure of $E^1$ as an $H$-bimodule.}
Recall the central idempotent $e_{\gamma_0} = e_{\id} + e_{\id^{-1}}$ in $H$.

\begin{proposition}\label{prop:structure1}
   Let $G={\rm SL}_2(\mathbb Q_p)$ with  $p\neq 2,3 $ and assume $\pi=p$. We have the following.
\begin{enumerate}
  \item As an $H$-bimodule, $E^1$ sits in an exact sequence of the form
\begin{equation*}
  0 \rightarrow \ker({f_1})\oplus \ker({g_1}) \rightarrow  E^1\rightarrow E^1/ \ker({f_1})\oplus \ker({g_1}) \rightarrow 0
\end{equation*}
where $E^1/ \ker({f_1})\oplus \ker({g_1})$ is  a $4$-dimensional $H$-bimodule.
  \item As a left (resp. right) $H$-module, $E^1/\ker({f_1})\oplus \ker({g_1})$ is isomorphic  to the direct sum of two copies of a simple $2$-dimensional left (resp. right) $H$-module on which $\zeta$ and $e_\go$ act by $1$.
  \item $E^1 / \ker(g_1)$ is an $H_\zeta$-bimodule.
\end{enumerate}
\end{proposition}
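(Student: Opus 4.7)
The plan is to exploit the vector-space decomposition
\[E^1 = \ker(g_1) \oplus \ker(f_1) \oplus \mathfrak{V}\]
that combines Lemma \ref{lemma:V} with the identification $\im(f^\pm) = \ker(f_1)$ from Proposition \ref{prop:kerf1star}. Part 1 is then immediate, because $\dim_k \mathfrak{V} = 4$ by \eqref{f:basisV}, so the quotient $E^1/(\ker(f_1) \oplus \ker(g_1))$ is naturally identified with $\mathfrak{V}$ as a $k$-vector space of dimension $4$.

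For part 2, the $H$-bimodule structure on the quotient can be read off Lemma \ref{lemma:zetaV}. Parts (g) and (h) of that lemma, combined with the identifications $\im(f_{(\mathbf{x}_0,\mathbf{x}_1)}) = \ker(g_1)$ (Proposition \ref{prop:isokerg}) and $\im(f^\pm) = \ker(f_1)$, show that $\zeta$ acts as the identity on the quotient from both sides. Using the explicit definitions in \eqref{f:basisV} together with the formulas of Section \ref{subsec:omega}, one checks that all four basis vectors $\bar x, \bar y, \overline{x\tau_{s_1}}, \overline{y\tau_{s_0}}$ lie in the $e_{\gamma_0}$-isotypic component from both sides, so $e_{\gamma_0}$ acts as the identity. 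The remaining relations (a)--(d) and (i) of Lemma \ref{lemma:zetaV} give
\begin{equation*}
\tau_{s_0} \bar x = 0,\quad \tau_{s_1} \bar x = \overline{y\tau_{s_0}},\quad \tau_{s_0} \overline{y\tau_{s_0}} = \bar x,\quad \tau_{s_1} \overline{y\tau_{s_0}} = 0,
\end{equation*}
and the symmetric set obtained by swapping $s_0 \leftrightarrow s_1$ and $\bar x \leftrightarrow \bar y$. As a left $H$-module the quotient decomposes as $H\bar x \oplus H\bar y$, and each summand is two-dimensional and simple: a nonzero $\Omega$-stable submodule must contain either the $\id$-weight or the $\id^{-1}$-weight basis vector, and the displayed relations then produce the other. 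Finally, the $k$-linear map $\bar x \mapsto \overline{x\tau_{s_1}},\ \overline{y\tau_{s_0}} \mapsto \bar y$ is checked on generators to be an isomorphism $H\bar x \xrightarrow{\sim} H\bar y$ of left $H$-modules. The right-module statement follows by applying the anti-involution $\anti$, which preserves both $\ker(f_1)$ and $\ker(g_1)$ (Remark \ref{rema:calcufg}-v) and fixes $\zeta$ and $e_{\gamma_0}$.

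For part 3, the induced vector-space decomposition is
\[E^1/\ker(g_1) = \ker(f_1) \oplus \bar{\mathfrak{V}},\]
where $\bar{\mathfrak{V}}$ denotes the image of $\mathfrak{V}$. On $\ker(f_1) = \im(f^\pm)$, $\zeta$ acts invertibly from both sides by Proposition \ref{plusminus-part}. Parts (e), (f), (g) of Lemma \ref{lemma:zetaV} show that for each of the four basis vectors $v$ of $\mathfrak{V}$, both $\zeta \cdot v - v$ and $v \cdot \zeta - v$ lie in $\ker(f_1) \oplus \ker(g_1)$; modulo $\ker(g_1)$ they therefore lie in $\ker(f_1)$. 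The matrix of left (respectively right) multiplication by $\zeta$ on $E^1/\ker(g_1)$ with respect to the ordered decomposition $\bar{\mathfrak{V}} \oplus \ker(f_1)$ is thus block triangular with the identity on the $\bar{\mathfrak{V}}$-block and an invertible operator on the $\ker(f_1)$-block; hence $\zeta$ is invertible, and $E^1/\ker(g_1)$ is an $H_\zeta$-bimodule.

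The main obstacle lies in part 2: one must patiently sort out which error terms in Lemma \ref{lemma:zetaV} belong to $\ker(g_1)$ versus $\ker(f_1)$, track the left and right $\Omega$-weights of each of the four generators in $\mathfrak{V}$, and then verify that the resulting multiplication table leaves no room for nontrivial submodules beyond the two obvious ones. Once this bookkeeping is complete, simplicity and the identification of the two summands reduce to a short direct check.
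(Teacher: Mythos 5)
Your proof is correct and follows essentially the same route as the paper: decompose $E^1 = \ker(g_1) \oplus \ker(f_1) \oplus \mathfrak{V}$ via Lemma \ref{lemma:V} and Prop.\ \ref{prop:kerf1star}-i, then read off the $4$-dimensional quotient from the generators $x$, $y$, $x\tau_{s_1}$, $y\tau_{s_0}$ and the relations in Lemma \ref{lemma:zetaV}. Two small comparisons are worth noting. For part~3 the paper argues more economically: since $f_1 \circ g_1 = g_1 \circ f_1 = 0$ by Prop.\ \ref{prop:kerf1star}-ii and $f_1(\ker(g_1)) \subseteq \ker(g_1)$, the induced endomorphism of $E^1/\ker(g_1)$ vanishes, i.e.\ $z \equiv \zeta\cdot z\cdot \zeta \bmod \ker(g_1)$ for every $z$, from which two-sided invertibility of $\zeta$ is immediate; your block-triangular argument via $\bar{\mathfrak V} \oplus \ker(f_1)$ reaches the same conclusion but with more moving parts. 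For part~2 the paper works with right $H$-modules while you work with left ones, which is an equivalent choice; in fact, your explicit isomorphism $\bar x \mapsto \overline{x\tau_{s_1}}$, $\overline{y\tau_{s_0}} \mapsto \bar y$ is the cleaner one, as the intertwiner written in the paper ($x \mapsto y\cdot\tau_{s_1}$, $x\cdot\tau_{s_0}\mapsto y$) has the indices crossed, since $x\cdot\tau_{s_0} = y\cdot\tau_{s_1} = 0$ by Lemma \ref{lemma:zetaV}-b. One tiny imprecision: to see that $\zeta$ acts by $1$ from the \emph{left} on the quotient you need Lemma \ref{lemma:zetaV}-e,f rather than g,h; parts g,h give the right action (and the action on $x\tau_{s_1}$, $y\tau_{s_0}$ then follows by centrality of $\zeta$). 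This does not affect the correctness of the overall argument.
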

\begin{proof}
The first assertion follows from Lemma \ref{lemma:V} and Prop.\ \ref{prop:kerf1star}-i. As observed before we trivially have $f_1(\ker(g_1)) \subseteq \ker(g_1)$. Hence $f_1$ induces a well defined endomorphism of $E^1 /\ker(g_1)$. But Prop.\ \ref{prop:kerf1star}-ii implies that this latter map is actually  the zero map. It follows that $z \equiv \zeta \cdot z \cdot \zeta \bmod \ker(g_1)$ for any $z \in E^1$, which implies the third assertion.

It remains to determine the module structure of the $4$-dimensional quotient $E^1 / \ker({f_1})\oplus \ker({g_1})$ which has as a $k$-basis the cosets of $x$, $y$, $x\cdot  \tau_{s_1}$, and $y\cdot  \tau_{s_0}$. Obviously $e_\go$ acts by $1$ on these elements from the left and the right. It follows from Lemma \ref{lemma:zetaV} that $\zeta$ acts by $1$ from the left and the right on this quotient. The same lemma also implies that $x$ and $x\cdot  \tau_{s_1}$ generate a 2-dimensional right $H$-submodule in $E^1 / \ker({f_1})\oplus \ker({g_1})$. It is necessarily a simple module because the only one-dimensional modules on which $e_{\gamma_0}$ acts by $1$ are supersingular namely annihilated by $\zeta$ (see \eqref{f:quad}). Correspondingly one sees that $y$ and $y \cdot \tau_{s_0}$ generate another 2-dimensional simple right $H$-submodule in $E^1 / \ker({f_1})\oplus \ker({g_1})$. It is easy to check that these two simple right modules are isomorphic to each other via the map $x \mapsto y \cdot  \tau_{s_1}, x \cdot  \tau_{s_0} \mapsto y$. This proves in particular that $ E^1/\ker(f_1)+\ker(g_1)$ is semisimple isotypic as a right $H$-module, and therefore also as a left $H$-module using $\anti$.
\end{proof}

\subsection{Structure of $E^2$}

We still assume that $G={\rm SL}_2(\mathbb Q_p)$ with $p\neq 2,3$ and that $\pi=p$. Here we focus on the graded piece $E^2$ and work with the endomorphisms of $H$-bimodules
\begin{equation*}
  {f_2}:= \zeta\cdot \id_{E^1}\cdot \, \zeta -  \id_{E^2}:  c\mapsto \zeta \cdot c\cdot \zeta- c \quad\text{
and }\quad
{g_2}:= \zeta\cdot \id_{E^1} -  \id_{E^2}\cdot \,\zeta: c\mapsto \zeta \cdot c- c\cdot \zeta
\end{equation*}
as introduced in \S\ref{subsec:fg}.
By  Prop.\ \ref{prop:structure1} we have an exact sequence of $H$-bimodules
\begin{equation*}
  0 \longrightarrow \ker({f_1}) \oplus \ker({g_1}) \longrightarrow  E^1 \longrightarrow  E^1/(\ker({f_1}) \oplus \ker({g_1})) \longrightarrow 0
\end{equation*}
where $E^1/(\ker({f_1})\oplus \ker({g_1}))$ is  a $4$-dimensional $H$-bimodule. Passing to duals, this gives an exact sequence of $H$-bimodules
\begin{equation*}
  0 \longrightarrow (E^1/(\ker({f_1}) \oplus \ker({g_1})))^\vee \longrightarrow (E^1)^\vee \longrightarrow (\ker({f_1})\oplus \ker({g_1}))^\vee \longrightarrow 0 \ .
\end{equation*}
We define the sub-$H$-bimodules \begin{center}$(E^1)^\vee_{f_1} := \{ \xi \in (E^1)^\vee : \xi | \ker(g_1) = 0\}$ and $(E^1)^\vee_{g_1} := \{ \xi \in (E^1)^\vee : \xi | \ker(f_1) = 0\}$.\end{center} Then
\begin{equation*}
  (E^1)^\vee = (E^1)^\vee_{f_1} + (E^1)^\vee_{g_1}  \quad\text{and}\quad   (E^1)^\vee_{f_1} \cap (E^1)^\vee_{g_1} = (E^1/(\ker({f_1}) \oplus \ker({g_1})))^\vee \ .
\end{equation*}

\begin{lemma}\label{lemma:dualkers}
The composed map
\begin{equation*}
(E^1)^{\vee,f} \xrightarrow{\subseteq} (E^1)^\vee \longrightarrow (\ker({f_1}) \oplus \ker({g_1}))^\vee
\end{equation*}
is injective.
\end{lemma}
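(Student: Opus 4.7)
The kernel of the composed map consists of those $\xi \in (E^1)^{\vee,f}$ whose restriction to $\ker(f_1) \oplus \ker(g_1) \subseteq E^1$ is zero, equivalently the finite-support linear forms on $E^1$ that factor through the 4-dimensional quotient $Q = E^1/(\ker(f_1) \oplus \ker(g_1))$ of Prop.~\ref{prop:structure1}. The plan is to show that any such $\xi$ must be zero.

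The key cohomological input is Prop.~\ref{prop:kerf1star}-ii, which asserts $f_1 \circ g_1 = g_1 \circ f_1 = 0$. Equivalently, $f_1(E^1) \subseteq \ker(g_1)$ and $g_1(E^1) \subseteq \ker(f_1)$. Consequently, for $\xi$ in the kernel of the composed map and any $z \in E^1$, both $g_1(z) = \zeta z - z\zeta$ and $f_1(z) = \zeta z \zeta - z$ are annihilated by $\xi$. This gives the two invariance identities $\xi(\zeta z) = \xi(z\zeta)$ and $\xi(\zeta z \zeta) = \xi(z)$, which iterate to $\xi(\zeta^a z \zeta^b) = \xi(z)$ whenever $a + b$ is even.

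By Lemma~\ref{lemma:V}, $E^1 = \ker(f_1) \oplus \ker(g_1) \oplus \mathfrak V$ with $\mathfrak V = k x \oplus k y \oplus k x\tau_{s_1} \oplus k y\tau_{s_0}$, so $\xi$ is determined by its four values on these generators. Using Prop.~\ref{F1part}-iv, v together with Remarks~\ref{rem:offdiag-Gamma-anti}-1 and \ref{rem:offdiag-anti}, both $\anti$ and $\Gamma_\varpi$ preserve $\ker(f_1)$ and $\ker(g_1)$ and permute the four generators of $\mathfrak V$; hence it will be enough to show $\xi(x) = 0$.

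For the main computation I apply $\xi(\zeta^{2n} x) = \xi(x)$ for arbitrary $n \geq 1$. Using the idempotent orthogonality $e_{\id}\cdot e_1 = 0$ (valid for $p \neq 2$), Cor.~\ref{coro:form-zeta} yields $\zeta \cdot x = e_{\id}(0,0,\mathbf{c})_{s_1 s_0}$, and iterating shows that $\zeta^n \cdot x$ stays inside the $e_{\id}$-isotypic component with its top-length term of length growing linearly in $n$ while the lower-length terms stabilize. Pick $n$ large enough that the top-length support of $\zeta^{2n}x$ lies strictly above the (finite) support of $\xi$; then the identity $\xi(\zeta^{2n}x) = \xi(x)$ becomes a linear relation on the values of $\xi$ at lengths bounded in terms of the support of $\xi$. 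Combining these relations with the vanishing $\xi((0,c^0,0)_w) = 0$ for $\ell(w) \geq 2$ (which follows from $\pm(0,\mathbf{c}^0,0)_w = f_{(\mathbf{x}_0,\mathbf{x}_1)}(\tau_w) \in \ker(g_1)$ by Prop.~\ref{F1part}-i) and with the constraints $\xi(\tau_w \cdot \mathbf{x}^\pm) = 0$ for $\ell(w)$ just above the support threshold (expanded via Lemma~\ref{lemma:fpm}, whose leading term falls outside the support while the correction terms land inside it), one extracts enough independent linear equations to force $\xi(x) = 0$. Applying $\Gamma_\varpi$ and $\anti$ then yields $\xi(y) = \xi(x\tau_{s_1}) = \xi(y\tau_{s_0}) = 0$, and $\xi = 0$ follows from the decomposition of $E^1$.

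The main obstacle is the last step: verifying that the linear relations extracted from the invariance $\xi(\zeta^{a} x \zeta^{b}) = \xi(x)$ (with $a + b$ even) together with the $\tau_w \mathbf{x}^\pm$-relations are actually rich enough to pin down $\xi(x)$. This requires careful bookkeeping of the stabilized low-length components of $\zeta^n \cdot x$ produced by repeated application of Cor.~\ref{coro:form-zeta}, and identification of the coefficients in the $\mathfrak V$-direction, where the explicit $e_{\id}/e_{\id^{-1}}$-decomposition of elements in $h^1(\Omega)$, $h^1(s_0\Omega)$, $h^1(s_1\Omega)$ plays the decisive role.
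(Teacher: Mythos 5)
Your overall strategy is the same as the paper's: show that every $\xi\in(E^1)^{\vee,f}$ that kills $\ker(f_1)\oplus\ker(g_1)$ must vanish, by reducing to the four $\mathfrak V$-generators via Lemma~\ref{lemma:V} and then using the growth in filtration degree of $\zeta^n\cdot x$ together with $f_1\circ g_1=0$. That is the right route. However, there is a genuine gap at the heart of the argument, and it stems from an earlier miscomputation.

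You assert that $\zeta\cdot x = e_{\id}\cdot(0,0,\mathbf c)_{s_1s_0}$ (it is $s_0s_1$, not $s_1s_0$), and that iterating produces a tail of ``lower-length terms'' that ``stabilize,'' requiring the invariance identities plus the $\tau_w\cdot\mathbf{x}^\pm$-relations to eliminate. That picture is wrong. Because $s_0s_1$ and indeed $(s_0s_1)^m$ lie in $\widetilde W^1$, and because left multiplication by $e_{\id}$ annihilates the $e_1$-, $e_{\id^{-1}}$- and $e_{\id^{-2}}$-components appearing in Cor.~\ref{coro:form-zeta}, one finds $\zeta^m\cdot x = e_{\id}\cdot(0,0,\mathbf c)_{(s_0s_1)^m}$ with \emph{no} lower-length terms at all, so $\zeta^m\cdot x\in F^{2m}E^1$. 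Once you have this, the argument closes immediately: for $n$ large enough that $\xi$ kills $F^{4n}E^1$ one gets $\xi(\zeta^{2n}\cdot x)=0$, so $\xi(x)=0$; the extra machinery of extracting linear relations from the $\tau_w\cdot\mathbf{x}^\pm$-expansions is unnecessary. You acknowledge this very step (``verifying that the linear relations\dots are actually rich enough'') as the ``main obstacle,'' and indeed it is a hole in your proof; but it dissolves entirely once the computation of $\zeta^m\cdot x$ is done correctly. Note also that the paper handles $x\cdot\tau_{s_1}$ by a direct filtration estimate (applying $\anti$ to the expression for $\zeta^m\cdot x$, not to $\xi$), rather than by deducing $\xi(x\cdot\tau_{s_1})=0$ from $\xi(x)=0$ via symmetry — the symmetry you invoke sends $x\cdot\tau_{s_1}$ only to $-y\cdot\tau_{s_0}$ modulo $\ker(f_1)$, so this substitution needs more care than the proposal gives it.
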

\begin{proof}
We have to prove, for $m \geq 1$, that
\begin{equation*}
  \ker(f_1) + \ker(g_1) + F^m E^1 = E^1 \ .
\end{equation*}
Because of Lemma \ref{lemma:V} this boils down to proving that $x$, $x \cdot \tau_{s_1}$, $y$, $y \cdot \tau_{s_0}$ all lie in $\im(f_{(\mathbf{x}_0,\mathbf{x}_1)}) \oplus \im(f^\pm) \oplus F^m E^1$. Since $y=\Gamma_\varpi(x)$ it is enough to prove this for $x = e_{\id} \cdot (0,0,\c)_{1}$ and $x \cdot \tau_{s_1}= e_{\id}\cdot  (0,0,\c)_{s_1}$.
By  Lemma \ref{lemma:zetaV}-e) we know that $\zeta^m\cdot  x - x$ and $\zeta^m \cdot x\cdot  \tau_{s_1} - x\cdot \tau_{s_1}$ lie in $\ker(f_1) + \ker(g_1)$  for any $m \geq 1$. But, using Cor.\ \ref{coro:form-zeta}, we have $\zeta^m\cdot  x = e_{\id} \zeta^m\cdot  (0,0,\c)_{1} = e_{\id} \cdot (0,0,\mathbf{c})_{(s_0 s_1)^m} \in F^{2m} E^1$ and then $\zeta^m \cdot x\cdot  \tau_{s_1} = e_{\id}\cdot  (0,0,\mathbf{c})_{(s_0 s_1)^m} \tau_{s_1} \in F^{2m-1} E^1$ by applying $\anti$ and using Prop.\ \ref{prop:theformulas}.
\end{proof}

We put $K_{f_1} := (E^1)^{\vee,f} \cap (E^1)^\vee_{f_1}$ and $K_{g_1} := (E^1)^{\vee,f} \cap (E^1)^\vee_{g_1}$. Because of Lemma \ref{lemma:dualkers} we have $K_{f_1} \oplus K_{g_1} \subseteq (E^1)^{\vee,f}$. Since $K_{f_1}$ and $K_{g_1}$ inject into $\ker(f_1)^\vee$ and $\ker(g_1)^\vee$, respectively, we have $\zeta \cdot \eta\cdot  \zeta = \eta$ for $\eta \in K_{f_1}$ and $\zeta\cdot  \eta = \eta \cdot  \zeta$ for $\eta \in K_{g_1}$.

\begin{lemma}\label{lemma:decompE1dual}
   $(E^1)^{\vee, f} = K_{f_1} \oplus K_{g_1}$.
\end{lemma}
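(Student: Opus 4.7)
My plan would be as follows. The inclusion $K_{f_1} + K_{g_1} \subseteq (E^1)^{\vee,f}$ is built into the definitions, so only directness and surjectivity remain. For directness, observe that any $\eta \in K_{f_1} \cap K_{g_1}$ vanishes simultaneously on $\ker(f_1)$ and $\ker(g_1)$, hence on their direct sum. But Lemma \ref{lemma:dualkers} says that restriction from $(E^1)^{\vee,f}$ to $(\ker(f_1) \oplus \ker(g_1))^\vee$ is injective, so $\eta = 0$.

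For the reverse inclusion, I will use the vector-space direct sum decomposition
\begin{equation*}
  E^1 = \ker(f_1) \oplus \ker(g_1) \oplus \mathfrak V
\end{equation*}
provided by Lemma \ref{lemma:V} together with Proposition \ref{prop:kerf1star}-i, with associated projections $\pi_1, \pi_2, \pi_3$. Given $\xi \in (E^1)^{\vee,f}$, the natural candidate splitting is
\begin{equation*}
  \xi_2 := \xi \circ \pi_2 \qquad \text{and} \qquad \xi_1 := \xi - \xi_2 = \xi \circ (\pi_1 + \pi_3),
\end{equation*}
which tautologically satisfies $\xi_2|_{\ker(f_1)} = 0$ (so $\xi_2 \in (E^1)^\vee_{g_1}$) and $\xi_1|_{\ker(g_1)} = 0$ (so $\xi_1 \in (E^1)^\vee_{f_1}$). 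The whole proof then hinges on verifying that both $\xi_1$ and $\xi_2$ remain in the finite dual $(E^1)^{\vee,f}$, after which $\xi = \xi_1 + \xi_2$ realises the desired decomposition.

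The component $\xi \circ \pi_3$ is automatically finite-dual: by \eqref{f:basisV}, $\mathfrak V$ is $4$-dimensional and supported in $h^1(\{1\} \cup s_0\Omega \cup s_1\Omega)$, so $\xi \circ \pi_3$ has finite Bruhat support irrespective of $\xi$. Consequently the problem reduces to showing $\xi \circ \pi_2 \in (E^1)^{\vee,f}$; the case of $\xi_1$ then follows by subtraction. This in turn amounts to the following locality property of $\pi_2$: for each $w' \in \widetilde W$, the composed map
\begin{equation*}
  h^1(w) \hookrightarrow E^1 \xrightarrow{\;\pi_2\;} \ker(g_1) \twoheadrightarrow \ker(g_1) \cap h^1(w')
\end{equation*}
is nonzero for only finitely many $w \in \widetilde W$.

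I expect this locality claim to be the main obstacle. To establish it, I would combine two pieces of explicit information already at our disposal. First, Proposition \ref{F1part}-i identifies $\ker(g_1) \cap h^1(w)$ with the line $k \cdot (0,\mathbf c^0,0)_w$ of middle components once $\ell(w) \geq 2$, with small boundary modifications at lengths $0$ and $1$ involving the cells $\Omega \cup s_0 \Omega \cup s_1 \Omega$. Second, the formulas \eqref{f:list0}--\eqref{f:list1} express the generators $\tau_w \cdot \mathbf x^\pm$ of $\ker(f_1)$ as a leading edge term in $h^1(w)$ together with corrections confined to $F_{\ell(w)-2} E^1 + h_0^1(\widetilde W)$. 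An induction on $\ell(w)$ using these formulas writes any $v \in h^1(w)$ explicitly as $\pi_1(v) + \pi_2(v) + \pi_3(v)$: the edge components are cleared by suitable linear combinations of $\tau_{w''} \mathbf x^\pm$ (decreasing length in steps of two), and the residual middle components lie in $\ker(g_1)$. The recursion terminates at length at most $1$, where the finite-dimensional $\mathfrak V$ absorbs the remainder. The careful bookkeeping needed to ensure that, for any fixed target $w'$, only finitely many starting cells $h^1(w)$ can contribute to the $h^1(w')$ component of $\pi_2(v)$ is the technical heart of the argument; once verified, the lemma follows.
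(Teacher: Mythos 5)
Your directness argument (via Lemma \ref{lemma:dualkers}) and the use of the decomposition $E^1 = \ker(f_1)\oplus\ker(g_1)\oplus\mathfrak V$ both match the paper's strategy; the disagreement is in how the summand of $\xi$ landing in $K_{g_1}$ is built. Your candidate $\xi_2 := \xi\circ\pi_2$ amounts to imposing $\xi_2|_{\mathfrak V}=0$, and the ``locality'' of $\pi_2$ on which you pin the finite-dual membership is false: the projections $\pi_1,\pi_2,\pi_3$ attached to this decomposition do not respect the length filtration. Concretely, Lemma \ref{lemma:zetaV}-e) shows $\zeta\cdot x - x$ lies in $\ker(f_1)\oplus\ker(g_1)$, whence so does $\zeta^m\cdot x - x$ for every $m\geq 1$; but (as computed in the proof of Lemma \ref{lemma:dualkers}) $\zeta^m\cdot x = e_{\id}\cdot(0,0,\mathbf{c})_{(s_0s_1)^m}\in F^{2m}E^1$. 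Therefore $\pi_3(\zeta^m\cdot x)=x\neq 0$ for all $m$, which already defeats the claim that $\xi\circ\pi_3$ has finite Bruhat support ``irrespective of $\xi$''. The same phenomenon hits $\pi_2$: the paper's own computation inside this proof gives, for every $m\geq 1$,
\begin{equation*}
\pi_2\big(e_{\id}\cdot(0,0,\mathbf{c})_{(s_0s_1)^m s_0}\big)=\sum_{j=1}^m e_{\id}\cdot(0,2\mathbf{c}\iota,0)_{(s_0s_1)^j}\ ,
\end{equation*}
whose component in the fixed length-$2$ cells $\Omega s_0 s_1$ is nonzero for every $m$. So $\pi_2(h^1(w))$ has nonzero component in a fixed length-$2$ cell for $w$ of unbounded length, contradicting the locality you need, and any $\xi$ with $\sum_{j\geq 1}\xi\big(e_{\id}\cdot(0,2\mathbf{c}\iota,0)_{(s_0s_1)^j}\big)\neq 0$ yields $\xi\circ\pi_2\notin(E^1)^{\vee,f}$.

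The idea missing from your plan is that the summand in $K_{g_1}$ must take a \emph{nonzero} value on $\mathfrak V$, chosen precisely to telescope against this infinite tail of $\pi_2$. In the $e_{\id}$-biequivariant piece the paper sets $\eta\big(e_{\id}\cdot(0,0,\mathbf{c})_{s_1}\big):=\sum_{j\geq 1}\xi\big(e_{\id}\cdot(0,2\mathbf{c}\iota,0)_{(s_0s_1)^j}\big)$, a finite sum since $\xi$ is finite-dual, so that $\eta\big(e_{\id}\cdot(0,0,\mathbf{c})_{(s_0s_1)^m s_0}\big)=-\sum_{j>m}\xi\big(e_{\id}\cdot(0,2\mathbf{c}\iota,0)_{(s_0s_1)^j}\big)$ vanishes for $m\gg 0$. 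No bookkeeping in an induction on $\ell(w)$ will rescue the choice $\eta|_{\mathfrak V}=0$; the correction term on $\mathfrak V$ is forced, and finding it is the actual content of the lemma.
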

\begin{proof}
Let $\xi\in (E^1)^{\vee, f}$. We claim that there exists a linear map $\eta \in K_{g_1}$ such that $\eta\vert_{\ker({g_1})} = \xi\vert_{\ker({g_1})}$. This implies that
 $\xi-\eta \in K_{f_1}$.
\begin{itemize}
\item  Suppose $\xi=\xi  (1-e_{\gamma_0})$. Then we can see $\xi$ as an element in $((1-e_{\gamma_0}) E^1)^{\vee, f}$.
Since $(1-e_{\gamma_0}) E^1=(1-e_{\gamma_0}) \ker({f_1})\oplus (1-e_{\gamma_0})\ker({g_1})$ where $(1-e_{\gamma_0}) \ker({f_1})=(1-e_{\gamma_0}) h_\pm ^1(\widetilde W)$ by Remark \ref{rema:1-gamma0pm} and $(1-e_{\gamma_0}) \ker({g_1})=(1-e_{\gamma_0}) h_0^1(\widetilde W)$ by Remark \ref{rema:1-gamma0F1} and Prop.\ \ref{prop:isokerg},  we may define $\eta$
to be zero on $(1-e_{\gamma_0}) \ker({f_1})$ and $\eta\vert_{(1-e_{\gamma_0}) \ker({g_1})} = \xi\vert_{(1-e_{\gamma_0}) \ker({g_1})}$.
\item  Suppose $\xi= (1-e_{\gamma_0}) \xi  e_{\gamma_0}$. Then we can see $\xi$ as an element in $(e_{\gamma_0} E^1 (1-e_{\gamma_0}))^{\vee, f}$.
Since $e_{\gamma_0} \ker (g_1) (1-e_{\gamma_0})=0$ by Remark \ref{rema:1-gamma0F1} and Prop.\ \ref{prop:isokerg}, the linear form $\xi $ is already in $K_{f_1}$.
\item Now suppose  $\xi= e_{\gamma_0} \xi e_{\gamma_0}$. We  may  consider separately  two cases, namely  $\xi= e_{\id} \xi  e_{\id}$ and $\xi= e_{\id} \xi e_{\id^{-1}}$ (the other cases following by conjugation by $\varpi$). We treat the first case, the second one being  similar.
 If  $\xi= e_{\id} \xi e_{\id}$, then we can see $\xi$ as a linear map on
$e_{\id }  E^1  e_{\id}$ (recall that we are working in the $H$-bimodule $(E^1)^{\vee}$).
By Lemma \ref{lemma:V} and \eqref{f:basisV} we have
\begin{equation*}
  e_{\id}  E^1 e_{\id}=e_{\id} (\ker({f_1}) \oplus \ker({g_1}))  e_{\id}\oplus  k e_{\id} (0,0,\c)_{s_1} \ .
\end{equation*}
Define the linear map $\eta: E^1\rightarrow k$ by
\begin{align*}
   & \eta\vert_{e_{\id} \ker(f_1) e_{\id}} : = 0, \
            \eta\vert_{e_{\id} \ker(g_1) e_{\id}} := \xi\vert_{e_{\id} \ker(g_1) e_{\id}}, \ \text{and} \\
   & \eta(e_{\id} (0,0, \c)_{s_1}) := \sum_{j=1}^{+\infty}\xi\big(e_{\id} ( 0,  2\c\iota, 0)_{ (s_0s_1)^{j}}\big) \ ,
\end{align*}
which is well defined because $\xi \in (E^1)^{\vee, f}$.
From \eqref{f:id-id} we have
\begin{equation*}
  e_{\id}  E^1 e_{\id}= e_{\id} h^1_0(\widetilde W^{even}) +  e_{\id}  h^1_+(\widetilde W^{odd})\ .
\end{equation*}
It remains to check that $\eta \in (E^1)^{\vee, f}$. Since  $h^1_0(\widetilde W^{\ell\geq 2})$ is contained in $\ker(g_1)$ by Prop.\ \ref{F1part}, we only need to check that $\eta$ is trivial on $e_{\id}  \cdot h^1_+(\widetilde W^{odd, \ell\geq m})$ for $m$ large enough.  From Cor.\ \ref{coro:form-zeta} we deduce that $\zeta^{m+1}  \cdot  x  \cdot \tau_{s_1} = \zeta^{m+1} e_{\id}  \cdot (0,0,\mathbf{c})_{s_1} = - e_{\id} (0,2\mathbf{c}\iota,0)_{(s_0 s_1)^{m+1}} - e_{\id} \cdot  (0,0,\mathbf{c})_{(s_0 s_1)^m s_0}$ for any $m \geq 0$. Hence
\begin{gather*}
  e_{\id}  \cdot  (0,0,\mathbf{c})_{(s_0 s_1)^m s_0}   \qquad\qquad\qquad\qquad\qquad\qquad\qquad\qquad\qquad\qquad\qquad\qquad\qquad\qquad\qquad\qquad \\
\begin{split}
   & = - \zeta^{m+1}  \cdot x  \cdot  \tau_{s_1} - e_{\id}  \cdot  (0,2\mathbf{c}\iota,0)_{(s_0 s_1)^{m+1}} \\
   & = - e_{\id} \cdot  (0,0,\mathbf{c})_{s_1} - (\zeta^{m+1}  \cdot x \cdot  \tau_{s_1} - x  \cdot \tau_{s_1}) - e_{\id} \cdot  (0,2\mathbf{c}\iota,0)_{(s_0 s_1)^{m+1}}   \\
   & = - e_{\id}  \cdot (0,0,\mathbf{c})_{s_1} - (\sum_{j=0}^m \zeta^j) (\zeta \cdot  x - x) \cdot \tau_{s_1} - e_{\id} \cdot  (0,2\mathbf{c}\iota,0)_{(s_0 s_1)^{m+1}}   \\
   & \in \ker(f_1) - e_{\id}  \cdot (0,0,\mathbf{c})_{s_1} + (\sum_{j=0}^m \zeta^j) e_{\id}  \cdot (0,2\mathbf{c}\iota,0)_{s_0} \cdot  \tau_{s_1} - e_{\id}  \cdot (0,2\mathbf{c}\iota,0)_{(s_0 s_1)^{m+1}} \\
   &  \qquad\qquad\qquad\qquad\qquad\qquad\qquad\qquad\qquad\qquad\qquad\qquad\qquad\qquad\qquad\qquad\text{by Lemma \ref{lemma:zetaV}-e)}   \\
   & = \ker(f_1) - e_{\id} \cdot  (0,0,\mathbf{c})_{s_1} + (\sum_{j=0}^m \zeta^j) e_{\id} \cdot  (0,2\mathbf{c}\iota,0)_{s_0 s_1}  - e_{\id}  \cdot (0,2\mathbf{c}\iota,0)_{(s_0 s_1)^{m+1}} \\
   &  \qquad\qquad\qquad\qquad\qquad\qquad\qquad\qquad\qquad\qquad\qquad\qquad\qquad\qquad\qquad\qquad\text{by Lemma \ref{lemma:righteasy}-i}   \\
   & =  \ker(f_1) - e_{\id} \cdot  (0,0,\mathbf{c})_{s_1} + (\sum_{j=0}^m e_{\id}  \cdot (0,2\mathbf{c}\iota,0)_{(s_0 s_1)^{j+1}}  - e_{\id}  \cdot (0,2\mathbf{c}\iota,0)_{(s_0 s_1)^{m+1}} \\
   &  \qquad\qquad\qquad\qquad\qquad\qquad\qquad\qquad\qquad\qquad\qquad\qquad\qquad\qquad\qquad\qquad\text{by Cor.\ \ref{coro:form-zeta}}   \\
   & =  \ker(f_1) - e_{\id}  \cdot (0,0,\mathbf{c})_{s_1} + (\sum_{j=1}^m e_{\id}  \cdot (0,2\mathbf{c}\iota,0)_{(s_0 s_1)^j} \ .
\end{split}
\end{gather*}
Since $\eta$ is zero on $\ker(f_1)$ it follows that
\begin{equation*}
   \eta(e_{\id} \cdot  (0,0,\c)_{(s_0s_1)^{m}s_0}) = \eta(- e_{\id} \cdot  (0,0, \c)_{s_1 }+ \sum_{j=1}^{m}e_{\id}  \cdot ( 0,  2\c\iota, 0)_{ (s_0s_1)^{j}}) = - \xi( \sum_{j=m+1}^{\infty}e_{\id}  \cdot ( 0,  2\c\iota, 0)_{(s_0 s_1)^{j}})  .
\end{equation*}
An analogous computation gives
\begin{equation*}
   \eta(e_{\id}  \cdot (0,0,\c)_{(s_1s_0)^{m}s_1}) = \eta (e_{\id}  \cdot (0,0, \c)_{s_1} - \sum_{j=1}^{m}e_{\id}  \cdot ( 0,  2\c\iota, 0)_{ (s_0s_1)^{j}}) = \xi( \sum_{j=m+1}^{\infty}e_{\id}  \cdot ( 0,  2\c\iota, 0)_{(s_0 s_1)^{j}})  .
\end{equation*}
Both are zero for $m$ large enough.
\end{itemize}
\end{proof}

Recall from  \eqref{f:dual} that we have an isomorphism of $H$-bimodules
\begin{equation}\label{f:dual12}
    E^2 \xrightarrow{\;\cong\;} {}^\anti((E^1)^{\vee,f})^\anti.
\end{equation}

\begin{proposition}\label{prop:structure2} Suppose $G={\rm SL}_2(\mathbb Q_p)$ with $p\neq 2,3$ and $\pi=p$.
  Via the isomorphism \eqref{f:dual12}, we have $\ker({{f_2}}) \cong K_{f_1}$ and $\ker({g_2}) \cong K_{g_1}$ and as  $H$-bimodules
\begin{equation*}
   E^2= \ker({{f_2}})\oplus \ker({g_2}).
\end{equation*}
In particular, $f_2\circ g_2= g_2\circ f_2=0$.
\end{proposition}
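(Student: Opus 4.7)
My plan is to transport the statement to the finite dual $(E^1)^{\vee,f}$ via the bimodule isomorphism $\Delta^2$ of \eqref{f:dual12} and then read off the decomposition from Lemma \ref{lemma:decompE1dual} combined with the vanishings already established in Prop.\ \ref{prop:kerf1star}-ii and Lemma \ref{lemma:cap}-iv.

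First I would spell out how $f_2$ and $g_2$ read on $(E^1)^{\vee,f}$. Since $\anti(\zeta)=\zeta$, chasing through the definition of the twisted bimodule ${}^\anti((E^1)^{\vee,f})^\anti$ (with the standard dual bimodule structure $(h\cdot\varphi)(x)=\varphi(xh)$, $(\varphi\cdot h)(x)=\varphi(hx)$ on the inner $(E^1)^{\vee,f}$) yields
\begin{equation*}
(\zeta\cdot\varphi)(x)=\varphi(\zeta x),\qquad (\varphi\cdot\zeta)(x)=\varphi(x\zeta).
\end{equation*}
Therefore $f_2(\varphi)=\varphi\circ f_1$ and $g_2(\varphi)=\varphi\circ g_1$ under $\Delta^2$. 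In particular, via $\Delta^2$,
\begin{equation*}
\ker(f_2)=\{\varphi\in(E^1)^{\vee,f}:\varphi|_{\im(f_1)}=0\},\quad \ker(g_2)=\{\varphi\in(E^1)^{\vee,f}:\varphi|_{\im(g_1)}=0\}.
\end{equation*}

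Next, Prop.\ \ref{prop:kerf1star}-ii gives $\im(f_1)\subseteq\ker(g_1)$ and $\im(g_1)\subseteq\ker(f_1)$, so the definitions of $K_{f_1}$ and $K_{g_1}$ (as those $\varphi\in(E^1)^{\vee,f}$ vanishing on $\ker(g_1)$, resp.\ $\ker(f_1)$) immediately yield the inclusions $K_{f_1}\subseteq\ker(f_2)$ and $K_{g_1}\subseteq\ker(g_2)$ through $\Delta^2$. Combined with Lemma \ref{lemma:decompE1dual}, which says $(E^1)^{\vee,f}=K_{f_1}\oplus K_{g_1}$, this at once gives the sum $E^2=\ker(f_2)+\ker(g_2)$.

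To upgrade the sum to a direct sum I would invoke Lemma \ref{lemma:cap}-iv, which says $\ker(f_2)\cap\ker(g_2)=0$. Hence $E^2=\ker(f_2)\oplus\ker(g_2)$ as $H$-bimodules. The two decompositions $E^2=K_{f_1}\oplus K_{g_1}=\ker(f_2)\oplus\ker(g_2)$ with pointwise inclusions $K_{f_1}\subseteq\ker(f_2)$ and $K_{g_1}\subseteq\ker(g_2)$ then force equalities term by term: any $\alpha\in\ker(f_2)$ splits as $\alpha=\alpha_1+\alpha_2$ with $\alpha_i\in K_{f_i}$, where necessarily $\alpha_2\in\ker(f_2)\cap\ker(g_2)=0$ by the above, so $\alpha\in K_{f_1}$, and symmetrically on the other side.

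Finally, $f_2\circ g_2=g_2\circ f_2=0$ drops out as a bonus: for any $\varphi\in(E^1)^{\vee,f}$ one has $(f_2g_2\varphi)(x)=\varphi(g_1f_1 x)=0$ since $g_1\circ f_1=0$ by Prop.\ \ref{prop:kerf1star}-ii, and similarly for $g_2\circ f_2$. No step presents a genuine obstacle since all the hard work has already been done earlier; the only mild subtlety is the sign/convention bookkeeping in identifying $f_2$ and $g_2$ with precomposition by $f_1$ and $g_1$, which is handled by carefully unwinding the twisted bimodule structure on ${}^\anti((E^1)^{\vee,f})^\anti$.
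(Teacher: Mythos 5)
Your proof is correct and follows essentially the same route as the paper: transport via $\Delta^2$, establish the inclusions $K_{f_1}\subseteq\ker(f_2)$ and $K_{g_1}\subseteq\ker(g_2)$, and invoke Lemma \ref{lemma:decompE1dual} for the sum and Lemma \ref{lemma:cap}-iv for the directness, then let the two decompositions force the term-by-term equalities. The only variation is bookkeeping: you establish the inclusions via the explicit identities $f_2(\varphi)=\varphi\circ f_1$, $g_2(\varphi)=\varphi\circ g_1$ together with $\im(f_1)\subseteq\ker(g_1)$ from Prop.\ \ref{prop:kerf1star}-ii, whereas the paper deduces them from the injections $K_{f_1}\hookrightarrow\ker(f_1)^\vee$, $K_{g_1}\hookrightarrow\ker(g_1)^\vee$ stated just before Lemma \ref{lemma:decompE1dual}; these are equivalent. (Minor typo: "$\alpha_i\in K_{f_i}$" should read $\alpha_1\in K_{f_1}$, $\alpha_2\in K_{g_1}$.)
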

\begin{proof}
 Let us denote the isomorphism \eqref{f:dual12} temporarily by $\mathfrak{j}$. We had observed already that $\zeta \eta \zeta = \eta$ for $\eta \in K_{f_1}$ and $\zeta \eta = \eta \zeta$ for $\eta \in K_{g_1}$. It follows that $\mathfrak{j}^{-1}(K_{f_1}) \subseteq \ker({{f_2}})$ and $\mathfrak{j}^{-1}(K_{g_1}) \subseteq \ker({g_2})$. We also know from Lemma  \ref{lemma:cap}-iv that $\ker({{f_2}}) \cap \ker({g_2}) = \{0\}$. Therefore, our assertion is a consequence of Lemma \ref{lemma:decompE1dual}.
\end{proof}

From Lemma \ref{lemma:cap}-i-ii, Propositions \ref{prop:kerf1star}-ii and \ref{prop:structure2} we get:

\begin{corollary}\label{coro:fg}
Under the same assumptions, we have
we have $f\circ g= g\circ f=0$ on $E^*$.
\end{corollary}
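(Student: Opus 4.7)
The plan is to decompose the statement degree by degree. Since $f$ and $g$ are graded endomorphisms of $E^*$, so are $f \circ g$ and $g \circ f$, and it suffices to verify that $f_i \circ g_i = g_i \circ f_i = 0$ on each $E^i$ for $i = 0, 1, 2, 3 = d$. Three of the four degrees fall out immediately from results already in hand: at $i = 0$, Lemma \ref{lemma:cap}-i gives $\ker(g_0) = E^0$, so $g_0 = 0$ on the nose; at $i = d$, Lemma \ref{lemma:cap}-ii gives $\ker(g_d) = E^d$, so $g_d = 0$ as well; and at $i = 1$, Proposition \ref{prop:kerf1star}-ii is literally the claim $f_1 \circ g_1 = g_1 \circ f_1 = 0$.

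The only remaining case is $i = 2$, and this is handled by Proposition \ref{prop:structure2}, which produces the $H$-bimodule decomposition
\begin{equation*}
    E^2 = \ker(f_2) \oplus \ker(g_2).
\end{equation*}
On the summand $\ker(g_2)$ the operator $g_2$ vanishes, so both $g_2 \circ f_2$ and $f_2 \circ g_2$ vanish there (using in the second case that $f_2$ and $g_2$ commute by Remark \ref{rema:calcufg}-i, which makes $\ker(g_2)$ stable under $f_2$). On $\ker(f_2)$ the symmetric argument applies. Thus $f_2 \circ g_2 = g_2 \circ f_2 = 0$ on all of $E^2$.

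Putting the four degrees together yields the corollary. There is no genuine obstacle left in this proof: the substantive work has already been done in Proposition \ref{prop:kerf1star}-ii (where the identification $\ker(f_1) = \operatorname{im}(f^\pm)$ together with the computations of $f_1$ on $\operatorname{im}(f_{(\mathbf{x}_0,\mathbf{x}_1)})$ and on $\mathfrak{V}$ was the key input) and in Proposition \ref{prop:structure2} (which rested on the duality \eqref{f:dual12} together with Lemma \ref{lemma:decompE1dual}). The present corollary is merely the assembly of these pieces across the grading.
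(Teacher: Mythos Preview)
Your proof is correct and follows essentially the same approach as the paper: a degree-by-degree assembly of Lemma \ref{lemma:cap}-i-ii, Proposition \ref{prop:kerf1star}-ii, and Proposition \ref{prop:structure2}. Note that the conclusion $f_2\circ g_2 = g_2\circ f_2 = 0$ is already stated explicitly as part of Proposition \ref{prop:structure2}, so your detailed argument for $i=2$ via the decomposition (while correct) reproduces reasoning that the paper has already absorbed into that proposition.
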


In the following two sections we determine the $H$-bimodule structure of the two summands $\ker(g_2)$ and $\ker(f_2)$.

\subsubsection{On $\ker(g_2)$}\label{subsubsec:kerg2}

The surjective restriction map $(E^1)^\vee \longrightarrow \ker(g_1)^\vee$ induces the injective map of $H$-bimodules
\begin{equation*}
  \ker(g_2) \cong {}^\anti(K_{g_1})^\anti \longrightarrow  {}^\anti(\ker(g_1)^\vee)^\anti \ .
\end{equation*}
We have to determine the image of this map. From Prop.\ \ref{F1part}-i we know that $h_0^1(\widetilde{W}^{\ell \geq 2}) \subseteq \ker(g_1) \subseteq h_0^1(\widetilde{W}) \oplus h_\pm^1(\Omega)$. Hence the decreasing filtration
\begin{equation*}
  F^n \ker(g_1) :=
  \begin{cases}
  \ker(g_1) & \text{if $n = 1$}, \\
  h_0^1(\widetilde{W}^{\ell \geq n}) & \text{if $n \geq 2$}
  \end{cases}
\end{equation*}
is well defined as well as the corresponding finite dual
\begin{equation*}
  \ker(g_1)^{\vee,f} := \bigcup_{n \geq 1} (\ker(g_1)/F^n \ker(g_1))^\vee \ .
\end{equation*}
If $\xi \in (E^1)^{\vee,f}$ satisfies $\xi | F^n E^1 = 0$ for some $n \geq 2$ then obviously $\xi_{|\ker(g_1)} | F^n \ker(g_1) = 0$ and hence $\xi_{|\ker(g_1)} \in \ker(g_1)^{\vee,f}$. Vice versa, let $\eta \in \ker(g_1)^{\vee,f}$ such that $\eta | F^n \ker(g_1) = 0$ for some $n \geq 2$. We first choose an extension $\dot\eta$ of $\eta$ to $h_0^1(\widetilde{W}^{\ell \geq 2})$ and then extend $\dot\eta$ further to $\ddot\eta$ on $E^1$ by setting $\ddot\eta | h_\pm^1(\widetilde{W}^{\ell \geq 1}) := 0$. then clearly $\ddot\eta | F^n E^1 = 0$, i.e., $\ddot\eta \in (E^1)^{\vee,f}$. This shows that our $\eta$ has an extension in $(E^1)^{\vee,f}$. By Prop.\ \ref{prop:structure2} it then must also have an extension $\xi \in (E^1)^{\vee,f}$ which satisfies $\xi_{| \ker(f_1)} = 0$, i.e., $\xi \in K_{g_1}$. We see that the above restriction map induces an isomorphism of $H$-bimodules
\begin{equation}\label{f:kerg2}
  \ker(g_2) \cong {}^\anti(K_{g_1})^\anti \xrightarrow{\;\cong\;}  {}^\anti(\ker(g_1)^{\vee,f})^\anti \ .
\end{equation}

\begin{proposition}\label{prop:structurekerg2}  Suppose $G={\rm SL}_2(\mathbb Q_p)$ with $p\neq 2,3$ and $\pi=p$.
 The space  $\ker({g_2})$ is the subspace of $\zeta$-torsion in $E^2$ on the left and on the right. We have an isomorphism of $H$-bimodules
\begin{equation}\label{f:compisokerg2}
         \ker(g_2) \cong (F^1H)^{\vee,f}\cong \bigcup_{n\geq 1} (F^1H/\zeta^n F^1 H )^{\vee} \ .
\end{equation}
In particular, $\ker(g_2)$  is $k[\zeta]$-divisible.
\end{proposition}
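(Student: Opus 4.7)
The plan is to combine the $H$-bimodule isomorphisms already assembled in the proof of Prop.\ \ref{prop:structure2} and in \eqref{f:kerg2} with the identification of Prop.\ \ref{prop:isokerg} to get a chain
\begin{equation*}
\ker(g_2)\;\cong\;{}^\anti(\ker(g_1)^{\vee,f})^\anti\;\cong\;{}^\anti((F^1H)^{\vee,f})^\anti\;\cong\;(F^1H)^{\vee,f}.
\end{equation*}
Under $f_{(\mathbf{x}_0,\mathbf{x}_1)}$ the filtration $F^n\ker(g_1)=h_0^1(\widetilde W^{\ell\geq n})$ (for $n\geq 2$) used in \eqref{f:kerg2} corresponds via Prop.\ \ref{F1part}-i to $F^nH\cap F^1H=F^nH$, which gives the middle isomorphism. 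The outer untwisting comes from the fact that $\anti(\tau_w)=\tau_{w^{-1}}$ preserves $F^1H$ along with every $F^nH$, so $\anti|_{F^1H}$ is an anti-automorphism of the $H$-bimodule $F^1H$; dualising (and invoking \cite{Ext} Rmk.\ 7.1) removes the ${}^\anti(-)^\anti$-twist on the finite dual.

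To rewrite $(F^1H)^{\vee,f}$ as the $\zeta$-adic direct limit I would prove the sharper equality $\zeta^n\cdot F^1 H=F^{2n+1}H$ inside $H$. The input is the direct calculation, using only the quadratic and braid relations, that $\zeta\tau_{s_0}=\tau_{s_0s_1s_0}$ and $\zeta\tau_{s_1}=\tau_{s_1s_0s_1}$; iterating (compare \cite{embed} Rmk.\ 3.2.ii) yields $\zeta^n\tau_w=\tau_{(s_0s_1)^nw}$ if $w\in\widetilde W^{1,\ell\geq 1}$ and $\zeta^n\tau_w=\tau_{(s_1s_0)^nw}$ if $w\in\widetilde W^{0,\ell\geq 1}$. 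The inclusion $\zeta^n F^1H\subseteq F^{2n+1}H$ is then immediate from $\ell((s_0s_1)^nw)=\ell(w)+2n$. For the reverse, any $w$ with $\ell(w)\geq 2n+1$ lies in exactly one of $\widetilde W^0,\widetilde W^1$ and its reduced expression, after pulling out the $\Omega$-factor, begins with $(s_0s_1)^n$ or $(s_1s_0)^n$, so $\tau_w=\zeta^n\tau_{w'}$ for a suitable $w'$ in $F^1H$. Hence the two filtrations coincide and $(F^1H)^{\vee,f}=\bigcup_n(F^1H/F^{2n+1}H)^\vee=\bigcup_n(F^1H/\zeta^nF^1H)^\vee$, which finishes \eqref{f:compisokerg2}.

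For the $\zeta$-torsion claim, tracing $\zeta$ through $\Delta^{d-1}$ and using $\anti(\zeta)=\zeta$ shows that both left and right multiplication by $\zeta$ on $\xi\in\ker(g_2)\subseteq(F^1H)^{\vee,f}$ become the pullback $\xi\mapsto\xi(\zeta\cdot{}_-)$; hence $\zeta^n\xi=0$ as soon as $\xi$ vanishes on $\zeta^nF^1H$, which is automatic from \eqref{f:compisokerg2}. Conversely, on the complementary summand $\ker(f_2)$ of Prop.\ \ref{prop:structure2}, the relation $\zeta\cdot c\cdot\zeta=c$ shows that $\zeta$ acts invertibly from both sides (its inverse being multiplication by $\zeta$ on the opposite side), so no nonzero $\zeta$-torsion can live there. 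Because $E^2=\ker(f_2)\oplus\ker(g_2)$ as $H$-bimodules and each summand is $\zeta$-stable, this identifies $\ker(g_2)$ as the full $\zeta$-torsion subspace of $E^2$ from either side.

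Finally, $k[\zeta]$-divisibility reduces (since $\ker(g_2)$ is $\zeta$-torsion) to showing that multiplication by $\zeta$ is surjective on $\ker(g_2)$, and this follows from the $\zeta$-injectivity of $F^1H$: by Lemma \ref{freeness} $H$ is $k[\zeta]$-free, so its submodule $F^1H$ is $k[\zeta]$-free over the PID $k[\zeta]$ as well, and $\zeta\cdot{}_-:F^1H\to F^1H$ is injective. Given $\xi\in(F^1H/\zeta^nF^1H)^\vee$, setting $\eta(\zeta x):=\xi(x)$ on $\zeta F^1H$ and extending by zero to any $k$-linear complement produces $\eta\in(F^1H/\zeta^{n+1}F^1H)^\vee\subseteq(F^1H)^{\vee,f}$ with $\zeta\cdot\eta=\xi$. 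The main technical point, in my view, is the filtration identification $\zeta^nF^1H=F^{2n+1}H$ and the bookkeeping of the $\anti$-twists across the duality; once those are in place the rest of the argument is formal.
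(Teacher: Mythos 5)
Your proof is correct and takes essentially the same route as the paper's: the chain of isomorphisms through ${}^\anti(\ker(g_1)^{\vee,f})^\anti$, the filtration match via Proposition \ref{F1part}-i, the $\anti$-untwisting, the identity $\zeta^n F^1H = F^{2n+1}H$, and the location of $\zeta$-torsion via $E^2 = \ker(f_2)\oplus\ker(g_2)$ are all the same ingredients. Your hands-on derivation of $\zeta^n F^1H = F^{2n+1}H$ and your direct lifting of functionals along the injection $\zeta\cdot : F^1H\hookrightarrow F^1H$ to get $\zeta$-surjectivity (and thence divisibility, using implicitly that on a $\zeta$-torsion module $P(\zeta)$-divisibility is automatic when $P(0)\neq 0$) simply replace the paper's citation of \cite{embed} Remark 3.2.ii and its identification $\ker(g_2)\cong k[\zeta^{\pm1}]/k[\zeta]\otimes_{k[\zeta]}F^1H$ -- local variations of bookkeeping rather than a different approach.
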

\begin{proof}
Prop.\ \ref{F1part}-i makes it directly visible that the isomorphism of $H$-bimodules $F^1 H \cong \ker(g_1)$ in Prop.\ \ref{prop:isokerg} respects the filtrations on both sides. Combined with \eqref{f:kerg2} we therefore obtain an isomorphism of $H$-bimodules
\begin{equation*}
   \ker({g_2}) \cong {}^\anti (( F^1H)^{\vee, f})^\anti\cong ( {}^\anti (F^1H )^\anti)^{\vee, f} \cong  ( F^1H)^{\vee, f}
\end{equation*}
where the last isomorphism is induced by $\anti: H\rightarrow H$. Since $\zeta^n\cdot F^1H=F^{2n+1}H$ for $n\geq 1$ by \cite{embed} Remark 3.2.ii, we also have
\begin{equation*}
  \ker({g_2}) \cong \bigcup_{n\geq 1} (F^1H/\zeta^n F^1 H )^{\vee} \ .
\end{equation*}
In particular, this makes visible that $\ker(g_2)$ is $\zeta$-torsion. On the other hand $\ker(f_2)$ does not contain any left or right $\zeta$-torsion since it is an $H_\zeta$-bimodule. It therefore follows from Prop.\ \ref{prop:structure2} that $\ker(g_2)$ is the full subspace of left (or right) $\zeta$-torsion in $E^2$. By Lemma \ref{freeness} $F^1 H$ is a finitely generated free $k[\zeta]$-module. Hence $(F^1H/\zeta^n F^1 H )^{\vee} \cong k[\zeta^{\pm 1}]/k[\zeta] \otimes_{k[\zeta]} F^1$ as a $k[\zeta]$-module, which shows that $\ker(g_2)$ is $k[\zeta]$-divisible.
\end{proof}

\begin{corollary}\label{coro:f1timesg2} Under the same assumptions, we have
   $\ker(f_1) \cdot \ker(g_2) = 0 = \ker(g_2) \cdot \ker(f_1)$.
\end{corollary}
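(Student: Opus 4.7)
The plan is to exploit the incompatibility between the defining relations of the two summands: elements of $\ker(f_1)$ are stabilized by conjugation by $\zeta$, while elements of $\ker(g_2)$ are $\zeta$-torsion. These two features clash as soon as we multiply such elements.

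First I would iterate the defining relation on $\ker(f_1)$: if $\alpha \in \ker(f_1) \subseteq E^1$ then $\zeta \cdot \alpha \cdot \zeta = \alpha$, and applying left and right multiplication by $\zeta$ inductively gives
\begin{equation*}
   \alpha = \zeta^n \cdot \alpha \cdot \zeta^n \qquad \text{for every } n \geq 1.
\end{equation*}
Next I would invoke Proposition \ref{prop:structurekerg2}, which identifies $\ker(g_2)$ as the full subspace of left (equivalently right) $\zeta$-torsion in $E^2$. Thus for any $\beta \in \ker(g_2)$ there exists some $n \geq 1$ with $\zeta^n \cdot \beta = 0$; since $\zeta \cdot \beta = \beta \cdot \zeta$ on $\ker(g_2)$ we automatically also have $\beta \cdot \zeta^n = 0$.

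Combining the two observations, the Yoneda product forces the vanishing directly:
\begin{equation*}
   \alpha \cdot \beta = (\zeta^n \cdot \alpha \cdot \zeta^n) \cdot \beta = \zeta^n \cdot \alpha \cdot (\zeta^n \cdot \beta) = 0,
\end{equation*}
and by the symmetric computation
\begin{equation*}
   \beta \cdot \alpha = \beta \cdot (\zeta^n \cdot \alpha \cdot \zeta^n) = (\beta \cdot \zeta^n) \cdot \alpha \cdot \zeta^n = 0.
\end{equation*}
This gives both equalities $\ker(f_1) \cdot \ker(g_2) = 0$ and $\ker(g_2) \cdot \ker(f_1) = 0$.

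There is no real obstacle left: the heavy lifting has already been done in the identification of $\ker(g_2)$ as the $\zeta$-torsion part of $E^2$ (Proposition \ref{prop:structurekerg2}) and in the description of $\ker(f_1)$ via $f^\pm$ together with Lemma \ref{lemma:extension} (which encodes the relation $\zeta \cdot x \cdot \zeta = x$). The corollary is a purely formal consequence of these two structural facts, combined with the associativity of the Yoneda product on $E^*$.
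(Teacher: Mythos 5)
Your proof is correct and is essentially the same argument as the paper's: both use Proposition \ref{prop:structurekerg2} to get that elements of $\ker(g_2)$ are annihilated by a power of $\zeta$ on both sides, and then expand $\alpha = \zeta^n \cdot \alpha \cdot \zeta^n$ (iterating the $\ker(f_1)$ relation) to force vanishing via associativity. The only cosmetic difference is that you explicitly justify $b \cdot \zeta^n = 0$ via the commutation $\zeta \cdot b = b \cdot \zeta$ on $\ker(g_2)$, a step the paper leaves implicit.
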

\begin{proof}
Let $a \in \ker(f_1)$ and $b \in \ker(g_2)$. By Prop.\ \ref{prop:structurekerg2} we find an $m \geq 1$ such that $\zeta^m\cdot  b = 0 = b \cdot \zeta^m$. Then $a\cdot b = \zeta^m \cdot a \cdot \zeta^m \cdot b = 0 = b \cdot \zeta^m \cdot a \cdot \zeta^m$.
\end{proof}

\subsubsection{On $\ker(f_2)$}\label{subsubsec:kerf2}

We proceed in a way which is entirely analogous to section \S\ref{subsubsec:Hzeta-inside}. Consider the following elements of $E^2$:
\begin{align*}
  \a^+ & := (\upalpha, 0,0)_{1}- e_{\id}\cdot (0,\iota^{-1} \upalpha, 0)_{s_0}=(\upalpha, 0,0)_{1}-  (0,\iota^{-1} \upalpha, 0)_{s_0}\cdot e_{\id^{-1}} \ \text{and} \\
  \a^- & := (0, 0,\upalpha)_{1}+ e_{\id^{-1}}\cdot (0, \iota^{-1}\upalpha, 0)_{s_1}=(0, 0,\upalpha)_{1}+ (0, \iota^{-1}\upalpha, 0)_{s_1}\cdot e_{\id}
\end{align*}
where $\upalpha$ is chosen as in \eqref{f:normalize} (see also \eqref{f:condodd'}).
It is easy to verify that
\begin{equation}\label{f:invJ}
  \anti(\a^+) = \a^+   \quad\text{and}\quad  \anti(\a^-) = \\a^-
\end{equation}
using Lemma \ref{lemma:antialpha} and \eqref{f:lefts2'}. In order to check that $\a^+$ lies in $\ker(f_2)$ we compute
\begin{align*}
   \a^+\cdot \zeta & = \anti(\zeta \cdot \anti(\a^+)) = \anti(\zeta \cdot \a^+)  \\
      & = \anti((\upalpha, 0,0)_{s_0s_1} + e_1\cdot (0,0,-\upalpha)_{s_1} + e_1\cdot (\upalpha, 0,0)_1)   \quad\text{ by Cor. \ref{coro:zetaonEd-1}}  \\
      & = (\upalpha, 0,0)_{s_1s_0} + (\upalpha, 0,0)_{s_1^{-1}} \cdot e_1 + (\upalpha, 0,0)_1\cdot e_1    \quad\text{ by Lemma \ref{lemma:antialpha}}  \\
      & = (\upalpha, 0,0)_{s_1s_0} + \tau_{\omega_{-1}} \cdot (\upalpha, 0,0)_{s_1} \cdot e_1 + (\upalpha, 0,0)_1 \cdot e_1    \quad\text{ by \eqref{f:lefts2'}}  \\
      & = (\upalpha, 0,0)_{s_1s_0} + e_{\id^2}\cdot (\upalpha, 0,0)_{s_1} + e_{\id^2}\cdot (\upalpha, 0,0)_1   \quad\text{ by \eqref{f:condeven'} and \eqref{f:condodd'}}.
\end{align*}
Hence
\begin{align*}
   \zeta \cdot \a^+ \cdot \zeta & = \zeta \cdot \big((\upalpha, 0,0)_{s_1s_0} + e_{\id^2} \cdot (\upalpha, 0,0)_{s_1} + e_{\id^2} \cdot (\upalpha, 0,0)_1 \big) \\
        & = (\upalpha,  0,0)_{1} + e_{\id}\cdot ( 0,\iota^{-1}(\upalpha),0)_{s_1^2 s_0}  + e_{\id^2}(- \upalpha, 0,0)_{s_1^2s_0}  \\
         & \qquad\qquad + e_{\id^2}\cdot  (\upalpha,  0,0)_{s_0s_1^2} + e_{\id^2}\cdot ( -\upalpha, 0, 0,)_{s_0s_1}
               + e_{\id^2}\cdot (\upalpha, 0,0)_{s_0s_1}    \quad\text{ by Cor. \ref{coro:zetaonEd-1}}  \\
        & = (\upalpha,  0,0)_{1} + e_{\id}\cdot ( 0,\iota^{-1}(\upalpha),0)_{s_1^2 s_0}  \\
        & = (\upalpha,  0,0)_{1} - e_{\id}\cdot ( 0,\iota^{-1}(\upalpha),0)_{s_0}    \quad\text{ by \eqref{f:lefts2'}}  \\
        & = \a^+\ .
\end{align*}
Using Lemma \ref{lemma:conjtripdual} we notice that $\Gamma_\varpi(\a^+)=\a^-$. Hence Remark \ref{rema:calcufg}-iv implies that also $\a^-\in \ker(f_2)$. As in Lemma \ref{lemma:extension} we therefore have the homomorphism of left $H_\zeta$-modules
\begin{equation}\label{f:isokerf2-a}
  H_\zeta \oplus H_\zeta \xrightarrow{f_{\a^+} + f_{\a^-}}  \ker(f_2)
\end{equation}
sending $(1,0)$ and $(0,1)$ to $\a^+$ and $ \a^-$, respectively.

\begin{remark}\label{rema:easyonapm}
Let $w\in \widetilde W$ with  $\ell(w)\geq 1$. From Proposition \ref{prop:theformulas'} we obtain
\begin{align}\label{f:easya}
\tau_w\cdot \a^+ & = \begin{cases} 0&\text{if $w^{-1}\in \widetilde W^1$} \\
(0,0,-\upalpha)_w&\text{if $w^{-1}\in \widetilde W^0$, $\ell(w)$ odd} \\
(\upalpha,0,0)_w&\text{if $w^{-1}\in \widetilde W^0$, $\ell(w)$ even}
\end{cases}   \nonumber \\
\text{and} &  \\
\tau_w\cdot \a^- & = \begin{cases} 0&\text{if $w^{-1}\in \widetilde W^0$} \\
(-\upalpha,0,0)_w&\text{if $w^{-1}\in \widetilde W^1$, $\ell(w)$ odd} \\
(0,0,\upalpha)_w&\text{if $w^{-1}\in \widetilde W^1$, $\ell(w)$ even} \ .
\end{cases}   \nonumber
\end{align}

\end{remark}

\begin{lemma}\label{lemma:leftright2}
\begin{itemize}
\item[1.] For any $u \in \mathbb{F}_p^\times$ we have  $\a^+ \cdot  \tau_{\omega_u}=u^{-2} \tau_{\omega_u}\cdot \a^+$ and $\a^- \cdot \tau_{\omega_u} = u^2 \tau_{\omega_u}\cdot  \a^-$.
\item[2.]  We have
$\a^+\cdot \tau_{s_0}=\tau_{s_0}\cdot\a^+=0$ and $ \a^-\cdot \tau_{s_1}=\tau_{s_1}\cdot \a^-=0$.

\item[3.]  We have
\begin{align*}
  \mathbf{a}^+\cdot \upiota (\tau_{s_1}) & =  -  \tau_{\omega_{-1}} \upiota( \tau_{s_0})\cdot \mathbf{a}^-  \cdot \zeta   \quad\text{and}  \\
  \mathbf{a}^- \cdot \upiota (\tau_{s_0}) & = - \tau_{\omega_{-1}} \upiota(\tau_{s_1})\cdot  \mathbf{a}^+\cdot  \zeta \ .
\end{align*}\end{itemize}
\end{lemma}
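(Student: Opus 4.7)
The lemma is the exact analogue of Lemma \ref{lemma:leftright} with $E^1$ replaced by $E^2$ and $({\bf x}^+,{\bf x}^-)$ replaced by $(\a^+,\a^-)$, so I will follow the same overall strategy, substituting Proposition~\ref{prop:theformulas'} for Proposition~\ref{prop:theformulas} and Cor.~\ref{coro:zetaonEd-1} for Cor.~\ref{coro:form-zeta}. Point~1 is a straightforward computation: starting from the definition $\a^+ = (\upalpha,0,0)_1 - e_{\id}\cdot(0,\iota^{-1}\upalpha,0)_{s_0}$, I apply the right-$\tau_{\omega_u}$ formula (the analogue of \eqref{f:rightomegaE1} on $E^{d-1}$, obtained via $\anti$ from \eqref{f:leftomegaE2}), then compare with $u^{-2}\tau_{\omega_u}\cdot \a^+$ computed via \eqref{f:leftomegaE2}; the factor $u^{-2}$ arises because the nonzero components of $\a^+$ lie in the ``$-$-triple'' slot. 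The identity for $\a^-$ then follows by applying $\Gamma_\varpi$, which interchanges $\a^+$ and $\a^-$ by Lemma~\ref{lemma:conjtripdual} and fixes $\zeta$.

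For Point~2, the vanishings $\tau_{s_0}\cdot\a^+=0$ and $\tau_{s_1}\cdot\a^-=0$ fall out directly by plugging $\a^+$ into Proposition~\ref{prop:theformulas'}: using the $\ell(w)=1$ case of the $\tau_{s_0}$-action on $(c^-,c^0,c^+)_{s_0}$ one sees that the term $e_{\id}\cdot(0,\iota^{-1}\upalpha,0)_{s_0}$ is precisely designed to cancel $\tau_{s_0}\cdot(\upalpha,0,0)_1$ (and symmetrically for $\a^-$). The right-sided vanishings $\a^+\cdot\tau_{s_0}=0$ and $\a^-\cdot\tau_{s_1}=0$ are then deduced from the anti-involution identity $\anti(\a^+)=\a^+$ of \eqref{f:invJ} via $\a^+\cdot\tau_{s_0}=\anti(\tau_{s_0^{-1}}\cdot\a^+)$, combined with \eqref{f:lefts2'} to reduce $\tau_{s_0^{-1}}=\tau_{s_0^2}\tau_{s_0^{-3}}$ back to a product already known to kill $\a^+$.

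Point~3 is the main obstacle, being the nontrivial algebraic identity. I follow the argument used for Lemma~\ref{lemma:leftright}-3: both sides of each equation lie in $\ker(f_2)$, where left multiplication by $\zeta$ is injective (since $\zeta\cdot\id_{E^2}\cdot\zeta=\id_{E^2}$ there), so it suffices to verify the identity after multiplication by $\zeta$ on the left. Writing $\upiota(\tau_{s_i})=-e_1-\tau_{s_i}$ and $\tau_{s_i}\cdot\a^{\mp}=0$ from Point~2, the equation to be checked becomes, for the second stated identity,
\[
-\zeta\cdot\a^- \cdot(\tau_{s_0}+e_1) \;=\; \tau_{\omega_{-1}}(\tau_{s_1}+e_1)\cdot\a^+.
\]
The left-hand side is computed by first evaluating $\a^-\cdot(\tau_{s_0}+e_1)$ with Point~1 and the right-hand analogues of Proposition~\ref{prop:theformulas'} (obtained again via $\anti$), then applying $\zeta$ from the left via Cor.~\ref{coro:zetaonEd-1}; the right-hand side is computed directly from Proposition~\ref{prop:theformulas'} together with Remark~\ref{rema:easyonapm}. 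After simplification using the idempotent identities $\tau_{\omega_{-1}}e_{\id^{\pm 1}}=-e_{\id^{\pm 1}}$ and $e_{\id^{\pm 2}}\tau_{s_1}\cdot\a^+$ evaluated on the $\ell(w)=1$ case, the two sides coincide. The first identity follows by applying $\Gamma_\varpi$, using that $\Gamma_\varpi$ is an algebra automorphism fixing $\zeta$ and $\tau_{\omega_{-1}}$, interchanges $s_0$ and $s_1$, and swaps $\a^+\leftrightarrow\a^-$.
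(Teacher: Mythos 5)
Your proposal is correct and follows essentially the same route as the paper: Point 1 via the anti-involution and the triple formulas, Point 2 directly from Remark \ref{rema:easyonapm} (i.e.\ Prop.\ \ref{prop:theformulas'}) plus $\anti(\a^\pm)=\a^\pm$, and Point 3 by reducing to an explicit identity after clearing the right-hand $\zeta$ using that $\a^\pm\in\ker(f_2)$ and then computing both sides. The only cosmetic difference is that you verify the second identity of Point 3 directly and obtain the first by $\Gamma_\varpi$, whereas the paper verifies the first and leaves the second to symmetry; the two choices are equivalent.
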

\begin{proof}
1. Using using \eqref{f:leftomega}, \eqref{f:rightomegaE1}  we compute:
\begin{align*}
  \a^+ \cdot  \tau_{\omega_u} &  =\anti(\tau_{\omega_u^{-1}}\cdot   ((\upalpha, 0,0)_{1}+ e_{\id} \cdot(0,\iota^{-1} \upalpha, 0)_{s_0^{-1}}))\\&=
\anti(  (u^{-2}\upalpha, 0,0)_{\omega_u^{-1}}+ u^{-1}e_{\id} \cdot(0,\iota^{-1} \upalpha, 0)_{s_0^{-1}})\\&=
   (\upalpha, 0,0)_{\omega_u}- u^{-1} (0,\iota^{-1} \upalpha, 0)_{s_0}\cdot e_{\id^{-1}}\\&=
   u^{-2} (\tau_{\omega_u}\cdot (\upalpha, 0,0)_{1}-\tau_{\omega_u} e_{\id}  \cdot (0,\iota^{-1} \upalpha, 0)_{s_0})\\&=
   u^{-2} \tau_{\omega_u}\cdot \a^+
\end{align*}
and, by an analogous computation (or by conjugation by $\varpi$), we obtain the second claim  of Point 1.\\
2. Point 2 follows from \eqref{f:easya} and  \eqref{f:invJ}.\\
3.
 We check the first identity.
Since $\a^-,\a^+\in\ker(f_2)$, we may as well check the following \begin{equation}\label{f:sc}
  - \zeta\cdot  \mathbf{a}^+\cdot  (\tau_{s_1} +e_1)=
     (\tau_{s_0^{-1}}  + e_1 )\cdot \mathbf{a}^-
\end{equation}
For the left hand side, we have using Lemma \ref{lemma:antialpha},  Prop.\ \ref{prop:theformulas'},  \eqref{f:lefts2'} and \eqref{f:condeven'},\eqref{f:condodd'}
\begin{align*}\mathbf{a}^+\cdot  (\tau_{s_1} +e_1)&=\anti((\tau_{s_1}+e_1)\cdot ((\upalpha, 0,0)_{\omega_{-1}}- e_{\id}\cdot (0,\iota^{-1} \upalpha, 0)_{s_0^{-1}}))\cr&=\anti((0,0,-\upalpha)_{s_1^{-1}}+ e_1\cdot (\upalpha, 0,0)_1)=
(\upalpha,0,0)_{s_1}+ e_{\id^2}\cdot (\upalpha, 0,0)_1
\end{align*} and then using Corollary \ref{coro:zetaonEd-1}:
\begin{align*}
   -\zeta\cdot \mathbf{a}^+ \cdot  (\tau_{s_1} +e_1)& = -(\upalpha,  0,0)_{s_0s_1^2}
               + e_1\cdot (0,  0,\upalpha)_{s_1^2}+e_{\id^2}\cdot ( \upalpha, 0, 0,)_{s_0s_1} -e_{\id^2}\cdot ( \upalpha, 0, 0,)_{s_0s_1} \\
              & = -(\upalpha,  0,0)_{s_0^{-1}} + e_1\cdot (0,  0,\upalpha)_{1}
\end{align*}
For the right hand side we have, using Remark \ref{rema:easyonapm}:
\begin{align*}
  \tau_{s_0^{-1}} \cdot  \mathbf{a}^- =(-\upalpha,0,0)_{s_0^{-1}}, \quad
  e_1\cdot  \mathbf{a}^- =  e_1\cdot (0,0,\upalpha)_1 \ .
   \end{align*}
By adding up, we see that  \eqref{f:sc} holds.

\end{proof}
By Lemma \ref{lemma:leftright2}-2, the map \eqref{f:isokerf2-a} factors through a homomorphism of left $H_\zeta$-modules
\begin{equation}\label{f:isokerf2}
H_\zeta/ H_{\zeta}\tau_{s_0}  \oplus H_\zeta/ H_{\zeta}\tau_{s_1} \xrightarrow{f_{\a^+}+ f_{\a^-}}  \ker(f_2)\ .
\end{equation}
\begin{proposition}\label{prop:kerf2}  Suppose $G = {\rm SL _2}(\mathbb Q_p)$ with $p\neq 2,3$ and $\pi=p$.
   The map \eqref{f:isokerf2}  induces an isomorphism  of $H_\zeta$-bimodules
\begin{equation}
\label{f:mapkerf2}  (H_\zeta / H_\zeta \tau_{s_0} \oplus H_\zeta / H_\zeta \tau_{s_1})^\pm \xrightarrow{\simeq} \ker(f_2)
\end{equation}
\end{proposition}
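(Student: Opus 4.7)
The plan is to follow closely the pattern established for $E^1$ in Proposition~\ref{plusminus-part}. The existence of $\a^\pm \in \ker(f_2)$ and the vanishing $\tau_{s_\epsilon}\cdot\a^\pm = 0$ (Lemma~\ref{lemma:leftright2}-2) already guarantee that \eqref{f:isokerf2} is a well-defined left $H_\zeta$-module homomorphism. The first step is to verify that it becomes an $H_\zeta$-bimodule homomorphism when the source is equipped with the twisted structure $(H_\zeta/H_\zeta\tau_{s_0}\oplus H_\zeta/H_\zeta\tau_{s_1})^{\pm}$. This amounts to matching the action of $\tau_{\omega_u}$, $\upiota(\tau_{s_0})$, $\upiota(\tau_{s_1})$ spelled out in \eqref{f:def-R} against the three items of Lemma~\ref{lemma:leftright2}; together with Lemma~\ref{lemma:zetazeta} this yields right $H_\zeta$-equivariance as in the proof of Proposition~\ref{plusminus-part}.

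Next I would establish injectivity by mimicking Lemma~\ref{lemma:fpm}. Using the $k$-bases of $H/H\tau_{s_0}$ and $H/H\tau_{s_1}$ indexed by $w=\omega(s_0s_1)^m$ or $\omega s_1(s_0s_1)^m$, resp.\ $w=\omega(s_1s_0)^m$ or $\omega s_0(s_1s_0)^m$, Remark~\ref{rema:easyonapm} combined with Proposition~\ref{prop:theformulas'} computes $\tau_w\cdot\a^+$ and $\tau_w\cdot\a^-$ explicitly. The leading $h^2_\pm$-contributions (that is, the components landing outside $h_0^2(\widetilde W)$) are visibly $k$-linearly independent just as in the $E^1$ case, so the restriction of \eqref{f:isokerf2} to $H/H\tau_{s_0}\oplus H/H\tau_{s_1}$ is injective. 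Injectivity on the full $H_\zeta$-bimodule then follows from Lemma~\ref{lemma:notorsion2}: no $k[\zeta]$-torsion is introduced when passing to $\zeta$-inverses.

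For surjectivity the natural approach is via duality. By Proposition~\ref{prop:structure2} we have $\ker(f_2)\cong {}^\anti(K_{f_1})^\anti$ through the isomorphism \eqref{f:dual12}, and Lemma~\ref{lemma:dualkers} identifies $K_{f_1}$ with the subspace of $(E^1)^{\vee,f}$ vanishing on $\ker(g_1)$. Restricting such functionals to $\ker(f_1)$ gives an injection of $K_{f_1}$ into an appropriate finite dual of $\ker(f_1)$. Proposition~\ref{prop:kerf1star} identifies $\ker(f_1)$ with $(H_\zeta/H_\zeta\tau_{s_0}\oplus H_\zeta/H_\zeta\tau_{s_1})^\pm$, which by Lemma~\ref{lemma:funnyright} is free of rank $4(p-1)$ over $k[\zeta^{\pm1}]$ on each side. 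One then shows that the source of \eqref{f:mapkerf2} and the target $\ker(f_2)$ both are free $k[\zeta^{\pm1}]$-modules of rank $4(p-1)$ on each side, so that the injection proved in the previous step is an isomorphism.

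The main obstacle will be the surjectivity step: one needs the correct analog for $\ker(f_1)$ of the filtration discussion used for $\ker(g_1)$ in \S\ref{subsubsec:kerg2}, so that the restriction map $K_{f_1}\to \ker(f_1)^{\vee,\mathrm{f}}$ is not merely injective but matches the expected $k[\zeta^{\pm1}]$-rank. An efficient way to finesse this is to work modulo $\zeta$, or to use a cofinal filtration by the powers $\zeta^n\cdot\ker(f_1)$ (which is an isomorphism on $\ker(f_1)$, hence trivial), and compare with a parallel filtration by $F_n E^2 \cap \ker(f_2)$; the resulting rank count over $k[\zeta^{\pm1}]$ then closes the argument without a direct quasi-inverse.
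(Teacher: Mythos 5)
Your first two steps — checking that \eqref{f:isokerf2} descends to a right $H_\zeta$-equivariant map via Lemma~\ref{lemma:leftright2} and Lemma~\ref{lemma:zetazeta}, and establishing injectivity by computing $\tau_w\cdot\a^\pm$ on a $k$-basis of $H/H\tau_{s_0}\oplus H/H\tau_{s_1}$ and observing that the non-$h^2_0$ leading terms are independent — are exactly what the paper does. (A small cleanup on the reduction to the $H$-part: what one really uses is that $\zeta$ is invertible on the source, so any element can be $\zeta$-scaled into $H/H\tau_{s_0}\oplus H/H\tau_{s_1}$; Lemma~\ref{lemma:notorsion2} is not really the relevant input here.)

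The surjectivity step is where your route genuinely diverges, and where there is a gap. You propose to conclude from the fact that both the source and $\ker(f_2)$ are free $k[\zeta^{\pm 1}]$-modules of rank $4(p-1)$ that the injection you have built is an isomorphism. That inference is false: an injection of free $k[\zeta^{\pm 1}]$-modules of equal finite rank need not be surjective. Multiplication by $\zeta-1$ on $k[\zeta^{\pm 1}]$ is already a counterexample — injective, both sides free of rank $1$, the target even torsion-free, yet the cokernel is nonzero. Moreover, the freeness of $\ker(f_2)$ over $k[\zeta^{\pm 1}]$ of that specific rank is in the paper precisely Cor.~\ref{cor:structurekerf2}, a \emph{corollary} of the proposition you are trying to prove, so appealing to it here would be circular. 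You correctly sense this is the obstacle and gesture toward a filtration-comparison argument ("$F_n E^2\cap\ker(f_2)$"), but that comparison is never carried out, and the $\zeta$-power filtration on $\ker(f_1)$ that you also mention is trivial (you note this yourself) and gives no traction.

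The paper instead makes the surjectivity a direct computation, avoiding rank counting altogether. It exploits the fact that, under the duality isomorphism $E^2\cong {}^\anti((E^1)^{\vee,f})^\anti$, an element of $\ker(f_2)$ must vanish on $\ker(g_1)$; plugging in the explicit description of $\ker(g_1)$ from Prop.~\ref{F1part} then constrains which $h^2(w)$-components an element of $\ker(f_2)$ can have. Using $\anti$-invariance and left and right $H$-equivariance together with \eqref{f:easya}, the paper shows $h^2_\pm(\widetilde W^{\ell\geq 1})$ lies in the image, and then determines the low-length components of $\ker(f_2)$ by hand, concluding that $\ker(f_2)$ is exactly the span of the $\tau_w\cdot\a^\pm$ (together with the right-hand analogues). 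This yields surjectivity with no abstract numerology. If you want to pursue a duality-style proof along the lines of \S\ref{subsubsec:kerg2}, you would need to identify a cofinal filtration on $\ker(f_1)$, prove that $K_{f_1}$ restricts isomorphically onto the corresponding finite dual, and only then transfer the $(H_\zeta/H_\zeta\tau_{s_0}\oplus H_\zeta/H_\zeta\tau_{s_1})^\pm$-structure back; none of that is in the present sketch.
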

\begin{proof}
We need to verify that the map is bijective and
 right $H$-equivariant. We may compare with the proof of Proposition \ref{plusminus-part}. Just like in  that proof, the right $H$-equivariance
 is  seen by comparing the definition \eqref{f:def-R} with
 Lemma \ref{lemma:leftright2}.

Concerning the injectivity we first observe that it suffices to check the injectivity of the restriction of the map to $H/H\tau_{s_0} \oplus H/H\tau_{s_1}$. The elements $\tau_w$ with $w \in \widetilde{W}$ such that $\ell(w s_0) = \ell(w) + 1$ from a $k$-basis of $H/H\tau_{s_0}$; they are of the form $w = \omega(s_0 s_1)^m$ or $= \omega s_1 (s_0 s_1)^m$ with $m \geq 0$ and $\omega \in \Omega$. Using \eqref{f:leftomegaE2} and \eqref{f:easya}, we see that
\begin{equation}\label{f:list0a}
  \tau_w \cdot \mathbf{a}^+ \in\begin{cases}
  k^\times (\upalpha,0,0)_w   & \text{if $w = \omega(s_0 s_1)^m$ with $m \geq 1$},  \\
  k^\times (0,0,\upalpha)_w & \text{if $w = \omega s_1(s_0 s_1)^m$ with $m \geq 0$},  \\
  k^\times (\upalpha,0,0)_w + k^\times e_{\id} (0,\iota^{-1}\upalpha,\mathbf{c},0)_{s_0}     & \text{if $w = \omega$}.
  \end{cases}
\end{equation}
Similarly the elements $\tau_w$ with $w \in \widetilde{W}$ such that $\ell(w s_1) = \ell(w) + 1$ form a $k$-basis of $H/H\tau_{s_1}$; they are of the form $w = \omega (s_1 s_0)^m$ or $= \omega s_0 (s_1 s_0)^m$ with $m \geq 0$ and $\omega \in \Omega$. In this case we obtain
\begin{equation}\label{f:list1a}
  \tau_w \cdot \mathbf{a}^- \in\begin{cases}
  k^\times (0,0,\upalpha)_w   & \text{if $w = \omega(s_1 s_0)^m$ with $m \geq 1$},  \\
  k^\times (\upalpha,0,0)_w & \text{if $w = \omega s_0(s_1 s_0)^m$ with $m \geq 0$},  \\
  k^\times (0,0,\upalpha)_w + k^\times e_{\id^{-1}} (0,\iota^{-1}\upalpha,\mathbf{c},0)_{s_1}     & \text{if $w = \omega$}.
  \end{cases}
\end{equation}

By comparing the lists \eqref{f:list0a} and \eqref{f:list1a} we easily see that the elements
\begin{equation*}
  \{\tau_w\cdot  \a^+ : \ell(w s_0) = \ell(w) + 1\} \cup \{\tau_w \cdot \mathbf{a}^- : \ell(w s_1) = \ell(w) + 1\}
\end{equation*}
in $E^2$ are $k$-linearly independent. This concludes the proof of the injectivity.
For the surjectivity,  we first list the following results which we obtain from  \eqref{f:list0a} and \eqref{f:list1a} by applying $\anti$:
\begin{equation}\label{f:list0a}
 \mathbf{a}^+ \cdot \tau_{w^{-1}}\in\begin{cases}
  k^\times (\upalpha,0,0)_w   & \text{if $w = \omega(s_0 s_1)^m$ with $m \geq 1$},  \\
  k^\times (0,0,\upalpha)_w & \text{if $w = \omega s_1(s_0 s_1)^m$ with $m \geq 0$},  \\
  k^\times (\upalpha,0,0)_w + k^\times e_{\id} (0,\iota^{-1}\upalpha,\mathbf{c},0)_{s_0}     & \text{if $w = \omega$}.
  \end{cases}
\end{equation}
Similarly the elements $\tau_w$ with $w \in \widetilde{W}$ such that $\ell(w s_1) = \ell(w) + 1$ form a $k$-basis of $H/H\tau_{s_1}$; they are of the form $w = \omega (s_1 s_0)^m$ or $= \omega s_0 (s_1 s_0)^m$ with $m \geq 0$ and $\omega \in \Omega$. In this case we obtain
\begin{equation}\label{f:list1a}
  \tau_w \cdot \mathbf{a}^- \in\begin{cases}
  k^\times (0,0,\upalpha)_w   & \text{if $w = \omega(s_1 s_0)^m$ with $m \geq 1$},  \\
  k^\times (\upalpha,0,0)_w & \text{if $w = \omega s_0(s_1 s_0)^m$ with $m \geq 0$},  \\
  k^\times (0,0,\upalpha)_w + k^\times e_{\id^{-1}} (0,\iota^{-1}\upalpha,\mathbf{c},0)_{s_1}     & \text{if $w = \omega$}.
  \end{cases}
\end{equation}

We  gather the following arguments:
\begin{itemize}
\item[-] A basis for $\ker(g_1)$ is  given by the set of all $f_{(x_0, x_1)}(\tau_w)$, $w\in \widetilde W$, $\ell(w)\geq 1$. These elements are  spelled out in Proposition \ref{F1part}. From these formulas, we see that an element in $\ker(f_2)$ lies necessarily in the space $h^2_\pm(\widetilde W^{\ell\geq 2})+ h^2(s_1\Omega)+ h^2(s_0\Omega)+h^2(\Omega)$.
\item[–] From  \eqref{f:list0a} and \eqref{f:list1a},  we deduce  that
$h^2_-(\widetilde{W}^{1,  \ell\geq 1}) + h^2_+(\widetilde{W}^{0, \ell\geq 1})=\sum_{w\in\widetilde W, \ell(w)\geq 1}k \tau_w\cdot \a^-+k \tau_w\cdot \a^+$ is contained in the image of the map of the proposition.

\begin{itemize}
\item  So it is contained in
 $\ker(f_2)$ which is invariant under $\anti$. Therefore by Lemma \ref{lemma:antialpha}, the whole space   $h^2_\pm(\widetilde W^{\ell\geq 1})$ is contained in $\ker(f_2)$.

\item  But this map is also right $H$-equivariant. So for $w\in \widetilde W$ with length $\geq 1$,
the elements $\a^+\cdot \tau_{w^{-1}}=\anti(\tau_w\cdot \a^+)$ and  $\a^-\cdot \tau_{w^{-1}}=\anti(\tau_w\cdot \a^-)$ also lie in this image (see \eqref{f:invJ}).
Therefore  the whole space   $h^2_\pm(\widetilde W^{\ell\geq 1})$ is contained in the image of the map.

\end{itemize}
\item[-] The component in
$$h^2(\Omega)+h^2_0(s_1\Omega)+h^2(s_0\Omega)$$ of $\ker(f_2)$ is spanned by all $\tau_{\omega}\cdot \a^+$ and
$\tau_{\omega}\cdot \a^-$ for $\omega\in \Omega$.

To verify this statement we notice,  using the third lines of   \eqref{f:list0a} and \eqref{f:list1a}, that it is equivalent to saying that the component in $h^2_0(s_1\Omega)+h^2(s_0\Omega)$ of $\ker(f_2)$  is zero. But the latter  follows easily from the formulas for $f_{(x_0, x_1)}(s_\epsilon\tau_\omega)$, $\omega\in \Omega$, $\epsilon=0,1$  given in Proposition \ref{F1part}.

\item[-] We have proved that $\ker(f_2)=h^2_\pm(\widetilde W^{\ell\geq 1})\oplus \bigoplus_{\omega\in \Omega}k\,\tau_{\omega}\cdot  \a^-\oplus k\,\tau_{\omega}\cdot\a^+$  and this space is contained in the image of the map.
\end{itemize}
\end{proof}

\begin{corollary}\phantomsection\label{cor:structurekerf2}
Suppose $G = {\rm SL _2}(\mathbb Q_p)$ with $p\neq 2,3$ and $\pi=p$.

\begin{itemize}
  \item[i.] The $H_\zeta$-bimodules $\ker({{f_1}})$ and $\ker({{f_2}})$ are isomorphic.
  \item[ii.] $\ker({{f_2}})$ is a free  $k[\zeta^{\pm 1}]$-module of rank $4(p-1)$ on the left and on the right.
\end{itemize}
\end{corollary}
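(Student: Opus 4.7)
The plan is that this corollary is essentially an immediate corollary of what has been assembled in the preceding propositions, so the proof will be short and consist of assembling earlier statements rather than new computations.

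For part i, I would first invoke Proposition \ref{prop:kerf2}, which gives an isomorphism of $H_\zeta$-bimodules
\[
 (H_\zeta / H_\zeta \tau_{s_0} \oplus H_\zeta / H_\zeta \tau_{s_1})^\pm \xrightarrow{\;\cong\;} \ker(f_2).
\]
On the other hand, Proposition \ref{prop:kerf1star}-i shows that the map $f^\pm$ of Proposition \ref{plusminus-part} induces an isomorphism of $H$-bimodules with the same source onto $\ker(f_1)$, and Proposition \ref{plusminus-part} also records that this is in fact an $H_\zeta$-bimodule isomorphism (the left $H$-action extends to $H_\zeta$ by Lemma \ref{lemma:fpm}, and the right action is defined through the bimodule structure $(H_\zeta/H_\zeta\tau_{s_0}\oplus H_\zeta/H_\zeta\tau_{s_1})^\pm$ coming from \S\ref{subsubsec:Hzeta-inside}, which is an $(H_\zeta,H_\zeta)$-bimodule by Lemma \ref{lemma:zetazeta}). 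Composing one isomorphism with the inverse of the other yields the desired $H_\zeta$-bimodule isomorphism $\ker(f_1) \cong \ker(f_2)$.

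For part ii, I would simply invoke Lemma \ref{lemma:funnyright}, which states that $(H_\zeta / H_\zeta \tau_{s_0} \oplus H_\zeta / H_\zeta \tau_{s_1})^\pm$ is free of rank $4(p-1)$ as a $k[\zeta^{\pm 1}]$-module on both sides. Transporting this along the isomorphism of Proposition \ref{prop:kerf2} gives the corresponding statement for $\ker(f_2)$. (Alternatively, one transports the same property, already noted for $\ker(f_1)$ at the end of Proposition \ref{prop:kerf1star}, along part i.)

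There is no real obstacle here; the only point to be careful about is to record that the isomorphism in Proposition \ref{prop:kerf2} is truly an $H_\zeta$-bimodule isomorphism and not merely of $H$-bimodules, which is warranted by Lemma \ref{lemma:zetazeta} ensuring that $\zeta$ acts invertibly from both sides on the source, and by the fact that $\zeta$ acts invertibly on $\ker(f_2)$ since $\zeta\cdot\id_{E^2}\cdot\zeta = \id_{E^2}$ on that subspace. Thus the proof reduces to chaining the three results already in place.
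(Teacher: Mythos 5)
Your proposal is correct and takes essentially the same route as the paper, which simply says to combine Propositions \ref{prop:kerf1star} and \ref{prop:kerf2}; you have merely spelled out the intermediate bookkeeping (in particular, verifying that the isomorphisms are genuinely $H_\zeta$-bimodule maps and not just $H$-bimodule maps) which the paper leaves implicit.
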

\begin{proof}
Combine Propositions \ref{prop:kerf1star} and \ref{prop:kerf2}.
\end{proof}

%

\begin{remark}\phantomsection\label{rem:offdiag-anti'}
\begin{itemize}
  \item[1.] It follows from $\Gamma_\varpi(\a^+)=\a^-$ (see also Remark \ref{rema:calcufg}-iv) that the diagram
\begin{equation*}
  \xymatrix{
    (H_\zeta / H_\zeta \tau_{s_0} \oplus H_\zeta / H_\zeta \tau_{s_1})^\pm \ar[d]_{(\sigma^+,\sigma^-) \mapsto (\Gamma_\varpi(\sigma^-),\Gamma_\varpi(\sigma^+))} \ar[r]^-{\eqref{f:mapkerf2} } & \ker(f_2)\ar[d]^{\Gamma_\varpi} \\
    (H_\zeta / H_\zeta \tau_{s_0} \oplus H_\zeta / H_\zeta \tau_{s_1})^\pm \ar[r]^-{\eqref{f:mapkerf2} } & \ker(f_2)   }
\end{equation*}
  is commutative.
  \item[2.] It follows from  \eqref{f:invJ}  (see also Remark \ref{rema:calcufg}-v) that the diagram
\begin{equation*}
  \xymatrix{
    (H_\zeta / H_\zeta \tau_{s_0} \oplus H_\zeta / H_\zeta \tau_{s_1})^\pm \ar[d]_{\beta\circ(\anti\oplus \anti)} \ar[r]^-{\eqref{f:mapkerf2}} & \ker(f_2) \ar[d]^{\anti} \\
    (H_\zeta / H_\zeta \tau_{s_0} \oplus H_\zeta / H_\zeta \tau_{s_1})^\pm \ar[r]^-{\eqref{f:mapkerf2}} & \ker(f_2) \ . }
\end{equation*}
Compare with Remark \ref{rem:offdiag-anti}-2. The maps in the diagram are all bijective.

\end{itemize}
\end{remark}

%

\section{\label{sec:valuesH*}On the left $H$-module $H^*(I, V)$ when $G={\rm SL}_2(\mathbb Q_p)$ with $p\neq 2,3$ and $V$ is of finite length}

We suppose that $G={\rm SL}_2(\mathbb Q_p)$ with $p\neq 2,3$. The goal of this section is to investigate the cohomology $H^*(I,V) = \Ext_{\Mod(G)}^*(\mathrm{X},V)$ for any finite length representation $V$ in $\Mod(G)$.

\begin{remark}
  Recall that our assumption on $G$ guarantees that the pro-$p$ Iwahori subgroup $I$ has cohomological dimension $3$. We therefore have $H^i(I,V) = 0$ for $i \geq 4$ and any $V$ in $\Mod(G)$.
\end{remark}

In a \textbf{first step} we fix a nonzero polynomial $Q \in k[X]$ and consider the smooth $G$-representation $\mathbf{X}/\mathbf{X} Q(\zeta)$. Since $H$ is free over $k[\zeta]$ (Lemma \ref{freeness}), right multiplication by   $Q(\zeta)$ induces an injective map on $\X^I$ and therefore on  $\mathbf{X}$. So  we have the short exact sequence of smooth $G$-representations
\begin{equation*}
  0 \rightarrow \mathbf{X} \xrightarrow{\cdot Q(\zeta)} \mathbf{X} \longrightarrow \mathbf{X}/\mathbf{X}Q(\zeta) \rightarrow 0 \ .
\end{equation*}
Hence we obtain the long exact cohomology sequence (of $H$-bimodules)
\begin{align}\label{f:coh-seq}
  0 & \longrightarrow E^0 \xrightarrow{\cdot Q(\zeta)} E^0 \longrightarrow (\mathbf{X}/\mathbf{X} Q(\zeta))^I  \longrightarrow  E^1 \xrightarrow{\cdot Q(\zeta)} E^1 \longrightarrow H^1(I,\mathbf{X}/\mathbf{X} Q(\zeta)) \\
    & \longrightarrow E^2 \xrightarrow{\cdot Q(\zeta)} E^2 \longrightarrow H^2(I,\mathbf{X}/\mathbf{X} Q(\zeta))   \longrightarrow E^3 \xrightarrow{\cdot Q(\zeta)} E^3 \longrightarrow H^3(I,\mathbf{X}/\mathbf{X} Q(\zeta)) \longrightarrow 0            \nonumber
\end{align}
and therefore the short exact sequences
\begin{equation}\label{f:coh-seq2}
  0 \rightarrow E^i/E^iQ(\zeta) \longrightarrow H^i(I,\mathbf{X}/\mathbf{X} Q(\zeta)) \longrightarrow \ker (E^{i+1} \xrightarrow{\cdot Q(\zeta)} E^{i+1} ) \rightarrow 0 \ .
\end{equation}
Note that all three terms in these short exact sequences are annihilated by $Q(\zeta)$ from the right. Next we collect in the following proposition what we have proved in the previous sections about $E^*$ as a left or a right $k[\zeta]$-module.

\begin{proposition}\label{collect-zeta}
   As left or right $k[\zeta]$-modules we have the following isomorphisms (for 2. and 3. we need $\pi=p$):
\begin{enumerate}
  \item $H \cong k[\zeta]^{4(p-1)}$;
  \item $E^1 \cong k[\zeta^{\pm 1}]^{4(p-1)} \oplus k[\zeta]^{4(p-1)}$;
  \item $E^2 \cong k[\zeta^{\pm 1}]^{4(p-1)} \oplus \big((k[\zeta^{\pm 1}]/k[\zeta]\big)^{4(p-1)}$;
  \item $E^3 \cong k \oplus \big((k[\zeta^{\pm 1}]/k[\zeta]\big)^{4(p-1)}$ with $\zeta$ acting by $1$ on the summand $k$.
\end{enumerate}
\end{proposition}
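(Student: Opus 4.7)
The plan is to establish the four claims separately; parts (1), (3), (4) are relatively direct, while part (2) requires more delicate analysis.

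For parts (1), (3), (4), the crucial inputs over $k[\zeta]$ are: (a) $H$ is a free $k[\zeta]$-module of rank $4(p-1)$, by Lemma~\ref{freeness}; and (b) $F^1 H$ is also a free $k[\zeta]$-module of rank $4(p-1)$. For (b), note that $F^1 H$ is a submodule of the free module $H$ over the PID $k[\zeta]$, hence free, and its rank equals that of $H$ because the quotient $H/F^1 H \cong k[\Omega]$ is a finite-dimensional $k[\zeta]$-module, hence $k[\zeta]$-torsion and vanishing upon tensoring with the fraction field $k(\zeta)$. Given (a) and (b), part (1) is immediate. For part (4), combine the decomposition $E^3 \cong \chi_{triv} \oplus \ker(\trace^d)$ from~\eqref{f:keydecompEd} (with $\chi_{triv}(\zeta) = 1$) and Proposition~\ref{prop:kerS-injhull}'s identification $\ker(\trace^d) \cong \bigcup_n (H/\zeta^n H)^\vee$; since the $k$-dual of $k[\zeta]/\zeta^n$ is cyclic of the same length and hence isomorphic to itself as a $k[\zeta]$-module, (a) yields $(H/\zeta^n H)^\vee \cong (k[\zeta]/\zeta^n)^{4(p-1)}$, and the direct limit is $(k[\zeta^{\pm 1}]/k[\zeta])^{4(p-1)}$. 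Part (3) proceeds identically using $E^2 = \ker(f_2) \oplus \ker(g_2)$ (Proposition~\ref{prop:structure2}), Corollary~\ref{cor:structurekerf2}(ii) for the first summand, and Proposition~\ref{prop:structurekerg2} combined with (b) for the second.

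For part (2), the obstruction is the four-dimensional bridge $\mathfrak V$ of Lemma~\ref{lemma:V}, which is not a $k[\zeta]$-submodule. My approach is to split the short exact sequence $0 \to \ker(f_1) \to E^1 \to E^1/\ker(f_1) \to 0$ rather than the one involving $\ker(g_1)$. Proposition~\ref{prop:kerf1star} and Lemma~\ref{lemma:funnyright} identify $\ker(f_1) \cong k[\zeta^{\pm 1}]^{4(p-1)}$, and the quotient $E^1/\ker(f_1)$ fits into $0 \to \ker(g_1) \to E^1/\ker(f_1) \to \mathfrak V \to 0$ with $\ker(g_1) \cong F^1 H \cong k[\zeta]^{4(p-1)}$ by Proposition~\ref{prop:isokerg} and (b). By Lemma~\ref{lemma:zetaV}(e,f) together with centrality of $\zeta$, each of the four generators $c \in \{x, y, x\tau_{s_1}, y\tau_{s_0}\}$ of $\mathfrak V$ lifts to $\tilde c \in E^1/\ker(f_1)$ satisfying $(\zeta - 1)\tilde c = \beta_c$, where $\beta_c \in \ker(g_1)$ is the $\ker(g_1)$-component of $\zeta c - c$ in $E^1$. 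This presents $E^1/\ker(f_1)$ as the quotient of a free $k[\zeta]$-module on $4(p-1) + 4$ generators by the four resulting relations (one checks that no further $k[\zeta]$-relation is forced: any such would project to a $k$-relation among the $c$'s in $\mathfrak V$ and reduce to a relation in the free module $\ker(g_1)$). Once this presentation is shown to yield a free $k[\zeta]$-module of rank $4(p-1)$, the sequence splits automatically because its quotient is projective, giving $E^1 \cong k[\zeta^{\pm 1}]^{4(p-1)} \oplus k[\zeta]^{4(p-1)}$.

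The main obstacle is this Smith normal form verification. One must show that all four invariant factors $d_1 \mid d_2 \mid d_3 \mid d_4$ of the $4 \times (4(p-1)+4)$ relation matrix are units in $k[\zeta]$. The smallest factor $d_1$ (the $\gcd$ of all entries) is a unit because each $\beta_c$ has nonzero constant coefficients in the $k[\zeta]$-basis of $F^1 H$: under the isomorphism of Proposition~\ref{prop:isokerg}, $\beta_x, \beta_y, \beta_{x\tau_{s_1}}$ correspond to scalar multiples of $e_{\id}\tau_{s_0}$, $e_{\id^{-1}}\tau_{s_1}$, $e_{\id}\tau_{s_0 s_1}$, each a $k$-linear combination of basis elements. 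For the product $d_1 d_2 d_3 d_4$, I would exhibit a single $4 \times 4$ minor with unit determinant: select the columns corresponding to $\tau_{s_0}, \tau_{s_1}, \tau_{s_0 s_1}$, and $\tau_{\omega_u s_0 s_1}$ for some $u \in \mathbb{F}_p^\times \setminus \{\pm 1\}$ (available since $p \geq 5$), and expand $\beta_{y\tau_{s_0}} \leftrightarrow 2e_{\id^{-1}}\tau_{s_1 s_0}$ in the $k[\zeta]$-basis of $F^1 H$ using the relation $\tau_{\omega s_1 s_0} = \zeta\tau_\omega - \tau_{\omega s_0 s_1} + (\text{terms of length} \leq 1)$; the resulting block-diagonal $4 \times 4$ determinant evaluates to $16(u^{-1} - u)$, a unit in $k[\zeta]$, forcing all $d_i$ to be units.
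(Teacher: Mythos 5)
Parts (1), (3), and (4) follow the paper's own approach exactly, and your treatment of rank$(F^1H) = 4(p-1)$ via passing to the fraction field is a fine substitute for the paper's appeal to elementary divisors.

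For part (2) you take a genuinely different route. The paper works with the same two submodules $A = \ker(f_1)$ and $B = \ker(g_1)$, uses the elementary divisor theorem on $E^1/A$ to write it as $F \oplus \bar D$ with $F$ free and $\bar D$ finite dimensional with $(\zeta-1)\bar D = 0$, lifts $\bar D$ to a $k[\zeta^{\pm 1}]$-module $D \subseteq E^1$ (because $\zeta$ acts bijectively on it), applies elementary divisors again over $k[\zeta^{\pm 1}]$ to split off a finite-dimensional $D'$ with $(\zeta-1)D' = 0$, and kills $D'$ by citing the absence of $P(\zeta)$-torsion in $E^1$ (Lemma \ref{lemma:notorsion}.ii). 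This makes no reference to the particular shape of $\mathfrak V$ or to the coefficients of the relations. Your proposal instead builds an explicit $4 \times (4(p-1)+4)$ presentation matrix for $E^1/\ker(f_1)$ over $k[\zeta]$ and argues that the invariant factors are units by exhibiting a $4\times 4$ minor with unit determinant. The overall plan is sound (the module is indeed free, so such a minor exists), and the setup — that the four relations $(\zeta-1)\tilde c_i = \beta_{c_i}$ together with freeness of $\ker(g_1)$ give a presentation, with no further relations, is argued correctly.

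However, there is a gap in the verification that is not cosmetic. Your claimed $4\times 4$ minor uses columns labelled $\tau_{s_0}, \tau_{s_1}, \tau_{s_0s_1}, \tau_{\omega_u s_0s_1}$ and relies on expanding $e_{\id^{-1}}\tau_{s_1s_0}$ in "the $k[\zeta]$-basis of $F^1H$" via $\tau_{\omega s_1s_0} = \zeta\tau_\omega - \tau_{\omega s_0s_1} + (\text{shorter})$. But this identity lives in $H$, not in $F^1H$: the term $\zeta\tau_\omega$ has a nonzero length-zero component (equal to $e_1$), so $\zeta\tau_\omega \notin F^1H$, and $\tau_\omega$ is not available as a generator of the domain of your presentation. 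In fact $F^1H$ is not a free $k[\zeta]$-direct summand of $H$ (the $k$-complement $k[\Omega]$ is not $\zeta$-stable), so the $k[\zeta]$-basis \eqref{f:Cbasis} of $H$ does not restrict to a basis of $F^1H$ by dropping $\{\tau_\omega\}$; the columns of your relations matrix must be indexed by an actual $k[\zeta]$-basis of $F^1H$, which you never produce. Without that, the determinant claim $16(u^{-1}-u)$ cannot be checked, and the whole Smith normal form argument is unverified. The paper's route avoids precisely this by never needing a basis of $F^1H$ and instead leaning on the torsion-freeness of $E^1$ itself; you should either fix the basis issue and actually carry out the minor computation, or adopt the paper's lifting argument.
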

\begin{proof}
1. See Lemma \ref{freeness}.

4. According to \eqref{f:keydecompEd} and Prop.\ \ref{prop:kerS-injhull} we have
\begin{equation*}
  E^3 \cong k \oplus \bigcup_{m \geq 1} (H/\zeta^m H)^\vee  \qquad\text{as $H$-bimodules}.
\end{equation*}
Using 1. we obtain
\begin{equation*}
  \bigcup_{m \geq 1} (H/\zeta^m H)^\vee \cong \big( \bigcup_{m \geq 1} (k[\zeta]/\zeta^m k[\zeta])^\vee \big)^{4(p-1)} \cong \big( \bigcup_{m \geq 1} (\frac{1}{\zeta^m}k[\zeta]/k[\zeta])^\vee \big)^{4(p-1)} \cong \big( k[\zeta^{\pm 1}]/k[\zeta] \big)^{4(p-1)} \ .
\end{equation*}

3. By Propositions \ref{prop:structure2} and \ref{prop:structurekerg2} and Corollary  \ref{cor:structurekerf2}, we have $E^2 = A \oplus B$ with $A \cong k[\zeta^{\pm 1}]^{4(p-1)}$ and $B \cong \bigcup_{m \geq 1} (F^1 H/\zeta^m F^1 H)^\vee$, the latter even as an $H$-bimodule. But $F^1 H$ is of finite codimension in $H$. Hence the elementary divisor theorem implies that also $F^1 H \cong k[\zeta]^{4(p-1)}$. Therefore the same computation as in the proof of 4. above shows that $B \cong \big((k[\zeta^{\pm 1}]/k[\zeta]\big)^{4(p-1)}$.

2. According to Propositions \ref{prop:structure1}, \ref{prop:isokerg}, and \ref{prop:kerf1star} the $H$-bimodule $E^1$ has the two sub-$H$-bimodules $A := \ker(f_1)$ and $B := \ker(g_1)$ which have the following properties:
\begin{itemize}
  \item[a.] $A \oplus B \subseteq E^1$ with $E^1/A \oplus B$ being $4$-dimensional;
  \item[b.] $A \cong k[\zeta^{\pm 1}]^{4(p-1)}$ and $B \cong F^1 H \cong k[\zeta]^{4(p-1)}$ as left or as right $k[\zeta]$-modules;
  \item[c.] $E^1/B$ is a $k[\zeta^{\pm 1}]$-module;
  \item[d.] $\zeta$ acts on $E^1/A \oplus B$ from the left and from the right by $1$.
\end{itemize}
We give the argument for the left $k[\zeta]$-action, the other case being entirely analogous. Again the elementary divisor theorem implies that $E^1/A$ as a $k[\zeta]$-module is of the form $E^1/A = F \oplus \bar{D}$ with $F$ being free of rank $4(p-1)$ and $\bar{D}$ being finite dimensional. Since the natural map $\bar{D} \hookrightarrow E^1/A \oplus B$ is injective $\zeta$ must act by $1$ on $\bar{D}$. Suppose that $\bar{D} = 0$. Then we have the short exact sequence $0 \rightarrow A \rightarrow E^1\rightarrow F \rightarrow 0$ which splits since $F$ is free. We therefore assume in the following that $\bar{D} \neq 0$, and we let $D \subset E^1$ denote the preimage of $\bar{D}$ in $E^1$. Then $\zeta$ acts bijectively on $D$ which therefore is a $k[\zeta^{\pm 1}]$-module, which contains the free $k[\zeta^{\pm 1}]$-module $A$ with a finite dimensional quotient. Applying this time the elementary divisor theorem to the $k[\zeta^{\pm 1}]$-module $D$ we see that it must be of the form $D = F' \oplus D'$ with $F' \cong k[\zeta^{\pm 1}]^{4(p-1)}$ and finite dimensional $D'$. This $D'$ then is a $k[\zeta]$-submodule of $E^1$ on which $\zeta$ acts by $1$ so that $(\zeta - 1) D' = 0$. It therefore follows from Lemma \ref{lemma:notorsion}.ii that $D' = 0$. Hence we have a short exact sequence $0 \rightarrow F' \rightarrow E^1 \rightarrow F \rightarrow 0$, which also must split.
\end{proof}

\begin{lemma}\label{ker-coker-zeta}
   The multiplication by $Q(\zeta)$ on $k[\zeta]$ and on $k[\zeta^{\pm 1}]$ has zero kernel and a finite dimensional cokernel whereas on $k[\zeta^{\pm 1}]/k[\zeta]$ it has a finite dimensional kernel and zero cokernel.
\end{lemma}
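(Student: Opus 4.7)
The plan is to factor the polynomial and analyze each factor separately. Write $Q(\zeta)=\zeta^m R(\zeta)$ with $R\in k[X]$ satisfying $R(0)\neq 0$, and treat the three cases.

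For $k[\zeta]$, injectivity is immediate since $k[\zeta]$ is a domain, and the cokernel is $k[\zeta]/Q(\zeta)k[\zeta]$, which has $k$-dimension $\deg Q$. For $k[\zeta^{\pm 1}]$, injectivity is again clear. Since $\zeta$ is invertible in $k[\zeta^{\pm 1}]$, we have $Q(\zeta)k[\zeta^{\pm 1}]=R(\zeta)k[\zeta^{\pm 1}]$, and the natural map $k[\zeta]/R(\zeta)k[\zeta]\to k[\zeta^{\pm 1}]/R(\zeta)k[\zeta^{\pm 1}]$ is an isomorphism because $R(0)\neq 0$ forces $\zeta$ to be already a unit modulo $R(\zeta)$. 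Hence the cokernel has dimension $\deg R$.

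For $M:=k[\zeta^{\pm 1}]/k[\zeta]$, the two factors of $Q(\zeta)$ will be analyzed separately. Multiplication by $\zeta^m$ on $M$, inspected in the basis $\{\zeta^{-n}\}_{n\geq 1}$, is surjective with kernel $\bigoplus_{n=1}^{m}k\zeta^{-n}$, which is $m$-dimensional. For the factor $R(\zeta)$ with $R(0)\neq 0$, I would show that multiplication by $R(\zeta)$ is a bijection on $M$. Injectivity: if $R(\zeta)(f/\zeta^n)\in k[\zeta]$ with $f\in k[\zeta]$, then $\zeta^n\mid R(\zeta)f$ in $k[\zeta]$, and since $R(0)\neq 0$ the element $R(\zeta)$ is coprime to $\zeta^n$, forcing $\zeta^n\mid f$, so $f/\zeta^n\equiv 0$. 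Surjectivity: $M=\varinjlim_n \zeta^{-n}k[\zeta]/k[\zeta]\cong\varinjlim_n k[\zeta]/\zeta^n k[\zeta]$, and $R(\zeta)$ is a unit in each finite quotient $k[\zeta]/\zeta^n k[\zeta]$ (since $\zeta$ is nilpotent there and $R(0)\in k^\times$), so $R(\zeta)$ acts invertibly on $M$. Composing, multiplication by $Q(\zeta)=\zeta^mR(\zeta)$ on $M$ has $m$-dimensional kernel and is surjective.

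There is no real obstacle here; the only small point to keep in mind is the factorization step and the fact that ``$R(0)\neq 0$'' is precisely what makes $R(\zeta)$ act invertibly on the $\zeta$-torsion module $M$. Alternatively, the whole statement can be obtained in one shot from the snake lemma applied to multiplication by $Q(\zeta)$ on the short exact sequence $0\to k[\zeta]\to k[\zeta^{\pm 1}]\to M\to 0$, using only the computation of the cokernels in the first two cases.
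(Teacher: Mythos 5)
Your proof is correct and takes essentially the same route as the paper: the paper also reduces to the two cases $Q(\zeta)=\zeta^m$ (declared clear) and $Q$ prime to $\zeta$, and in the latter case invokes precisely the identity $k[\zeta]/Q(\zeta)k[\zeta]=k[\zeta^{\pm 1}]/Q(\zeta)k[\zeta^{\pm 1}]$ that you justify via $\zeta$ becoming a unit modulo $R(\zeta)$. You merely spell out a few steps the paper leaves implicit (the explicit kernel of $\zeta^m$ on $k[\zeta^{\pm1}]/k[\zeta]$, the direct-limit description), and the snake-lemma packaging you mention at the end is a clean way to see why only the cokernel on the torsion module needs an argument once the first two cases are settled.
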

\begin{proof}
The only part of the statement which might not be entirely obvious is the surjectivity of the multiplication on $k[\zeta^{\pm 1}]/k[\zeta]$. This is clear if $Q(\zeta)$ is a power of $\zeta$. We therefore assume that $Q(\zeta)$ is prime to $\zeta$. But then $k[\zeta]/Q(\zeta) k[\zeta] = k[\zeta^{\pm 1}] / Q(\zeta) k[\zeta^{\pm 1}]$.
\end{proof}
In the three next statements, $G={\rm SL}_2(\mathbb Q_p)$ with $p\neq 2,3$ and we assume $\pi=p$.

\begin{corollary}\label{ker-coker-zeta-E}
   The multiplication by $Q(\zeta)$ from the right on $E^*$ has finite dimensional kernel and cokernel.
\end{corollary}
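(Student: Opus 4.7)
The plan is to reduce the statement to the purely $k[\zeta]$-theoretic computation of Lemma \ref{ker-coker-zeta} by using the explicit decomposition of each $E^i$ as a right $k[\zeta]$-module provided by Proposition \ref{collect-zeta}. Since $E^* = E^0 \oplus E^1 \oplus E^2 \oplus E^3$ (recall that $E^i = 0$ for $i \geq 4$) and since right multiplication by $Q(\zeta)$ respects this grading, it suffices to show that on each $E^i$ the kernel and cokernel of $\cdot Q(\zeta)$ are finite dimensional over $k$.

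For each $i \in \{0,1,2,3\}$ Proposition \ref{collect-zeta} exhibits $E^i$ as a finite direct sum of summands each isomorphic to one of $k[\zeta]$, $k[\zeta^{\pm 1}]$, $k[\zeta^{\pm 1}]/k[\zeta]$, or (in the case $i=3$) a one-dimensional space $k$ on which $\zeta$ acts by $1$. Right multiplication by $Q(\zeta)$ is additive over direct sums, so I would apply Lemma \ref{ker-coker-zeta} summand by summand: on $k[\zeta]$ and $k[\zeta^{\pm 1}]$ the kernel is zero and the cokernel is finite dimensional, while on $k[\zeta^{\pm 1}]/k[\zeta]$ the cokernel is zero and the kernel is finite dimensional.

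The only thing not covered directly by Lemma \ref{ker-coker-zeta} is the one-dimensional summand $k$ of $E^3$ on which $\zeta$ acts by $1$: there, multiplication by $Q(\zeta)$ is just multiplication by the scalar $Q(1)$, so both kernel and cokernel are at most one-dimensional. Adding these contributions across the finitely many summands of each $E^i$ and across $i = 0,1,2,3$ gives the desired finite dimensionality of both $\ker(\cdot Q(\zeta))$ and $\coker(\cdot Q(\zeta))$ on $E^*$. There is no real obstacle here; the content is entirely contained in Proposition \ref{collect-zeta}, whose proof was the substantive step.
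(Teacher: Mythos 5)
Your argument is correct and is precisely the intended one: the paper leaves this corollary without a written proof because it follows immediately by applying Lemma \ref{ker-coker-zeta} summand-by-summand to the decompositions of $E^0,\dots,E^3$ given in Proposition \ref{collect-zeta}, with the one-dimensional summand of $E^3$ handled trivially since $\zeta$ acts there by $1$. No discrepancy with the paper's approach.
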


Using \eqref{f:coh-seq2} we deduce the following result.

\begin{corollary}\label{Q-finite-dim}
  The $k$-vector space $H^*(I,\mathbf{X}/\mathbf{X} Q(\zeta))$ is finite dimensional.
\end{corollary}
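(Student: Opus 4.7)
The plan is essentially immediate from the two ingredients just established. I would combine the short exact sequences \eqref{f:coh-seq2}
\[
0 \rightarrow E^i/E^iQ(\zeta) \longrightarrow H^i(I,\mathbf{X}/\mathbf{X} Q(\zeta)) \longrightarrow \ker\bigl(E^{i+1} \xrightarrow{\cdot Q(\zeta)} E^{i+1}\bigr) \rightarrow 0
\]
with Corollary \ref{ker-coker-zeta-E}, which asserts that both the kernel and the cokernel of right multiplication by $Q(\zeta)$ on each $E^i$ are finite dimensional. The outer two terms of the displayed short exact sequence are respectively the cokernel on $E^i$ and the kernel on $E^{i+1}$ of this multiplication; both are therefore finite dimensional, hence so is $H^i(I,\mathbf{X}/\mathbf{X} Q(\zeta))$.

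Finally, since $I$ has cohomological dimension $d = 3$ (under our standing hypothesis $G = \mathrm{SL}_2(\mathbb{Q}_p)$ with $p \neq 2,3$), we have $H^i(I, \mathbf{X}/\mathbf{X}Q(\zeta)) = 0$ for $i \geq 4$, so only the four degrees $i = 0,1,2,3$ contribute. Summing the four finite-dimensional contributions yields the claim.

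There is essentially no obstacle here: all the work has been done in Proposition \ref{collect-zeta} (the structural decomposition of each $E^i$ as a left/right $k[\zeta]$-module) and Lemma \ref{ker-coker-zeta} (the behavior of multiplication by $Q(\zeta)$ on each of the three types of summands $k[\zeta]$, $k[\zeta^{\pm 1}]$, $k[\zeta^{\pm 1}]/k[\zeta]$), which together give Corollary \ref{ker-coker-zeta-E}. The corollary we are proving is just the bookkeeping that packages these facts through the long exact cohomology sequence \eqref{f:coh-seq}.
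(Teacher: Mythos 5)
Your argument is exactly the one the paper uses: the short exact sequences \eqref{f:coh-seq2} sandwich $H^i(I,\mathbf{X}/\mathbf{X}Q(\zeta))$ between the cokernel on $E^i$ and the kernel on $E^{i+1}$ of right multiplication by $Q(\zeta)$, both of which Corollary \ref{ker-coker-zeta-E} shows to be finite dimensional. Correct, and identical in substance to the paper's one-line deduction.
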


Next we consider the left $k[\zeta]$-action on $H^*(I,\mathbf{X}/\mathbf{X} Q(\zeta))$. For this we introduce the polynomial $P(X) := Q(X)Q(\frac{1}{X})X^{\deg(Q)}$.

\begin{proposition}\label{left-torsion}
   $H^*(I,\mathbf{X}/\mathbf{X} Q(\zeta))$ is left $P(\zeta)$-torsion.
\end{proposition}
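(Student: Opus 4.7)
The plan is to exploit the short exact sequences \eqref{f:coh-seq2} of $H$-bimodules to reduce the assertion to showing, for each $i \in \{0,1,2,3\}$, that both $E^i/E^i Q(\zeta)$ and the right-kernel $\ker(\cdot Q(\zeta) : E^i \to E^i)$ are annihilated by a uniformly bounded power of $P(\zeta)$ from the left. Since both of these objects are right-annihilated by $Q(\zeta)$, the whole problem reduces to analyzing how a right $Q(\zeta)$-annihilation transfers to a left $P(\zeta)$-annihilation on the various bimodule pieces of $E^*$. The key identity I will use throughout is the factorization $P(\zeta) = Q(\zeta)\,Q^{*}(\zeta)$ where $Q^{*}(X) := X^{\deg Q}\,Q(X^{-1})$.

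I will analyze separately the three types of sub-bimodule into which the previous sections have decomposed $E^*$:
\begin{itemize}
\item[(a)] On $\ker(g_i)\subseteq E^i$ the left and right actions of $\zeta$ coincide; this includes all of $E^0 = H$ by centrality of $\zeta$, and all of $E^3$ by Remark~\ref{rema:centralizeEd}. Right $Q(\zeta)$-annihilation is then the same as left $Q(\zeta)$-annihilation, so such objects are killed already by $Q(\zeta)$, hence by $P(\zeta)$.
\item[(b)] On $\ker(f_i)\subseteq E^i$ the defining identity $\zeta\cdot z\cdot \zeta = z$, together with the fact that $\ker(f_i)$ is an $H_\zeta$-bimodule (Remark~\ref{rema:calcufg}-ii, Proposition~\ref{prop:kerf1star}, Corollary~\ref{cor:structurekerf2}), yields $R(\zeta)\cdot z = z\cdot R(\zeta^{-1})$ for every $R\in k[X,X^{-1}]$. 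If $z\cdot Q(\zeta) = 0$, rewriting this as a left action gives $Q(\zeta^{-1})\cdot z = 0$; multiplying by $\zeta^{\deg Q}$ from the left (legitimate because $\zeta$ is invertible on $\ker(f_i)$) gives $Q^{*}(\zeta)\cdot z = 0$, and therefore $P(\zeta)\cdot z = Q(\zeta)Q^{*}(\zeta)\cdot z = 0$. The same argument applies to $\ker(f_i)/\ker(f_i)Q(\zeta)$.
\item[(c)] On the four-dimensional quotient bimodule $N := E^1/(\ker(f_1)\oplus \ker(g_1))$ the element $\zeta$ acts as $1$ from both sides (Proposition~\ref{prop:structure1}). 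Hence $P(\zeta)$ acts as the scalar $P(1) = Q(1)^2$: if $Q(1)=0$ this scalar is zero and $N$ is killed by $P(\zeta)$, whereas if $Q(1)\neq 0$ then $Q(\zeta)$ is invertible on $N$ and both $N[Q(\zeta)]$ and $N/NQ(\zeta)$ vanish.
\end{itemize}

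With these three computations in hand the remaining step is to glue them in degree $1$, where $E^1$ is not an internal direct sum of $\ker(f_1)$ and $\ker(g_1)$. I will apply the snake lemma to the short exact sequence $0\to \ker(f_1)\oplus \ker(g_1)\to E^1\to N\to 0$ endowed with the endomorphism ``right multiplication by $Q(\zeta)$''; this exhibits each of $E^1[Q(\zeta)]$ and $E^1/E^1Q(\zeta)$ as an extension of a subquotient of $N$ by a subquotient of $\ker(f_1)\oplus \ker(g_1)$, so by (a), (b) and (c) each of these two objects is killed by $P(\zeta)^2$. In degree $2$, where $E^2 = \ker(f_2)\oplus \ker(g_2)$ is a genuine bimodule direct sum (Proposition~\ref{prop:structure2}), the analogous statements already hold with $P(\zeta)$ rather than $P(\zeta)^2$. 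Feeding all this into \eqref{f:coh-seq2} produces $P(\zeta)^3\cdot H^i(I,\mathbf X/\mathbf X Q(\zeta)) = 0$ for every $i$, which is considerably stronger than $P(\zeta)$-torsion.

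The main obstacle is the asymmetry between the left and the right action of $\zeta$ on $\ker(f_i)$: it is precisely the emergence of $Q(\zeta^{-1})$ on the left when $Q(\zeta)$ acts trivially on the right that forces $P$ rather than $Q$ to appear, and this is what makes the reciprocal polynomial $Q^{*}(X) = X^{\deg Q}Q(X^{-1})$ enter the picture. Secondarily, the bookkeeping in degree $1$ via the snake lemma is slightly delicate because $E^1$ fails to be the internal direct sum of $\ker(f_1)$ and $\ker(g_1)$; the snake lemma is what allows me to avoid chasing the ``off-diagonal'' piece $\mathfrak V$ of Lemma~\ref{lemma:V} explicitly.
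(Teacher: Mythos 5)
Your proof is correct, but it takes a genuinely different route from the paper's. The paper works uniformly on all of $E^*$: starting from Corollary~\ref{coro:fg} (i.e.\ $\zeta\cdot x\cdot\zeta - x\in\ker(g)$ for every $x\in E^*$) it iterates to get $\zeta^m\cdot x\cdot\zeta^i\equiv\zeta^{m-i}\cdot x\bmod\ker(g)$, then exploits the palindromic symmetry $a_{m-i}=a_i$ of the coefficients of $P$ (with $m = 2\deg Q$) to obtain the single congruence $P(\zeta)\cdot x\equiv\zeta^m\cdot x\cdot P(\zeta)\bmod\ker(g)$; from this, both $E^*[\,\cdot Q(\zeta)]$ and $E^*/E^*Q(\zeta)$ are killed by $P(\zeta)^2$ on the left, and one is done without ever touching the degreewise decompositions. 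You instead decompose $E^i$ piecewise (via $\ker(f_i)$, $\ker(g_i)$, and the four-dimensional quotient $N$ in degree $1$), verify the $P(\zeta)$-annihilation on each piece by hand, and reassemble with the snake lemma. Your argument does correctly locate where the reciprocal polynomial $Q^*(X) = X^{\deg Q}Q(1/X)$ comes from — the $\zeta\leftrightarrow\zeta^{-1}$ swap on $\ker(f_i)$ — and the bookkeeping in degree $1$ is handled carefully. The trade-off is that your route leans on the full structure theorems (Propositions~\ref{prop:isokerg}, \ref{prop:kerf1star}, \ref{prop:structure1}, \ref{prop:structure2}, \ref{prop:kerf2}) for each degree, whereas the paper's congruence argument needs only Corollary~\ref{coro:fg} and goes through at once for all $i$, and produces the slightly sharper bound $P(\zeta)^2$ in place of your $P(\zeta)^3$ (irrelevant for mere torsion, but worth noting). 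Both approaches ultimately rest on the same structural input $f\circ g = g\circ f = 0$; the paper's is the more economical packaging of it.
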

\begin{proof}
We start with the following observation.
By Corollary \ref{coro:fg}, we know that  for any $x \in E^*$, we have $\zeta\cdot x\cdot \zeta-x\in \ker(g)$.
We deduce, for any $m\geq 0$ and $0 \leq i \leq m$, that
$ \zeta^m\cdot  x \cdot  \zeta^i \equiv \zeta^{m-i} \cdot  x  \bmod \ker(g)$
We choose  $m$ to be $2\deg(Q)$ which is $\geq \deg(P)$.
The coefficients of the polynomial $P=\sum_{i=0}^{m} a_i X^i$ satisfy $a_{m-i} = a_i$ for any $i$.
For $x\in E^*$, have
\begin{align}
  P(\zeta) \cdot x -  \zeta^m\cdot  x\cdot  P(\zeta) & = \sum_{i=0}^m a_i \zeta^i \cdot x - \sum_{i=0}^m  a_i \zeta^m\cdot x\cdot  \zeta^i    \equiv  \sum_{i=0}^m a_i \zeta^i \cdot x - \sum_{i=0}^m  a_i \zeta^{m-i}\cdot x     \bmod \ker(g) \cr
   &  \sum_{i=0}^m a_i \zeta^i \cdot x - \sum_{i=0}^m  a_{m-i} \zeta^{m-i}\cdot x  \equiv 0   \bmod \ker(g) \ .\label{f:preliore}
\end{align}
Now we prove the proposition. Because of the exact sequences \eqref{f:coh-seq2} it suffices to show that $E^*/E^* Q(\zeta)$ and $\ker (E^* \xrightarrow{\cdot Q(\zeta)} E^*)$ are left $P(\zeta)$-torsion. Obviously both modules are annihilated by $P(\zeta)$ from the right.
That $\ker (E^* \xrightarrow{\cdot Q(\zeta)} E^*)$ is  of left $P(\zeta)$-torsion follows from the above observation: suppose $x\cdot Q(\zeta)=0$, then $x\cdot P(\zeta)=0$ and $ P(\zeta) \cdot x \in \ker(g)$ so $ P(\zeta)^2\cdot  x  = P(\zeta)\cdot x \cdot P(\zeta)=0$.
Now let $x\in E^*$. From \eqref{f:preliore}, we deduce that
$P(\zeta)^2 \cdot x -  \zeta^mP(\zeta)\cdot  x\cdot  P(\zeta)=  P(\zeta) \cdot x \cdot  P(\zeta) -  \zeta^m\cdot  x\cdot  P(\zeta)^2$
so
$$P(\zeta)^2 \cdot x =  (\zeta^m P(\zeta)\cdot  x+  P(\zeta) \cdot x  -  \zeta^m\cdot  x\cdot  P(\zeta))\cdot   P(\zeta)\in E^*\cdot Q(\zeta).$$
This shows that $E^*/E^* Q(\zeta)$ is left $P(\zeta)$-torsion.
\end{proof}

\begin{remark}\label{Ore}
  The formula \eqref{f:preliore} actually holds true for any nonzero polynomial $P(X) \in k[X]$ with the property that $X^m P(\frac{1}{X}) = P(X)$ for some integer $m \geq \deg(P)$. It shows that, for any $x \in E^*$ and any $j \geq 1$, we have
\begin{equation*}
  P(\zeta)^j \cdot x \equiv \zeta^{mj} \cdot x \cdot P(\zeta)^j  \bmod \ker(g)
\end{equation*}
and symmetrically
\begin{equation*}
   x \cdot P(\zeta)^j \equiv  P(\zeta)^j  \cdot x \cdot \zeta^{mj}  \bmod \ker(g) \ .
\end{equation*}
This easily implies that the multiplicative subset $\{P(\zeta)^n : n \geq 0\}$ of $H = E^0$ satisfies the left and right Ore conditions inside the full algebra $E^*$. Therefore the corresponding classical ring of fractions $E^*_{P(\zeta)}$ exists. This applies in particular to $P(X) = X$ so that $H_\zeta$ is part of the larger ring $E^*_\zeta$. We will come back to these localizations elsewhere.
\end{remark}

\begin{lemma}\phantomsection\label{sub-quot}
\begin{itemize}
  \item[i.] $\Mod^I(G)$ is closed under the formation of subrepresentations and quotient representations.
  \item[ii.] The functor $V \longrightarrow V^I$ is exact on $\Mod^I(G)$.
\end{itemize}
\end{lemma}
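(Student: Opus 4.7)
The plan is to establish part (ii) first by invoking the equivalence of categories recalled in \S\ref{subsubsec:eqcat}, and then to deduce part (i) from (ii) combined with the elementary fact that every nonzero smooth $k$-representation of the pro-$p$ group $I$ admits a nonzero $I$-fixed vector (since $\chara(k)=p$).

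For (ii), the exactness of $H^0(I,-)=(-)^I$ on $\Mod^I(G)$ is the last sentence of \S\ref{subsubsec:eqcat}: it is recorded there as an immediate consequence of the fact that $H^0(I,-)$ and $\X\otimes_H(-)$ are quasi-inverse equivalences of categories. Nothing further needs to be said.

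For (i), closure under quotients is immediate: if $V\in\Mod^I(G)$ surjects onto $W$, then the image of $V^I$ lies in $W^I$ and generates $W$. The interesting case is closure under subrepresentations. Given $U\subseteq V$ with $V\in\Mod^I(G)$, I would form $U':=\langle G\cdot U^I\rangle\subseteq U$, which manifestly lies in $\Mod^I(G)$, and then conclude via the quotient case that $V/U'\in\Mod^I(G)$. Applying (ii) to the short exact sequence $0\to U'\to V\to V/U'\to 0$ in $\Mod^I(G)$ yields the surjectivity of $V^I\twoheadrightarrow(V/U')^I$. The key claim then reduces to showing $U=U'$. Suppose on the contrary that $U/U'\subseteq V/U'$ is nonzero. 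The pro-$p$ observation forces $(U/U')^I\neq 0$, and lifting any nonzero $\bar v\in(U/U')^I\subseteq(V/U')^I$ through the surjection above produces a vector $v\in V^I$ whose image in $V/U'$ lies in $U/U'$. Since the preimage of $U/U'$ in $V$ is exactly $U$, this places $v$ in $U\cap V^I=U^I\subseteq U'$, contradicting $\bar v\neq 0$.

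The main point to notice --- hardly an obstacle --- is that one must invoke (ii) in order to prove (i) without circular reasoning; this is fine because (ii) is already on record in \S\ref{subsubsec:eqcat} as part of the equivalence of categories, independently of the present lemma. The pro-$p$ observation itself is standard: the $k[I]$-span of any nonzero smooth vector is finite-dimensional and the $I$-action on it factors through a finite $p$-group, which acts with nonzero fixed vectors on any nonzero finite-dimensional vector space over a field of characteristic $p$.
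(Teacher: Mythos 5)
Your proof is correct, but part (i) goes by a genuinely different route from the paper's. For closure under subrepresentations the paper uses a single commutative diagram: the row $0 \rightarrow \mathbf{X} \otimes_H U^I \rightarrow \mathbf{X} \otimes_H V^I \rightarrow \mathbf{X} \otimes_H (V/U)^I$ is exact because $(-)^I$ is left exact and $\mathbf{X}$ is projective as a right $H$-module; the middle and right vertical arrows down to $0 \rightarrow U \rightarrow V \rightarrow V/U \rightarrow 0$ are isomorphisms because $V$ and $V/U$ lie in $\Mod^I(G)$; a diagram chase then shows the left vertical $\mathbf{X} \otimes_H U^I \rightarrow U$ is an isomorphism, which is precisely $U \in \Mod^I(G)$. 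No appeal is made to the pro-$p$ fixed-point fact. You instead introduce the auxiliary submodule $U' := \langle G \cdot U^I \rangle$, use the (trivial) quotient case to put $V/U'$ in $\Mod^I(G)$, invoke (ii) for the surjection $V^I \twoheadrightarrow (V/U')^I$, and finish with the classical nonvanishing of $I$-invariants on nonzero smooth $k[I]$-modules to force $U/U' = 0$. Both arguments ultimately rest on the equivalence and on the projectivity of $\mathbf{X}$ as a right $H$-module (yours implicitly, through (ii)); the paper's diagram chase is shorter and avoids the pro-$p$ lemma, while yours is more step-by-step and makes the role of $U^I$ explicit. Your remark that there is no circularity is correct: the exactness in (ii) follows from the equivalence and the projectivity of $\mathbf{X}$ alone and does not presuppose (i).
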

\begin{proof}
i. For quotient representations the assertion is obvious. For a subrepresentation $U$ of a representation $V$ in $\Mod^I(G)$ we consider the commutative diagram
\begin{equation*}
  \xymatrix{
     0 \ar[r] & \mathbf{X} \otimes_H U^I \ar[d] \ar[r] & \mathbf{X} \otimes_H V^I \ar[d]_{\cong} \ar[r] & \mathbf{X} \otimes_H (V/U)^I \ar[d]_{\cong} & \\
    0 \ar[r] & U \ar[r] & V \ar[r] & V/U \ar[r] & 0.   }
\end{equation*}
The upper horizontal row is exact by the left exactness of the functor $(-)^I$ and the fact that $\mathbf{X}$ is projective as a (right) $H$-module (cf.\ the proof of \cite{embed} Prop.\ 3.25). By the equivalence of categories in \S\ref{subsubsec:eqcat} the middle and right perpendicular arrows are isomorphisms. Hence the left one is an isomorphism as well. This shows that $U$ lies in $\Mod^I(G)$.\\
ii. This a consequence of the equivalence of categories in \S\ref{subsubsec:eqcat}.
\end{proof}

\begin{lemma}\label{subquotients}
  The $G$-representation $\mathbf{X}/\mathbf{X} Q(\zeta)$ is of finite length. Furthermore, the following sets of isomorphism classes of $G$-representations coincide:
\begin{itemize}
  \item[a.] irreducible smooth $G$-representations $V$ such that $Q(\zeta) V^I = 0$;
  \item[b.] irreducible quotient representations of $\mathbf{X}/\mathbf{X} Q(\zeta)$;
  \item[c.] irreducible subquotient representations of $\mathbf{X}/\mathbf{X} Q(\zeta)$.
\end{itemize}
\end{lemma}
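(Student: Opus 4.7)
The plan is to reduce everything to the equivalence of categories of \S\ref{subsubsec:eqcat}, which identifies $\Mod^I(G)$ with the category of left $H$-modules via $V \mapsto V^I$, with quasi-inverse $M \mapsto \mathbf{X} \otimes_H M$. Since $\mathbf{X} = k[G/I]$ is generated by $\mathrm{char}_I$, the quotient $\mathbf{X}/\mathbf{X}Q(\zeta)$ lies in $\Mod^I(G)$ by Lemma \ref{sub-quot}.i, so passing through the equivalence is legitimate.

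For the finite-length assertion, I would first compute $(\mathbf{X}/\mathbf{X}Q(\zeta))^I$ using the opening of the long exact sequence \eqref{f:coh-seq}: since Lemma \ref{lemma:notorsion}.ii rules out right $Q(\zeta)$-torsion in $E^1$, that sequence collapses to an isomorphism $(\mathbf{X}/\mathbf{X}Q(\zeta))^I \cong H/HQ(\zeta)$. By Lemma \ref{freeness}, $H$ is finitely generated free over $k[\zeta]$, so this quotient is finite dimensional over $k$, hence of finite length as an $H$-module. The equivalence of categories then exhibits $\mathbf{X}/\mathbf{X}Q(\zeta) \cong \mathbf{X} \otimes_H H/HQ(\zeta)$ as of finite length in $\Mod^I(G)$, and \emph{a fortiori} in $\Mod(G)$.

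For the equality of the three sets of isomorphism classes, I would prove the cycle of inclusions $(b) \subseteq (c) \subseteq (a) \subseteq (b)$. The first is tautological. For $(c) \subseteq (a)$, write an irreducible subquotient as $V = W/U$ with $U \subseteq W \subseteq \mathbf{X}/\mathbf{X}Q(\zeta)$; Lemma \ref{sub-quot}.i places $U$ and $W$ in $\Mod^I(G)$, Lemma \ref{sub-quot}.ii gives $V^I = W^I/U^I$, and since $Q(\zeta)$ annihilates $(\mathbf{X}/\mathbf{X}Q(\zeta))^I \cong H/HQ(\zeta)$ from both sides, by centrality of $\zeta$, it annihilates the subquotient $V^I$. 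For $(a) \subseteq (b)$, given an irreducible $V$ with $Q(\zeta)V^I = 0$, I use that $V^I \neq 0$ (any nonzero smooth $I$-representation over a field of characteristic $p$ has fixed vectors because $I$ is pro-$p$) to pick $0 \neq v_0 \in V^I$; the map $\mathbf{X} \to V$, $gI \mapsto gv_0$, is surjective by irreducibility, sends $\mathbf{X}Q(\zeta)$ to $0$ because $Q(\zeta) v_0 = 0$, and therefore realises $V$ as a quotient of $\mathbf{X}/\mathbf{X}Q(\zeta)$.

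I do not anticipate a genuine obstacle: the no-torsion input, the $\mathrm{SL}_2(\mathbb{Q}_p)$-equivalence of categories, and the freeness of $H$ over $k[\zeta]$ together supply all the mechanics needed. The only subtle point to watch is the distinction between left and right multiplication by $Q(\zeta)$, which the centrality of $\zeta$ in $H$ resolves instantly.
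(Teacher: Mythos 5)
Your proof is correct and follows essentially the same route as the paper. The one cosmetic difference is in computing $(\mathbf{X}/\mathbf{X}Q(\zeta))^I \cong H/HQ(\zeta)$: you go through the long exact cohomology sequence \eqref{f:coh-seq} and the no-torsion statement Lemma \ref{lemma:notorsion}.ii, whereas the paper invokes the exactness of $V \mapsto V^I$ on $\Mod^I(G)$ (Lemma \ref{sub-quot}.ii) directly; since Lemma \ref{lemma:notorsion}.ii is itself established from that exactness, your derivation is slightly more roundabout but amounts to the same thing.
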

\begin{proof}
First of all we have, by Lemma \ref{sub-quot}, that $(\mathbf{X}/\mathbf{X} Q(\zeta))^I = H/HQ(\zeta)$. This is finite dimensional over $k$ by Prop.\ \ref{collect-zeta}.1 and hence is an $H$-module of finite length. The equivalence of categories in \S\ref{subsubsec:eqcat} then implies that $\mathbf{X}/\mathbf{X} Q(\zeta)$ is of finite length.

Also by Lemma \ref{sub-quot} the $H$-module $V^I$, for any irreducible subquotient $V$ of $\mathbf{X}/\mathbf{X} Q(\zeta)$, is a subquotient of $H/HQ(\zeta)$ and hence satisfies $Q(\zeta) V^I = 0$. On the other hand consider any irreducible smooth $G$-representation $V$ such that $Q(\zeta)V^I = 0$. By the equivalence of categories $V^I$ is a simple $H$-module. We therefore have a surjection $H \twoheadrightarrow V^I$, which  factors over $H/HQ(\zeta)$ and then gives rise to a surjection $\mathbf{X}/\mathbf{X} Q(\zeta) = \mathbf{X} \otimes_H H/H Q(\zeta) \twoheadrightarrow \mathbf{X} \otimes_H V^I = V$.
\end{proof}

\begin{remark}\label{rem:Q}
   As pointed out in the proof of the previous lemma the $H$-module $V^I$ is finite dimensional for any irreducible smooth $G$-representation $V$. Hence there always is a nonzero polynomial $Q \in k[X]$ such that $Q(\zeta) V^I = 0$.
\end{remark}

Combining all of the above we may now establish in a \textbf{second step} our main result.

\begin{theorem}\label{cohomology}
Let $G={\rm SL}_2(\mathbb Q_p)$ with $p\neq 2,3$. For any representation $V$ of finite length in $\Mod(G)$ we have:
\begin{itemize}
  \item[i.] The $k$-vector space $H^*(I,V)$ is finite dimensional;
  \item[ii.] Assume $\pi=p$. If $V$ lies in $\Mod^I(G)$ and $Q(\zeta) V^I = 0$ for some nonzero polynomial $Q \in k[X]$, then the left $H$-module $H^*(I,V)$ is $P(\zeta)$-torsion for the polynomial $P(X) := Q(X)Q(\frac{1}{X})X^{\deg(Q)}$.
\end{itemize}
\end{theorem}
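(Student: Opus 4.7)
I plan to deduce part (i) from part (ii) and to prove part (ii) by devissage, starting from Corollary~\ref{Q-finite-dim} and Proposition~\ref{left-torsion}. For part (i), an induction on the length of $V$ using the long exact cohomology sequence reduces to the case $V$ irreducible. A nonzero irreducible smooth $G$-representation $V$ has $V^I \ne 0$ (since $I$ is pro-$p$ and $\mathrm{char}(k) = p$) and is generated by $V^I$, so $V \in \Mod^I(G)$; admissibility forces $V^I$ to be finite-dimensional, and Remark~\ref{rem:Q} supplies a nonzero $Q \in k[X]$ with $Q(\zeta) V^I = 0$. Part (ii) then delivers the finite-dimensionality.

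For part (ii), fix $V \in \Mod^I(G)$ of finite length with $Q(\zeta) V^I = 0$. By the equivalence of categories of \S\ref{subsubsec:eqcat}, the length of $V$ equals the length of the finite-dimensional $H/H Q(\zeta)$-module $V^I$; I argue by induction on this length. If $V$ is reducible, I pick a nonzero proper $H$-submodule $M' \subsetneq V^I$ and apply the exact functor $\mathbf{X} \otimes_H -$ to obtain
$$
0 \to V_1 \to V \to V_2 \to 0
$$
in $\Mod^I(G)$ with $V_1^I = M'$ and $V_2^I = V^I/M'$, both of strictly smaller length and annihilated by $Q(\zeta)$; the inductive hypothesis and the long exact sequence yield the conclusion for $V$.

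When $V$ is irreducible, the simple module $V^I$ is a quotient of $H/HQ(\zeta)$, so $V$ is a quotient of $\mathbf{X}/\mathbf{X}Q(\zeta)$, giving $0 \to W \to \mathbf{X}/\mathbf{X}Q(\zeta) \to V \to 0$ with $W \in \Mod^I(G)$ and $Q(\zeta) W^I = 0$. Corollary~\ref{Q-finite-dim} and Proposition~\ref{left-torsion} handle $H^*(I, \mathbf{X}/\mathbf{X}Q(\zeta))$, and the long exact sequence then reduces the claim for $V$ to the same claim for $W$. The principal obstacle is that $W$ may have length strictly greater than $V$, so the naive length-induction does not close on $W$.

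To handle this, I would show that the class $\mathcal{F}$ of finite-length $V \in \Mod^I(G)$ with $Q(\zeta)V^I = 0$ satisfying the conclusion of (ii) is stable under passage to subrepresentations and quotient representations; combined with its stability under extensions (via the LES) and the known membership $\mathbf{X}/\mathbf{X}Q(\zeta) \in \mathcal{F}$, this gives $\mathcal F$ all subquotients of $\mathbf{X}/\mathbf{X}Q(\zeta)$ and then, since every $V^I$ in question is an iterated extension of simple $H/HQ(\zeta)$-modules that are in turn subquotients of $H/HQ(\zeta)$ (and hence correspond, via Lemma~\ref{subquotients}, to irreducible subquotients of $\mathbf{X}/\mathbf{X}Q(\zeta)$), all $V$ in question. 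The hard step is the stability of $\mathcal{F}$ under subobjects and quotients; the trick is a descending-degree argument in the LES that exploits the vanishing $H^i(I,-) = 0$ for $i > d = 3$: from $H^d(V) \twoheadrightarrow H^d(V/U) \to H^{d+1}(U) = 0$ one concludes $H^d(V/U)$ satisfies the cohomological conditions, and a careful inductive propagation handles the remaining degrees.
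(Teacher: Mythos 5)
Your plan for part ii matches the paper's devissage in spirit: start from Corollary~\ref{Q-finite-dim} and Proposition~\ref{left-torsion}, reduce to irreducible $V$ by the long exact sequence, and then propagate via Lemma~\ref{subquotients} and the bound $H^{>3}(I,{-})=0$. However, the packaging as ``$\mathcal F$ is stable under subobjects and quotients'' is imprecise and, taken literally, circular: to control $H^3(I,U)$ for $U\subseteq V$ from the LES you would already need $H^2(I,V/U)$, which is what you are trying to establish. The paper's actual mechanism is a double induction: outer descending induction on the cohomological degree $i$ (starting at $i=3$), and inner induction on the length of \emph{subquotients of $\mathbf X/\mathbf X Q(\zeta)$}. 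At degree $3$, irreducible $V$ is handled by the surjection $\mathbf X/\mathbf X Q(\zeta)\twoheadrightarrow V$ together with $H^4(I,U)=0$; then one spreads to all subquotients $U$ of $\mathbf X/\mathbf X Q(\zeta)$ by length induction. Only after $H^3(I,U)$ is controlled for all such $U$ does the three-term sequence $H^2(I,\mathbf X/\mathbf X Q(\zeta))\to H^2(I,V)\to H^3(I,U)$ become usable for irreducible $V$; and so on downward. Your ``careful inductive propagation'' gestures at this, but the order of quantifiers is essential and should be made explicit.

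The more serious issue is the claim that ``part~ii then delivers the finite-dimensionality'' for part~i. It does not: part~ii gives only that $H^*(I,V)$ is $P(\zeta)$-torsion, and a $P(\zeta)$-torsion $k[\zeta]$-module (or left $H$-module) can be infinite-dimensional over $k$. There is no a priori finite-generation of $H^i(I,V)$ over $k[\zeta]$ available before the theorem is proved, so torsion alone is not enough. The paper avoids this by running the same devissage for \emph{both} properties simultaneously, using Corollary~\ref{Q-finite-dim} (finite-dimensionality of $H^*(I,\mathbf X/\mathbf X Q(\zeta))$) as the seed for finite-dimensionality and Proposition~\ref{left-torsion} as the seed for $P(\zeta)$-torsion. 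The fix for your argument is straightforward: define $\mathcal F$ so that membership requires $H^*(I,V)$ to be both finite-dimensional and $P(\zeta)$-torsion, and carry both conclusions through the double induction; part~i is then obtained in parallel with part~ii rather than deduced from it.
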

\begin{proof}
By a straightforward induction using the long exact cohomology sequence as well as Lemma \ref{sub-quot} (for ii.) we may assume that $V$ is irreducible. According to Remark \ref{rem:Q} and Lemma \ref{subquotients} we then have a surjection $\mathbf{X}/\mathbf{X}Q(\zeta) \twoheadrightarrow V$. Because of the bound $3$ for the cohomological dimension of $I$ this surjection induces a surjection $H^3(I,\mathbf{X}/\mathbf{X}Q(\zeta)) \twoheadrightarrow H^3(I,V)$. It therefore follows from Cor.\ \ref{Q-finite-dim} and Prop.\ \ref{left-torsion} that $H^3(I,V)$ is finite dimensional and $P(\zeta)$-torsion. By Lemma \ref{subquotients} this reasoning, in fact, applies to any irreducible subquotient representation of $\mathbf{X}/\mathbf{X}Q(\zeta)$. By another induction we obtain that $H^3(I,U)$ is finite dimensional and $P(\zeta)$-torsion for any subquotient representation $U$ of $\mathbf{X}/\mathbf{X}Q(\zeta)$. Going back to our original $V$ we write it as part of an exact sequence $0 \rightarrow U \rightarrow \mathbf{X}/\mathbf{X}Q(\zeta) \rightarrow V \rightarrow 0$, which gives rise to an exact sequence of $H$-modules
\begin{equation*}
  H^2(I,\mathbf{X}/\mathbf{X}Q(\zeta)) \longrightarrow H^2(I,V) \longrightarrow H^3(I,U) \ .
\end{equation*}
Using again Cor.\ \ref{Q-finite-dim} and Prop.\ \ref{left-torsion} it follows that $H^2(I,V)$ is finite dimensional and $P(\zeta)$-torsion. By repeating the above argument we obtain that $H^2(I,U)$ is finite dimensional and $P(\zeta)$-torsion for any subquotient $U$. Reasoning  inductively downwards (w.r.t.\ the cohomology degree $i$) we finally deduce our full assertion.
\end{proof}

Over an algebraically closed field $k$ we refer to \cite{OV} \S 5 for the notion of an irreducible admissible supercuspidal representation. Note that for our group $G$ every irreducible representation is admissible as a consequence of the equivalence of categories in \S \ref{subsubsec:eqcat}. We extend this notion as follows to arbitrary $k$. Let $V$ be an irreducible representation in $\Mod(G)$. By this equivalence of categories $V^I$ is a finite dimensional $H$-module. Hence, if $\bar{k}$ denotes an algebraic closure of $k$, the base extension $\bar{k} \otimes_k V$ is still generated by its $I$-fixed vectors and $(\bar{k} \otimes_k V)^I = \bar{k} \otimes_K V^I$ is a finite dimensional $\bar{k} \otimes_k H$-module. The equivalence of categories over $\bar{k}$ therefore implies that $\bar{k} \otimes_k V$ is a representation of finite length of $G$ over $\bar{k}$. We will call $V$ \textit{supersingular} if all irreducible constituents of $\bar{k} \otimes_k V$ are supersingular in the sense of \cite{OV} \S 5.

\begin{corollary}\label{coro:supersingular}
  Let $G={\rm SL}_2(\mathbb Q_p)$ with $p\neq 2,3$. An irreducible representation $V$ in $\Mod(G)$ is supersingular if and only if the left $H$-module $H^*(I,V)$ is supersingular.\end{corollary}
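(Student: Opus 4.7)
The plan is to deduce the corollary from Theorem \ref{cohomology}.ii by exploiting that, for an irreducible $V \in \Mod(G)$, the $H$-module $V^I$ is simple. Indeed $V^I \neq 0$ since $I$ is pro-$p$ and $k$ has characteristic $p$, so $V$ is generated by $V^I$ and lies in $\Mod^I(G)$; the equivalence of categories of \S\ref{subsubsec:eqcat} then identifies $V^I$ with a simple $H$-module. Because $\zeta$ is central in $H$, it acts on this simple module either as $0$ or as an automorphism, so annihilation of $V^I$ by a power of $\zeta$ is equivalent to $\zeta V^I = 0$.

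For the forward implication I would assume $V$ is supersingular, so that each irreducible constituent $V_j$ of $\bar{k}\otimes_k V$ is supersingular in the sense of \cite{OV}. Via the equivalence of categories over $\bar{k}$ this translates into $\zeta V_j^I = 0$ for every $j$. Since $\bar{k}\otimes_k V$ has finite length and $(-)^I$ is exact on $\Mod^I(G)$ (Lemma \ref{sub-quot}.ii), an induction on the length shows that $(\bar{k}\otimes_k V)^I = \bar{k}\otimes_k V^I$ is annihilated by some power of $\zeta$; faithful flatness of $\bar{k}/k$ then forces the same for $V^I$, and the simplicity observation upgrades it to $\zeta V^I = 0$. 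Applying Theorem \ref{cohomology}.ii with $Q(X) = X$, so that conveniently $P(X) = X$ as well, yields $\zeta H^*(I,V) = 0$, proving $H^*(I,V)$ is supersingular.

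For the converse, if $H^*(I,V)$ is supersingular then in particular $V^I = H^0(I,V)$ is annihilated by a power of $\zeta$, and by simplicity $\zeta V^I = 0$. Base changing to $\bar{k}$ and again using the exactness of $(-)^I$ on $\Mod^I(G)$, $(\bar{k}\otimes_k V)^I$ is $\zeta$-torsion; any subquotient inherits this property, so $V_j^I$ is $\zeta$-torsion for every irreducible constituent $V_j$ of $\bar{k}\otimes_k V$. The equivalence of categories together with \cite{OV} then guarantees that each such $V_j$ is supersingular, and hence so is $V$ by definition.

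There is no real obstacle once Theorem \ref{cohomology}.ii is granted: the sole subtlety is the descent between $k$ and $\bar{k}$, which is handled uniformly by faithful flatness, the exactness of $(-)^I$ on $\Mod^I(G)$, and the dichotomy that a central element of $H$ acts on a simple module either as zero or invertibly; the choice $Q(X) = X$ in the supersingular case is fortunate in that it produces $P(X) = X$, so no exponent is lost in transferring the torsion from $V^I$ to all of $H^*(I,V)$.
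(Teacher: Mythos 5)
Your proof is correct and follows essentially the same route as the paper: both deduce the result from Theorem \ref{cohomology}.ii applied with $Q(X)=X$ (so $P(X)=X$), both use the characterization of \cite{OV} Thm.\ 5.3 over $\bar k$ together with the exactness of $(-)^I$ from Lemma \ref{sub-quot} and the simplicity of $V^I$ coming from the equivalence of categories of \S\ref{subsubsec:eqcat} to identify supersingularity of $V$ with $\zeta V^I = 0$. The only difference is one of presentation (you separate the two implications and invoke faithful flatness of $\bar k/k$ explicitly), not of substance.
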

\begin{proof}
It is shown in \cite{OV} Thm. 5.3 that, when $k$ is algebraically closed, an irreducible (admissible) representation $V_0$ in $\Mod(G)$ is supersingular if and only $V_0^I$ is  $\zeta$-torsion, namely if and only $V_0^I$  is supersingular. Hence $V$ is supersingular if and only if $V_0^I$ is $\zeta$-torsion for all irreducible constituents $V_0$ of $\bar{k} \otimes_k V$. By Lemma \ref{sub-quot} the latter is equivalent to $(\bar{k} \otimes_k V)^I$ being $\zeta$-torsion hence to $V^I$ being $\zeta$-torsion, i.e., being supersingular (see \S\ref{subsubsec:supersing}). But by the equivalence of categories in \S\ref{subsubsec:eqcat} the $H$-module $V^I$ is simple. If it is $\zeta$-torsion it must satisfy $\zeta V^I = 0$. So we apply Thm.\ \ref{cohomology}.ii with $Q := X$ to see that then all of $H^*(I,V)$ is $\zeta$-torsion and hence supersingular.
\end{proof}

We remind the reader that in Prop.\ \ref{prop:HdVzero} we had determined for which irreducible representations $V$ the top cohomology $H^d(I,V)$ vanishes.

\section{The commutator  in $E^*$ of the center of $H$ when $G={\rm SL}_2(\mathbb Q_p)$, $p\neq 2,3$}\label{sec:commutator}
We assume in this section that $G={\rm SL}_2(\mathbb Q_p)$, $p\neq 2,3$ and  $\pi=p$.
Recall that we denote by $Z$ the center of $H$. In this section we consider the subalgebra
$$\CZ=\{\mathscr E\in E^*, \: z \cdot \mathscr E= \mathscr E\cdot z \, \quad \forall z\in Z \}$$ of $E^*$. We are going to describe the product in this algebra.
We denote by $\cz{i}$ its $i^{\text{th}}$ graded piece.

\begin{proposition}
$\CZ$ coincides with the commutator of $\zeta$ in $E^*$ namely with $\ker(g)$:
$$
\CZ=\{\mathscr E\in E^*, \: \zeta \cdot \mathscr E= \mathscr E\cdot \zeta \}.
$$
\end{proposition}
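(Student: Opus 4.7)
The plan is to establish the non-trivial inclusion $\ker(g)\subseteq \CZ$, the reverse inclusion being immediate from $\zeta\in Z$. The strategy is to proceed degree by degree and exploit the structural results on $\ker(g_i)$ obtained in the preceding sections. Since $E^*=\bigoplus_i E^i$ and each $E^i$ is stable under left and right multiplication by elements of the even-degree subalgebra $H$ (hence by $Z$), and since both $g$ and the defining condition for $\CZ$ preserve the grading, it suffices to show $\ker(g_i)\subseteq C_{E^i}(Z)$ for $i=0,1,2,3$.

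In degrees $0$ and $3$ there is essentially nothing to prove: Lemma \ref{lemma:cap} gives $\ker(g_0)=H$ and $\ker(g_3)=E^3$, and the right-hand sides are obviously centralized by $Z$, in degree $0$ because $Z$ is by definition the center of $H$, and in degree $3$ by Remark \ref{rema:centralizeEd}, which states that any central element of $H$ acts identically from the left and from the right on $E^d$.

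For degree $1$ I plan to invoke Proposition \ref{prop:isokerg}, which gives an isomorphism $f_{(\mathbf{x}_0,\mathbf{x}_1)}\colon F^1H\xrightarrow{\cong}\ker(g_1)$ of $H$-bimodules. Since $F^1H$ is a sub-$H$-bimodule of $H$ and each $z\in Z$ is central in $H$, one has $z\tau=\tau z$ for every $\tau\in F^1H$; transporting this equality through the bimodule isomorphism yields $z\cdot\mathscr{E}=\mathscr{E}\cdot z$ for any $\mathscr{E}\in\ker(g_1)$. For degree $2$, I will run the same argument via the $H$-bimodule isomorphism $\ker(g_2)\cong (F^1H)^{\vee,f}$ provided by Proposition \ref{prop:structurekerg2}, using the elementary fact that if a central element acts identically from both sides on an $H$-bimodule $M$, then the same holds on the contragredient $M^{\vee,f}$ (via the formula $(z\cdot\varphi)(m)=\varphi(mz)=\varphi(zm)=(\varphi\cdot z)(m)$), and on any $\anti$-twist of such a module, since $\anti$ preserves $Z$ (note $\anti(\zeta)=\zeta$, and more generally $\anti$ maps the center to the center).

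I do not anticipate any real obstacle: the hard work has already been done in the structure theorems for $\ker(g_1)$ and $\ker(g_2)$. What remains is the purely formal observation that a central element acting equally from both sides on an $H$-bimodule continues to do so on sub-bimodules and on their finite duals, plus the routine bookkeeping across the four graded pieces.
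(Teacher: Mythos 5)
Your proposal is correct and follows essentially the same route as the paper: the trivial inclusion from $\zeta\in Z$, then the reverse inclusion degree by degree, reading off $\ker(g_0)=H$, $\ker(g_1)\cong F^1H$ (Proposition \ref{prop:isokerg}), $\ker(g_2)\cong(F^1H)^{\vee,f}$ (Proposition \ref{prop:structurekerg2}), and $\ker(g_3)=E^3$, with $Z$ centralizing each of these as an $H$-bimodule (for $E^3$ by Remark \ref{rema:centralizeEd}). The paper's proof is exactly this, compressed into a few lines; your explicit remark that a central element acting equally from both sides on an $H$-bimodule continues to do so on sub-bimodules, finite duals, and $\anti$-twists (since $\anti$ preserves $Z$) is the correct justification that the paper leaves implicit.
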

\begin{proof}
As $H$-bimodules, we have
\begin{equation*}
  \ker(g^0)\cong  H  , \ \ker(g^1) \cong F^1 H , \ \text{and}\ \ker(g_2)\cong (F^1H )^{\vee,f}\cong
\bigcup_{n\geq 1} (F^1H/\zeta^n F^1 H )^{\vee}
\end{equation*}
(see Propositions \ref{prop:isokerg} and \ref{prop:structurekerg2}). So
these spaces are contained in $\CZ$. Lastly we explained in \S\ref{sec:zetatorsion}\textbf{B)} that the elements of $Z$ centralize the elements of $E^3 =\ker(g^3)$.
\end{proof}

We recall some notations  and results from \S\ref{subsubsec:fduals},  \S\ref{subsubsec:kerg1} and   \S\ref{subsubsec:kerg2}:
\begin{itemize}
\item $\cz{0}=H$,
\item We have an isomorphism of  $H$-bimodules
$  f_{(\mathbf{x}_0,\mathbf{x}_1)} : F^1 H \longrightarrow \cz{1}$. We keep track of its inverse
\begin{equation}\label{f:cz1}f_{(\mathbf{x}_0,\mathbf{x}_1)}^{-1}:\cz{1}\overset{\simeq}\longrightarrow F^1H\:, \ .\end{equation}
\item
We  have an isomorphism of $H$-bimodules (see \eqref{f:kerg2})
\begin{equation}\label{f:cz2}\cz{2}\overset{\simeq}\longrightarrow {}^{\anti}((F^1H)^{\vee,f})^\anti
\end{equation}
 and we denote by $\upalpha^\star_w$ the preimage of $\tau^\vee_w\vert_{F^1H}$ by this map for $w\in \widetilde W$, $\ell(w)\geq 1$.
The set of all these $\upalpha_w^\star s$ forms a basis of $\cz{2}$.
\item $\cz{3}\cong{}^\anti (H^{\vee,f})^\anti $ as $H$-bimodules. As in \S\ref{subsubsec:fduals}, the element in $E^3$ corresponding to $\tau_w^\vee$
is denoted by $\phi_w$.
\end{itemize}

\begin{remark}\label{rema:actionstar0}
Let $w\in{\widetilde W}$ with $\ell(w)\geq 1$, $\omega\in \Omega$. Using  formulas \eqref{f:leftrightHd}, we obtain immediately
\begin{align*}
 \tau_{\omega}\cdot \upalpha ^\star_w=&\, \upalpha ^\star_{\omega w}\cr
 \tau_{s_\epsilon}\cdot\upalpha ^\star_w=
  &\begin{cases} 0&  \text{ if  $w\in \widetilde W^\epsilon$ with $\ell(w)\geq 1$,}\cr
-e_1\cdot  \upalpha ^\star_w      + \upalpha ^\star_{s_0w}   & \text{ if  $w\in \widetilde W^{1-\epsilon}$ with  $\ell(w) \geq 2$, }\cr
-e_1\cdot \upalpha ^\star_w
      & \text{ if  $w\in \widetilde W^{1-\epsilon}$ with $\ell(w)= 1$}.\cr\end{cases}\cr
            \zeta\cdot \upalpha ^\star_w=
&\begin{cases} 0&  \text{if  $\ell(w)\leq 2$,}\cr
\upalpha ^\star_{s_{\epsilon}s_{1-\epsilon} w}   & \text{ if  $w\in \widetilde W^\epsilon$ with  $\ell(w) \geq 3$.}
  \end{cases}
\end{align*}

 \end{remark}

 \begin{remark}\phantomsection\label{rema:conjkerg2}
\begin{itemize}
\item[i.]  We have   $\upalpha^\star_w\cup f_{(\mathbf{x}_0,\mathbf{x}_1)}^{-1}(\tau_w)=\delta_{v,w}\phi_w$
for all $v,w\in\widetilde W$ with $\ell(v),\ell(w)\geq 1$.
\begin{itemize}\item In particular, using \eqref{f:conjphi} and Proposition \ref{F1part}-v we see that the image of
$\upalpha^\star_w$ by conjugation by $\varpi$ is $\upalpha^\star_{\varpi w\varpi^{-1}}$.
\item Using Proposition \ref{F1part}-iv and recalling by \cite{Ext} (89) (8.2)  that $\anti(\phi_w)=\phi_{w^{-1}}$, we deduce (see also \cite{Ext} Rmk. 6.2) that $\anti(\upalpha^\star_w)=-\upalpha^\star_{w^{-1}}$.
\end{itemize}

\item[ii.]  Recall that the element $\upalpha^0\in 1+p\mathbb  Z_p/1+ p^2\mathbb Z_p$ was chosen in \eqref{f:normalize}.
For $w\in \widetilde W$ with $\ell(w)\geq 1$, there is a unique element
 in  $\ker(g_2)$ which, when seen as a linear form in $(E^1)^{\vee,f}$,    coincides with $(0, \upalpha^0, 0)_w$ if $w\in \widetilde W^0$ (resp. -$(0, \upalpha^0, 0)_w$ if $w\in \widetilde W^1$) on $\ker(g_1)$
(see Lemma \ref{lemma:decompE1dual} and
Proposition \ref{prop:structure2}). By Proposition \ref{F1part}-i, this element is $\upalpha^\star_w$. By definition, it is zero on $\ker(f_1)$.

When $w\in \widetilde W^0$, the element  $\upalpha_w^\star-(0, \upalpha^0, 0)_w$
is an element of $\ker(f_2)$ which coincides with $-(0, \upalpha^0, 0)_w$ on $\ker(f_1)$. But
Remark \ref{rema:1-gamma0pm} implies that
$(1-e_{\gamma_0})\cdot (0, \upalpha^0, 0)_w$ is trivial on $\ker(f_1)$. Therefore, and  by conjugation by $\varpi$ (Lemma \ref{lemma:conjtripdual}),
\begin{align}\label{f:explicitalphastar}
 & \upalpha_w^\star-(0, \upalpha^0, 0)_w\in e_{\gamma_0}\cdot \ker(f_2) \text{ if $w\in \widetilde W^0$} \\
 & \upalpha_w^\star+(0, \upalpha^0, 0)_w\in e_{\gamma_0}\cdot \ker(f_2) \text{ if $w\in \widetilde W^1$} \ .  \nonumber
\end{align}
\end{itemize}
\end{remark}

\subsection{The product $(\cz{1}, \cz{1})\rightarrow \cz{2}$}

Recall using  \eqref{f:mm+1} that we have an homomorphism of $H$-bimodules
\begin{align}
F^1H&\longrightarrow  {}^{\anti}((F^1H/F^{2}H)^\vee)^\anti   \\
\tau_w  & \longmapsto
\begin{cases}
    -\tau_w^\vee\vert_{F^1H} & \text{ if $\ell(w)=1$}, \\
     0 & \text{ if $\ell(w)\geq 2$}
\end{cases} \nonumber
\end{align}
which is trivial on $F^2H$.
Identifying $(F^1H/F^2H)^\vee$ with the sub-$H$-bimodule of the linear forms in $(F^1H)^{\vee,f}$ which are trivial on $F^2H$, we  obtain an  homomorphism of $H$-bimodules
\begin{equation}\label{f:prod11}
\begin{array}
{ccc}F^1H\otimes _H F^1H&\longrightarrow &  {}^{\anti}((F^1H/F^{2}H)^\vee)^\anti\hookrightarrow{}^{\anti}((F^1H)^{\vee,f})^\anti\cr \tau_v\otimes  \tau_w&\longrightarrow & \begin{cases}
   -\tau_v\cdot \tau_w^\vee\vert_{F^1H} &\text{ if $\ell(w)=1$}\cr 0 &\text{ if $\ell(w)\geq 2$ .}
\end{cases}
\end{array}
\end{equation}

\begin{remark}\label{rema:prodF1}
Let $v,w\in\widetilde W$ with length $\geq 1$,  $\omega, \omega'\in\Omega$ and $\epsilon\in\{0,1\}$. Using \eqref{f:leftrightHd}, we see that the map above has the following outputs:
\begin{align*}
 \tau_{\omega s_\epsilon}\otimes \tau_{\omega' s_\epsilon}&\longmapsto   e_1\cdot \tau_{ s_\epsilon}^\vee\vert_{F^1H}= \tau_{ s_\epsilon}^\vee\vert_{F^1H}\cdot e_1\cr
 \tau_{\omega s_\epsilon}\otimes \tau_{\omega' s_{1-\epsilon}}&\longmapsto   0\cr&\quad\text{and}\cr
 \tau_{v}\otimes \tau_{w}&\longmapsto  0\text{ if $\ell(v)\geq 2$ or $\ell(w)\geq 2$.}
\end{align*}
We see that \eqref{f:prod11} is a symmetric bilinear map onto a 2-dimensional $k$-vector space.
\end{remark}

\begin{proposition}\label{prop:11} Assume that $G={\rm SL}_2(\mathbb Q_p)$, $p\neq 2,3$ and  $\pi=p$. We have a commutative diagram of $H$-bimodules
\begin{equation}
  \xymatrix{
    \cz{1}\otimes_H\cz{1} \ar[d]^{\cong}_{\eqref{f:cz1}\otimes \eqref{f:cz1}} \ar[rrr]^{\quad\quad\text{Yoneda product}} && &\cz{2}\ar[d]^{\cong}_{\eqref{f:cz2}} \\
F^1H\otimes _H F^1H \ar[rrr]^{\eqref{f:prod11}} & && {}^{\anti}((F^1H)^{\vee,f})^\anti }
\end{equation}
\end{proposition}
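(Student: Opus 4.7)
The plan is to exploit the $H$-bilinearity of both maps in the diagram and reduce commutativity to a check on generators. Since $F^1H=\tau_{s_0}H+\tau_{s_1}H$ as a right $H$-module, $F^1H\otimes_H F^1H$ is generated as an $H$-bimodule by the four elements $\tau_{s_i}\otimes\tau_{s_j}$ for $i,j\in\{0,1\}$. Under the left vertical isomorphism these correspond to $\mathbf{x}_i\otimes\mathbf{x}_j$, so the task reduces to showing that $\mathbf{x}_i\cdot\mathbf{x}_j\in\cz{2}$ corresponds under \eqref{f:cz2} to the linear form $\eqref{f:prod11}(\tau_{s_i}\otimes\tau_{s_j})$ computed in Remark \ref{rema:prodF1}: zero when $i\neq j$, and $e_1\cdot\tau^\vee_{s_\epsilon}\vert_{F^1H}$ when $i=j=\epsilon$.

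For $i\neq j$, since $\ell(s_is_j)=\ell(s_i)+\ell(s_j)$, formula \eqref{f:prod-goodlength} gives $\mathbf{x}_i\cdot\mathbf{x}_j=(\mathbf{x}_i\cdot\tau_{s_j})\cup(\tau_{s_i}\cdot\mathbf{x}_j)$. Both factors coincide with $f_{(\mathbf{x}_0,\mathbf{x}_1)}(\tau_{s_is_j})$ by the $H$-bilinearity of $f_{(\mathbf{x}_0,\mathbf{x}_1)}$, and the self-cup of an odd-degree class in $H^1(I_{s_is_j},k)$ vanishes in characteristic $\neq 2$ by anticommutativity (transported via \eqref{f:cup+Sh}); hence $\mathbf{x}_i\cdot\mathbf{x}_j=0$, matching the image under \eqref{f:prod11}.

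The case $i=j=\epsilon$ is the main step and requires an explicit computation. Decomposing $\mathbf{x}_\epsilon=\alpha_s+\alpha_1$ into its $h^1(s_\epsilon)$- and $h^1(1)$-components according to Lemma \ref{F1-E1}, I will expand
\[
\mathbf{x}_\epsilon\cdot\mathbf{x}_\epsilon=\alpha_s\cdot\alpha_s+\alpha_s\cdot\alpha_1+\alpha_1\cdot\alpha_s+\alpha_1\cdot\alpha_1
\]
and treat each term using Proposition \ref{prop:yo}: the cross terms via \eqref{f:prod-goodlength} (since $\ell(s_\epsilon)+\ell(1)=1$), the term $\alpha_s\cdot\alpha_s$ via \eqref{f:prod-badlength} (since $\ell(s_\epsilon^2)=0$), and $\alpha_1\cdot\alpha_1$ as a direct cup product in $h^2(1)$. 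All resulting cup products will be evaluated through the Shapiro compatibility \eqref{f:cup+Sh} and the normalizations of Example \ref{ex:H*I} together with \eqref{f:orientation}, yielding an explicit formula for $\mathbf{x}_\epsilon\cdot\mathbf{x}_\epsilon$ lying in $h^2(1)\oplus h^2(\Omega s_\epsilon)$.

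To identify the resulting element with $e_1\cdot\upalpha^\star_{s_\epsilon}$, I will use that both lie in $\cz{2}=\ker(g_2)$ and that $\ker(g_2)\cap\ker(f_2)=0$ by Proposition \ref{prop:structure2}, so matching them modulo $\ker(f_2)$ suffices. By Remark \ref{rema:conjkerg2}-ii and \eqref{f:explicitalphastar}, $\upalpha^\star_{s_\epsilon}$ equals $\pm(0,\upalpha^0,0)_{s_\epsilon}$ modulo $e_{\gamma_0}\ker(f_2)$, and since $e_1e_{\gamma_0}=0$ we obtain $e_1\cdot\upalpha^\star_{s_\epsilon}=\pm\sum_u(0,\upalpha^0,0)_{\omega_us_\epsilon}\in h^2(\Omega s_\epsilon)$. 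The main technical hurdle is the careful bookkeeping of signs, of the Teichm\"uller twists from \eqref{f:leftomega} and \eqref{f:leftomegaE2}, and of the precise $h^2(1)$-contribution appearing in \eqref{f:prod-badlength}; all of these are pinned down by the normalizations of \S\ref{subsubsec:normalize}. Once the two sides match modulo $\ker(f_2)$, the commutativity of the diagram follows.
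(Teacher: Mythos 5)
Your overall strategy — reduce to the generators $\mathbf{x}_\epsilon\otimes\mathbf{x}_{\epsilon'}$ using the right $H$-module presentation $F^1H=\tau_{s_0}H+\tau_{s_1}H$, then compute the four products explicitly — is the same as the paper's. But two of your intermediate steps have genuine gaps.

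First, for $i\neq j$ you assert $\mathbf{x}_i\cdot\mathbf{x}_j=(\mathbf{x}_i\cdot\tau_{s_j})\cup(\tau_{s_i}\cdot\mathbf{x}_j)$ as a direct application of \eqref{f:prod-goodlength}. That formula is only stated for $\alpha\in H^1(I,\mathbf{X}(v))$ and $\beta\in H^1(I,\mathbf{X}(w))$ supported on single double cosets, whereas each $\mathbf{x}_\epsilon$ has a component in $h^1(s_\epsilon)$ and a component spread over $h^1(\Omega)$. Expanding the product therefore gives four terms, and your claimed identity only accounts for the length-$1\times$length-$1$ piece. The right-hand side lives entirely in $h^2(s_is_j)$, while a priori the left-hand side also has contributions in $h^2(\Omega s_0)$, $h^2(\Omega s_1)$, and $h^2(\Omega)$. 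Those contributions do vanish — because in each surviving cup product one factor has only a $c^0$-component and the other factor gets its $c^0$-component killed (or one gets a $c^-\cup c^+$-type cup in $h^2(\Omega)$, which is zero by Example \ref{ex:H*I}) — but this has to be checked, not asserted.

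Second, for $i=j$ your plan to pin down the unknown correction term from \eqref{f:prod-badlength} by "matching modulo $\ker(f_2)$" is not an executable argument as stated. You are right that $\mathbf{x}_\epsilon\cdot\mathbf{x}_\epsilon\in\ker(g_2)$ and that $\ker(g_2)\cap\ker(f_2)=0$ reduces the problem to a congruence $\bmod\ker(f_2)$. But the unknown $\gamma_{s_\epsilon^2}\in h^2(s_\epsilon^2)\subseteq h^2(\Omega)$ is precisely the piece about which you have no a priori information, and the decomposition $E^2=\ker(g_2)\oplus\ker(f_2)$ does not respect the grading $\oplus_w h^2(w)$, so you cannot decide whether $\gamma_{s_\epsilon^2}$ lies in $\ker(f_2)$ without already knowing it. The paper avoids this circularity with a sharper trick: both $\mathbf{x}_\epsilon\cdot\mathbf{x}_\epsilon$ and $e_1\cdot\upalpha^\star_{s_\epsilon}$ are annihilated by the left action of $\tau_{s_\epsilon}+e_1$ (for the first because $\mathbf{x}_\epsilon$ is, for the second by Remark \ref{rema:actionstar0}); applying the explicit formulas for the left action of $\tau_{s_\epsilon}$ on $E^2$ from Prop.~\ref{prop:theformulas'} to the $h^2(s_\epsilon^2)$-piece then forces $\gamma_{s_\epsilon^2}=0$ directly. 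Substituting that argument for your last paragraph would close the gap.
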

\begin{proof}
Because of the isomorphism \eqref{f:cz1}, the $H$-bimodule
   $ \cz{1}\otimes_H\cz{1}$ is generated by the elements of the form $f_{(\mathbf{x}_0,\mathbf{x}_1)}^{-1}(\tau_{s_\epsilon})\otimes f_{(\mathbf{x}_0,\mathbf{x}_1)}^{-1}(\tau_{s_\epsilon'})=\mathbf{x}_\epsilon\otimes \mathbf{x}_{\epsilon'}$ for $\epsilon, \epsilon'\in\{0,1\}$. Therefore, using  Remark \ref{rema:actionstar0},
it is enough to prove that
\begin{equation*}
  \mathbf{x}_\epsilon\cdot  \mathbf{x}_{1-\epsilon} = 0   \qquad\text{and}\qquad   \mathbf{x}_\epsilon\cdot \mathbf{x}_{\epsilon}=e_1\cdot \upalpha_{s_{\epsilon}}^\star \ .
\end{equation*}
We verify these identities now. In the calculations below, we use formulas  \eqref{f:leftomega}, \eqref{f:rightomegaE1}, \eqref{f:condeven}  the definition of the idempotents \eqref{defel}, Proposition \ref{prop:theformulas}, Lemma \ref{lemma:righteasy}-i and  Proposition \ref{prop:yo}.
\begin{itemize}
\item
First we check that
\begin{align*}
\mathbf{x}_0\cdot  \mathbf{x}_{1}
=&
 -((0,\c^0, 0)_{s_0}+e_{\id^{-1}} \cdot (  \c^0\iota^{-1}, 0,0) _{1})\cdot
((0,\c^0, 0)_{s_1}- ( 0,  0, \c^0\iota^{-1}) _{1}\cdot e_{\id^{-1}})\cr=&
-(0,\c^0, 0)_{s_0}\cdot  (0,\c^0, 0)_{s_1}
+(0,\c^0, 0)_{s_0} \cdot ( 0,  0, \c^0\iota^{-1}) _{1}\cdot   e_{\id^{-1}} \cr&
-e_{\id^{-1}} \cdot (  \c^0\iota^{-1}, 0,0) _{1}\cdot
(0,\c^0, 0)_{s_1}+ e_{\id^{-1}} \cdot (  \c^0\iota^{-1}, 0,0) _{1}\cdot   ( 0,  0, \c^0\iota^{-1}) _{1} \cdot  e_{\id^{-1}}
\cr=&
-((0,\c^0, 0)_{s_0}\cdot \tau_{s_1}\cup \tau_{s_0}\cdot  (0,\c^0, 0)_{s_1})
+((0,\c^0, 0)_{s_0} \cup \tau_{s_0}\cdot ( 0,  0, \c^0\iota^{-1}) _{1}))\cdot   e_{\id^{-1}} \cr&
-e_{\id^{-1}} \cdot( (  \c^0\iota^{-1}, 0,0) _{1}\cdot  \tau_{s_1}\cup
(0,\c^0, 0)_{s_1})+ e_{\id^{-1}} \cdot ((  \c^0\iota^{-1}, 0,0) _{1}\cup   ( 0,  0, \c^0\iota^{-1}) _{1} )\cdot  e_{\id^{-1}}
\cr=&
(0,\c^0, 0)_{s_0s_1}\cup   (0,\c^0, 0)_{s_0s_1}
+ e_{\id^{-1}} \cdot ((  \c^0\iota^{-1}, 0,0) _{1}\cup   ( 0,  0, \c^0\iota^{-1}) _{1} )\cdot  e_{\id^{-1}}\cr=&
0 \text{ by  Example \ref{ex:H*I}} .
\end{align*}
Likewise, by conjugation by $\varpi$ (see Proposition \ref{F1part}-v) we have $\mathbf{x}_1\cdot  \mathbf{x}_{0}=0$.
\item
Next we compute
\begin{align*}
 \mathbf{x}_0\cdot  \mathbf{x}_{0} & =
 [(0,\c^0, 0)_{s_0}+e_{\id^{-1}} \cdot (  \c^0\iota^{-1}, 0,0) _{1}]\cdot  [(0,\c^0, 0)_{s_0}+e_{\id^{-1}} \cdot (  \c^0\iota^{-1}, 0,0) _{1}]   \\
 & = (0,\c^0, 0)_{s_0}\cdot (0,\c^0, 0)_{s_0} + \\
 & \qquad\qquad (0,\c^0, 0)_{s_0}\cdot  (  \c^0\iota^{-1}, 0,0) _{1}\cdot e_{\id}  +   e_{\id^{-1}} \cdot (  \c^0\iota^{-1}, 0,0) _{1}\cdot  (0,\c^0, 0)_{s_0}  \qquad\text{(using \eqref{f:condeven})}  \\
 & = (0,\c^0, 0)_{s_0}\cdot (0,\c^0, 0)_{s_0} - \sum_{u\in \mathbb F_p^\times}\big(u^{-1}[(0,\c^0, 0)_{s_0}\cdot\tau_{\omega_u} \cup \tau_{s_0}\cdot (  \c^0\iota^{-1}, 0,0) _{\omega_u}]   \\
 & \qquad\qquad\qquad\qquad\qquad\qquad\qquad\qquad\qquad - u^{-1}[(  \c^0\iota^{-1}, 0,0) _{\omega_u}\cdot  \tau_{s_0}\cup \tau_{\omega_u} (0,\c^0, 0)_{s_0}]\big)  \\
 & = (0,\c^0, 0)_{s_0}\cdot (0,\c^0, 0)_{s_0} + \sum_{u\in \mathbb F_p^\times}u^{-1}[(0,\c^0, 0)_{s_0\omega_u} \cup  (  0,0,\c^0\iota^{-1}) _{s_0\omega_u}]   \\
 & \qquad\qquad\qquad\qquad\qquad\qquad\qquad\qquad\qquad  - \sum_{u\in \mathbb F_p^\times}u[(  \c^0\iota^{-1}, 0,0) _{s_0\omega_u}\cup  (0,\c^0, 0)_{s_0 \omega_u}]\ .
\end{align*}
But by \eqref{f:prod-badlength}, there exists $\gamma_{s_0^2}\in H^2(I, \X(s_0^2))$ such that (see Lemma \ref{lemma:righteasy}-ii)
\begin{align*}
(0,\c^0, 0)_{s_0}\cdot (0,\c^0, 0)_{s_0}
& = [(0,\c^0, 0)_{s_0}\cdot \tau_{s_0}\cup \tau_{s_0}\cdot (0,\c^0, 0)_{s_0}]+\gamma_{s_0^2}\cr
& = [(-e_1\cdot (0, \c^0, 0)_{s_0}-e_{\id^{-1}}\cdot (\c^0\iota^{-1}, 0,0)_{s_0}))   \\
&  \qquad\qquad\qquad\qquad\qquad    \cup ((0,-\c^0, 0)_{s_0}\cdot e_1 +  (0,0, \c^0\iota^{-1})_{s_0}\cdot e_{\id}]+\gamma_{s_0^2} \\
& = - [(e_1\cdot (0, \c^0, 0)_{s_0})\cup ((0,0, \c^0\iota^{-1})_{s_0}\cdot e_{\id})]   \\
& \qquad\qquad\qquad + [(e_{\id^{-1}}\cdot (\c^0\iota^{-1}, 0,0)_{s_0}) \cup ((0,\c^0, 0)_{s_0}\cdot e_1)]  \\
& \qquad\qquad\qquad\qquad - [(e_{\id^{-1}}\cdot (\c^0\iota^{-1}, 0,0)_{s_0})) \cup ((0,0, \c^0\iota^{-1})_{s_0}\cdot e_{\id})] + \gamma_{s_0^2}  \\
& = - [(\sum_{u\in \mathbb F_p^\times} (0, \c^0, 0)_{\omega_u s_0})\cup (\sum_{v\in \mathbb F_p^\times}  v^{-1}(0,0, \c^0\iota^{-1})_{ s_0\omega_{v}})]  \\
& \qquad\qquad\qquad + [(\sum_{u\in \mathbb F_p^\times} u^{-1}(\c^0\iota^{-1}, 0,0)_{\omega_u s_0}) \cup
                 (\sum_{v\in \mathbb F_p^\times}  (0,\c^0, 0)_{s_0 \omega_v})]   \\
& \qquad\qquad\qquad\qquad - [(e_{\id^{-1}}\cdot (\c^0\iota^{-1}, 0,0)_{s_0})) \cup ((0,0, \c^0\iota^{-1})_{s_0}\cdot e_{\id})] + \gamma_{s_0^2}  \\
& = - \sum_{u\in \mathbb F_p^\times} u^{-1} (0, \c^0, 0)_{s_0 \omega_u}\cup (0,0, \c^0\iota^{-1})_{ s_0\omega_{u}}   \\
& \qquad\qquad\qquad + \sum_{u\in \mathbb F_p^\times} u(\c^0\iota^{-1}, 0,0)_{s_0\omega_u}\cup   (0,\c^0, 0)_{s_0 \omega_u}   \\
& \qquad\qquad\qquad\qquad - [(e_{\id^{-1}}\cdot (\c^0\iota^{-1}, 0,0)_{s_0})) \cup ((0,0, \c^0\iota^{-1})_{s_0}\cdot e_{\id})]
+\gamma_{s_0^2} \ .
\end{align*}
So
\begin{equation*}
  {\bf x}_0\cdot {\bf x}_0 =-[(e_{\id^{-1}}\cdot (\c^0\iota^{-1}, 0,0)_{s_0}) \cup ((0,0, \c^0\iota^{-1})_{s_0}\cdot e_{\id})]
+\gamma_{s_0^2}\ .
\end{equation*}
Compute that
\begin{gather*}
(e_{\id^{-1}}\cdot (\c^0\iota^{-1}, 0,0)_{s_0})) \cup ((0,0, \c^0\iota^{-1})_{s_0}\cdot e_{\id})  \qquad\qquad\qquad\qquad\qquad\qquad\qquad\qquad\qquad\qquad\qquad  \\
\begin{split}
  & = (\sum_{u\in \mathbb F_p^\times} u^{-1} (\c^0\iota^{-1}, 0,0)_{\omega_u s_0})\cup (\sum_{v\in \mathbb F_p^\times} v^{-1} (0,0, \c^0\iota^{-1})_{s_0\omega_v})  \\
  & = (\sum_{u\in \mathbb F_p^\times} u^{-1} (\c^0\iota^{-1}, 0,0)_{\omega_u s_0})\cup (\sum_{v\in \mathbb F_p^\times} v (0,0, \c^0\iota^{-1})_{\omega_vs_0}) \\
  & = \sum_{u\in \mathbb F_p^\times}  (\c^0\iota^{-1}, 0,0)_{\omega_u s_0} \cup (0,0, \c^0\iota^{-1})_{\omega_us_0} = - \sum_{u\in \mathbb F_p^\times} (0, \upalpha ^0, 0)_{\omega_us_0} \qquad\text{by \eqref{f:pairing}} \\
  & = e_1\cdot (0, \upalpha ^0, 0)_{s_0} = - e_1\cdot  \upalpha^ \star _{s_0}  \qquad\text{by \eqref{f:explicitalphastar}}\ .
\end{split}
\end{gather*}
Since ${\bf x} _{0}$  and $ \upalpha ^\star _{s_0}$  both lie in the kernel of the left action of $(\tau_{s_0}+e_1)$ (Remark \ref{rema:actionstar0}) we obtain directly, using   the formulas of Prop. \ref{prop:theformulas'}, that $\gamma_{s_0^2}=0$.
So as expected
$\mathbf{x}_0\cdot \mathbf{x}_{0}=e_1\cdot \upalpha_{s_{0}}^\star$
The same result is valid with $s_1$ instead of $s_0$ by conjugation by $\varpi$ (Remark \ref{rema:conjkerg2} and proof of Proposition \ref{F1part}-v which says that $\Gamma_\omega({\bf x}_0)={\bf x}_1$).
\end{itemize}
\end{proof}

\subsection{The products $(\cz{i}, \cz{{3-i}})\rightarrow \cz{3}$ for $i=1,2$}

For $\tau\in F^1H$, we have the homomorphisms of left, resp. right,  $H$-modules
$$
L_\tau:{}^\anti H^\anti \rightarrow {}^\anti (F^1H)^\anti, \quad  h\mapsto h\cdot \tau=\anti(\tau) h\quad\text{and}\quad R_\tau:{}^\anti H^\anti \rightarrow {}^\anti (F^1H)^\anti, \quad  h\mapsto  \tau\cdot h=h\anti(\tau)
$$
which by pullback give homomorphisms of right, resp. left,  $H$-modules
$$
L_\tau^*:{}^{\anti}((F^1H)^{\vee})^\anti \rightarrow {}^{\anti}(H^{\vee})^\anti, \quad  \alpha\mapsto \alpha\circ L_\tau
\quad\text{and}\quad
R_\tau^*:{}^{\anti}((F^1H)^{\vee})^\anti \rightarrow {}^{\anti}(H^{\vee})^\anti, \quad  \alpha\mapsto \alpha\circ R_\tau
$$
such that $L_{x\tau y}^*(\alpha)=x\cdot (L_\tau^*( y\, \cdot\, \alpha))$ and
$R_{x\tau y}^*(\alpha)=(R_\tau^*(  \alpha\cdot x))\cdot y$
for $x,y\in H$ and $\alpha\in {}^{\anti}((F^1H)^{\vee})^\anti$. We therefore have  natural homomorphisms of $H$-bimodules
$$
\begin{array}{ccc}
F^1H\otimes_H {}^{\anti}((F^1H)^{\vee})^\anti&\longrightarrow &{}^{\anti}(H^{\vee})^\anti\cr \tau\otimes \alpha&\longmapsto& -L_\tau^*(\alpha) =-\alpha(\anti(\tau)_-)
\end{array}
$$
$$
\begin{array}{ccc}
{}^{\anti}((F^1H)^{\vee})^\anti\otimes F^1H&\longrightarrow &{}^{\anti}(H^{\vee})^\anti\cr \alpha\otimes \tau&\longmapsto& -R_\tau^*(\alpha) =-\alpha(_-\anti(\tau))
\end{array}
$$
which respectively induce homomorphisms of $H$-bimodules
\begin{equation}\label{f:prod12}\begin{array}{ccc} F^1H\otimes_H  {}^{\anti}((F^1H)^{\vee,f})^\anti&\longrightarrow &{}^{\anti}(H^{\vee,f})^\anti
\end{array}\end{equation}
\begin{equation}\label{f:prod21}\begin{array}{ccc}   {}^{\anti}((F^1H)^{\vee,f})^\anti\otimes_H F^1H&\longrightarrow &{}^{\anti}(H^{\vee,f})^\anti \ .
\end{array}
\end{equation}


\begin{proposition}\label{prop:1221} Assume that $G={\rm SL}_2(\mathbb Q_p)$, $p\neq 2,3$ and  $\pi=p$.
We have  commutative diagrams of $H$-bimodules
\begin{equation}
  \xymatrix{
    \cz{1}\otimes_H\cz{2} \ar[d]^{\cong}_{\eqref{f:cz1}\otimes \eqref{f:cz2}} \ar[rrr]^{\quad\quad\text{Yoneda product}} && &\cz{3}= E^3\ar[d]^{\Delta^3 \text{(see \eqref{f:dual})}}_{\cong} \\
F^1H\otimes _H {}^{\anti}((F^1H)^{\vee,f})^\anti  \ar[rrr]^{\eqref{f:prod12}} & && {}^{\anti}(H^{\vee,f})^\anti }
\label{f:prodC12}
\end{equation}
\begin{equation}
  \xymatrix{
    \cz{2}\otimes_H\cz{1} \ar[d]^{\cong}_{\eqref{f:cz2}\otimes \eqref{f:cz1}} \ar[rrr]^{\quad\quad\text{Yoneda product}} && &\cz{3}= E^3\ar[d]^{\Delta^3 \text{(see \eqref{f:dual})}}_{\cong} \\
 {}^{\anti}((F^1H)^{\vee,f})^\anti \otimes _H F^1H\ar[rrr]^{\eqref{f:prod21}} & && {}^{\anti}(H^{\vee,f})^\anti }
\label{f:prodC21}
\end{equation}
Both these Yoneda product maps have image $\ker(\trace^3)$ namely the space of $\zeta$-torsion in $E^3$.
\end{proposition}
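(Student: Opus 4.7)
The plan is to verify the commutativity of the two diagrams by reducing to a small number of computations on generators, and then to deduce the image identification by combining a structural inclusion with a spanning check.

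For the commutativity of \eqref{f:prodC12}, both horizontal arrows are $H$-bimodule maps, so it will be enough to compare them on a bimodule-generating set of the tensor product. Since $\cz{1}\cong F^1H=\tau_{s_0}H+\tau_{s_1}H$ is generated as a right $H$-module by $\mathbf{x}_0,\mathbf{x}_1$, and since the basis vectors $\upalpha_w^\star$ of $\cz{2}$ with $\ell(w)\geq 1$ are obtained from $\upalpha_{s_0}^\star,\upalpha_{s_1}^\star$ by the left $H$-action recorded in Remark \ref{rema:actionstar0}, the problem will reduce to computing the four Yoneda products $\mathbf{x}_\epsilon\cdot\upalpha_{s_{\epsilon'}}^\star$ with $\epsilon,\epsilon'\in\{0,1\}$ and matching them against the values of the bottom arrow at $\tau_{s_\epsilon}\otimes\tau_{s_{\epsilon'}}^\vee|_{F^1H}$.

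These four Yoneda products will be computed in direct analogy with the proof of Proposition \ref{prop:11}. The element $\mathbf{x}_\epsilon$ has support in $h^1(s_\epsilon)+h^1(\Omega)$ by Lemma \ref{F1-E1}, and $\upalpha_{s_{\epsilon'}}^\star$ has length-one support $\pm(0,\upalpha^0,0)_{s_{\epsilon'}}$ by Remark \ref{rema:conjkerg2}-ii plus a contribution in $e_{\gamma_0}\cdot\ker(f_2)$ that is explicit from Proposition \ref{prop:kerf2}. Proposition \ref{prop:yo} then splits the calculation into two regimes: when $\epsilon\neq\epsilon'$, formula \eqref{f:prod-goodlength} will reduce the Yoneda product to a single cup product in $h^3(s_\epsilon s_{\epsilon'})$, which by \eqref{f:pairing} and \eqref{f:orientation} vanishes, matching the bottom arrow (which is zero in this case by \eqref{f:leftrightHd}, since $\ell(s_\epsilon s_{\epsilon'})=\ell(s_{\epsilon'})+1$); when $\epsilon=\epsilon'$, formula \eqref{f:prod-badlength} will yield a cup product in $h^3(s_\epsilon^2)$ together with a correction term in $\bigoplus_{\omega\in\Omega}h^3(\omega s_{\epsilon'})$ which I will evaluate directly from Propositions \ref{prop:theformulas} and \ref{prop:theformulas'}, and the combined element of $E^3$ is to correspond under $\Delta^3$ to the bottom-arrow value $-(\tau_{\omega_{-1}}^\vee-e_1\cdot\tau_{s_\epsilon}^\vee)$ read off from \eqref{f:leftrightHd} and the identity $\tau_{s_\epsilon^{-1}}=\tau_{\omega_{-1}}\tau_{s_\epsilon}$. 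The commutativity of \eqref{f:prodC21} will then follow either by the symmetric computation or, more cheaply, by transport through the anti-involution $\anti$, using Remark \ref{rema:conjkerg2}-i and the multiplicativity of $\anti$ on the Yoneda product.

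For the image identification, the inclusion into $\ker(\trace^3)$ is structural: any $x\in\cz{2}$ is killed by some power $\zeta^m$ by Proposition \ref{prop:structurekerg2}, so $\zeta^m(y\cdot x)=y\cdot\zeta^m x=0$ for any $y\in\cz{1}$, which makes $y\cdot x$ $\zeta$-torsion in $E^3$; in view of \eqref{f:keydecompEd} and $\chi_{triv}(\zeta)=1$, this places the product inside $\ker(\trace^3)$. The reverse inclusion is the main obstacle: via $\Delta^3$ and the established commutativity, the image of the Yoneda product in $H^{\vee,f}$ coincides with the image of the bottom arrow \eqref{f:prod12}, which is $k$-spanned by the linear forms $h\mapsto-\tau_w^\vee(\tau_{v^{-1}}h)$ with $\ell(v),\ell(w)\geq 1$. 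The plan is to show this image exhausts $\Delta^3(\ker(\trace^3))$ by expanding $\tau_{v^{-1}}\tau_u$ in the $\tau$-basis through the braid and quadratic relations, and verifying by induction on length that the resulting forms span exactly the linear forms in $H^{\vee,f}$ vanishing on the $\chi_{triv}$-direction, i.e.\ the complement of $\chi_{triv}$ in $E^3$ under $\Delta^3$.
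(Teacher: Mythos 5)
Your first reduction contains a genuine gap. You claim that the basis vectors $\upalpha_w^\star$ of $\cz{2}$ with $\ell(w)\geq 1$ are obtained from $\upalpha_{s_0}^\star,\upalpha_{s_1}^\star$ by the left $H$-action of Remark \ref{rema:actionstar0}, and you use this to reduce the commutativity of \eqref{f:prodC12} to the four Yoneda products $\mathbf{x}_\epsilon\cdot\upalpha_{s_{\epsilon'}}^\star$. But that remark shows exactly the opposite: left multiplication by $\tau_{s_\epsilon}$ either kills $\upalpha_w^\star$ or returns $-e_1\cdot\upalpha_w^\star+\upalpha_{s_\epsilon w}^\star$ with $\ell(s_\epsilon w)=\ell(w)-1$, and left multiplication by $\tau_\omega$ preserves length. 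So $H\cdot\upalpha_{s_0}^\star+H\cdot\upalpha_{s_1}^\star$ is contained in the span of those $\upalpha_v^\star$ with $\ell(v)\leq 1$; you cannot reach length $\geq 2$. This is the analogue of the fact that $\tau_s\cdot\tau_w^\vee$ decreases length in ${}^\anti(H^{\vee,f})^\anti\cong E^3$; the bimodule $\cz{2}\cong {}^\anti((F^1H)^{\vee,f})^\anti$ is not finitely generated on either side. Consequently, even though both horizontal arrows in \eqref{f:prodC12} are $H$-bimodule maps, their agreement on $\tau_{s_\epsilon}\otimes\tau_{s_{\epsilon'}}^\vee|_{F^1H}$ for the four pairs $(\epsilon,\epsilon')$ does not force agreement on all of $F^1H\otimes_H {}^\anti((F^1H)^{\vee,f})^\anti$, because those four elements do not generate that bimodule.

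The reduction that actually works uses only that $\cz{1}$ is generated as a left $H$-module by $\mathbf{x}_0,\mathbf{x}_1$, and therefore one must verify $\mathbf{x}_\epsilon\cdot\upalpha_w^\star=-\tau_{s_\epsilon}\cdot\phi_w$ for every $\epsilon\in\{0,1\}$ and every $w$ with $\ell(w)\geq 1$, not just $w=s_{\epsilon'}$. The case $w\in\widetilde W^\epsilon$ is then handled uniformly by the annihilation $\mathbf{x}_\epsilon\cdot(\tau_{s_\epsilon}+e_1)=0$ combined with $\upalpha_w^\star=(\tau_{s_\epsilon}+e_1)\cdot\upalpha_{s_\epsilon^{-1}w}^\star$, while the case $w\in\widetilde W^{1-\epsilon}$ (all lengths) requires the explicit cup-product computation, using Proposition \ref{prop:yo}, \eqref{f:explicitalphastar}, and the observation that $e_\lambda\cdot\ker(g_1)\cdot\ker(f_2)=0$ for $\lambda\neq 1$ (since $\ker(g_1)\cdot\ker(f_2)\subseteq\ker(f_3)$, which is one-dimensional of type $\chi_{triv}$). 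Your treatment of the $\epsilon=\epsilon'$, length-one cup product, the passage to \eqref{f:prodC21} via $\anti$, and the image identification via $\zeta$-torsion and Remark \ref{rema:psiw} are all sound as far as they go, but they rest on first establishing the identity for all $w$, which your proposal does not do.
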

\begin{proof}
Preliminary observations:
\begin{itemize}
  \item[A)] For $s\in\{s_0, s_1\}$ and $w\in \widetilde W$, $\ell(w)\geq 1$,  the map \eqref{f:prod12} sends $\tau_{s}\otimes \tau_w^\vee\vert_{F^1H}$ to
$-\tau_{s}\cdot   \tau_w^\vee\in {}^{\anti}(H^{\vee,f})^\anti \ .$
and  \eqref{f:prod21} sends $ \tau_w^\vee\vert_{F^1H}\otimes \tau_{s}$ to
$  - \tau_w^\vee\cdot \tau_s\in {}^{\anti}(H^{\vee,f})^\anti \ .$
  \item[B)] By Remark \ref{rema:calcufg}-iii, we have $\ker(g_1)\cdot \ker(f_2)\subseteq \ker(f_3)$ and likewise
$\ker(f_2)\cdot \ker(g_1)\subseteq \ker(f_3)$.
 But $\ker(f_3)$ is a one dimensional vector space with basis  $e_1\cdot \phi_1$  and supporting the character   $\chi_{triv}$ of $H$ (Lemma \ref{lemma:cap}).  Therefore,    $e_\lambda \cdot \ker(g_1)\cdot \ker(f_2)=\{0\}$
 and   $e_\lambda \cdot \ker(f_2)\cdot \ker(g_1)=\{0\}$
 for any $\lambda\neq 1$.
\end{itemize}
We now turn to the proof of the commutativity of the diagrams. Since the left $H$-module $\cz{1}$ is generated by ${\bf x_0}$ and ${\bf x}_1$ and
using observation A) and \eqref{f:leftrightHd} above, it is enough to prove,  for $\epsilon\in\{0, 1\}$ and $w\in \widetilde W$, $\ell(w)\geq 1$:
$$
{\bf x}_{\epsilon}\cdot \upalpha^\star_ w =-\tau_{s_\epsilon}\cdot \phi_w\ =
\begin{cases}
-\phi_{s_\epsilon w}+e_1\cdot \phi_w&\text{if $w\in\widetilde W^{1-\epsilon}$}\cr
0&\text{if $w\in\widetilde W^{\epsilon}$}
\end{cases}
$$
$$
\upalpha^\star_ w\cdot{\bf x}_{\epsilon} = -\phi_w\cdot \tau_{s_\epsilon} =
\begin{cases}
-\phi_{ ws_\epsilon}+e_1\cdot \phi_w&\text{if $w^{-1}\in\widetilde W^{1-\epsilon}$}\cr
0&\text{if $w^{-1}\in\widetilde W^{\epsilon}$ \ .}
\end{cases}
$$
Using Remark \ref{rema:psiw}, these identities show that the Yoneda product maps have image $\ker(\trace^3)$.

By the proof of Proposition \ref{F1part}-iv, we know that $\anti({\bf x}_\epsilon)=-\tau_{s_\epsilon^2}\cdot {\bf x}_\epsilon$
and this is equal to $- {\bf x}_\epsilon\cdot \tau_{s_\epsilon^2}$ (since $f_{( {\bf x}_0, {\bf x}_1)})$ is an homomorphism of $H$-bimodules).
By Remark \ref{rema:conjkerg2}-i we have, that  $\anti(\upalpha^\star_w)=-\upalpha^\star_{w^{-1}}$. Lastly,
$\anti(\phi_w)=\phi_{{w}^{-1}}$ by \cite{Ext}  (8.2).
Since $\anti$ is an anti-involution of the graded algebra $E^*$,  it is therefore enough
to prove the first identity above namely we focus on the commutativity of \eqref{f:prodC12}.
 \begin{itemize}
\item
Suppose $w\in \widetilde W^\epsilon$ with $\ell(w)\geq 1$. Then by Remark \ref{rema:actionstar0}
we have  $\upalpha^\star_ w=(\tau_{s_\epsilon}+e_1)\cdot  \upalpha_{s_{\epsilon}^{-1}w}^\star$
But ${\bf x}_{\epsilon}\cdot
 (\tau_{s_\epsilon}+e_1)=0$. Therefore ${\bf x}_{\epsilon}\cdot \upalpha^\star_ w=0$.
\item Suppose $w\in \widetilde W^{1-\epsilon}$ with $\ell(w)\geq 1$.
We know from  \eqref{f:explicitalphastar}  
 that
$$ \begin{cases}\upalpha^\star_ w\in-(0,\upalpha^0, 0)_w+e_{\gamma_0}\cdot \ker(f_2)&\text{if $\epsilon=0$}\cr
\upalpha^\star_ w\in(0,\upalpha^0, 0)_w+e_{\gamma_0}\cdot \ker(f_2)&\text{if $\epsilon=1$}
\end{cases}$$  so by observation B) above, we have
 $${{\bf x}_\epsilon}\cdot \upalpha^\star_ w=
 \begin{cases}-{{\bf x}_\epsilon}\cdot (0,\upalpha^0, 0)_w&\text{if $\epsilon=0$}\cr
{{\bf x}_\epsilon}\cdot (0,\upalpha^0, 0)_w&\text{if $\epsilon=1 .$}
\end{cases}$$
Therefore, when $\epsilon=0$ we compute, using Proposition \ref{prop:yo} and  Lemma \ref{lemma:righteasy}-i,
\begin{align*}{{\bf x}_0}\cdot\upalpha^\star_ w=&
((0, \c^0, 0)_{s_0}+e_{\id^{-1}} \cdot (   \c^0\iota^{-1}, 0,0) _{1})\cdot (0, \upalpha^0, 0)_{w}\cr=&
(0, \c^0, 0)_{s_0}\cdot (0, \upalpha^0, 0)_{w}+e_{\id^{-1}} \cdot [(   \c^0\iota^{-1}, 0,0) _{1}\cdot \tau_w\cup  (0, \upalpha^0, 0)_{w}]\cr=&
(0, \c^0, 0)_{s_0}\cdot (0, \upalpha^0, 0)_{w}+e_{\id^{-1}} \cdot [(   \c^0\iota^{-1}, 0,0) _{w}\cup  (0, \upalpha^0, 0)_{w}]\cr=&(0, \c^0, 0)_{s_0}\cdot (0, \upalpha^0, 0)_{w}\cr=&
[(0, \c^0, 0)_{s_0}\cdot \tau_w\cup \tau_{s_0}\cdot (0, \upalpha^0, 0)_{w}] +\mu_{w}\phi_{s_0w}\text{ where $\mu_w\in k$}.
\end{align*}

Now using Lemma \ref{lemma:righteasy}-ii,  Proposition \ref{prop:theformulas'}, and \eqref{f:leftomegaE2}, we compute
\begin{gather*}
   (0, \c^0, 0)_{s_0}\cdot \tau_w\cup \tau_{s_0}\cdot (0, \upalpha^0, 0)_{w}  \qquad\qquad\qquad\qquad\qquad\qquad\qquad\qquad\qquad\qquad\qquad\qquad\qquad\qquad  \\
\begin{split}
& = (e_1\cdot (0, \c^0, 0)_{w})\cup(e_1\cdot  (0, \alpha^0, 0)_w)-
(e_{\id^{-1}}\cdot(\c^0\iota^{-1}, 0,0)_{w})\cup (e_{\id}\cdot (2\iota(\alpha^0),0,0)_w)  \\
& = [\sum_{u, v\in \mathbb F_p^\times}(0,\c^0, 0)_{\omega_u w}\cup  (0, \upalpha^0, 0)_{\omega_vw} ]-[\sum_{u, v\in \mathbb F_p^\times}u ^{-1} (\c^0\iota^{-1}, 0,0)_{\omega_u w}\cup v (2\iota(\alpha^0),0,0)_{\omega_v w}]   \\
& = [\sum_{u\in \mathbb F_p^\times}(0,\c^0, 0)_{\omega_u w}\cup  (0, \upalpha^0, 0)_{\omega_uw} ]-[\sum_{u\in \mathbb F_p^\times}(\c^0\iota^{-1}, 0,0)_{\omega_u w}\cup  (2\iota(\alpha^0),0,0)_{\omega_u w}]   \\
& = [\sum_{u\in \mathbb F_p^\times}\phi_{\omega_uw} ] -2[\sum_{u\in \mathbb F_p^\times}\phi_{\omega_u w}]= e_1\cdot \phi_w \text{ by \eqref{f:pairing'}.}
\end{split}
\end{gather*}
So
${{\bf x}_0}\cdot\upalpha^\star_ w=e_1\cdot \phi_w+\mu_{w}\phi_{s_0w}.$
But $(\tau_{s_0}+e_1)\cdot(e_1\cdot \phi_w+\mu_{w}\phi_{s_0w})=e_1\cdot \phi_{s_0w}+\mu_{w}e_1\cdot \phi_{s_0w}$ (see \eqref{f:leftrightHd})  and ${{\bf x}_0}$ being in the kernel of $\tau_{s_0}+e_1$, we obtain
 $\mu_w=-1$. Therefore,  as expected,
${{\bf x}_0}\cdot\upalpha^\star_ w=e_1\cdot \phi_w-\phi_{s_0w}=-\tau_{s_0}\cdot \phi_{w}$.
The case when $\epsilon=1$ may then be obtained by conjugation by $\varpi$ (\eqref{f:conjphi}, the proof of Proposition \ref{F1part}-v which says that $\Gamma_\omega({\bf x}_0)={\bf x}_1$),  and \ref{rema:conjkerg2}-i).

\end{itemize}
\end{proof}

\begin{remark}
For $w\in\widetilde W$ with length $1$ and $\epsilon\in\{0,1\}$ the map \eqref{f:prod12} satisfies:
\begin{equation*}
\tau_{s_\epsilon}\otimes \tau_w^\vee\vert_{F^1H}\longmapsto
\begin{cases}
0 & \text{if $w\in \widetilde W^\epsilon$}\cr
-\psi_w & \text{if $w\in \widetilde W^{1-\epsilon}$}
\end{cases}
\end{equation*}
where $\psi_w$ was defined in Remark \ref{rema:psiw}.

Together with Remark \ref{rema:prodF1} and using Propositions \ref{prop:11} and \ref{prop:1221}, this completely describes the triple Yoneda product $\cz{1} \otimes_H \cz{1} \otimes_H \cz{1} \rightarrow \cz{3} = E^3$ with image the subspace $k e_1 \cdot \psi_{s_0} \oplus k e_1 \cdot \psi_{s_1} \subseteq \ker(\trace^3)$.

\end{remark}

\section{Appendix}

\subsection{\label{proofbadlength}Proof of Proposition \ref{prop:yo}} This  proposition is written in the general context of $G := \mathbf{G}(\mathfrak{F})$ being the group of $\mathfrak{F}$-rational points of a connected reductive group $\mathbf{G}$  over $\mathfrak{F}$ which we  assume to be $\mathfrak{F}$-split. The first point was proved in \cite{Ext} Cor.\ 5.5.
To prove the second point, we recall some notations of \cite{Ext}. The affine Coxeter system $(W_{aff}, S_{aff})$ attached to $G$ was introduced in \S 2.1.3 \emph{loc. cit.} Recall that $W_{aff}$ is a subgroup of $W=N_G(T)/T^0$ and that $\widetilde W= N_G(T)/T^0$ (see \S\ref{sec:not}).

The action of $\tau_\omega$ where $\omega\in \widetilde W $ has length zero is given in  \cite{Ext} Prop.\ 5.6 (it is the same formula as \eqref{f:omega}). Using  this formula together with  \eqref{f:support}, we  see that  it is enough to prove the second point  of Proposition \ref{prop:yo} in the case when $v$ is a lift in $N(T)/T^1$ of $s\in S_{aff}$. For $s\in S_{aff}$, we pick the element $n_s\in N(T)$ as defined in \S2.1.6 \emph{loc. cit.}  and let $v:= n_sT^1$. Recall that each $s\in S_{aff}$ corresponds to an affine simple root of the form $(\alpha, \mathfrak h)$.  As in (2.13)  \emph{loc. cit.}, the corresponding cocharacter $\check\alpha$ carves out the finite subgroup $\ima=\{\check\alpha([z]), z\in \mathbb F_q^\times\}$ of $T^0$, where $[_-]: \mathbb F_q^\times\rightarrow \mathfrak O^\times$ denotes the  multiplicative Teichm\"uller lift.
By (2.18) \emph{loc. cit.}, we have
\begin{equation*}
   n_s I n_s^{-1} I=I \;\dot\cup \; \dot\bigcup_{z\in\mathbb F_q^\times} x_{\alpha}(\pi^{\mathfrak h}[z]) \check\alpha([z]) n_s^{-1} I \subset  I  \;\dot\cup \;\bigcup_{z\in\mathbb F_q^\times}I  \check\alpha([z]) n_s^{-1} I=I  \;\dot\cup \;\dot\bigcup_{\omega\in \ima }I \omega n_s^{-1} I
\end{equation*}
where $x_{\alpha}(\pi^{\mathfrak h}[z])\in I$ is defined in \emph{loc. cit.} (2.14). We choose  a lift $\dot w\in N(T)$ of $w\in \widetilde W$. Because of the condition on the length (namely  $\ell(vw)=\ell(w)-1$), we know that $I \dot w I=I  n_s^{-1}  I n_s \dot w I$ and therefore
\begin{align}\label{f:quadraticw2}
 n_s I \dot w I =I n_s \dot w I \;\dot\cup \;\bigcup_{z\in\mathbb F_q^\times} x_{\alpha}(\pi^{\mathfrak h}[z]) \check\alpha([z]) n_s^{-1} I n_s \dot w I \subset  I n_s \dot w I  \;\dot\cup \;\bigcup_{z\in\mathbb F_q^\times}I  \check\alpha([z]) \dot w I=I n_s \dot w I \;\dot\cup \;\dot\bigcup_{\omega\in \ima }I \omega \dot w I
\end{align}
This shows  a result which is more precise  than the one announced  in Proposition \ref{prop:yo}. Namely, when $v= n_sT^1$, we have
$$
\alpha\cdot \beta\in H^{i+j}(I,\mathbf{X}(v w))\oplus \bigoplus_{\omega\in \ima} H^{i+j}(I,\mathbf{X}( \omega \dot w)) \ .
$$

Let $\omega \in \ima$ and  $u_\omega:= \omega \dot w$.  We study the component  $\gamma_{u_\omega}$ of  $\alpha\cdot \beta$ in
$H^{i+j}(I,\mathbf{X}( u_\omega)) $.
We have $n_s^{-1} Iu_\omega I\cap I\dot wI=n_ s^{-1}(I \omega \dot w  I\cap n_ s I \dot w I).$ From \eqref{f:quadraticw2}
we obtain that
$$n_s^{-1} Iu_\omega I\cap I\dot wI=\bigcup_{z\in\mathbb F_q^\times, \check\alpha([z])=\omega} n_s^{-1} x_{\alpha}(\pi^{\mathfrak h}[z]) \check\alpha([z]) n_s^{-1} I n_s \dot w I=\bigcup_{z\in\mathbb F_q^\times, \check\alpha([z])=\omega} I_{n_s} n_s^{-1} x_{\alpha}(\pi^{\mathfrak h}[z]) u_\omega  I$$
The second identity comes from the fact that $I_{n_s}= I_{n_s^{-1}}$ is normalized by $J$ by Cor. 2.5-iii. and from (2.7) in Lemma 2.2 (still in \cite{Ext}).
 Now suppose that $\mathbf G$ is semisimple and simply connected, then  by the proof of Lemma 2.8  \emph{loc. cit.}, the map $\check\alpha$ is injective. Therefore there is a unique
 $z\in\mathbb F_q^\times$ such that  $\check\alpha([z])=\omega$ and
 $$n_s^{-1} Iu_\omega I\cap I\dot wI=I_{n_s} n_s^{-1} x_{\alpha}(\pi^{\mathfrak h}[z]) u_\omega   I\ .$$
To apply the formula of Prop.\ 5.3 of \cite{Ext}, we need to study the double cosets
$ I_{n_s} \backslash (n_s^{-1} Iu_\omega  \cap I\dot wI)/I_{u_\omega^{-1}}$. But from Lemma 5.2 \emph{loc. cit.} and the above identity,  we obtain immediately:
$$n_s^{-1} Iu_\omega  \cap I\dot wI = I_{n_s} n_s^{-1} x_{\alpha}(\pi^{\mathfrak h}[z]) u_\omega I_{u_\omega^{-1}}$$
Let $h:=n_s^{-1} x_{\alpha}(\pi^{\mathfrak h}[z]) u_\omega$.
We have $ u_\omega  h ^{-1} I h  u_\omega^{-1}=x_{\alpha}(\pi^{\mathfrak h}[z]) ^{-1}n_ s I n_ s^{-1}x_{\alpha}(\pi^{\mathfrak h}[z]) $. Since
$x_{\alpha}(\pi^{\mathfrak h}[z]) \in I$ normalizes $I_{n_s}$ and since $I_w\subset I_s$ (Lemma 2.2 \emph{loc. cit.}), we obtain:
\begin{align*}I_{u_\omega}\cap u_\omega  h  ^{-1} I h   u_\omega^{-1}&= I\cap w I w^{-1}\cap
\big(x_{\alpha}(\pi^{\mathfrak h}[z]) ^{-1}n_ s I n_ s^{-1}x_{\alpha}(\pi^{\mathfrak h}[z])\big)\cr&=x_{\alpha}(\pi^{\mathfrak h}[z]) ^{-1} I_{n_s} x_{\alpha}(\pi^{\mathfrak h}[z]) \cap w I w^{-1}= I_{n_s}\cap w I w^{-1}=I_s\cap I_w= I_w= I_{u_\omega} \end{align*}
By Remark 5.4 \emph{loc. cit.}, it implies that  the component of $\alpha\cdot \beta - \alpha\cdot \tau_w\cup \tau_{n_s}\cdot \beta$ in
$H^{i+j}(I, \X(u_\omega))$ is zero. So
$$\alpha\cdot \beta - \alpha\cdot \tau_w\cup \tau_{n_s}\cdot \beta\in H^{i+j}(I, \X(n_sw)).$$
This concludes the proof. We add the computation of  this element. Using Lemma 2.2 and Lemma  5.2-i \emph{loc. cit.}, we obtain the following.
Let $u:=n_s \dot w $.
We have $n_ s ^{-1}I n_ s \dot w\subset I_{n_s} \dot w I$ therefore
 $n_ s ^{-1}I n_ s \dot w\ I\cap I \dot w I= I_{n_s} \dot w I$ and $I_{n_s} \backslash (n_s^{-1} I u \cap I\dot  wI)/I_{u^{-1}}$ is made of  only one  double coset $I_{n_s} \dot  w  I_{u^{-1}}$.
We have $I_u= I_{n_s  \dot w}$ and $I_u\cap u \dot w^{-1} I \dot  w u=n_s I_w n_s^{-1}$ while
$I \cap u \dot w^{-1} I  \dot wu^{-1}= I_s$ and
$ uI u^{-1} \cap u \dot w^{-1} I  \dot wu^{-1}=n_ s I_w n_ s^{-1}.$
So, by Prop. 5.3,  the component $\gamma_{n_s \dot w}$ in  $H^{i+j}(I, \X(n_ s \dot w))$ of $\alpha\cdot \beta$  is given by
\begin{equation*} \Sh_{n_ s \dot w}(\gamma_{n_s \dot w}) =  \mathrm{cores}^{n_s I_w n_s^{-1}}_{I_{n_ s w}}
   \big( \res^{I_{n_s}}_{n_ s I_w n_s^{-1}} \big( \Sh_{n_ s}(\alpha)
   \big) \cup  \big( {n_ s}_*\Sh_w(\beta)\big) \big).\\
\end{equation*} In particular if $\mathbf G$ is semisimple and simply connected, then the image by $ \Sh_{n_ s\dot  w}$ of the element
$$
\alpha\cdot \beta- \alpha\cdot \tau_w\cup \tau_{n_s}\cdot \beta \ ,
$$
which  lies in $H^{i+j}(I, \X(n_ s \dot w))$, is
 \begin{multline*}
 \mathrm{cores}^{n_s I_w n_s^{-1}}_{I_{n_ s w}}
   \big( \res^{I_{n_s}}_{n_ s I_w n_s^{-1}} \big( \Sh_{n_ s}(\alpha)
   \big) \cup  \big( {n_ s}_*\Sh_w(\beta)\big) \big)  \\
   - \mathrm{cores}^{n_s I_w n_s^{-1}}_{I_{n_s w}}
   \big( \res^{I_{n_s}}_{n_ s I_w n_ s^{-1}} \big( \Sh_{n_s}(\alpha)
   \big)  \big)\cup  \mathrm{cores}^{n_s I_w n_s^{-1}}_{I_{n_s w}}
   \big(    {n_s}_*\Sh_w(\beta) \big).
\end{multline*}

\subsection{Computation of  some transfer maps}

We use notations introduced in \S\ref{rootdatum} and  \S\ref{triples}, see in particular Remark \ref{rema:simpliQp}.
\begin{lemma}\label{lemma:conjtrans}Suppose $p\neq 2$ and $G={\rm SL}_2(\mathfrak F)$.
Let  $w\in \widetilde W$ with  length $m:=\ell(w)$ which we suppose $\geq 1$. Let $s\in \{s_0, s_1\}$ be the unique element such that $\ell(sw)=\ell(w)-1$.

\begin{itemize}
\item[i.] Suppose $\mathfrak F\neq \mathbb Q_p$. If $m \geq 2$ or $m = 1$ and $q \neq 3$, then
the transfer map  $(I_{sw})_\Phi\rightarrow (s I_w s^{-1})_\Phi$   is the zero map.

\item[ii.]  Suppose that $\mathfrak F= \mathbb Q_p$. If $m \geq 2$ or $m = 1$ and $p \neq 3$ then the transfer map
  $(I_{sw})_\Phi\rightarrow (s I_w s^{-1})_\Phi$   is

\begin{align*}\begin{psmallmat} 1+\pi x& y\cr \pi^m z& 1+\pi t\end{psmallmat}\mapsto \begin{psmallmat} 1& p y\cr0& 1\end{psmallmat} \bmod  \left(\begin{smallmatrix}
   1+\pi^{2} \mathbb Z_p&   \pi^{2}\mathbb Z_p\\
\pi^{m+1}\mathbb Z_p  & 1+\pi^2\mathbb Z_p
   \end{smallmatrix}\right) &\quad \quad\textrm{ if $s=s_0$}\\
\begin{psmallmat} 1+\pi x& \pi^{m-1}y\cr \pi z& 1+\pi t\end{psmallmat}\mapsto \begin{psmallmat} 1& 0\cr p \pi z& 1\end{psmallmat} \bmod  \left(\begin{smallmatrix}
   1+\pi^2\mathbb Z_p&\pi^{m}\mathbb Z_p\\
\pi^{3}\mathbb Z_p  & 1+\pi^2\mathbb Z_p
   \end{smallmatrix}\right) &\quad \quad \textrm{ if $s=s_1$.}\end{align*}


\end{itemize}

\end{lemma}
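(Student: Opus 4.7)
The plan is to compute the transfer (Verlagerung) directly via explicit coset representatives. The two cases $s = s_0$ and $s = s_1$ are exchanged by conjugation by $\varpi$ (Remark \ref{remark:N}), which preserves all the relevant subgroup inclusions and Frattini subgroups; so I reduce to the case $s = s_0$, i.e.\ $w \in \widetilde W^1$ with $\ell(w) = m$. Then by \eqref{f:cap} one has $I_{sw} = I_{m-1}^+$ and $I_w = I_m^-$, and a direct matrix computation of $s_0 \left(\begin{smallmatrix} 1+\pi x & \pi^m y \\ \pi z & 1+\pi t \end{smallmatrix}\right) s_0^{-1}$ shows that $sI_ws^{-1} = \left(\begin{smallmatrix} 1+\mathfrak M & \mathfrak M \\ \mathfrak M^m & 1+\mathfrak M \end{smallmatrix}\right)$. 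This is a subgroup of $I_{sw}$ of index $q$, with the quotient parametrized by the top-right entry modulo $\mathfrak M$.

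I choose the coset representatives $u_\lambda := \left(\begin{smallmatrix} 1 & [\lambda] \\ 0 & 1 \end{smallmatrix}\right)$ for $\lambda \in \mathbb F_q$, where $[\cdot]$ is the Teichmüller lift. For $g = \left(\begin{smallmatrix} 1+\pi x & y \\ \pi^m z & 1+\pi t \end{smallmatrix}\right) \in I_{sw}$, imposing that the top-right entry of $u_\mu^{-1} g u_\lambda$ lie in $\mathfrak M$ forces $\mu = \lambda + \bar y$, where $\bar y := y \bmod \mathfrak M$; hence the induced permutation of $\mathbb F_q$ is the translation $\lambda \mapsto \lambda + \bar y$. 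An explicit multiplication then yields the four entries of $h_{g,\lambda} := u_{\lambda + \bar y}^{-1} g u_\lambda$: the bottom-left is $\pi^m z$ (independent of $\lambda$), the diagonal entries are $1+\pi x - [\lambda+\bar y]\pi^m z$ and $1+\pi t + [\lambda]\pi^m z$, and the top-right is
\begin{equation*}
  ([\lambda] - [\lambda+\bar y] + y) + \pi\bigl(x[\lambda] - [\lambda+\bar y]t\bigr) - [\lambda+\bar y][\lambda]\pi^m z.
\end{equation*}

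The Verlagerung is obtained by multiplying the $h_{g,\lambda}$'s modulo the Frattini subgroup. Since the Frattini quotient is an elementary abelian $p$-group, this amounts to summing each ``coordinate''. The key ingredients are the identities $\sum_{\lambda\in \mathbb F_q}[\lambda] = \sum_{\lambda\in \mathbb F_q}[\lambda+\bar y]$ and the sharper cancellation $\sum_\lambda \bigl([\lambda+\bar y] - [\lambda] - y\bigr) = -qy$. For $\mathfrak F = \mathbb Q_p$ the latter, once divided by $p$, gives the top-right coordinate $\equiv \bar y \pmod p$, producing exactly the displayed matrix $\left(\begin{smallmatrix} 1 & py \\ 0 & 1 \end{smallmatrix}\right)$; the bottom-left summed contribution $q\pi^m z = p\cdot p^m z$ lies in $p^{m+1}\mathbb Z_p$ and is absorbed by the Frattini, and similarly the diagonal contribution $q\pi x - \pi^m z \sum_\lambda [\lambda + \bar y]$ lies in $p^2\mathbb Z_p$. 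For general $\mathfrak F \neq \mathbb Q_p$ the analogous estimates, combined with the Frattini decomposition \eqref{f:I-ab} transported via conjugation by $s_0$, show that every coordinate of the Verlagerung already vanishes in the target Frattini quotient, giving part (i).

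The main technical obstacle is bookkeeping in which power of $\mathfrak M$ each term lives, and identifying the ``mixed-degree'' subgroup in the statement as the Frattini subgroup $\Phi(sI_ws^{-1})$ (a direct verification using that the $p$-th powers of the three coordinate one-parameter subgroups produce exactly the displayed containment). The delicate point is the cross term $-[\lambda+\bar y][\lambda]\pi^m z$ when $m=1$: summed over $\lambda$, the Frattini-relevant part involves $\sum_{\lambda\in \mathbb F_q} \lambda(\lambda+\bar y) = \sum \lambda^2 + \bar y \sum \lambda$, and $\sum_{\lambda\in \mathbb F_q}\lambda^2 \equiv 0 \pmod p$ holds if and only if $q \neq 3$ (equivalently $p \neq 3$ when $f = 1$, and automatically for higher $f$). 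This is precisely the source of the hypothesis $p\neq 3$ (resp.\ $q\neq 3$) in the case $m = 1$; for $m \geq 2$ this cross term already lies in $\mathfrak M^{m+1}$ and is absorbed unconditionally.
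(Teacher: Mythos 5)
Your proposal is correct, and it takes a genuinely different route from the paper. The paper first applies the Iwahori factorization to split a general $g\in I_{m-1}^+$ into lower-unipotent, diagonal, and upper-unipotent factors, computes the Verlagerung of each separately, and exploits that the first two lie in the normal subgroup $s_0 I^-_m s_0^{-1}$ (so the transfer is just a product of conjugates), while for the upper-unipotent factor it uses the commutative diagram relating corestriction for $s_0 I^-_m s_0^{-1}\leq I^+_{m-1}$ to the simpler transfer $U(\mathfrak O)_\Phi\to U(\mathfrak M)_\Phi$, which is the $q$-th power map. Your one-shot approach replaces that diagram trick with a direct computation of the coset permutation (a translation by $\bar y$) and the elements $h_{g,\lambda}=u_{\lambda+\bar y}^{-1}g\,u_\lambda$, and then sums coordinates in the Frattini quotient — legitimate since the quotient is abelian, and the matrix entries of $h_{g,\lambda}$ agree modulo the right ideals with its Iwahori coordinates, so they can be added component-wise. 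What the paper's route buys is avoiding the permutation bookkeeping entirely; what yours buys is self-containedness and making the dependence on $y$ (hence the nontrivial output $py$) structurally transparent, since it emerges from $\sum_\lambda([\lambda]-[\lambda+\bar y]+y)=qy$ rather than from a cited identity about the $q$-th power transfer. Both arguments trace the exceptional case $(m=1,q=3)$ to the same quantity: your cross term $\sum_\lambda[\lambda+\bar y][\lambda]\equiv\sum_\lambda\lambda^2$ is exactly the paper's $\sum_b b^2$ (set $\bar y=0$ and restrict to the lower-unipotent factor). The only place I would tighten your writeup is the aside that the "mixed-degree" subgroup is $\Phi(sI_ws^{-1})$ because the $p$-th powers of the coordinate subgroups produce it — one also needs the commutator subgroup (which the paper quotes from \cite[Prop.\ 3.62]{embed}) to justify the diagonal contribution $1+\mathfrak M^2$ when $m\geq 2$; your subsequent diagonal estimate is fine, but the identification of $\Phi$ itself needs that input.
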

\begin{proof}Compare with \cite[Prop. 3.65]{embed}. We let $
m:=\ell(w)$.
By conjugation by $\varpi$,  it is enough  to treat the case of the transfer map
$(I_{m-1}^+)_\Phi\rightarrow (s_0 I_m^- s_0^{-1})_\Phi$ in both the proofs of i. and ii. We denote this map by $\rm{tr}$.
Recall that when $s=s_0$, then
$I_w= I_m^-$  and $I_{sw}= I_{m-1}^+$ where
and
$$I_{m-1}^+ := \left(
\begin{smallmatrix}
1+\mathfrak{M} & \mathfrak{O} \\ \mathfrak{M}^{m} & 1+\mathfrak{M}
\end{smallmatrix}\right), \quad s_0 I_m^-s_0^{-1} = \left(
\begin{smallmatrix}
1+\mathfrak{M} & \mathfrak{M} \\ \mathfrak{M}^m & 1+\mathfrak{M}
\end{smallmatrix}
\right),$$

\noindent By the Iwahori factorization of $I^+_{m-1}$, it suffices to compute the transfer of elements of the form
$\left(\begin{smallmatrix}
       1 & 0 \\
       \pi ^m v & 1
       \end{smallmatrix}\right)
       $,
$\left(\begin{smallmatrix}
       t & 0 \\
       0 & t^{-1}
       \end{smallmatrix}\right)$,
and
$\left(\begin{smallmatrix}
       1 &  u \\
       0 & 1
       \end{smallmatrix}\right)
$ of  $I_{m-1}^+$.
Let $S \subseteq \mathfrak{O}$  be a set of representatives for the cosets in $\mathfrak{O}/\mathfrak{M}$. Then the matrices
$\left(\begin{smallmatrix}
  1 & b \\
  0 & 1
\end{smallmatrix}\right)$, for $b \in S$, form a set of representatives in the right cosets $s_0 I_m^- s_0^{-1}\backslash I_{m-1}^+$.
\begin{itemize}
\item[-]
Since
$\left(\begin{smallmatrix}
       1 & 0 \\
       \pi^m v & 1
       \end{smallmatrix}\right)
        \in s_0 I^-_{m} s_0^{-1}$, which is normal in $I_{m-1}^+$, we have
\begin{equation*}
  \mathrm{tr}(
\left(\begin{smallmatrix}
1 & 0 \\
\pi^m v & 1
\end{smallmatrix}\right) ) \equiv
\prod_{b \in S}
\left(\begin{smallmatrix}
1 & b \\
0 & 1
\end{smallmatrix}\right)
\left(\begin{smallmatrix}
1 & 0 \\
\pi^m v & 1
\end{smallmatrix}\right)
\left(\begin{smallmatrix}
1 & - b \\
0 & 1
\end{smallmatrix}\right)  \equiv \prod_{b \in S}
\left(\begin{smallmatrix}
1+b\pi^m v & -b^2  \pi^{m}v \\
\pi^m v & 1-b\pi^{m} v
\end{smallmatrix}\right) \mod  \Phi(s_0I^-_{m}s_0^{-1})\end{equation*} where
$\Phi(s_0I^-_{m}s_0^{-1})$ denotes the Frattini subgroup of $s_0I^-_{m}s_0^{-1}$.
From \cite[Prop. 3.62]{embed} we get
  $[s_0I^-_m s_0^{-1},s_0I^-_m s_0^{-1}] = s_0[I^-_m, I^-_m] s_0^{-1}=
   \left(\begin{smallmatrix}
   1+\mathfrak{M}^{m+1} &   \mathfrak{M}^{2}\\
 \mathfrak{M}^{m+1}  & 1+\mathfrak{M}^{m+1}
   \end{smallmatrix}\right)$
so $$(s_0I^-_{m}s_0^{-1})_\Phi \cong \mathfrak{M}^m/\mathfrak{M}^{m+1} \times (1+\mathfrak{M}/((1+\mathfrak{M}^{m+1})(1+\mathfrak{M})^p) \times  \mathfrak{M}/\mathfrak{M}^{2}.$$ In this isomorphism the above element corresponds to \begin{align*}
  & (q\pi^m v \bmod \mathfrak M^{m+1},\: \prod_b (1+ b \pi^{m} v) \bmod (1 + \mathfrak{M}^{m+1})(1+\mathfrak{M})^p, \:- \pi^{m}v \sum_b  b^2 \bmod \mathfrak{M}^{2})  \\
  & = (0,\: 1+ \pi^{m} v \sum_b b \bmod (1 + \mathfrak{M}^{m+1})(1+\mathfrak{M})^p, \:- \pi^{m}v \sum_b  b^2 \bmod \mathfrak{M}^{2}) \ .
\end{align*}
The  zero coordinate comes from the fact that for any choice of $\mathfrak F$ we have $q \mathfrak M^m\subseteq   \mathfrak M^{m+1}$.

View $b \longmapsto b$ and $b \longmapsto b^2$ as  $\mathbb{F}_q$-valued characters of the group $\mathbb{F}_q^\times$ of order prime to $p$. By the orthogonality relation for characters the sum $\sum_{b \in \mathbb{F}_q^\times} b$, resp.\  $\sum_{b \in \mathbb{F}_q^\times} b^2$,  vanishes if and only if the respective character is nontrivial if and only if $q \neq 2$, resp.\ $q \neq 2,3$. Since we assume $p \neq 2$ the second component is zero whereas the last component is zero if either $m\geq 2$, or $m= 1$ and $q \neq 3$.

\item[-] For $t\in 1+\mathfrak{M}$, the element
$\left(\begin{smallmatrix}
       t & 0 \\
       0 & t^{-1}
\end{smallmatrix}\right)$ again lies in $s_0I^-_ms_0^{-1}$ so  we have
\begin{equation*}
  \mathrm{tr}( \left(\begin{smallmatrix}
t & 0 \\
0 & t^{-1}
\end{smallmatrix}\right) ) \equiv
\prod_{b \in S}
\left(\begin{smallmatrix}
1 & b \\
0 & 1
\end{smallmatrix}\right)
\left(\begin{smallmatrix}
t & 0 \\
0 & t^{-1}
\end{smallmatrix}\right)
\left(\begin{smallmatrix}
1 & -b \\
0 & 1
\end{smallmatrix}\right)  \equiv \prod_{b \in S}
\left(\begin{smallmatrix}
t & b(t^{-1}-t) \\
0 & t^{-1}
\end{smallmatrix}\right) \mod  \Phi(s_0I^-_{m}s_0^{-1}).
\end{equation*}
The above element seen in $(s_0I^-_{m}s_0^{-1})_{\Phi}$ corresponds to
\begin{equation*}
  (0,\: t^q \bmod(1 + \mathfrak{M}^{m+1})(1+\mathfrak{M})^p , \:(t^{-1} - t) \sum_b  b \bmod \mathfrak{M}^{2}) \ .
\end{equation*}
Since $t^q$  is a $p$th power, the second component is zero. The last component is zero since $q \neq 2$.

\item[-] To compute
$\mathrm{tr}( \left(\begin{smallmatrix}
1 & u \\
0 & 1
\end{smallmatrix}\right) )$, where $u\in \mathfrak O$, we follow  the  argument of  the proof of \cite[Lemma 3.40.i.a)]{embed}.
Let $U(\mathfrak{M}) := \left(
\begin{smallmatrix}
1 & \mathfrak{M} \\
 0& 1
\end{smallmatrix}
\right)$ and $U(\mathfrak{O}) := \left(
\begin{smallmatrix}
1 & \mathfrak{O} \\
 0& 1
\end{smallmatrix}
\right)$. Since $I^+_{m-1} = U(\mathfrak{O}) s_0 I^-_{m}s_0^{-1}$ we  obtain the commutative diagram (\cite{NSW} Cor.\ 1.5.8)
\begin{equation*}
  \xymatrix{
    H^1(s_0 I^-_{m}s_0^{-1} ,k) \ar[d]^{\mathrm{res}} \ar[r]^-{\mathrm{cores}} & H^1(I^+_{m-1},k) \ar[d]^{\mathrm{res}} \\
    H^1(U(\mathfrak{M}),k) \ar[r]^-{\mathrm{cores}} & H^1(U(\mathfrak{O}),k).   }\text{ or dually }   \xymatrix{
   U(\mathfrak{O})_\Phi \ar[d]^{} \ar[r]^-{} &  U(\mathfrak{M})_\Phi   \ar[d] \\
    (I_{m-1}^+)_\Phi \ar[r]^-{\mathrm{tr}} & (s_0 I^-_{m}s_0^{-1})_\Phi.   }
\end{equation*}
The upper right horizontal arrow  is the transfer map $U(\mathfrak{O})_\Phi\rightarrow U(\mathfrak{M})_\Phi$ and it coincides with the $q$th power map $g \longmapsto g^q$ (\cite{Hup} Lemma IV.2.1). So we study the image of $u\in \mathfrak O$ under the map
 $ \mathfrak{O} \longrightarrow \mathfrak{M}, x\mapsto qx$. If $\mathfrak{F} \neq \mathbb{Q}_p$, then we have $q\mathfrak{O} \subseteq \mathfrak{M}^{2}$. Therefore
$\mathrm{tr}(
\left(\begin{smallmatrix}
1 &  u \\
0 & 1
\end{smallmatrix}\right) ) \equiv 0 \bmod \Phi(s_0 I^-_{m}s_0^{-1})$.
If $\mathfrak{F} = \mathbb{Q}_p$, then
$\mathrm{tr}(
\left(\begin{smallmatrix}
1 &  u \\
0 & 1
\end{smallmatrix}\right) ) \equiv \left(\begin{smallmatrix}
1 &  pu \\
0 & 1
\end{smallmatrix}\right) \bmod \Phi(s_0 I^-_{m}s_0^{-1})$.
\end{itemize}

\noindent Under the hypotheses   $p\neq 2$, and $m\geq 2$ or $m= 1$ and $q \neq 3$ we have proved:
 if $\mathfrak F\neq \mathbb Q_p$  then
the transfer map  $(I_{m-1}^+)_\Phi\rightarrow (s_0 I_m^- s_0^{-1})_\Phi$   is trivial;
if $\mathfrak F= \mathbb Q_p$, then
the image of
$$\left(\begin{smallmatrix}
1+\pi x&  y\\
\pi ^m z & 1+\pi t
\end{smallmatrix}\right)=\left(\begin{smallmatrix}
1&  0\\
\frac{\pi ^m z}{1+\pi x} & 1
\end{smallmatrix}\right)\left(\begin{smallmatrix}
1+\pi x&  0\\
0 & (1+\pi x)^{-1}
\end{smallmatrix}\right)\left(\begin{smallmatrix}
1&  \frac{y}{1+\pi x}\\
0& 1
\end{smallmatrix}\right)\in I_{m-1}^+$$
by  the transfer map  $(I_{m-1}^+)_\Phi\rightarrow (s_0 I_m^- s_0^{-1})_\Phi$
 is
$\left(\begin{smallmatrix}
1&  py\\
0& 1
\end{smallmatrix}\right) \bmod \Phi(s_0 I^-_{m}s_0^{-1})$.
\end{proof}
\subsection{Proof of Proposition \ref{prop:theformulas}\label{subsec:proofformu}}

Here $G={\rm SL}_2(\mathbb Q_p)$ with $p\neq 2,3$ and  $\pi = p$.
Let $w\in \widetilde W$ with length $m:=\ell(w)$.
For $s\in\{s_0, s_1\}$ we compute the action of $\tau_{s}$ on an element  $c\in H^1(I,\X(w))$  seen as a triple $(c^-, c^0, c^+)_w$.
Using Lemma \ref{lemma:conjtrip} and knowing that the map \eqref{gammapi} of conjugation by $\varpi$ is compatible with the Yoneda product hence the action of $H$, it is enough to prove the formulas for $s= s_0$.
We recall the following result from \cite{Ext} Prop.\ 5.6. There we worked with $n_{s_i}$ (instead of the matrices $s_i$ of the current article) where $n_{s_0}= s_0$ (but $n_{s_1}=s_1^{-1}$). Recall $s_0 = \left(
\begin{smallmatrix}
0 & 1 \\ -1 & 0
\end{smallmatrix}
\right)$.
We have either $\ell(s_0 w)=\ell(w)+1$
and $\tau_{ s_0} \cdot c\in h^1(s_0w)$  with
\begin{equation}\label{f:leftsgood}
  \Sh_{ s_0 w}(\tau_{s_0}	\cdot c)=\res^{s_0 I_w s_0^{-1}}_{I_{s_0w}} \big({s_0}_* \Sh_w(c)\big)\ ,
\end{equation}
or  $\ell(s_0 w)=\ell(w)-1$ and
\begin{equation}\label{f:leftsbad-support}
  \tau_{{s_0}}	\cdot c =\gamma_{ {s_0} w}+\sum_{\omega \in \Omega }\gamma_{\omega w}\in h^1({s_0}w)\oplus \bigoplus_{\omega \in \Omega } h^1(\omega w)
\end{equation}
with
\begin{equation}\label{f:leftsbad1}
  \Sh_{s_0 w}(\gamma_{{s_0} w})=\cores_{I_{s_0w}}^{s_0 I_w s_0^{-1}}\big({s_0}_* \Sh_w(c)\big) \textrm{ and }
   \end{equation}
\begin{equation}\label{f:leftsbad2}
\Sh_{\omega_u w}(\gamma_{\omega_u w})= \big({s_0} \omega_u^{-1} \left(\begin{smallmatrix}1&[u]^{-1}\cr 0&1\end{smallmatrix}\right) {s_0}^{-1}\big)_*\Sh_w(c)\ .\end{equation}

\noindent \textbf{A) Case when $\ell(s_0w)=\ell(w)+1$.}
It means that $w\in \widetilde W^0$,  $I_w= I_{m}^+$ and $I_{s_0w}=I_{m+1}^-$. We compute the composite map  $H^1(I_m^+,k) \overset{{s_0}_*}{\longrightarrow}  H^1(s_0I_m^+s_0^{-1},k) \overset{\res}{\longrightarrow}  H^1(I_{m+1}^-,k)$. Let $X=\left(\begin{smallmatrix}
1+p x& p^{m+1} y \\ p z & 1+p t
\end{smallmatrix}\right)\in  I_{m+1}^-$. Then $s_0^{-1} X s_0=\left(\begin{smallmatrix}
1+p t& -p z \\ -p^{m+1}y  & 1+p x
\end{smallmatrix}\right)$. Its image in $(I_m^+)_\Phi$ (see \eqref{f:I+ab}) corresponds to
$$(-y, 1-p x, 0)= (-y, 1+p t, 0)\in  \mathbb Z_p/p\mathbb Z_p \times (1+p\mathbb Z_p) \big/ (1+p^2\mathbb Z_p) \times  \mathbb Z_p/p\mathbb Z_p \ .$$
This proves that   $\Sh_{ s_0 w}(\tau_{s_0}	\cdot c)$ is given by
$(y,1+px, z)\mapsto -c^-(y)-c^0(1+px)$ namely $$\tau_{s_0}	\cdot c=(0, -c^0, -c^-)_{s_0w} \textrm{ if $m\geq 1$}$$ and if $m=0$ then  $\tau_{s_0}	\cdot c=(0, 0, -c^-)_{s_0w}.$
%


\noindent\textbf{B) Now suppose $\ell(s_0w)=\ell(w)-1$.} Then $\tau_{s_0}\cdot c$ has a component
$\gamma_{s_0w}\in h^1(s_0w)$ and a component  $\sum_{u\in \mathbb F_p^\times }\gamma_{\omega_u w}\in \oplus \bigoplus_{\omega \in \Omega } h^1(\omega w)$. Recall that $\omega_u$ was defined in \eqref{omegaz}.

\noindent\textbf{1)} We compute  $\sum_{u\in \mathbb F_p^\times }\gamma_{\omega_u w}\in \oplus \bigoplus_{\omega \in \Omega } h^1(\omega w)$.\\
In fact, for  all $u\in\mathbb F_p^\times $,  we compute the elements $\varepsilon_u\in H^1(I_w,k)$ defined by  $$e_1\cdot c+\sum_{u\in \mathbb F_p^\times }\gamma_{\omega_u w} = \sum_{{u\in \mathbb F_p^\times }} \Sh_{\omega_u w}^{-1}(\varepsilon_u)\in  \bigoplus_{{u\in \mathbb F_p^\times }} h^1(\omega_u w)$$ namely
$\varepsilon_u=\Sh_{\omega_u w}(\gamma_{\omega_u w})-(\omega_u)_*\Sh_w(c)$.\\
Recall $I_{\omega w}=I_m^-=\left(\begin{smallmatrix}
1+{p\mathbb Z_p} & p^m\mathbb Z_p \\ p\mathbb Z_p & 1+p\mathbb Z_p
\end{smallmatrix}\right)$ for any $\omega\in \Omega $. Compute
 $s_0 \omega_u^{-1} \left(\begin{smallmatrix}1&[u]^{-1}\cr 0&1\end{smallmatrix}\right) s_0^{-1}=\begin{psmallmat} 1&0\cr-[u]& 1\cr\end{psmallmat} \omega_u$. Therefore, by  \eqref{f:leftsbad2}
$$ \Sh_{\omega_u w}(\gamma_{\omega_u w})-(\omega_u)_*\Sh_w(c):  X\mapsto
(\omega_u)_*\Sh_w(c)\big(\begin{psmallmat} 1&0\cr-[u]& 1\cr\end{psmallmat} ^{-1} X\begin{psmallmat} 1&0\cr-[u]& 1\cr\end{psmallmat} X^{-1}\big)$$
 for any
$X:=\left(\begin{smallmatrix}
1+p x& p^m y \\ p z & 1+p t
\end{smallmatrix}\right)\in I_w$.
We have
$$\begin{psmallmat} 1&0\cr-[u]& 1\cr\end{psmallmat}^{-1} X\begin{psmallmat} 1&0\cr-[u]& 1\cr\end{psmallmat} X^{-1}=\left(\begin{smallmatrix}
1+p x- p^m y [u]& p^m y \\  p z+p(x-t)[u]-p^m y[u^2] & 1+p t+p m y[u]
\end{smallmatrix}\right) X^{-1}.$$ Via \eqref{f:I-abQp}  this image of this element  in $(I_m^-)_{\Phi}$ corresponds to
$$(2 x[u]-p^{m-1} y [u]^2, 1-p^{m}y[u], 0)\in  \mathbb Z_p/p\mathbb Z_p \times (1+p\mathbb Z_p) \big/ (1+p^2\mathbb Z_p) \times  \mathbb Z_p/p\mathbb Z_p.$$
So for $u\in \mathbb F_p^\times$, we just computed that
$\Sh_{\omega_u w}(\gamma_{\omega_u w})-(\omega_u)_*\Sh_w(c)$ is the element $\varepsilon_u$ in  $\Hom( I_w, k)$ sending $X\in I_w$ to
\begin{align*}(\omega_u)_*\Sh_w(c)( (2 x[u]-p^{m-1} y [u]^2, 1-p^my[u], 0))&=\Sh_w(c) ((2 x[u]^{-1}-p^{m-1} y, 1-p^my[u], 0))\cr&=c^-(2 x[u]^{-1}-p^{m-1} y)+ c^0(1-p^my[u])
\end{align*}
If $m=1$,  then
$\varepsilon_u$ sends $X$ onto (see notation \eqref{f:iota}):
$$[u]^{-1}2c^-\iota(1+p x)\:- [u]^{-2} c^-( [u]^2y)\:- [u]^{-1}c^0\iota^{-1}(y[u]^2)).$$ Using \eqref{f:uplambdac} we see that its preimage by $\Sh_{\omega_u w}$ is the component in $h^1(\omega_uw)$ of
$e_{\id}\cdot (0, -2c^-\iota, 0)_w+ e_{\id^2}\cdot (0, 0, c^-)_w+e_{\id}\cdot (0, 0, c^0\iota^{-1})_w$
so when $m=1$, we have
$$\sum_{u\in \mathbb F_p^\times }\gamma_{\omega_u w}=- e_1\cdot (c^-, c^0, c^+)_w+e_{\id}\cdot (0, -2c^-\iota, c^0\iota^{-1})_w+ e_{\id^2}(0, 0, c^-)_w.$$
If $m\geq 2$, then the only remaining component of $\varepsilon_u$ is $X\mapsto [u]^{-1}2c^-\iota(1+p x)$ so we obtain
$$\sum_{u\in \mathbb F_p^\times }\gamma_{\omega_u w}=- e_1\cdot (c^-, c^0, c^+)_w+e_{\id}\cdot (0, -2c^-\iota, 0)_w.$$

\noindent \textbf{2)} We compute $\gamma_{s_0w}\in h^1(s_0w)$.  By \eqref{f:leftsbad1} we have
  $\Sh_{s_0 w}(\gamma_{ s_0 w})=\cores_{I_{s_0w}}^{ s_0 I_w  s_0^{-1}}\big( {s_0}_* \Sh_w(c)\big)$.  By  Lemma \ref{lemma:conjtrans},  the composite map $(I_{s_0w})_\Phi\xrightarrow{{\rm tr}} (s_0 I_w s_0^{-1})_\Phi\xrightarrow{{s_0^{-1}}_- {s_0}} (I_w)_\Phi$   is
\begin{align*}
(z, 1+px,y) \mapsto (-y,0,0) \in \mathbb Z_p/p\mathbb Z_p \times (1+p\mathbb Z_p) \big/ (1+p^2\mathbb Z_p) \times  \mathbb Z_p/p\mathbb Z_p
\end{align*}
This shows that $\gamma_{s_0 w}=(0,0,-c^-)_{s_0w}$.
%
%
%

\subsection{\label{proof:theformulas'}Proof of Proposition \ref{prop:theformulas'}} Let $w\in {\widetilde W}$ and $\alpha=(\alpha^-, \alpha^0, \alpha^+)_w\in h^1(w)^\vee\subset {}^\anti((E^{1})^{\vee, f})^\anti$. We suppose that $s=s_0$, the case $s=s_1$ following by conjugation by $\varpi$ (by the map \eqref{gammapi} which is compatible with the Yoneda product).
\begin{itemize}

\item Suppose that $\ell({s_0}w)=\ell(w)+1$. By  \eqref{f:support} we know that  $\tau_{s_0^{-1}} \cdot \alpha=\alpha(\tau_{{s_0}}\,\cdot\, _ -)$ has support in $h^1(s_0^{-1}w)$.
Let $c=(c^-, c^0, c^+)_{s_0^{-1}w}\in h^1(s_0^{-1}w)$.
We compute $(\tau_{s_0^{-1}} \cdot  \alpha) (c)=\alpha(\tau_{{s_0}}\cdot c)$.
By Proposition \ref{prop:theformulas}, the component in  $h^1(w)$ of $\tau_{s_0}\cdot   c$ is $(0,0,-c^-)_{w}$.  Therefore
  $( \tau_{s_0^{-1}} \cdot  \alpha)(c)=\alpha((0,0,-c^-)_{w})=-c^-(\alpha^+)$ and   $\tau_{s_0^{-1}}\cdot   \alpha= (-\alpha^+,0,0)_{s_0^{-1}w}$.
  Using \eqref{f:lefts2'}, it gives $\tau_{s_0}  \cdot \alpha= (-\alpha^+,0,0)_{s_0w}$.


\item Suppose that $\ell({s_0}w)=\ell(w)-1$. By  Proposition \ref{prop:yo} (or \eqref{f:support}) we know that
 $\tau_{s_0^{-1}}\cdot   \alpha=\alpha(\tau_{{s_0}} \cdot \,_ -)$
 has support in
$
 h^1( s_0^{-1}w) \oplus \bigoplus_{\omega\in \Omega }  h^1( \omega w)$.
\begin{itemize}
\item Compute its component in $(h^1(s_0^{-1} w))^\vee$:  \\
We compute $(\tau_{s_0^{-1}}\cdot   \alpha) (c)=\alpha(\tau_{s_0} \cdot   c)$
for $c=(c^-, c^0, c^+)_{s_0^{-1}w}\in h^1(s_0^{-1}w)$ with $c^0=0$ if $\ell(w)=1$. By Proposition \ref{prop:theformulas}, the element $\tau_{s_0} \cdot   c$ lies in $h^1(w)$ and is equal to
$(0,  {-}c^0, -c^-)_{w}$. Therefore
  $(\tau_{s_0^{-1}}\cdot  \alpha)(c)= {-} c^0(\alpha_0)-c^-(\alpha^+)$, and the component in $(h^1(s_0^{-1}w))^\vee$ of $\tau_{s_0^{-1}}\cdot  \alpha$ is $(-\alpha^+,  {-} \alpha_0,0)_{s_0^{-1}w}$ if $\ell(w)\geq 2$
 and
$ (-\alpha^+,  0,0)_{s_0^{-1}w}$ if $\ell(w)= 1$.   Using \eqref{f:lefts2'},
 the component in $(h^1(s_0w))^\vee$ of $\tau_{s_0}\cdot  \alpha$ is $(-\alpha^+,  {-} \alpha_0,0)_{s_0w}$ if $\ell(w)\geq 2$   and
$ (-\alpha^+,  0,0)_{s_0w}$ if $\ell(w)= 1$.
\item
Compute the component $\sum_{u\in \mathbb F_p^\times}\beta_{\omega_uw}$ in $\oplus_{u\in \mathbb F_p^\times}(h^1( \omega_u w))^\vee$ of $\tau_{s_0^{-1}} \cdot  \alpha$: \\
The component in
$(h^1(w))^\vee$ of $(\tau_{{\omega_u}^{-1}}\tau_{s_0^{-1}} \cdot  \alpha)$ is $\tau_{{\omega_u}^{-1}}\cdot \beta_{\omega_uw}$. We therefore compute
$(\tau_{\omega_u}^{-1}\cdot \beta_{\omega_uw})(c)=\alpha(\tau_{s_0} \tau_{\omega_u}\cdot  c) $  for $c =(c^-, c^0, c^+)_{w} \in  h^1(w)$.
By  Proposition \ref{prop:theformulas} and the definition of the idempotents \eqref{defel} (see also \eqref{tll}), the component in $h^1(w)$  of  $\tau_{s_0} \tau_{\omega_u}\cdot  c= \tau_{\omega_u^{-1}}\tau_{s_0}\cdot  c$ is
\begin{align*}
\begin{cases}    (c^-, c^0 , c^+)_{ w}+
  u^{-1} (0, 2c^-\iota , 0)_{ w} & \text{ if   $\ell(w) \geq 2$, }\cr  (c^-, c^0 , c^+)_{ w}
+  u^{-1} (0, 2c^-\iota , -c^0\iota^{-1})_{ w} +u^{-2} (0,0 , -c^-)_{ w}  & \text{ if  $\ell(w) =1$ }.
\end{cases}
\end{align*}
Therefore
\begin{multline*}
\alpha(\tau_{s_0} \tau_{\omega_u}\cdot  c) =   \\
\begin{cases}
  c^-(\alpha^-)+c^0(\alpha^0)+c^+(\alpha^+)+u^{-1}2c^-\iota (\alpha^0)
   & \text{ if   $\ell(w) \geq 2$, }\cr
   c^-(\alpha^-)+c^0(\alpha^0)+c^+(\alpha^+)+u^{-1}2c^-\iota (\alpha^0)
     -u^{-1}c^0\iota^{-1}(\alpha^+)-u^{-2}c^-(\alpha^+)
      & \text{ if   $\ell(w) =1$ }.
\end{cases}
\end{multline*}
So
\begin{multline*}
\beta_{\omega_uw} =   \\
\begin{cases}
 \tau_{\omega_u}\cdot  (\alpha^-, \alpha^0, \alpha^+)_w+u^{-1}\tau_{\omega_u}\cdot(2\iota(\alpha^0),0,0)_w      & \text{ if   $\ell(w) \geq 2$, }\cr
 \tau_{\omega_u}\cdot  (\alpha^-, \alpha^0, \alpha^+)_w+u^{-1}\tau_{\omega_u}\cdot( 2\iota(\alpha^0),-\iota^{-1}(\alpha^+),0)_w-u^{-2}\tau_{\omega_u}\cdot ( \alpha^+, 0,0)_w
      & \text{ if   $\ell(w) =1$ }
\end{cases}
\end{multline*}
and
\begin{multline*}
\sum_{u\in \mathbb F_p^\times }  \beta_{\omega_uw} =   \\
\begin{cases}
-e_1 \cdot (\alpha^-, \alpha^0, \alpha^+)_w- e_{\id}\cdot (2\iota(\alpha^0),0,0)_w      & \text{ if   $\ell(w) \geq 2$, }\cr
-e_1 \cdot (\alpha^-, \alpha^0, \alpha^+)_w-e_{\id}\cdot ( 2\iota(\alpha^0),-\iota^{-1}(\alpha^+),0)_w+e_{\id^2}\cdot ( \alpha^+, 0,0)_w
      & \text{ if   $\ell(w) =1$ }.
\end{cases}
\end{multline*}
The component in $\oplus_{u\in \mathbb F_p^\times}(h^1( \omega_u w))^\vee$ of $\tau_{s_0} \cdot  \alpha$ is
\begin{multline*}
\tau_{s_0^2}\cdot \sum_{u\in \mathbb F_p^\times }  \beta_{\omega_uw} =   \\
\begin{cases}
-e_1 \cdot (\alpha^-, \alpha^0, \alpha^+)_w+e_{\id}\cdot (2\iota(\alpha^0),0,0)_w      & \text{ if   $\ell(w) \geq 2$, }\cr
-e_1 \cdot   (\alpha^-, \alpha^0, \alpha^+)_w+e_{\id}\cdot ( 2\iota(\alpha^0),-\iota^{-1}(\alpha^+),0)_w+e_{\id^2} \cdot  ( \alpha^+, 0,0)_w
      & \text{ if   $\ell(w) =1$ }.
\end{cases}
\end{multline*}

 \end{itemize}

  \end{itemize}

\addcontentsline{toc}{section}{References}

\noindent Rachel Ollivier\\
Mathematics Department, 
University of British Columbia\\
1984 Mathematics Road,
Vancouver BC V6T 1Z2, Canada\\
ollivier@math.ubc.ca\\
http://www.math.ubc.ca/~ollivier \\

\noindent Peter Schneider\\
Mathematisches Institut, 
Westf\"alische Wilhelms-Universit\"at M\"unster\\
Einsteinstr.\ 62,
48149 M\"unster, Germany\\
pschnei@uni-muenster.de\\
https://www.uni-muenster.de/Arithm/schneider\\



%
%
%
%
%
%
%
%
%
%
%
 \end{document}